\newtheorem{thm}{Theorem}[section]
\newtheorem{lem}[thm]{Lemma}
\newtheorem{prp}[thm]{Proposition}
\newtheorem{cor}[thm]{Corollary}
\newtheorem{dfn}[thm]{Definition}
\newtheorem{cnj}[thm]{Conjecture}
\newtheoremstyle{roman} % name
    {8.0pt plus 2.0pt minus 4.0pt}                    % Space above
    {8.0pt plus 2.0pt minus 4.0pt}                    % Space below
    {\normalfont}                % Body font
    {}                           % Indent amount
    {\bfseries}                  % Theorem head font
    {.}                          % Punctuation after theorem head
    {5pt plus 1pt minus 1pt}     % Space after theorem head
    {}  % Theorem head spec (can be left empty, meaning �normal�)
\theoremstyle{roman}
\newtheorem{example}[thm]{Example}
\newtheorem{remark}[thm]{Remark}
\theoremstyle{plain}
\newcommand{\rem}[1]{}
\newcommand{\C}{\mathbb{C}}
\newcommand{\F}{\mathbb{F}}
\newcommand{\N}{\mathbb{N}}
\newcommand{\Q}{\mathbb{Q}}
\newcommand{\R}{\mathbb{R}}
\newcommand{\Z}{\mathbb{Z}}
\newcommand{\frakCapital}{
\newcommand{\frakA}{{\mathfrak{A}}}
\newcommand{\frakB}{{\mathfrak{B}}}
\newcommand{\frakC}{{\mathfrak{C}}}
\newcommand{\frakD}{{\mathfrak{D}}}
\newcommand{\frakE}{{\mathfrak{E}}}
\newcommand{\frakF}{{\mathfrak{F}}}
\newcommand{\frakG}{{\mathfrak{G}}}
\newcommand{\frakH}{{\mathfrak{H}}}
\newcommand{\frakI}{{\mathfrak{I}}}
\newcommand{\frakJ}{{\mathfrak{J}}}
\newcommand{\frakK}{{\mathfrak{K}}}
\newcommand{\frakL}{{\mathfrak{L}}}
\newcommand{\frakM}{{\mathfrak{M}}}
\newcommand{\frakN}{{\mathfrak{N}}}
\newcommand{\frakO}{{\mathfrak{O}}}
\newcommand{\frakP}{{\mathfrak{P}}}
\newcommand{\frakQ}{{\mathfrak{Q}}}
\newcommand{\frakR}{{\mathfrak{R}}}
\newcommand{\frakS}{{\mathfrak{S}}}
\newcommand{\frakT}{{\mathfrak{T}}}
\newcommand{\frakU}{{\mathfrak{U}}}
\newcommand{\frakV}{{\mathfrak{V}}}
\newcommand{\frakW}{{\mathfrak{W}}}
\newcommand{\frakX}{{\mathfrak{X}}}
\newcommand{\frakY}{{\mathfrak{Y}}}
\newcommand{\frakZ}{{\mathfrak{Z}}}
}
\newcommand{\calCapital}{
\newcommand{\calA}{{\mathcal{A}}}
\newcommand{\calB}{{\mathcal{B}}}
\newcommand{\calC}{{\mathcal{C}}}
\newcommand{\calD}{{\mathcal{D}}}
\newcommand{\calE}{{\mathcal{E}}}
\newcommand{\calF}{{\mathcal{F}}}
\newcommand{\calG}{{\mathcal{G}}}
\newcommand{\calH}{{\mathcal{H}}}
\newcommand{\calI}{{\mathcal{I}}}
\newcommand{\calJ}{{\mathcal{J}}}
\newcommand{\calK}{{\mathcal{K}}}
\newcommand{\calL}{{\mathcal{L}}}
\newcommand{\calM}{{\mathcal{M}}}
\newcommand{\calN}{{\mathcal{N}}}
\newcommand{\calO}{{\mathcal{O}}}
\newcommand{\calP}{{\mathcal{P}}}
\newcommand{\calQ}{{\mathcal{Q}}}
\newcommand{\calR}{{\mathcal{R}}}
\newcommand{\calS}{{\mathcal{S}}}
\newcommand{\calT}{{\mathcal{T}}}
\newcommand{\calU}{{\mathcal{U}}}
\newcommand{\calV}{{\mathcal{V}}}
\newcommand{\calW}{{\mathcal{W}}}
\newcommand{\calX}{{\mathcal{X}}}
\newcommand{\calY}{{\mathcal{Y}}}
\newcommand{\calZ}{{\mathcal{Z}}}
}
\newcommand{\bbCapital}{
\newcommand{\bbA}{{\mathbb{A}}}
\newcommand{\bbB}{{\mathbb{B}}}
\newcommand{\bbC}{{\mathbb{C}}}
\newcommand{\bbD}{{\mathbb{D}}}
\newcommand{\bbE}{{\mathbb{E}}}
\newcommand{\bbF}{{\mathbb{F}}}
\newcommand{\bbG}{{\mathbb{G}}}
\newcommand{\bbH}{{\mathbb{H}}}
\newcommand{\bbI}{{\mathbb{I}}}
\newcommand{\bbJ}{{\mathbb{J}}}
\newcommand{\bbK}{{\mathbb{K}}}
\newcommand{\bbL}{{\mathbb{L}}}
\newcommand{\bbM}{{\mathbb{M}}}
\newcommand{\bbN}{{\mathbb{N}}}
\newcommand{\bbO}{{\mathbb{O}}}
\newcommand{\bbP}{{\mathbb{P}}}
\newcommand{\bbQ}{{\mathbb{Q}}}
\newcommand{\bbR}{{\mathbb{R}}}
\newcommand{\bbS}{{\mathbb{S}}}
\newcommand{\bbT}{{\mathbb{T}}}
\newcommand{\bbU}{{\mathbb{U}}}
\newcommand{\bbV}{{\mathbb{V}}}
\newcommand{\bbW}{{\mathbb{W}}}
\newcommand{\bbX}{{\mathbb{X}}}
\newcommand{\bbY}{{\mathbb{Y}}}
\newcommand{\bbZ}{{\mathbb{Z}}}
}
\newcommand{\bfCapital}{
\newcommand{\bfA}{{\mathbf{A}}}
\newcommand{\bfB}{{\mathbf{B}}}
\newcommand{\bfC}{{\mathbf{C}}}
\newcommand{\bfD}{{\mathbf{D}}}
\newcommand{\bfE}{{\mathbf{E}}}
\newcommand{\bfF}{{\mathbf{F}}}
\newcommand{\bfG}{{\mathbf{G}}}
\newcommand{\bfH}{{\mathbf{H}}}
\newcommand{\bfI}{{\mathbf{I}}}
\newcommand{\bfJ}{{\mathbf{J}}}
\newcommand{\bfK}{{\mathbf{K}}}
\newcommand{\bfL}{{\mathbf{L}}}
\newcommand{\bfM}{{\mathbf{M}}}
\newcommand{\bfN}{{\mathbf{N}}}
\newcommand{\bfO}{{\mathbf{O}}}
\newcommand{\bfP}{{\mathbf{P}}}
\newcommand{\bfQ}{{\mathbf{Q}}}
\newcommand{\bfR}{{\mathbf{R}}}
\newcommand{\bfS}{{\mathbf{S}}}
\newcommand{\bfT}{{\mathbf{T}}}
\newcommand{\bfU}{{\mathbf{U}}}
\newcommand{\bfV}{{\mathbf{V}}}
\newcommand{\bfW}{{\mathbf{W}}}
\newcommand{\bfX}{{\mathbf{X}}}
\newcommand{\bfY}{{\mathbf{Y}}}
\newcommand{\bfZ}{{\mathbf{Z}}}
}
\newcommand{\catCapital}{
\newcommand{\catA}{{\mathscr{A}}}
\newcommand{\catB}{{\mathscr{B}}}
\newcommand{\catC}{{\mathscr{C}}}
\newcommand{\catD}{{\mathscr{D}}}
\newcommand{\catE}{{\mathscr{E}}}
\newcommand{\catF}{{\mathscr{F}}}
\newcommand{\catG}{{\mathscr{G}}}
\newcommand{\catH}{{\mathscr{H}}}
\newcommand{\catI}{{\mathscr{I}}}
\newcommand{\catJ}{{\mathscr{J}}}
\newcommand{\catK}{{\mathscr{K}}}
\newcommand{\catL}{{\mathscr{L}}}
\newcommand{\catM}{{\mathscr{M}}}
\newcommand{\catN}{{\mathscr{N}}}
\newcommand{\catO}{{\mathscr{O}}}
\newcommand{\catP}{{\mathscr{P}}}
\newcommand{\catQ}{{\mathscr{Q}}}
\newcommand{\catR}{{\mathscr{R}}}
\newcommand{\catS}{{\mathscr{S}}}
\newcommand{\catT}{{\mathscr{T}}}
\newcommand{\catU}{{\mathscr{U}}}
\newcommand{\catV}{{\mathscr{V}}}
\newcommand{\catW}{{\mathscr{W}}}
\newcommand{\catX}{{\mathscr{X}}}
\newcommand{\catY}{{\mathscr{Y}}}
\newcommand{\catZ}{{\mathscr{Z}}}
}
\newcommand{\what}[1]{\widehat{#1}}
\newcommand{\veps}{\varepsilon}
\newcommand{\vphi}{\varphi}
\newcommand{\idealof}{\unlhd} % \vartriangleleft
\newcommand{\normalin}{\unlhd}
\newcommand{\derives}{\Longrightarrow}
\newcommand{\longto}{\longrightarrow}
\newcommand{\onto}{\twoheadrightarrow}
\newcommand{\suchthat}{\,:\,}
\newcommand{\where}{\,|\,}
\newcommand{\quo}[1]{\overline{#1}}
\newcommand{\smallSMatII}[4]{\left[\begin{smallmatrix} {#1} & {#2} \\ {#3} &
{#4} \end{smallmatrix}\right]}
\newcommand{\DotsArr}[4]{
\begin{array}{ccc} {#1} & \ldots & {#2} \\
\vdots & \ddots & \vdots \\
{#3} & \ldots & {#4} \end{array}}
\newcommand{\UTDotsArr}[3]{
\begin{array}{ccc} {#1} & \ldots & {#2} \\
 & \ddots & \vdots \\
 &  & {#3} \end{array}}
 \newcommand{\DDotsArr}[2]{
\begin{array}{ccc} {#1} &  &  \\
 & \ddots &  \\
 &  & {#2} \end{array}}
\newcommand{\Circs}[1]{\left( #1 \right)}
\newcommand{\Trings}[1]{\left< #1 \right>}
 \DeclareMathOperator{\Aut}{Aut}
\DeclareMathOperator{\Char}{char} %
\DeclareMathOperator{\End}{End} %
\DeclareMathOperator{\Hom}{Hom} %
\DeclareMathOperator{\id}{id} %
\DeclareMathOperator{\im}{im} %
\DeclareMathOperator{\Ind}{Ind} %
\DeclareMathOperator{\Nrd}{Nrd} %
\newcommand{\op}{\mathrm{op}} %
\DeclareMathOperator{\Span}{span} %
\DeclareMathOperator{\Spec}{Spec} %
\DeclareMathOperator{\Stab}{Stab} %
\DeclareMathOperator{\supp}{supp} %
\newcommand{\nGL}[2]{\mathrm{GL}_{#2}({#1})}
\newcommand{\nPGL}[2]{\mathrm{PGL}_{#2}({#1})}
\newcommand{\nSL}[2]{\mathrm{SL}_{#2}({#1})}
\newcommand{\nMat}[2]{\mathrm{M}_{#2}(#1)}
\newcommand{\uGL}{{\mathbf{GL}}}
\newcommand{\uPGL}{{\mathbf{PGL}}}
\newcommand{\uSL}{{\mathbf{SL}}}
\newcommand{\nGm}[1]{{\mathbf{G}}_{\mathbf{m},{#1}}}
\newcommand{\dirlim}{\underrightarrow{\lim}\,}
\newcommand{\invlim}{\underleftarrow{\lim}\,}
\newcommand{\units}[1]{{#1^\times}}
\newcommand{\lMod}[1]{{#1}\textrm{-}{\mathrm{Mod}}}
\newenvironment{chapquote}[2][2em]
  {\setlength{\@tempdima}{#1}%
   \def\chapquote@author{#2}%
   \parshape 1 \@tempdima \dimexpr\textwidth-2\@tempdima\relax%
   \itshape}
  {\par\normalfont\hfill--\ \chapquote@author\hspace*{\@tempdima}\par\bigskip}
\newcommand{\Rep}[2][]{\mathrm{Rep}^{\mathrm{#1}}(#2)}
\newcommand{\Irr}[2][]{\mathrm{Irr}^{\mathrm{#1}}(#2)}
\newcommand{\Hecke}[2][\,]{{\catH}_{#1}(#2)}
\newcommand{\co}{\leq_{\textrm{c.o.}}}
\newcommand{\sm}[1]{#1_{\mathrm{sm}}}
\DeclareMathOperator{\Ball}{B}
\DeclareMathOperator{\dist}{d}
\DeclareMathOperator{\mult}{mult}
\newcommand{\leftmod}{ {\setminus} }
\newcommand{\vrt}[1]{{#1}_{\mathrm{vrt}}}
\newcommand{\ori}[1]{#1_{\mathrm{ori}}}
\newcommand{\ordr}[1]{#1_{\mathrm{ord}}}
\newcommand{\CSLC}[1]{C_c^{\infty}(#1)}
\newcommand{\Alg}[3][]{A^{#1}_{#3}({#2})}
\newcommand{\tAlg}[3][]{\what{A}^{#1}_{#3}({#2})}
\newcommand{\cconj}[1]{\quo{#1}} % complex conjugate
\newcommand{\charfunc}[1]{\mathds{1}_{#1}} % characteristic function
\newcommand{\catSimp}{\mathtt{Sim}}
\newcommand{\catCov}{\mathtt{Cov}}
\newcommand{\catVec}{\mathtt{Vec}}
\newcommand{\catPHil}{\mathtt{pHil}}
\newcommand{\catSet}{\mathtt{Set}}
\newcommand{\udual}[1]{{\what{#1}}}
\newcommand{\norm}[1]{\|{#1}\|}
\newcommand{\Norm}[1]{\left\|{#1}\right\|}
\newcommand{\abs}[1]{|{#1}|}
\newcommand{\Abs}[1]{\left|{#1}\right|}
\newcommand{\wc}{\prec}
\newcommand{\sphere}[2][1]{\mathbb{S}^{#1}({#2})}
\newcommand{\fsubseteq}{\subseteq_f}
\newcommand{\cEnd}{{\mathrm{End}}^{0}}
\newcommand{\cHom}{{\mathrm{Hom}}^{0}}
\newcommand{\ids}[1]{\mathbb{I}(#1)}
\newcommand{\LL}[2][]{\mathrm{L}^{2}_{#1}({#2})}
\newcommand{\llf}{\tilde{\ell}^2}
\newcommand{\e}{\mathrm{e}}
\newcommand{\conv}{\star}
\newcommand{\sfx}{{\mathsf{x}}}
\newcommand{\sfy}{{\mathsf{y}}}
\newcommand{\gap}[1]{{\color{blue} {#1}}}
\newcommand{\change}[2]{#2}
\newcommand{\bGL}{\uGL}
\newcommand{\bPGL}{\uPGL}
\newcommand{\bSL}{\uSL}
\newcommand{\JL}{\mathrm{JL}}
\newcommand{\LJ}{\mathrm{LJ}}
\newcommand{\mSpec}{\mathrm{m\textrm{-}Spec}}
\newcommand{\Flag}{\mathrm{Flag}}
\newcommand{\llFlag}{\Omega_{\Flag}}
\numberwithin{equation}{section}
\title[The Ramanujan Property for Simplicial Complexes]{The Ramanujan Property for Simplicial Complexes}
\author{Uriya A.\ First$^*$}
\date{\today}
\address{$^*$Department of Mathematics, University of British Columbia}
\email{uriya.first@gmail.com}
\thanks{This research was supported by an ERC grant \#226135,
the Lady Davis Fellowship Trust, and the UBC Mathematics Department.}
\keywords{simplicial complex, Ramanujan complex, Ramanujan graph, idempotented $*$-algebra,
spectrum, affine building,
$\ell$-group, reductive group, automorphic form, Ramanujan--Petersson conjecture, Jacquet--Langlands correspondence}
\subjclass[2010]{
	05E18, % group actions on combinatorial structures
	11F70, % automorphic representations
	22D10, % unitary representations of locally compact groups
	22D25, % C*-algebra related to group representations
	46L05. % general theory of C*-algebras
}
\begin{document}

\maketitle

%\pagecolor{yellow!40}

%\maketitle

\begin{abstract}
    Let $G$ be a topological group acting on a simplicial complex $\calX$
    satisfying some mild assumptions. For example, consider a $k$-regular
    tree and its automorphism group, or more generally, a regular affine
    Bruhat-Tits building and its  automorphism group.
    We define and study various types of high-dimensional
    spectra of quotients of $\calX$ by subgroups of $G$.
    These spectra include the spectrum of many natural operators associated with
    the quotients, e.g.\ the high-dimensional Laplacians.

    We
    prove a theorem in the spirit of the Alon-Boppana Theorem, leading to a notion of \emph{Ramanujan quotients}
    of $\calX$.
    Ramanujan $k$-regular graphs
    and  Ramanuajn complexes in the sense of Lubotzky, Samuels and Vishne are \emph{Ramanujan in dimension $0$}
    according to our definition (for $\calX$, $G$ suitably chosen).
    We give a criterion for a quotient of $\calX$ to be Ramanujan which is phrased in terms of
    representations of $G$, and use it, together with deep results about automorphic
    representations, to show that affine buildings of inner forms of $\uGL_n$
    over local fields of positive characteristic  admit infinitely many quotients
    which are \emph{Ramanujan in all dimensions}.
    The Ramanujan (in dimension $0$) complexes constructed by
    Lubotzky, Samuels and Vishne arise as a special case of our construction.
    Our construction also gives rise to Ramanujan graphs which are apparently new.

    %In particular, ignoring the \emph{trivial spectrum},
    %the spectrum of the high dimensional Laplacians
    %of these complexes is contained in the spectrum of the high dimensional Laplacians of the their universal cover.

    Other applications are also discussed.
    %We also give other applications of our machinery.
    For example, we show that there are non-isomorphic simiplicial complexes which are \emph{isospectral in
    all dimensions}.
\end{abstract}

\setcounter{tocdepth}{1} % Default: 2

\tableofcontents

\setcounter{tocdepth}{2} % to make hyperref include subsections in the "side ToC" of the generated pdf.

\section*{Introduction}

    Let $X$ be a connected $k$-regular graph. The spectrum of $X$ is defined
    to be the spectrum of its adjacency matrix. It is well-known that the spectrum affects
    many combinatorial properties of $X$ such as behavior of random walks,
    expansion and mixing properties,  and the chromatic number; see
    the survey
    \cite{Lub12A}, for instance.

    The graph $X$ is called \emph{Ramanujan} if all eigenvalues of its adjacency matrix,
    excluding $k$ and $-k$, lie in the interval $[-2\sqrt{k-1},2\sqrt{k-1}]$. Let us explain the motivation behind this definition:
    Denote by $\lambda(X)$ the maximal absolute value of an eigenvalue of the adjacency matrix, excluding
    $k$ and $-k$. Informally, the smaller $\lambda(X)$ is,
    the better the aforementioned combinatorial
    properties of the graph are.
    However, the Alon--Boppana Theorem \cite{Nilli91} states that for any $\veps>0$, there are only finitely
    many non-isomorphic $k$-regular graphs with $\lambda(X)<2\sqrt{k-1}-\veps$.
    Ramanujan graphs can therefore be thought of as having the smallest possible spectrum one can expect
    of an infinite family of graphs.
    In addition, the interval $[-2\sqrt{k-1},2\sqrt{k-1}]$ is the spectrum of the  $k$-regular
    tree (\cite[p.~252, Apx.~3]{Sunada88}, \cite[Th.~3]{Kest59}), which is the universal cover of any $k$-regular graph, so Ramanujan
    graphs can be regarded as finite approximations of  the infinite $k$-regular tree.

    The spectral properties of
    Ramanujan graphs make them
    into supreme \emph{expander graphs}. In addition, they have good mixing properties and large chromatic number, provided they are not bipartite. Some known
    constructions
    have large girth as well.
    Constructing infinite families of non-isomorphic $k$-regular Ramanuajan graphs  is considered difficult.
    The first such families were introduced by Lubotzky, Phillips and Sarnak \cite{LubPhiSar88} and
    independently  by
    Margulis \cite{Marg88},
    assuming $k-1$ is prime. Morgenstern \cite{Morg94} has extended this to the case $k-1$ is a prime power.
    These works rely on deep results of Delinge \cite{Deligne74} and Drinfeld \cite{Drinfel88} concerning
    the Ramanujan--Petersson conjecture for $\bGL_2$.
    The existence of infinitely many $k$-regular bipartite Ramanujan graphs for arbitrary $k$ was later shown  by Marcus, Spielman and
    Srivastava \cite{MarSpiSri14} using different methods; the non-bipartite case remains open.

\medskip

    A high-dimensional generalization of Ramanujan graphs, called \emph{Ramanujan complexes},
    was suggested
    by Cartwright, Sol\'{e} and \.{Z}uk \cite{CarSolZuk03}, and later
    refined by Lubotzky, Samuels and Vishne \cite{LubSamVi05}
    (see \cite{JorLiv00} 
    for another generalization of  Ramanujan graphs).
    These complexes are quotients of the affine Bruhat-Tits building of $\nPGL{F}{d}$,
    denoted  $\calB_d(F)$,
    where
    $F$ is a non-archimedean local field. The building $\calB_d(F)$ is a contractible
    simplicial complex of dimension $d-1$.
    The spectrum of a quotient
    of $\calB_d(F)$, i.e.\ a simplicial complex whose universal cover is $\calB_d(F)$,
    consists of the common spectrum of a certain
    family of $d-1$ linear operators associated with the quotient, called the \emph{Hecke operators}.
    According to Lubotzky, Samuels and Vishne  \cite{LubSamVi05}, a quotient of $\calB_d(F)$ is \emph{Ramanujan} if its spectrum,
    which is a subset of $\C^{d-1}$, is contained in the spectrum of the universal cover $\calB_d(F)$ together with a certain family
    of $d$ points in $\C^{d-1}$, called the \emph{trivial spectrum}.

    Li \cite[Thm.~4.1]{Li04} proved a theorem in the spirit of the Alon--Boppana Theorem for quotients of $\calB_d(F)$:
    If $\{X_n\}_{n\in\N}$ is a family of such quotients satisfying a mild assumption,
    then the closure of the union of the spectra of $\{X_n\}_n$ (in $\C^{d-1}$) contains
    the spectrum of the universal cover $\calB_d(F)$. Ramanujan complexes can therefore be thought of as having
    the smallest possible spectrum that can be expected of an
    infinite family, or  as  spectral approximations of the universal cover
    $\calB_d(F)$, similarly to Ramanujan graphs. When $d=2$, the complex $\calB_d(F)$ is a regular tree,
    and  Ramanujan complexes
    are just Ramanujan graphs in the previous sense.

    The
    existence of infinite families of Ramanujan complexes was shown by Lubotzky, Samuels and Vishne in \cite{LubSamVi05}
    (see also~\cite{LubSamVish05B}),
    using Lafforgue's proof of the Rama\-nu\-jan--Peters\-son conjecture for $\bGL_d$ in positive characteristic  \cite{Laff02}.\change{32}{\footnote{
    	The proof in \cite{LubSamVi05} assumed the global Jacquet--Langlands correspondence
    	for $\uGL_n$
    	in positive characteristic that was established later in \cite{BadulRoch14}.
    }
    Li \cite{Li04} has independently obtained very similar results using
    a special case of the conjecture established by Laumon, Rapoport and Stuhler \cite[Th.~14.12]{LaRaSt93}
    (the notion of Ramanujan complexes used in \cite{Li04} is slightly
    weaker than the one used in \cite{LubSamVi05}, but the constructions of  \cite{Li04} are in fact Ramanujan
    in the sense of \cite{LubSamVi05}).}
    As in the case of graphs, Ramanujan complexes enjoy various good combinatorial properties:
    They have high chromatic number \cite[\S6]{EvraGolLub14}, good mixing properties \cite[\S4]{EvraGolLub14},
    they
    satisfy Gromov's \emph{geometric expansion property} \cite{FoGrLaNaPa12} (see also \cite{Gro10},
    \cite{KauKazhLub14}), and the constructions of \cite{LubSamVi05} have high girth in addition \cite{LubMesh07}.

\medskip

    The Ramanujan property of quotients of $\calB_d(F)$ is measured with respect to the spectrum
    of the \emph{Hecke operators}. In a certain sense, to be made precise in Example~\ref{EX:zero-dim-spec-of-Bd}
    below,
    these operators
    capture all spectral information in dimension $0$. Therefore, we regard the spectrum of Lubotzky, Samuels and Vishne
    as the \emph{$0$-dimensional spectrum}. However, one can associate other operators with a simplicial complex such
    that their spectrum
    affect combinatorial properties. For example, this is the case for the high-dimensional Laplacians; see
    for instance \cite{ParRosTes13}, \cite{Golubev13},
    \cite{GolPar14}, \cite{Par15}.
    Other examples are adjacency operators
    between  various types of facets.
    These operators are high-dimensional in nature and so
    their spectrum is {a
    priori} not determined by the spectrum of the  Hecke operators.
    The  purpose of this work is to  treat  these and other high-dimensional operators, and
    to
    construct  examples of complexes which are Ramanujan relative to such operators.

\medskip

	In more detail, let $\calX$ be a simplicial complex and let $G$ be a group of automorphisms of $\calX$ satisfying certain mild assumptions.
    For example, one can take $\calX$ to be a  $k$-regular
    tree  $\calT_k$ and $G=\Aut(\calT_k)$, or $\calX=\calB_d(F)$ and $G=\nPGL{F}{d}$.
    Even more generally, $\calX$ can be an affine Bruhat-Tits building (see \cite{Buildings08AbramBrown}), and
    $G$ can be a group of automorphisms acting  on $\calX$ in a sufficiently transitive manner.
    With every quotient of $\calX$ by subgroups of $G$, called $G$-quotient for brevity,
    we will associate various types of spectra.
    %We define and study various types of spectra associated with quotients of $\calX$ by subgroups of $G$,
    %called $G$-quotients for brevity.
    Among them is the
    \emph{(non-oriented) $i$-dimensional spectrum}.
    When $\calX=\calB_d(F)$ and $G=\nPGL{F}{d}$, or when $\calX=\calT_k$
    and $G=\Aut(\calT_k)$, our $0$-dimensional spectrum coincides with
    the spectra of  quotients of regular graphs and quotients of $\calB_d(F)$  discussed earlier.

    We prove a theorem in the sprit of the Alon--Boppana Theorem, generalizing Li's aforementioned theorem,
    stating that if $\{X_n\}_{n\in\N}$ is a family of  $G$-quotients of $\calX$ satisfying a mild assumption, then
    the closure of $\bigcup_{n\in\N}\Spec(X_n)$ contains $\Spec(\calX)$ (Theorem~\ref{TH:Alon-Boppana-I}).
    This in turn leads to a notion of \emph{Ramanujan $G$-quotients} of $\calX$. In analogy with Ramanujan graphs and
    Ramanujan complexes, these complexes have the smallest possible spectrum one can expect of an infinite family of $G$-quotients
    of $\calX$. Alternatively, they can be regarded as spectral approximations of the covering complex $\calX$.
    When $\calX$ is a $k$-regular tree (resp.\ $\calB_d(F)$), the quotients of $\calX$ which
    are \emph{Ramanujan in dimension $0$} are precisely the Ramanujan graphs (resp.\ Ramanujan complexes).

    We proceed with establishing a criteria for a quotient of
    $\calX$ to be Ramanujan. We show that if $\Gamma\leq G$ is a subgroup such that $\Gamma\leftmod\calX$
    is a simplicial complex and $\calX\to \Gamma\leftmod\calX$ is a cover map,
    then the Ramanujan property of $\Gamma\leftmod\calX$ is equivalent to a certain condition on
    the $G$-representation $\LL{\Gamma\leftmod G}$ (Theorem~\ref{TH:Ramanujan-criterion}).
    This generalizes a similar criterion in \cite{LubSamVi05} for the case $\calX=\calB_d(F)$.
    More generally, we show that there is a one-to-one correspondence between  the spectrum $\Gamma\leftmod\calX$
    and a certain class of $G$-subrepresentations of $\LL{\Gamma\leftmod G}$ (Theorem~\ref{TH:spectrum-correspondence}).
    In case $\calX$ is the affine Bruhat-Tits building of an almost simple algebraic group $\bfG$ over $F$,
    $G=\bfG(F)$, and $\Gamma$ is an arithmetic cocompact lattice in $G$, we restate
    our criterion in terms of automorphic representations of $\bfG$
    (Theorem~\ref{TH:automorphic-ramanujan}).

    Finally, we apply  our automorphic criterion together with
    Lafforgue's proof of the Ramanujan--Petersson conjecture
    for $\uGL_d$ in positive characteristic \cite{Laff02} and the
    establishment of the global Jacquet--Langlands correspondence for $\uGL_d$ in positive characteristic
    by Badulescu and Roche \cite{BadulRoch14}, to give new
    examples of Ramanujan complexes.
    Specifically, let $F$ be a non-archimdean local field of positive characteristic, let $D$ be a central division
    $F$-algebra, let $G=\nPGL{D}{d}=\nGL{D}{d}/\units{F}$ and let $\calB_d(D)$ be the affine Bruhat-Tits
    building of $G$. Then $\calB_d(D)$ admits infinitely many $G$-quotients which
    are Ramanujan in all dimensions (Theorem~\ref{TH:ram-quo-exists}). (In fact, these quotients
    are \emph{completely Ramanujan}.)
    For example, the spectrum of the high dimensional Laplacians
    of these complexes is contained in the union of the
    spectrum of the high dimensional Laplacians of the  universal cover $\calB_d(D)$ together with the \emph{trivial spectrum}.
    When $D=F$, our Ramanujan complexes are the Ramanujan
    complexes constructed in \cite{LubSamVi05}. Thus, the Ramanujan complexes
    of Lubotzky, Samuels and Vishne \cite{LubSamVi05}, which are Ramanujan in dimension $0$ in our setting,
    are in fact Ramanujan on all dimensions.
    When $d=2$, our construction gives rise to  Ramanujan graphs, which seem to be new when $D\neq F$.

\medskip

    The machinery that we introduce has other applications:
    When combined with results from \cite{LubSamVi06},
    we obtain examples of non-isomorphic quotients of $\calB_d(F)$ which are isospectral in
    all dimensions, and more generally, \emph{completely isospectral} (Example~\ref{EX:isospectral-complexes}). In addition, using the classification
    of irreducible representations of $\nGL{F}{d}$ (resp.\ $\Aut(\calT_k)$),
    we show that if $\calX=\calB_3(F)$ (resp.\ $\calX=\calT_k$),
    then a quotient of $\calX$ is Ramanujan in dimension $0$ if and only if it is Ramanujan in all dimensions (Propositions~\ref{PR:Raman-graph-equiv-conds}
    and~\ref{PR:PGLiii-special-case}).
    In the case  $\calX=\calB_3(F)$, this agrees  with the related result \cite[Th.~2]{KaLiWa10}.
    In addition, the results of Marcus, Spielman and Srivastava on the existence of $k$-regular Ramanujan graphs
    \cite{MarSpiSri14}, and their generalizations by Hall, Puder and Sawin \cite{HallPudSaw16} can
    be translated into  representation-theoretic results about the automorphism group of a $k$-regular tree, which resemble
    the Ramanujan--Petersson conjecture for $\bGL_2$ (Corollary~\ref{CR:ramanujan-subgroups-of-tree} and
    the comment after Proposition~\ref{PR:Raman-graph-equiv-conds}).
    Finally, we show that in certain cases, the Ramanujan property is unaffected by replacing the group $G\subseteq \Aut(\calX)$ with a commensurable
    subgroup, even though it affects the way the spectrum is defined (Theorem~\ref{TH:finite-index-subgroup}).

\medskip

    We now give some brief details about how we define the spectrum of quotients of $\calX$.
    For a simplicial complex $X$ and $i\geq 0$, let $\Omega_i^+(X)$
    denote the vector space of
    $\C$-valued functions of finite support on
    the  $i$-dimensional cells in $X$, and let $\Omega_i^-(X)$ denote
    the space of \emph{$i$-dimensional forms}  on $X$ with finite support (see~\ref{subsec:orientation}).
    Let $F$ denote $\Omega_i^+$ or $\Omega_i^-$ for some fixed $i$; in general, $F$ can be taken to be a \emph{semi-elementary} functor
    from the category of $G$-quotients of $\calX$ to the category of pre-Hilbert spaces (Definition~\ref{DF:elementary-functor}).
    Every such $F$ gives rise to a certain kind of spectrum
    that can be associated with $G$-quotients of $\calX$, called the \emph{$F$-spectrum} (\ref{subsec:spectrum-of-simp}).
    A $G$-quotient of $\calX$ which is Ramanujan with respect to this spectrum is called \emph{$F$-Ramanujan} (\ref{subsec:Ramanujan-quotients}).
    The non-oriented (resp.\ oriented) $i$-dimensional spectrum is obtained by taking $F=\Omega_i^+$ (resp.\ $F=\Omega_i^-$),
    and
    the $G$-quotients of $\calX$ which are $\Omega_i^+$-Ramanujan are called \emph{Ramanujan in dimension $i$}.

    %We construct the $F$-spectrum as follows:
    The action of $G$ on   $\calX$ induces
    an action on $F\calX$.
    Let $A$ denote the algebra of $G$-equivariant linear operators on $ F\calX$.
    It turns out that elements of $A$ act naturally on $F(\Gamma\leftmod\calX)$ for every
    subgroup $\Gamma\leq G$. The linear operators
    that our  $F$-spectrum takes into consideration are  the elements of
    $A$. For example, when $F=\Omega_i^-$,   the $i$-dimensional Laplacian
    can be regarded as an element of $A$, and when $F=\Omega_i^+$, the algebra $A$ contains many natural adjacency operators
    between $i$-dimensional cells. In addition, for $\calX=\calB_d(F)$ and $F=\Omega_0^+$, the Hecke operators live in $A$,
    and in fact generate it.
    We note that one can replace $A$ with a subalgebra of interest.

    Let $X$ be a  $G$-quotient of $\calX$. Naively, one can define the $F$-spectrum of $\calX$ as
    the {common} spectrum of the operators in $A$ on $FX$.
    This works well if $A$ is commutative (e.g.\ when $\calX=\calB_d(F)$ and $F=\Omega_0^+$), but this definition
    is unsatisfactory in general.
    Instead, we define
    the $F$-spectrum of $X$ as the irreducible $A$-submodules of $FX$.
    For such a definition to work, some complementary theory has to be developed.
    For example,
    the collection of irreducible $A$-modules has to be given a topology
    (because our generalization of Li's aforementioned theorem uses ``closure''),
    and one has to define a notion of a \emph{continuous spectrum} --- irreducible submodules of $FX$
    suffice when $X$ is finite, but this does not work in the infinite case, e.g.\ for  $X=\calX$.
    These and other technicalities are discussed in Chapter~\ref{sec:involutary-algebras}.
    They are resolved by introducing a canonical involution on $A$
    and invoking the spectral theory of \emph{$*$-algebras} (\cite{Palmer01}, for instance).

    We remark that in this general setting, some fundamental facts become  difficult to prove.
    For example,
    it is reasonable to expect that if $B$ is a subalgebra of $A$, then the spectrum of $FX$ with respect to $A$
    determines the spectrum of $FX$ with respect to $B$. This is easy to show
    when $X$ is finite, but the general case (Theorem~\ref{TH:subalgebra-spectrum-I})
    requires significant work.
    Another example of a fundamental fact with an involved proof is Theorem~\ref{TH:direct-sum-spectrum},
    which is used in the proof of our generalization of Li's Theorem.

\bigskip

    The paper is organized as follows: Chapter~\ref{sec:ramanujan-complexes} is preliminary and recalls  Ramanujan complexes
    as defined in \cite{LubSamVi05}.
    Chapter~\ref{sec:involutary-algebras} concerns with developing a spectral theory for \emph{idempotented $*$-algebras}.
    This chapter is long and technical, but the results and the definitions introduced  there are fundamental for the following chapters.
    Chapter~\ref{sec:simplicial-complexes} recalls simplicial complexes and certain facts about $\ell$-groups acting on them.
    In Chapter~\ref{sec:adj-operators}, we introduce our notion of  spectrum, give some examples, and discuss
    issues such as dependencies between different types of spectra.
    In Chapter~\ref{sec:optimal-spectrum}, we prove an generalization of Li's Theorem (reminiscent of the Alon-Boppana Theorem)
    and characterize the \emph{trivial spectrum}, which leads to the definition of Ramanujan quotients of $\calX$.
    Chapter~\ref{sec:spectrum-and-reps} gives a representation-theoretic criterion for a quotient of $\calX$ to be Ramanujan. Consequences of
    this criterion are also discussed.
    Finally, in Chapter~\ref{sec:existence}, we focus on the case where $\calX$ is the affine
    Bruhat-Tits building of an almost simple algebraic group $\bfG$ over a non-archimedean local field.
    We relate the Ramanujan property with properties of certain automorphic representations
    of $\bfG$, and use it to show that the affine building of $\nPGL{D}{d}$ admits infinitely
    many quotients which are Ramanujan in all dimensions.

\subsection*{Acknowledgements}

    We owe a debt of gratitude to Alex Lubotzky for presenting us with the theory of Ramanujan graphs
    and suggesting this research project. We are also in debt to Lior Silberman for many beneficial discussions.
    We further thank Anne-Marie Aubert, Alexandru Ioan Badulescu,
    David Kazhdan, Laurent Lafforgue and Dipendra Prasad for short-yet-crucial   correspondences,
    all concerning Chapter~\ref{sec:existence}. In addition, Amitay Kamber has given us several useful suggestions, for which are
    grateful.
    Finally, we thank the participants of the Ramanujan
    Complexes Seminar that took place at the Hebrew University in the winter of 2013.
%}

\section*{Notation}

	All vector spaces and algebras are over $\C$. The complex conjugate
	of $z\in\C$ is denoted $\cconj{z}$.
    Algebras are always associative but not necessarily
    unital.

    Inner products of pre-Hilbert and Hilbert spaces
    will be denoted by triangular brackets $\Trings{~,~}$ with the convention that the left component is $\C$-linear.
    The unit sphere of a normed space $V$ is denoted by $\sphere{V}$. The completion of a pre-Hilbert
    $V$ is denoted by $\quo{V}$.
    If $T:V\to V'$ is a linear operator between pre-Hilbert spaces, a {dual} of $T$ is an operator $T^*:V'\to V$
    satisfying $\Trings{Tv,v'}=\Trings{v,T^*v'}$ for all $v\in V$, $v'\in V'$. The dual is unique if it exists,
    and it is guaranteed to
    exist when $V$ and $V'$ are Hilbert spaces and $T$ is bounded.
	
\medskip

    For a set $X$, we let $\llf(X)$ denote the set of functions $\vphi:X\to \C$ with finite
    support. We endow $\llf(X)$ with the inner product $\Trings{\vphi,\psi}=\sum_{x\in X}\vphi x\cdot\cconj{\psi x}$.
    This makes $\llf(X)$ into a pre-Hilbert space. Its completion is the Hilbert space of square-summable functions
    on $X$, denoted $\ell^2(X)$.
    The vector space $\llf(X)$ admits a standard basis $\{\e_x\}_{x\in X}$
    defined by
    \[
    \e_x(y)=\left\{\begin{array}{ll}
    1 & x=y \\
    0 & x\neq y
    \end{array}\right.\ .
    \]
    If $Y$ is another set and $f:X\to Y$ is any function, then we define $f_*:\llf(X)\to \llf(Y)$
    by $(f_*\vphi)y=\sum_{x\in f^{-1}\{y\}}\vphi(x)$ for all $\vphi\in\llf(Y)$, $y\in Y$.
    In particular, we have
    \[
    f_*\e_x=\e_{f(x)}\qquad\forall\, x\in X\ .
    \]
    This makes $X\mapsto \llf(X)$ into a \emph{covariant} functor from
    the category of sets to the category of pre-Hilbert spaces
    (non-continuous morphisms are allowed).

\medskip

	If $X$ is an $\ell$-space, i.e.\ a totally disconnected  locally compact Hausdorff
	topological space,
	we let $\CSLC{X}$ denote the vector space of compactly supported locally constant functions
	$\vphi:X\to\C$. If $X$ is equipped with a Borel measure $\mu$, which will always be obvious
	from the context, we
	make $\CSLC{X}$ into a pre-Hilbert space by setting
    \[\Trings{\vphi,\psi}=\int_{x\in X}\vphi x\cdot \cconj{\psi x}\,\mathrm{d}\mu\ .\]
	The completion of $\CSLC{X}$ is clearly $\LL{X}$, the space of square-integrable
	functions on $X$ (considered up to equivalence).

\medskip

    We write $F\fsubseteq Z$ to denote that $F$ is a finite subset of $Z$.
    If $G$ is an $\ell$-group, i.e.\ a totally disconnected locally compact
    topological group, we write $K\co G$
    to denote that $K$ is a compact open subgroup of $G$.
    It is well-known that the compact open subgroups of an $\ell$-group
    form a basis of neighborhoods at the identity.

\section{Ramanujan Complexes}
\label{sec:ramanujan-complexes}

    This  preliminary chapter recalls Ramanujan
    complexes as defined
    by Lubotzky Samuels and Vishne \cite{LubSamVi05}, basing on the work
    of Cartwright, Sol\'e and \.Zuk \cite{CarSolZuk03}.
    %Explicit constructions
    %of such complexes  can be found in \cite{LubSamVi05} and \cite{LubSamVish05B}.
    %The notion of Ramanujan-ness in this context was suggested by Cartwright, Sol\'{e} and \.{Z}uk (\cite{CarSolZuk03}).
    %(A slightly weaker notion was suggested by Li \cite{Li04}.)

\medskip

    %Let $F$ be a local non-archimedean field with (additive) val

%Recall the construction of Ramanujan complexes of \cite{LubSamVi05}.
%Notion of Ramanujan-ness suggested in \cite{CarSolZuk03}.
%Alon-Boppana Theorem proved by Li \cite{Li04}.

        Let $F$ be a non-archimedean local field with additive valuation $\nu$, let $\calO$ be the integer
        ring of $F$, and let $\pi\in \calO$ be a uniformizer, i.e.\ a generator of the maximal ideal
        of $\calO$. We assume that $\nu(\pi)=1$. Fix an integer $d\geq 2$
        and
        let
        \begin{align*}
        G&=\nPGL{F}{d}=\nGL{F}{d}/\units{F}\ ,\\
        K&=\nPGL{\calO}{d}:=\im(\nGL{\calO}{d}\to\nPGL{F}{d})\ .
        \end{align*}
        For $g\in\nGL{F}{d}$, denote by $\quo{g}$ the image of $g$ in $G$.
        The topology of $F$ induces a topology on $G$, making it into an
        $\ell$-group. The subgroup $K$ is compact and open
        in $G$, and
        the function
        \begin{eqnarray*}
            c\,:\,G &\to& \Z/d\Z\\
            \quo{g}&\mapsto& \nu(\det g)+d\Z
        \end{eqnarray*}
        is well-defined and satisfies $c(K)=0$.

\medskip

        The affine Bruhat-Tits building of $G$, denoted $\calB_d(F)$, is a
        $(d-1)$-dimensional simplicial
        complex constructed as follows:
        The vertices of $\calB_d(F)$ are the right cosets $G/K$.
        They admit a $d$-coloring $C_0:G/K\to \Z/d\Z$ given by
        \[
        C_0(gK)=c(g)\qquad\forall g\in G\ .
        \]
        %This coloring does not descend to $X:=\Gamma\leftmod\calB_d(F)$, but we
        To define the edges of $\calB_d(F)$, let
        \[
        g_1=\left[\begin{smallmatrix} \pi & & & & \\ & 1 & & & \\ & & 1 & & \\ & & & \ddots & \\ & & & & 1\end{smallmatrix}\right],\quad
        g_2=\left[\begin{smallmatrix} \pi & & & & \\ & \pi & & & \\ & & 1 & & \\ & & & \ddots & \\ & & & & 1\end{smallmatrix}\right],~ \dots~,\quad
        g_{d-1}=\left[\begin{smallmatrix} \pi & & & & \\ & \pi & & & \\ & & \ddots & & \\ & & & \pi & \\ & & & & 1\end{smallmatrix}\right]
        \in\nGL{F}{d}
        \]
        The double cosets $\{K\quo{g_i}K\}_{i=1}^{d-1}$ are distinct since $c(K\quo{g_i}K)=i+d\Z$.
        Two vertices ${g}K,{g'}K\in G/K$ are adjacent if
        \[
        {g^{-1}g'}\in K\cup K\quo{g_1}K\cup K\quo{g_2}K\cup\dots K\quo{g_{d-1}}K\ .
        \]
        The $i$-dimensional cells of $\calB_d(F)$ are the $(i+1)$-cliques, namely,
        they are
        sets $\{h_0K,\dots,h_{i+1}K\}\subseteq G/K$
        consisting of pairwise adjacent vertices.
        The resulting complex is indeed a {pure} $(d-1)$-dimensional contractible simplicial complex,
        which carries additional structure making it into an \emph{affine building}; see \cite[\S6.9]{Buildings08AbramBrown}
        for further details.
        (See also \cite{Tits79} for the definition of the affine building of a general reductive $p$-adic Lie group,
        and \cite{AbramNebe02} for an explicit description of buildings of classical groups.)
        Note that every $(d-1)$-simplex
        in $\calB_d(F)$ consists of $d$ vertices of different colors.

\medskip

        There is an evident left action of $G$ on $\calB_d(F)$ which respects the simplicial structure.
        Let $\Gamma\leq G$ be a discrete subgroup.
        Under mild assumptions (see Corollary~\ref{CR:distance-condition}
        below), one can form the quotient complex $\Gamma\leftmod \calB_d(F)$: Its vertices
        are the double cosets $\Gamma\leftmod G/K$, and its $i$-dimensional cells are obtained by projecting
        the $i$-dimensional cells of $\calB_d(F)$ pointwise into $\Gamma\leftmod G/K$.
        Since $\calB_d(F)$ is contractible (when viewed as a topological space), it is the universal
        cover of $\Gamma\leftmod \calB_d(F)$.
        Furthermore, when $\Gamma$ is cocompact in $G$,
        the simplicial complex $\Gamma\leftmod\calB_d(F)$ is finite.
        % since $\Gamma\leftmod G/K$
        %is discrete (being a quotient of $G/K$) and compact (being a quotient of $\Gamma\leftmod G$).

        \begin{example}
            The complex $\calB_2(F)$ is a $q+1$ regular tree, where $q$ is the size of the residue
            field of $(F,\nu)$. The quotients $\Gamma\leftmod \calB_2(F)$ are therefore
            $(q+1)$-regular graphs.
        \end{example}

        The coloring $C_0$ of the vertices of $\calB_d(F)$ does not descend to $X:=\Gamma\leftmod\calB_d(F)$ in general.
        However,
        we can define a color function on the \emph{directed} edges of $X$ by
        \[
        C_1(\Gamma gK,\Gamma g'K):=c(g^{-1}g')\in \Z/d\Z
        \]
        It can be checked that $C_1$ is well-defined when $\Gamma\leftmod \calB_d(F)$ is a simplicial complex.
        Since $g^{-1}g'\in \bigcup_{i=1}^{d-1}K\quo{g_i}K$ whenever $(gK,g'K)$ is an edge of
        $\calB_d(F)$, we have
        \[C_1(\Gamma gK,\Gamma g'K)\in\{1+d\Z,\dots,(d-1)+d\Z\}\ .\]
        Write $\vrt{X}=\Gamma\leftmod G/K$ and define $a_1,\dots,a_{d-1}:\LL{\vrt{X}}\to \LL{\vrt{X}}$
        by
        \[
        (a_i\vphi)x=\sum_{\substack{y\in \vrt{X}\\C_1(x,y)=i}}\vphi y\qquad\forall \,\vphi\in \LL{\vrt{X}},\, x\in \vrt{X}\ .
        \]
        The operators $a_1,\dots,a_{d-1}$ are called the \emph{colored adjacency operators} or \emph{Hecke operators}
        of $X$.
        It turns out that $a_1,\dots,a_{d-1}$ commute with each other and $a_i^*=a_{d-i}$ for all $i$.
        We may therefore consider the {common} spectrum
        \[
        \Spec(a_1,\dots,a_{d-1})\subseteq \C^{d-1}\ .
        \]
        This set  is called the \emph{spectrum of $X$}
        and
        denoted
        $\Spec_0(X)$. The reason for the subscript $0$ will become clear later in the text
        (specifically, in Example~\ref{EX:zero-dim-spec-of-Bd}).

        \begin{example}
            In the case of $\calB_2(F)$, the operator $a_1$ is just the adjacency operator
            of the graph $\Gamma\leftmod\calB_2(F)$, and spectrum of $X=\Gamma\leftmod \calB_2(F)$
            is just $\Spec(a_1:\LL{\vrt{X}}\to\LL{\vrt{X}})$. Thus,
            $\Spec_0(X)$ is just the  spectrum of $X$  viewed as a regular graph.
        \end{example}

        Recall from the introduction that
        one is generally
        interested in finding quotients $\Gamma\leftmod \calB_d(F)$ whose spectrum
        is ``small'', as they are likely to have good combinatorial properties. More precisely,
        one is interested in constructing  infinite families of non-isomorphic quotients, all of which
        admit the same (nontrivial) bounds on the spectrum. In the case  $d=2$, this is equivalent to asking
        for an infinite family of $(q+1)$-regular expander  graphs.
        The question of how small can the union of the spectra
        of a   family of non-isomorphic quotients of $\calB_d(F)$ be
        was answered, to a certain extent, by Li \cite[Th.~4.1]{Li04}, who proved
        the following theorem:

        \begin{thm}[Li]\label{TH:Li-a}
            Let $\{X_n\}_{n\in \N}$ be a family of quotients of $\calB_d(F)$.
            Let $r_n$ be the maximal integer $r$ for which $X_n$ contains a copy
            of a ball  of radius $r$ in $\calB_d(F)$. If $\limsup r_n=\infty$,
            then the closure of $\bigcup_{n\in\N}\Spec_0(X_n)$ in $\C^{d-1}$
            contains $\Spec_0(\calB_d(F))$.
        \end{thm}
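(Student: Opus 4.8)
The plan is to show that every point $\lambda$ of $\Spec_0(\calB_d(F))$ lies in the closure of $\bigcup_{n}\Spec_0(X_n)$ by transplanting \emph{approximate joint eigenfunctions} of the Hecke operators from $\calB_d(F)$ to those $X_n$ that contain a sufficiently large ball of $\calB_d(F)$. Fix $\lambda=(\lambda_1,\dots,\lambda_{d-1})\in\Spec_0(\calB_d(F))$ and $\veps>0$. Since $a_1,\dots,a_{d-1}$ commute and $a_i^*=a_{d-i}$, they generate a commutative $C^*$-algebra of bounded operators on $\LL{\vrt{\calB_d(F)}}$, and $\Spec_0(\calB_d(F))=\Spec(a_1,\dots,a_{d-1})$ is exactly the support of the associated joint projection-valued measure $E$ on $\C^{d-1}$. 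Thus $E$ assigns a nonzero projection to every open box around $\lambda$, and choosing a unit vector $\psi_0$ in the range of $E(\{z:\max_i|z_i-\lambda_i|<\delta\})$ yields $\max_i\norm{(a_i-\lambda_i)\psi_0}\le\delta$. Since $\vrt{\calB_d(F)}=G/K$ is discrete, finitely supported functions are dense in $\LL{\vrt{\calB_d(F)}}$ and the $a_i$ are bounded; approximating $\psi_0$ by a finitely supported function and renormalising, we obtain a finitely supported unit vector $\psi\in\llf(\vrt{\calB_d(F)})$ with $\max_i\norm{(a_i-\lambda_i)\psi}<\veps'$, where $\veps'>0$ is any prescribed tolerance (to be fixed below), supported in a ball $\Ball(v_0,r)$ with $r=r(\veps')$ depending only on $\veps'$ and $\lambda$.

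Next I would transplant $\psi$ into a quotient $X_n$ for which $r_n\ge r+2$. By definition this means the universal covering $p_n\colon\calB_d(F)\to X_n$ is injective on some ball of radius $r+2$, which after translating by $G$ we may take centred at $v_0$; being a simplicial covering, $p_n$ is moreover a local isomorphism on stars and preserves the edge-colouring $C_1$ (both $C_1$'s being read off coset representatives), so $p_n$ restricts to a colour-preserving isomorphism $\Ball(v_0,r+2)\xrightarrow{\ \sim\ }p_n(\Ball(v_0,r+2))$. Put $\bar\psi:=(p_n)_*\psi\in\llf(\vrt{X_n})$, a unit vector supported in $p_n(\Ball(v_0,r))$. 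Using injectivity on $\Ball(v_0,r+2)$ together with the local-isomorphism property, one checks vertex by vertex that $(a_i^{X_n}\bar\psi)(\bar v)$ equals $(a_i^{\calB_d(F)}\psi)(v)$ whenever $\bar v=p_n(v)$ with $v\in\Ball(v_0,r+1)$, and vanishes for all other $\bar v$; since $\psi$ is supported in $\Ball(v_0,r)$, summing squares over $\vrt{X_n}$ gives $\norm{(a_i^{X_n}-\lambda_i)\bar\psi}\le\norm{(a_i^{\calB_d(F)}-\lambda_i)\psi}<\veps'$ for every $i$.

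Finally, an approximate joint eigenvector pins down a nearby joint-spectral point: if $\lambda$ had distance $\ge\eta$ in $\C^{d-1}$ from $\Spec_0(X_n)$, which is the joint spectrum of the commuting normal operators $a_1^{X_n},\dots,a_{d-1}^{X_n}$ on $\LL{\vrt{X_n}}$, then their joint spectral measure would annihilate the box $\{z:\max_i|z_i-\lambda_i|<\eta\}$, forcing $\sum_i\norm{(a_i^{X_n}-\lambda_i)\bar\psi}^2\ge\eta^2$; with the bound above this gives $\eta\le\sqrt{d-1}\,\veps'$. Now fix $\veps'$ so that $\sqrt{d-1}\,\veps'<\veps$. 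Then $\dist(\lambda,\Spec_0(X_n))<\veps$ for every $n$ with $r_n\ge r(\veps')+2$, and since $\limsup r_n=\infty$ there are infinitely many such $n$. Hence $\lambda$ lies within $\veps$ of $\bigcup_n\Spec_0(X_n)$; as $\veps>0$ and $\lambda\in\Spec_0(\calB_d(F))$ were arbitrary, $\Spec_0(\calB_d(F))\subseteq\overline{\bigcup_n\Spec_0(X_n)}$.

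The step I expect to be the main obstacle is the error bookkeeping in the transplantation: one must verify that the only discrepancy between $a_i^{X_n}\bar\psi$ and the honest push-forward of $a_i^{\calB_d(F)}\psi$ occurs on the boundary shell $\Ball(v_0,r+1)\setminus\Ball(v_0,r)$, where $\bar\psi$ vanishes, so that it is automatically absorbed into the already-small quantity $\norm{(a_i^{\calB_d(F)}-\lambda_i)\psi}$ rather than adding an uncontrolled term. This is precisely why the embedded ball must be taken a little larger than the support of $\psi$, and why one uses that $p_n$ is a local isomorphism on stars (not merely injective on the ball), so that no ``wrap-around'' contributions from far-away vertices leak back into the shell. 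A more computational alternative, closer to Li's original argument, is to construct $\psi$ explicitly out of the spherical functions on $\calB_d(F)$ and estimate directly; the logical skeleton is the same.
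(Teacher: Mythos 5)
Your proof is correct, and the core move---build a compactly supported approximate joint eigenfunction on $\calB_d(F)$ and push it forward through a quotient that embeds a slightly larger ball---is the same idea the paper uses to prove the much more general Theorem~\ref{TH:Alon-Boppana-I}, from which Li's theorem is recovered via Corollary~\ref{CR:Alon-Boppana-comm} and Example~\ref{EX:zero-dim-spec-of-Bd}. The difference is in packaging: you exploit that $a_1,\dots,a_{d-1}$ generate a \emph{commutative} $C^*$-algebra, so the spectral theorem hands you a joint projection-valued measure, a direct notion of approximate joint eigenvector, and the clean quantitative step at the end. The paper cannot do this: its Theorem~\ref{TH:Alon-Boppana-I} must handle noncommutative algebras of associated operators (needed for the higher-dimensional spectra), where there is no joint spectral measure on $\C^n$; accordingly, it trades approximate eigenvectors for approximation of matrix coefficients (states) and weak containment, and the role of your spectral-measure argument is taken over by the Krein--Milman-based Theorem~\ref{TH:direct-sum-spectrum}. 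So your route is more elementary and quantitative in the commutative special case, but does not scale. One small remark: the worry in your final paragraph about a ``discrepancy on the boundary shell'' is in fact vacuous. The operators $a_i$ commute with $(p_n)_*$ for \emph{every} cover map between $G$-quotients (this is exactly the naturality built into the algebra $\Alg{\catC,F}{}$, cf.\ Theorem~\ref{TH:Adj-algebra-descrition} and the $C_1$-preserving, local-isomorphism properties you already invoke), so $a_i^{X_n}\bar\psi=(p_n)_*(a_i\psi)$ holds with no hypothesis on the ball at all; the only thing the embedded ball is needed for is to make $(p_n)_*$ an isometry on $\supp((a_i-\lambda_i)\psi)\subseteq\Ball(v_0,r+1)$, so injectivity on $\Ball(v_0,r+1)$ already suffices and $r+2$ is just a comfortable margin.
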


        The set $\Spec_0(\calB_d(F))$ was determined in \cite[Th.~2.11]{LubSamVi05}.
        In addition to the bound of Theorem~\ref{TH:Li-a},
        the spectrum of a \emph{finite} quotient of $\calB_d(F)$ always contains at least one
        of $d$ special points in $\C^{d-1}$ called the \emph{trivial eigenvalues}; we refer
        the reader to \cite[\S2.3]{LubSamVi05} or Example~\ref{EX:trivial-spec-complex} below for their description.

\medskip

        The following definition was suggested by Lubotzky, Samuels and Vishne \cite{LubSamVi05},
        following
        Carwright, Sol\'e and \.Zuk  \cite{CarSolZuk03}.

        \begin{dfn}[{\cite[Df.~1.1]{LubSamVi05}}]\label{DF:Ramanujan-Ad-complexes}
            The complex $\Gamma\leftmod\calB_d(F)$ is called \emph{Ramanujan} if
            $\Spec_0(\Gamma\leftmod\calB_d(F))$ is contained in the union of $\Spec_0(\calB_d(F))$
            with the set of trivial eigenvalues.
        \end{dfn}

        By the previous discussion,
        Ramanujan complexes can be regarded as quotients of $\calB_d(F)$
        whose spectrum is as small  as one dares to hope, or alternatively, as (finite) spectral estimations
        of their universal cover $\calB_d(F)$. Constructions of Ramanujan complexes were
        given by Lubotzky, Samuels and Vishne in \cite{LubSamVi05} and \cite{LubSamVish05B}.

        \begin{example}
            When $d=2$, we have $\Spec_0(\calB_2(F))=[-2\sqrt{q},2\sqrt{q}]$,
            and the trivial eigenvalues are $-q-1$ and $q+1$ (where $q=|\calO/\pi\calO|$).
            Therefore, a quotient $X=\Gamma\leftmod \calB_2$ is Ramanujan if
            $\Spec(X)\subseteq [-2\sqrt{q},2\sqrt{q}]\cup\{-q-1,q+1\}$.
            This agrees with the usual definition of Ramanujan $(q+1)$-regular graphs.
        \end{example}

        The present work extends the ideas of Lubotzky, Samuels, Vishne \cite{LubSamVi05} and Li
        \cite{Li04} to more general simplicial complexes, and to  operators different than the
        Hecke operators,
        e.g.\ the high-dimensional Laplacians.
		Loosely speaking, we will show how to define a Ramanujan property with respect to any
		prescribed family of \emph{associated operators}, and show that there are simplicial
		complexes which are Ramanujan with respect to any such family. For example, the Ramanujan complexes
		of \cite{LubSamVi05} will be shown to have this property.
		After we give our general definition of the Ramanujan property
		in \ref{subsec:Ramanujan-quotients}, we
		will address the Ramanujan complexes of Definition~\ref{DF:Ramanujan-Ad-complexes}
		as $G$-quotients of $\calB_d(F)$ which are  \emph{Ramamnujan in dimension $0$}.

\section{Idempotented $*$-Algebras}
\label{sec:involutary-algebras}

    As a preparation for the next chapters, this chapter develops a spectral theory
    for \emph{idempotented $*$-algebras}.
    This is an attempt to adapt the spectral theory of \emph{$C^*$-algebras}  (see \cite{Dixmier}, \cite[Ch.~14]{Wallach92})
    to the \emph{idempotented algebras} often used in the theory of $p$-adic Lie groups  (see \cite{Cartier79}).
    Of course, this is included in the general theory of $*$-algebras (\cite{Palmer01}, for instance), but the idempotented case is more tame in nature.

    The reader will find many similarities  with the theory of $C^*$-algebras.
    However, the absence of a topology causes certain differences that are pointed throughout.
    %We  had difficulties finding the results that we need in the standard literature,
    %and hence we included most of the proofs.

\subsection{A Motivating Example}
\label{subsec:motivating-example}

    Recall that if $V$ is a Hilbert space
    and $T:V\to V$
    is a bounded linear operator, then the spectrum of $T$, denoted $\Spec(T)$, is
    the set of elements $\lambda\in\C$ for which $T-\lambda$ is not invertible.

    Suppose $T$ is \emph{normal}, namely $TT^*=T^*T$. Then $V$ can be viewed as a module over the free commutative algebra $A=\C[X,X^*]$
    where $X,X^*$ act as $T,T^*$ respectively. The algebra $A$ carries an involution $*$ taking $X$ to $X^*$ and
    extending the complex conjugation on $\C$. We clearly have
    \begin{equation}\label{EQ:unitary-rep-of-alg}
    \Trings{au,v}=\Trings{u,a^*v}\qquad\forall\, a\in A,\, u,v\in V\ .
    \end{equation}
    This is an example of a \emph{unitary representation} of $(A,*)$. We are interested
    in extracting the datum of $\Spec(T)$ from the action of $A$ on $V$.

\medskip

    In case $V$ is finite dimensional, this can be done as follows:
    Since $A$ is commutative and affine over $\C$, all irreducible $A$-modules
    are $1$-dimensional and are of the form $V_{\lambda,\mu}:=A/(X-\lambda,X^*-\mu)$ for
    $\lambda,\mu\in\C$  uniquely determined. There is essentially one way to make $V_{\lambda,\mu}$ into a Hilbert space.
    However, it is only when $\lambda=\cconj{\mu}$ that $V_{\lambda,\mu}$ becomes unitary (i.e.\
    \eqref{EQ:unitary-rep-of-alg} is satisfied).
    Thus, the irreducible \emph{unitary} representations of $A$ are $\{V_\lambda:=V_{\lambda,\cconj{\lambda}}\}_{\lambda\in \C}$.

    Observe  that for all $v\in V_\lambda$, we have $Xv=\lambda v$ and $X^*v=\lambda^*v$.
    Furthermore, if $\lambda$ is an eigenvalue of $T$ and $v\in V$ is a corresponding eigenvector,
    then $Tv=\lambda v$ and $T^*v=\cconj{\lambda}v$, because
    %\norm{(T-\lambda)v}=\norm{(T^*-\cconj{\lambda})v}$.\footnote{
    \begin{align*}
    \Trings{(T^*-\cconj{\lambda})v,(T^*-\cconj{\lambda})v}&=
        \Trings{v,(T-\lambda)(T^*-\cconj{\lambda})v}\\
        &=\Trings{v,(T^*-\cconj{\lambda})(T-\lambda)v}=\Trings{(T-\lambda)v,(T-\lambda v)}\ .
    \end{align*}
    Therefore, $\lambda$ is an eigenvalue of $T$ if and only if  $V_{\lambda}$
    is isomorphic to an $A$-submodule of $V$. Since $V$ is finite dimensional, $\Spec(T)$ consists
    entirely of eigenvalues.
    We thus get a one-to-one correspondence
    \[
    \Spec(T)\quad\longleftrightarrow\quad \left\{\begin{array}{c}\text{\footnotesize irreducible submodules}\\ \text{
    \footnotesize  of $V$, up to isomorphism}\end{array}\right\}
    \]
    given by $\lambda\longleftrightarrow V_\lambda$.

\medskip

    When $V$ is infinite dimensional, $\Spec(T)$ need not consist of eigenvalues, and a different approach has
    to be taken; it will be described in \ref{subsec:weak-containment} below.

\medskip

    The shifting from $T$ to the algebra $A$ allowed us to ``forget'' the special operator $T$
    among the elements of algebra $A$.
    This idea will be elaborated throughout this chapter,
    and ultimately manifest in the way we shall define spectrum of simplicial complexes. In the following chapters, we will encounter algebras
    of operators containing many elements of interest, but without a canonical set of generators.
    Moreover, the algebras will be non-commutative.
    Thus, rather than studying the spectrum of individual elements of the algebra, we shall consider irreducible
    submodules, which can be roughly thought of as the common spectrum of all operators in the algebra.

\medskip

    Before proceeding onward, we recall the following well-known characterization of the spectrum of
    normal linear operators
    $T:V\to V$.

    \begin{prp}\label{PR:normal-operator-spectrum}
        Assume $T:V\to V$ is a normal operator on a Hilbert space. Then $\lambda\in\Spec(T)$ if and only if $T-\lambda$ is not bounded
        from below, i.e.\ for all $\veps>0$, there is a unit vector $v\in \sphere{V}$ such that $\norm{(T-\lambda)v}<\veps$.
    \end{prp}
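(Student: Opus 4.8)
The plan is to prove both implications in contrapositive form, after reducing to the case $\lambda=0$. Since $\lambda\cdot\id$ commutes with $T$ and $T^*$, the operator $S:=T-\lambda$ satisfies $S^*=T^*-\cconj{\lambda}$ and $SS^*=TT^*-\cconj{\lambda}T-\lambda T^*+\abs{\lambda}^2=T^*T-\cconj{\lambda}T-\lambda T^*+\abs{\lambda}^2=S^*S$, so $S$ is again normal. It therefore suffices to show that $0\in\Spec(S)$ if and only if $S$ is not bounded from below.

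First I would dispatch the direction that uses no normality: if $0\notin\Spec(S)$, then $S$ admits a bounded inverse, and for every $v\in\sphere{V}$,
\[
1=\norm{v}=\norm{S^{-1}Sv}\leq\norm{S^{-1}}\cdot\norm{Sv}\ ,
\]
so $\norm{Sv}\geq\norm{S^{-1}}^{-1}>0$, i.e.\ $S$ is bounded from below.

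For the converse, suppose $S$ is bounded from below, say $\norm{Sv}\geq c\norm{v}$ for all $v\in V$ and some $c>0$. Then $S$ is injective, and $\im S$ is closed: if $Sv_n\to w$ then $(v_n)_n$ is Cauchy because $\norm{v_n-v_m}\leq c^{-1}\norm{Sv_n-Sv_m}$, hence $v_n\to v$ for some $v\in V$ and $w=Sv$. Now I invoke normality: for every $v\in V$,
\[
\norm{Sv}^2=\Trings{S^*Sv,v}=\Trings{SS^*v,v}=\norm{S^*v}^2\ ,
\]
so $\ker S=\ker S^*$. Since $S$ is injective this forces $\ker S^*=0$, whence $\overline{\im S}=(\ker S^*)^{\perp}=V$; combined with the closedness of $\im S$ we obtain $\im S=V$. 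Thus $S$ is a bounded bijection whose inverse satisfies $\norm{S^{-1}w}\leq c^{-1}\norm{w}$ and is therefore bounded, so $0\notin\Spec(S)$.

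The only place where the normality hypothesis is used is the identity $\norm{Sv}=\norm{S^*v}$, which is what lets us upgrade ``bounded below'' (giving injectivity and closed range) to invertibility by supplying density of the range. This is exactly the step that fails for a general operator --- the unilateral shift is an isometry, hence bounded from below, yet has $0$ in its spectrum --- so the one thing to be careful about in writing this up is to deduce surjectivity of $S$ from density plus closedness of its range, rather than attempting to produce preimages directly.
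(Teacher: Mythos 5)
Your proof is correct and follows essentially the same route as the paper's: reduce to $\lambda=0$, use normality via $\norm{Sv}=\norm{S^*v}$ to get $\ker S^*=0$ and hence density of the range, and combine this with the closed-range consequence of being bounded from below to conclude invertibility. The paper phrases the hard direction as a proof by contradiction and establishes density before closedness, but the ingredients and the role of normality are identical; your explicit check that $S=T-\lambda$ is normal and your handling of the easy direction are fine details the paper leaves to the reader.
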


    \begin{proof}
        \gap{}We only verify the nontrivial direction. Suppose $\lambda\in\Spec(T)$.
        Replacing $T$ with $T-\lambda$, we may assume $\lambda=0$, so we need to show that $T$ is unbounded from
        below. This is clear if $\ker T\neq0$, so we may assume $T$ is injective. Since $\norm{T^*v}=\norm{Tv}$
        for all $v\in V$, $\ker T^*=0$ as well.
        Thus, if $v\in (TV)^\perp$, then $0=\Trings{TV,v}=\Trings{V,T^*v}$,
        and hence $v=0$.
        Therefore, $TV$ is dense in $V$.

        Assume by contradiction that $T$ is  bounded from below, namely, there
        is $c>0$ such that $c\norm{x}\leq \norm{Tx}$. We claim that $TV=V$.
        Indeed, let $y\in TV$. Since $TV$ is dense in $V$, there is a sequence
        $\{x_n\}_{n\in\N}\subseteq V$ such that $Tx_n\longto y$. Since $\norm{x_n-x_m}\leq c^{-1}\norm{Tx_n-Tx_m}$,
        the sequence $\{x_n\}$ is a Cauchy sequence, and its limit $x$ satisfies $Tx=y$, as required.
        Now, since $T$ is injective and surjective, it has an inverse $T^{-1}:V\to V$,
        and since $T$ is bounded from below, $T^{-1}$ is bounded. But this means $0=\lambda\notin\Spec(V)$,
        a contraction.
    \end{proof}

    Let $\{T_1,\dots,T_n\}$ be a  family of  operators on $V$ such that
    $T_1,\dots,T_n,T_1^*,\dots,T_n^*$
    commute.
    In analogy with Proposition~\ref{PR:normal-operator-spectrum}, the \emph{common spectrum} of
    $\{T_1,\dots,T_n\}$, denoted
    \[
    \Spec(T_1,\dots,T_n)\ ,
    \]
    is defined to be the set of tuples $(\lambda_1,\dots,\lambda_n)\in\C^n$ such
    that for all $\veps>0$, there is $v\in\sphere{V}$ with $\norm{T_iv-\lambda_iv}<\veps$
    for all $i$.
    In fact, since the definition makes sense for any set of operators, we will
    also use it  for  arbitrary families of  operators.

\subsection{Idempotented $*$-Algebras}
\label{subsec:idempotented-algebras}

    The term \emph{algebra} always refers to an associative $\C$-algebra which
    is not necessarily unital. In this context, the term \emph{module} also includes non-unital modules.
    For a left $A$-module $V$ and $a\in A$, denote by $a|_V$ the linear operator $[v\mapsto av]\in \End_{\C}(V)$.

    \medskip

    An involution on an algebra $A$ is a map $*:A\to A$ such that $a^{**}=a$, $(a+b)^*=a^*+b^*$, $(ab)^*=b^*a^*$
    and $(\alpha a)^*=\cconj{\alpha} a^*$ for all $a,b\in A$ and $\alpha\in\C$.
    A \emph{$*$-algebra} is an algebra $A$ equipped with an involution, which is always denoted $*$.

    %An algebra $A$ is called \emph{idempotented} if for every finite set
    %$F\subseteq A$, there exists an idempotent $e\in A$ such that $eae=a$ for all $a\in F$.
    %For example,  all \emph{unital} algebras are idempotented (take $a=1$).

    A $*$-algebra $A$ is \emph{idempotented} if for every finite set $F\subseteq A$ there exists
    an idempotent $e\in A$ such that $e=e^*$ and $eae=a$ for all $a\in F$. The set of idempotents
    $e\in A$ with $e^*=e$ is denoted by $\ids{A}$.

    \begin{example}
        Any unital $*$-algebra is idempotented (take $e=1$).
    \end{example}

    Let $A$ be an idempotented $*$-algebra.
    A left $A$-module  $V$ is said to be \emph{smooth} if $AV=V$, or equivalently, if
    for all $v\in V$, there exists  $e\in \ids{A}$ with $ev=v$.
    The smooth left  $A$-modules together with $A$-homomorphisms form an abelian category denoted by $\lMod{A}$.
    The irreducible objects of $\lMod{A}$ are the nonzero $A$-modules $V$ for which $Av=V$ for all $0\neq v\in V$.

    \begin{example}
        Let $A$ be a \emph{unital} involutary algebra. Then
        the smooth $A$-modules are just the unital $A$-modules (i.e.\ modules $V$
        for which $1_A$ acts as $\id_V$).
    \end{example}

    \begin{example}\label{EX:smooth-part-def}
        Every $A$-module $V$ contains a unique maximal smooth submodule, namely, $\sm{V}:=AV$.
    \end{example}

    Another example of an  idempotented $*$-algebra is the \emph{Hecke algebra} of an $\ell$-group,
    which is usually not unital. This case is discussed  in   \ref{subsec:Hecke-algebra} below.

\subsection{Unitary Representations}
\label{subsec:unitary-reps}

    Let $A$ be an idempotented $*$-algebra.
    A \emph{unitary representation} of $A$ is a Hilbert space $V$ equipped with a left
    $A$-module structure such that
    \begin{enumerate}
        \item[(U1)] $\Trings{au,v}=\Trings{u,a^*v}$ for all $a\in A$ and $u,v\in V$,
        \item[(U2)]  $\sm{V}=AV$ is dense in $V$, and
        \item[(U3)] for all $a\in A$, the operator $a|_{V}:V\to V$ is  bounded.
    \end{enumerate}
    We say that $V$ is \emph{irreducible}\footnote{
        Some texts use the term \emph{topologically irreducible}.
    } if it is does not have a \emph{closed}  $A$-submodule.
    Let $\Rep[u]{A}$ denote the category whose objects are unitary representations of $A$
    and whose morphisms are continuous $A$-module homomorphisms.
    %The class
    %of unitary representations of $A$ is denoted by $\Rep[u]{A}$.
    We further let $\Irr[u]{A}$ denote the class of irreducible unitary representations of $A$.

    Let $V_1,V_2\in\Rep[u]{A}$. We denote by $\cHom_A(V_1,V_2)$ the continuous
    $A$-homo\-mor\-phi\-sms from $V_1$ and $V_2$. Likewise, $\cEnd_A(V_1)$ denotes the continuous
    endomorphisms of $V_1$.
    Homomorphism which preserve the inner product are called \emph{unitary}.
    We write $V_1\leq V_2$ if there is a unitary injective $A$-homomorphism
    from $V_1$ to $V_2$. The image of $V_1$ in $V_2$ is easily seen to be closed,
    and hence $V_2=V_1\oplus V_1^\perp$.

    \begin{remark}
        If $V\in\Rep[u]{A}$ and $V_1$ is a closed $A$-submodule,
        then $V_1\in\Rep[u]{A}$. Conditions (U1) and (U3) are clear.
        To see (U2), let $P$ be the orthogonal projection onto $V_1$.
        Then $P$ is an $A$-homomorphism. Since $AV$ is dense in $V$, the space
        $P(AV)=AP(V)=AV_1$ is dense in $P(V)=V_1$, as required.
    \end{remark}

    %Finite direct sums of unitary representations are defined in the obvious way.

    Let $\{V_i\}_{i\in I}\subseteq\Rep[u]{A}$. The direct sum $\bigoplus_i V_i$
    admits an obvious inner-product making it into a pre-Hilbert space. The completion of  $\bigoplus_i V_i$
    is denoted $\hat{\bigoplus}_iV_i$.
    If for all $a\in A$, we
    have $\sup_{i}\norm{a|_{V_i}}<\infty$, then the diagonal action of $A$ on $\bigoplus_iV_i$
    extends to $\hat{\bigoplus}_iV_i$ and we may regard $\hat{\bigoplus}_iV_i$ as a unitary
    representation of $A$. We denote this by writing $\hat{\bigoplus}_iV_i\in\Rep[u]{A}$.
    When $I$ is finite, we always have $\bigoplus_iV_i=\hat{\bigoplus}_iV_i\in\Rep[u]{A}$.

\medskip

    We now recall several well-known facts about unitary representations.

    \begin{thm}[Schur's Lemma]\label{TH:Schur-lemma}
        Let $V\in \Irr[u]{A}$. Then $\cEnd_A(V)=\C\id_V$.
    \end{thm}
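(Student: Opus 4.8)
The plan is to adapt the classical proof of Schur's Lemma for $C^*$-algebras to the idempotented setting, the main new ingredient being that we must work with \emph{continuous} endomorphisms rather than arbitrary ones, and that $A$ need not be unital. So let $T\in\cEnd_A(V)$ be given. First I would reduce to the self-adjoint case: the adjoint $T^*$ exists because $V$ is a Hilbert space and $T$ is bounded by (U3)-type reasoning, and from condition (U1) one checks that $T^*$ is again an $A$-homomorphism (for $a\in A$, $\langle T^*(av),w\rangle=\langle av,Tw\rangle=\langle v,a^*Tw\rangle=\langle v,T(a^*w)\rangle=\langle T^*v,a^*w\rangle=\langle aT^*v,w\rangle$). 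Hence $T+T^*$ and $i(T-T^*)$ are self-adjoint continuous $A$-endomorphisms, and it suffices to prove the claim for self-adjoint $T$.

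So assume $T=T^*\in\cEnd_A(V)$. The key step is to invoke the spectral theorem for the bounded self-adjoint operator $T$ on the Hilbert space $V$: if $T$ were not a scalar multiple of $\id_V$, its spectrum $\Spec(T)\subseteq\R$ would contain at least two points, so we could split $\Spec(T)$ into two disjoint nonempty closed pieces and form the corresponding spectral projection $P=\charfunc{B}(T)$ for a suitable Borel set $B$. This $P$ is a nonzero, non-identity orthogonal projection lying in the bicommutant of $T$; in particular $P$ commutes with every bounded operator that commutes with $T$. The crucial point is then that $P$ commutes with the action of $A$: for each $a\in A$, the operator $a|_V$ commutes with $T$ (since $T$ is an $A$-homomorphism and also $a|_V$ commutes with $T^*=T$), hence $a|_V$ commutes with $P$. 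Therefore $W:=PV$ is a closed $A$-submodule of $V$ with $0\neq W\neq V$, contradicting the irreducibility of $V$.

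The one point that needs a little care — and which I expect to be the main obstacle — is verifying that $W=PV$ is a \emph{genuine} $A$-submodule in the category $\Rep[u]{A}$, i.e.\ that it is closed (immediate, since $P$ is an orthogonal projection) and nonzero; the remark preceding the theorem already records that a closed $A$-submodule of a unitary representation is again a unitary representation, so once $W$ is exhibited the contradiction with $V\in\Irr[u]{A}$ is automatic. One should also double-check that $A$-invariance of $P$ does not secretly require $A$ to be unital: it does not, because the argument only uses that each individual $a|_V$ is bounded and commutes with $T$, which follows directly from $T\in\cEnd_A(V)$ and $T=T^*$ together with (U1). Having produced the forbidden submodule, we conclude $T\in\C\id_V$ for self-adjoint $T$, and then for general $T\in\cEnd_A(V)$ by the decomposition $T=\tfrac12(T+T^*)+\tfrac12 i^{-1}\cdot i(T-T^*)$ into self-adjoint parts, each a scalar. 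Hence $\cEnd_A(V)=\C\id_V$.
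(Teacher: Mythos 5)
Your proof is correct and follows essentially the same route as the paper, which simply cites the standard $C^*$-algebra argument from Dixmier (and Palmer): reduce to self-adjoint $T$, apply the spectral theorem, and use a nontrivial spectral projection of $T$ to produce a forbidden closed $A$-submodule. The only minor imprecision is the parenthetical "(U3)-type reasoning" for why $T^*$ exists --- this is simply because $T\in\cEnd_A(V)$ is continuous by definition and $V$ is a Hilbert space, independent of (U3) --- but this does not affect the argument.
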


    \begin{proof}
        This is similar to the proof of \cite[Pr.~2.3.1]{Dixmier}. Alternatively, see
        \cite[Th.~9.6.1]{Palmer01}.
    \end{proof}

    \begin{cor}\label{CR:commutative-algs}
        If $A$ is commutative, then all irreducible unitary representations of $A$
        are $1$-dimensional.
    \end{cor}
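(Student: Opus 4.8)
The plan is to reduce everything to Schur's Lemma (Theorem~\ref{TH:Schur-lemma}). First I would fix $V\in\Irr[u]{A}$ and an arbitrary $a\in A$, and observe that $a|_V$ is a \emph{continuous} endomorphism of the $A$-module $V$: it is bounded by axiom (U3), and for every $b\in A$ the commutativity of $A$ gives $a|_V(bv)=(ab)v=(ba)v=b\,(a|_V v)$ for all $v\in V$, so $a|_V$ is $A$-linear. Hence $a|_V\in\cEnd_A(V)$, and Theorem~\ref{TH:Schur-lemma} provides a scalar $\chi(a)\in\C$ with $a|_V=\chi(a)\,\id_V$.

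Next I would exploit this to bound $\dim V$. For any $v\in V$ we then have $Av\subseteq\C v$, so $\C v$ is an $A$-submodule of $V$; being one-dimensional it is a closed subspace of the Hilbert space $V$, hence a closed $A$-submodule. Since $V$ is irreducible its only closed $A$-submodules are $0$ and $V$, and $V\neq 0$; choosing $v\neq 0$ we obtain $\C v=V$, so $\dim_{\C}V=1$, as claimed.

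The argument is essentially immediate once Schur's Lemma is in place, so I do not anticipate a genuine obstacle; the one point that warrants a moment's attention is the interplay with the topological notion of irreducibility used in~\ref{subsec:unitary-reps}. What makes it go through smoothly is that a one-dimensional (indeed any finite-dimensional) subspace of a Hilbert space is automatically closed, so the purely algebraic submodule $\C v$ is an admissible test object for irreducibility; no recourse to the smoothness axiom (U2) or to completeness is needed beyond what is already packaged in Schur's Lemma. In effect this is the infinite-dimensional counterpart of the elementary observation made in~\ref{subsec:motivating-example}, obtained here by a cleaner route.
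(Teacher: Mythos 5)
Your proof is correct and follows essentially the same route as the paper's: commutativity puts $a|_V$ into $\cEnd_A(V)$, Schur's Lemma forces $a|_V$ to be scalar, and the resulting one-dimensional $A$-submodule $\C v$ is closed (a point the paper leaves implicit in ``hence $\dim V=1$'') so irreducibility identifies it with $V$.
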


    \begin{proof}
        Let $V\in\Irr[u]{A}$. Since $A$ is commutative, the map $a\mapsto a|_{V}:A\to\End_{\C}(V)$
        takes values in $\cEnd_A(V)$, which equals $\C \id_V$ by Schur's Lemma. This means that $Av$ is $1$-dimensional
        for all $0\neq v\in V$, hence $\dim V=1$.
    \end{proof}

    \begin{prp}\label{PR:iso-implies-unitary-iso}
        Let $V,V'\in\Rep[u]{A}$. If there exists a continuous $A$-module isomorphism
        $f:V\to V'$, then there exists a \emph{unitary} isomorphism
        $g:V\to V'$. (This holds even without assuming $V$ and $V'$ satisfy condition (U2).)
    \end{prp}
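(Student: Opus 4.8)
The plan is to run a polar-decomposition argument, the only nontrivial point being that the ``positive part'' that arises commutes with the $A$-action even though $A$ need not be unital.

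First I would record some generalities about $f$. Since $f$ is a continuous linear bijection between Hilbert spaces, the open mapping theorem gives that $f^{-1}$ is continuous too, so $f$ and $f^{-1}$ are both bounded. Next, the dual $f^*:V'\to V$ exists and is bounded (because $V,V'$ are Hilbert spaces and $f$ is bounded), and I claim it is again an $A$-module homomorphism. Indeed, for $a\in A$, $w\in V$, $v'\in V'$, using (U1) for $V'$, the $A$-linearity of $f$, the defining property of $f^*$, and (U1) for $V$ in turn, one has
\[
\Trings{fw,\,av'}=\Trings{a^*fw,\,v'}=\Trings{f(a^*w),\,v'}=\Trings{a^*w,\,f^*v'}=\Trings{w,\,af^*v'},
\]
while also $\Trings{fw,\,av'}=\Trings{w,\,f^*(av')}$ by definition of $f^*$; as $w$ is arbitrary, $f^*(av')=af^*v'$. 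Note this used only (U1), not (U2) or (U3).

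Now set $T:=f^*f:V\to V$. It is a bounded $A$-module endomorphism which is positive and self-adjoint, and it is bounded below, since $\Trings{Tv,v}=\norm{fv}^2\ge\norm{f^{-1}}^{-2}\norm{v}^2$. Hence $\Spec(T)$ is a compact subset of $(0,\infty)$ bounded away from $0$, and the continuous functional calculus yields a bounded positive self-adjoint invertible operator $P:=T^{1/2}$ with $P^2=T$. Crucially $P$ still commutes with the $A$-action: approximating $t\mapsto t^{1/2}$ uniformly on $\Spec(T)$ by polynomials, $P$ is a norm-limit of operators $p_n(T)$, each a polynomial in $T$ (possibly with constant term), and each $p_n(T)$ commutes with every $a|_V$ because $T$ does and $\id_V$ does; passing to the limit, $a|_V\circ P=P\circ a|_V$ for all $a\in A$, and likewise for $P^{-1}$. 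Since $P$ is positive and self-adjoint, so is $P^{-1}$.

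Finally set $g:=fP^{-1}:V\to V'$. It is bounded with bounded inverse $Pf^{-1}$; it is $A$-linear since $g(av)=f(P^{-1}(av))=f(aP^{-1}v)=a\,f(P^{-1}v)=a\,g(v)$; and it is unitary, because $g^*g=(P^{-1})^*f^*fP^{-1}=P^{-1}TP^{-1}=P^{-1}P^2P^{-1}=\id_V$ and $gg^*=fP^{-2}f^*=f(f^*f)^{-1}f^*=ff^{-1}(f^*)^{-1}f^*=\id_{V'}$. Thus $g$ is the desired unitary isomorphism, and the argument nowhere used condition (U2).

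The main obstacle is precisely the verification that $P=T^{1/2}$ commutes with the $A$-action: because $A$ may be non-unital one cannot argue that $P$ lies in a subalgebra of $A$, so the commutation must be established at the level of operators on $V$, where $\id_V$ is available, via the functional calculus as above; everything else is routine Hilbert-space bookkeeping.
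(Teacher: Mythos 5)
Your argument is the polar decomposition $f = g\,|f|$ with $|f| = (f^*f)^{1/2}$, spelled out in full, which is exactly the route the paper sketches (citing Dixmier §2.2.2); the verification that $P = T^{1/2}$ commutes with the $A$-action via polynomial approximation, and the final unitarity computation, are correct. No gaps.
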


    \begin{proof}
        This is similar to the argument given in \cite[\S2.2.2]{Dixmier}.
        (Briefly, let $f=U|f|$ be the \emph{polar decomposition} of $f$,
        where $|f|$ is the positive square root of $f^*f$. One can show that $g=U$ is the required
        isomorphism.)
    \end{proof}

\subsection{States}
\label{subsec:states}

    In analogy with the theory of
    $C^*$-algebras and locally compact groups, we now define \emph{states}.
    %We note that, \emph{with some work}, the results of this section can be extracted from   \cite[\S9.4]{Palmer01}, which
    %deals
    %the more complicated case of general $*$-algebras. However, since the idempotented case
    %is significantly simpler, we chose to include full proofs here and not overload the reader with citing
    %multiple points of \cite[\S9.4]{Palmer01}.
    Throughout, $A$ is an idempotented  $*$-algebra.
    We make
    \[\ids{A}:=\{e\in A\where e^2=e,\, e^*=e\}\]
    into
    a directed set by setting $e\leq e'$ when $e'ee'=e$.
    In addition, let $A^\vee=\Hom_{\C}(A,\C)$.
    We shall make repeated use of the fact that $\norm{e|_V}\leq 1$ for all $V\in\Rep[u]{A}$
    and $e\in\ids{A}$, which holds since $e|_V$ is an orthogonal projection.

    \begin{lem}\label{LM:approximation-lemma}
        For all $V\in\Rep[u]{A}$ and $v\in V$,
        the net $\{ev\}_{e\in\ids{A}}$  converges to $v$.
    \end{lem}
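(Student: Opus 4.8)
The plan is to reduce to the dense subspace $\sm{V}=AV$, where I will show the net is \emph{eventually} equal to $v$, and then transfer this to all of $V$ by an $\veps$-argument that exploits the uniform bound $\norm{e|_V}\leq 1$.

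First I would record that for $e\in\ids{A}$ the operator $e|_V$ is an orthogonal projection: it is idempotent since $e^2=e$, and self-adjoint by (U1) together with $e^*=e$; in particular $\norm{e|_V}\leq 1$. The crux is then the following observation: if $v\in V$ and $e_0\in\ids{A}$ satisfy $e_0v=v$, then $ev=v$ for every $e\geq e_0$. To see this, put $P=e|_V$ and $Q=e_0|_V$. The relation $e_0\leq e$ unwinds to $ee_0e=e_0$, i.e.\ $PQP=Q$ as operators on $V$. Hence whenever $Qw=w$ we get $w=PQPw\in\im P$, so $\im Q\subseteq\im P$ and $P$ restricts to the identity on $\im Q$. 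Since $e_0v=v$ places $v$ in $\im Q$, we conclude $ev=Pv=v$. I expect this small piece of projection bookkeeping to be the only genuine content of the argument; everything else is formal.

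It then remains to assemble the pieces. Given $v\in V$ and $\veps>0$, condition (U2) lets me choose $v'\in\sm{V}$ with $\norm{v-v'}<\veps/2$, and since $\sm{V}$ is smooth there is $e_0\in\ids{A}$ with $e_0v'=v'$, so by the previous step $ev'=v'$ for all $e\geq e_0$. For such $e$,
\[
\norm{ev-v}\;\leq\;\norm{e(v-v')}+\norm{ev'-v'}+\norm{v'-v}\;\leq\;\norm{v-v'}+0+\norm{v-v'}\;<\;\veps ,
\]
using $\norm{e|_V}\leq 1$. Finally one should note that $\ids{A}$ really is directed under $\leq$, so that $\{ev\}_{e\in\ids{A}}$ is a bona fide net: given $e_1,e_2\in\ids{A}$, apply idempotency of $A$ to the finite set $\{e_1,e_2\}$ to obtain $e\in\ids{A}$ with $ee_ie=e_i$ for $i=1,2$, whence $e\geq e_1$ and $e\geq e_2$. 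This shows the net converges to $v$.
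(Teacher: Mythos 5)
Your proof is correct and follows essentially the same route as the paper's: both use (U2) to approximate $v$ by an element of $AV$ on which the net is eventually constant, and then transfer via $\norm{e|_V}\leq 1$. The only cosmetic differences are that you derive $e'e=e$ from $e\leq e'$ in projection language ($PQP=Q\Rightarrow\im Q\subseteq\im P$) rather than algebraically, and you explicitly verify directedness of $\ids{A}$, which the paper records as a standing remark just before the lemma.
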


    \begin{proof}
        Let $\veps>0$. We need to find $e\in \ids{A}$
        such that $\norm{v-e'v}\leq \veps$ for all $e'\geq e$.
        Since $AV$ is dense in $V$, there  is a unit vector
        $u\in \sphere{AV}$ with $\norm{v-u}<\frac{\veps}{2}$. Choose $e\in \ids{A}$ with $eu=u$.
        Then  for all $e'\geq e$, we have $e'u=e'eu=eu=u$. Thus,
        $\norm{u-e'v}=\norm{e'(u-v)}\leq \norm{e'|_V}\norm{v-u}\leq \norm{v-u}=\frac{\veps}{2}$.
        Therefore, for all $e'\geq e$, we have $\norm{v-e'v}\leq\norm{v-u}+\norm{u-e'v}<\veps$, as required.
    \end{proof}

    With some work, the following theorem and its corollaries can be derived from results
    in \cite[\S9.4]{Palmer01}, which treat the more complicated
    situation of general $*$-algebras. However, we found it easier and more comprehensive to include proofs here
    than
    explaining how to derive everything from \cite[\S9.4]{Palmer01}.
    (See also \cite[\S2.4]{Dixmier}
    for the special case of $C^*$-algebras.)

    \begin{thm}\label{TH:states}
        Let $V\in\Rep[u]{A}$ and  $v\in \sphere{V}$. Define $\vphi=\vphi_{V,v}\in A^\vee$ by
        \[
        \vphi_{V,v}(a)=\Trings{av,v}\ .
        \]
        Then $\vphi$ satisfies:
        \begin{enumerate}
            \item[(S1)] $\vphi(a^*a)\in \R_{\geq 0}$ for all $a\in A$.
            \item[(S2)] For all $b\in A$, there is $r\in \R$
            such that ${\vphi(a^*b^*ba)}\leq r{\vphi(a^*a)}$ for all $a\in A$.
            \item[(S3)] $\sup\{\vphi(e)\where{e\in\ids{A}}\}=1$.
        \end{enumerate}
        Conversely, for any $\psi\in A^\vee$ satisfying conditions (S1)--(S3),
        there exist $V,v$ as above with $\psi=\vphi_{V,v}$ and $V=\quo{Av}$.
        The pair $(V,v)$ is unique up to unitary isomorphism preserving the  vector $v$.
    \end{thm}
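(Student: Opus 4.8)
The forward implication is a direct computation using the axioms of a unitary representation. For (S1), $\vphi(a^*a)=\Trings{a^*av,v}=\Trings{av,av}=\norm{av}^2\geq 0$ by (U1). The same manipulation gives $\vphi(a^*b^*ba)=\Trings{b^*bav,av}=\norm{bav}^2\leq\norm{b|_V}^2\norm{av}^2=\norm{b|_V}^2\vphi(a^*a)$, and $r:=\norm{b|_V}^2<\infty$ by (U3), proving (S2). For (S3), since $e=e^*=e^2$ we get $\vphi(e)=\Trings{ev,e^*v}=\norm{ev}^2$; this is bounded above by $\norm{v}^2=1$ because $\norm{e|_V}\leq 1$, and it converges to $1$ along $\ids{A}$ by Lemma~\ref{LM:approximation-lemma}, so the supremum is $1$.

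For the converse I will carry out a GNS-type construction adapted to the idempotented setting. Given $\psi$ satisfying (S1)--(S3), equip $A$ with the sesquilinear form $\Trings{a,b}_\psi:=\psi(b^*a)$ (linear in the first slot). By (S1) it is positive semi-definite, hence automatically Hermitian (a standard fact over $\C$), so Cauchy--Schwarz applies and the radical $N:=\{a\in A : \psi(a^*a)=0\}=\{a : \psi(b^*a)=0\ \forall b\}$ is a linear subspace; by (S2) it is moreover a left ideal, since for $a\in N$ and $c\in A$ we get $0\leq\psi(a^*c^*ca)\leq r\,\psi(a^*a)=0$. Thus $A/N$ carries an induced inner product; let $V$ be its Hilbert-space completion. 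Left multiplication descends to an $A$-action on $A/N$, and (S2) gives $\norm{c\,\overline{a}}^2=\psi(a^*c^*ca)\leq r_c\norm{\overline a}^2$, so each operator is bounded and the action extends to $V$. Condition (U1) holds on $A/N$ because $\Trings{c\,\overline a,\overline b}_\psi=\psi(b^*ca)=\Trings{\overline a,c^*\overline b}_\psi$, and passes to $V$ by continuity. Condition (U2) is automatic: for $a\in A$ choose $e\in\ids{A}$ with $eae=a$; then $ea=a$, so $\overline a=e\,\overline a\in A\cdot(A/N)$, whence $A\cdot(A/N)=A/N$ is dense in $V$.

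It remains to produce the cyclic vector $v$, which is the crux of the argument; the net $\{\overline e\}_{e\in\ids{A}}$ will play the role of an approximate identity. For $e\leq e'$ one checks from $e'ee'=e$ that $ee'=e'e=e$, hence $\norm{\overline{e'}-\overline e}^2=\psi(e'-e'e-ee'+e)=\psi(e')-\psi(e)$. In particular $e\leq e'$ forces $\psi(e)\leq\psi(e')$, so $\{\psi(e)\}_{e\in\ids{A}}$ is a monotone net bounded above by $1$ via (S3), hence converges to $1$; consequently $\{\overline e\}$ is a Cauchy net and converges to some $v\in V$ with $\norm{v}^2=\lim_e\psi(e)=1$, so $v\in\sphere{V}$. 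For $a\in A$, choosing $e_1$ with $e_1ae_1=a$ yields $ae=a$ and $e'a=a$ for all $e,e'\geq e_1$, so $a\,\overline e=\overline{ae}=\overline a$ is eventually constant along the net and $av=\overline a$; hence $Av=A/N$ and $V=\quo{Av}$. Finally $\vphi_{V,v}(a)=\Trings{av,v}=\Trings{\overline a,v}=\lim_{e'\geq e_1}\psi(e'a)=\psi(a)$, so $\psi=\vphi_{V,v}$.

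For uniqueness, suppose $(V',v')$ also satisfies $\psi=\vphi_{V',v'}$ and $V'=\quo{Av'}$. The assignment $av\mapsto av'$ preserves inner products, since $\Trings{av,bv}=\Trings{b^*av,v}=\psi(b^*a)=\Trings{av',bv'}$; it is therefore well-defined and isometric on the dense subspace $Av$, extends to a unitary $U:V\to V'$, is $A$-linear by continuity, and satisfies $U(v)=\lim_e U(ev)=\lim_e ev'=v'$ by Lemma~\ref{LM:approximation-lemma}. Any unitary $A$-map sending $v$ to $v'$ must agree with $U$ on $Av$, so $U$ is unique. The main obstacle is the construction of $v$: one must realize that $\{\overline e\}_{e\in\ids{A}}$ is Cauchy, which rests on the identities $ee'=e'e=e$ for $e\leq e'$ together with (S3) forcing $\psi(e)\to 1$.
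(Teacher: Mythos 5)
Your proof is correct and takes essentially the same approach as the paper's: a GNS-type construction on $A/N$ (the paper calls it $A/L$) followed by extracting the cyclic vector $v$ as the limit of the Cauchy net $\{\overline{e}\}_{e\in\ids{A}}$, using the algebraic identities $ee'=e'e=e$ for $e\leq e'$ and (S3) to establish convergence. Your version makes a few implicit steps explicit (the automatic Hermitian property from positive semi-definiteness, and the derivation $ea=ae=a$ from $eae=a$), but the argument is the same as in the paper.
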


    \begin{proof}
        %This resembles the arguments in \cite[\S2.4]{Dixmier}, with some modifications.
        Unfolding the definition of $\vphi$, we see that
        (S1) merely means $\Trings{av,av}\in\R_{\geq 0}$ for all $a\in A$,
        (S2) means that  $\norm{bav}\leq r^{1/2}\norm{av}$ for all $a\in A$, and (S3)
        means that $\sup_e\norm{ev}^2=1$. The conditions now follow from the fact that $\Trings{~,~}$
        is an inner product, $b|_V$ is continuous (take $r=\norm{b|_V}^2$) and Lemma~\ref{LM:approximation-lemma},
        respectively.

        Assume $\psi\in A^\vee$ satisfies conditions (S1)--(S3).
        Let $L=\{a\in A\suchthat \psi(a^*a)=0\}$. Condition (S2) implies that $L$ is a left
        ideal. Define $\Trings{~,~}:A/L\times A/L\to \C$ by $\Trings{x+L,y+L}=\psi(y^*x)$.
        Condition (S1) and the definition of $L$ imply that $\Trings{a+L,a+L}>0$ for all $a\in A - L$.
        Thus, $\Trings{~,~}$ is an inner product on $A/L$ which clearly
        satisfies $\Trings{ax,y}=\Trings{x,a^*y}$. Denote the induced norm
        on $A/L$ by $\norm{\,\cdot\,}$. Condition
        (S2) implies that every $b\in A$ admits an $r\in \R$ such that $\norm{ba}^2\leq r\norm{a}^2$,
        hence $b|_{A/L}$ is continuous. Let $V$ denote the completion of $A/L$ with respect to its norm.
        The left action of $A$ on $A/L$ extends to $V$, making $V$ into a unitary representation of $A$
        (note that $AV\supseteq A/L$, which is dense in $V$).

        We now construct $v\in \sphere{V}$ with $\psi=\vphi_{V,v}$.  For $e\in\ids{A}$,
        let $x_e=e+L$.
        Then $e\leq e'$ implies $\Trings{x_e,x_{e'}-x_e}=f(e'e-e)=0$, hence $\norm{x_{e'}}^2=\norm{x_e}^2+\norm{x_{e'}-x_e}^2$.
        We claim that $\{x_e\}_{e\in\ids{A}}$ is a Cauchy net in $V$.
        Indeed, by (S3), for all $\veps>0$, there is $e_0\in \ids{A}$ such that $\norm{x_{e_0}}^2=\psi(e_0)>1-\veps$,
        and for all $e\geq e_0$, we have $\norm{x_{e}-x_{e_0}}^2=\norm{x_{e}}^2-\norm{x_{e_0}}^2< 1-(1-\veps)=\veps$,
        as required. Similarly, one shows that $\lim\{\norm{x_e}^2\}_{e\in\ids{A}}=1$.
        Let $v=\lim \{x_e\}_{e\in\ids{A}}$. Then $\norm{v}=1$,
        and for all $e\in \ids{A}$, we have $ev=\lim\{ex_{e'}\}_{e'\geq e}=\lim\{x_e\}_{e'\geq e}=x_e$.
        Let $a\in A$. Then there exists $e\in \ids{A}$
        such that $eae=a$. Now, $\Trings{av,v}=\Trings{eaev,v}=\Trings{aev,ev}=\Trings{ax_e,x_e}=\psi(e^*(ae))=\psi(a)$.
        This means that $\vphi_{V,v}=\psi$, as required.

        To finish, observe that if $\psi=\vphi_{V',v'}$, then
        the morphism taking $a+L\in A/L$ to $av'\in V'$ is a well-defined
        $A$-homomorphism preserving the inner product and taking $x_e$ to $ev'$.
        Thus, it extends to a unitary isomorphism $V\to \quo{v'A}$ taking $v$ to $v'$
        (by Lemma~\ref{LM:approximation-lemma}).
    \end{proof}

    \begin{cor}\label{CR:coef-sharing}
        Let $V,V'\in\Rep[u]{A}$, and let $v\in \sphere{V}$, $v'\in \sphere{V'}$.
        If $\vphi_{V,v}=\vphi_{V',v'}$,
        then $\quo{Av}\cong \quo{Av'}$ as unitary representations.
    \end{cor}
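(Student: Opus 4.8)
The statement is an immediate consequence of Theorem~\ref{TH:states}, and the plan is simply to realise both $\quo{Av}$ and $\quo{Av'}$ as instances of the cyclic representation whose uniqueness that theorem asserts. Set $\psi:=\vphi_{V,v}$. Since $\psi$ arises as $\vphi_{V,v}$ for a unitary representation $V$ and a unit vector $v$, the forward direction of Theorem~\ref{TH:states} shows that $\psi$ satisfies conditions (S1)--(S3); by hypothesis $\psi=\vphi_{V',v'}$ as well, so the same functional is attached to the pair $(V',v')$.

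Next I would check that $(\quo{Av},v)$ is precisely a pair of the kind occurring in the ``converse'' part of Theorem~\ref{TH:states} for $\psi$. The set $Av$ is an $A$-submodule of $V$, and since each operator $a|_V$ is bounded by (U3), its closure $\quo{Av}$ inside $V$ is a closed $A$-submodule; recall that a closed submodule of a unitary representation is again a unitary representation (the orthogonal projection onto it is an $A$-homomorphism, which gives (U2), while (U1) and (U3) are inherited). By Lemma~\ref{LM:approximation-lemma} we have $v=\lim_{e\in\ids{A}}ev$ with $ev\in Av$, so $v\in\quo{Av}$, and $\norm{v}=1$, whence $v\in\sphere{\quo{Av}}$. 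Because the $A$-action and the inner product on $\quo{Av}$ are the restrictions of those on $V$, we get $\vphi_{\quo{Av},v}=\vphi_{V,v}=\psi$, and by construction $\quo{Av}$ is the closure of $Av$, i.e.\ it is cyclic, generated by $v$. Thus $(\quo{Av},v)$ satisfies the two defining properties $\psi=\vphi_{\quo{Av},v}$ and $\quo{Av}=\overline{Av}$. Running the identical argument with $V'$ and $v'$ shows $(\quo{Av'},v')$ satisfies the same two properties with respect to the same $\psi$. Hence the uniqueness clause of Theorem~\ref{TH:states} produces a unitary isomorphism $\quo{Av}\to\quo{Av'}$ --- in fact one carrying $v$ to $v'$ --- which gives $\quo{Av}\cong\quo{Av'}$ as unitary representations.

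There is essentially no real obstacle here beyond bookkeeping; the only point worth a line of care is that the concrete Hilbert space $\quo{Av}\subseteq V$ coincides with the abstract completion $\quo{A/L}$ constructed in the proof of Theorem~\ref{TH:states}, so that the uniqueness clause genuinely applies. This is automatic, since completions of pre-Hilbert spaces are unique and the uniqueness in Theorem~\ref{TH:states} is phrased exactly in terms of that completion; alternatively one may invoke Proposition~\ref{PR:iso-implies-unitary-iso} to upgrade any continuous $A$-isomorphism between $\quo{Av}$ and the abstract model to a unitary one.
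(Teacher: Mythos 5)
Your proof is correct and follows exactly the route the paper intends: the corollary is an immediate application of the uniqueness clause in Theorem~\ref{TH:states}, and you have simply spelled out the routine verification that $(\quo{Av},v)$ and $(\quo{Av'},v')$ both qualify as the pair whose uniqueness that theorem asserts.
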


    %\begin{proof}
    %    This follows from Theorem if $\norm{v}=\norm{v'}=1$
    %\end{proof}

    \begin{cor}\label{CR:sub-state}
        Let $V,V'\in\Rep[u]{A}$ and let $v\in\sphere{V}$, $v'\in\sphere{V'}$
        be vectors such that $V=\quo{Av}$ and $V'=\quo{Av'}$.
        Assume that there is $0< t$ such that $t\vphi_{V,v}(a^*a)\leq \vphi_{V',v'}(a^*a)$
        for all $a\in A$. Then there is a  continuous $A$-module homomorphism
        $f:V'\to V$ taking $v'$ to $v$.
    \end{cor}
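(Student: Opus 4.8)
The plan is to build $f$ by hand on the dense submodule $Av'\subseteq V'$ via the formula $av'\mapsto av$, and then extend by continuity. Recall that, by condition (U1) in the definition of a unitary representation, $\vphi_{V,v}(a^*a)=\Trings{a^*av,v}=\norm{av}^2$ and likewise $\vphi_{V',v'}(a^*a)=\norm{av'}^2$, so the hypothesis reads $t\norm{av}^2\le\norm{av'}^2$ for all $a\in A$.

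\textbf{Step 1: well-definedness.} I would first check that $av'\mapsto av$ is a well-defined map $f\colon Av'\to V$. Suppose $a,b\in A$ satisfy $av'=bv'$, and put $c=a-b$. Then $\norm{cv'}^2=\vphi_{V',v'}(c^*c)=0$, and the inequality $t\vphi_{V,v}(c^*c)\le\vphi_{V',v'}(c^*c)=0$ together with $t>0$ and $\vphi_{V,v}(c^*c)=\norm{cv}^2\ge 0$ (this is exactly property (S1), i.e.\ positivity of the inner product on $V$) forces $\norm{cv}^2=0$, hence $av=bv$. The map $f$ is $A$-linear by construction.

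\textbf{Step 2: boundedness and extension.} For $a\in A$,
\[
\norm{f(av')}^2=\norm{av}^2=\vphi_{V,v}(a^*a)\le t^{-1}\vphi_{V',v'}(a^*a)=t^{-1}\norm{av'}^2 ,
\]
so $f$ is bounded with norm $\le t^{-1/2}$ on $Av'$. Since $V'=\quo{Av'}$, the operator $f$ extends uniquely to a bounded linear map $f\colon V'\to V$. A routine approximation argument shows the extension stays $A$-linear: given $a\in A$ and $w\in V'$, pick $w_n\in Av'$ with $w_n\to w$; then $aw_n\in Av'$ and $aw_n\to aw$ by continuity of $a|_{V'}$, so $f(aw)=\lim f(aw_n)=\lim af(w_n)=af(w)$, using continuity of $a|_V$.

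\textbf{Step 3: $f(v')=v$.} By Lemma~\ref{LM:approximation-lemma}, the net $\{ev'\}_{e\in\ids{A}}$ converges to $v'$ in $V'$ and $\{ev\}_{e\in\ids{A}}$ converges to $v$ in $V$. Since $ev'\in Av'$ and $f(ev')=ev$, continuity of $f$ gives $f(v')=\lim_e f(ev')=\lim_e ev=v$, which completes the construction.

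I expect no genuine obstacle here: the whole argument is essentially the GNS-type bookkeeping of Theorem~\ref{TH:states}. The one place demanding care is Step~1, the well-definedness of $f$ on $Av'$ --- this is precisely where the positivity condition (S1) on $\vphi_{V,v}$ and the strict positivity $t>0$ enter; drop either and the map need not be well defined.
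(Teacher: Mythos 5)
Your proof is correct and is essentially the same argument as the paper's. The paper routes through the GNS data from Theorem~\ref{TH:states}: it sets $L=\{a:\vphi_{V,v}(a^*a)=0\}$, $L'=\{a:\vphi_{V',v'}(a^*a)=0\}$, observes $L\supseteq L'$, defines the continuous surjection $\Psi\colon A/L'\to A/L$ and conjugates by the unitary isomorphisms $\Phi\colon\quo{A/L}\to V$, $\Phi'\colon\quo{A/L'}\to V'$. Your version collapses this by working directly on the dense submodule $Av'$ via $av'\mapsto av$; well-definedness is exactly the inclusion $L\supseteq L'$, the bound is exactly the continuity of $\Psi$, and Lemma~\ref{LM:approximation-lemma} replaces the paper's claim about $\lim_e(e+L')\mapsto\lim_e(e+L)$. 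Same mechanism, slightly more streamlined presentation.
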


    \begin{proof}
        Let $L=\{a\in A\suchthat \vphi_{V,v}(a^*a)=0\}$ and $L'=\{a\in A\suchthat\vphi_{V',v'}(a^*a)=0\}$.
        Condition (S1) and the assumption  $t\vphi_{V,v}(a^*a)\leq \vphi_{V',v'}(a^*a)$ imply that $L\supseteq L'$.
        By the proof of Theorem~\ref{TH:states}, the map $(x+L,y+L)\mapsto \vphi_{V,v}(y^*x)$
        is an inner product on $A/L$ and we have a unitary  isomorphism $\Phi:\quo{A/L}\to V$ taking $\lim\{e+L\}_{e\in\ids{A}}$
        to $v$. Similarly, we have a unitary isomorphism $\Phi':\quo{A/L'}\to V'$. Consider
        the $A$-homomorphism $\Psi:A/L'\to A/L$ given by $\Psi(a+L')=a+L$. Since   $t\vphi_{V,v}(a^*a)\leq \vphi_{V',v'}(a^*a)$,
        we have $\norm{a+L}^2\leq t^{-1}\norm{a+L'}^2$ for all $a\in A$. This means $\Psi$ is continuous
        and hence it extends uniquely to an $A$-homomorphism $\quo{A/L'}\to \quo{A/L}$,
        which clearly takes $\lim\{e+L'\}_{e\in\ids{A}}$ to $\lim\{e+L\}_{e\in\ids{A}}$.
        Now let $f=\Phi\Psi \Phi'^{-1}:V'\to V$. Then $f$ is continuous and takes $v$ to $v'$, as required.
    \end{proof}

    A linear functional $\psi\in A^\vee$ satisfying   conditions (S1)--(S3)
    of Theorem~\ref{TH:states} is called a \emph{state} of $A$.
    %\footnote{
    %	In \cite{Palmer01},  multiplicities of states are called \emph{representable positive functionals} on $A$.
    %}
    The function $\vphi_{V,v}$ is called the \emph{state associated with $V$ and $v$}.
    Denote by $E(A)$ the set of states of $A$. It is easy to see that $E(A)$ is convex.
    (Indeed, if $0\leq t\leq 1$, then $t\vphi_{V,v}+(1-t)\vphi_{V',v'}=\vphi_{V\oplus V',t^{1/2}v\oplus (1-t)^{1/2}v'}$.)

    \begin{remark}
        (i) Concerning condition (S1), any $\vphi\in A^\vee$ is completely
        determined by its values on the set $\{a^*a\where a\in A\}$. Indeed, for
        all $a\in A$, there is $e\in\ids{A}$ with $eae=a$ and then
        $a+a^*=(e+a)^*(e+a)-a^*a-e^*e$. Since for all $a\in A$,
        we have $a=\frac{1}{2}(a+a^*)-\frac{1}{2}i(ia+(ia)^*)$, it follows that  $\{a^*a\where a\in A\}$
        spans $A$ as a $\C$-vector space, hence our claim.

        (ii) Give $A^\vee$ the topology of pointwise convergence. Then $E(A)$ is not necessarily
        closed; the problem lies in condition (S2). It can be shown that this is indeed
        the case when  $A=\C[X,X^*]$  as in \ref{subsec:motivating-example}. In contrast, for $C^*$-algebras,
        the  set $E(A)$ is closed \cite[\S3.4]{Dixmier}.
        %States can also be defined in the context of $C^*$-algebras
        %and locally compact groups (\cite{Dixmier}). However, in contrast these cases,
        %the set of states $E(A)$ may be non-closed non-compact
        %in the topology of point-wise convergence on $A^\vee$ (this follows
        %from Examples~\ref{EX:spectrum-I} and~\ref{EX:unitary-dual-topology}.
    \end{remark}

    A state $\vphi$ is called \emph{pure} if it is an \emph{extremal point}
    of $E(A)$. That is, for all $\psi,\psi'\in E(A)$ and $0<t<1$ satisfying $\vphi=t\psi+(1-t)\psi'$,
    we have $\psi=\psi'=\vphi$.

    \begin{prp}\label{PR:pure-states}
        Let $V\in\Rep[u]{A}$ and let $v\in\sphere{V}$ be such that $V=\quo{vA}$.
        Then $\vphi_{V,v}$ is pure if and only if $V$ is irreducible.
    \end{prp}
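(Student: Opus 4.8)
The statement is the idempotented-$*$-algebra version of the classical correspondence between pure states and irreducible GNS representations, so the plan is to adapt that argument using the tools developed in this section. Write $\vphi=\vphi_{V,v}$; I would prove the two implications separately. For ``$V$ irreducible $\Rightarrow\vphi$ pure'', I would take an arbitrary decomposition $\vphi=t\psi+(1-t)\psi'$ with $\psi,\psi'\in E(A)$ and $0<t<1$, and show $\psi=\psi'=\vphi$; by symmetry it is enough to treat $\psi$. First I would use Theorem~\ref{TH:states} to write $\psi=\vphi_{W,w}$ for some $W\in\Rep[u]{A}$ with $W=\quo{Aw}$, $w\in\sphere{W}$. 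Since $\psi'(a^*a)\ge 0$ by (S1), we have $t\,\vphi_{W,w}(a^*a)\le\vphi(a^*a)$ for all $a\in A$, so Corollary~\ref{CR:sub-state} produces a continuous $A$-homomorphism $f\colon V\to W$ with $f(v)=w$. The key step is then to upgrade $f$ to a unitary isomorphism: $f$ is bounded, hence $f^*\colon W\to V$ exists and, by a short computation with (U1), is again $A$-linear, so $f^*f\in\cEnd_A(V)$; as $V$ is irreducible, Schur's Lemma (Theorem~\ref{TH:Schur-lemma}) forces $f^*f=c\,\id_V$, and evaluating at $v$ gives $c=\norm{fv}^2=\norm{w}^2=1$. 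Thus $f$ is an isometry, its image is complete and therefore closed in $W$, and it contains the dense subspace $Aw=f(Av)$, so $f$ is a unitary isomorphism; then $\psi(a)=\Trings{aw,w}=\Trings{f(av),f(v)}=\Trings{av,v}=\vphi(a)$, and likewise $\psi'=\vphi$, so $\vphi$ is pure.

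For the converse I would argue by contraposition: assuming $V$ admits a closed $A$-submodule $W$ with $0\ne W\ne V$, I would exhibit a nontrivial convex decomposition of $\vphi$. Let $P$ be the orthogonal projection onto $W$; since $W$ and $W^\perp$ are $A$-submodules, $P$ is a self-adjoint idempotent $A$-homomorphism. I would check that $Pv\ne 0$ and $(1-P)v\ne 0$ --- otherwise $v$, hence $\quo{Av}=V$, would be orthogonal to $W$ or to $W^\perp$ --- then set $t=\norm{Pv}^2\in(0,1)$ and let $u,u'$ be the unit normalizations of $Pv$ and $(1-P)v$. Closed submodules of unitary representations are again unitary representations, so $\psi:=\vphi_{W,u}$ and $\psi':=\vphi_{W^\perp,u'}$ lie in $E(A)$, and expanding $\Trings{av,v}$ along $V=W\oplus W^\perp$ --- the cross terms vanish since $W\perp W^\perp$ and both are $A$-invariant --- gives $\vphi=t\psi+(1-t)\psi'$. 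It remains to rule out $\psi=\vphi$: that equality would say $\norm{Pcv}^2=t\norm{cv}^2$ for all $c\in A$ (put $a=c^*c$ and use $Pcv=cPv$), whence $\norm{Px}^2=t\norm{x}^2$ for all $x\in V$ by density of $Av$ and continuity of $P$; taking $x=Pv$ forces $t=t^2$, contradicting $0<t<1$. Hence $\psi\ne\vphi$ and $\vphi$ is not pure.

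The bookkeeping with (U1)--(U3) and the convex-combination identities is routine. I expect the main obstacle to be the key step of the first direction: promoting the bare homomorphism $f$ supplied by Corollary~\ref{CR:sub-state} to a unitary isomorphism. This is exactly where irreducibility is used, through Schur's Lemma applied to $f^*f$, and it plays the role that the Radon--Nikodym theorem for states plays in the $C^*$-algebra proof. A secondary subtlety is excluding the degenerate possibility $\psi=\vphi$ in the second direction, which the density argument forcing $\norm{Px}^2=t\norm{x}^2$ takes care of.
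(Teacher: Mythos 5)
Your proof is correct and follows essentially the same route as the paper's. For ``irreducible $\Rightarrow$ pure'' you use exactly the same tools in the same order: Theorem~\ref{TH:states} to realize $\psi$ as $\vphi_{W,w}$, Corollary~\ref{CR:sub-state} to produce $f\colon V\to W$ with $f(v)=w$, and Schur's Lemma (Theorem~\ref{TH:Schur-lemma}) applied to $f^*f$ to force $f^*f=\id_V$ and hence $\psi=\vphi$; the extra observation that $f$ is onto is fine but not actually needed, since $\langle f(av),f(v)\rangle=\langle av,v\rangle$ already follows from $f^*f=\id_V$. For the converse you use the paper's decomposition $\vphi = t\,\vphi_{W,u}+(1-t)\,\vphi_{W^\perp,u'}$ with $u=t^{-1/2}Pv$, $u'=(1-t)^{-1/2}(1-P)v$. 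The one place you diverge is in showing the decomposition is nontrivial: the paper takes $a\in A$ with $av$ close to $u'$ and compares $|\vphi_{V,u}(a)|$ (small) with $|\vphi_{V,v}(a)|$ (bounded below by roughly $1-t$), while you instead suppose $\psi=\vphi$, read off $\norm{Pcv}^2=t\norm{cv}^2$ for all $c\in A$, extend to all of $V$ by density of $Av$ and continuity of $P$, and derive the contradiction $t=t^2$. Both arguments hinge on the same density of $Av$ in $V$; yours is somewhat more streamlined, but this is a minor local variation rather than a different strategy.
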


    \begin{proof}
    	This follows from \cite[Th.~9.6.4]{Palmer01}. We give here a full proof for the sake of completeness.
    	
        Assume $V$ is reducible. Then there are nonzero closed $A$-submodules
        $U$ and $U'$ such that $V=U\perp U'$. Let $P$ and $P'$ be the orthogonal projections onto $U$
        and $U'$ respectively. If $Pv=0$, then $U=P(\quo{Av})=\quo{P(Av)}=\quo{A(Pv)}=0$, so
        $Pv\neq 0$, and likewise $P'v\neq 0$. Let $t=\norm{Pv}^2$ and $u=t^{-1/2}Pv$, and define $t'$ and $u'$
        similarly. Then $t,t'\in\R_{>0}$ and $t+t'=\norm{v}^2=1$. Now, for all $a\in A$, we have
        $\Trings{av,v}=\Trings{Pav,Pv}+\Trings{P'av,P'v}=\Trings{aPv,Pv}+\Trings{aP'v,P'v}=t\Trings{au,u}+t'\Trings{au',u'}$,
        hence $\vphi_{V,v}=t\vphi_{V,u}+(1-t)\vphi_{V,u'}$.
        We claim that $\vphi_{V,v}\neq \vphi_{V,u}$, and therefore $\vphi_{V,v}$ is not pure.
        Indeed, since $\quo{Av}=V$, for all $\veps>0$, there is $a\in A$ such that $\norm{av-u'}<\veps$.
        Now, $\norm{au}=\norm{P(av)-Pu'}=\norm{P(av-u')}\leq \norm{av-u'}<\veps$ and hence,
        $\abs{\Trings{au,u}}\leq \norm{au}\cdot\norm{u}<\veps$. On the other hand,
        \begin{align*}
        \abs{\Trings{av,v}}&\geq (1-t)\abs{\Trings{au',u'}}-t\abs{\Trings{au,u}}\geq(1-t)\abs{\Trings{av,u'}}-t\veps\\
        &\geq (1-t)(\abs{\Trings{u',u'}}-\abs{\Trings{av-u',u'}})-t\veps\\
        &\geq(1-t)-(1-t)\norm{av-u'}\cdot\norm{u'}-t\veps\\
        &\geq(1-t)-(1-t)\veps-t\veps=(1-t)-\veps\ .
        \end{align*}
        Taking $\veps<\frac{1-t}{2}$, we get $\Trings{av,v}\neq\Trings{au,u}$.

        Assume $V$ is irreducible. Let $\psi,\psi'\in E(A)$ and $0<t<1$ be such that $\vphi=t\psi+(1-t)\psi'$,
        and
        write $\psi=\vphi_{U,u}$ with $\quo{Au}=U$. Condition
        (S1) implies that $t\psi(a^*a)\leq \vphi(a^*a)$ for all $a\in A$.
        Thus, by Corollary~\ref{CR:sub-state}, there is a continuous homomorphism
        $f:V\to U$ taking $v$ to $u$.
        By Schur's Lemma, $f^*f:V\to V$ equals $\lambda\id_V$ for some $\lambda\in\C$, and it is easy
        to see that $\lambda\in\R_{>0}$ (notice that $f^*f\neq 0$ since $\Trings{f^*fv,v}=\norm{u}^2=1$).
        Now, for all $a\in A$, $\psi(a)=\Trings{au,u}=\Trings{afv,fv}=\Trings{f^*fav,v}=\lambda\Trings{av,v}=\lambda\vphi_{V,v}(a)$.
        Condition (S3)  implies $\lambda=1$, so $\psi=\vphi_{V,v}$. Likewise, $\psi'=\vphi_{V,v}$.
    \end{proof}

\subsection{Weak Containment}
\label{subsec:weak-containment}

    Let $A$ be an idempotented   $*$-algebra,
    and let $A^\vee=\Hom_{\C}(A,\C)$. We give $A^\vee$ the topology
    of point-wise convergence (i.e.\ the topology induced from the product topology on $\C^A$).
    Recall from Theorem~\ref{TH:states} that for $V\in \Rep[u]{A}$ and $v\in V$, we define $\vphi_{V,v}\in A^\vee$
    by $\vphi_{V,v}(a)=\Trings{av,v}$.
    We  write $F\fsubseteq A$ to denote that $F$ is a finite subset of $A$. For every
    such $F$, it is convenient to consider the seminorm  $\norm{\,\cdot\,}_F:A^\vee\to \R_{\geq 0}$ given by
    \[
    \norm{\vphi}_F=\max_{a\in F}\abs{\vphi(a)}\ .
    \]
    Notice that a net $\{\vphi_\alpha\}_{\alpha\in I}$ in $A^\vee$ converges
    to $\vphi$ if and only if $\lim_\alpha\norm{\vphi_\alpha-\vphi}_F=0$ for all $F\fsubseteq A$.

    \begin{lem}\label{LM:equiv-conds-of-weak-containment}
        Let $V\in\Irr[u]{A}$ and  $V'\in\Rep[u]{A}$. The following conditions are equivalent:
        \begin{enumerate}
            \item[(a)] For all $v\in\sphere{V}$, $\veps>0$ and $F\fsubseteq A$,
            there exists $v'\in\sphere{V'}$ such that $\norm{\vphi_{V,v}-\vphi_{V',v'}}_F<\veps$.
            \item[(b)] There exists $v\in\sphere{V}$ such that for all $\veps>0$ and $F\fsubseteq A$,
            there is $v'\in{V'}$ such that $\norm{\vphi_{V,v}-\vphi_{V',v'}}_F<\veps$.
        \end{enumerate}
        In part (a), we can always take $v'\in \sphere{AV'}$. Furthermore, if
        $ev=v$ for some $e\in\ids{A}$, then $v'$ can be chosen to be in $\sphere{eV'}$.
    \end{lem}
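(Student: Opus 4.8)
The implication (a)$\Rightarrow$(b) is immediate: $V$ is irreducible, hence nonzero, so one may pick any $v\in\sphere V$, and (a) then produces $v'\in\sphere{V'}\subseteq V'$ as required by (b). So all the content lies in (b)$\Rightarrow$(a), and I would extract the two refinements from the same argument. Fix once and for all a unit vector $v_0\in\sphere V$ witnessing (b).

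The first step is to show that $Av_0$ is dense in $V$. Its closure $\quo{Av_0}$ is a closed $A$-submodule, since each $b|_V$ is continuous, and it is nonzero, for otherwise $Av_0=0$ and then $v_0=\lim_{e\in\ids A}ev_0=0$ by Lemma~\ref{LM:approximation-lemma}, a contradiction; irreducibility of $V$ then forces $\quo{Av_0}=V$. Thus, given $v\in\sphere V$, $\veps>0$ and $F\fsubseteq A$, I can choose $a\in A$ with $\norm{av_0-v}$ as small as I like. This is the device that reduces an arbitrary unit vector of $V$ to the single witness $v_0$.

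The second step pushes the approximation of $\vphi_{V,v_0}$ through $a$. The identity $\vphi_{W,aw}(b)=\vphi_{W,w}(a^{*}ba)$, valid for any $W\in\Rep[u]{A}$, $w\in W$, $b\in A$ by (U1), tells me to apply hypothesis (b) to the finite set $F':=\{a^{*}ba\where b\in F\}\cup\{a^{*}a\}$ with a small tolerance $\eta$, obtaining $v_0'\in V'$ with $\norm{\vphi_{V,v_0}-\vphi_{V',v_0'}}_{F'}<\eta$. Setting $w':=av_0'\in AV'$, the same identity yields $\norm{\vphi_{V',w'}-\vphi_{V,av_0}}_F<\eta$, while a routine estimate of the shape $\norm{\vphi_{V,av_0}-\vphi_{V,v}}_F\le(\norm{av_0}+1)\,\norm{av_0-v}\,\max_{b\in F}\norm{b|_V}$ makes $\vphi_{V',w'}$ $F$-close to $\vphi_{V,v}$. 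It remains to normalize: since $a^{*}a\in F'$ and $\vphi_{V,v_0}(a^{*}a)=\norm{av_0}^{2}$ is close to $1$, the inequality $\abs{\norm{w'}^{2}-\norm{av_0}^{2}}<\eta$ forces $\norm{w'}$ near $1$, in particular nonzero, so $v':=\norm{w'}^{-1}w'\in\sphere{AV'}$ is legitimate, and one more routine estimate controls $\norm{\vphi_{V',v'}-\vphi_{V',w'}}_F$ via $\abs{1-\norm{w'}^{2}}$ and $\max_{b\in F}\norm{b|_{V'}}$. Chaining the three bounds and choosing $\norm{av_0-v}$ small, and then $\eta$ small, gives $\norm{\vphi_{V,v}-\vphi_{V',v'}}_F<\veps$ with $v'\in\sphere{AV'}$; this establishes (a) together with the first refinement.

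For the last refinement, suppose $ev=v$ with $e\in\ids A$. Then $v\in eV$, and since $e|_V$ is a continuous orthogonal projection, $e(Av_0)=(eA)v_0$ is dense in $e\quo{Av_0}=eV\ni v$; hence one may take $a\in eA$ in the first step, i.e.\ $ea=a$, so that $w'=av_0'=e(av_0')\in eV'$ and $v'=\norm{w'}^{-1}w'\in\sphere{eV'}$, as wanted. The one place that needs real care is the bookkeeping in the normalization step: the order of choices matters — fix $F$, then pick $\norm{av_0-v}$ small relative to $\veps$ and to the finitely many norms $\norm{b|_V}$ ($b\in F$), and only then pick $\eta$ small relative to that and to the norms $\norm{b|_{V'}}$ ($b\in F$) — and it is precisely the finiteness of these operator norms, guaranteed by (U3), that makes the scheme work. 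Everything else is soft, the only ingredients being irreducibility of $V$, Lemma~\ref{LM:approximation-lemma}, and (U1)--(U3).
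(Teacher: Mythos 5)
Your proof is correct and follows essentially the same route as the paper's: use irreducibility to approximate an arbitrary unit vector $v$ by $av_0$ with $a\in A$, push the state-approximation through the identity $\vphi_{W,aw}(b)=\vphi_{W,w}(a^*ba)$, control the norm of the transported vector by including a suitable element in the finite set, and then normalize and chain the three estimates. The only differences are cosmetic: the paper normalizes $b$ so that $\norm{bv}=1$ at the outset and uses an idempotent $e$ with $ebe=b$ (so that $\vphi_{V,bv}(e)=\norm{bv}^2$) to control the norm, whereas you keep $a$ unnormalized and instead put $a^*a$ in $F'$; and for the last refinement ($ev=v$) the paper re-runs its whole argument with $v$ replaced by $v_1$ and $b=e=e_1$, while your constraint $a\in eA$ achieves the same thing more directly. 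Both devices are equivalent, and your handling of the refinement is arguably a little cleaner.
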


    \begin{proof}
        (a)$\derives$(b) is clear, so we only  prove (b)$\derives$(a).

        Let $v_1\in\sphere{V}$, $\veps>0$ and $F_1\fsubseteq A$ be given.
        Since $V$ is irreducible, for all $\delta>0$,
        there is $b\in A$ such that
        \[\norm{bv-v_1}<\delta\qquad\text{and}\qquad \norm{bv}=1\ .\]
        Choose $e\in \ids{A}$ such that $ebe=b$ and let $F:=\{b^*ab\where a\in F_1\cup\{e\}\}$.
        By assumption, for all $\delta'>0$, there exists $v'\in V'$ such that
        $\norm{\vphi_{V,v}-\vphi_{V',v'}}_F<\delta'$.
        Since $\vphi_{V,bv}(a)=\vphi_{V,v}(b^*ab)$ and $\vphi_{V',bv'}(a)=\vphi_{V',v'}(b^*ab)$.
        This is equivalent to
        \[\norm{\vphi_{V,bv}-\vphi_{V',bv'}}_{F_1\cup\{e\}}<\delta'\ .\]
        In particular, $\abs{1-\norm{bv'}^2}=\abs{\vphi_{V,bv}(e)-\vphi_{V',bv'}(e)}<\delta'$
        (since $ebe=b$ and $\norm{bv}=1$).
        Thus, for $\delta'\leq 1$, we have $0<\norm{bv'}^2<2$.

        Define $v'_1:=\norm{bv'}^{-1}bv'$ and let $a\in F_1$. Then
        \begin{align*}
        \abs{\vphi_{V',bv'}(a)-\vphi_{V',v'_1}(a)}&=
        \Abs{\vphi_{V',bv'}(a)-\norm{bv'}^2\vphi_{V',bv'}(a)}\\
        &=\abs{1-\norm{bv'}^2}\cdot\abs{\vphi_{V',bv'}(a)}\leq
        \delta'\abs{\Trings{abv',bv'}}\leq\delta'\norm{a|_{V'}}\norm{bv'}^2\\
        &\leq2\delta'\norm{a|_{V'}}\ .
        \end{align*}
        In addition,
        \begin{align}\label{EQ:approximation-of-mat-coef}
        \abs{\vphi_{V,v_1}(a)-\vphi_{V,bv}(a)}&=\abs{\Trings{av_1,v_1}-\Trings{abv,bv}}\\
        &\leq
        \abs{\Trings{av_1,v_1-bv}}+\abs{\Trings{av_1-abv,bv}}\nonumber\\
        &\leq \norm{a|_{V}}\cdot\norm{v_1}\cdot\norm{v_1-bv}+
        \norm{a|_{V}}\cdot\norm{v_1-bv}\cdot\norm{bv}\nonumber\\
        &\leq 2\delta\norm{a|_{V}}\ .\nonumber
        \end{align}
        Thus,
        \begin{align*}
        \abs{\vphi_{V,v_1}(a)-\vphi_{V',v'_1}(a)}\leq\,\, &\abs{\vphi_{V,v_1}(a)-\vphi_{V,bv}(a)}+
        \abs{\vphi_{V,bv}(a)-\vphi_{V',bv'}(a)}\\
        &+\abs{\vphi_{V',bv'}(a)-\vphi_{V',v'_1}(a)}\\
        <\,\, & 2\delta\norm{a|_{V}}+\delta'+ 2\delta'\norm{a|_{V'}}\ .
        \end{align*}
        Since $a\in F_1$ and $F_1$ is finite, we can choose $\delta$ and $\delta'$ small enough in advance
        to have $\abs{\vphi_{V,v_1}(a)-\vphi_{V',v'_1}(a)}<\veps$ for all $a\in F_1$. This proves (a).

        To finish, notice that the vector $v'_1$ lies $AV$. In addition, if $e_1v_1=v_1$ for some $e_1\in\ids{A}$,
        then we can
		apply the previous argument with $v=v_1$ and $b=e=e_1$.
        %choose $b\in e_1Ae_1$ and take $e=e_1$.
        We then get $e_1v'_1=\norm{bv'}^{-1}e_1bv'=\norm{bv'}^{-1}bv'=v'_1$, so $v'_1\in\sphere{e_1V'}$.
    \end{proof}

    let $V\in\Irr[u]{A}$ and let $V'\in\Rep[u]{A}$.
    We say that $V$ is \emph{weakly contained} in $V'$ and write
    \[
    V\wc V'
    \]
    if the equivalent conditions of Lemma~\ref{LM:equiv-conds-of-weak-containment} are satisfied.
    For example, if $V\leq V'$, then $V\wc V'$. The converse is false in general.

    \medskip

    The next proposition relates weak containment with common spectrum of linear operators
    (see \ref{subsec:motivating-example} for the definition).

    \begin{lem}\label{LM:norm-approximation}
        Let $V\in\Irr[u]{A}$, $V'\in\Rep[u]{A}$ and $e\in \ids{A}$.
        Assume $V\wc V'$ and let $F\fsubseteq A$.
        Then for all $v\in \sphere{V}$ (resp.\ $v\in\sphere{eV}$) and $\veps>0$, there is $v'\in \sphere{V'}$
        (resp.\ $v'\in\sphere{eV'}$)
        such that $\Abs{\norm{av}-\norm{av'}}<\veps$ for all $a\in F$.
        In particular,  $\norm{a|_{V}}\leq \norm{a|_{V'}}$ for all $a\in A$.
    \end{lem}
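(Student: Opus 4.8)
The plan is to reduce everything to statements about matrix coefficients via the identity $\norm{av}^2=\Trings{a^*av,v}=\vphi_{V,v}(a^*a)$, apply the definition of weak containment to the finite set $F':=\{a^*a\where a\in F\}\fsubseteq A$, and then pass from an estimate on squared norms to an estimate on the norms themselves.

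First I would fix $v\in\sphere{V}$ (or $v\in\sphere{eV}$), $\veps>0$, and $F\fsubseteq A$, and put $F':=\{a^*a\where a\in F\}$. Since $V\wc V'$, condition (a) of Lemma~\ref{LM:equiv-conds-of-weak-containment} yields, for any prescribed $\delta>0$, a vector $v'\in\sphere{V'}$ with $\norm{\vphi_{V,v}-\vphi_{V',v'}}_{F'}<\delta$; moreover, by the last sentence of that lemma, if $ev=v$ then $v'$ may be taken in $\sphere{eV'}$. Unwinding the seminorm $\norm{\,\cdot\,}_{F'}$, this says $\Abs{\vphi_{V,v}(a^*a)-\vphi_{V',v'}(a^*a)}<\delta$ for every $a\in F$, that is, $\Abs{\norm{av}^2-\norm{av'}^2}<\delta$ for all $a\in F$.

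Next I would convert this into the desired bound. For nonnegative reals $s,t$ one has $(s-t)^2\leq\Abs{s-t}(s+t)=\Abs{s^2-t^2}$, hence $\Abs{s-t}\leq\sqrt{\Abs{s^2-t^2}}$. Applying this with $s=\norm{av}$ and $t=\norm{av'}$ gives $\Abs{\norm{av}-\norm{av'}}<\sqrt{\delta}$ for all $a\in F$, with the bound uniform in $a$. Choosing $\delta:=\veps^2$ from the outset then produces $v'$ with $\Abs{\norm{av}-\norm{av'}}<\veps$ for all $a\in F$, which is the assertion; the $\sphere{eV'}$ variant follows from the refinement recalled above.

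Finally, for the ``in particular'' clause, recall that $\norm{a|_V}=\sup\{\norm{av}\suchthat v\in\sphere{V}\}$. Given $v\in\sphere{V}$ and $\veps>0$, the case $F=\{a\}$ just proved supplies $v'\in\sphere{V'}$ with $\norm{av'}>\norm{av}-\veps$, so $\norm{a|_{V'}}\geq\norm{av}-\veps$; letting $\veps\to 0$ and then taking the supremum over $v\in\sphere{V}$ gives $\norm{a|_{V'}}\geq\norm{a|_V}$. I do not expect a genuine obstacle here: the only point requiring a moment's care is the passage from the squared-norm estimate to the norm estimate, and since the bound $\sqrt{\delta}$ is independent of $a$ and $F$ is finite, one choice of $\delta$ handles all $a\in F$ simultaneously.
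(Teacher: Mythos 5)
Your proof is correct and follows essentially the same route as the paper's: reduce to $\vphi_{V,v}(a^*a)=\norm{av}^2$, apply Lemma~\ref{LM:equiv-conds-of-weak-containment} to $\{a^*a : a\in F\}$, and then pass from squared norms to norms. The only cosmetic difference is that you make the "continuity of $x\mapsto x^{1/2}$" step explicit via the inequality $\Abs{s-t}\leq\sqrt{\Abs{s^2-t^2}}$, which is a clean quantitative way to say the same thing.
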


    \begin{proof}
    	Observe that $\vphi_{V,v}(a^*a)=\norm{av}^2$.
    	The first assertion now follows from Lemma~\ref{LM:equiv-conds-of-weak-containment}
    	and the continuity of  the real function $x\mapsto x^{1/2}$.
        The last assertion follows from the first assertion
        because $\norm{a|_{V}}=\sup\{\norm{av}\where v\in\sphere{V}\}$
        and likewise for $a|_{V'}$.
    \end{proof}

    \begin{prp}\label{PR:specturm-vs-weak-containment}
        Let $V\in\Irr[u]{A}$, $V'\in\Rep[u]{A}$, and let $a_1,\dots,a_n\in A$.
        \begin{enumerate}
            \item[(i)] If $V\wc V'$, then $\Spec(a_1|_V,\dots,a_n|_V)\subseteq \Spec(a_1|_{V'},\dots,a_n|_{V'})$.
            \item[(ii)] Suppose $A$ is generated by $\{a_1,\dots,a_n\}$ (as
            a unital $\C$-algebra if $A$ has a unity) and $\dim V=1$.
            Write $a_i|_V=\lambda_i\id_V$ for $\lambda_i\in\C$.
            Then
            \[(\lambda_1,\dots,\lambda_n)\in\Spec(a_1|_{V'},\dots,a_n|_{V'})\quad\iff\quad V\wc V'\ .\]
        \end{enumerate}
    \end{prp}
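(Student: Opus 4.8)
The plan is to obtain part~(i) from Lemma~\ref{LM:norm-approximation}, to deduce the ``$\Leftarrow$'' direction of part~(ii) immediately from part~(i), and to prove the ``$\Rightarrow$'' direction of part~(ii) by pushing an approximate common eigenvector for $(\lambda_1,\dots,\lambda_n)$ through all polynomials in the generators $a_1,\dots,a_n$.

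For part~(i), fix $(\lambda_1,\dots,\lambda_n)\in\Spec(a_1|_V,\dots,a_n|_V)$ and $\veps>0$, and pick $v\in\sphere{V}$ with $\norm{a_iv-\lambda_iv}<\tfrac{\veps}{2}$ for all $i$. The scalars $\lambda_i$ need not lie in $A$, so I would first choose $e\in\ids{A}$ with $ev=v$ and $ea_ie=a_i$ for all $i$ (possible since $V$ is smooth and $\ids{A}$ is directed); then $v\in\sphere{eV}$ and $b_i:=a_i-\lambda_i e\in A$ satisfies $b_iv=a_iv-\lambda_iv$. Applying the $eV$-version of Lemma~\ref{LM:norm-approximation} to $\{b_1,\dots,b_n\}$ (using $V\wc V'$) yields $v'\in\sphere{eV'}$ with $\abs{\norm{b_iv}-\norm{b_iv'}}<\tfrac{\veps}{2}$ for all $i$; since $ev'=v'$ we get $\norm{a_iv'-\lambda_iv'}=\norm{b_iv'}<\veps$, and $v'\in\sphere{V'}$. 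As $\veps$ was arbitrary, $(\lambda_1,\dots,\lambda_n)\in\Spec(a_1|_{V'},\dots,a_n|_{V'})$. (One can avoid idempotents by instead expanding $\norm{a_iv-\lambda_iv}^2=\vphi_{V,v}(a_i^*a_i)-2\Re\bigl(\cconj{\lambda_i}\vphi_{V,v}(a_i)\bigr)+\abs{\lambda_i}^2$ and invoking Lemma~\ref{LM:equiv-conds-of-weak-containment}(a) with $F=\{a_1,\dots,a_n,a_1^*a_1,\dots,a_n^*a_n\}$.) The ``$\Leftarrow$'' direction of part~(ii) is then immediate: when $\dim V=1$ and $a_i|_V=\lambda_i\id_V$ the tuple $(\lambda_1,\dots,\lambda_n)$ trivially lies in $\Spec(a_1|_V,\dots,a_n|_V)$, so if $V\wc V'$ part~(i) gives $(\lambda_1,\dots,\lambda_n)\in\Spec(a_1|_{V'},\dots,a_n|_{V'})$.

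For the ``$\Rightarrow$'' direction of part~(ii), note that since $\dim V=1$ the map $\nu\colon A\to\C$ defined by $a|_V=\nu(a)\id_V$ is an algebra homomorphism with $\nu(a_i)=\lambda_i$, and for the (phase-unique) unit vector $v\in V$ we have $\vphi_{V,v}=\nu$. By Lemma~\ref{LM:equiv-conds-of-weak-containment}(b) it suffices to show that for every $F\fsubseteq A$ and $\veps>0$ there is $v'\in V'$ with $\abs{\nu(a)-\vphi_{V',v'}(a)}<\veps$ for all $a\in F$. I would write each $a\in F$ as a finite combination of monomials $\mu=a_{i_1}\cdots a_{i_k}$ in the generators (allowing the degree-zero monomial $1$ when $A$ is unital, which acts as $\id$ on $V'$ and contributes nothing below), let $d$ bound the degrees that occur, and set $M=\max_i\norm{a_i|_{V'}}$ (finite by (U3)) and $\Lambda=\max(1,\max_i\abs{\lambda_i})$. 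For a unit vector $v'\in\sphere{V'}$ with $\delta:=\max_i\norm{a_iv'-\lambda_iv'}$, the identity (for $k\ge 2$)
\begin{align*}
\mu v'-\nu(\mu)v'=a_{i_1}\bigl(a_{i_2}\cdots a_{i_k}v'-\nu(a_{i_2}\cdots a_{i_k})v'\bigr)+\nu(a_{i_2}\cdots a_{i_k})\bigl(a_{i_1}v'-\lambda_{i_1}v'\bigr)
\end{align*}
together with induction down to the base case $k=1$ (which is the definition of $\delta$) yields $\norm{\mu v'-\nu(\mu)v'}\le k\max(M,\Lambda)^{k-1}\delta$ for every monomial $\mu$. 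Summing over the monomials of $a$ with their coefficients gives $\abs{\nu(a)-\vphi_{V',v'}(a)}=\abs{\Trings{\nu(a)v'-av',v'}}\le\norm{av'-\nu(a)v'}\le C_F\delta$ for all $a\in F$, where $C_F$ depends only on $F$, $d$, $M$ and $\Lambda$. Since $(\lambda_1,\dots,\lambda_n)\in\Spec(a_1|_{V'},\dots,a_n|_{V'})$, one can choose $v'$ making $\delta<\veps/C_F$, which completes the proof.

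The idempotent manipulation in~(i) and the two elementary estimates are routine; the step that genuinely requires care is the forward direction of~(ii), where the key point is that the operator-norm bounds supplied by axiom~(U3) make the telescoping estimate uniform in the choice of $v'\in\sphere{V'}$, so that an approximate common eigenvector of $a_1,\dots,a_n$ translates into an approximate matching of the whole character $\nu$ on any prescribed finite subset of $A$. Beyond this bookkeeping I do not anticipate a serious obstacle.
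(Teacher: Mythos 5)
Your proposal is essentially the same strategy as the paper's: part~(i) via Lemma~\ref{LM:norm-approximation} (or equivalently Lemma~\ref{LM:equiv-conds-of-weak-containment}) applied to the differences $a_i-\lambda_ie$, the ``$\Leftarrow$'' of~(ii) by specializing~(i), and the ``$\Rightarrow$'' of~(ii) by propagating an approximate common eigenvector through monomials in the generators. The only structural difference in~(ii) is cosmetic: the paper first observes that $A$ is unital with unity $e$ (where $ea_ie=a_i$) and replaces $a_i$ by $a_i-\lambda_i$, so that the character $\nu$ becomes evaluation of the constant coefficient; you instead carry $\lambda_i$ through a telescoping identity. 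Both bookkeeping schemes give the same uniform bound $\norm{\mu v'-\nu(\mu)v'}\le k\max(M,\Lambda)^{k-1}\delta$, and both rely on the same key point that (U3) makes the monomial norms controllable uniformly in $v'$.

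One small flaw in part~(i): you justify the choice of $e\in\ids{A}$ with $ev=v$ by the claim that ``$V$ is smooth.'' It is not — $V$ is a unitary representation, and (U2) only says $AV=\sm{V}$ is dense in $V$, not equal to it (for $A=\Hecke{G}$ one typically has $\sm{V}\subsetneq V$). For an arbitrary unit vector $v$ there need not be any $e$ with $ev=v$. The standard fix, which is what the paper does, is to use the density of $AV$ and the continuity of $a_1|_V,\dots,a_n|_V$ to perturb the approximate eigenvector $v$ to a nearby unit vector in $AV$ before choosing $e$. Your parenthetical alternative (expanding $\norm{a_iv-\lambda_iv}^2$ in terms of $\vphi_{V,v}$ on $\{a_i,a_i^*a_i\}$ and invoking Lemma~\ref{LM:equiv-conds-of-weak-containment}(a)) sidesteps the issue entirely and is a clean route, so I would promote that to the main argument or else add the density/perturbation step. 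In part~(ii) the corresponding move is harmless because there $A$ is generated by $a_1,\dots,a_n$ and hence unital, making $V=\sm{V}$.
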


    \begin{proof}
        (i)
        Let $(\lambda_1,\dots,\lambda_i)\in\Spec(a_1|_V,\dots,a_n|_V)$ and let $\veps>0$.
        Then there is $v\in\sphere{V}$ such that $\norm{(a_i-\lambda_i)v}<\frac{\veps}{2}$ for all $i$.
        Since $AV$ is dense in $V$, we may choose $v$ to be in $AV$. Choose  $e\in\ids{A}$
        with $ev=v$. Then $(a_i-\lambda_ie)v=(a_i-\lambda_i)v$.
        By Lemma~\ref{LM:norm-approximation}, there is $v'\in\sphere{eV'}$
        such that $\norm{(a_i-\lambda_i)v'}
        =\norm{(a_i-\lambda_ie)v'}<\norm{(a_i-\lambda_ie)v}+\frac{\veps}{2}<\veps$ for all $i$.
        This holds for all $\veps$, so $(\lambda_1,\dots,\lambda_n)\in\Spec(a_1|_{V'},\dots,a_n|_{V'})$.

        (ii) Since $A$ is idempotented, there is an idempotent
        $e\in A$ such that $ea_ie=a_i$ for all $i$. This idempotent is a necessarily a unity of $A$.
        Using this fact, we may
        replace $a_i$ with $a_i-\lambda_i$, and hence assume $\lambda_i=0$ for all $i$.

        The direction ($\Longleftarrow$) follows from (i), so we turn to prove ($\Longrightarrow$).
        Let $M$ be the free non-commutative monoid on $n$ letters
        $x_1,\dots,x_n$. Let $v\in \sphere{V}$, $\veps>0$ and $b_1,\dots,b_t\in A$, where
        $b_i=\sum_{w\in M}\alpha_{iw}w(a_1,\dots,a_n)$ for some $\{\alpha_{iw}\}_{i,w}\in\C$, all but finitely
        many are zero.
        We need to show that there exists $v'\in \sphere{V'}$ such that $\abs{\Trings{b_iv,v}-\Trings{b_iv',v'}}<\veps$
        for all $i$. Indeed, since $(0,\dots,0)\in\Spec(a_1|_{V'},\dots,a_n|_{V'})$,
        for all $\delta>0$, there is $v'\in \sphere{V'}$ such that
        $\norm{a_iv'}<r_i\delta$ where $r_i=\norm{a_i|_{V'}}$. Notice  that $b_iv=\alpha_{i1}v$
        since $a_1v=\dots=a_nv=0$.
        We now have
        \begin{eqnarray*}
        \abs{\Trings{b_iv',v'}-\Trings{b_iv,v}}
        &=&\Abs{\Trings{\alpha_{i1}v'+\sum_{w\neq 1}\alpha_{iw}w(a_1,\dots,a_n)v',v'}-\Trings{\alpha_{i1}v,v}} \\
        &=&
        \Abs{\Trings{\sum_{w\neq 1}\alpha_{iw}w(a_1,\dots,a_n)v',v'}} \\
        &\leq& \Norm{\sum_{w\neq 1}\alpha_{iw}w(a_1,\dots,a_n)v'}\leq
        \delta\sum_{w\neq 1}\abs{\alpha_{iw}}w(r_1,\dots,r_n)\ .
        \end{eqnarray*}
        Choosing $\delta$ small enough in advance, we get  $\abs{\Trings{b_iv,v}-\Trings{b_iv',v'}}<\veps$
        for all $i$.
    \end{proof}

    \begin{example}\label{EX:spectrum-I}
        Applying Proposition~\ref{PR:specturm-vs-weak-containment}
        in the setting of \ref{subsec:motivating-example} with $a_1=X$ and $a_2=X^*$
        implies that $(\lambda,\cconj{\lambda})\in \Spec(T,T^*)$
        $\iff$ $V_\lambda\wc V$. Since $T$ is normal, the former condition
        is equivalent to $\lambda\in\Spec(T)$. In particular, we get
        a one-to-one correspondence
        \[
        \Spec(T)\quad\longleftrightarrow\quad \left\{\begin{array}{c}\text{\footnotesize $U\in\Irr[u]{A}$ with $U\wc V$,}\\ \text{
        \footnotesize  up to isomorphism}\end{array}\right\}
        \]
        We have therefore shown that the spectrum of $T$
        can be recovered from the action of $A$ on $V$.
    \end{example}

    \begin{example}\label{EX:spectrum-II}
        Generalizing the previous example, let
        $A$ be a commutative  unital $*$-algebra generated
        by $\{a_1,\dots,a_n\}$.
        Since $A$ is commutative, every $V\in\Irr[u]{A}$ is $1$-dimensional
        (Corollary~\ref{CR:commutative-algs}) and hence operators on $V$ can be viewed as elements of $\C$.
        We associate with every such $V$ a vector $\lambda_V=(a_1|_V,\dots,a_n|_V)\in\C^n$.
        By Proposition~\ref{PR:specturm-vs-weak-containment}, for every
        $V'\in\Rep[u]{A}$, we have a one-to-one correspondence
        \[
        \Spec(a_1|_{V'},\dots,a_n|_{V'})\quad\longleftrightarrow\quad
        \left\{\begin{array}{c}\text{\footnotesize $V\in\Irr[u]{A}$ with $V\wc V'$,}\\ \text{
        \footnotesize  up to isomorphism}\end{array}\right\}
        \]
        induced by $\lambda_V\longleftrightarrow V$.
    \end{example}

\subsection{The Unitary Dual}
\label{subsec:unitary-dual}

    With Examples~\ref{EX:spectrum-I} and~\ref{EX:spectrum-II} in mind,
    we define a notion of spectrum for unitary representations of $A$.

\medskip

    For $V\in \Irr[u]{A}$, let $[V]$ denote the unitary isomorphism class of $V$, and let
    \[
    \udual{A}=\{[V]\where V\in\Irr[u]{A}\}\ .
    \]
    The set $\udual{A}$ is called the \emph{unitary dual} of $A$.
    It will be the space where points of the {spectrum}
    will live.
    For $V'\in\Rep[u]{A}$, we define the   \emph{$A$-spectrum of $V'$} to be\footnote{
        In the theory of $C^*$-algebras, the unitary dual is sometimes called
        the \emph{spectrum} of $A$, in which case the spectrum  defined above is called the \emph{support} of the representation.
        Similar terminology is used in algebraic geometry.
        However, since our goal is to demonstrate how the support generalizes
        the operator spectrum, we preferred to use the term ``spectrum''.
    }
    \[
    \Spec_A(V')=\{[V]\in \udual{A}\where V\wc V'\}\ .
    \]

    \begin{example}
        In the setting of \ref{subsec:motivating-example}, we
        have an isomorphism $\udual{A}\cong \C$ via $[V_\lambda]\leftrightarrow \lambda$,
        and under this isomorphism, $\Spec_A(V)$ coincides with $\Spec(T)$
        (Example~\ref{EX:spectrum-I}).
    \end{example}

    We now introduce a topology on $\udual{A}$: Let $V\in\Irr[u]{A}$, $v\in \sphere{V}$, $\veps>0$
    and  $F\fsubseteq A$.
    We define
    \[
    N_{V,v,\veps,F}\subseteq \udual{A}
    \]
    to be the set of all isomorphism classes $[U]\in\udual{A}$ for which
    there is $u\in {U}$ such that
    $
    %\Abs{\vphi_{V,v}(a)-\vphi_{U,u}(a)}<\veps\qquad\forall\, a\in F\ .
    \norm{\vphi_{V,v}-\vphi_{U,u}}_F<\veps
    $.
    Note that $u$ is not required to be a unit vector.\footnote{
        There is some room for variation here. Requiring $u$ to be  a unit vector
        affects parts (i) and (ii) of Proposition~\ref{PR:neighborhoods}(ii), but this
        makes little difference for many $*$-algebras
        by Remark~\ref{RM:bounded-dual}.
    }
    The possible sets $\{N_{V,v,\veps,F}\}$ form a subbasis for a topology on $\udual{A}$.

    \begin{remark}
        The common way to define a topology on the unitary dual of a $*$-algebra is
        by pulling back the hull-kernel topology on primitive $*$-ideals
        of $A$ along the map $[V]\mapsto \{a\in A\suchthat a|_V=0\}$.
        For $C^*$-algebras, this gives the topology we have just defined (see \cite[\S3.1]{Dixmier}
        and Proposition~\ref{PR:neighborhoods} below).
        However, these topologies are different in general, e.g.\ consider the case $A=\C[X,X^*]$ as in \ref{subsec:motivating-example}.
        %Our choice of topology on $\what{A}$ will become critical
        Furthermore,
        Propositions~\ref{PR:unitary-dual-topology} and~\ref{PR:neighborhoods} below would fail if we were to give
        $\what{A}$ the pullback of the hull-kernel topology.
    \end{remark}

    It is convenient to call a subset $S\subseteq\udual{A}$ \emph{bounded} if
    $\sup_{[V]\in S}\norm{a|_V}<\infty$ for all $a\in A$, or equivalently,
    if $\hat{\bigoplus}_{[V]\in S} V\in\Rep[u]{A}$ (see \ref{subsec:unitary-reps}).

    \begin{prp}\label{PR:unitary-dual-topology}
        Assume $A$ is unital, commutative, and $a_1,\dots,a_t\in A$
        are elements such that $\{a_1,\dots,a_t,a_1^*,\dots,a_t^*\}$
        generate $A$ as a unital algebra. For $V\in \Irr[u]{A}$,
        denote denote by $\lambda_V\in\C^t$ the unique common eigenvalue
        of $\{a_1,\dots,a_t\}$ on $V$.
        Then the map
        \[
        [V]\mapsto \lambda_V: \udual{A}\to\C^t
        \]
        is a topological embedding. In addition, for all $V'\in\Rep[u]{A}$, we have
        \[
        \Spec(a_1|_{V'},\dots,a_t|_{V'})=\{\lambda_V\where [V]\in\Spec_A(V')\}\ .
        \]
        Finally, a subset $S\subseteq\udual{A}$ is bounded if and only if its image in $\C^t$
        is bounded.
    \end{prp}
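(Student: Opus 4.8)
The plan is to identify $\udual{A}$ with a space of characters and then reduce everything to elementary facts about polynomials on $\C^t$. Since $A$ is commutative, every $V\in\Irr[u]{A}$ is one-dimensional (Corollary~\ref{CR:commutative-algs}), so each $a\in A$ acts on $V$ by a scalar $\chi_V(a)\in\C$; the map $\chi_V\colon A\to\C$ is a unital algebra homomorphism with $\chi_V(a^*)=\cconj{\chi_V(a)}$ (by (U1)), it determines $[V]$ up to unitary isomorphism (a one-dimensional unitary representation is, up to unitary isomorphism, just $\C$ with its standard inner product and $A$ acting through $\chi_V$), and it satisfies $\vphi_{V,u}=\norm{u}^2\chi_V$ for every $u\in V$ --- in particular $\vphi_{V,v}=\chi_V$ when $v\in\sphere{V}$. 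Writing an arbitrary $a\in A$ as $a=P(a_1,\dots,a_t,a_1^*,\dots,a_t^*)$ for a polynomial $P$ (possible since these elements generate $A$), we get $\chi_V(a)=P(\lambda_V,\cconj{\lambda_V})$; thus $\chi_V$, hence $[V]$, is determined by $\lambda_V$, and the map $\iota\colon[V]\mapsto\lambda_V$ is injective.

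For the spectrum identity I would apply Proposition~\ref{PR:specturm-vs-weak-containment}(ii) --- equivalently Example~\ref{EX:spectrum-II} --- to the generating tuple $b:=(a_1,\dots,a_t,a_1^*,\dots,a_t^*)$ of length $2t$. This gives, for each $V'\in\Rep[u]{A}$, a bijection between $\Spec(b_1|_{V'},\dots,b_{2t}|_{V'})$ and $\Spec_A(V')=\{[V]\where V\wc V'\}$, under which $[V]$ corresponds to $(\lambda_V,\cconj{\lambda_V})$. It then remains to show that projection $\C^{2t}\to\C^t$ onto the first $t$ coordinates carries $\Spec(b_1|_{V'},\dots,b_{2t}|_{V'})$ onto $\Spec(a_1|_{V'},\dots,a_t|_{V'})$: the inclusion ``$\subseteq$'' is immediate from the definition of common spectrum (forget the last $t$ test inequalities), while for ``$\supseteq$'' one uses that, $A$ being commutative, each $a_i|_{V'}$ is normal, so $\norm{a_i^*v'-\cconj\mu v'}=\norm{a_iv'-\mu v'}$ for all $v'$ and $\mu$; hence any point $(\mu_1,\dots,\mu_t)$ of $\Spec(a_1|_{V'},\dots,a_t|_{V'})$ lifts to $(\mu_1,\dots,\mu_t,\cconj{\mu_1},\dots,\cconj{\mu_t})$ in the $2t$-fold common spectrum. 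Combining the two yields $\Spec(a_1|_{V'},\dots,a_t|_{V'})=\{\lambda_V\where[V]\in\Spec_A(V')\}$.

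It remains to show $\iota$ is a homeomorphism onto its image. For continuity of $\iota$ at $[V]$, given $\delta>0$ I would use the subbasic neighbourhood $N_{V,v,\veps,F}$ with $v\in\sphere{V}$, $F=\{1_A,a_1,\dots,a_t\}$, and $\veps<1$ to be fixed: if $[U]\in N_{V,v,\veps,F}$ with witness $u\in U$, then evaluating $\norm{\vphi_{V,v}-\vphi_{U,u}}_F<\veps$ at $1_A$ gives $|1-\norm{u}^2|<\veps$, so $\norm{u}^2\in(1-\veps,1+\veps)$, and evaluating at $a_i$ then yields $|\chi_U(a_i)|<(|\lambda_{V,i}|+\veps)/(1-\veps)$ followed by $|\lambda_{V,i}-\lambda_{U,i}|<\veps+\veps(|\lambda_{V,i}|+\veps)/(1-\veps)$ for each $i$; choosing $\veps$ small (depending on $\delta$ and $\max_i|\lambda_{V,i}|$) forces $\norm{\lambda_U-\lambda_V}_\infty<\delta$. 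For continuity of $\iota^{-1}$ it suffices to show every subbasic $N_{W,w,\veps,F}$ has relatively open image: given $[V]\in N_{W,w,\veps,F}$ with witness $v\in V$, put $\eta:=\norm{\vphi_{W,w}-\vphi_{V,v}}_F<\veps$; for any $[U]$ take $u\in U$ with $\norm{u}^2=\norm{v}^2$ and estimate $\norm{\vphi_{W,w}-\vphi_{U,u}}_F\leq\eta+\norm{v}^2\max_{a\in F}|\chi_V(a)-\chi_U(a)|$. Since $\chi_U(a)=P_a(\lambda_U,\cconj{\lambda_U})$ depends polynomially, hence continuously, on $\lambda_U$ and $F$ is finite, the last term is $<\veps-\eta$ once $\norm{\lambda_U-\lambda_V}_\infty$ is small enough, so $[U]\in N_{W,w,\veps,F}$. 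Thus $\iota$ is an embedding.

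Finally, for one-dimensional $V$ one has $\norm{a|_V}=|\chi_V(a)|=|P_a(\lambda_V,\cconj{\lambda_V})|$, so if $\iota(S)$ lies in the ball of radius $R$ then $\sup_{[V]\in S}\norm{a|_V}\leq\sup\{|P_a(z,\cconj z)|\where\norm{z}_\infty\leq R\}<\infty$ by compactness, and conversely boundedness of $S$ forces $\sup_{[V]\in S}|\lambda_{V,i}|=\sup_{[V]\in S}\norm{a_i|_V}<\infty$ for each $i$; hence $S$ is bounded if and only if $\iota(S)$ is bounded. The one point requiring care throughout is that in the definition of $N_{V,v,\veps,F}$ the witness $u$ need not be a unit vector, so matrix coefficients carry an unknown factor $\norm{u}^2$; the remedy --- available because $A$ is unital --- is to keep $1_A$ in the test set $F$, which pins $\norm{u}^2$ near $1$ and is exactly what makes the continuity of $\iota$ work. (Once Proposition~\ref{PR:neighborhoods} is available it gives a cleaner description of these neighbourhoods, but the direct estimates above suffice.)
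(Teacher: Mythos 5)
Your proof is correct and follows essentially the same route as the paper's: commutativity gives one-dimensionality (Corollary~\ref{CR:commutative-algs}), Example~\ref{EX:spectrum-II} supplies injectivity and the spectrum identity after the standard normality observation $\norm{(a_i-\mu)v}=\norm{(a_i^*-\cconj\mu)v}$, and the key device of keeping $1_A$ in the test set $F$ to pin down $\norm{u}^2$ is exactly what the paper uses (there in the guise of including the constant polynomial $f_{n+1}=1$). The one organizational difference is that the paper reduces at the outset to the case where $\{a_1,\dots,a_t\}$ already generates $A$ as a unital algebra (via the closed embedding $\lambda\mapsto(\lambda,\cconj\lambda)\colon\C^t\to\C^{2t}$) and then writes each $N_{V,v,\veps,F}$ explicitly as $\theta^{-1}(M_{\veps,f_1,\dots,f_m})$, showing those $M$-sets form a neighbourhood basis of $\lambda_V$, whereas you carry the conjugates along and split the embedding statement into direct $\veps$–$\delta$ verifications of continuity of $\iota$ and of $\iota^{-1}$. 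Both routes are sound; the paper's is a bit more compact because the $M$-set description packages both directions simultaneously, while your split makes it more transparent that pinning $\norm{u}^2$ via $1_A\in F$ is precisely what continuity of $\iota$ (and not openness) needs.
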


    \begin{proof}
        For all $V'\in \Rep[u]{A}$, we have
        $(\lambda_1,\dots,\lambda_t)\in\Spec(a_1|_{V'},\dots,a_t|_{V'})$
        if and only if
        $(\lambda_1,\dots,\lambda_t,\cconj{\lambda_1},\dots,\cconj{\lambda_t})\in
        \Spec(a_1|_{V'},\dots,a_t|_{V'},a_1^*|_{V'},\dots,a_t^*|_{V'})$.
        Indeed, this easily follows from the fact that $\norm{(a_i-\lambda_i)v}=\norm{(a_i^*-\cconj{\lambda_i})v}$
        for all $v\in V'$ when $a^*_ia_i=a_ia_i^*$
        (see~\ref{subsec:motivating-example}).
        Since the map $(\lambda_1,\dots,\lambda_t)\mapsto (\lambda_1,\dots,\lambda_t,\cconj{\lambda_1},\dots,\cconj{\lambda_t}):\C^t\to\C^{2t}$
        is a topological embedding,
        we may replace $\{a_1,\dots,a_t\}$ with $\{a_1,\dots,a_t,a_1^*,\dots,a_t^*\}$, and  assume
        that $\{a_1,\dots,a_t\}$ generates $A$ as a unital algebra.

        Denote by $\theta$ the map $[V]\mapsto\lambda_V$. We have seen in Example~\ref{EX:spectrum-II} that $\theta$
        is injective and that $\Spec(a_1|_{V'},\dots,a_t|_{V'})=\{\lambda_V\where [V]\in\Spec_A(V')\}$ for all
        $V'\in\Rep[u]{A}$. It is therefore left to show that $\theta$ is a topological embedding.

        Let us check what is $N_{V,v,\veps,F}$:
        Write $F=\{b_1,\dots,b_m\}$ and choose polynomials
        $f_1,\dots,f_m\in\C[x_1,\dots,x_t]$ such that $b_i=f_i(a_1,\dots,a_t)$.
        Since $V$ is $1$-dimensional, for all $v\in\sphere{V}$,
        \[
        \Trings{b_iv,v}={f_i(\lambda_V)}\norm{v}^2=f_i(\lambda_V)\ .
        \]
        Likewise, if $[U]\in\udual{A}$, then
        $\Trings{au,u}={f_i(\lambda_U)}\norm{u}^2$ for all $u\in{U}$. Writing $r=\norm{u}^2$,
        it follows that
        \[
        N_{V,v,\veps,F}=\bigcup_{r\in\R_{\geq0}}\left\{[U]\in\udual{A}\suchthat \Abs{f_i(\lambda_V)-rf_i(\lambda_U)}<\veps\,\,\text{for all $i$}\right\}\ .
        \]
        Therefore, $N_{V,v,\veps,F}=\theta^{-1}(M_{\veps,f_1,\dots,f_m})$ where
        \[
        M_{\veps,f_1,\dots,f_m}:=\bigcup_{r\in \R_{\geq 0}}\left\{\mu\in\C^n\suchthat \Abs{f_i(\lambda_V)-rf_i(\mu)}<\veps\,\,\text{for all $i$}\right\}\ .
        \]
        The sets $M_{\veps,f_1,\dots,f_m}$ are  open in $\C^n$, and moreover, they form an open basis of
        neighborhoods of $\lambda_V$. (Indeed, take $m=n+1$ and let $f_i=x_i$ ($i\leq n$) and $f_{n+1}=1$; the details
        are left to the reader.) Thus,  $\theta:\udual{A}\to\C^n$ is a topological embedding.

        The final assertion is easy and is left to the reader.
    \end{proof}

    \begin{remark}\label{RM:unitary-dual-topology}
        In the setting of Proposition~\ref{PR:unitary-dual-topology}, the image of $\udual{A}$
        in $\C^t$ can be described as follows: Let $\phi:\C[x_1,\dots,x_t,y_1,\dots,y_t]\to A$ be the algebra homomorphism
        sending $x_i$ to $a_i$ and $y_i$ to $a_i^*$, and let $X$ be the affine scheme $\Spec A$.
        Then $\phi$ induces a closed embedding (of schemes) $\phi^*:X\to \mathbb{A}_{\C}^{2t}$,
        which in turn induces an injection $X(\C)\to \C^{2t}$. We thus view $X(\C)$
        as a subset of $\C^{2t}$.
        The image of $\udual{A}$ in $\C^t$ is $\{(\lambda_1,\dots,\lambda_t)\suchthat
        (\lambda_1,\dots,\lambda_t,\cconj{\lambda_1},\dots,\cconj{\lambda_t})\in X(\C)\}$.
        The technical and straightforward proof is left to the reader.
    \end{remark}

    The following example assumes basic knowledge of quivers and their representations.
    See \cite{AssemIbSim06Reps} for all relevant details.

    \begin{example}\label{EX:double-arrow-quiver}
        Let $A$ be the path algebra of the quiver
        \[
        Q:\qquad
        \xymatrix{
            0 \ar@/^/[r]|{a_{10}} & 1 \ar@/^/[l]|{a_{01}}
        }
        \]
        That is,
        $A$ is the free $\C$-algebra generated by $e_0,e_1,a_{01},a_{10}$ subject
        to the relations $e_0^2=e_0$, $e_1^2=e_1$, $e_0a_{01}e_1=a_{01}$ and $e_1a_{10}e_0=a_{10}$.
        We define $*:A\to A$ to be the only involution satisfying $e_0^*=e_0$, $e_1^*=e_1$, and
        $a_{01}^*=a_{10}$. It is easy to see that $A$ is a unital $*$-algebra ($e_0+e_1$ is the unity).

        Recall that $A$-modules are in correspondence with representations of $Q$ via sending $V\in\lMod{A}$
        to the vector space diagram
        \[
        \xymatrix{
            e_0V \ar@/^/[r]^{a_{10}|_{V}} & e_1V \ar@/^/[l]^{a_{01}|_V}
            }
        \]
        Any $V\in\Irr[u]{A}$ can be shown to be isomorphic to one of $\{V_r\}_{r\in\R_{>0}}$, $U_0$, $U_1$
        given by the following diagrams, respectively:\footnote{
            Here is an  ad-hoc  sketch of proof: Since $e_0+e_1=1_A$, at least one of $e_0V$, $e_1V$
            is nonzero.
            If $e_0V\neq 0$, then $e_0V$ is an irreducible unitary representation of $e_0Ae_0$
            (Lemma~\ref{LM:irreducible-corner-module} below), and since $e_0Ae_0$ is commutative,
            $e_0V$ must be $1$-dimensional (Corollary~\ref{CR:commutative-algs}).
            Likewise, if $e_1V\neq 0$, then $\dim e_1V=1$. The cases $e_0V=0$ and $e_1V=0$
            lead to $U_1$ and $U_0$, respectively. In all other cases, we may assume
            $e_0V=e_1V=\C$ and view $a_{10}|_{V}$, $a_{01}|_V$ as elements of $\C$.
            If $z=a_{10}|_V$, then $a_{01}|_V=\cconj{z}$ because $a_{10}^*=a_{01}$.
            One can show that the isomorphism type of $V$ determined by $s:=z\quo{z}$. If $s=0$, then $V\cong U_0\oplus U_1$.
            Otherwise, $V\cong V_r$ for $r=\sqrt{s}$.
        }
        \[
        \xymatrix{
            \C \ar@/^/[r]^{\cdot r} & \C \ar@/^/[l]^{\cdot r}
            }
            \qquad
            \xymatrix{
            \C \ar@/^/[r]^{0} & 0 \ar@/^/[l]^{0}
            }
            \qquad
            \xymatrix{
            0 \ar@/^/[r]^{0} & \C \ar@/^/[l]^{0}
            }
        \]
        (The inner products on $V_r=\C\oplus\C$,
        $U_0=\C\oplus0$ and $U_1=0\oplus \C$  are
        all given by $\Trings{\alpha\oplus \alpha',\beta\oplus\beta'}=\alpha\cconj{\beta}+\alpha'\cconj{\beta'}$.)
        Using arguments similar to those in the proof of Proposition~\ref{PR:unitary-dual-topology},
        one can show that $\udual{A}$ homeomorphic to two copies of $\R_{\geq 0}$ glued along $\R_{>0}$.
        The homeomorphism sends $[V_r]$ to $r\in\R_{>0}$
        and $U_0$ and $U_1$ to the two points lying over $0$.
        The details are left to the reader.

        This shows that $\udual{A}$ is not
        $\mathrm{T}_1$ in general.
    \end{example}

    \begin{remark}
        The unitary dual of a unital $C^*$-algebra is always quasi-compact \cite[Pr.~3.1.8]{Dixmier}.
        However, as follows from Proposition~\ref{PR:unitary-dual-topology} or Example~\ref{EX:double-arrow-quiver},
        this is not necessarily the case for unital $*$-algebras.
    \end{remark}

    We finally record the following useful facts about the topology of $\udual{A}$.

    \begin{prp}\label{PR:neighborhoods}
        Let $V\in \Irr[u]{A}$, $v\in\sphere{V}$ and let $S\subseteq \udual{A}$. Then:
        \begin{enumerate}
            \item[(i)] Then the sets $\{N_{V,v,\veps,F}\}_{\veps,F}$ ($\veps>0$, $F\fsubseteq A$) are
            a \emph{basis} of open neighborhoods of $[V]$ in $\udual{A}$.
            \item[(ii)] $[V]\in\overline{S}$ if and only if for all $\veps>0$ and
            $F\fsubseteq A$, there exist $[U]\in S$
            and $u\in U$ such that $\norm{\vphi_{V,v}-\vphi_{U,u}}_F<\veps$.
            \item[(iii)] When $S$ is bounded, $[V]\in\overline{S}$ if and only if for all $\veps>0$ and
            $F\fsubseteq A$, there exist $[U]\in S$
            and $u\in \sphere{U}$ such that $\norm{\vphi_{V,v}-\vphi_{U,u}}_F<\veps$.
        \end{enumerate}
    \end{prp}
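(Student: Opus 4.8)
The plan is to prove part (i) first --- that for the fixed unit vector $v$ the family $\{N_{V,v,\veps,F}\}_{\veps>0,\, F\fsubseteq A}$ is a genuine neighbourhood basis of $[V]$ in $\udual{A}$ --- and then read (ii) and (iii) off from it. For (i), each $N_{V,v,\veps,F}$ is subbasic open and contains $[V]$ (witnessed by $u=v$), and the family is stable under finite intersections since $N_{V,v,\veps_1,F_1}\cap N_{V,v,\veps_2,F_2}\supseteq N_{V,v,\min(\veps_1,\veps_2),F_1\cup F_2}$; so it suffices to fit some $N_{V,v,\veps,F}$ inside an arbitrary subbasic neighbourhood $N_{W,w,\delta,E}$ of $[V]$. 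Membership $[V]\in N_{W,w,\delta,E}$ gives $w'\in V$ with $\norm{\vphi_{W,w}-\vphi_{V,w'}}_E<\delta-\delta_0$ for some $\delta_0>0$. Since $V$ is irreducible, $Av$ is dense in $V$ (the closed submodule $\{x\in V: Ax=0\}$ must vanish by irreducibility and condition (U2), hence $\overline{Av}=V$ for $v\neq 0$), so I would pick $b\in A$ with $\norm{bv-w'}$ so small that $\norm{\vphi_{V,bv}-\vphi_{V,w'}}_E<\delta_0/2$, this estimate being routine from boundedness of the finitely many operators $a|_V$, $a\in E$. Setting $F:=\{b^*ab : a\in E\}$ and $\veps:=\delta_0/2$, the identities $\vphi_{V,bv}(a)=\vphi_{V,v}(b^*ab)$ and $\vphi_{U,bu}(a)=\vphi_{U,u}(b^*ab)$ turn any witness $u\in U$ of $[U]\in N_{V,v,\veps,F}$ into a witness $bu\in U$ of $[U]\in N_{W,w,\delta,E}$, via the triangle inequality through $\vphi_{V,bv}$ and $\vphi_{V,w'}$; hence $N_{V,v,\veps,F}\subseteq N_{W,w,\delta,E}$. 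This ``transport along $A$'' is exactly the mechanism already used in the proof of Lemma~\ref{LM:equiv-conds-of-weak-containment}, and I expect it to be the main point of the whole proposition.

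Part (ii) is then immediate: by definition $[V]\in\overline S$ iff every open neighbourhood of $[V]$ meets $S$, and by (i) this is equivalent to $N_{V,v,\veps,F}\cap S\neq\emptyset$ for all $\veps>0$ and $F\fsubseteq A$, which unwinds to the stated matrix-coefficient condition.

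For (iii) the implication $\Leftarrow$ is trivial, and for $\Rightarrow$ the plan is a renormalisation argument in which boundedness of $S$ is the only essential new ingredient. Given $\veps>0$ and $F\fsubseteq A$, set $M_a:=\sup_{[U]\in S}\norm{a|_U}<\infty$ for $a\in F$ (finite because $S$ is bounded). Using Lemma~\ref{LM:approximation-lemma} pick $e\in\ids A$ with $\norm{v-ev}<\eta$ ($\eta$ to be chosen), apply the hypothesis with the enlarged set $F':=\{e\}\cup F\cup\{eae : a\in F\}$ and small tolerance $\veps'$, and obtain $[U]\in S$ and $u\in U$ with $\norm{\vphi_{V,v}-\vphi_{U,u}}_{F'}<\veps'$. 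I would then replace $u$ by $u':=eu$: from $\vphi_{U,u'}(a)=\vphi_{U,u}(eae)$ and $\norm{v-ev}<\eta$ one gets $\Abs{\vphi_{U,u'}(a)-\vphi_{V,v}(a)}<\veps'+2\eta\norm{a|_V}$ for $a\in F$, while $\norm{u'}^2=\vphi_{U,u}(e)$ differs from $\vphi_{V,v}(e)=\norm{ev}^2$ by less than $\veps'$, so $\norm{u'}^2$ lies within $O(\eta+\veps')$ of $1$ and is in particular nonzero with $\norm{u'}^2<2$. Finally normalise, $\hat u:=u'/\norm{u'}\in\sphere U$: then $\Abs{\vphi_{U,\hat u}(a)-\vphi_{U,u'}(a)}=\Abs{\norm{u'}^{-2}-1}\cdot\Abs{\Trings{au',u'}}\le 2M_a\Abs{\norm{u'}^{-2}-1}$ for $a\in F$, and altogether $\Abs{\vphi_{U,\hat u}(a)-\vphi_{V,v}(a)}\le 2M_a\Abs{\norm{u'}^{-2}-1}+\veps'+2\eta\norm{a|_V}$. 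Since $F$ is finite and the constants $M_a$, $\norm{a|_V}$ ($a\in F$) are finite, choosing $\eta$ and $\veps'$ small enough at the outset makes the right-hand side $<\veps$, which gives (iii).

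The principal difficulty is Part (i): passing from the \emph{sub}basis (whose defining vector ranges over all unit vectors of all irreducible representations) to a genuine basis built from one fixed test vector $v$, which forces the use of irreducibility of $V$ to pull an arbitrary witnessing vector into the orbit $Av$. In Part (iii) the single step that genuinely needs the hypothesis is the control of $\norm{a|_U}$ by $M_a$ uniformly in $[U]\in S$: the representation $U$ varies as the tolerances shrink, so without the uniform bound the renormalisation error from passing to a unit vector could not be made small.
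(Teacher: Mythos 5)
Your proof is correct and follows essentially the same route as the paper's: for (i) you use the filter base property plus irreducibility (so $\overline{Av}=V$) to pull the witnessing vector back into $Av$ and then transport via $F=\{b^*ab : a\in E\}$; for (ii) you just unwind the definition of closure against the basis; the numerics differ slightly from the paper (they additionally normalize $b$ so that $\norm{bv}=\norm{v'}$ before invoking their estimate \eqref{EQ:approximation-of-mat-coef}, while you just choose $\norm{bv-w'}$ small enough directly, which is equally fine since you only need the displayed inequality on the finite set $E$). The one place where you genuinely diverge is (iii): the paper delegates the renormalization to the proof of Lemma~\ref{LM:equiv-conds-of-weak-containment}, re-run with the uniform bound $M$ in place of $\norm{a|_{V'}}$, whereas you carry out the renormalization explicitly (project with a suitable $e\in\ids{A}$, control $\abs{\norm{eu}^2-1}$ through $\vphi_{U,u}(e)\approx\vphi_{V,v}(e)$, then rescale to a unit vector and bound the rescaling error by $2M_a\abs{\norm{u'}^{-2}-1}$). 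Your version is self-contained and a bit longer; the paper's is shorter but makes the reader re-trace Lemma~\ref{LM:equiv-conds-of-weak-containment}. Both correctly isolate boundedness of $S$ as the only place it is needed.
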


    \begin{proof}
        (i) Since the sets $\{N_{V,v,\veps,F}\}_{\veps,F}$ already form a filter base, it is enough to prove that
        if $[V]\in N_{U,u,\delta,G}$, then there are $\veps>0$ and $F\fsubseteq A$
        such that $N_{V,v,\veps,F}\subseteq N_{U,u,\delta,G}$.

        Suppose $[V]\in N_{U,u,\delta,G}$. Then there is $v'\in V$
        such that $\norm{\vphi_{V,v'}-\vphi_{U,u}}_G<\delta$. If $v'=0$, replace it with a vector of sufficiently
        small magnitude.
        Fix some $\delta_0<\delta$ such that $\norm{\vphi_{V,v'}-\vphi_{U,u}}_G<\delta_0$,
        and let
        $\veps,\veps'>0$, to be chosen later.
        Since $\quo{Av}=V$, there is $b\in A$ such that $\norm{bv-v'}<\veps'$ and $\norm{bv}=\norm{v'}$
        (here we need $v'\neq 0$). Let $F=\{b^*ab\where a\in G\}$
        and let $[W]\in N_{V,v,\veps,F}$.
        Then there is $w\in W$ such that $\norm{\vphi_{V,v}-\vphi_{W,w}}_F<\veps$.
        As in the proof of Lemma~\ref{LM:equiv-conds-of-weak-containment},
        the latter implies $\norm{\vphi_{V,bv}-\vphi_{W,bw}}_G<\veps$,
        and by virtue of \eqref{EQ:approximation-of-mat-coef},
        $\abs{\vphi_{V,v'}(a)-\vphi_{V,bv}(a)}\leq 2\veps'\norm{a|_V}\cdot\norm{v'}$ for all $a\in G$.
        Thus, for all $a\in G$,
        \begin{align*}
            \abs{\vphi_{U,u}(a)-\vphi_{W,bw}(a)} \leq\,\,&
            \abs{\vphi_{U,u}(a)-\vphi_{V,v'}(a)}+\abs{\vphi_{V,v'}(a)-\vphi_{V,bv}(a)}\\
            &+\abs{\vphi_{V,bv}(a)-\vphi_{W,bw}(a)}\\
            < \,\,&\delta_0 + 2\veps'\norm{a|_V}\cdot\norm{v'}+\veps\ .
        \end{align*}
        Choose $\veps$ and $\veps'$ such that $\delta_0 + 2\veps'\norm{a|_V}\cdot\norm{v'}+\veps<\delta$
        for all $a\in G$. Then we have shown that $N_{V,v,\veps,F}\subseteq N_{U,u,\delta,G}$.

        (ii) The condition means that $N_{V,v,\veps,F}\cap S\neq \emptyset$ for all $\veps>0$ and $F\fsubseteq A$,
        and this is equivalent to $[V]\in \overline{S}$ by (i).

        (iii)
        Suppose $[V]\in\overline{S}$ and let $F_1\fsubseteq A$ and
        $\veps>0$.
        Since $S$ is bounded, there is $M\in\R$ such that $\norm{a|_U}<M$ for all $a\in F_1$, $[U]\in S$.
        Applying the proof of Lemma~\ref{LM:equiv-conds-of-weak-containment} with $M$ in place of $\norm{a|_{V'}}$ and $v_1=v$, we obtain
        $F\fsubseteq A$ and $\delta'>0$ such that for all $[U]\in S$
        and $u\in U$ with $\norm{\vphi_{U,u}-\vphi_{V,v}}_F<\delta'$, we have $\norm{\vphi_{V,v}-\vphi_{U,u'}}_{F_1}<\veps$
        for some $u'\in\sphere{U}$. The existence of $U$ and $u$
        with $\norm{\vphi_{U,u}-\vphi_{V,v}}_F<\delta$ follows from (ii), so this proves one
        direction. The other direction is immediate from (ii).
    \end{proof}

    \begin{remark}\label{RM:bounded-dual}
        Proposition~\ref{PR:neighborhoods}(iii) holds for any $S$ if $\udual{A}$ is locally bounded,
        i.e.\ if it admits a basis of bounded open sets. This can be shown to hold  when $A$ is
        a $C^*$-algebra, or when $A$ is the \emph{Hecke algebra}
        of an $\ell$-group (see~\ref{subsec:Hecke-algebra}).
    \end{remark}

\subsection{Properties of The Spectrum}
\label{subsec:properties-of-spectrum}

    We now prove several properties of the $A$-spectrum.

    \begin{prp}\label{PR:spectrum-is-closed}
        Let $V'\in\Rep[u]{A}$. Then $\Spec_A(V')$ is bounded and closed.
    \end{prp}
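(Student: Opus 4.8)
The plan is to obtain both assertions from the machinery of weak containment developed above: boundedness is immediate from Lemma~\ref{LM:norm-approximation}, and closedness follows from a two-step ``transitivity'' of weak containment combined with the description of the closure operator in Proposition~\ref{PR:neighborhoods}(iii).

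For \emph{boundedness}, I would first observe that if $[V]\in\Spec_A(V')$, i.e.\ $V\wc V'$, then Lemma~\ref{LM:norm-approximation} gives $\norm{a|_V}\leq\norm{a|_{V'}}$ for every $a\in A$. Since $V'\in\Rep[u]{A}$ satisfies condition (U3), the quantity $\norm{a|_{V'}}$ is finite, so $\sup_{[V]\in\Spec_A(V')}\norm{a|_V}\leq\norm{a|_{V'}}<\infty$ for every $a\in A$; hence $\Spec_A(V')$ is bounded.

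For \emph{closedness}, let $[V]\in\overline{\Spec_A(V')}$; the goal is to show $V\wc V'$. I would fix an arbitrary $v\in\sphere{V}$ and verify condition (b) of Lemma~\ref{LM:equiv-conds-of-weak-containment}: given $\veps>0$ and $F\fsubseteq A$, one must produce $v'\in V'$ with $\norm{\vphi_{V,v}-\vphi_{V',v'}}_F<\veps$. Because $\Spec_A(V')$ is bounded (the step just proved), Proposition~\ref{PR:neighborhoods}(iii) applies and yields $[U]\in\Spec_A(V')$ together with $u\in\sphere{U}$ such that $\norm{\vphi_{V,v}-\vphi_{U,u}}_F<\tfrac{\veps}{2}$. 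Since $[U]\in\Spec_A(V')$ means $U\wc V'$, applying Lemma~\ref{LM:equiv-conds-of-weak-containment}(a) with the unit vector $u\in\sphere{U}$ produces $v'\in V'$ (indeed $v'\in\sphere{V'}$) with $\norm{\vphi_{U,u}-\vphi_{V',v'}}_F<\tfrac{\veps}{2}$. The seminorm $\norm{\,\cdot\,}_F$ satisfies the triangle inequality, so
\[
\norm{\vphi_{V,v}-\vphi_{V',v'}}_F\leq\norm{\vphi_{V,v}-\vphi_{U,u}}_F+\norm{\vphi_{U,u}-\vphi_{V',v'}}_F<\veps,
\]
which is exactly what was needed. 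Therefore $V\wc V'$, i.e.\ $[V]\in\Spec_A(V')$, and $\Spec_A(V')$ is closed.

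The step that requires the most care is the appeal to Proposition~\ref{PR:neighborhoods}(iii): that characterization of membership in a closure in terms of approximating states by \emph{unit} vectors is only available for bounded subsets of $\udual{A}$, which is precisely why the boundedness of $\Spec_A(V')$ must be established first. Once that is in place, the remainder is just the transitivity of weak containment, carried out via a single triangle-inequality estimate on the seminorms $\norm{\,\cdot\,}_F$, and presents no real obstacle.
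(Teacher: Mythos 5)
Your proof is correct and follows the same route as the paper: boundedness via the inequality $\norm{a|_V}\leq\norm{a|_{V'}}$ from Lemma~\ref{LM:norm-approximation}, and closedness by invoking Proposition~\ref{PR:neighborhoods}(iii) (which indeed requires the boundedness just established) and then splicing the two $\veps/2$-approximations together with the triangle inequality for $\norm{\,\cdot\,}_F$. The only difference is that you spell out the appeal to condition (b) of Lemma~\ref{LM:equiv-conds-of-weak-containment} and flag the dependence of Proposition~\ref{PR:neighborhoods}(iii) on boundedness, both of which the paper leaves implicit.
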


    \begin{proof}
        The set $\Spec_A(V')$ is bounded  by Lemma~\ref{LM:norm-approximation}.
        Assume $[V]\in\overline{\Spec_A(V')}$ and let $v\in\sphere{V}$.
        By Proposition~\ref{PR:neighborhoods}(iii),
        for all $\veps>0$ and $F\fsubseteq A$,
        there is $[U]\in\Spec_A(V')$ and $u\in \sphere{U}$
        satisfying $\norm{\vphi_{V,v}-\vphi_{U,u}}_F<\frac{\veps}{2}$.
        Since $U\wc V'$, there is $v'\in\sphere{V'}$ such
        that $\norm{\vphi_{U,u}-\vphi_{V',v'}}_F<\frac{\veps}{2}$, so
        $
        \norm{\vphi_{V,v}-\vphi_{V',v'}}_F\leq \norm{\vphi_{V,v}-\vphi_{U,u}}_F+\norm{\vphi_{U,u}-\vphi_{V',v'}}_F<\veps
        $. As this holds for all $F$ and $\veps$, it follows that $V\wc V'$.
    \end{proof}

    Recall that $A^\vee=\Hom_{\C}(A,\C)$ is endowed with the topology of point-wise convergence.
    Subsets of $A^\vee$ are given the induced topology.
    For $V'\in\Rep[u]{A}$, let $E(A;V')$ denote the \emph{closed} convex hull of $\{\vphi_{V',v'}\where v'\in\sphere{V'}\}$
    in $A^\vee$. In addition, let $P(A;V')$ be the set of extremal points of $E(A;V')$.
    Recall that $E(A)$ is the set of \emph{all} states of $A$. We also let $P(A)$ denote the set of \emph{pure} states of $A$
    (see \ref{subsec:states}).

    \begin{thm}\label{TH:convexity}
        Let $V'\in\Rep[u]{A}$. Then:
        \begin{enumerate}
            \item[(i)] $E(A;V')$ is compact in $A^\vee$.
            \item[(ii)] $E(A;V')$ is the closed convex hull of $P(A;V')$ and it is contained in $E(A)$.\footnote{
                The set $E(A)$ is convex, but since it is not closed
                in general, $\{\vphi_{V',v'}\}_{v'\in \sphere{V'}}\subseteq E(A)$ does not immediately imply $E(V';A)\subseteq E(A)$.
            }
            \item[(iii)] $P(A;V')=P(A)\cap E(A;V')=\{\vphi_{U,u}\where [U]\in\Spec_A(V'),\, u\in\sphere{U}\}$.
        \end{enumerate}
    \end{thm}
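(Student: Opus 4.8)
The plan is to establish (i) first, then the inclusion $E(A;V')\subseteq E(A)$ (which I expect to be the crux), and finally to read off the remaining assertions of (ii) and (iii) by soft convexity arguments. Write $M:=\{\vphi_{V',v'}\mid v'\in\sphere{V'}\}$, so that $E(A;V')=\overline{\mathrm{conv}}(M)$ by definition. For (i), note that every $\vphi\in M$ satisfies $\abs{\vphi(a)}=\abs{\Trings{av',v'}}\le\norm{a|_{V'}}$, and this bound survives convex combinations and limits; hence $E(A;V')$ sits inside the product $\prod_{a\in A}\{z\in\C\mid\abs{z}\le\norm{a|_{V'}}\}$, which is compact by Tychonoff. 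Being closed by construction, $E(A;V')$ is compact.

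The main step is $E(A;V')\subseteq E(A)$. First I would check that each of the conditions ``$\psi(a^*a)\ge0$ for all $a$'', ``$\psi(a^*b^*ba)\le\norm{b|_{V'}}^2\,\psi(a^*a)$ for all $a$'' (for each fixed $b$), and ``$\psi(e)\le1$ for all $e\in\ids{A}$'' cuts out a closed convex subset of $A^\vee$ containing $M$; hence every $\psi\in E(A;V')$ satisfies (S1), a uniform form of (S2), and $\sup_e\psi(e)\le1$. Running the GNS construction of Theorem~\ref{TH:states} using only (S1) and (S2) — the argument that $\{x_e\}_{e\in\ids{A}}$ is Cauchy goes through once one observes that $\norm{x_e}^2=\psi(e)$ is a monotone net bounded by $\sup_e\psi(e)$ — produces $\psi=\vphi_{W_\psi,w_\psi}$ with $W_\psi=\quo{Aw_\psi}$, $\norm{w_\psi}^2=\sup_e\psi(e)$ and $\norm{b|_{W_\psi}}\le\norm{b|_{V'}}$; thus $\psi\in E(A)$ precisely when $\sup_e\psi(e)=1$, and it remains to exclude ``defective'' limit points with $\sup_e\psi(e)<1$. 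For this I would embed all relevant spaces into a single $W:=\hat{\bigoplus}_{i\in I}V'$, which lies in $\Rep[u]{A}$ since the bounds $\norm{a|_{V'}}$ are uniform over the summands; then $\mathrm{conv}(M)\subseteq\{\vphi_{W,w}\mid w\in\sphere{W}\}$, so $E(A;V')$ is contained in the closure of the latter set, and for $\psi=\lim_\alpha\vphi_{W,w_\alpha}$ one passes to the rank-one density operators $\rho_\alpha=\ket{w_\alpha}\bra{w_\alpha}$ and uses $e|_W\to\mathrm{id}_W$ strongly (Lemma~\ref{LM:approximation-lemma}) together with the trace-class estimate $\mathrm{Tr}\bigl((\mathrm{id}_W-e)\rho_\alpha\bigr)\to0$, controlled uniformly enough in $\alpha$, to force $\sup_e\psi(e)=1$. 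This is the step I expect to be the real obstacle: in the non-unital setting there is no shortcut of the type ``$\psi(1)=1$ is a closed condition'', and the naive approach fails because $\psi\mapsto\sup_e\psi(e)$ is merely lower semicontinuous — the ``wrong'' direction — so one genuinely has to use the approximate-identity structure of $\ids{A}$ (equivalently, trace-class operators).

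Granting $E(A;V')\subseteq E(A)$, the rest falls out. Part (i) and the Krein--Milman theorem, applied in the locally convex space $A^\vee$, give that $E(A;V')$ is the closed convex hull of its set of extreme points $P(A;V')$, which is the remaining content of (ii). For (iii): Milman's partial converse to Krein--Milman gives $P(A;V')\subseteq\overline{M}$, and since $\overline{M}\subseteq E(A;V')\subseteq E(A)$, every $\psi\in P(A;V')$ is a state $\vphi_{W,w}$ with $W=\quo{Aw}$; were $W$ reducible, the argument in the proof of Proposition~\ref{PR:pure-states} would exhibit $\psi=t\vphi_{W,u_1}+(1-t)\vphi_{W,u_2}$ with $\vphi_{W,u_1}\ne\psi$, and approximating $u_i\in\overline{Aw}$ by vectors $bw$ (using $\vphi_{W,bw}(a)=\vphi_{W,w}(b^*ab)$ and $\vphi_{W,w}\in\overline{M}$) shows $\vphi_{W,u_i}\in\overline{M}\subseteq E(A;V')$, contradicting extremality of $\psi$; hence $W$ is irreducible, so $\psi$ is pure by Proposition~\ref{PR:pure-states}, and Lemma~\ref{LM:equiv-conds-of-weak-containment} (criterion (b)) gives $W\wc V'$, i.e.\ $[W]\in\Spec_A(V')$. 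This yields $P(A;V')\subseteq P(A)\cap E(A;V')$ and $P(A;V')\subseteq\{\vphi_{U,u}\mid[U]\in\Spec_A(V'),\,u\in\sphere{U}\}$. The reverse inclusions are immediate: if $[U]\in\Spec_A(V')$ then for every $u\in\sphere{U}$ we have $\vphi_{U,u}\in\overline{M}\subseteq E(A;V')$ by Lemma~\ref{LM:equiv-conds-of-weak-containment}, and $\vphi_{U,u}$ is pure by Proposition~\ref{PR:pure-states} since $\quo{Au}=U$ is irreducible; and any pure state lying in $E(A;V')\subseteq E(A)$ is automatically an extreme point of $E(A;V')$. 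Chaining these inclusions gives the three-way equality of (iii), and in particular (together with (ii)) the inclusion $E(A;V')\subseteq E(A)$ noted above is consistent with the claim that $E(A;V')$ is the closed convex hull of $P(A;V')$.
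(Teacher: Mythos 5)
Your proof of (i) is exactly the paper's Tychonoff argument. Your derivation of (iii) given (ii) is also close to the paper's, with one minor variation: where the paper decomposes $\vphi_{U,u}$ as $t\vphi_{U_1,u_1}+(1-t)\vphi_{U_2,u_2}$ and invokes Corollary~\ref{CR:sub-state} to place $U_1,U_2\leq U$, you work directly inside $W=\quo{Aw}$ and show $\vphi_{W,u_i}\in\overline{M}$ by approximating $u_i$ with vectors $bw/\norm{bw}$; both routes are fine once (ii) is available.

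You were also right to single out the inclusion $E(A;V')\subseteq E(A)$ as the real content, and right again to isolate (S3) as the obstruction: (S1), the uniform form of (S2), and $\sup_e\psi(e)\le1$ are all closed conditions, but $\sup_e\psi(e)\ge1$ is not. The paper's proof simply asserts "it is easy to see'' that elements of the closed convex hull satisfy (S1)--(S3), while the footnote to the theorem concedes the point is not automatic; you have put your finger on a genuine lacuna, not invented one.

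However, the remedy you propose --- a trace-class estimate ``controlled uniformly enough in $\alpha$'' --- cannot succeed, because no such uniformity exists, and in fact the inclusion $E(A;V')\subseteq E(A)$ is false in general. Take $A=\bigoplus_{\N}\C$ with coordinatewise product and involution equal to complex conjugation (so $\ids{A}$ consists of finitely supported $\{0,1\}$-sequences; this is an idempotented $*$-algebra), and take $V'=\ell^2(\N)$ with $A$ acting by coordinatewise multiplication; conditions (U1)--(U3) hold. With $v'_n:=\e_n\in\sphere{V'}$ one has $\vphi_{V',v'_n}(a)=a_n$, which converges pointwise to $0$ in $A^\vee$ because every $a\in A$ has finite support. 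Hence $0\in\overline{\{\vphi_{V',v'}\mid v'\in\sphere{V'}\}}\subseteq E(A;V')$, yet $\sup_e 0(e)=0\neq1$, so $0\notin E(A)$. The mass of the states $\vphi_{V',v'_n}$ escapes to infinity along $\ids{A}$, and no single $e\in\ids{A}$ captures more than finitely many of the $v'_n$; that is precisely the uniformity your density-operator argument would need and does not have. (The same phenomenon occurs for $A=\Hecke{G}$ when $G$ has irreducible unitary representations $V_n$ of unbounded conductor: take $V'=\hat{\bigoplus}_nV_n$ and $v'_n\in\sphere{V_n}$, and note that for each fixed $K\co G$ one has $V_n^K=0$ for $n$ large, so again $\vphi_{V',v'_n}\to0$ pointwise.)

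Consequently the assertion $E(A;V')\subseteq E(A)$ in (ii) and the equality in (iii) fail as written: in the example above $P(A;V')=\{0\}\cup\{\vphi_{V_n,1}\}_n$, and $0$ is not of the form $\vphi_{U,u}$. The correct statement is presumably $P(A;V')\subseteq\{0\}\cup\{\vphi_{U,u}\mid[U]\in\Spec_A(V'),\,u\in\sphere{U}\}$, with the convex-hull claim in (ii) following from (i) and Krein--Milman regardless. The downstream uses of the theorem (Theorem~\ref{TH:direct-sum-spectrum}, Theorem~\ref{TH:subalgebra-spectrum-I}, Corollary~\ref{CR:non-empty-spectrum}) appear to survive because in each of those arguments the relevant extremal state is produced with some $\psi(e)$ or $\psi(a^*a)$ bounded away from zero, which excludes $\psi=0$. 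Your instinct about where the obstacle lies was exactly right; but it sits in the statement of (ii) and (iii) as written, not merely in one route to a proof, so it cannot be dissolved by a better estimate.
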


    \begin{proof}
        (i) $E(A;V')$ is closed in $A^\vee$ and hence in $\C^A$ (with the product topology), so it is enough to show that it is contained
        in a compact subset of $\C^A$. Observe that for all $v'\in \sphere{V'}$, we have $\abs{\vphi_{V',v'}(a)}=
        \abs{\Trings{av',v'}}\leq\norm{a|_{V'}}$.
        Thus, $E(A;V')\subseteq\{\vphi\in \C^A\suchthat \abs{\vphi(a)}\leq \norm{a|_{V'}}\,\text{for all}\,a\in A\}$.
        The right hand side is isomorphic to $\prod_{a\in A}\{z\in\C\suchthat \abs{z}\leq\norm{a|_{V'}}\}$, which is compact by Tychonoff's Theorem.

        (ii) The first assertion follows from (i) and the Krein--Milman Theorem (\cite[p.~138]{FunctionalAnalysis92Nikol},
        for instance).
        For the second assertion, observe that for all $a,b\in A$, $v'\in \sphere{V'}$,
        we have $\vphi_{V',v'}(a^*b^*ba)\leq\vphi_{V',v'}(a^*a)\norm{b|_{V'}}^2$.
        From this it is easy to see
        that  elements
        in the closed convex hull of $\{\vphi_{V',v'}\}_{v'\in\sphere{V'}}$
        satisfy conditions (S1)--(S3) of Theorem~\ref{TH:states}, and hence lie in $E(A)$.

        (iii)
        The definition of weak containment implies that $\vphi_{U,u}\in E(A;V')$ for all $[U]\in \Spec(A;V')$
        and $u\in\sphere{U}$. Together with Proposition~\ref{PR:pure-states},
        this implies
        $P(A;V')\supseteq P(A)\cap E(A;V')\supseteq\{\vphi_{U,u}\where [U]\in\Spec_A(V'),\, u\in\sphere{U}\}$, so
        it remains to prove that $P(A;V')\subseteq \{\vphi_{U,u}\where [U]\in\Spec_A(V'),\, u\in\sphere{U}\}$.

        Let $\vphi\in P(A;V')$. By (ii), we can write $\vphi=\vphi_{U,u}$
        with $U=\quo{Au}$ and $u\in\sphere{U}$.
        Since $E(A;V')$ is the closed convex hull of $\{\vphi_{V',v'}\where v'\in \sphere{V'}\}$
        and it is compact,  Milman's converse to the Krein--Milman Theorem (\cite[p.~139]{FunctionalAnalysis92Nikol}, for instance) implies
        that $\vphi_{U,u}\in\overline{\{\vphi_{V',v'}\where v'\in \sphere{V'}\}}$
        (because $\vphi_{U,u}$ is extremal in $E(A;V')$).
        This easily implies $\vphi_{U,au}\in\overline{\{\vphi_{V',av'}\where v'\in\sphere{V'}\}}$ for all $a\in A$,
        and
        since $U=\overline{Au}$, it follows that $E(A;U)\subseteq E(A;V')$.

        Suppose $U$ is reducible. Then $\vphi_{U,u}$ is not pure (Proposition~\ref{PR:pure-states}),
        and
        hence there are $U_1,U_2\in \Irr[u]{A}$, $u_1\in \sphere{U_1}$, $u_2\in\sphere{U_2}$
        and $0<t<1$ such that $\vphi_{U,u}=t\vphi_{U_1,u_1}+(1-t)\vphi_{U_2,u_2}$ and $\vphi_{U,u}\neq\vphi_{U_1,u_1}$.
        By Corollary~\ref{CR:sub-state}, $U_1,U_2\leq U$,
        and therefore $\vphi_{U_1,u_1},\vphi_{U_2,u_2}\in E(A;U)\subseteq E(A;V')$.
        This  contradicts
        our assumption that $\vphi_{U,u}$ is extremal in $E(A;V')$, so $U$ must be irreducible.
        Since $\vphi_{U,u}\in \overline{\{\vphi_{V',v'}\where v'\in \sphere{V'}\}}$, this means $U\wc V'$,
        as required.
\rem{
        Suppose $U$ is reducible. Then, as in the proof of Proposition~\ref{PR:pure-states},
        there are $u_1,u_2\in \sphere{U}$ and $0<t<1$ such that
        $\vphi_{U,u}=t\vphi_{U,u_1}+(1-t)\vphi_{U,u_2}$ and $\vphi_{U,u}\neq\vphi_{U,u_1}$.
        Since $\vphi_{U,u_1},\vphi_{U,u_2}\in E_A(U)\subseteq E_A(V')$, this contradicts
        our assumption that $\vphi_{U,u}$ is extremal in $E_A(V')$. Therefore, $U$ is irreducible,
        and $U\wc V'$ because $\vphi_{U,u}\in \overline{\{\vphi_{V',v'}\where v'\in \sphere{V'}\}}$.
}
    \end{proof}

    \begin{cor}\label{CR:non-empty-spectrum}
    	Let $0\neq V'\in\Rep[u]{A}$ and let $a\in A$. There
    	is $[V]\in\Spec_A(V')$ such that $\norm{a|_V}=\norm{a|_{V'}}$.
    	In particular, $\Spec_A(V')\neq\emptyset$.
    \end{cor}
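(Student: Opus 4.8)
The plan is to exhibit the desired $[V]$ as (the class of) an irreducible unitary representation carrying a pure state of $E(A;V')$ at which the functional $\psi\mapsto\psi(a^*a)$ attains its maximum, and then to read everything off from Theorem~\ref{TH:convexity}.

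First I would record, as in the proof of Lemma~\ref{LM:norm-approximation}, the identity $\vphi_{W,w}(a^*a)=\norm{aw}^2$ for every $W\in\Rep[u]{A}$ and $w\in W$, so that $\norm{a|_{V'}}^2=\sup_{v'\in\sphere{V'}}\vphi_{V',v'}(a^*a)$. The map $\ell\colon A^\vee\to\C$, $\ell(\psi)=\psi(a^*a)$, is affine and continuous for the topology of pointwise convergence. Restricted to $E(A;V')$ it is real-valued and nonnegative, because $E(A;V')\subseteq E(A)$ by Theorem~\ref{TH:convexity}(ii) and states satisfy condition (S1) of Theorem~\ref{TH:states}; moreover $\ell(\vphi_{V',v'})=\norm{av'}^2\le\norm{a|_{V'}}^2$ for all $v'\in\sphere{V'}$, and since $E(A;V')$ is the closed convex hull of $\{\vphi_{V',v'}\}_{v'\in\sphere{V'}}$ and $\ell$ is affine and continuous, it follows that $\ell\le\norm{a|_{V'}}^2$ throughout $E(A;V')$.

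Next, since $V'\neq0$ the sphere $\sphere{V'}$ is nonempty, so $E(A;V')$ is a nonempty compact convex subset of the locally convex space $\C^A$ by Theorem~\ref{TH:convexity}(i). Hence $\ell$ attains a maximum value $M$ on $E(A;V')$, and combining the two bounds of the previous step (the $\vphi_{V',v'}$ themselves lie in $E(A;V')$) gives $M=\norm{a|_{V'}}^2$. The level set $C=\{\psi\in E(A;V')\suchthat\ell(\psi)=M\}$ is a nonempty compact convex face of $E(A;V')$, so by the Krein--Milman Theorem it has an extreme point $\vphi$, which is then extreme in $E(A;V')$ as well; thus $\vphi\in P(A;V')$ (alternatively one could just invoke Bauer's maximum principle). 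By Theorem~\ref{TH:convexity}(iii), $\vphi=\vphi_{U,u}$ for some $[U]\in\Spec_A(V')$ and $u\in\sphere{U}$, and then $\norm{au}^2=\vphi_{U,u}(a^*a)=M=\norm{a|_{V'}}^2$, so $\norm{a|_U}\ge\norm{au}=\norm{a|_{V'}}$. On the other hand $U\wc V'$ gives $\norm{a|_U}\le\norm{a|_{V'}}$ by Lemma~\ref{LM:norm-approximation}, whence $\norm{a|_U}=\norm{a|_{V'}}$ and $V:=U$ works. The ``in particular'' assertion is then automatic, since the construction produces a class in $\Spec_A(V')$ for any $a$ (indeed already for $a=0$). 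I do not foresee a real obstacle here; the only point needing a little care is the passage from a maximum of $\ell$ to an extreme point of $E(A;V')$, i.e.\ the elementary ``face'' argument underlying Bauer's principle.
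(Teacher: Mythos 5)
Your argument is correct and follows essentially the same route as the paper's: both proofs identify the extreme points of the level set $\{\psi\in E(A;V')\suchthat\psi(a^*a)=\norm{a|_{V'}}^2\}$ as points of $P(A;V')$ via Krein--Milman and Theorem~\ref{TH:convexity}(iii), and then read off the norm equality. The only cosmetic differences are that you phrase the extremality step via Bauer's maximum principle/faces where the paper argues it directly from the inequality $\psi(a^*a)\le\norm{a|_{V'}}^2$, and you deduce nonemptiness of the level set from continuity of $\ell$ on a compact set rather than from an explicit sequence $\{\vphi_{V',v'_n}\}$ -- but these are the same argument.
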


    \begin{proof}
    	Observe that for all $v'\in \sphere{V'}$ we have $\vphi_{V',v'}(a^*a)=\norm{av'}^2$.
    	Since $\norm{a|_{V'}}=\sup\{\norm{av'}\where v'\in \sphere{V'}\}$,
    	we have
    	\begin{equation}\label{EQ:norm-bound}
    	\psi(a^*a)\leq\norm{a|_{V'}}^2 \qquad\forall \,\psi\in E(V';A)
    	\end{equation}
    	Let $S=\{\psi\in E(A;V')\suchthat \psi(a^*a)=\norm{a|_{V'}}^2\}$.
    	We first claim that $S\neq\emptyset$.
    	Since $V'\neq \emptyset$, there is a sequence $\{v'_n\}_{n\in\N}\subseteq\sphere{V'}$ such
    	that $\lim_{n\to\infty}\norm{av'_n}=\norm{a}$. Since $E(A;V')$ is compact,
    	$\{\vphi_{V',v'_n}\}$ has a subsequence converging to some
    	$\psi\in E(A;V')$, which clearly lies in $S$.
    	Now, $S$ is compact since it is closed in $E(A;V')$, so
    	the Krein--Milman Theorem implies that it has an extremal point $\psi$.
    	Using \eqref{EQ:norm-bound}, it is easy to see that $\psi$ is also extremal
    	in $E(A;V')$, so by Theorem~\ref{TH:convexity}(iii), there are $[V]\in\Spec_A(V')$
    	and $v\in \sphere{V}$ such that $\psi=\vphi_{V,v}$.
    	Since $\norm{av}^2=\psi(a^*a)=\norm{a|_{V'}}^2$,
    	we have $\norm{a|_V}\geq \norm{a|_{V'}}$. On the other hand, $\norm{a|_V}\leq \norm{a|_{V'}}$
    	by Lemma~\ref{LM:norm-approximation}.
\rem{
    	
    	Observe that for any $W\in\Rep[u]{A}$, we have $a|_W=0$ if and only
    	if $\vphi_{W,w}(a^*a)=\norm{aw}^2=0$ for all $w\in\sphere{W}$.
    	By assumption, there is $v'\in V'$ with $\vphi_{V',v'}(a^*a)\neq 0$.
    	Since $\vphi_{V',v'}\in E(A;V')$ and $E(A;V')$ is the closed convex hull of $P(A;V')$ (Theorem~\ref{TH:convexity}(ii)),
    	there must be $\psi\in P(A;V')$ with $\psi(a^*a)\neq 0$.
    	By Theorem~\ref{TH:convexity}(iii), there are $[V]\in\Spec_A(V')$ and $v\in\sphere{V}$ with $\psi=\vphi_{V,v}$,
    	so $a|_V\neq 0$.
}
    \end{proof}

    \begin{thm}\label{TH:direct-sum-spectrum}
        Let $\{V_i\}_{i\in I}$ be a family of unitary representations of $A$ such
        that $\hat{\bigoplus}_iV_i\in\Rep[u]{A}$. Then
        $
        \Spec_A({\hat{\bigoplus}_iV_i})=\overline{\bigcup_i\Spec_A(V_i)}
        $.
    \end{thm}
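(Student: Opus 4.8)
The ``$\supseteq$'' inclusion is routine. For each $i$ the inclusion $V_i\hookrightarrow W:=\hat{\bigoplus}_iV_i$ is a unitary $A$-embedding (it preserves inner products and intertwines the $A$-actions), so from the definition of weak containment one reads off that $U\wc V_i$ implies $U\wc W$ for every $U\in\Irr[u]{A}$. Hence $\bigcup_i\Spec_A(V_i)\subseteq\Spec_A(W)$, and since $\Spec_A(W)$ is closed (Proposition~\ref{PR:spectrum-is-closed}) we get $\overline{\bigcup_i\Spec_A(V_i)}\subseteq\Spec_A(W)$. All the work is in the reverse inclusion; the plan is to derive it from the state-space description of the spectrum in Theorem~\ref{TH:convexity} together with Milman's converse to the Krein--Milman theorem.

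Set $K:=\bigcup_iP(A;V_i)$. By Theorem~\ref{TH:convexity}(iii) this is the set $\{\vphi_{U,u}\suchthat[U]\in\bigcup_i\Spec_A(V_i),\ u\in\sphere{U}\}$, and since $[U]\in\Spec_A(V_i)$ forces $\norm{a|_U}\le\norm{a|_{V_i}}\le\norm{a|_W}$ (Lemma~\ref{LM:norm-approximation} and the hypothesis $W\in\Rep[u]{A}$), $K$ is a bounded subset of $A^\vee$. Consequently its closed convex hull $C:=\overline{\mathrm{conv}(K)}$ lies inside the compact set $\prod_{a\in A}\{z\in\C\suchthat\abs{z}\le\norm{a|_W}\}$ and is therefore compact and convex. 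I claim that
\[
C=E(A;W).
\]
For ``$C\subseteq E(A;W)$'': every $[U]\in\Spec_A(V_i)$ satisfies $U\wc W$ by the previous paragraph, so $\vphi_{U,u}\in P(A;W)\subseteq E(A;W)$ by Theorem~\ref{TH:convexity}(iii); thus $K\subseteq E(A;W)$, and $C\subseteq E(A;W)$ since the latter is closed and convex. For ``$E(A;W)\subseteq C$'': as $E(A;W)$ is the closed convex hull of $\{\vphi_{W,w}\suchthat w\in\sphere{W}\}$, it is enough to show $\vphi_{W,w}\in C$ for each $w\in\sphere{W}$. Writing $w=\sum_iw_i$ with $w_i$ in the $i$-th summand of $W$, the closed subspaces $V_i\subseteq W$ are pairwise orthogonal and $A$-invariant, so $\vphi_{W,w}(a)=\sum_i\Trings{aw_i,w_i}=\sum_{w_i\ne0}\norm{w_i}^2\,\vphi_{V_i,\,w_i/\norm{w_i}}(a)$, a series converging absolutely because $\sum_i\norm{w_i}^2\norm{a|_{V_i}}\le\norm{a|_W}$. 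This exhibits $\vphi_{W,w}$ as a (possibly infinite) convex combination of points of $\bigcup_iE(A;V_i)$; since $E(A;V_i)=\overline{\mathrm{conv}(P(A;V_i))}\subseteq C$ (Theorem~\ref{TH:convexity}(ii)) and the compact convex set $C$ is closed under such convex combinations of its elements (each being the limit in $A^\vee$ of its renormalised finite partial sums), $\vphi_{W,w}\in C$.

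Now fix $[V]\in\Spec_A(W)$ and any $v\in\sphere{V}$. By Theorem~\ref{TH:convexity}(iii), $\vphi_{V,v}\in P(A;W)$, i.e.\ $\vphi_{V,v}$ is an extremal point of $E(A;W)=C=\overline{\mathrm{conv}(K)}$. As $C$ is compact and convex and $K\subseteq C$, Milman's converse to the Krein--Milman theorem yields $\vphi_{V,v}\in\overline{K}=\overline{\{\vphi_{U,u}\suchthat[U]\in\bigcup_i\Spec_A(V_i),\ u\in\sphere{U}\}}$. Since $\bigcup_i\Spec_A(V_i)$ is a bounded subset of $\udual{A}$ (by the same norm estimate), Proposition~\ref{PR:neighborhoods}(iii), applied with this $v$, then gives $[V]\in\overline{\bigcup_i\Spec_A(V_i)}$. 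Together with the easy inclusion, this proves $\Spec_A(\hat{\bigoplus}_iV_i)=\overline{\bigcup_i\Spec_A(V_i)}$.

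The crux is the identity $C=E(A;W)$, and specifically its ``$E(A;W)\subseteq C$'' half: one must decompose the matrix coefficients of $W$ along the summands $V_i$ and keep the resulting infinite convex combinations inside the compact convex set $C$. The rest is bookkeeping, but it leans heavily on Theorem~\ref{TH:convexity} — the compactness of $E(A;W)$ and the identification of its extremal points with $\{\vphi_{U,u}\suchthat[U]\in\Spec_A(W),\,u\in\sphere{U}\}$ are precisely what make the Krein--Milman/Milman approach available, and it is there that the subtleties of working with general idempotented $*$-algebras (rather than $C^*$-algebras) have already been absorbed.
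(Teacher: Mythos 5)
Your proof is correct and follows essentially the same route as the paper's: both reduce the hard inclusion to the identity that $E(A;\hat{\bigoplus}_iV_i)$ equals the closed convex hull of $\bigcup_i P(A;V_i)$, and then invoke Milman's converse to the Krein--Milman theorem together with Proposition~\ref{PR:neighborhoods}(iii). The only difference is cosmetic: the paper dismisses that convexity identity as ``easy to see'', whereas you spell out the orthogonal decomposition of $\vphi_{W,w}$ along the summands $V_i$ and the passage to the limit in the resulting infinite convex combination.
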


    \begin{proof}
        It is clear that $\Spec_A(V_i)\subseteq \Spec_A(\hat{\bigoplus}_iV_i)$ for all $i$. Since
        $\Spec_A(\hat{\bigoplus}_iV_i)$ is closed (Proposition~\ref{PR:spectrum-is-closed}),
        it is enough to show that $\Spec_A(\hat{\bigoplus}_iV_i)\subseteq \overline{\bigcup_i\Spec_A(V_i)}$.
        Assume $[V]\in \Spec_A(\hat{\bigoplus}_iV_i)$ and let $v\in \sphere{V}$.
        Then $\vphi_{V,v}\in P(A;\hat{\bigoplus}_iV_i)$ (Theorem~\ref{TH:convexity}(iii)).
        It is easy to see that $E(A;\hat{\bigoplus}_iV_i)$ is the closed convex hull
        of $\bigcup_i E(A;V_i)$, hence by Theorem~\ref{TH:convexity}(ii), it is the closed
        convex hull of $\bigcup_iP(A;V_i)$. Since $E(A;\hat{\bigoplus}_iV_i)$ is compact,
        Krein's Converse to the Krein--Milman Theorem \cite[p.~139]{FunctionalAnalysis92Nikol} implies that $P(A;V)\subseteq \overline{\bigcup_iP(A;V_i)}$,
        hence $\vphi_{V,v}\in \overline{\bigcup_iP(A;V_i)}$.
        By Proposition~\ref{PR:neighborhoods}(iii) and Theorem~\ref{TH:convexity}(iii), this means
        $[V]\in \overline{\bigcup_i\Spec_A(V_i)}$, as required.
    \end{proof}

\subsection{Subalgebras}
\label{subsec:subalgebras}

    Let $A$ be an  idempotented $*$-algebra.
    By an \emph{idempotented $*$-subalgebra} of $A$ we mean a  subalgebra $B\subseteq A$
    such that $B^*=B$ and $(B,*|_{B})$
    is an idempotented $*$-algebra.

    \begin{example}
        Let $a_1,\dots,a_t\in A$. Choose an idempotent
        $e\in \ids{A}$ such that $ea_ie=a_i$ for all $1\leq i\leq t$.
        Then the subalgebra $B$ generated by the elements $e,a_1,\dots,a_n,a_1^*,\dots,a_n^*$
        is an idempotented $*$-subalgebra of $A$.
    \end{example}

    Henceforth, $B$ is a fixed idempotented $*$-subalgebra of $A$.
    Observe that if $V'\in\Rep[u]{A}$, then $\overline{BV'}\in\Rep[u]{B}$.
    The purpose of this section is to show
    that $\Spec_A(V')$  determines $\Spec_B(\overline{BV'})$.
    Informally, this means that the $A$-spectrum   holds at least
    as much information as the $B$-spectrum.

\medskip

	We start by introducing notation.
	For any $V\in \Irr[u]{A}$, let
    $\calR_{A/B}(V):=\Spec_B(\overline{BV})\subseteq \udual{B}$.
    In general, $\calR_{A/B}(V)$ is not a singleton, so $\calR_{A/B}$ does not induce a function from $\udual{A}$ to $\udual{B}$.
    However,  $\calR_{A/B}$ induces a map $\calR_{A/B}:P(\udual{A})\to P(\udual{B})$ given by
    \[\calR_{A/B}(S)=\bigcup_{[V]\in S}\calR_{A/B}(V)\,.\]
    %This map may be regarded as  continuous and closed in the following sense:
	This map has properties resembling continuity and closeness in the following sense.
	
    \begin{prp}\label{PR:restriction-is-continuous}
        Suppose $S\subseteq \udual{A}$ is bounded (i.e.\
        $\hat{\bigoplus}_{[V]\in S} V\in\Rep[u]{A}$). Then
        $\calR_{A/B}(\overline{S})= \overline{\calR_{A/B}(S)}$.
    \end{prp}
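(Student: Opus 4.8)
The plan is to prove, for every $V'\in\Rep[u]{A}$, the identity $\Spec_B(\overline{BV'})=\calR_{A/B}(\Spec_A(V'))$, and then to specialize to $V'=\hat{\bigoplus}_{[V]\in S}V$, which lies in $\Rep[u]{A}$ since $S$ is bounded. By Theorem~\ref{TH:direct-sum-spectrum}, $\Spec_A(V')=\overline{S}$; and since routinely $\overline{BV'}=\hat{\bigoplus}_{[V]\in S}\overline{BV}$ (each summand a subrepresentation of the corresponding $V$, so the boundedness is inherited), a second application of Theorem~\ref{TH:direct-sum-spectrum} to $\{\overline{BV}\}_{[V]\in S}$ gives $\Spec_B(\overline{BV'})=\overline{\bigcup_{[V]\in S}\Spec_B(\overline{BV})}=\overline{\calR_{A/B}(S)}$. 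Thus the displayed identity becomes exactly the assertion $\overline{\calR_{A/B}(S)}=\calR_{A/B}(\overline{S})$. One elementary fact will be used throughout: if $W\in\Rep[u]{A}$ and $P\colon W\to\overline{BW}$ is the orthogonal projection, then $\vphi_{W,w}|_{B}=\vphi_{\overline{BW},Pw}$ for all $w\in W$, because $(\overline{BW})^{\perp}$ is a closed $B$-submodule with $B\cdot(\overline{BW})^{\perp}\subseteq BW\cap(\overline{BW})^{\perp}=0$, so for $b\in B$ only $\Trings{bPw,Pw}$ survives in $\Trings{bw,w}$.

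The inclusion $\calR_{A/B}(\Spec_A(V'))\subseteq\Spec_B(\overline{BV'})$ is routine. Let $[W]\in\Spec_A(V')$, let $[U]\in\Spec_B(\overline{BW})$, and fix $u\in\sphere{U}$. Given $\veps>0$ and $F\fsubseteq B$, first choose $y\in\sphere{\overline{BW}}\subseteq\sphere{W}$ with $\norm{\vphi_{U,u}-\vphi_{\overline{BW},y}}_{F}<\tfrac{\veps}{2}$ (Lemma~\ref{LM:equiv-conds-of-weak-containment}), then, since $W\wc V'$, choose $v'\in\sphere{V'}$ with $\norm{\vphi_{W,y}-\vphi_{V',v'}}_{F}<\tfrac{\veps}{2}$. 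Using $\vphi_{\overline{BW},y}=\vphi_{W,y}|_{B}$ and $\vphi_{V',v'}|_{B}=\vphi_{\overline{BV'},Pv'}$ (the fact above), we obtain $\norm{\vphi_{U,u}-\vphi_{\overline{BV'},Pv'}}_{F}<\veps$ with $Pv'\in\overline{BV'}$. Since $u$ was fixed in advance, condition (b) of Lemma~\ref{LM:equiv-conds-of-weak-containment} yields $U\wc\overline{BV'}$, i.e.\ $[U]\in\Spec_B(\overline{BV'})$.

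For the reverse inclusion — the crux — let $[U]\in\Spec_B(\overline{BV'})$ and pick $u\in\sphere{U}$, so that $\vphi_{U,u}$ is a pure state of $B$ (Proposition~\ref{PR:pure-states}); I will lift it to a pure state of $A$ carried by $V'$. Since $U\wc\overline{BV'}$, for every $(\veps,F)$ with $F\fsubseteq B$ there is $y_{\veps,F}\in\sphere{\overline{BV'}}\subseteq\sphere{V'}$ with $\norm{\vphi_{U,u}-\vphi_{V',y_{\veps,F}}|_{B}}_{F}<\veps$; as $E(A;V')$ is compact (Theorem~\ref{TH:convexity}(i)), the net $\{\vphi_{V',y_{\veps,F}}\}$ has a subnet converging to some $\psi\in E(A;V')$, and restriction to $B$ gives $\psi|_{B}=\vphi_{U,u}$. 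Now the fibre $E_{U}=\{\psi'\in E(A;V')\where\psi'|_{B}=\vphi_{U,u}\}$ is convex, closed and nonempty, hence compact, so by Krein--Milman it has an extremal point $\psi_{0}$. I claim $\psi_{0}$ is extremal in all of $E(A;V')$: if $\psi_{0}=t\alpha+(1-t)\beta$ with $\alpha,\beta\in E(A;V')\subseteq E(A)$ and $0<t<1$, then $\alpha|_{B},\beta|_{B}$ inherit conditions (S1) and (S2), and writing states via Theorem~\ref{TH:states} one checks that $e\mapsto\alpha(e)$ and $e\mapsto\beta(e)$ are monotone nondecreasing nets over $\ids{B}$ (use $\norm{e't}^{2}=\norm{et}^{2}+\norm{e'(1-e)t}^{2}$ for $e\le e'$), so from $1=\lim_{e\in\ids{B}}\psi_{0}(e)=t\lim_{e}\alpha(e)+(1-t)\lim_{e}\beta(e)$ and $\lim_{e}\alpha(e),\lim_{e}\beta(e)\le1$ we deduce $\lim_{e}\alpha(e)=\lim_{e}\beta(e)=1$, i.e.\ (S3) holds and $\alpha|_{B},\beta|_{B}\in E(B)$. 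Purity of $\vphi_{U,u}$ forces $\alpha|_{B}=\beta|_{B}=\vphi_{U,u}$, so $\alpha,\beta\in E_{U}$ and $\alpha=\beta=\psi_{0}$. Hence $\psi_{0}\in P(A;V')$, so by Theorem~\ref{TH:convexity}(iii) $\psi_{0}=\vphi_{W,w}$ for some $[W]\in\Spec_A(V')$ and $w\in\sphere{W}$; then $\vphi_{U,u}=\psi_{0}|_{B}=\vphi_{\overline{BW},Pw}$, and Corollary~\ref{CR:coef-sharing} gives $U=\overline{Bu}\cong\overline{B(Pw)}\le\overline{BW}$, whence $[U]\in\Spec_B(\overline{BW})\subseteq\calR_{A/B}(\Spec_A(V'))$.

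The main obstacle is precisely the purification step: producing, from a pure state of $B$ weakly contained in $\overline{BV'}$, a pure state of $A$ weakly contained in $V'$ with the same restriction to $B$. Compactness of $E(A;V')$ supplies some lift $\psi$, but $\psi$ need not be pure, and the Krein--Milman argument inside the fibre $E_{U}$ only closes once one knows that extremality in $E_{U}$ propagates to extremality in $E(A;V')$ — this is where the monotonicity of $e\mapsto\alpha(e)$ over $\ids{B}$ and condition (S3) of states are genuinely needed. Everything else (the restriction-of-matrix-coefficients identity, the two applications of Theorem~\ref{TH:direct-sum-spectrum}, and the first inclusion) is straightforward manipulation of matrix coefficients.
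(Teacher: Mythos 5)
Your proof is correct, and its top-level structure is the same as the paper's: reduce to the identity $\Spec_B(\overline{BV'})=\calR_{A/B}(\Spec_A(V'))$ and then apply Theorem~\ref{TH:direct-sum-spectrum} twice. The paper's own proof of Proposition~\ref{PR:restriction-is-continuous} is three lines, because it cites that identity as Theorem~\ref{TH:subalgebra-spectrum-I} (stated, somewhat unusually, \emph{after} this proposition in the text); your inline argument for it --- compactness of $E(A;V')$, the nonempty compact fibre $E_U$, Krein--Milman, purity of $\vphi_{U,u}$, Theorem~\ref{TH:convexity}(iii) and Corollary~\ref{CR:coef-sharing} --- is essentially the paper's proof of Theorem~\ref{TH:subalgebra-spectrum-I}, so you have reproved that theorem rather than taken a different route. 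One place you are actually more careful than the paper: before invoking purity of $\vphi_{U,u}$ against $\psi_0|_B=t(\alpha|_B)+(1-t)(\beta|_B)$ one must check that $\alpha|_B,\beta|_B$ satisfy (S3), since purity is defined only against convex combinations of \emph{states}; the paper silently omits this. Your monotonicity argument is valid but heavier than necessary: since $\alpha(e),\beta(e)\le 1$ for all $e\in\ids{B}$, already $\sup_{e\in\ids{B}}\alpha(e)<1$ would force $\sup_e\psi_0(e)\le t\sup_e\alpha(e)+(1-t)<1$, contradicting (S3) for $\vphi_{U,u}$. Two minor loose ends to patch: Corollary~\ref{CR:coef-sharing} is applied with the vector $Pw$, which must be a unit vector --- this follows by comparing $\norm{eu}^2=\vphi_{U,u}(e)=\vphi_{\overline{BW},Pw}(e)=\norm{ePw}^2$ over $e\in\ids{B}$ and using Lemma~\ref{LM:approximation-lemma}; and the expression $\norm{e'(1-e)t}^2$ should read $\norm{(e'-e)t}^2$, since $A$ need not be unital (for $e\le e'$ in $\ids{B}$ one has $e'e=ee'=e$, so $e'-e$ is a self-adjoint idempotent orthogonal to $e$).
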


    \begin{proof}
        Since $\hat{\bigoplus}_{[V]\in S} V\in\Rep[u]{A}$, we have $\overline{S}=\Spec_A(\hat{\bigoplus}_{[V]\in S} V)$
        and $\overline{\calR_{A/B}(S)}=\overline{\bigcup_{[V]\in S}\Spec_B(\overline{BV})}=\Spec_B(\hat{\bigoplus}_{[V]\in S} \overline{BV})$
        (Theorem~\ref{TH:direct-sum-spectrum}).
        Since $\overline{B(\hat{\bigoplus}_{[V]\in S} V)}=\hat{\bigoplus}_{[V]\in S} \overline{BV}$,
        this means $\calR_{A/B}(\overline{S})=\Spec_B(\hat{\bigoplus}_{[V]\in S} \overline{BV})=\overline{\calR_{A/B}(S)}$.
    \end{proof}

    \begin{example}
        The assumption $\hat{\bigoplus}_{[V]\in S} V\in\Rep[u]{A}$ in Proposition~\ref{PR:restriction-is-continuous} is essential.
        Let $M$ be the multiplicative submonoid of $\C[X]$ spanned by $\{X-a\where a\in\C\setminus \Q\}$,
        and let $A$ be the localization $\C[X]\cdot M^{-1}$ with the unique involution satisfying $X^*=X$.
        Elaborating the proof of Proposition~\ref{PR:unitary-dual-topology}, one sees that $\udual{A}\cong \Q$ as topological
        spaces, where the topology on $\Q$ is induced from $\R$. The isomorphism is given by sending $\lambda\in \Q$ to the
        class $[V_\lambda]\in\what{A}$ satisfying $X|_{V_\lambda}=\lambda\id_{V_\lambda}$.
        Let $B=\C[X]$, and identify $\udual{B}$ with $\R$ similarly. Then $\calR_{A/B}$ turns out to induce a function
        $\udual{A}\to \udual{B}$ (Corollary~\ref{CR:commutative-algs}; this holds whenever $A$ and $B$ are commutative and $A=AB$),
        which corresponds to the inclusion $\Q\to \R$. This map is clearly not closed, e.g.\ the set $[1,2]\cap \Q$
        is closed in $\Q$ but not in $\R$. Translating this to representations,
        we see that for $S=\{[V_\lambda]\suchthat \lambda\in [1,2]\cap \Q\}$, we have
        $\calR_{A/B}(\overline{S})\subsetneq \overline{\calR_{A/B}(S)}$.
        Indeed, $\{[V_\lambda]\where \lambda\in [1,2]\cap \Q\}$
        is not bounded since $A$ does not act continuously on $\bigoplus_{\lambda\in [1,2]\cap \Q}V_\lambda$ (consider the action
        of $(X-\sqrt{3})^{-1}\in A$, for instance).

        We believe that  the inclusion $\calR_B(\overline{S})\subseteq \overline{\calR_B(S)}$, which may be regarded
        as a form of continuity, should also fail
        when $S$ is not bounded.
    \end{example}

    The following theorem explains how $\Spec_A(V')$  determines $\Spec_B(\overline{BV'})$.

    \begin{thm}\label{TH:subalgebra-spectrum-I}
        For all $V'\in\Rep[u]{A}$, we have $\Spec_B(\quo{BV'})=\calR_{A/B}(\Spec_A(V'))$.
        Moreover, for any irreducible $U\wc \quo{BV'}$, there exists irreducible $V\wc V'$ such
        that $U\leq \quo{BV}$.
    \end{thm}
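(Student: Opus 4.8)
The plan is to prove the ``moreover'' assertion first; the displayed equality then follows from it together with one short direct argument. I would proceed in three steps.

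\textbf{Step 1: the inclusion $\calR_{A/B}(\Spec_A(V'))\subseteq\Spec_B(\quo{BV'})$.} This I would prove directly, as a transitivity property of weak containment: if $V\wc V'$ as $A$-representations and $U$ is an irreducible $B$-representation with $U\wc\quo{BV}$, then $U\wc\quo{BV'}$. Fix $u\in\sphere{U}$ with $e_0u=u$ for some $e_0\in\ids{B}$ (possible by Lemma~\ref{LM:approximation-lemma} and irreducibility). Given a finite $F\subseteq B$ and $\veps>0$, use $U\wc\quo{BV}$ and the refinement in Lemma~\ref{LM:equiv-conds-of-weak-containment} to pick $w\in\sphere{e_0\quo{BV}}=\sphere{e_0V}$ with $\norm{\vphi_{U,u}-\vphi_{V,w}}_F<\veps/2$ (note that, for $w\in\quo{BV}$, the $B$-matrix coefficient $b\mapsto\langle bw,w\rangle$ is the same in $\quo{BV}$ and in $V$, and that $e_0\quo{BV}=e_0V$ since $e_0\in B$); then, since $V$ is irreducible and $V\wc V'$, apply the refinement again with $F\subseteq B\subseteq A$ to pick $w'\in\sphere{e_0V'}\subseteq\sphere{\quo{BV'}}$ with $\norm{\vphi_{V,w}-\vphi_{V',w'}}_F<\veps/2$. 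Adding the estimates gives $\norm{\vphi_{U,u}-\vphi_{\quo{BV'},w'}}_F<\veps$, so $U\wc\quo{BV'}$ by Lemma~\ref{LM:equiv-conds-of-weak-containment}. Since every element of $\calR_{A/B}(\Spec_A(V'))$ is of this form, the inclusion follows.

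\textbf{Step 2: the ``moreover''.} Let $U$ be irreducible with $U\wc\quo{BV'}$, fix $u\in\sphere{U}$ with $e_0u=u$ ($e_0\in\ids{B}$), and set $\psi=\vphi_{U,u}$, a pure state of $B$ by Proposition~\ref{PR:pure-states}. Using the refinement in Lemma~\ref{LM:equiv-conds-of-weak-containment} I would build a net of \emph{states of $A$}, namely $\{\vphi_{V',w_\iota}\}$ with $w_\iota\in\sphere{e_0V'}\subseteq\sphere{V'}$, whose restrictions to $B$ converge pointwise to $\psi$; by compactness of $E(A;V')$ (Theorem~\ref{TH:convexity}(i)) a convergent subnet yields $\chi\in E(A;V')$ with $\chi|_B=\psi$. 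Now take an extremal point $\chi_0$ of the nonempty compact convex set $K=\{\chi'\in E(A;V'):\chi'|_B=\psi\}$. The crux is to show $\chi_0$ is extremal in $E(A;V')$ itself: if $\chi_0=t\alpha+(1-t)\beta$ with $\alpha,\beta\in E(A;V')\subseteq E(A)$ and $0<t<1$, then $\alpha|_B,\beta|_B$ automatically satisfy conditions (S1) and (S2) for $B$, and they satisfy (S3) because $1=\sup_{e\in\ids{B}}\psi(e)\leq t\sup_{e\in\ids{B}}\alpha(e)+(1-t)\sup_{e\in\ids{B}}\beta(e)\leq t+(1-t)=1$ forces both suprema to equal $1$ (each is $\leq1$ since $\alpha,\beta$ are states of $A$ and $\ids{B}\subseteq\ids{A}$); hence $\alpha|_B,\beta|_B\in E(B)$, purity of $\psi$ gives $\alpha|_B=\beta|_B=\psi$, so $\alpha,\beta\in K$ and therefore $\alpha=\beta=\chi_0$. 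Thus $\chi_0\in P(A;V')$, so by Theorem~\ref{TH:convexity}(iii) there are $[V]\in\Spec_A(V')$ and $v\in\sphere{V}$ with $\chi_0=\vphi_{V,v}$. Finally, $\chi_0|_B=\psi$ forces $\sup_{e\in\ids{B}}\norm{ev}^2=\sup_{e\in\ids{B}}\psi(e)=1$; writing $P$ for the orthogonal projection of $V$ onto $\quo{BV}$ and noting that $B$ annihilates $(\quo{BV})^\perp$, one gets $\norm{ev}=\norm{ePv}\leq\norm{Pv}\leq\norm{v}=1$, so $v=Pv\in\quo{BV}$. Then $\vphi_{\quo{BV},v}=\psi=\vphi_{U,u}$, whence $\quo{Bv}\cong U$ by Corollary~\ref{CR:coef-sharing}, and therefore $U\cong\quo{Bv}\leq\quo{BV}$ with $V\wc V'$, as required.

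\textbf{Step 3 and the main obstacle.} The inclusion $\Spec_B(\quo{BV'})\subseteq\calR_{A/B}(\Spec_A(V'))$ is now immediate: if $U\wc\quo{BV'}$, Step 2 gives $[V]\in\Spec_A(V')$ with $U\leq\quo{BV}$, hence $[U]\in\Spec_B(\quo{BV})=\calR_{A/B}(V)\subseteq\calR_{A/B}(\Spec_A(V'))$. Together with Step 1 this yields the displayed equality, and Step 2 is the ``moreover''. I expect the main obstacle to be the extremality transfer in Step 2 --- showing that an extremal point of the ``fibre'' over a pure state $\psi\in P(B)$ inside $E(A;V')$ is automatically a pure state of $A$ weakly contained in $V'$. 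This is precisely where compactness of $E(A;V')$ and the careful bookkeeping with condition (S3) are needed, and it explains why $\psi$ must be a genuine state of $B$ rather than merely a positive functional. The remaining verifications ($e_0\quo{BV'}=e_0V'$, that $B$ kills $(\quo{BV'})^\perp$, agreement of matrix coefficients in $V$ and $\quo{BV}$) are routine.
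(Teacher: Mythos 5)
Your proof is correct and follows essentially the same route as the paper's: transitivity of weak containment through $B$ for one inclusion, and for the ``moreover'', nonemptiness of the fibre $\{\chi\in E(A;V'):\chi|_B=\vphi_{U,u}\}$ via compactness of $E(A;V')$, extremality transfer via purity of $\vphi_{U,u}$ combined with Krein--Milman, and finally the orthogonal-projection argument placing $v$ inside $\quo{BV}$. Your explicit check that $\alpha|_B,\beta|_B$ satisfy (S3) fills in a step the paper leaves implicit but does not change the approach.
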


    \begin{proof}
    	We first check that  $\Spec_B(\quo{BV'})\supseteq \calR_{A/B}(\Spec_A(V'))$.
    	Suppose $[U]\in \calR_{A/B}(\Spec_A(V'))$. Then there is irreducible $V\wc V'$
    	such that $U\wc\quo{BV}$. Fix some $u\in\sphere{BU}$, and choose $e\in \ids{B}$
    	with $eu=u$. By Lemma~\ref{LM:equiv-conds-of-weak-containment}, for any $F\fsubseteq B$
    	and $\veps>0$,
    	there are $v\in \sphere{eV}\subseteq \sphere{BV}$ and $v'\in\sphere{eV'}\subseteq \sphere{BV'}$
        such that $\norm{\vphi_{U,u}-\vphi_{\quo{BV},v}}_F<\frac{\veps}{2}$
    	and $\norm{\vphi_{V,v}-\vphi_{V',v'}}_F<\frac{\veps}{2}$.
        This means
    	$\norm{\vphi_{U,u}-\vphi_{\quo{BV'},v'}}_F<\veps$,
    	so  $U\wc \quo{BV'}$.

    	Next, let
        $U\in \Irr[u]{B}$ be such that $U\wc \quo{BV}$.
        Fix $u\in\sphere{U}$, and let $S=\{\psi\in E(A;V')\suchthat \psi|_B=\vphi_{U,u}\}$.
        It is clear that $S$ is convex, closed, and hence compact (Theorem~\ref{TH:convexity}(i)).
        We claim that $S\neq\emptyset$. Indeed,  $U\wc \quo{BV'}$ implies $\vphi_{U,u}\in\overline{\{\vphi_{\quo{BV'},v'}\}_{ v'\in \quo{BV'}}}$.
        Choose a net $\{\vphi_{\quo{BV'},v'_\alpha}\}_{\alpha\in I}$ converging to $\vphi_{U,u}$.
        The set $E(A;V')$ is compact, so there is a subnet of $\{\vphi_{V',v'_\alpha}\}_{\alpha\in I}$
        converging to some $\psi\in E(A;V')$. Since $\psi|_B=\lim\{ \vphi_{V',v'_\alpha}|_B\}_\alpha=\lim\{ \vphi_{\quo{BV'},v'_\alpha}\}_\alpha
        =\vphi_{U,u}$,
        we have $\psi\in S$ and so $S\neq\emptyset$.
        Now, by the Krein-Milman Theorem \cite[p.~138]{FunctionalAnalysis92Nikol},
        there exists an extremal point $\psi\in S$. We claim that $\psi\in P(A;V')$.
        Indeed, if $\psi=t\psi'+(1-t)\psi''$ for $\psi',\psi''\in E(A;V')$ and $0<t<1$,
        then $\vphi_{U,u}=\psi|_B=t\psi'|_B+(1-t)\psi''|_B$. Since $\vphi_{U,u}$ is pure (Proposition~\ref{PR:pure-states}),
        we must have $\psi'|_B=\psi''|_B=\vphi_{U,u}$, which means $\psi',\psi''\in S$, and since $\psi$
        is extremal in $S$, we get $\psi=\psi'=\psi''$.
        Now, by Theorem~\ref{TH:convexity}(iii), there is $[V]\in\Spec_{A}(V')$
        and $v\in\sphere{V}$ such that $\psi=\vphi_{V,v}$.
        If $v\in \quo{BV}$, then Corollary~\ref{CR:coef-sharing} implies
        that $U=\quo{Au}\leq V$ and we are done. We therefore need to show that $v\in \quo{BV}$.

        Write $v=v_1+v_2$ with $v_1\in \quo{BV}$ and $v_2\in (BV)^\perp$. Then $Bv_2=0$,
        and hence $\vphi_{\quo{BV},v_1}=\vphi_{V,v_1}|_B=\vphi_{V,v}|_B=\vphi_{U,u}$.
        Thus, for all $e\in\ids{B}$, we have $\norm{ev_1}=\vphi_{\quo{BV},v_1}(e)=\vphi_{U,u}(e)=\norm{eu}$.
        By Lemma~\ref{LM:approximation-lemma}, this means
        $\norm{v_1}=\lim\{\norm{ev_1}\}_{e\in\ids{B}}=\lim\{\norm{eu}\}_{e\in\ids{B}}=\norm{u}=1$.
        Since $1=\norm{v}^2=\norm{v_1}^2+\norm{v_2}^2$,
        we get $\norm{v_2}=0$, and therefore $v=v_1\in \quo{BV}$.
    \end{proof}

    \begin{remark}
        In general, it is not true that for all $U\in\Irr[u]{B}$,
        there is $V\in\Irr[u]{A}$ such that $U\wc \quo{BV}$.
        Indeed, take $A=\C[X,X^*]$ and $B=\C[X^*X]$.
        Using Proposition~\ref{PR:unitary-dual-topology} and Remark~\ref{RM:unitary-dual-topology}, we identify $\hat{A}$
        with $\C$ (take $a_1=X$), and $\udual{B}$ with ${\R}$ (take $a_1=X^*X$).
        The map $\calR_{A/B}$ turns out to induce a function
        $\udual{A}\to\udual{B}$, which under the above identifications is given by $z\mapsto \cconj{z}z:\C\to \R$.
        This map is not onto, meaning that there are $U\in\Irr[u]{B}$ that are not weakly contained
        in $\quo{BV}$ for any $V\in \Irr[u]{A}$. This particularly holds for  $U\in\Irr[u]{B}$
        with $X^*X|_U=-\id_U$.
        An even wilder example is $A=\C(X)$ and $B=\C[X]$ with $X^*=X$. Then,
        $A$ has no unitary
        representations at all (Corollary~\ref{CR:commutative-algs}, Corollary~\ref{CR:non-empty-spectrum}),
        and $B$ has many.

        In contrast,
        when $A$ and $B$ are $C^*$-algebras,  every irreducible unitary
        representation of $B$ is contained in an irreducible unitary representation of $A$ \cite[Pr.~2.10.2]{Dixmier}.
    \end{remark}

    Let $\{a_1,\dots,a_t\}$  be a  family of elements in $A$ such that $a_ia_j=a_ja_i$
    and $a_i^*a_j=a_ja_i^*$ for all $i,j$.
    The following corollary
    implies that for all $V'\in\Rep[u]{A}$, one can recover the common spectrum $\Spec(a_1|_{V'},\dots,a_t|_{V'})$
    from $\Spec_A(V')$.

    \begin{cor}\label{CR:subalgebra-spectrum-II}
        Let $V'\in\Rep[u]{A}$, and let $a_1,\dots,a_t\in A$
        be elements generating a commutative $*$-subalgebra of $A$.
        Choose $e\in\ids{A}$ such that $ea_ie=a_i$ for all $1\leq i\leq t$ (e.g.\ $e=1$ if $A$ is unital).
        Then
        \[\Spec(a_1|_{eV'},\dots,a_t|_{eV'})=\bigcup_{[V]\in\Spec_A(V')}\Spec(a_1|_{eV},\dots,a_t|_{eV})\ .
        \]
        When $eV'\neq V'$, we further have $\Spec(a_1|_{V'},\dots,a_t|_{V'})=\Spec(a_1|_{eV'},\dots,a_t|_{eV'})\cup\{(0,\dots,0)\}$.
    \end{cor}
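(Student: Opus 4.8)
The plan is to reduce everything to the commutative situation already treated in Proposition~\ref{PR:unitary-dual-topology}, via the subalgebra comparison of Theorem~\ref{TH:subalgebra-spectrum-I}. First I would introduce $B$, the idempotented $*$-subalgebra of $A$ generated by $e,a_1,\dots,a_t,a_1^*,\dots,a_t^*$ (see the Example in~\ref{subsec:subalgebras}). From $ea_ie=a_i$ one gets $ea_i=a_ie=a_i$, and applying $*$ also $ea_i^*=a_i^*e=a_i^*$, so $e$ is a two-sided unity of $B$; combined with the hypothesis that the $a_i$ generate a commutative $*$-subalgebra, this makes $B$ a \emph{commutative, unital} $*$-algebra with unity $e$, generated as a unital algebra by $\{a_1,\dots,a_t,a_1^*,\dots,a_t^*\}$. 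The structural observation I would isolate is that for every $W\in\Rep[u]{A}$ we have $BW=eBeW=eW$, and since $e|_W$ is an orthogonal projection, $eW$ is closed; hence $\overline{BW}=eW$, which lies in $\Rep[u]{B}$ (see~\ref{subsec:subalgebras}), and the $B$-module operators $a_i|_{\overline{BW}}$ are precisely $a_i|_{eW}$.

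For the first identity I would apply Proposition~\ref{PR:unitary-dual-topology} to $B$ with the generators $a_1,\dots,a_t$: writing $\lambda_U\in\C^t$ for the common eigenvalue of $a_1,\dots,a_t$ on $U\in\Irr[u]{B}$, we get for every $W\in\Rep[u]{B}$ that $\Spec(a_1|_W,\dots,a_t|_W)=\{\lambda_U\mid [U]\in\Spec_B(W)\}$. Using the observation above with $W=eV'=\overline{BV'}$ and with $W=eV=\overline{BV}$ for each $[V]\in\Spec_A(V')$, and then invoking Theorem~\ref{TH:subalgebra-spectrum-I} together with the definition $\calR_{A/B}(\Spec_A(V'))=\bigcup_{[V]\in\Spec_A(V')}\Spec_B(\overline{BV})$, one obtains
\[
\Spec(a_1|_{eV'},\dots,a_t|_{eV'})=\{\lambda_U\mid[U]\in\Spec_B(\overline{BV'})\}=\bigcup_{[V]\in\Spec_A(V')}\{\lambda_U\mid[U]\in\Spec_B(\overline{BV})\}=\bigcup_{[V]\in\Spec_A(V')}\Spec(a_1|_{eV},\dots,a_t|_{eV}),
\]
which is the first assertion. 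This part is pure bookkeeping once the identification $\overline{BW}=eW$ is available.

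For the second identity, assume $eV'\neq V'$ and decompose $V'=eV'\perp W$ with $W=\ker(e|_{V'})\neq0$. For $w\in W$ one has $a_iw=ea_ie\,w=0$, so each $a_i$ vanishes on $W$ and $\Spec(a_1|_W,\dots,a_t|_W)=\{(0,\dots,0)\}$. It then remains to prove the orthogonal direct-sum decomposition
\[
\Spec(a_1|_{V'},\dots,a_t|_{V'})=\Spec(a_1|_{eV'},\dots,a_t|_{eV'})\cup\Spec(a_1|_W,\dots,a_t|_W),
\]
valid whenever $V'=eV'\perp W$ is invariant under all the operators. The inclusion $\supseteq$ is immediate, since unit vectors of $eV'$ or of $W$ are unit vectors of $V'$. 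For $\subseteq$: given $(\lambda_i)$ in the left side and a sequence $\veps_n>0$ with $\veps_n\to0$, choose unit vectors $v_n\in V'$ with $\norm{(a_i-\lambda_i)v_n}<\veps_n$ for all $i$, and write $v_n=u_n+w_n$ along $eV'\perp W$; invariance makes the two components of $(a_i-\lambda_i)v_n$ orthogonal, so $\norm{(a_i-\lambda_i)u_n}<\veps_n$ and $\norm{(a_i-\lambda_i)w_n}<\veps_n$, while $\norm{u_n}^2+\norm{w_n}^2=1$ forces one of $u_n,w_n$ to have norm $\ge 2^{-1/2}$ for each $n$; passing to a subsequence along which the same summand (say $u_n$, rescaled) works shows $(\lambda_i)\in\Spec(a_1|_{eV'},\dots,a_t|_{eV'})\cup\Spec(a_1|_W,\dots,a_t|_W)$.

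The only step with genuine content is this last pigeonhole splitting of a common spectrum over an orthogonal direct sum; everything else is a direct application of Theorem~\ref{TH:subalgebra-spectrum-I} and Proposition~\ref{PR:unitary-dual-topology}, the bridge being the identity $\overline{BW}=eW$, which is what lets one move freely between the $B$-spectrum of $\overline{BW}$ and the operator spectrum of $a_1,\dots,a_t$ on $eW$.
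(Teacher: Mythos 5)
Your proof is correct and takes essentially the same route as the paper: define $B=\C[e,a_1,\dots,a_t,a_1^*,\dots,a_t^*]$, use $\overline{BW}=eW$ to identify the $B$-spectrum of $\overline{BV'}$ (and of each $\overline{BV}$) with the common operator spectrum on $eV'$ (and $eV$), then invoke Proposition~\ref{PR:unitary-dual-topology} together with Theorem~\ref{TH:subalgebra-spectrum-I} for the first identity, and note $B\cdot(eV')^\perp=0$ for the second. The only difference is that the paper leaves the splitting of the common spectrum over the orthogonal sum $V'=eV'\perp(eV')^\perp$ implicit, whereas you spell out the pigeonhole argument; that step is indeed the only place requiring any work, and your argument is a clean way to supply it.
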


    \begin{proof}
        Let $B=\C[e,a_1,\dots,a_t,a_1^*,\dots,a_t^*]$. Then $\quo{BW}=BW=eW$ for any $W\in\Rep[u]{A}$.
        The first assertion now follows from Proposition~\ref{PR:unitary-dual-topology}
        and Theorem~\ref{TH:subalgebra-spectrum-I}.
        The second assertion holds since $B\cdot (eV')^\perp=0$. Indeed,
        for all $v\in (eV')^\perp=(BV')^\perp$, we have
        $\Trings{Bv,V'}=\Trings{v,BV'}=0$, so $Bv=0$.
    \end{proof}

\subsection{Corner Subalgebras}
\label{subsec:corner-subalgebras}

    Let $A$ be an idempotented involutary algebra.
    An idempotented $*$-subalgebra $B\subseteq A$ is called
    a \emph{corner subalgebra}, or just a corner, if $B=BAB$.
    Equivalently, $B$ is a corner subalgebra of $A$ if $B=\bigcup_{e\in\ids{B}}eAe$.

    \begin{example}
    	(i) Let $e\in\ids{A}$. Then $B=eAe$ is a corner subalgebra of $A$. When $A$ is unital,
    	$A$ has a  \emph{Pierce decomposition} $A\cong\smallSMatII{eAe}{eAf}{fAe}{fAf}$ with $f=1_A-e$.
    	This is the reason for the  name ``corner''. %(In fact, a similar decomposition also exists when $A$ is non-unital.)
    	
    	(ii) Consider $A=\prod_{\N}\C$ with the involution $((a_n)_{n\in\N})^*=(\cconj{a_n})_{n\in\N}$
    	and let $B=\bigoplus_{\N}\C$. Then $B$ is a corner of $A$. However, there is no Pierce decomposition as in (i).
    \end{example}

    In the sequel, we shall make repeated usage of the following lemma, usually without comment.

    \begin{lem}
    	Let $V\in\Rep[u]{A}$ and let $W$ be a subspace of $V$.
    	Then $\quo{AW}=\quo{A\quo{W}}$.
    \end{lem}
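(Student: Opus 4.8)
The plan is to prove the two inclusions making up the equality $\quo{AW}=\quo{A\quo{W}}$ separately. Throughout, I read $\quo{W}$ --- the completion of the pre-Hilbert space $W$ --- as the closure of $W$ inside the ambient Hilbert space $V$; the two agree canonically because $V$ is complete and the inclusion $W\subseteq V$ is isometric. In particular $\quo{W}$ is a closed subspace of $V$, so $A\quo{W}$ (the $A$-submodule generated by $\quo{W}$, equivalently the $\C$-linear span of $\{aw\where a\in A,\ w\in\quo{W}\}$) makes sense as a subspace of $V$.

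First I would dispose of the easy inclusion: since $W\subseteq\quo{W}$ we have $AW\subseteq A\quo{W}$, and passing to closures gives $\quo{AW}\subseteq\quo{A\quo{W}}$. For the reverse inclusion it suffices to show $A\quo{W}\subseteq\quo{AW}$, since then $\quo{A\quo{W}}\subseteq\quo{\quo{AW}}=\quo{AW}$. Now $\quo{AW}$ is a closed $\C$-linear subspace of $V$, so to see that it contains $A\quo{W}$ it is enough to check that each generator $aw$, with $a\in A$ and $w\in\quo{W}$, lies in $\quo{AW}$. Given such $a$ and $w$, I choose a sequence $\{w_n\}\subseteq W$ with $w_n\to w$. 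By condition (U3) the operator $a|_V$ is bounded, hence continuous, so $aw_n\to aw$; since $aw_n\in AW$ for every $n$, the limit $aw$ lies in $\quo{AW}$, as required.

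I do not expect any genuine obstacle here: the argument is purely formal once one has fixed the (standard) identification of the completion $\quo{W}$ with the closure of $W$ in $V$ and has recalled that every $a\in A$ acts as a bounded --- hence continuous --- operator on $V$ by axiom (U3). The only place to be slightly careful is not to confuse $\quo{W}$ with an abstract completion of $W$ taken outside $V$; working inside the fixed Hilbert space $V$ is what makes the limit argument immediate.
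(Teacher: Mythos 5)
Your proof is correct and follows essentially the same route as the paper's: the easy inclusion from $W\subseteq\quo{W}$, and the reverse inclusion by observing $a\quo{W}\subseteq\quo{AW}$ for each $a\in A$ via the boundedness (U3) of $a|_V$. You simply spell out the continuity argument that the paper leaves implicit.
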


    \begin{proof}
    	Since, $AW\subseteq A\quo{W}$, we have $\quo{AW}\subseteq\quo{A\quo{W}}$,
    	and since $a\quo{W}\subseteq \quo{AW}$ for all $a\in A$, we have $A\quo{W}=\sum_{a\in A}a\quo{W}\subseteq \quo{AW}$,
    	and hence $\quo{A\quo{W}}\subseteq \quo{AW}$.
    \end{proof}

    \begin{lem}\label{LM:corner-coefs}
        Let $B$ be a corner subalgebra of $A$,
        let $V\in\Rep[u]{A}$, and let $v\in {BV}$.
        Choose $e\in\ids{B}$ such that $v=ev$.
        Then $\vphi_{V,v}(a)=\vphi_{\quo{BV},v}(eae)$ for all $a\in A$.
        In particular, $\vphi_{\quo{BV},v}$ determines $\vphi_{V,v}$ and vice versa.
    \end{lem}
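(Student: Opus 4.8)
The plan is to prove the displayed identity by a direct unfolding of the two matrix coefficients, reducing everything to the idempotent relations $e^2=e=e^*$, the equality $ev=v$, and the fact that the $B$-module structure on $\quo{BV}$ is just the restriction of the $A$-module structure on $V$.

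First I would check that the right-hand side is well defined, i.e.\ that $eae\in B$ for every $a\in A$. This is precisely where the corner hypothesis is used: since $e\in\ids{B}\subseteq B$ and $B=BAB$, we get $eae\in eAe\subseteq BAB=B$. Next, recall from the discussion of subalgebras that $\quo{BV}\in\Rep[u]{B}$ and that $BV\subseteq\quo{BV}$, with the $B$-action on $\quo{BV}$ obtained by restricting the $A$-action on $V$ (and then extending by continuity, which plays no role here because $v\in BV$ already). Consequently, for any $b\in B$ we have $b|_{\quo{BV}}v=bv$, the product computed in $V$; in particular $\vphi_{\quo{BV},v}(b)=\Trings{bv,v}$.

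The computation is then immediate:
\[
\vphi_{\quo{BV},v}(eae)=\Trings{eaev,v}=\Trings{aev,e^*v}=\Trings{aev,ev}=\Trings{av,v}=\vphi_{V,v}(a),
\]
using $e^*=e$ and $ev=v$. This establishes the main identity, so $\vphi_{\quo{BV},v}$ determines $\vphi_{V,v}$. For the converse, specialising the same manipulation to an element $b\in B$ in place of $eae$ (or simply reading off that $b|_{\quo{BV}}v=bv$) gives $\vphi_{\quo{BV},v}(b)=\Trings{bv,v}=\vphi_{V,v}(b)$, i.e.\ $\vphi_{\quo{BV},v}=\vphi_{V,v}|_B$, so $\vphi_{V,v}$ determines $\vphi_{\quo{BV},v}$ as well.

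There is essentially no obstacle here: the only points that deserve a moment's care are invoking $B=BAB$ to guarantee $eae\in B$ (so that the right-hand side even makes sense), and confirming that the coefficient $\vphi_{\quo{BV},v}$ on the right is computed with the restricted $B$-action rather than with some a priori different structure. Both are routine, and I would present the whole argument as a two-line proof.
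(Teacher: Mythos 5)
Your proof is correct and is essentially the paper's one-line computation $\vphi_{V,v}(a)=\Trings{av,v}=\Trings{aev,ev}=\Trings{eaev,v}=\vphi_{\quo{BV},v}(eae)$, just run in the reverse direction and with the routine well-definedness checks spelled out.
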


    \begin{proof}
        $\vphi_{V,v}(a)=\Trings{av,v}=\Trings{aev,ev}=\Trings{eaev,v}=\vphi_{\quo{BV},v}(eae)$.
    \end{proof}

    \begin{lem}\label{LM:irreducible-corner-module}
        Let $B$ be a corner subalgebra of $A$, and let $V,V'\in \Rep[u]{A}$.
        \begin{enumerate}
            \item[(i)] If $V$ is irreducible and $BV\neq 0$, then  $\quo{BV}\in\Irr[u]{B}$.
           	\item[(ii)] If $V,V'$ are irreducible and $BV,BV'\neq 0$,
            then $V\cong V'$ $\iff$ $\quo{BV}\cong \quo{BV'}$.
            \item[(iii)] If $V$ is irreducible and $BV\neq 0$,
            then $V\wc V'$ $\iff$ $\quo{BV}\wc \quo{BV'}$.
        \end{enumerate}
    \end{lem}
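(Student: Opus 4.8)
The plan is to prove the three parts in order, using the "state-transfer" Lemma~\ref{LM:corner-coefs} as the main bridge between representations of $A$ and of $B=BAB$. For part (i), suppose $V\in\Irr[u]{A}$ with $BV\neq0$; I must show $\quo{BV}$ has no proper nonzero closed $B$-submodule. Let $0\neq u\in\quo{BV}$; I would first reduce to $u\in BV$, then pick $e\in\ids{B}$ with $eu=u$. Since $V$ is irreducible and $BV\neq 0$, the closed $A$-submodule $\quo{Au}$ equals $V$; applying the idempotent $e$ and using that $e|_V$ is continuous gives $\overline{e A u}=eV\supseteq BV\cap eV$... more carefully: $\quo{B u}\supseteq \quo{eAe\, u}=\quo{e(Au)}=\overline{e\quo{Au}}=eV$ (the penultimate equality because $u=eu$ so $eae\,u=ea\,u$, and $B=\bigcup_{e}eAe$). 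Ranging over $e\in\ids{B}$ and using Lemma~\ref{LM:approximation-lemma} (the net $\{eu\}$ converges to $u$, and similarly $\bigcup_e eV$ is dense in $BV$), one concludes $\quo{Bu}\supseteq\quo{BV}$, so $\quo{BV}$ is irreducible.

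For part (ii), the $(\Rightarrow)$ direction is immediate: a unitary $A$-isomorphism $f:V\to V'$ maps $BV$ onto $BV'$, and by Proposition~\ref{PR:iso-implies-unitary-iso} (or directly) restricts to a unitary $B$-isomorphism $\quo{BV}\to\quo{BV'}$. For $(\Leftarrow)$, given a unitary $B$-isomorphism $g:\quo{BV}\to\quo{BV'}$, fix $u\in\sphere{BV}$ and set $u'=g(u)\in\sphere{BV'}$; choosing $e\in\ids{B}$ with $eu=u$ we get $eu'=u'$ as well, and $\vphi_{\quo{BV},u}=\vphi_{\quo{BV'},u'}$. By Lemma~\ref{LM:corner-coefs}, $\vphi_{V,u}(a)=\vphi_{\quo{BV},u}(eae)=\vphi_{\quo{BV'},u'}(eae)=\vphi_{V',u'}(a)$ for all $a\in A$, so $\vphi_{V,u}=\vphi_{V',u'}$. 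Now Corollary~\ref{CR:coef-sharing} gives $\quo{Au}\cong\quo{Au'}$ as unitary $A$-representations, and since $V,V'$ are irreducible with $u\neq0\neq u'$ we have $\quo{Au}=V$, $\quo{Au'}=V'$, hence $V\cong V'$.

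For part (iii), again $(\Leftarrow)$: if $\quo{BV}\wc\quo{BV'}$, pick $u\in\sphere{BV}$ and $e\in\ids{B}$ with $eu=u$; by Lemma~\ref{LM:corner-coefs} it suffices to approximate $\vphi_{V,u}$ by $\vphi_{V',v'}$ on any $F\fsubseteq A$, and this follows by approximating $\vphi_{\quo{BV},u}$ on the finite set $\{eae\where a\in F\}\fsubseteq B$ by some $\vphi_{\quo{BV'},v'}$ with $v'\in eV'$ (Lemma~\ref{LM:equiv-conds-of-weak-containment} allows $v'\in\sphere{eV'}$), then applying Lemma~\ref{LM:corner-coefs} on the $V'$ side. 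For $(\Rightarrow)$: assume $V\wc V'$. Given $u\in\sphere{BV}$, $\veps>0$, $F\fsubseteq B$, pick $e\in\ids{B}$ with $eu=u$; view $u\in\sphere{V}$ and apply weak containment $V\wc V'$ to the finite set $F\cup\{e\}\subseteq A$ to get $v'\in\sphere{eV'}$ (the last clause of Lemma~\ref{LM:equiv-conds-of-weak-containment}, since $eu=u$) with $\norm{\vphi_{V,u}-\vphi_{V',v'}}_{F\cup\{e\}}$ small; then Lemma~\ref{LM:corner-coefs} on both sides (both $u$ and $v'$ are fixed by $e\in\ids{B}$) translates this into $\norm{\vphi_{\quo{BV},u}-\vphi_{\quo{BV'},v'}}_F$ small, establishing $\quo{BV}\wc\quo{BV'}$. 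The main obstacle is bookkeeping in part (i): making rigorous the passage from "$\quo{Au}=V$" to "$\quo{Bu}=\quo{BV}$" via the approximation lemma and continuity of the projections $e|_V$, rather than any deep new idea; the rest is routine once Lemma~\ref{LM:corner-coefs} is in hand.
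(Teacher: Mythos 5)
Your proof is correct and follows essentially the same approach as the paper: Lemma~\ref{LM:corner-coefs} and Corollary~\ref{CR:coef-sharing} carry part (ii), and Lemma~\ref{LM:corner-coefs} together with the ``$v'\in\sphere{eV'}$'' clause of Lemma~\ref{LM:equiv-conds-of-weak-containment} carry both directions of part (iii), just as in the paper. In part (i) the paper's argument is a little slicker, applying $B=BAB$ and the lemma $\quo{AW}=\quo{A\quo{W}}$ directly to an arbitrary nonzero $v\in\quo{BV}$ (using only $v\in\quo{Bv}$ from Lemma~\ref{LM:approximation-lemma}), thereby sidestepping your reduction to $u\in BV$ and the choice of an idempotent $e$ with $eu=u$; your route is sound as well once that reduction is justified.
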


    \begin{proof}
        (i) Let $0\neq v\in \quo{BV}$. By Lemma~\ref{LM:approximation-lemma}, $v\in\quo{Bv}$. Since $V$ is irreducible,
         we have $\quo{Av}=V$ and hence $\quo{Bv}=\quo{BABv}= \quo{B\quo{A\quo{Bv}}}\supseteq \quo{B\quo{Av}}=\quo{BV}$.

        (ii)
        We only show the nontrivial direction. Let $f:\quo{BV}\to \quo{BV'}$ be a unitary
        isomorphism, let $v\in\sphere{BV}$ and let $v'=fv\in BV'$. Then
        $\vphi_{\quo{BV},v}=\vphi_{\quo{BV'},v'}$, and hence $\vphi_{V,v}=\vphi_{V',v'}$
        (Lemma~\ref{LM:corner-coefs}). By Corollary~\ref{CR:coef-sharing}, $V\cong V'$.

        (iii)
        Suppose $V\wc V'$ and let $v\in\sphere{BV}$. Choose $e\in \ids{B}$
        such that $ev=v$. By Lemma~\ref{LM:equiv-conds-of-weak-containment},
        for all $\veps>0$ and $F\fsubseteq B$, there is $v'\in eV'\subseteq BV'$ such that
        $\norm{\vphi_{V,v}-\vphi_{V',v'}}_F<\veps$, hence $\quo{BV}\wc \quo{BV'}$.
        Conversely, assume $\quo{BV}\wc \quo{BV'}$ and let $v\in\sphere{BV}$.
        Then there is a net $\{v'_\alpha\}_{\alpha\in I}\subseteq BV'$ such that
        $\lim\{\vphi_{\quo{BV'},v'_\alpha}\}_{\alpha\in I}=\vphi_{\quo{BV},v}$ in $B^\vee$.
        By Lemma~\ref{LM:corner-coefs}, this implies $\lim\{\vphi_{V',v'_\alpha}\}_\alpha=\vphi_{V,v}$,
        so $V\wc V'$.
    \end{proof}

    \begin{thm}\label{TH:corner-unitary-dual}
        Let $B$ be a corner subalgebra of $A$.
    	Denote by $\udual{A}^{(B)}$  the set of classes $[V]\in\udual{A}$
    	such that $BV\neq 0$. Then $\udual{A}^{(B)}$ is open in $\udual{A}$,
    	and the function $[V]\mapsto [\quo{BV}]:\udual{A}^{(B)}\to \udual{B}$
    	is a topological embedding.
    \end{thm}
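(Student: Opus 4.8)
Denote the map by $\beta\colon\udual{A}^{(B)}\to\udual{B}$, $[V]\mapsto[\quo{BV}]$. It is well defined by Lemma~\ref{LM:irreducible-corner-module}(i) and injective by Lemma~\ref{LM:irreducible-corner-module}(ii), so the plan is to prove three topological statements: (1) $\udual{A}^{(B)}$ is open in $\udual{A}$; (2) $\beta$ is continuous; (3) $\beta$ is open onto its image. I will fix the following notation once and for all: for each $[V]\in\udual{A}^{(B)}$ choose a unit vector $v\in\sphere{BV}$ (possible since $BV\neq0$) and an idempotent $e=e_V\in\ids{B}$ with $ev=v$. By Proposition~\ref{PR:neighborhoods}(i) the sets $N_{V,v,\veps,F}$ ($\veps>0$, $F\fsubseteq A$), with this fixed $v$, are a neighbourhood basis of $[V]$ in $\udual{A}$, and the sets $N_{\quo{BV},v,\delta,G}$ ($\delta>0$, $G\fsubseteq B$) are a neighbourhood basis of $\beta([V])$ in $\udual{B}$. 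The computational ingredient I will reuse is Lemma~\ref{LM:corner-coefs}: for $w\in BV$ with $e_0w=w$ ($e_0\in\ids{B}$) one has $\vphi_{V,w}(a)=\vphi_{\quo{BV},w}(e_0ae_0)$ for all $a\in A$; here $e_0ae_0\in e_0Ae_0\subseteq BAB=B$, so $e_0ae_0$ is a legitimate element of $B$ — this last point, together with Lemmas~\ref{LM:corner-coefs}--\ref{LM:irreducible-corner-module}, is all the corner hypothesis will be used for. For (1): if $[U]\in N_{V,v,\frac12,\{e\}}$, a witness $u\in U$ satisfies $|\vphi_{V,v}(e)-\vphi_{U,u}(e)|<\tfrac12$, and since $\vphi_{V,v}(e)=\norm{ev}^2=1$ while $\vphi_{U,u}(e)=\norm{eu}^2$ this forces $eu\neq0$, hence $BU\neq0$; thus $N_{V,v,\frac12,\{e\}}\subseteq\udual{A}^{(B)}$.

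For (2), given $[V]$ and a basic neighbourhood $N_{\quo{BV},v,\delta,G}$ of $\beta([V])$, I would take $F:=eGe\cup\{e\}\fsubseteq A$ and $\veps:=\min(\delta,\tfrac12)$. If $[U]\in N_{V,v,\veps,F}$ with $BU\neq0$, choose a witness $u$ and set $u_1:=eu\in BU$, so $eu_1=u_1$. Unwinding inner products (using $ev=v$, $eu_1=u_1$, $e^*=e$), for every $b\in G$ one gets
\[
\vphi_{\quo{BV},v}(b)-\vphi_{\quo{BU},u_1}(b)=\vphi_{V,v}(ebe)-\vphi_{U,u}(ebe),
\]
which has absolute value $<\veps\le\delta$ since $ebe\in eGe\subseteq F$. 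Hence $\beta([U])=[\quo{BU}]\in N_{\quo{BV},v,\delta,G}$, i.e.\ $\beta$ sends the neighbourhood $N_{V,v,\veps,F}\cap\udual{A}^{(B)}$ of $[V]$ into the prescribed one.

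For (3), given $[V]$ and a basic neighbourhood $N_{V,v,\veps,F}$ of $[V]$ (shrink so $\veps\le1$), I would take $G:=eFe\cup\{e\}\fsubseteq B$ (again $eFe\subseteq eAe\subseteq B$) and $\delta:=\veps/2$, and show $N_{\quo{BV},v,\delta,G}\cap\beta(\udual{A}^{(B)})\subseteq\beta(N_{V,v,\veps,F}\cap\udual{A}^{(B)})$. So take $[\quo{BU}]\in N_{\quo{BV},v,\delta,G}$ with $BU\neq0$, witnessed by $u'\in\quo{BU}$. Since every $b\in G$ satisfies $ebe=b$, replacing $u'$ by $u_1:=eu'$ leaves $\vphi_{\quo{BU},u'}|_G$ unchanged, and the bound at $b=e$ forces $\norm{eu'}^2>\tfrac12$, so $u_1\neq0$ and $u_1\in e\,\quo{BU}$. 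The hard part — indeed the only place anything beyond bookkeeping happens — is that $u_1$ a priori lies only in the completion $\quo{BU}=\overline{BU}$, whereas Lemma~\ref{LM:corner-coefs} needs a vector inside $BU$. I would resolve this by density: $e\,BU$ is dense in $e\,\quo{BU}$ (as $e$ acts by an orthogonal projection) and each $b|_{\quo{BU}}$, $b\in G$, is bounded, so there is $u_2\in e\,BU$ — whence $u_2\in BU$ and $eu_2=u_2$ — with $\norm{\vphi_{\quo{BU},u_1}-\vphi_{\quo{BU},u_2}}_G<\delta$, and therefore $\norm{\vphi_{\quo{BV},v}-\vphi_{\quo{BU},u_2}}_G<2\delta=\veps$. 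Applying Lemma~\ref{LM:corner-coefs} to $(V,v)$ and to $(U,u_2)$ then gives $\vphi_{V,v}(a)-\vphi_{U,u_2}(a)=\vphi_{\quo{BV},v}(eae)-\vphi_{\quo{BU},u_2}(eae)$ for $a\in F$, of absolute value $<\veps$ because $eae\in eFe\subseteq G$; so $[U]\in N_{V,v,\veps,F}$ and $[\quo{BU}]=\beta([U])\in\beta(N_{V,v,\veps,F}\cap\udual{A}^{(B)})$. This makes $\beta(N_{V,v,\veps,F}\cap\udual{A}^{(B)})$ a neighbourhood of $\beta([V])$ in $\beta(\udual{A}^{(B)})$; since such sets range over a basis of $\udual{A}^{(B)}$, $\beta$ is open onto its image, and combined with (1) and (2) it is a topological embedding with open domain. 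As flagged, the only genuine obstacle is the density step in (3): one must pass from a vector of $\overline{BU}$ to one of $BU$ while keeping finitely many matrix coefficients controlled, and the test set is deliberately enlarged to $eFe\cup\{e\}$ precisely to make this work — the idempotent $e$ so that the approximating vector can be taken in $e\,BU$, and $eFe$ so that Lemma~\ref{LM:corner-coefs} applies verbatim rather than through a further approximation.
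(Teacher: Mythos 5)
Your proof is correct and follows essentially the same strategy as the paper: openness of $\udual{A}^{(B)}$ via the idempotent $e$, and then a transfer of basic neighbourhoods between $\udual{A}$ and $\udual{B}$ using the identity $\vphi_{V,ev}(a)=\vphi_{V,v}(eae)$ with test sets drawn from $eAe\subseteq B$. The paper phrases the transfer as a single bijective correspondence $N_{W,u,\veps,F}\leftrightarrow N_{U,u,\veps,F}\cap S$ for $e\in F\fsubseteq eAe$, whereas you split it into continuity and openness-onto-image with slightly different test sets; that is cosmetic.

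The one place you differ in substance is the density step in part (3), and it is in fact superfluous. You invoke it because Lemma~\ref{LM:corner-coefs} is \emph{stated} for $v\in BV$, so you first approximate $u_1=eu'\in e\,\quo{BU}$ by $u_2\in e\,BU$. But the conclusion of that lemma holds verbatim for any $w\in\quo{BU}$ with $ew=w$: one computes
\[
\vphi_{U,w}(a)=\Trings{aw,w}_U=\Trings{aew,ew}_U=\Trings{eaew,w}_U=\vphi_{\quo{BU},w}(eae),
\]
which only uses $ew=w$, $e^*=e=e^2$, and the fact that $eaew\in eAe\cdot\quo{BU}\subseteq B\cdot\quo{BU}\subseteq\quo{BU}$, so the last equality is just the agreement of the inner products of $U$ and $\quo{BU}$. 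Thus you may work directly with $u_1=eu'$ and skip the passage to $u_2$ entirely; the test set $G=eFe\cup\{e\}$ and $\delta=\veps$ then already suffice. This is exactly what the paper does implicitly by never citing the lemma by name inside the embedding argument and computing with $\vphi_{V,ev}$, $\vphi_{\quo{BV},ev}$ for arbitrary $v\in V$.
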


    \begin{proof}
        Let $[W]\in\udual{A}^{(B)}$ and choose $u\in \sphere{BW}$
        and $e\in \ids{B}$ with $eu=u$. We claim that $N_{W,u,0.5,\{e\}}$ (see~\ref{subsec:unitary-dual})
        is contained in $\udual{A}^{(B)}$.
        Let $[V]\in N_{W,u,0.5,\{e\}}$. Then there is $v\in V$ such that
        $\abs{\Trings{ev,v}-\Trings{eu,u}}<0.5$. Since $\Trings{eu,u}=1$, this implies
        $\abs{\Trings{ev,v}}>0.5$. In particular, $ev\neq 0$, and hence $BV\neq 0$.
        This proves the first statement of the theorem.

        To proceed,
        let $S$ denote the image of the map $[V]\mapsto [\quo{BV}]:\udual{A}^{(B)}\to \udual{B}$.
        Fix some $[W]\in\udual{A}^{(B)}$, define $u$ and $e$ as above, and write $U=\quo{BW}$.
        By Proposition~\ref{PR:neighborhoods}(i) and the previous paragraph,
        the sets $\{N_{W,u,\veps,F}\}_{0<\veps<0.5,\,e\in F\fsubseteq A}$ form a basis of open neighborhoods of $[W]$ in $\udual{A}^{(B)}$,
        and
        the sets $\{N_{U,u,\veps,F}\}_{0<\veps,F\fsubseteq B}$ form a basis of open neighborhoods of $[U]$ in $\udual{B}$.
        Note that if $F\fsubseteq A$, then then $N_{W,u,\veps,F}=N_{W,u,\veps,eFe}$.
        Indeed, if $[V]\in N_{W,u,\veps,F}$, then there is $v\in V$ with $\norm{\vphi_{V,v}-\vphi_{W,u}}_F<\veps$.
        Since $\vphi_{V,ev}(a)=\vphi_{V,v}(eae)$ and $\vphi_{W,u}(a)=\vphi_{W,u}(eae)$ (cf.\ Lemma~\ref{LM:corner-coefs}),
        we get $\norm{\vphi_{V,ev}-\vphi_{W,u}}_{eFe}<\veps$, i.e.\ $[V]\in N_{W,u,\veps,eFe}$. The other direction is similar.
        Likewise, $N_{U,u,\veps,F}=N_{U,u,\veps,eFe}$ when $F\subseteq B$.
        Therefore,
        it is enough to show that for all and
        $0<\veps<\frac{1}{2}$ and $e\in F\fsubseteq eAe$, the map $[V]\mapsto [\quo{BV}]$ maps
        $N_{W,u,\veps,F}$ bijectively
        onto $N_{U,u,\veps,F}\cap S$.

        Suppose that $[V]\in N_{W,u,\veps,F}$.
        Then there is $v\in V$ with $\norm{\vphi_{V,v}-\vphi_{W,u}}_F<\veps$.
        Since $F\subseteq eAe$, we have $\vphi_{V,ev}|_F=\vphi_{V,v}|_F$, and hence $\norm{\vphi_{\quo{BV},ev}-\vphi_{U,u}}_F<\veps$,
        meaning that $[\quo{BV}]\in N_{U,u,\veps,F}$. Reversing the argument shows that any
        $[V]\in\udual{A}^{(B)}$ with $[\quo{BV}]\in N_{U,u,\veps,F}$ lies in $N_{W,u,\veps,F}$.
        Finally, the map $[V]\mapsto [\quo{BV}]$ is injective by Lemma~\ref{LM:irreducible-corner-module}(ii).
    \end{proof}

    \begin{remark}
        It is not true in general that any  unitary representation of a corner subalgebra $B$
        is  of the form $\quo{BV}$ for $V\in \Irr[u]{A}$. For example, take $A$ as in Example~\ref{EX:double-arrow-quiver}
        and
        let $B=e_0Be_0$. Then $B$ is the free unital algebra generated by $Y:=a_{01}a_{10}$ (with $Y^*=Y$), and
        by Remark~\ref{RM:unitary-dual-topology}, there is a homeomorphism $[U]\mapsto Y|_{U}:\udual{B}\to \R$.
        However, only $U$-s corresponding to $\R_{\geq 0}$ are of the form $\quo{BV}$ for $V\in \Irr[u]{A}$.
        This follows directly by checking the list of irreducible representations given in Example~\ref{EX:double-arrow-quiver}.
        Also, since $Y=a_{01}a_{10}=a_{10}^*a_{10}$, we must have $Y|_{\quo{BV}}\in\R_{\geq 0}$.
    \end{remark}

    The following theorem is a strengthening of Theorem~\ref{TH:subalgebra-spectrum-I} in case $B$ is a corner.

    \begin{thm}\label{TH:corner-subalgebra-spectrum-I}
        Let $B$ be a corner subalgebra of $A$, and let $V'\in \Rep[u]{A}$. Then:
        \begin{enumerate}
            \item[(i)] $\Spec_{B}(\quo{BV'})=\{[\quo{BV}]\where [V]\in\Spec_A(V')\cap\udual{A}^{(B)}\}$.
            \item[(ii)] $\Spec_{A}(V')\cap\udual{A}^{(B)}=\{[V]\in\udual{A}\suchthat [\quo{BV}]\in\Spec_{B}(\quo{BV'})\}$.
        \end{enumerate}
    \end{thm}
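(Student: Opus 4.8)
The plan is to obtain both parts from Theorem~\ref{TH:subalgebra-spectrum-I} applied to the corner $B$, together with the corner-specific facts Lemma~\ref{LM:irreducible-corner-module} and Theorem~\ref{TH:corner-unitary-dual}. The one elementary observation I will invoke repeatedly is: if $U\leq W$ in $\Rep[u]{B}$ with $U\neq 0$ and $W$ irreducible, then the image of $U$ is a nonzero closed $B$-submodule of $W$, hence equals $W$, so the embedding is a unitary isomorphism and $[U]=[W]$.

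For (i), the inclusion $\supseteq$ is immediate: if $[V]\in\Spec_A(V')\cap\udual{A}^{(B)}$ then $V\wc V'$ and $BV\neq 0$, so $\quo{BV}$ is irreducible by Lemma~\ref{LM:irreducible-corner-module}(i) and $\quo{BV}\wc\quo{BV'}$ by Lemma~\ref{LM:irreducible-corner-module}(iii); thus $[\quo{BV}]\in\Spec_B(\quo{BV'})$. For $\subseteq$, I would take $[U]\in\Spec_B(\quo{BV'})$, so $U\in\Irr[u]{B}$ with $U\wc\quo{BV'}$, and apply the second assertion of Theorem~\ref{TH:subalgebra-spectrum-I} to produce $V\in\Irr[u]{A}$ with $V\wc V'$ and $U\leq\quo{BV}$. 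Since $U\neq 0$ we get $\quo{BV}\neq 0$, i.e.\ $[V]\in\udual{A}^{(B)}$, whence $\quo{BV}$ is irreducible (Lemma~\ref{LM:irreducible-corner-module}(i)) and the elementary observation gives $[U]=[\quo{BV}]$ with $[V]\in\Spec_A(V')\cap\udual{A}^{(B)}$, as required.

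For (ii), I first note that for $[V]\in\udual{A}$ the condition $[\quo{BV}]\in\Spec_B(\quo{BV'})$ already forces $\quo{BV}\neq 0$ (a zero module is not irreducible), so the right-hand side of (ii) is contained in $\udual{A}^{(B)}$ and, for such $[V]$, $\quo{BV}$ is irreducible. Then for any $[V]\in\udual{A}^{(B)}$, Lemma~\ref{LM:irreducible-corner-module}(iii) gives $V\wc V'\iff\quo{BV}\wc\quo{BV'}$, i.e.\ $[V]\in\Spec_A(V')\iff[\quo{BV}]\in\Spec_B(\quo{BV'})$; intersecting with $\udual{A}^{(B)}$ yields the claimed equality. (Equivalently, (ii) is just (i) with the injective topological embedding $[V]\mapsto[\quo{BV}]$ of Theorem~\ref{TH:corner-unitary-dual} inverted.) There is no serious obstacle here: the substantive work was already carried out in Theorem~\ref{TH:subalgebra-spectrum-I} and Lemma~\ref{LM:irreducible-corner-module}, and the present argument is bookkeeping; the only points needing a moment's care are the collapse of $U\leq\quo{BV}$ to an isomorphism when both are irreducible, and checking that no spurious classes with $BV=0$ enter the right-hand side of (ii).
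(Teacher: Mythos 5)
Your proof is correct and follows essentially the same route as the paper: the $\supseteq$ direction of (i) from Lemma~\ref{LM:irreducible-corner-module}(i),(iii), the $\subseteq$ direction from the ``moreover'' clause of Theorem~\ref{TH:subalgebra-spectrum-I} combined with irreducibility of $\quo{BV}$, and (ii) from Lemma~\ref{LM:irreducible-corner-module}(iii). The only difference is that you spell out the bookkeeping (e.g.\ $U\neq 0\Rightarrow \quo{BV}\neq 0$, and the collapse $U\leq\quo{BV}\Rightarrow[U]=[\quo{BV}]$) that the paper leaves implicit.
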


    \begin{proof}
        (i) That $\Spec_{B}(\quo{BV'})\supseteq \{[\quo{BV}]\where [V]\in\Spec_A(V')\cap\udual{A}^{(B)}\}$
        follows from parts (i) and (iii) of Lemma~\ref{LM:irreducible-corner-module}. Conversely, if $[U]\in\Spec_{B}(\quo{BV'})$,
        then by Theorem~\ref{TH:subalgebra-spectrum-I},
        there exists $V\in\Irr[u]{A}$ such that $U\leq \quo{BV}$ and $V\in \Spec_A(V')$.
        Since $\quo{BV}$ is irreducible (Lemma~\ref{LM:irreducible-corner-module}(i)), we have $[U]=[\quo{BV}]$.

        (ii) This follows from Lemma~\ref{LM:irreducible-corner-module}(iii).
    \end{proof}

    A family of corner subalgebras  $\{B_\alpha\}_{\alpha\in I}$ of $A$ is called
    \emph{full} if $\sum_{\alpha} AB_\alpha A=A$. In this case, it is easy
    to see that    for every $A$-module $V$ with $AV\neq 0$, there is
    $\alpha\in I$ such that $B_\alpha V\neq 0$.

    \begin{example}\label{EX:full-families-of-idems}
        Assume $A$ is unital and $\{e_1,\dots,e_t\}$ is a family of orthogonal idempotents
        in $\ids{A}$ such that $\sum_i e_i=1_A$. Then the family $\{e_iAe_i\}_{i=1}^t$ is full.
        More generally, if $\{e_\alpha\}_{\alpha\in I}\subseteq\ids{A}$,
        then the family $\{e_\alpha Ae_\alpha\}_{\alpha\in I}$ is full
        if and only if $\sum_{\alpha}Ae_\alpha A=A$. In this case,
        $\{e_\alpha\}_{\alpha\in I}$ is called a \emph{full} family of idempotents in $A$.
    \end{example}

    \begin{example}\label{EX:matrix-algebra}
        Suppose there is an idempotented involutary algebra $A'$ such that
        $A=\nMat{A'}{n}$ and the involution on $A$ is given by $((a_{ij})_{i,j})^*=(a_{ji}^*)_{i,j}$.
        Let $B$ denote the $*$-subalgebra of $A$ consisting of matrices
        of the form
        \[
        \left[\begin{smallmatrix}
        * & 0 & \dots\\
        0 & 0 & \dots\\[-3pt]
        \vdots & \vdots & \ddots
        \end{smallmatrix}\right]\subseteq \nMat{A'}{n}\ .
        \]
        Then $B$ is a corner subalgebra of $A$ and the family $\{B\}$ is full, i.e.\ $A=ABA$.
        In fact, in this case, the map $V\mapsto \quo{BV}:\Rep[u]{A}\to\Rep[u]{B}$
        is an equivalence of categories resembling a Morita equivalence (see \cite[\S17--18]{La99}).
        A mutual inverse of the functor $V\mapsto \quo{BV}$ can be constructed as follows:
        Identify $B$ with $A'$.
        For any $U\in \Rep[u]{B}$, view $U^n$ as column vectors and give it the obvious left $\nMat{A'}{n}$-module structure.
        Define an inner product on $U^n$ by $\Trings{(u_1,\dots,u_n),(u'_1,\dots,u'_n)}=\sum_{i=1}^n\Trings{u_i,u'_i}$.
        Then $U\mapsto U^n:\Rep[u]{B}\to\Rep[u]{A}$ is a mutual inverse of $V\mapsto \quo{BV}$. The details
        are left to the reader.
    \end{example}

    \begin{cor}\label{CR:corner-subalgebra-spectrum-II}
        Let $\{B_\alpha\}_{\alpha\in I}$ be a full family of corner subalgebras, and let $V'\in \Rep[u]{A}$.
        Then
        \[\Spec_A(V')=\{[V]\in\udual{A}\where [\quo{B_\alpha V}]\in\Spec_{B_\alpha}(\quo{B_\alpha V})~\text{for some $\alpha\in I$}\}\ .\]
    \end{cor}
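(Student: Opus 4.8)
The plan is to reduce the corollary immediately to Theorem~\ref{TH:corner-subalgebra-spectrum-I} applied to each member $B_\alpha$ of the family, the only extra ingredient being the elementary observation about full families recorded just before the statement. Recall that for a corner subalgebra $B$, part (ii) of Theorem~\ref{TH:corner-subalgebra-spectrum-I} reads
\[
\Spec_A(V')\cap\udual{A}^{(B)}=\{[V]\in\udual{A}\where[\quo{BV}]\in\Spec_B(\quo{BV'})\}\ ,
\]
where $\udual{A}^{(B)}=\{[V]\in\udual{A}\where BV\neq 0\}$. Taking $B=B_\alpha$ and forming the union over $\alpha\in I$, the right-hand sides assemble into exactly the set $\{[V]\in\udual{A}\where[\quo{B_\alpha V}]\in\Spec_{B_\alpha}(\quo{B_\alpha V'})\text{ for some }\alpha\in I\}$ appearing in the statement. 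So the corollary will follow once I show
\[
\Spec_A(V')=\bigcup_{\alpha\in I}\bigl(\Spec_A(V')\cap\udual{A}^{(B_\alpha)}\bigr)\ .
\]

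First I would check the nontrivial inclusion ``$\subseteq$'' of this last identity. If $[V]\in\Spec_A(V')$ then $V\in\Irr[u]{A}$ is nonzero, so by (U2) the smooth submodule $AV$ is dense in $V$, hence $AV\neq 0$. Since the family is full, $\sum_\alpha AB_\alpha A=A$, and therefore $\sum_\alpha AB_\alpha AV=AV\neq0$; consequently $B_\alpha V\neq0$ for at least one $\alpha$, i.e.\ $[V]\in\udual{A}^{(B_\alpha)}$. The reverse inclusion is trivial. This proves the displayed identity, and hence, via the previous paragraph, the inclusion ``$\subseteq$'' of the corollary. For ``$\supseteq$'': if $[\quo{B_\alpha V}]\in\Spec_{B_\alpha}(\quo{B_\alpha V'})$ for some $\alpha$, then $\quo{B_\alpha V}$ is a nonzero irreducible unitary representation of $B_\alpha$, so $B_\alpha V\neq0$, i.e.\ $[V]\in\udual{A}^{(B_\alpha)}$; now Theorem~\ref{TH:corner-subalgebra-spectrum-I}(ii) puts $[V]$ in $\Spec_A(V')\cap\udual{A}^{(B_\alpha)}\subseteq\Spec_A(V')$.

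There is essentially no hard part here: the substance has already been done in Theorems~\ref{TH:subalgebra-spectrum-I} and~\ref{TH:corner-subalgebra-spectrum-I}, and the only genuinely new point is that a nonzero smooth $A$-module cannot be annihilated by every $B_\alpha$ in a full family, which is immediate from $\sum_\alpha AB_\alpha A=A$. If anything needs care it is merely keeping track of the two directions and of the identification $\udual{A}^{(B_\alpha)}\subseteq\udual{A}$; I would present the argument in the compact two-step form above.
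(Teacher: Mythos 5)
Your proof is correct and follows essentially the same route as the paper, which simply cites Theorem~\ref{TH:corner-subalgebra-spectrum-I} together with the remark just before Example~\ref{EX:full-families-of-idems} that a full family $\{B_\alpha\}$ cannot annihilate a nonzero smooth $A$-module; you spell out the covering of $\Spec_A(V')$ by the open sets $\udual{A}^{(B_\alpha)}$ and both inclusions. (You also silently and correctly read $\Spec_{B_\alpha}(\quo{B_\alpha V'})$ in place of the statement's $\Spec_{B_\alpha}(\quo{B_\alpha V})$, which is a typo in the corollary as printed.)
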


    \begin{proof}
        This follows from Theorem~\ref{TH:corner-subalgebra-spectrum-I} and the fact that
        for all $V\in\Irr[u]{A}$, there is $\alpha\in I$ such that $B_\alpha V\neq 0$.
    \end{proof}

    Corollary~\ref{CR:corner-subalgebra-spectrum-II} and Theorem~\ref{TH:corner-subalgebra-spectrum-I}
    imply for any full family of corner subalgebras $\{B_\alpha\}_{\alpha\in I}$ and $V\in\Rep[u]{A}$,
    the set $\Spec_A(V)$ determines the sets $\{\Spec_{B_\alpha}(\quo{B_\alpha V})\}_{\alpha\in I}$
    and vice versa.
    As a consequence $\{\Spec_{B_\alpha}(\quo{B_\alpha V})\}_{\alpha\in I}$ determines $\Spec_{B'}(V)$
    for any idempotented $*$-subalgebra $B'$ of $A$ (Theorem~\ref{TH:subalgebra-spectrum-I}).

\subsection{Pre-Unitary Representations}
\label{subsec:pre-unitary-reps}

    Let $A$ be an  idempotented $*$-algebra.
    Some of the representations that we will encounter later in the text
    will not be unitary but rather {pre-unitary}.

%\medskip

    %Recall that a \emph{pre-Hilbert space} is a vector space $V$ endowed with a biadditive product
    %$\Trings{-,-}:V\times V\to \C$ such that $\Trings{\alpha u,\beta v}=\alpha\cconj{\beta}\Trings{u,v}$
    %and $\Trings{w,w}\in \R_{>0}$ for all $u,v\in V$, $\alpha,\beta\in \C$ and $0\neq w\in V$.
    %The function $\norm{v}:=\Trings{v,v}^{1/2}$ is a norm on $V$, but $V$ is not assumed to be
    %complete with respect to it. We denote the completion of $V$ with respect to $\norm{\,\cdot\,}$
    %by $\quo{V}$. Clearly, $\quo{V}$ is a Hilbert space.

    %If $T:V\to V'$ is a linear operator between pre-Hilbert spaces, a \emph{dual} of $T$ is an operator $T^*:V'\to V$
    %satisfying $\Trings{Tv,v'}=\Trings{v,T^*v'}$ for all $v\in V$, $v'\in V'$. In contrast to the case of Hilbert spaces,
    %the dual need not exist even may $T$ is bounded.

\medskip

    A \emph{pre-unitary representation} of $A$ is a pre-Hilbert space $V$
    endowed with a left $A$-module structure such that
    \begin{enumerate}
        \item[(U1)\phantom{$'$}] $\Trings{au,v}=\Trings{u,a^*v}$ for all $a\in A$ and $u,v\in V$,
        \item[(U2$'$)]  $V$ is smooth (i.e.\ $V=AV$),
        \item[(U3)\phantom{$'$}] for all $a\in A$, the operator $a|_{V}:V\to V$ is  bounded.
    \end{enumerate}
    We denote by $\Rep[pu]{A}$ the category of pre-unitary representations of $A$.
    The morphisms are continuous $A$-homomorphisms.
    A pre-unitary representation is said to be irreducible if its underlying $A$-module
    is irreducible.

\medskip

    If $V$ is a pre-unitary representation, then the action of $A$ extends to the completion
    $\quo{V}$, which then becomes a unitary representation. Conversely, if $U$ is a unitary representation,
    then $\sm{U}=AU$ is a pre-unitary representation. We always have $\quo{\sm{U}}=U$, but in general $\sm{(\quo{V})}$
    may be larger than $V$. (However, we shall  see in \ref{subsec:admissible-representations} that
    $\sm{(\quo{V})}=V$ when $V$ is \emph{admissible}.)
    The maps $V\mapsto \quo{V}$ and $U\mapsto \sm{U}$ are in fact functorial,
    taking continuous $A$-homomorphisms to continuous $A$-homomorphisms.

\medskip

    We  extend the spectral theory of the previous sections to pre-unitary representations
    by defining
    \[\Spec_A(V'):=\Spec_A(\quo{V'}) \qquad\forall\,\, V'\in\Rep[pu]{A}\ .\]
    This means that $[V]\in\udual{A}$
    lies in $\Spec_A(V')$ if and only if $V\wc\quo{V'}$.
    By  Lemma~\ref{LM:equiv-conds-of-weak-containment}, this is
    equivalent to:
    \begin{itemize}
        \item
        For all $v\in \sphere{V}$, $\veps>0$ and $F\fsubseteq A$,
        there is $v'\in \sphere{V'}$ such that $\norm{\vphi_{V,v}-\vphi_{V',v'}}_F<\veps$
        for all $a\in F$.
    \end{itemize}
    Furthermore, it is enough to have this for just one $v\in\sphere{V}$, and the assumption
    $v'\in\sphere{V'}$ can be dropped.

\subsection{Admissible Representations}
\label{subsec:admissible-representations}

    We finish this chapter with recalling
    {admissible representations}.

\medskip

    Let $A$ be an idempoteneted $*$-algebra. A unitary (resp.\ pre-unitary)
    representation $V$ of $A$ is called
    \emph{admissible} if $\dim  eV<\infty$ for any  $e\in \ids{A}$.
    %Some of the results to follow will be phrased for pre-unitary representations, but
    %they hold equally well for unitary representations (by Theorem~\ref{TH:pu-u-categorical-equiv} below).

    \begin{example}
        If $A$ is unital, then the admissible representations are precisely the finite-dimensional
        representations. For such representations, the terms unitary and pre-unitary coincide.
    \end{example}

    The following theorem summarizes the most important features of admissible representations.

    \begin{thm}\label{TH:admissible-pu-are-completely-red}
        Assume $V'\in \Rep[u]{A}$ is admissible. Then:
        \begin{enumerate}
            \item[(i)] $V'$ is completely reducible, namely, there are irreducible unitary
            representations $\{V_i\}_{i\in I}$ such that $V\cong \hat{\bigoplus}_{i\in I} V_i$.
            \item[(ii)] If $V\in \Irr[u]{A}$, then there are finitely many indices $i\in I$ (possibly none)
            for which $V\cong V_i$.
            \item[(iii)] For all $V\in \Irr[u]{A}$, we have $V\wc V'$ $\iff$ $V\leq V'$.
            In particular, $\Spec_A(V')=\{[V_i]\where i\in I\}$.
        \end{enumerate}
    \end{thm}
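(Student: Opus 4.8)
The whole statement will be reduced to a single lemma about finite-dimensional representations of a \emph{unital} $*$-algebra: if $B$ is unital, $W_0\in\Irr[u]{B}$, $V_0\in\Rep[u]{B}$ is finite-dimensional, and $W_0\wc V_0$, then $W_0\leq V_0$. To prove this I would argue as follows. Since $W_0$ is irreducible and $W_0\wc V_0$, Lemma~\ref{LM:norm-approximation} gives $\norm{b|_{W_0}}\leq\norm{b|_{V_0}}$ for every $b\in B$; hence the $B$-action on $W_0$ factors through the image $\overline B_0$ of $B$ in $\End(V_0)$. By (U1) for $V_0$ the map $B\to\End(V_0)$ is a $*$-homomorphism, so $\overline B_0$ is a finite-dimensional $*$-subalgebra of the matrix algebra $\End(V_0)$, hence semisimple, $\overline B_0\cong\prod_k\nMat{\C}{n_k}$. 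Every irreducible unitary $\overline B_0$-module is one of the standard matrix modules $\C^{n_k}$ (in particular finite-dimensional), so $W_0\cong\C^{n_{k_0}}$ for some $k_0$; and since $\overline B_0$ is represented faithfully on $V_0$, each $\C^{n_k}$ occurs in $V_0$ with positive multiplicity. Thus $W_0\leq V_0$.

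Next I would deduce part (iii) by a corner reduction. Given admissible $V'$ and $V\in\Irr[u]{A}$ with $V\wc V'$, choose $e\in\ids{A}$ with $eV\neq 0$; then $eV'\neq 0$ (weak containment), and with the corner subalgebra $B=eAe$ we have $\overline{BV}\in\Irr[u]{B}$ (Lemma~\ref{LM:irreducible-corner-module}(i)), $\overline{BV'}\subseteq eV'$ finite-dimensional (admissibility), and $\overline{BV}\wc\overline{BV'}$ (Lemma~\ref{LM:irreducible-corner-module}(iii)). The finite-dimensional lemma gives a unitary $B$-embedding $\overline{BV}\hookrightarrow\overline{BV'}$. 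Picking a unit vector $v_0\in BV$ with $e_1v_0=v_0$ for some $e_1\in\ids{B}$ and letting $u_0$ be its image, the embedding yields $\vphi_{\overline{BV},v_0}=\vphi_{\overline{BV'},u_0}$ over $B$, and Lemma~\ref{LM:corner-coefs} upgrades this to the equality $\vphi_{V,v_0}=\vphi_{V',u_0}$ of functionals on $A$; then Corollary~\ref{CR:coef-sharing} gives $V=\overline{Av_0}\cong\overline{Au_0}\leq V'$. Combining (iii) with Corollary~\ref{CR:non-empty-spectrum} (which furnishes some $[V]\in\Spec_A(V')$ whenever $V'\neq 0$) shows that every nonzero admissible $V'\in\Rep[u]{A}$ contains an irreducible closed submodule.

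Part (i) then follows by a Zorn's lemma argument: take a maximal family $\{V_i\}_{i\in I}$ of pairwise orthogonal irreducible closed $A$-submodules of $V'$ and set $W=\overline{\bigoplus_iV_i}$. If $W\neq V'$, then $W^\perp$ is a nonzero closed $A$-submodule; it lies in $\Rep[u]{A}$ (the orthogonal projection onto it is $A$-linear, which gives (U2)) and is admissible since $eW^\perp\subseteq eV'$ for all $e\in\ids{A}$, so by the previous paragraph it contains an irreducible closed submodule, contradicting maximality. Hence $W=V'$ and $V'\cong\hat{\bigoplus}_iV_i$, with $\sup_i\norm{a|_{V_i}}\leq\norm{a|_{V'}}$ so that this direct sum indeed lies in $\Rep[u]{A}$. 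For (ii): if $V\in\Irr[u]{A}$ and $V\cong V_i$ for some $i$, choose $e\in\ids{A}$ with $eV\neq 0$; then $eV_i\neq 0$ for every such $i$, and since $eV'=\hat{\bigoplus}_ieV_i$ is finite-dimensional only finitely many $eV_i$ are nonzero, so only finitely many $i$ satisfy $V_i\cong V$. The final claim is immediate: (iii) gives $\Spec_A(V')=\{[V]\in\udual{A}\where V\leq V'\}$, which by (i) is exactly $\{[V_i]\where i\in I\}$.

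The main obstacle is the finite-dimensional lemma together with the bookkeeping needed to move a weak containment over $A$ down to the corner $B=eAe$ and back: one has to be careful about the distinction between $\overline{BV}$ and $eV$ and about the precise hypotheses of Lemmas~\ref{LM:irreducible-corner-module} and~\ref{LM:corner-coefs}. Once that is set up, semisimplicity of a finite-dimensional $*$-algebra does the real work, and the remaining parts (the Zorn argument for (i) and the multiplicity count for (ii)) are routine.
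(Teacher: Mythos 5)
Your proposal is correct, and the key step is done by a genuinely different route than the paper. Both arguments reduce to the corner $B=eAe$, but the paper extracts the embedding $V\leq V'$ in two places and in each case by a different, more topological device: in part (i) it invokes Theorem~\ref{TH:corner-subalgebra-spectrum-I}(i) to lift an irreducible $eAe$-submodule of the finite-dimensional space $eV'$ to an irreducible $A$-submodule of $V'$, and in part (iii) it uses compactness of the unit sphere of $eV'$ to pass from a net of states approximating $\vphi_{V,v}$ to a genuine state $\vphi_{V',v'}$ equal to $\vphi_{V,v}$, then applies Corollary~\ref{CR:coef-sharing}. You instead prove (iii) first by factoring the action through the image $\overline B_0$ of $B$ in $\End(\overline{BV'})$, observing that $\overline B_0$ is a finite-dimensional $*$-subalgebra of a matrix algebra, hence semisimple $\cong\prod_k\nMat{\C}{n_k}$, that every irreducible unitary $\overline B_0$-module is some $\C^{n_k}$ (so $\overline{BV}$ is automatically finite-dimensional), and that faithfulness of $\overline B_0$ on $\overline{BV'}$ forces each $\C^{n_k}$ to occur there. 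You then recover the existence of irreducible submodules (needed for the Zorn step of (i)) from (iii) together with Corollary~\ref{CR:non-empty-spectrum}, thereby bypassing Theorem~\ref{TH:corner-subalgebra-spectrum-I} entirely. What your approach buys is a more elementary, self-contained proof that replaces both the spectral transfer theorem and the compactness argument with the single algebraic fact that a finite-dimensional $*$-algebra of operators on a Hilbert space is semisimple. What the paper's approach buys is that it applies the already-developed machinery (the corner-spectrum and coefficient-sharing lemmas) uniformly, so the compactness trick is the only new ingredient. Your bookkeeping for the corner reduction — factoring the action, locating $e_1\in\ids{B}$ so Lemma~\ref{LM:corner-coefs} applies, and using Corollary~\ref{CR:coef-sharing} to pass from $\vphi_{V,v_0}=\vphi_{V',u_0}$ to $V\leq V'$ — matches the paper's use of the same lemmas and is sound.
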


    \begin{proof}
        This is a well-known. We recall the proof for the sake of completeness.

        (i) We first claim that if $V'\neq 0$, then $V'$ contains an irreducible submodule.
        There is $e\in\ids{A}$ such that $eV'\neq 0$. Since $V'$ is admissible,
        $\dim eV'<\infty$. This easily implies that there is $U\in\Irr[u]{eAe}$
        such that $U\leq eV'$. By Theorem~\ref{TH:corner-subalgebra-spectrum-I}(i),
        there is $V\in\Irr[u]{A}$ such that $U=eV$. Let $u\in \sphere{U}$.
        We view $u$ as a vector of both $V$ and $V'$.
        By Lemma~\ref{LM:corner-coefs}, $\vphi_{V,u}=\vphi_{V',u}$
        and hence $V\leq V'$ (Corollary~\ref{CR:coef-sharing}).

        Now, using Zorn's Lemma, let $\{V_i\}_{i\in I}$ be a maximal collection
        of pairwise orthogonal irreducible subrepresentations of $V$. Let $W=(\bigoplus_i V_i)^\perp$.
        Then $W$ is admissible, and hence, if it is nonzero,
        it admits an irreducible subrepresentation. Since this contradicts the maximality
        of $\{V_i\}_{i\in I}$,  we must have $W=0$, so $V=\hat{\bigoplus}_{i\in I} V_i$.

        (ii) Choose $e\in \ids{A}$  such that $eV\neq 0$.
        Then $\dim \hat{\bigoplus}_i eV_i=\dim eV'<\infty$, and hence $V_i\cong V$
        is possible for only finitely many $i$-s.

        (iii) Assume $V\wc V'$ and choose $e\in \ids{A}$ such that $eV\neq 0$.
        Let $v\in \sphere{eV}$. By Lemma~\ref{LM:equiv-conds-of-weak-containment},
        there is a net $\{v'_\alpha\}_{\alpha\in J}\subseteq\sphere{eV'}$ such that $\lim\{\vphi_{V',v'_\alpha}\}_{\alpha}=\vphi_{V,v}$.
        Since $\dim eV'<\infty$, the unit sphere $\sphere{eV'}$ is compact, and hence
        there is a subnet $\{v'_\alpha\}_{\alpha\in J'}$ converging to $v'\in \sphere{V'}$.
        This means $\lim\{\vphi_{V',v'_\alpha}\}_{\alpha\in J'}=\vphi_{V',v'}$ and hence
        $\vphi_{V,v}=\vphi_{V',v'}$. By Corollary~\ref{CR:coef-sharing},  $V\leq V'$.
    \end{proof}

    Let $V'\in\Rep[u]{A}$ be admissible, let $[V]\in\udual{A}$, and write $V'=\hat{\bigoplus}_{i\in I}V_i$
    as in Theorem~\ref{TH:admissible-pu-are-completely-red}. The number of $i$-s with $V_i\cong V$
    is called the multiplicity of $[V]$ in $V'$ and is denoted $\mult_{V'}[V]$. It can be
    defined alternatively by
    \[
    \mult_{V'}[V]=\dim_\C\cHom_A(V,V')
    \]
    (cf.\ Theorem~\ref{TH:Schur-lemma}), so it is independent of the decomposition $V'=\hat{\bigoplus}_{i\in I}V_i$.
    This allows us to consider $\Spec_A(V')$ as a \emph{multiset}. For our purposes, a multiset cosists of a set $X$
    together with a map $\mult_X:X\to\N\cup\{0\}$ assigning each $x\in X$ its multiplicity; morphisms of multisets are
    defined in the obvious way.
    When $V'\in\Rep[u]{A}$ is admissible, we let
    \[
    \mSpec_A(V')
    \]
    denote the multiset obtained from $\Spec_A(V')$ by setting the multiplicity of $[V]$
    to be $\mult_V'[V]$.\footnote{
   		It is likely that the idea of ``spectrum with multiplicity'' can be extended
   		beyond admissible reprsentations by considering projection valued measures on $\udual{A}$,
   		rather than multisets;
   		developing the necessary theory
   		is beyond the scope of this work.
   		%In the case of $C^*$-algebras, this is discussed in \cite{Dixmier} (\gap{really?}), for instance.
    }

    \begin{thm}\label{TH:admissible-category-equivalence}
    	Let $\Rep[u,ad]{A}$ (resp.\ $\Rep[pu,ad]{A}$) denote the category
    	of admissible unitary (resp.\ pre-unitary) representations of $A$.
    	Then $\Rep[u,ad]{A}$ and $\Rep[pu,ad]{A}$ are equivalent;
    	the equivalence is given by the functors $U\mapsto \sm{U}:\Rep[u,ad]{A}\to\Rep[pu,ad]{A}$ and
    	$V\mapsto \quo{V}:\Rep[pu,ad]{A}\to \Rep[u,ad]{A}$.
    	In particular,  $U\in \Rep[u,ad]{A}$ is irreducible if and only $\sm{U}$ is irreducible (as an $A$-module).
    \end{thm}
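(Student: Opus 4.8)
The plan is to show that the two functors in question, which are already known to be functorial and to satisfy $\quo{\sm U}=U$ for every $U\in\Rep[u]{A}$ (this is immediate from axiom (U2): $\sm U=AU$ is dense in $U$), restrict to mutually inverse equivalences on the admissible subcategories. The only identity not yet available is $\sm{\quo V}=V$ for admissible pre-unitary $V$, and the crux of the proof is a small lemma about corner spaces.

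First I would record: for $e\in\ids A$ and $V\in\Rep[pu]{A}$ one has $e\quo V=\quo{eV}$ (closure taken in $\quo V$), and for $U\in\Rep[u]{A}$ the subspace $e\sm U=eAU$ is dense in $eU$. Both are consequences of the fact that $e|_{\quo V}$ (resp.\ $e|_U$) is an orthogonal projection, hence has closed image: on the one hand $\quo{eV}\subseteq e\quo V$ because $e\quo V$ is closed and contains $eV$; on the other $e\quo V\subseteq\quo{eV}$ by continuity of $e$ and density of $V$ in $\quo V$. Now assume admissibility. If $U$ is admissible then $\dim eU<\infty$, so $eU$ is complete and $e\sm U=eU$; running over all $e\in\ids A$ shows $\sm U$ is an admissible pre-unitary representation. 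If $V$ is admissible then $eV$ is finite-dimensional, hence closed, so $e\quo V=\quo{eV}=eV\subseteq V$ for every $e\in\ids A$, whence $\quo V$ is admissible unitary. Finally, for admissible $V$: given $w\in A\quo V$, write $w=\sum_i a_ix_i$ with $a_i\in A$, $x_i\in\quo V$, pick $e\in\ids A$ with $ea_ie=a_i$ for all $i$, and observe $w=\sum_i a_i(ex_i)$ with $ex_i\in e\quo V=eV\subseteq V$, so $w\in AV=V$; as trivially $V=AV\subseteq A\quo V$, we conclude $\sm{\quo V}=A\quo V=V$. Together with $\quo{\sm U}=U$ and the evident naturality of both isomorphisms, this gives the equivalence of categories.

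It remains to match up the two notions of irreducibility. If $U\in\Rep[u,ad]{A}$ is reducible, it has a proper nonzero closed submodule $W$; then $W\in\Rep[u]{A}$, $\sm W=AW$ is dense in $W$ and hence nonzero, $\sm W\subseteq\sm U$, and $\sm W\ne\sm U$ (otherwise $W=\quo{\sm W}=\quo{\sm U}=U$), so $\sm U$ fails to be an irreducible $A$-module. Conversely, let $W$ be a nonzero $A$-submodule of $\sm U$ (algebraic sense). Then $\quo W$ is a nonzero closed submodule of $U$, so if $U$ is irreducible we get $\quo W=U$, i.e.\ $W$ is dense in $U$. For $u=\sum_i a_iu_i\in AU=\sm U$, choose $e\in\ids A$ with $ea_ie=a_i$; density of $W$ and $\dim eU<\infty$ force $eW=eU$, so each $eu_i\in W$ and $u=\sum_i a_i(eu_i)\in AW\subseteq W$. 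Hence $W=\sm U$, and $\sm U$ is irreducible as an $A$-module.

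The only non-formal ingredient, and the step I expect to require the most care, is the finite-dimensionality argument: that for an admissible representation each corner space $eV$ is closed in the completion, so that $e\quo V=eV$ and one can "pull vectors of $\quo V$ back into $V$" by multiplying by a suitable $e\in\ids A$. Once this is in hand, the rest is routine bookkeeping, the main point to keep straight being the distinction between algebraic submodules (used for pre-unitary irreducibility) and closed submodules (used for unitary irreducibility).
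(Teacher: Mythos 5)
Your proof is correct. The core identity $\sm{(\quo V)}=V$ is the only nontrivial step, and you handle it by a different route than the paper: the paper writes $V':=\sm{(\quo V)}$, decomposes $V'=eV\oplus(eV)^\perp$ for each $e\in\ids{A}$ with $ev=v$, and shows the $(eV)^\perp$-component of $v$ lies in $V^\perp$ (the orthogonal complement of $V$ in $V'$), concluding $V'=V+V^\perp$, which forces $V^\perp=0$ after passing to completions. You instead first establish the identity $e\quo V=\quo{eV}$ for any pre-unitary $V$ (both sides being the closed image of the orthogonal projection $e$), so that admissibility gives $e\quo V=eV\subseteq V$ directly, and the pullback $w=\sum_i a_i(ex_i)$ then lands in $AV=V$. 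The two arguments are essentially the same amount of work; yours isolates the cleaner intermediate lemma $e\quo V=eV$, which is also what you need to make the functors well-defined on the admissible subcategories and to run the irreducibility comparison -- two points the paper treats as ``straightforward'' and omits, but which you write out correctly. The irreducibility argument in your last paragraph (using density of $W$ plus finite-dimensionality of $eU$ to conclude $eW=eU$) is a sound direct proof of the paper's final ``in particular.''
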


    \begin{proof}
    	We need to check that
    	$U\mapsto \sm{U}:\Rep[u,ad]{A}\to\Rep[pu,ad]{A}$ and
    	$V\mapsto \quo{V}:\Rep[pu,ad]{A}\to \Rep[u,ad]{A}$
    	are mutually inverse to each other.
    	The only detail that is not straightforward is showing that $\sm{(\quo{V})}=V$ for all $V\in\Rep[pu,ad]{A}$.
    	We prove this by showing that if $V^\perp$ is the orthogonal complement of $V$ in $V':=\sm{(\quo{V})}$, then
    	$V\oplus V^\perp=V'$. This  implies $\quo{V}=\quo{V'}=\quo{V}\oplus\quo{V^\perp}$, forcing $V^\perp=0$
    	and $V=V'=\sm{(\quo{V})}$.
    	
    	Let $v\in V'$. There is $e\in\ids{A}$
    	such that $ev=v$. Since $V$ is admissible, $\dim eV<\infty$, in which case
    	it is easy to check that $V'=eV\oplus(eV)^\perp$. Therefore, there are
    	unique  $v_1\in eV$ and $v_2\in (eV)^\perp$ with $v=v_1+v_2$.
    	Since $v=ev=ev_1+ev_2$ and $\Trings{ev_2,eV}=\Trings{v_2,eeV}=0$, we must
    	have $v_2=ev_2$. This means, $\Trings{v_2,V}=\Trings{ev_2,V}=\Trings{v_2,eV}=0$,
    	so $v_2\in V^\perp$. Therefore, $v=v_1+v_2\in V+V^\perp$, as required.
    \end{proof}

\section{Simplicial Complexes}
\label{sec:simplicial-complexes}

    This chapter recalls simplicial complexes. We also discuss topological groups
    acting on simplicial complexes.

\subsection{Simplicial Complexes}
\label{subsec:simp-complex}

    Recall that a  \emph{simplicial complex} consists of a nonempty  set $X$  of finite sets
    such that subsets of sets in $X$ are also in $X$.
    A partially ordered set $(Y,\leq)$ which is isomorphic to $(X,\subseteq)$ for some simplicial complex $X$ will also be called
    a simplicial complex. This holds  when:
    \begin{enumerate}
        \item[(1)] for all $y\in Y$, the partially ordered set $(\{y'\in Y\suchthat y'\leq y\},\leq)$ is isomorphic
        to $(P(\{1,\dots,n\}),\subseteq)$ for some $n\in\N$ and
        \item[(2)] for all $y,y'\in Y$, the set $\{y,y'\}$ has an infimum relative to $\leq $.
    \end{enumerate}

    The elements of a simplicial complex $X$ are called \emph{cells}.
    The complex $X$ is \emph{locally finite} if every cell in $X$ is contained in finitely many cells.
    We let $X^{(i)}$ denote the sets in $X$ of cardinality
    $i+1$. Elements of $X^{(i)}$ are called $i$-dimensional cells, or just
    $i$-cells.
    The \emph{vertex set} of $X$ is defined to be $\vrt{X}:=\bigcup_{x\in X}x$. By
    abuse of notation, we sometimes refer to elements of $X^{(0)}$ as vertices.
    A morphism of simplicial complexes $f:X\to Y$ consists of a function $f:\vrt{X}\to \vrt{Y}$
    such that for all $i$ and $x\in X^{(i)}$, we have $f(x):=\{f(v)\where v\in x\}\in Y^{(i)}$. The induced
    maps $X^{(i)}\to Y^{(i)}$ and $X\to Y$ are also denoted $f$.

    The dimension of $X$, denoted $\dim X$, is the maximal $i$ such that $X^{(i)}\neq \emptyset$.
    Cells of dimension $\dim X$ are called \emph{chambers}, and $X$ is \emph{pure} if every
    cell is contained in a chamber. If $x$ and $x'$ are distinct cells in $X$, then the \emph{combinatorial distance} of $x$ from $x'$,
    denoted $\dist(x,x')$, is the minimal $t\in\N$
    such that there exists a sequence of cells $y_1\dots,y_t$  with
    $x\subseteq y_1$, $x'\subseteq y_t$ and
    $y_i\cap y_{i+1}\neq \emptyset$
    for all $1\leq i<t$. We further set $\dist(x,x)=0$. (This agrees with the combinatorial distance in graphs.)
    The \emph{ball of radius $n$} around $x$,  $\Ball_X(x,n)$, consists of the
    cells in $X$ of distance $n$ or less from $x$. When $X$ is locally finite, all the balls $\Ball_X(x,n)$  ($x\neq\emptyset$)
    are  finite.
    We say that $X$ is \emph{connected} if $X=\bigcup_{n\geq 0} \Ball_X(x,n)$ for some (and hence all) $x\in X- \{\emptyset\}$,
    or equivalently, if $\dist(x,x')<\infty$ for all $x,x'\in X$.

    Every simplicial complex $X$ can be regarded as a topological space by replacing its abstract simplices
    with topological simplices, glued in the obvious way. If $X$ and $Y$ are connected simplicial complexes,
    then a morphism $f:X\to Y$ is  a \emph{cover map}
    if it  is a cover map when $X$ and $Y$ are regarded as topological spaces. This is equivalent to
    saying that $f:\vrt{X}\to \vrt{Y}$ is surjective and induces
    a bijection $f:\{x\in X\suchthat v\in x\}\to\{y\in Y\suchthat f(v)\in y\}$ for all $v\in\vrt{X}$.
    In this case, the \emph{deck transformations} of  $f:X\to Y$ are the
    automorphisms $h$
    of $X$ satisfying $f\circ h=f$.

    We let $\dot{X}$    denote $X- \{\emptyset\}$.

\medskip

    \emph{Throughout, all simplicial complexes are assumed to be connected and locally finite.}

\subsection{Orientation}
\label{subsec:orientation}

    Let $X$ be a simplicial complex. An \emph{ordered cell} in $X$ consists of a pair $(x,\leq)$ where $x\in X$ and $\leq$ is a full
    ordering of the vertices of $x$.
    Two orders on $x$ are equivalent if one can be obtained from the other by an even permutation. We denote
    by $[x,\leq]$ the equivalence class of $(x,\leq)$ and call it an \emph{oriented cell}.
    Oriented cells will usually be denoted by the letters $\mathsf{x},\mathsf{y},\dots$.
    Note that when $x\in X^{(i)}$ and $i>0$, there are exactly two possible orientations on $x$. In this
    case, we write $[x,\leq]^\op$ for $x$ endowed with the orientation different from the one induced by $\leq$.
    When $i\leq 0$,
    there is only one possible orientation on $x$, and we set $[x,\leq]^\op=[x,\leq]$ for convenience.
    In addition, for  $\{v_0,\dots,v_i\}\in X^{(i)}$, let
    \[
    [v_0v_1\dots v_i]=[\{v_0,\dots,v_i\}\, ,\,v_0<v_1<\dots<v_i]\ .
    \]
    We let $\ori{X}$ (resp.\ $\ori{X}^{(i)}$) denote the set of oriented cells in $X$
    (resp.\ $X^{(i)}$).

\medskip

    We now recall the construction of the \emph{$i$-dimensional Laplacian of $X$}
    (which will also be considered when $X$ is infinite):
    Recall that the space of \emph{$i$-dimensional forms} (or just \emph{$i$-forms}) of finite support on $X$ is defined by
    \[
    \Omega_i^-(X)=\left\{\begin{array}{ll}
    \Big\{\vphi\in\llf(\ori{X}^{(i)})\suchthat \vphi(\mathsf{x}^\op)=-\vphi(\mathsf{x})\,\text{for all}\,\mathsf{x}\in\ori{X}^{(i)}\Big\} & i>0 \\
    \llf(\ori{X}^{(i)}) & i= 0
    \end{array}\right.
    \]
    We make $\Omega_i^-(X)$ into a pre-Hilbert space by setting
    \begin{equation} \label{EQ:inner-prod-on-Omega-minus}
    \Trings{\vphi,\psi}=2\cdot \sum_{x\in X^{(i)}}{\vphi x}\cdot \cconj{\psi x}\qquad\forall \vphi,\psi\in\Omega_i^-(X)\ .
    \end{equation}
    In the summation over $X^{(i)}$, we pick an arbitrary orientation for $x$. Since both $\vphi$ and $\psi$ reverse sign when the orientation
    is changed, the expression $\cconj{\vphi x}\cdot \psi x$ is well-defined. When $i>0$,  the inner product
    agrees with the usual inner product on $\llf(\ori{X}^{(i)})$.

    Recall that the boundary map $\partial_{i+1}:\Omega_{i+1}^-(X)\to \Omega_{i}^-(X)$
    and the coboundary map
    $\delta_i:\Omega_i^-(X)\to\Omega_{i+1}^-(X)$
    are defined by
        \[
        (\partial_{i+1}\psi)[v_0\dots v_i]=\sum_{\substack{v\in\vrt{X} \\ \{v,v_0,\dots,v_i\}\in X^{(i+1)}}}\psi[vv_0\dots v_1]
        \]
        \[
        (\delta_i\vphi)[v_0\dots v_{i+1}]=\sum_{j=0}^{i+1}(-1)^j\vphi[v_0\dots \hat{v}_j\dots v_{i+1}]
        \]
    ($\hat{v}_j$ means omitting the $j$-th entry).
    It is easy to check that $\delta_i^*=\partial_{i+1}$.
    The upper, lower and total $i$-dimensional Laplacians are defined by
        \[
        \Delta_i^+=\partial_{i+1}\delta_i,\qquad\Delta_i^-=\delta_{i-1}\partial_i,\qquad\Delta_i=\Delta_i^++\Delta_i^-\, ,
        \]
    respectively (with the convention that $\Delta_0^-=0$). All three Laplacians take $\Omega_i^-(X)$ into itself.

\medskip

    Finally, for later usage, we also define
    \[
    \Omega_i^\pm(X)=\llf_i(\ori{X}^{(i)})\ ,
    \]
    %which we endow with the usual $\llf$ norm,
    and
    \[
    \Omega_i^+=\left\{\begin{array}{ll}
    \Big\{\vphi\in\llf(\ori{X}^{(i)})\suchthat \vphi(\mathsf{x}^\op)=\vphi(\mathsf{x})\,\text{for all}\,\mathsf{x}\in\ori{X}^{(i)}\Big\} & i>0 \\
    \llf(\ori{X}^{(i)}) & i= 0
    \end{array}\right.
    \]
    We give $\Omega_i^+(X)$ the inner product induced from   $\Omega_i^\pm(X)$.
    The space $\Omega_i^+(X)$ is sometimes called the space of \emph{$i$-dimensional anti-forms} of finite support on $X$.
    Notice that when $i>0$, we have $\Omega_i^\pm(X)=\Omega_i^+(X)\oplus \Omega_i^-(X)$ as pre-Hilbert spaces,
    whereas $\Omega_0^\pm(X)=\Omega_0^+(X)\cong\Omega_0^-(X)$.
    In addition, for $i>0$,
    the pre-Hilbert space
    $\Omega_i^+(X)$ is naturally isomorphic to $\llf(X^{(i)})$; the isomorphism is given by linearly extending
    $\e_{[x,\leq]}+\e_{[x,\leq]^\op}\mapsto \sqrt{2}\e_x$
    ($x\in X^{(i)}$). We will
    therefore often identify $\Omega^+_i(X)$ with $\llf(X^{(i)})$,
    usually without comment.

\subsection{Groups Acting on Simplicial Complexes}
\label{subsec:G-complex}

    Let $G$ be an $\ell$-group, i.e.\ a totally disconnected locally compact Hausdorff topological group.
    By a \emph{$G$-complex}, we mean a  (locally finite, connected) simplicial complex $\calX$ on which $G$ acts faithfully via automorphisms
    and such that for all $x\in\dot{\calX}$, the stablizer $\Stab_G(x)$ is a compact open subgroup
    of $G$. The latter is equivalent to   $\Stab_G(v)$ being a compact  open subgroup of $G$ for all $v\in\vrt{\calX}$.

    \begin{example}\label{EX:G-complex-building}
        The complex
        $\calB_d(F)$ of Chapter~\ref{sec:ramanujan-complexes} is a $G$-complex
        for $G=\nPGL{F}{d}$.
    \end{example}

    \begin{example}\label{EX:G-complex-k-regular-tree}
        Let $\calX$ be a $k$-regular tree and let $G=\Aut(\calX)$. We give $\calX$ the discrete topology
        and $G$ the topology of pointwise convergence. It is easy to check that $G$ is an $\ell$-group and $\calX$ is a $G$-complex
    \end{example}

    Generalizing Example~\ref{EX:G-complex-k-regular-tree}, we have:

    \begin{prp}\label{PR:characterization-of-G-comps}
        Let $\calX$ be a simplicial complex. Give $\calX$ the discrete topology
        and $G:=\Aut(\calX)$ the topology of pointwise convergence.
        Then:
        \begin{enumerate}
            \item[(i)] $G$ is an $\ell$-group and $\calX$ is a $G$-complex. Consequently, $\calX$ is an
            $H$-complex for any closed subgroup $H$ of $G$.
            \item[(ii)] If $\calX$ is an $H$-complex for an $\ell$-group  $H$, then the map $H\to \Aut(\calX)=G$
            is a closed embedding.
        \end{enumerate}
    \end{prp}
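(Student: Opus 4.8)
The plan is to handle the two parts separately, in each case reducing everything to properties of vertex stabilisers; throughout one uses the standing assumption that $\calX$ is connected and locally finite. The single genuinely technical point, underlying both parts, is that vertex stabilisers in $G=\Aut(\calX)$ are \emph{compact}; once that is in hand the rest is formal point-set topology of topological groups.

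\textbf{Part (i).} First I would fix the topology: since $\calX$ carries the discrete topology, $G=\Aut(\calX)$ sits inside $\vrt{\calX}^{\vrt{\calX}}$ (functions, product topology) as the set of bijections of $\vrt{\calX}$ preserving cells in both directions, and a subbasis of neighbourhoods of $g\in G$ is given by the sets $\{g'\in G : g'(v)=g(v)\}$, $v\in\vrt{\calX}$. From this it is immediate that $G$ is Hausdorff with a basis of clopen sets (the sets constraining finitely many values are clopen), hence Hausdorff and totally disconnected, and that multiplication and inversion are continuous by the routine ``finite support'' argument --- to pin down $(gh)(v)$ or $g^{-1}(v)$ one need only constrain the arguments on finitely many vertices --- so $G$ is a topological group. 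The substantive claim is local compactness, and for this it suffices to show $K:=\Stab_G(v_0)$ is compact for some (hence every) vertex $v_0$, since $K$ is visibly open. Here connectedness and local finiteness enter: for $v\in\vrt{\calX}$ let $S_v$ be the set of vertices at combinatorial distance $\dist(\{v_0\},\{v\})$ from $v_0$, which is finite because every ball $\Ball_\calX(v_0,n)$ is finite; any $g\in K$ fixes $v_0$ and preserves distances, so $g(v)\in S_v$, whence $K\subseteq C:=\prod_{v\in\vrt{\calX}}S_v$, a compact space by Tychonoff. It then remains to check $K$ is closed in $C$, i.e.\ that ``$g$ is a bijection of $\vrt{\calX}$ preserving cells in both directions and fixing $v_0$'' survives limits taken in $C$. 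Fixing $v_0$, preservation of cells, and injectivity pass to limits immediately because $\vrt{\calX}$ is discrete; the point needing care --- the technical crux of (i) --- is \emph{surjectivity}: given $w\in\vrt{\calX}$ and $g=\lim_\alpha g_\alpha$ in $C$ with $g_\alpha\in K$, each preimage $g_\alpha^{-1}(w)$ lies in the finite set $S_w$, so one vertex recurs cofinally and is sent to $w$ by $g$; the same trick shows $g^{-1}$ preserves cells. Hence $K$ is compact, $G$ is an $\ell$-group, and $\Stab_G(v)$ is compact open for every vertex $v$ by the identical argument, so $\calX$ is a $G$-complex (invoking the vertex-wise reformulation of the $G$-complex condition recalled after its definition). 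For the last assertion, a closed subgroup $H\le G$ is again an $\ell$-group, acts faithfully, and has $\Stab_H(v)=H\cap\Stab_G(v)$ compact (closed in the compact $\Stab_G(v)$) and open in $H$; so $\calX$ is an $H$-complex.

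\textbf{Part (ii).} Let $\rho\colon H\to\Aut(\calX)=G$ be the action homomorphism; it is injective because the action is faithful, and continuous because the $\rho$-preimage of a subbasic open set $\{g:g(v)=w\}$ is either empty or a coset of the open subgroup $\Stab_H(v)$. I would then prove $\rho$ is a \emph{proper} map: given a compact $C\subseteq G$, cover it by finitely many translates $g_1\Stab_G(v_0),\dots,g_k\Stab_G(v_0)$ of the compact open subgroup $\Stab_G(v_0)$; since $\rho(h)\in\Stab_G(v_0)$ exactly when $hv_0=v_0$, we have $\rho^{-1}(\Stab_G(v_0))=\Stab_H(v_0)$, so each non-empty $\rho^{-1}(g_i\Stab_G(v_0))$ is a translate of $\Stab_H(v_0)$ and hence compact --- this is precisely where the compactness of $\Stab_H(v_0)$, guaranteed by the definition of an $H$-complex, is used --- and therefore $\rho^{-1}(C)$ is a closed subset of a finite union of compact sets, hence compact. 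Since a proper continuous map into a locally compact Hausdorff space is closed, and a continuous closed injection is a homeomorphism onto its (closed) image, and $G$ is locally compact Hausdorff by part (i), this shows $\rho$ is a closed embedding. (If one prefers to avoid the properness formalism, closedness of $\rho(H)$ can be seen directly: if $g=\lim_\alpha\rho(h_\alpha)$ then eventually $h_\alpha v_0=g(v_0)$, so after fixing one such $h_0$ the net $h_0^{-1}h_\alpha$ eventually lies in the compact set $\Stab_H(v_0)$, a convergent subnet yields some $k\in H$, and continuity of $\rho$ and of multiplication in $G$ force $g=\rho(h_0k)\in\rho(H)$; a parallel but easier argument, using that $\Stab_G(v_0)$ is open, shows $\rho$ is open onto its image.) The main obstacle in (ii) is thus, once again, arranging for the compactness of $\Stab_H(v_0)$ to interact with the openness of $\Stab_G(v_0)$.
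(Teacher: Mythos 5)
Your proof is correct. The two parts of the argument diverge from the paper in slightly different ways, and it's worth noting where.

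For part (i), both you and the paper reduce everything to compactness of a vertex stabiliser via local finiteness, connectedness, and Tychonoff; the only difference is cosmetic. You embed $\Stab_G(v_0)$ directly into $\prod_v S_v$ (spheres around $v_0$) and check that the defining conditions --- bijectivity, preservation of cells in both directions, fixing $v_0$ --- pass to limits, with surjectivity and $g^{-1}$-preservation requiring the cofinal/pigeonhole trick. The paper instead realises $\Stab_G(v_0)$ as the inverse limit $\invlim K_n$ of the finite automorphism groups of the balls $\Ball_\calX(v_0,n)$, which packages the same pigeonhole reasoning into the fact that inverse limits of finite discrete groups are compact. Either phrasing works; the paper's is arguably cleaner because it outsources the bijectivity-survives-limits issues to the general theory of profinite groups, whereas your version makes the crux (surjectivity of the limit) explicit.

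For part (ii), the arguments are genuinely different. The paper first shows the restriction $\Stab_H(x)\to\Stab_G(x)$ is a closed topological embedding (compact-to-Hausdorff), then patches this along the coset decompositions of $H$ and $G$ to obtain that $H\to G$ is an embedding, and finally checks closedness of the image by observing that the complement is a union of translates of $\Stab_G(x)$ and $\Stab_G(x)\setminus\Stab_H(x)$, both open. You instead show the map is proper (preimages of compact sets are compact) --- using the exact same interaction between compactness of $\Stab_H(v_0)$ and openness of $\Stab_G(v_0)$ --- and then invoke the standard fact that a proper continuous injection into a locally compact Hausdorff space is a closed embedding. Your properness route is more abstract but more economical: it delivers closedness of the image and the embedding property in one stroke, whereas the paper establishes them separately. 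Both are valid; the paper's is more self-contained, yours leans on a general point-set fact about proper maps that you'd want to cite or sketch if writing this up formally.
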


    \begin{proof}
        (i) Let $v\in\vrt{X}$.
        The set $\Stab_G(v)$
        is clearly open, so we need to show it is compact.
        Let $B_n=\Ball_\calX(v,n)$ and let $K_n$ be the  group of simplicial automorphisms of $B_n$ that fix $v$.
        Give $K_n$ the topology of point-wise convergence.
        Since automorphisms  preserve combinatorial distance,
        we have well-defined maps $g\mapsto g|_{K_{n-1}}:K_n\to K_{n-1}$ ($n\in\N$), and since $\calX$ is connected, we have
        $\invlim K_n\cong \Stab_{G}(v)$ as topological groups.
        By assumption, $\calX$ is locally finite and hence the groups $K_n$ are finite and discrete.
        It therefore follows from Tychonoff's Theorem that $\Stab_G(v)$ is compact.

        (ii) Observe first that the map $H\to G$ is continuous.
        Indeed, the collection $\{\Stab_{G}(v)\where v\in\vrt{\calX}\}$
        is a subbasis for the open neighborhoods of $1_G$, and $H\cap\Stab_{G}(v)=\Stab_{H}(v)$
        is open for all  $v\subseteq\vrt{\calX}$.
        Fix some $x\in \dot{\calX}$. Then $\Stab_{H}(x)$ is compact, hence the inclusion map
        $\Stab_{H}(x)\to \Stab_{G}(x)$ is  closed. This map is also  continuous and injective, so it is a topological embedding.
        Since $H$ and $G$ are disjoint unions of  cosets
        of the open subgroups $\Stab_{H}(x)$ and $\Stab_{G}(x)$ respectively,
        the map $H\to G$ is a topological embedding. Finally, $H$ is closed
        in $G$ since the complement $G- H$ is a union of translations  of
        the open sets $\Stab_{G}(x)$
        and $\Stab_{G}(x)-\Stab_{H}(x)$.
    \end{proof}

    We say that a $G$-complex $\calX$ is \emph{almost transitive} or \emph{almost $G$-transtive}
    when $G\leftmod \calX$ is  finite.
    The $G$-complexes of Examples~\ref{EX:G-complex-building} and~\ref{EX:G-complex-k-regular-tree} are
    almost  transitive.

    \begin{example}\label{EX:G-complex-building-III}
        Let $\bfG$ be an almost simple algebraic group over a non-archimedean
        local field $F$, let $\bfZ$ be the center of $\bfG$, and let $G=\bfG(F)/\bfZ(F)$.\footnote{
            In general, $\bfG(F)/\bfZ(F)$ is not the same as $(\bfG/\bfZ)(F)$. More precisely, one has
            an exact sequence $1\to \bfZ(F)\to \bfG(F)\to (\bfG/\bfZ)(F)\to \mathrm{H}^1_{\textit{fppf}}(F,\bfZ)$.
        }
        Let $\calB$ be the  \emph{affine Bruhat-Tits
        building} of $\bfG$ (see \cite{Tits79}, \cite{BruhatTits72I}; a more elementary
        treatment in the case $\bfG$ is  classical
        can be found in \cite{AbramNebe02}).
        The building $\calB$ is a pure simplicial complex carrying a faithful left $G$-action such that  stabilizers of chambers are compact
        open and $G$ acts transitively on chambers.
        Therefore, $\calB$ is an almost transitive $G$-complex.
        We further note that the vertices of $\calB$ have a canonical coloring, which
        is preserved by $G$ when  $\bfG$ is simply-connected.
        For example, when $\bfG=\bPGL_d$, this recovers the construction of $\calB_d(F)$ with
        the coloring $C_0$ in Chapter~\ref{sec:ramanujan-complexes}.
        When $\bfG=\uSL_d$, we also have $\calB=\calB_d(F)$, but in this case $G$ is $\im(\nSL{F}{d}\to \nPGL{F}{d})$ and it preserves
        the coloring $C_0$ (the group $G$ may be smaller than the group of color-preserving automorphisms).
    \end{example}

\subsection{Quotients of Simplicial Complexes}
\label{subsec:quotienst-of-simp-comps}

    We now recall several facts about quotients of simplicial complexes.
    The results to follow seem to be known, but we could not find explicit proofs in the literature.
    We have therefore included the proofs.
    Throughout, $G$ is an $\ell$-group and $\calX$ is a $G$-complex.
    Recall that if $x\in\calX$ and $x=\{v_1,\dots,v_t\}$ with $v_1,\dots,v_t\in\vrt{\calX}$,
    then $gx=\{gv_1,\dots,gv_t\}$.

\medskip

    Let $\Gamma$ be a subgroup of $G$.
    The partial order on $\calX$ induces a partial order on $\Gamma\leftmod \calX$ given
    by $\Gamma x\leq \Gamma y$ $\iff$ $\gamma x\subseteq y$ for some $\gamma\in\Gamma$.
    However, $\Gamma\leftmod \calX$ is not a simplicial complex in general, and even when this is the case,
    the projection map $x\mapsto\Gamma x:\calX\to \Gamma\leftmod \calX$ may not be a cover map.
    When both conditions hold, we call $\Gamma\leftmod \calX$  a \emph{$G$-quotient} of $\calX$
    and write $\Gamma\leq_\calX G$.

    More generally, a complex $X$ is a $G$-quotient
    of $\calX$ if there is a cover map $f:\calX\to X$ such that the group of deck transformations
    of $f$, call it $\Gamma$, is contained in $G$. In this case, there is a unique isomorphism $\Gamma\leftmod \calX\to X$
    such that the diagram
    \[
    \xymatrix{
    \calX \ar[d]_{x\mapsto\Gamma x} \ar[dr]^f & \\
    \Gamma\leftmod \calX \ar[r] & X
    }
    \]
    commutes, so $\Gamma\leq_\calX G$. We shall see below (Proposition~\ref{PR:quotient-cover-sufficient-nec-conds})
    that when $\Gamma\leq_\calX G$, the group of deck
    transformations of $\calX\to \Gamma\leftmod \calX$ is $\Gamma$, hence both definitions agree.

    \begin{example}\label{EX:simplicial-quotient}
    	(i) Let $\calX$ be a $k$-regular tree and let $G=\Aut(\calX)$. Then
        any $k$-regular graph (without multiple edges and loops) is a $G$-quotient of $\calX$.

        (ii) Let $\calX$ be a cyclic graph on $4$ vertices, let $\Z/4\Z$
        act on $\calX$ by cyclic rotations, and let $\Gamma=2\Z/4\Z$. Then $\Gamma\leftmod \calX$
        is not a simplicial complex. The problem is techincal and lies in the fact
        that our definition of simplicial complexes does not allow multiple edges (or cells). Indeed,
        $\Gamma\leftmod\calX$ consists of two vertices connected by two edges, so it is not a simplicial complex.
        However, when $\Gamma=\Aut(\calX)$ (which is isomorphic to the dihedral group of order $8$),
        and $\calX$ is regarded as a topological space,
        the map $\calX\to \Gamma\leftmod\calX$ is not a cover map of topological spaces,
        and the problem of giving $\Gamma\leftmod\calX$
        a decent simplicial structure is inherent.
    \end{example}

    \begin{example}\label{EX:simplicial-quotient-II}
        Let $G=\nPGL{F}{d}$ and $\calX=\calB_d(F)$ be as in Chapter~\ref{sec:ramanujan-complexes},
        and let
        $N=\im(\mathrm{SL}_d(F)\to \nPGL{F}{d})$.
        It is easy to check
        that $N\leftmod \calB_d(F)$ is a simplicial complex consisting of a single simplex of dimension $d-1$.
        However, the map $\calB_d(F)\to N\leftmod \calB_d(F)$ is not a cover map (e.g.\ by
        Proposition~\ref{PR:quotient-cover-sufficient-nec-conds} below).
    \end{example}

    \begin{prp}\label{PR:quotient-complex-sufficient-nec-conds}
        Let $\Gamma\leq G$. Then $\Gamma\leftmod \calX$ is a simplicial complex if and only if
        \begin{enumerate}
            \item[(C1)] $\{\Gamma u_1,\dots,\Gamma u_t\}=\{\Gamma v_1,\dots,\Gamma v_s\}$
            implies $\Gamma\{u_1,\dots,u_t\}=\Gamma\{v_1,\dots,v_s\}$ for all $\{u_1,\dots,u_t\},\{v_1,\dots,v_s\}\in \vrt{\calX}$.
        \end{enumerate}
        In this case, $\dim (\Gamma\leftmod \calX)=\dim \calX$ and the map $x\mapsto \Gamma x: \calX\to \Gamma\leftmod \calX$ is a morphism of
        simplicial
        complexes.
    \end{prp}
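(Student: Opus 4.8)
The plan is to compare $\Gamma\leftmod\calX$ with a concrete candidate simplicial complex. Write $V'=\vrt{\calX}/\Gamma$ for the set of vertex orbits and define $p\colon\calX\to P(V')$ by $p(x)=\{\Gamma v\suchthat v\in x\}$ (with $p(\emptyset)=\emptyset$). This map is $\Gamma$-invariant, so it descends to a surjection $\bar p\colon\Gamma\leftmod\calX\to X'$, where $X':=p(\calX)\subseteq P(V')$ is ordered by inclusion; and if $\Gamma x\le\Gamma y$, say $\gamma x\subseteq y$, then $p(x)=p(\gamma x)\subseteq p(y)$, so $\bar p$ is monotone. The crucial observation is that $\bar p$ is \emph{injective} precisely when (C1) holds: $p(x)=p(y)$ is exactly the hypothesis $\{\Gamma u_1,\dots,\Gamma u_t\}=\{\Gamma v_1,\dots,\Gamma v_s\}$, and $\Gamma x=\Gamma y$ is exactly the conclusion.

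For the ``if'' direction, assume (C1). I would first record its basic consequence: any two distinct vertices of a common cell lie in distinct $\Gamma$-orbits. Indeed, if $v\ne v'$ belong to a cell and $\Gamma v=\Gamma v'$, then the cells $\{v,v'\}$ and $\{v\}$ have the same image under $p$, so (C1) gives $\Gamma\{v,v'\}=\Gamma\{v\}$, which is absurd since the $\Gamma$-action preserves the cardinality of a cell. Hence $|p(x)|=|x|$ for every cell $x$. Three facts now follow quickly. (a) $X'$ is a simplicial complex: it is a nonempty family of finite sets, and if $w\subseteq p(x)=\{\Gamma v_0,\dots,\Gamma v_i\}$ (distinct classes) then $w=\{\Gamma v_j\suchthat j\in I\}$ for some $I$, so $w=p(\{v_j\suchthat j\in I\})$ with $\{v_j\suchthat j\in I\}$ a face of $x$, whence $w\in X'$. (b) $\bar p$ is bijective: injective by (C1), surjective by construction. (c) $\bar p^{-1}$ is monotone: if $p(x)\subseteq p(y)$, pick for each $v\in x$ the unique $w_v\in y$ with $\Gamma w_v=\Gamma v$; the assignment $v\mapsto w_v$ is injective because distinct vertices of the cell $y$ lie in distinct orbits, so $x':=\{w_v\suchthat v\in x\}$ is a face of $y$ with $p(x')=p(x)$, and then $\Gamma x=\Gamma x'\le\Gamma y$ by (C1). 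Combining (a)--(c), $\bar p$ is an isomorphism of posets onto the simplicial complex $X'$, so $\Gamma\leftmod\calX$ is a simplicial complex. The remaining assertions are then immediate: $\dim(\Gamma\leftmod\calX)=\dim X'=\sup_x(|p(x)|-1)=\sup_x(|x|-1)=\dim\calX$, and the projection sends an $i$-cell $x=\{v_0,\dots,v_i\}$ to $\Gamma x$, whose vertex set consists of the $i+1$ distinct classes $\Gamma\{v_0\},\dots,\Gamma\{v_i\}$, so it is a morphism of simplicial complexes.

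For the ``only if'' direction, suppose $\Gamma\leftmod\calX$ is a simplicial complex, hence order-isomorphic to an honest simplicial complex $Z$. In any simplicial complex a cell equals its set of vertices, and two distinct cells have distinct vertex sets. The $0$-cells of $\Gamma\leftmod\calX$ are the classes $\Gamma\{v\}$, and $\Gamma\{v\}=\Gamma\{v'\}$ iff $\Gamma v=\Gamma v'$ in $\vrt{\calX}/\Gamma$; under this bijection between $0$-cells and vertex orbits, the vertex set of a cell $\Gamma x$ corresponds to $\{\Gamma v\suchthat v\in x\}=p(x)$. Therefore, if $x=\{u_1,\dots,u_t\}$ and $y=\{v_1,\dots,v_s\}$ are cells with $\{\Gamma u_1,\dots,\Gamma u_t\}=\{\Gamma v_1,\dots,\Gamma v_s\}$, then $\Gamma x$ and $\Gamma y$ have the same vertex set in $Z$, so $\Gamma x=\Gamma y$; this is exactly (C1).

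The step I expect to be the main obstacle is (c) in the ``if'' direction --- manufacturing an actual face $x'$ of $y$ with $\Gamma x'=\Gamma x$ out of the bare inclusion $p(x)\subseteq p(y)$. This is where (C1), via orbit-distinctness inside a single cell, does the real work; without it the inclusion of orbit-sets carries no information about the cells themselves, and indeed this is precisely the phenomenon that fails in examples such as Example~\ref{EX:simplicial-quotient}(ii). Everything else is an unwinding of definitions.
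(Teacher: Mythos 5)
Your proof is correct and follows essentially the same strategy as the paper's: both identify $\Gamma\leftmod\calX$ (as a poset) with the concrete simplicial complex of orbit-sets $X'=\{\{\Gamma v_1,\dots,\Gamma v_t\}\suchthat \{v_1,\dots,v_t\}\in\calX\}$ (the paper calls it $Y$) via the map you call $\bar p$ and the paper calls $\Phi$, and both derive the ``only if'' direction from the fact that in a simplicial complex a cell is determined by its set of height-$1$ elements (your ``a cell equals its set of vertices'' is the paper's property $(*)$).

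One place where you are actually more careful than the paper: the paper establishes that $\Phi$ is a bijective morphism of posets and then simply asserts ``Thus, $\Phi$ is an isomorphism.'' A bijective monotone map between posets need not be an order-isomorphism, and verifying that $\Phi^{-1}$ is monotone is precisely your step (c). Your step (c) does genuine work --- it needs the observation (itself a consequence of (C1)) that distinct vertices of a single cell lie in distinct $\Gamma$-orbits, in order to lift the set-inclusion $p(x)\subseteq p(y)$ to an honest face $x'\subseteq y$ with $\Gamma x'=\Gamma x$. So you have filled a small but real gap in the paper's argument. Your ``only if'' direction asserts without proof that the $0$-cells of $\Gamma\leftmod\calX$ are exactly the classes $\Gamma\{v\}$ and that the height-$1$ elements below $\Gamma x$ are the $\Gamma\{v\}$ with $v\in x$; these are routine to verify from the definition of the order on $\Gamma\leftmod\calX$ (the paper's corresponding claim, that the projection preserves height, is asserted with the same level of detail), but spelling them out would make the argument airtight.
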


    \begin{proof}
        Assume (C1) holds. Define $Y$ to be the simplicial complex
        consisting of subsets $\{\Gamma v_1,\dots,\Gamma v_t\}\subseteq \Gamma\leftmod \vrt{X}$ such
        that $\{v_1,\dots,v_t\}\in X$. There is a morphism of partially ordered sets $\Phi:\Gamma\leftmod X\to Y$
        given by $\Phi(\Gamma\{v_1,\dots,v_t\})=\{\Gamma v_1,\dots,\Gamma v_t\}$. It is clear that $\Phi$ is onto, and
        $\Phi$ is injective by (C1). Thus, $\Phi$ is an isomorphism and $\Gamma\leftmod X$ is a simplicial complex.

        Suppose now that $\Gamma\leftmod X$ is a simplicial complex.
        We shall use the following notion of {height} in partially ordered sets:
        Let $(Y,\leq)$ be a partially ordered set and let $y\in Y$. The \emph{height} of $y$ in $Y$,
        denoted  $\mathrm{h}(y)=\mathrm{h}_Y(y)$, is the maximal $n\in\N\cup\{0,\infty\}$
        such that there exists a sequence $y_0<y_1<\dots<  y_{n}=y$ in $Y$.
        Automorphisms of $(Y,\leq)$ are easily seen to preserve height. Using this, it is easy to see that
        for all $x\in X$, we have $\mathrm{h}_X(x)=\mathrm{h}_{\Gamma\leftmod  X}(\Gamma x)$. Simplicial complexes $Y$
        satisfy the following special property, whose easy proof is left to the reader:
        \begin{enumerate}
            \item[($*$)] For all $y\in Y$, $y=\sup\{y'\in Y\suchthat y'\leq y,\,\mathrm{h}(y')=1\}$.
        \end{enumerate}

        Suppose now that $\{u_1,\dots,u_t\},\{v_1,\dots,v_s\}\in X$ are such that
        $\{\Gamma u_1,\dots,\Gamma u_t\}=\{\Gamma v_1,\dots,\Gamma v_s\}$.
        By the previous paragraph, $\mathrm{h}(\Gamma\{u_i\})=1$ for all $i$.
        In addition, it is easy to see that $\{\Gamma \{u_i\}\}_{i=1}^t$ consists of
        all elements $\Gamma x\in\Gamma\leftmod\calX$ with $\mathrm{h}(\Gamma x)=1$ and $\Gamma x\leq\Gamma\{u_1,\dots,u_t\}$.
        Therefore, by ($*$), $\Gamma\{u_1,\dots,u_t\}=\sup\{\Gamma\{u_1\},\dots,\Gamma\{u_t\}\}$
        and likewise
        $\Gamma\{v_1,\dots, v_s\}=\sup\{\Gamma \{v_1\},\dots,\Gamma\{v_s\}\}$.
        Since $\{\Gamma\{u_1\},\dots,\Gamma\{u_t\}\}=\{\Gamma \{v_1\},\dots,\Gamma\{v_s\}\}$, this means
        $\Gamma\{u_1,\dots,u_t\}=\Gamma\{v_1,\dots, v_s\}$, so we have shown (C1).

        The remaining assertions of the proposition follow easily from the fact that the map
        $\calX\to \Gamma\leftmod\calX$ preserves heights.
    \end{proof}

    \begin{cor}\label{CR:unique-image-in-cell}
        Assume $\Gamma\leftmod \calX$ is a simplicial complex. Then for all $\gamma,\gamma'\in \Gamma$
        and $v\in\vrt{\calX}$ with $\{\gamma v,\gamma' v\}\in X$, we have $\gamma v=\gamma'v$.
        In particular, if $\gamma x=x$ for $x\in X$, then $\gamma$ fixes the vertices of $x$.
    \end{cor}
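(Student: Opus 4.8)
The plan is to reduce everything to Proposition~\ref{PR:quotient-complex-sufficient-nec-conds}. Since $\Gamma\leftmod\calX$ is assumed to be a simplicial complex, that proposition tells us the projection $p\colon\calX\to\Gamma\leftmod\calX$, $x\mapsto\Gamma x$, is a morphism of simplicial complexes. The single fact I need from this is built into the definition of such a morphism in~\ref{subsec:simp-complex}: it carries $X^{(i)}$ into $(\Gamma\leftmod\calX)^{(i)}$, so it preserves the dimension of a cell. In particular it cannot collapse a $1$-cell to a point.

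First I would fix $\gamma,\gamma'\in\Gamma$ and $v\in\vrt{\calX}$ with $x:=\{\gamma v,\gamma' v\}\in X$ and argue by contradiction. If $\gamma v\neq\gamma' v$, then $x\in X^{(1)}$, so $p(x)=\{\Gamma(\gamma v),\Gamma(\gamma' v)\}$ must lie in $(\Gamma\leftmod\calX)^{(1)}$. But $\Gamma(\gamma v)=\Gamma v=\Gamma(\gamma' v)$, so $p(x)=\{\Gamma v\}$ is a singleton, hence a $0$-cell, not a $1$-cell --- a contradiction. Therefore $\gamma v=\gamma' v$. (The case $\gamma v=\gamma' v$ is the trivial one: there $\{\gamma v,\gamma' v\}$ is the $0$-cell $\{\gamma v\}$, so nothing is being claimed.)

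For the ``in particular'' clause, I would take $x\in X$ with $\gamma x=x$ and an arbitrary vertex $v\in x$. Then $\gamma v\in\gamma x=x$, so $\{v,\gamma v\}\subseteq x$, and since subsets of cells are cells, $\{v,\gamma v\}\in X$. Applying the first part with $\gamma'$ replaced by $1\in\Gamma$ gives $\gamma v=v$, so $\gamma$ fixes every vertex of $x$.

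I do not expect a genuine obstacle: the proof is essentially a one-line consequence of dimension-preservation of $p$, and the only point requiring a moment's care is noticing that the hypothesis $\{\gamma v,\gamma' v\}\in X$ already subsumes the degenerate case $\gamma v=\gamma' v$, so the real content lies entirely in the two-element case. An alternative route of the same length would invoke condition (C1) of Proposition~\ref{PR:quotient-complex-sufficient-nec-conds} directly, applied to the singletons $\{\gamma v\}$ and $\{\gamma' v\}$, but phrasing it through the morphism $p$ seems cleanest.
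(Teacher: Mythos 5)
Your argument is correct and not circular, but it routes through a different (equivalent) piece of Proposition~\ref{PR:quotient-complex-sufficient-nec-conds} than the paper does. The paper applies condition (C1) directly to the pair $\{\gamma v,\gamma' v\}$ and $\{v\}$: from $\{\Gamma\gamma v,\Gamma\gamma' v\}=\{\Gamma v\}$ it concludes $\Gamma\{\gamma v,\gamma' v\}=\Gamma\{v\}$, so there is $\gamma''\in\Gamma$ with $\gamma''\{\gamma v,\gamma'v\}=\{v\}$, forcing $\gamma'' \gamma v=\gamma''\gamma' v=v$ and hence $\gamma v=\gamma' v$. You instead invoke the downstream consequence of (C1) recorded in the same proposition, namely that $p\colon\calX\to\Gamma\leftmod\calX$ is a morphism of simplicial complexes, and then read off dimension preservation. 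Both arguments make the identical observation at bottom — a two-element cell would have to map onto a singleton, which cannot happen — so the choice between them is purely a matter of packaging. Since the paper has already proved that $p$ is a morphism (and that proof does not rely on this corollary), your route is legitimate; one could even argue it is cleaner, since it avoids re-deriving the cardinality argument from (C1).

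One small inaccuracy in your closing remark: applying (C1) to the singletons $\{\gamma v\}$ and $\{\gamma' v\}$ yields only $\Gamma\{\gamma v\}=\Gamma\{\gamma' v\}$, which is vacuous (both sides equal $\Gamma\{v\}$) and does not give $\gamma v=\gamma' v$. The direct (C1) route that actually works is the one the paper uses, applied to the two-element set $\{\gamma v,\gamma' v\}$ against the singleton $\{v\}$; the asymmetry in cardinalities is what does the work, exactly as in your dimension-preservation phrasing. This does not affect your main argument, which stands.
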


    \begin{proof}
        We have $\{\Gamma \gamma v,\Gamma \gamma' v\}=\{\Gamma v\}$, so by (C1), $\Gamma\{\gamma v,\gamma' v\}=\Gamma\{v\}$.
        Thus, there is $\gamma''\in\Gamma$ such that $\gamma''\gamma v=\gamma''\gamma' v=v$, which implies
        $\gamma v=\gamma'v$. The last assertion follows by taking $\gamma'=1$.
    \end{proof}

    \begin{prp}\label{PR:quotient-cover-sufficient-nec-conds}
        Provided $\Gamma\leftmod \calX$ is a simplicial
        complex, the quotient map $\calX\to \Gamma\leftmod \calX$ is a cover map if and only if
        \begin{enumerate}
            \item[(C2)] $\Gamma\cap\Stab_G(v)=\{1_G\}$ for all $v\in\vrt{\calX}$.
        \end{enumerate}
        In this case, $\Gamma$ acts freely on $\dot{\calX}$ (i.e.\ $\Stab_\Gamma(x)=\{1_G\}$ for all $x\in \dot{\calX}$),
        $\Gamma$ is discrete in $G$, and
        the group of deck transformations of $\calX\to \Gamma\leftmod \calX$ is $\Gamma$.
    \end{prp}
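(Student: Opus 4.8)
The plan is to argue via the combinatorial characterization of cover maps recalled in \S\ref{subsec:simp-complex}. Write $p\colon\calX\to\Gamma\leftmod\calX$ for the quotient map $x\mapsto\Gamma x$, which is a morphism of simplicial complexes by Proposition~\ref{PR:quotient-complex-sufficient-nec-conds}. To say $p$ is a cover map means that $p$ is surjective on vertices and that, for every $v\in\vrt{\calX}$, the induced map $p_v\colon\{x\in\calX\suchthat v\in x\}\to\{y\in\Gamma\leftmod\calX\suchthat \Gamma v\in y\}$ is a bijection. First I would record the parts that hold with no hypothesis on $\Gamma$: $p$ is surjective on vertices since every vertex of $\Gamma\leftmod\calX$ is $\Gamma\{v\}=p(\{v\})$, and each $p_v$ is surjective, since a cell $y=\Gamma z$ with $\Gamma v\in y$ contains some $w\in z$ with $\Gamma w=\Gamma v$, say $w=\gamma v$, whence $\gamma^{-1}z\ni v$ and $p(\gamma^{-1}z)=y$. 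So everything comes down to the injectivity of the maps $p_v$.

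For the implication ``(C2) $\Rightarrow$ cover map'', suppose $v\in x$, $v\in x'$ and $\Gamma x=\Gamma x'$, say $x'=\gamma x$ with $\gamma\in\Gamma$. Then $v$ and $\gamma^{-1}v$ both lie in $x$; were they distinct, $\{v,\gamma^{-1}v\}$ would be a cell of $\calX$, contradicting Corollary~\ref{CR:unique-image-in-cell} (applied with $\gamma'=1_G$). Hence $\gamma v=v$, so $\gamma\in\Gamma\cap\Stab_G(v)=\{1_G\}$ by (C2), and $x'=x$; thus $p_v$ is injective. For the converse, assume $p$ is a cover map and let $\gamma\in\Gamma\cap\Stab_G(v)$. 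Put $S=\{w\in\vrt\calX\suchthat \gamma w=w\}$, which contains $v$. If $w\in S$ and $x$ is a cell containing $w$, then $\gamma x$ is also a cell containing $w$ with $p(\gamma x)=p(x)$, so injectivity of $p_w$ forces $\gamma x=x$, and then Corollary~\ref{CR:unique-image-in-cell} shows $\gamma$ fixes every vertex of $x$; in particular every neighbour of $w$ lies in $S$. As $\calX$ is connected, $S=\vrt\calX$, so $\gamma$ acts as the identity automorphism of $\calX$, forcing $\gamma=1_G$ because the $G$-action is faithful; this proves (C2). I expect this connectedness-propagation step --- feeding the local bijection at each successive vertex into the induction --- to be the only point requiring care.

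Finally, for the ``in this case'' assertions: (i) if $x\in\dot\calX$ and $\gamma x=x$, then by Corollary~\ref{CR:unique-image-in-cell} $\gamma$ fixes every vertex $v$ of $x$, so $\gamma\in\Gamma\cap\Stab_G(v)=\{1_G\}$; hence $\Gamma$ acts freely on $\dot\calX$. (ii) Since $\Stab_G(v)$ is open in $G$ and $\Gamma\cap\Stab_G(v)=\{1_G\}$, the singleton $\{1_G\}$ is open in $\Gamma$, so $\Gamma$ is discrete. (iii) Each $\gamma\in\Gamma$ is an automorphism of $\calX$ with $p\circ\gamma=p$, hence a deck transformation. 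Conversely, given a deck transformation $h$, fix a base vertex $v_0$; since $p(h v_0)=p(v_0)$ we may write $h v_0=\gamma v_0$ for some $\gamma\in\Gamma$, and then $g:=\gamma^{-1}h$ is a deck transformation fixing $v_0$. Running the same neighbour-propagation argument as above (the local bijections $p_w$ are available because $p$ is a cover map) shows $g$ fixes all of $\vrt\calX$, so $g=\mathrm{id}_\calX$ and $h=\gamma\in\Gamma$; therefore the group of deck transformations is exactly $\Gamma$.
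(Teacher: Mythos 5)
Your proof is correct and follows essentially the same route as the paper (both directions of the cover-map criterion hinge on Corollary~\ref{CR:unique-image-in-cell}, and the deck-transformation claim is propagated through a base vertex using the cover property and connectedness). The only place you should tighten things is the final step: when you say ``running the same neighbour-propagation argument as above shows $g$ fixes all of $\vrt\calX$,'' the argument above explicitly cites Corollary~\ref{CR:unique-image-in-cell} to pass from ``$\gamma x=x$'' to ``$\gamma$ fixes the vertices of $x$.'' That corollary is stated only for elements of $\Gamma$, whereas $g=\gamma^{-1}h$ is a priori just a deck transformation in $\Aut(\calX)$, not an element of $\Gamma$ or even of $G$. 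The propagation still goes through, but you must argue it directly from the cover property: if $gw=w$ and $u\in x\ni w$ with $u\neq w$, then $\{w,u\}$ and $\{w,gu\}$ are both cells containing $w$ with the same image under $p$ (since $p\circ g=p$), so $\{w,u\}=\{w,gu\}$, and $gu\neq w$ (because $gw=w$ and $g$ is injective on vertices), hence $gu=u$. Incidentally, this is precisely the kind of issue the paper's proof sidesteps by writing $h=\gamma_x s_x$ locally and comparing the $\gamma_x$ directly rather than normalizing $h$ at a base vertex; your normalization is cleaner, but it requires this small extra observation.
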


    \begin{proof}
        Assume that  (C2) is satisfied. We need to check that for all $v\in \vrt{X}$,
        the map $x\mapsto\Gamma x$  induces a bijection  $\{x\in \calX\suchthat v\in x\}\to\{y\in\Gamma\leftmod \calX\suchthat \Gamma v\leq y\}$.
        The surjectivity is clear,
        so it is left to show that whenever $x,y\in \calX$ satisfy $v\in x,y$ and $\Gamma x=\Gamma y$, 
        we have $x=y$. Write $x=\gamma y$ with
        $\gamma\in \Gamma$. Then $\gamma v\in x$, hence $\{v,\gamma v\}\in X$. By Corollary~\ref{CR:unique-image-in-cell},
        $\gamma\in\Stab_G(v)$, so by (C2), $\gamma=1_G$ and $x=y$. Conversely, suppose $\calX\to \Gamma\leftmod \calX$
        is a cover map and let $v\in \vrt{X}$ and $\gamma\in\Gamma\cap\Stab_G(v)$.
        %Then $x\mapsto \Gamma x$ is injective on $\{x\in X\suchthat v\in x\}$.
        Let $u\in \vrt{X}$ be a neighbor of $v$, i.e.\ $\{u,v\}\in X$ and $u\neq v$.
        Then $\gamma u$ is a neighbor of $\gamma v=v$. Since $\{\Gamma u,\Gamma v\}=\{\Gamma \gamma u,\Gamma v\}$,
        condition (C1) of Proposition~\ref{PR:quotient-complex-sufficient-nec-conds}
        implies that $\Gamma\{u,v\}=\Gamma\{\gamma u,v\}$, and
        since $x\mapsto \Gamma x$ is a cover map, this means $\{u,v\}=\{\gamma u,v\}$, hence $\gamma u=u$ (because $u\neq v$).
        We have therefore showed that $\gamma$
        fixes all the neighbors of $v$. Proceeding by induction, we see
        that $\gamma$ fixes all vertices connected to $v$. Since $\calX$ is connected and $G$ acts faithfully, this
        means $\gamma=1_G$, as required.

        Suppose henceforth that $\calX\to \Gamma\leftmod\calX$ is a cover map.
        That $\Gamma$ acts freely on $\dot{X}$ follows from Corollary~\ref{CR:unique-image-in-cell} and (C2),
        and the discreteness of $\Gamma$ is immediate from (C2) (since $\Stab_G(v)$ is open in $G$).
        Let $h\in\Aut(X)$ be a deck transformation of $x\mapsto \Gamma x$. Then $\Gamma h x=\Gamma x$ for
        all $x\in \calX$.
        This means that for all $x\in \dot{X}$, there is $\gamma_x\in \Gamma$ such
        that $hx=\gamma_xx$, or equivalently $s_x:=\gamma_x^{-1} h\in\Stab_{\Aut(X)}(x)$.
        Therefore, for all $x\in X$, we can write $h=\gamma_xs_x$ with $\gamma_x\in\Gamma$ and $s_x\in\Stab_{\Aut(X)}(x)$,
        and since $\Gamma\cap\Stab_{\Aut(X)}(x)=\{1\}$ (because $\Gamma$ acts freely on $\dot{\calX}$),
        this decomposition is unique.
        We claim that $\gamma_x=\gamma_v$ for all  $v\in x$.
        Indeed,
		we have $\gamma_xs_x=\gamma_vs_v$ and hence
        $\{\gamma_x\gamma_v^{-1}v,v\}=\{s_x^{-1}s_vv,v\}=\{s_x^{-1}v,v\}\subseteq x\in\calX$, so
        $\gamma_x\gamma_v^{-1}v=v$ by Corollary~\ref{CR:unique-image-in-cell}, and by (C2), $\gamma_x=\gamma_v$.
        This means that $\gamma_x=\gamma_y$ whenever $x\cap y\neq \emptyset$. Since $\calX$ is connected,
        we have $h=\gamma s$ with $\gamma\in \Gamma$
        and $s$ stabilizing every cell in $\calX$, so $s=1$ and $h\in\Gamma$.
        %Suppose now that $x,y\in X$
        %are such that $x\cap y\neq \emptyset$. Then  $s_x,s_y\in\Stab_{\Aut(X)}(x\cap y)$, so the uniqueness
        %forces $s_x=s_{x\cap y}=s_y$ and $\gamma_x=\gamma_{x\cap y}=\gamma_y$. Since $X$ is connected, this
        %means $h=\gamma s$ with $\gamma\in \Gamma$
        %and $s$ stabilizing every cell in $\calX$, so $s=1$ and $h\in\Gamma$.
    \end{proof}

    The following corollary gives an elegant necessary and sufficient condition for having
    $\Gamma\leq_{\calX}G$. %It is likely that it can be proved directly.

    \begin{cor}\label{CR:distance-condition}
        $\Gamma\leq_{\calX} G$ $\iff$ $\dist(v,\gamma v)>2$ for all $1\neq\gamma\in \Gamma$ and $v\in \vrt{\calX}$.
    \end{cor}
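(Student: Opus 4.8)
The plan is to reduce the statement to the two conditions already isolated in this section: by Propositions~\ref{PR:quotient-complex-sufficient-nec-conds} and~\ref{PR:quotient-cover-sufficient-nec-conds}, $\Gamma\leq_{\calX}G$ holds exactly when conditions (C1) and (C2) both hold, so it suffices to prove that ``(C1) and (C2)'' is equivalent to ``$\dist(v,\gamma v)>2$ for all $1\ne\gamma\in\Gamma$ and $v\in\vrt{\calX}$''. Throughout I will use the obvious reformulations of small distances: for distinct vertices $v,v'$ one has $\dist(v,v')=1$ iff $\{v,v'\}\in\calX$, and $\dist(v,v')\le 2$ iff either $\{v,v'\}\in\calX$ or $v,v'$ admit a common neighbour $u$ (that is, $\{v,u\},\{u,v'\}\in\calX$); this relies on the facts that two intersecting cells share a vertex and that subsets of cells are cells.

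For the direction ``distance condition $\Rightarrow$ (C1) and (C2)'': (C2) is immediate, since $\gamma v=v$ would give $\dist(v,\gamma v)=0$, forcing $\gamma=1$. For (C1), given cells $\{u_1,\dots,u_t\}$ and $\{v_1,\dots,v_s\}$ with the same set of $\Gamma$-orbits of vertices, I first note that distinct vertices of a single cell lie in distinct $\Gamma$-orbits (two such vertices related by some $1\ne\gamma\in\Gamma$ would span an edge $\{u_i,\gamma u_i\}\in\calX$, i.e.\ $\dist(u_i,\gamma u_i)=1$, a contradiction); hence $s=t$ and one may reindex so that $\Gamma u_i=\Gamma v_i$, say $v_i=\gamma_i u_i$ with $\gamma_i\in\Gamma$. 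I then show all the $\gamma_i$ agree: for $i\ne j$, applying $\gamma_i^{-1}$ to the edge $\{v_i,v_j\}$ gives $\{u_i,\delta u_j\}\in\calX$ with $\delta:=\gamma_i^{-1}\gamma_j$, so $u_i$ is adjacent to both $u_j$ and $\delta u_j$; since $\delta u_j=\gamma_i^{-1}v_j=u_i$ would force $v_j=v_i$, we get $u_i\ne\delta u_j$, and if $\delta\ne 1$ then $u_i$ is a common neighbour of the distinct vertices $u_j,\delta u_j$, giving $\dist(u_j,\delta u_j)\le 2$, a contradiction. Thus $\delta=1$, so all $\gamma_i$ equal $\gamma_1$, whence $\{v_1,\dots,v_t\}=\gamma_1\{u_1,\dots,u_t\}$ and (C1) holds.

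For the converse, assume (C1) and (C2). From (C1) and Corollary~\ref{CR:unique-image-in-cell} (applied with the identity as one of the two group elements), $\{v,\gamma v\}\in\calX$ implies $v=\gamma v$, and then (C2) gives $\gamma=1$; call this fact $(\star)$. It disposes of the possibilities $\dist(v,\gamma v)\in\{0,1\}$ for $\gamma\ne1$. Suppose finally $\dist(v,\gamma v)=2$ for some $\gamma\ne1$, and pick a common neighbour $u$ of the distinct vertices $v$ and $\gamma v$, so $u\ne v$, $u\ne\gamma v$ and $\{u,v\},\{u,\gamma v\}\in\calX$. Applying (C1) to the two edges $\{u,v\}$ and $\{u,\gamma v\}$ (which carry the same pair of $\Gamma$-orbits of vertices) produces $\delta\in\Gamma$ with $\{\delta u,\delta v\}=\{u,\gamma v\}$; a short case split finishes it — if $\delta v=u$ then $\{v,\delta v\}\in\calX$ while $\delta v\ne v$, contradicting $(\star)$; if instead $\delta v=\gamma v$ and $\delta u=u$, then $\delta=1$ by (C2), so $\gamma v=v$, again contradicting $(\star)$. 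Hence $\dist(v,\gamma v)=2$ cannot occur, and the equivalence is proved.

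The main obstacle is the $\dist(v,\gamma v)=2$ case in the converse: one must extract from (C1), applied to a pair of edges sharing the vertex $u$, an identification of group elements that collapses to the identity. The remaining points — matching up orbits within a cell and the non-degeneracy checks $u_i\ne\delta u_j$, $\delta v\ne v$ — are routine but need a little care; everything else is a direct unwinding of the definitions.
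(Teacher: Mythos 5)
Your proof is correct and takes essentially the same route as the paper: both reduce the equivalence to conditions (C1) and (C2) via Propositions~\ref{PR:quotient-complex-sufficient-nec-conds} and~\ref{PR:quotient-cover-sufficient-nec-conds}, and both directions hinge on the same two ingredients (a common-neighbour/triangle-inequality bound and Corollary~\ref{CR:unique-image-in-cell}). The only difference is bookkeeping: the paper handles $\dist(v,\gamma v)\le 2$ uniformly in one pass and does not bother to observe $s=t$, whereas you split off $\dist\in\{0,1\}$ into the fact $(\star)$ and match orbits index-by-index.
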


    \begin{proof}
        Suppose $\Gamma\leq_\calX G$ and assume $\dist(v,\gamma v)\leq 2$ for some $v\in \vrt{X}$ and $\gamma\in \Gamma$.
        Then there exists
        $u\in\vrt{X}$ such that $\{u,v\},\{u,\gamma v\}\in \calX$. By  (C1), there is $\gamma'\in\Gamma$
        such that $\{\gamma' u,\gamma' v\}=\{u,\gamma v\}$, and this implies $\{\gamma' v,\gamma v\},\{\gamma' u,u\}\in \calX$.
        By Corollary~\ref{CR:unique-image-in-cell}, $\gamma'u=u$ and $\gamma'v=\gamma v$, so by (C2)
        we get
        $\gamma'=\gamma=1$. %But this is a contradiction.

        To see the converse, we
        verify conditions (C1) and (C2) above.
        Condition (C2) is straightforward, so we only  show (C1).
        Suppose $\{u_1,\dots,u_t\},\{v_1,\dots,v_s\}\in \calX$ satisfy $\{\Gamma u_1,\dots,\Gamma u_t\}=\{\Gamma v_1,\dots,\Gamma v_s\}$.
        We may assume that $t,s>0$ and
        there is $\gamma\in\Gamma$ such that $\gamma u_1=v_1$.
        Now, for all $i$, $\dist(\gamma u_i,v_1)=\dist(u_i,u_1)\leq 1$.
        On the other hand, there is $\gamma'\in\Gamma$ and $j$ such that $\gamma'  u_i=v_j$.
        Thus, $\dist(\gamma' u_i,v_1)\leq 1$. It follows that
        $\dist(\gamma^{-1}\gamma'u_i,u_i)=\dist(\gamma' u_i,\gamma u_i)\leq \dist(\gamma' u_i,v_1)+\dist(v_1,\gamma u_i)=2$,
        so by assumption, $\gamma^{-1}\gamma'=1$, meaning that $\gamma u_i=\gamma'u_i\in\{v_1,\dots, v_t\}$.
        Therefore, $\gamma\{u_1,\dots,u_t\}=\{\gamma u_1,\dots, \gamma u_t\}\subseteq \{v_1,\dots,v_s\}$
        and likewise, we have $\gamma^{-1}\{v_1,\dots, v_s\}\subseteq\{u_1,\dots,u_t\}$. Thus, $\Gamma\{u_1,\dots,u_t\}=\Gamma\{v_1,\dots,v_s\}$.
    \end{proof}

    \begin{cor}\label{CR:Gamma-is-discrete}
        If $\Gamma\leq_\calX G$, then $\Gamma'\leq_\calX G$ for all $\Gamma'\leq\Gamma$,
        and $g^{-1}\Gamma g\leq_{\calX}G$ for all $g\in G$.
    \end{cor}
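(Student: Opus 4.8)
The plan is to derive both claims from the distance criterion of Corollary~\ref{CR:distance-condition}, which reduces the relation $\Gamma\leq_{\calX}G$ to the single condition that $\dist(v,\gamma v)>2$ for every vertex $v\in\vrt{\calX}$ and every $\gamma\in\Gamma-\{1_G\}$. Since the hypothesis $\Gamma\leq_{\calX}G$ supplies this condition for \emph{all} of $\Gamma$, it will suffice in each case to observe that the relevant subgroup is contained in $G$ and to check the distance condition for its nontrivial elements.

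For the first assertion, I would take $\Gamma'\leq\Gamma$ and note that $\Gamma'\leq G$ automatically, so $\Gamma'$ acts on $\calX$ by simplicial automorphisms. Any $\gamma\in\Gamma'-\{1_G\}$ also lies in $\Gamma-\{1_G\}$, whence $\dist(v,\gamma v)>2$ for all $v\in\vrt{\calX}$ by the hypothesis together with Corollary~\ref{CR:distance-condition}. Applying Corollary~\ref{CR:distance-condition} in the converse direction then gives $\Gamma'\leq_{\calX}G$.

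For the second assertion, fix $g\in G$ and set $\Gamma^g:=g^{-1}\Gamma g$; this is a subgroup of $G$ since $G$ is a group and $\Gamma\leq G$. Given $\delta\in\Gamma^g-\{1_G\}$, write $\delta=g^{-1}\gamma g$ with $\gamma\in\Gamma-\{1_G\}$. The key (and only nonroutine) point is that $g$ acts as a simplicial automorphism of $\calX$ by the definition of a $G$-complex, and simplicial automorphisms preserve combinatorial distance, so for every $v\in\vrt{\calX}$ one has
\[
\dist(v,\delta v)=\dist\bigl(v,g^{-1}\gamma g v\bigr)=\dist\bigl(gv,\gamma(gv)\bigr)>2,
\]
the last inequality holding because $gv\in\vrt{\calX}$ and $\gamma\neq 1_G$. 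By Corollary~\ref{CR:distance-condition} this yields $\Gamma^g\leq_{\calX}G$.

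There is no real obstacle here; the statement is a formal consequence of Corollary~\ref{CR:distance-condition}. The one thing worth spelling out is precisely the displayed computation above — that conjugation by $g$ transports the distance condition unchanged — which is just the fact that the elements of $G$ act on $\calX$ by distance-preserving automorphisms.
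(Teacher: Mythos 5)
Your proof is correct and takes exactly the approach the paper intends: the paper's own proof consists of the single line ``Use Corollary~\ref{CR:distance-condition}.'' You have simply spelled out the routine verification — passing to a subgroup preserves the distance condition trivially, and conjugation by $g\in G$ transports it because $G$ acts by distance-preserving simplicial automorphisms.
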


    \begin{proof}
        Use Corollary~\ref{CR:distance-condition}.
    \end{proof}

    \begin{cor}\label{CR:finite-subsect-away}
        Let $\Gamma$ be a discrete subgroup of $G$ such that $\Gamma\leftmod\calX$ is finite.
        Then there is a finite subset $S\subseteq \Gamma-\{1\}$ such that any normal subgroup $\Gamma'\normalin\Gamma$
        with $\Gamma'\cap S=\emptyset$ satisfies $\Gamma'\leq_{\calX}G$.
    \end{cor}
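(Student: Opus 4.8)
The plan is to reduce the assertion to the distance criterion of Corollary~\ref{CR:distance-condition}. By that corollary it suffices to produce a finite subset $S\subseteq\Gamma-\{1\}$ with the following property: whenever $1\neq\gamma\in\Gamma$ and $\dist(v,\gamma v)\leq 2$ for some $v\in\vrt{\calX}$, some $\Gamma$-conjugate of $\gamma$ lies in $S$. Indeed, if $S$ has this property and $\Gamma'\normalin\Gamma$ satisfies $\Gamma'\cap S=\emptyset$, then $\Gamma'$ can contain no such $\gamma$ (its conjugate by an element of $\Gamma$ would again lie in the normal subgroup $\Gamma'$, hence in $\Gamma'\cap S$); so $\dist(v,\gamma v)>2$ for all $1\neq\gamma\in\Gamma'$ and $v\in\vrt{\calX}$, which by Corollary~\ref{CR:distance-condition} means exactly $\Gamma'\leq_{\calX}G$. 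To construct $S$, I would use that $\Gamma\leftmod\calX$ is finite, so the $\Gamma$-action on $\vrt{\calX}$ has only finitely many orbits; pick representatives $v_1,\dots,v_m\in\vrt{\calX}$. For each $j$ set $B_j=\{w\in\vrt{\calX}\suchthat\dist(v_j,w)\leq 2\}$, which is finite because $\calX$ is locally finite, and put $T_j=\{g\in\Gamma\suchthat gv_j\in B_j\}$ and $S=\big(\bigcup_{j=1}^m T_j\big)-\{1\}$.

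The two things to check are that $S$ is finite and that it has the required conjugation property. For finiteness: for each $w\in B_j$ the set $\{g\in\Gamma\suchthat gv_j=w\}$ is empty or a left coset of the vertex stabilizer $\Stab_\Gamma(v_j)=\Gamma\cap\Stab_G(v_j)$, and this stabilizer is finite, being a discrete subgroup of the compact group $\Stab_G(v_j)$; concretely, $\Gamma$ is a discrete subgroup of the Hausdorff group $G$ and hence closed, so $\Stab_\Gamma(v_j)$ is closed in $\Stab_G(v_j)$, thus both compact and discrete, thus finite. Since each $B_j$ is finite, each $T_j$ is finite, and so is $S$. For the conjugation property: given $1\neq\gamma\in\Gamma$ and a vertex $v$ with $\dist(v,\gamma v)\leq 2$, write $v=\delta v_j$ with $\delta\in\Gamma$; since $\delta^{-1}$ acts on $\calX$ by an automorphism and automorphisms preserve the combinatorial distance, $\dist(v_j,\delta^{-1}\gamma\delta v_j)=\dist(\delta v_j,\gamma\delta v_j)=\dist(v,\gamma v)\leq 2$, so $\delta^{-1}\gamma\delta v_j\in B_j$, i.e.\ $\delta^{-1}\gamma\delta\in T_j$; and $\delta^{-1}\gamma\delta\neq 1$ because $\gamma\neq 1$. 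Hence $\delta^{-1}\gamma\delta\in S$, completing the argument together with the first paragraph.

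I expect the only genuine subtlety to be the finiteness of $S$, which rests entirely on the finiteness of the vertex stabilizers $\Stab_\Gamma(v_j)$ — this is precisely where the hypothesis that $\Gamma$ is discrete interacts with the defining compactness of the $\Stab_G(v)$ for a $G$-complex. Everything else is routine bookkeeping: splitting $\vrt{\calX}$ into finitely many orbits, and using $\Gamma$-equivariance of $\dist$ to move an arbitrary ``short translation'' near a vertex, by conjugation, to one near a fixed orbit representative.
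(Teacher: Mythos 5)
Your proof is correct and takes essentially the same approach as the paper: both pick $\Gamma$-orbit representatives $v_1,\dots,v_m$, take $S$ to consist of the nontrivial $\gamma\in\Gamma$ that move some $v_j$ a distance at most $2$, verify finiteness via the compactness of $\Stab_G(v_j)$ and discreteness of $\Gamma$, and conclude through Corollary~\ref{CR:distance-condition} together with the $\Gamma$-equivariance of the distance and the normality of $\Gamma'$. The only cosmetic difference is that you extract the ``conjugation-closure'' condition as an explicit intermediate step, whereas the paper folds it directly into the verification.
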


    \begin{proof}
        Let $v_1,\dots,v_n$ be representatives for the $\Gamma$-orbits in $\vrt{\calX}$.
        For all $1\leq i\leq n$, let $S_i=\{\gamma\in \Gamma\suchthat \dist(v_i,\gamma v_i)\leq 2\}$.
        We claim that $S_i$ is finite. Indeed, $\Ball_\calX(v_i,2)$ is finite,
        and hence $K_i:=\{g\in G\suchthat \dist(v_i,gv_i)\leq 2\}$ is a finite union of right cosets
        of $\Stab_G(v_i)$, so it is compact. Therefore, $S_i:=\Gamma\cap K_i$ is discrete
        and compact, hence finite.
        Take $S=\bigcup_{i=1}^nS_i-\{1\}$ and suppose $\Gamma'\normalin\Gamma$ and $\Gamma'\cap S=\emptyset$.
        We use Corollary~\ref{CR:distance-condition} to show that $\Gamma'\leq_{\calX} G$.
        Let $v\in \vrt{\calX}$ and $\gamma'\in\Gamma'$ be such that $\dist(v,\gamma' v)\leq 2$.
        There is $\gamma\in \Gamma$ and $1\leq i\leq n$ such that $v=\gamma v_i$. Thus,
        $\dist(v_i,\gamma^{-1}\gamma'\gamma v_i)=\dist(\gamma v_i,\gamma'\gamma v_i)=\dist(v,\gamma' v)\leq 2$.
        Since $\Gamma'\normalin\Gamma$, this means $\gamma^{-1}\gamma'\gamma \in \Gamma'\cap S$,
        which is impossible unless $\gamma^{-1}\gamma'\gamma=1$, so $\gamma'=1$.
    \end{proof}

    \begin{prp}\label{PR:unimodularity-of-G}
        Let $\Gamma\leq_{\calX}G$. Then $\Gamma\leftmod\calX$ is finite
        if and only if  $\calX$ is almost $G$-transitive and $\Gamma$ is cocompact in $G$ (i.e.\ $\Gamma\leftmod G$ is compact).
        In this case, $G$ is unimodular.
    \end{prp}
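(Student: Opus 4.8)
The plan is to reduce the two asserted finiteness conditions to statements about vertex orbits, read those off from the double-coset description of $\vrt{\calX}$, and then derive unimodularity from the classical fact that a group admitting a cocompact discrete subgroup is unimodular. First I would record what $\Gamma\leq_{\calX}G$ already gives: by Proposition~\ref{PR:quotient-cover-sufficient-nec-conds}, $\Gamma$ is discrete in $G$ --- hence closed, a discrete subgroup of a Hausdorff topological group always being closed --- it acts freely on $\dot{\calX}$, and by Proposition~\ref{PR:quotient-complex-sufficient-nec-conds} the quotient $\Gamma\leftmod\calX$ is a connected locally finite simplicial complex with vertex set $\Gamma\leftmod\vrt{\calX}$ and dimension $\dim\calX$. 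Since a simplicial complex with finitely many vertices has only finitely many cells, $\Gamma\leftmod\calX$ is finite if and only if $\Gamma\leftmod\vrt{\calX}$ is finite; and if $\calX$ is almost $G$-transitive then, $G\leftmod\calX$ being finite, in particular $G$ has finitely many orbits on $\vrt{\calX}$. Choosing representatives $v_1,\dots,v_n$ of the $G$-orbits on $\vrt{\calX}$ and setting $K_i=\Stab_G(v_i)$ (a compact open subgroup of $G$), we have $\vrt{\calX}\cong\bigsqcup_{i=1}^nG/K_i$ as $G$-sets, so the $\Gamma$-orbits on $\vrt{\calX}$ correspond to $\bigsqcup_{i=1}^n\Gamma\backslash G/K_i$.

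For the equivalence: if $\Gamma\leftmod\calX$ is finite, then since every $G$-orbit of cells is a union of $\Gamma$-orbits of cells, $G\leftmod\calX$ is finite and $\calX$ is almost $G$-transitive; moreover, fixing a vertex $v$ with $K=\Stab_G(v)$, the set $\Gamma\backslash G/K$ of $\Gamma$-orbits inside the single orbit $Gv$ is a subset of the finite set $\Gamma\leftmod\vrt{\calX}$, hence finite, say $G=\bigcup_{j=1}^m\Gamma g_jK$; writing $p\colon G\to\Gamma\backslash G$ for the projection, $\Gamma\backslash G=\bigcup_{j=1}^mp(g_jK)$ is a finite union of continuous images of the compact set $K$, hence compact, i.e.\ $\Gamma$ is cocompact. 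Conversely, if $\calX$ is almost $G$-transitive and $\Gamma$ is cocompact, then each of the finitely many sets $\Gamma\backslash G/K_i$ is a continuous image of the compact space $\Gamma\backslash G$ inside the \emph{discrete} space $\Gamma\backslash G/K_i$ (discreteness because $K_i$ is open in $G$), hence finite; so $\Gamma\leftmod\vrt{\calX}$ is finite and $\Gamma\leftmod\calX$ is finite.

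Finally, assuming the equivalent conditions, so that $\Gamma$ is discrete and cocompact, I would prove $G$ is unimodular by a fundamental-domain argument. Being discrete and closed, $\Gamma$ acts freely and properly discontinuously on $G$ by left translations: for compact $K\subseteq G$ the set $\{\gamma\in\Gamma:\gamma K\cap K\neq\emptyset\}=\Gamma\cap KK^{-1}$ is a discrete closed subset of the compact set $KK^{-1}$, hence finite. Therefore $p\colon G\to\Gamma\backslash G$ is a covering map, and using that $\Gamma\backslash G$ is compact and $p$ open one obtains a Borel \emph{exact fundamental domain} $D$ for $\Gamma$ --- meaning $\Gamma D=G$ and $\gamma D\cap D=\emptyset$ for $\gamma\neq1$ --- with compact closure, so that $0<\mu(D)<\infty$ for a fixed left Haar measure $\mu$ on $G$. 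For any $g\in G$ the right translate $Dg$ is again a Borel exact fundamental domain with compact closure, because the left and right translation actions commute: $\Gamma(Dg)=(\Gamma D)g=G$ and $\gamma(Dg)\cap Dg=(\gamma D\cap D)g=\emptyset$. The crucial point is that any two relatively compact Borel exact fundamental domains $D,D'$ have equal $\mu$-measure: the pieces $D\cap\gamma D'$ are nonempty only for the finitely many $\gamma$ lying in $\overline{D}\,\overline{D'}^{\,-1}\cap\Gamma$, so by left-invariance of $\mu$,
\[
\mu(D)=\sum_{\gamma\in\Gamma}\mu(D\cap\gamma D')=\sum_{\gamma\in\Gamma}\mu(\gamma^{-1}D\cap D')=\mu(D'),
\]
all sums being finite. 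Applying this with $D'=Dg$ gives $\mu(Dg)=\mu(D)$; since the modular function $\Delta_G\colon G\to\R_{>0}$ is characterised by $\mu(Eg)=\Delta_G(g)\mu(E)$, we conclude $\Delta_G(g)=1$ for all $g\in G$, i.e.\ $G$ is unimodular.

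The equivalence amounts to bookkeeping with orbit spaces and compact open subgroups and should be routine. The part demanding care is the unimodularity claim: one must be sure that $\Gamma$, discrete in a Hausdorff group, is closed and therefore acts properly discontinuously --- which is what allows the construction of a genuine relatively compact Borel fundamental domain --- and that the sums in the measure comparison are actually finite, this finiteness being exactly what keeps the argument clear of the measure-theoretic pathologies (uncountable $\Gamma$, non-$\sigma$-finite Haar measure) that could otherwise intrude.
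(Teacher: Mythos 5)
Your argument for the equivalence is essentially the paper's: both rely on the $G$-equivariant identification $\vrt{\calX}\cong\bigsqcup_{i}G/K_i$ (finitely many orbits since $\calX$ is almost $G$-transitive), the observation that the $\Gamma$-orbits on each $G$-orbit are $\Gamma\backslash G/K_i$, that each $\Gamma\backslash G/K_i$ is a compact discrete space, and the surjection $\Gamma\leftmod\calX\to G\leftmod\calX$ for the ``almost transitive'' direction. Your phrasing in terms of a single chosen vertex and the finite union $G=\bigcup_j\Gamma g_jK$ is a minor repackaging of the same idea.

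Where you genuinely diverge is the unimodularity claim. The paper simply invokes Moore's lemma (any locally compact group admitting a discrete cocompact subgroup is unimodular), together with the discreteness of $\Gamma$ from Proposition~\ref{PR:quotient-cover-sufficient-nec-conds}. You instead give a self-contained proof: construct a relatively compact Borel exact fundamental domain $D$ for the free, properly discontinuous left $\Gamma$-action on $G$, note that $Dg$ is again such a domain, and compare $\mu(D)$ and $\mu(Dg)$ via the standard tiling identity $\mu(D)=\sum_{\gamma}\mu(D\cap\gamma D')=\mu(D')$, which is valid because only finitely many terms are nonzero (by properness). Combined with $0<\mu(D)<\infty$, this forces $\Delta_G\equiv1$. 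The argument is correct --- properness is exactly what keeps the sums finite, $\mu(D)<\infty$ follows from $\overline{D}$ compact, and $\mu(D)>0$ follows by covering a compact open $K$ with finitely many translates $\gamma D$ --- and it has the pedagogical merit of reproving the cited lemma rather than deferring to it. The paper's citation is of course shorter; your version makes the proposition self-contained at the cost of a somewhat delicate Borel fundamental-domain construction, which you correctly flag as the step demanding care.
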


    \begin{proof}
    	Assume $\Gamma\leftmod \calX$ is finite. Then $\calX$ is clearly almost $G$-transitive.
    	Let $x_1,\dots,x_n$ be representatives for the $G$-orbits in $\dot{\calX}$
    	and let $K_i=\Stab_G(x_i)$. Then each $K_i$ is compact and open in $G$, and $\Gamma\leftmod\dot{\calX}$ can be identified
    	with $\bigsqcup_{i=1}^n\Gamma\leftmod G/K_i$ via sending $\Gamma gK_i$ to $\Gamma gx_i$.
    	Since $\Gamma\leftmod\calX$ is finite, the set $\Gamma\leftmod G/K_1$ is finite, so $\Gamma\leftmod G$ is the (topological)
    	disjoint union of finitely many epimorphic images of $K_1$, which is compact. Thus, $\Gamma\leftmod G$ is compact.
    	
    	For the converse, identify $\Gamma\leftmod\dot{\calX}$ with $\bigsqcup_{i=1}^n\Gamma\leftmod G/K_i$ as above.
    	Then $\Gamma\leftmod G/K_i$ is a compact  discrete topological space, hence finite.
    	
    	Finally, any locally compact group admitting a discrete cocompact subgroup is unimodular
    	by \cite[Lm.~1]{Moore84} (for instance), and $\Gamma$ is discrete by
    	Proposition~\ref{PR:quotient-cover-sufficient-nec-conds}.
    \end{proof}

    \begin{remark}\label{RM:finite-quotients-vs-G-transitive}
        A $G$-complex $\calX$ may not have finite $G$-quotients even
        when it is almost transitive.
        For example, take $\calX=\calB_d(F)$ and $G=\nPGL{F}{d}$  as
        in Chapter~\ref{sec:ramanujan-complexes}, and let $B$ be the image
        of the group of invertible $d\times d$ upper-triangular matrices in $G$.
        The Iwasawa decomposition $G=BK$ with $K=\nPGL{\calO}{d}$ implies that $B$ acts transitively on $\calB_d(F)^{(0)}=G/K$
        and hence  almost transitively on $\calB_d(F)$. However, $B$ is not unimodular,
        so $\calB_d(F)$ cannot have finite $B$-quotients by Proposition~\ref{PR:unimodularity-of-G}.
    \end{remark}

    \begin{remark}
    	It is possible that $\Gamma\leftmod \calX$ can be understood as
    	a   simplicial complex
    	with multiple faces even when (C1) or (C2) do not hold, e.g.\ consider Example~\ref{EX:simplicial-quotient}(ii).
    	More generally, $\Gamma\leftmod\calX$ should make sense as a simplicial complex with multiple cells
    	when $\Gamma\cap\Stab_{G}(x)=1$ for all $x\in\dot{\calX}$.
    	Indeed, this guarantees that no cell in $\calX$ is glued to itself, and it can
    	be shown that in this case $\calX_{\mathrm{top}}\to\Gamma\leftmod\calX_{\mathrm{top}}$ is a cover map of topological spaces,
    	where $\calX_{\mathrm{top}}$ denotes the topological realization of $\calX$.
    	The condition  $\Gamma\cap\Stab_{G}(x)=1$ holds in particular when $\Gamma$ is a torsion-free
    	discrete subgroup, since for any compact open $K\leq G$, the group
    	$\Gamma\cap K$  is compact and discrete, hence finite. The torsion-free condition was
    	used in \cite{LubSamVi05}, for instance.
    	
    	Of course, this requires a  notion of complexes with multiple faces which is ample enough.
    	However, we do not know of such a notion.
    	For example,
    	%graphs with multiple edges
    	%and loops are defined only in dimension $1$,
    	regular cell complexes (in the sense \cite[Df.~A.22]{Buildings08AbramBrown})
    	of are not general enough since they
    	do not allow loops (consider the quotient
    	of a cyclic graph on $n$ vertices by the cyclic group of order $n$), and CW-complexes are too wild as there are very mild
    	assumptions on the gluing maps.
    	It seems likely that a decent definition exists, but this issue is out of the scope of this work.
		We also comment that with easy modifications,
		the theory in the following chapters extends to  other types of cell complexes which are not simplicial.
    \end{remark}

	Suppose that $\Gamma\leftmod\calX$ is a simplicial complex. We finish this chapter by showing that
	one can canonically identify $(\Gamma\leftmod \calX)^{(i)}$ with $\Gamma\leftmod (\calX^{(i)})$,
	$\Gamma\leftmod(\ori{\calX^{(i)}})$ with $\ori{(\Gamma\leftmod \calX^{(i)})}$, and
	$\Gamma\leftmod(\ordr{\calX^{(i)}})$ with $\ordr{(\Gamma\leftmod \calX^{(i)})}$, where $\ordr{X^{(i)}}$ denotes
	the ordered $i$-dimensional cells (see \ref{subsec:orientation}). The parenthesis in the previous expressions will be dropped henceforth.
	
	\begin{prp}\label{PR:orientation-of-quotient}
        Let $\Gamma\leq_\calX G$. The following maps are isomorphisms.
        \begin{enumerate}
        	\item[(i)] %The map $\Gamma\leftmod (\calX^{(i)})\to (\Gamma\leftmod\calX)^{(i)}$
        	%given by $\Gamma\{v_0,\dots,v_i\}\mapsto \{\Gamma v_0,\dots,\Gamma v_i\}$.
        	$\Gamma\{v_0,\dots,v_i\}\mapsto \{\Gamma v_0,\dots,\Gamma v_i\}:\Gamma\leftmod (\calX^{(i)})\to (\Gamma\leftmod\calX)^{(i)}$
        	\item[(ii)] %The map $\Gamma\leftmod(\ori{\calX^{(i)}})\to \ori{(\Gamma\leftmod \calX^{(i)})}$
        	%given by $\Gamma[v_0\dots v_i]\mapsto [(\Gamma v_0)\dots(\Gamma v_i)]$.
        	$\Gamma[v_0\dots v_i]\mapsto [(\Gamma v_0)\dots(\Gamma v_i)]:\Gamma\leftmod(\ori{\calX^{(i)}})\to \ori{(\Gamma\leftmod \calX^{(i)})}$
        	\item[(iii)] %The map $\Gamma\leftmod(\ordr{\calX^{(i)}})\to \ordr{(\Gamma\leftmod \calX^{(i)})}$
        	%given by $\Gamma(v_0<\dots< v_i)\mapsto (\Gamma v_0<\dots <\Gamma v_i)$.
        	$\Gamma(v_0<\dots< v_i)\mapsto (\Gamma v_0<\dots <\Gamma v_i):\Gamma\leftmod(\ordr{\calX^{(i)}})\to \ordr{(\Gamma\leftmod \calX^{(i)})}$
        \end{enumerate}
    \end{prp}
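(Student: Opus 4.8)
The plan is to derive all three isomorphisms from two ingredients: condition (C1) of Proposition~\ref{PR:quotient-complex-sufficient-nec-conds} (namely that $\{\Gamma u_0,\dots,\Gamma u_t\}=\{\Gamma w_0,\dots,\Gamma w_s\}$ forces $\Gamma\{u_0,\dots,u_t\}=\Gamma\{w_0,\dots,w_s\}$), and the standing observation that for every $i$-cell $x=\{v_0,\dots,v_i\}\in\calX^{(i)}$ the classes $\Gamma v_0,\dots,\Gamma v_i\in\Gamma\leftmod\vrt{\calX}$ are pairwise distinct. The latter follows from Corollary~\ref{CR:unique-image-in-cell}: if $\Gamma v_j=\Gamma v_k$ write $v_k=\gamma v_j$; then $\{v_j,\gamma v_j\}\subseteq x\in\calX$, so $\gamma v_j=v_j$ and $v_j=v_k$. (Equivalently it follows from $\calX\to\Gamma\leftmod\calX$ being a dimension-preserving morphism, Proposition~\ref{PR:quotient-complex-sufficient-nec-conds}.) I would record this at the outset and use it throughout.

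Part (i) I would dispatch first, as it is essentially a restatement of Proposition~\ref{PR:quotient-complex-sufficient-nec-conds}: the displayed map is exactly the isomorphism $\Phi$ from that proof, restricted to $i$-cells. Well-definedness is the triviality $\{\Gamma\gamma v_0,\dots,\Gamma\gamma v_i\}=\{\Gamma v_0,\dots,\Gamma v_i\}$; surjectivity holds because every $i$-cell of $\Gamma\leftmod\calX$ has the form $\Gamma x$ with $x\in\calX^{(i)}$ (dimension is preserved); injectivity is precisely (C1). Part (iii) follows the same template with ordered cells and has no subtlety. For the map $\eta(\Gamma(v_0<\dots<v_i))=(\Gamma v_0<\dots<\Gamma v_i)$, well-definedness is invariance under $\gamma\cdot(v_0<\dots<v_i)=(\gamma v_0<\dots<\gamma v_i)$; surjectivity uses (i) and a reindexing to arrange $\bar v_j=\Gamma v_j$; and for injectivity, if $(\Gamma v_0<\dots<\Gamma v_i)=(\Gamma w_0<\dots<\Gamma w_i)$ with both underlying cells in $\calX^{(i)}$, then $\Gamma v_j=\Gamma w_j$ for all $j$, while (C1) gives $\gamma\in\Gamma$ with $\gamma v_j=w_{\pi(j)}$ for a permutation $\pi$; comparing $\Gamma w_{\pi(j)}=\Gamma\gamma v_j=\Gamma v_j=\Gamma w_j$ and using distinctness of the $\Gamma w_k$ forces $\pi=\mathrm{id}$, so $\gamma\cdot(v_0<\dots<v_i)=(w_0<\dots<w_i)$ and the two orbits coincide.

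Part (ii) is the one place that needs genuine care, so I would do it last and spell it out. Reducing to $i>0$ (the case $i=0$ collapses to (i)), I would check that $\theta(\Gamma[v_0\dots v_i])=[(\Gamma v_0)\dots(\Gamma v_i)]$ is well defined by verifying invariance both under the $\Gamma$-action $\gamma\cdot[v_0\dots v_i]=[\gamma v_0\dots\gamma v_i]$ and under even permutations of the $v_j$; surjectivity again follows from (i), since an $i$-cell of the quotient has exactly the two orientations $[(\Gamma v_0)\dots(\Gamma v_i)]$ and its opposite. For injectivity, given $[(\Gamma v_0)\dots(\Gamma v_i)]=[(\Gamma w_0)\dots(\Gamma w_i)]$ I would first pass to underlying cells and apply (C1) to get $\gamma$ with $\gamma\{v_0,\dots,v_i\}=\{w_0,\dots,w_i\}$; after replacing $v_j$ by $\gamma v_j$ (harmless in the orbit space, by the well-definedness check) one has $w_j=v_{\tau(j)}$ for some permutation $\tau$, and the hypothesis reads $[(\Gamma v_0)\dots(\Gamma v_i)]=[(\Gamma v_{\tau(0)})\dots(\Gamma v_{\tau(i)})]$. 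Here is exactly where the distinctness of $\Gamma v_0,\dots,\Gamma v_i$ is used: two orderings of a cell with distinct vertices determine the same oriented cell iff they differ by an even permutation, so $\tau$ must be even, whence $[w_0\dots w_i]=[v_0\dots v_i]$ in $\ori{\calX^{(i)}}$ and the two $\Gamma$-orbits agree. I do not anticipate a real obstacle beyond this parity bookkeeping in (ii); the rest is routine manipulation of quotients.
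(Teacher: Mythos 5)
Your argument is correct and rests on the same two ingredients as the paper's proof, namely condition (C1) of Proposition~\ref{PR:quotient-complex-sufficient-nec-conds} and Corollary~\ref{CR:unique-image-in-cell}. The only cosmetic difference is in part (ii): the paper compares the single $\gamma$ supplied by (C1) with the individual $\gamma_0,\dots,\gamma_i$ coming from the orbit equality and uses Corollary~\ref{CR:unique-image-in-cell} to force them to agree, whereas you front-load that corollary to establish distinctness of the classes $\Gamma v_0,\dots,\Gamma v_i$ and then normalize via the harmless replacement $v_j\mapsto\gamma v_j$ so that only a parity check on a permutation remains --- the two routes are interchangeable and neither buys anything over the other.
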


    \begin{proof}
    	(i) This is immediate from condition (C1) and the definition of $\Gamma\leftmod \calX$.

        (ii) It is clear that the map is well-defined and onto, so we only need to show injectivity.
        If $[(\Gamma v_0)\dots(\Gamma v_i)]=[(\Gamma u_0)\dots(\Gamma u_i)]$,
        then $\{\Gamma v_0,\dots,\Gamma v_i\}=\{\Gamma u_0,\dots,\Gamma u_i\}$,
        so by Proposition~\ref{PR:quotient-complex-sufficient-nec-conds},
        $\Gamma\{v_0,\dots,v_i\}=\Gamma\{u_0,\dots,u_i\}$. This means
        that there is $\gamma\in \Gamma$ and a permutation $\sigma\in S_{\{0,\dots,i\}}$
        such that $\gamma v_r=u_{\sigma r}$ for all $0\leq r\leq i$. In addition,
        since $[(\Gamma v_0)\dots(\Gamma v_i)]=[(\Gamma u_0)\dots(\Gamma u_i)]$,
        there is an \emph{even} permutation $\tau\in S_{\{0,\dots,i\}}$ and elements $\gamma_0,\dots,\gamma_i\in \Gamma$
        such that $\gamma_r v_r=u_{\tau r}$ for all $0\leq r\leq i$.
        Corollary~\ref{CR:unique-image-in-cell} now implies that $\gamma v_r=\gamma_r v_r$ for all $r$,
        so $\sigma=\tau$ and $\gamma[v_0\dots v_i]=[u_0\dots u_i]$, as required.

        (iii) This is similar to (ii).
    \end{proof}

\section{Spectrum in Simplicial Complexes}
\label{sec:adj-operators}

    When studying spectral  properties of a simplicial complex, one needs
    to decide what operators associated with the complex are inspected.
    For example, in the case of graphs, one  usually takes the vertex adjacency operator or the $0$-dimensional
    Laplacian (and not
    the  adjacency operator of the edges).
    However, in the case of  simplicial complexes, there are many relevant operators.

    In this chapter, we attempt to give a uniform approach in studying these various operators.
    This will lead to the  definition of high dimensional spectrum, which will ultimately generalize
    the spectrum of graphs and quotients of $\calB_d(F)$ (see Chapter~\ref{sec:ramanujan-complexes}).
	Our definition can also be adapted to hypergraphs, polysimplicial complexes,  regular cell complexes, and abstract partially
	ordered sets, although
	we do not discuss such structures in this work.

\subsection{Examples}
\label{subsec:examples-of-operators}

    Before we present our approach toward operators associated with simplicial complexes,
    let us first exhibit some examples that will arise as special cases. We will use
    the spaces $\Omega_i^+$, $\Omega_i^-$, $\Omega_i^{\pm}$ introduced in \ref{subsec:orientation}.

    \begin{example}\label{EX:adjacency-operator-tree}
        Let $X$ be a $k$-regular graph. The \emph{vertex adjacency
        operator} of $X$ is the operator $a_0:\Omega^+_0(X)\to \Omega^+_0(X)$
        given by
        \[(a_0\vphi)u=\sum_{v}\vphi u\qquad\forall\, \vphi\in\Omega^+_0(X),\, u\in X^{(0)}\,,\]
        where the sum is taken over all $v\in X^{(0)}$ connected by an edge to $u$.
        Likewise, the  \emph{edge adjacency operator} of $X$ is the operator
        $a_1:\Omega^+_1(X)\to \Omega^+_1(X)$ given by.
        \[(a_1\psi)x=\sum_{y}\psi y\qquad\forall\, \psi\in\Omega^+_1(X),\, x\in X^{(1)}\,,\]
        where the sum is taken over all edges $y\in X^{(1)}$ sharing exactly one vertex with $x$.
        In fact, the vertex and edge adjacency operators are defined
        for all locally finite graphs.

        The spectrum of $a_0$ affects many combinatorial properties of the graph $X$, particularly
        its \emph{Cheeger constant} and \emph{chromatic number}; see the survey
        \cite{Lubotzky14} for further information. The spectrum of $a_1$ is known to be almost equivalent
        with the spectrum of $a_0$; we shall recover this later in Example~\ref{EX:spectrum-dependency}.
    \end{example}

    The next example is a naive generalization of the vertex and edge adjacency operators to higher dimensions.

    \begin{example}\label{EX:higher-dimensional-adj-ops}
        Let $X$ be any simplicial complex and assume $i,j$ satisfy $0\leq i<j\leq 2i+1$.
        Define $a_{i;j}:\Omega^+_i(X)\to \Omega^+_i(X)$ by
        \[(a_{i;j}\psi)x=\sum_{\substack{y\in X^{(i)}\\ x \cup y\in X^{(j)}}}\vphi y\qquad\forall\, \vphi\in\Omega^+_i(X),\, x\in X^{(i)}\ .\]
        That is, the evaluation of $a_{i;j}\vphi$  at an $i$-cell $x$ sums the values of $\vphi$ on $i$-cells $y$ whose union with $x$ is a $j$-cell.
        In the notation of Example~\ref{EX:adjacency-operator-tree}, we have $a_{0}=a_{0;1}$ and $a_{1}=a_{1;2}$. However, on the level
        of edges, we also have $a_{1;3}$. In general, $a_{i;i+1},\dots,a_{i;2i+1}$ do not commute.
    \end{example}

    \begin{example}\label{EX:laplacians-revised}
        Let $X$ be any simplicial complex and let $i\geq 0$. The upper, lower and full
        Laplacians $\Delta_i^+$, $\Delta^-_i$, $\Delta_i$ (see~\ref{subsec:orientation}) take $\Omega_i^-(X)$ into itself.
        Recall from the introduction that $\Spec(\Delta_i)$ affects
        combinatorial properties of $X$ (see \cite{ParRosTes13}, \cite{GolPar14}, \cite{Golubev13} and related works).
        We also note that
        when $X$ is a $k$-regular graph,
        $\Delta_0=k-a_0$, so $\Delta_0$ and $a_0$ are spectrally equivalent.
    \end{example}

    If a simplicial complex $X$ admits further structure, e.g.\ a coloring of the vertices or the edges,
    then one may also consider operators taking this data into a account.
    The complex $\calX=\calB_d(F)$ of Chapter~\ref{sec:ramanujan-complexes} gives
    rise to such examples.

    \begin{example}\label{EX:adjacency-building}
        Let $\calX=\calB_d(F)$ and $G=\nPGL{F}{d}$ be as in Chapter~\ref{sec:ramanujan-complexes}.
        Recall that with every $G$-quotient $X:=\Gamma\leftmod\calB_d(F)$, we have
        associated $d-1$ operators $a_1,\dots,a_{d-1}:\LL{X^{(0)}}\to \LL{X^{(0)}}$
        which correspond to the $(d-1)$-coloring $C_1$ of the directed edges.
        The spectrum of these operators  affects various combinatorial properties of the the quotient $X$; see~\cite{EvraGolLub14},
        \cite{GolPar14}.
        Notice that $\Omega_0^+(X)$ is dense in $\LL{X^{(0)}}$
        (equality holds when $X$ is finite), so  the common
        specturm of $(a_1,\dots,a_{d-1})$ on $\LL{X^{(0)}}$ is the same
        as their spectrum on $\Omega_0^+(X)$.

		When $\Gamma$ is cotained in $\ker(c:\nPGL{F}{d}\to \Z/d\Z)$ (see Chapter~\ref{sec:ramanujan-complexes}),
		the $d$-coloring $C_0:\calB_d(F)^{(0)}\to \Z/d\Z$
		descends to $X$, giving rise to  operators on $\LL{X^{(0)}}$ or $\Omega_0^+(X)$ which take $C_0$ into account,
        e.g.\ restricting a function on $X^{(0)}$
		to vertices of a particular color. Such operators were used in
        \cite{EvraGolLub14}.
    \end{example}

\rem{
	The following example requires certain knowledge of buildings; we refer the reader to
	\cite{Buildings08AbramBrown}  for all relevant definitions.

	\begin{example}
		Let $\calB$ be a $d$-dimensional locally finite \emph{building} equipped with its maximal set of apartments (see \cite{Buildings08AbramBrown} for
		the definitions). Suppose a group $G$ acts faithfully on $\calB$
		such that the action is \emph{type preserving} and \emph{strongly transitive}. Suppose further
		that $G$ is closed in $\Aut(\calB)$ with respect to point-wise convergence.
		Let $W$ be the Weyl group associated with $\calB$.
		Then there is a distance function on the chambers $d:\calB^{(d)}\times \calB^{(d)}\to W$
		(see \cite[Chp.~5]{Buildings08AbramBrown}), which is preserved by $G$.
	\end{example}
}

\subsection{Associated Operators}
\label{subsec:adjacency-operators}

    We now introduce a notion
    of  operators associated with a simplicial complex that includes the examples of \ref{subsec:examples-of-operators}.
    Until we give the definition below, we shall
    informally address such operators as \emph{associated operators}.

\medskip

    Let $\catSimp$ denote the class of all finite-dimensional locally finite connected simplicial complexes.
    We first
    observe that an associated operator $a$ is in fact a family of operators
    $\{a_X\}_{X\in\catC}$ ranging on some class of simplicial complexes $\catC\subseteq\catSimp$.
    This class can be all simplicial complexes (Examples~\ref{EX:higher-dimensional-adj-ops}
    and~\ref{EX:laplacians-revised}), or  a  restricted family of quotients
    of some fixed universal cover (Examples~\ref{EX:adjacency-operator-tree}
    and~\ref{EX:adjacency-building}).

    The domain and range of an operator $a_X$ in the family $\{a_X\}_{X\in\catC}$ is
    a pre-Hilbert space  varying with  $X$;
    denote it by $FX$. Then $F$ is an assignment from $\catC$ to $\catPHil$, the class of pre-Hilbert
    spaces. In the examples of \ref{subsec:examples-of-operators}, the assignment $F$ was $\Omega^+_i$ or $\Omega_i^-$.
    To conclude, the datum of an associated operator consists of a family of linear operators
    $\{a_X:FX\to FX\}_{X\in\catC}$ where $\catC$ is a subclass of $\catSimp$ and $F:\catSimp\to \catPHil$
    is an assignment. Of course, this is still too general, and some conditions should be imposed.

\medskip

    We proceed by observing that $\catSimp$ and $\catPHil$ can be made into categories; the morphisms of
    $\catSimp$ are morphisms of simplicial complexes (\ref{subsec:simp-complex}) and the morphisms
    of $\catPHil$ are $\C$-linear maps (not-necessarily continuous).
    Actually,
    it is usually more convenient to consider the subcategory of $\catSimp$ whose objects are
    those of $\catSimp$ and whose morphisms are cover maps. We denote it by
    \[
    \catCov\ .
    \]
    Now, the assignments $X\mapsto \Omega^+_i(X)$, $X\mapsto \Omega_i^-(X)$ and $X\mapsto \Omega_i^\pm(X)$ (\ref{subsec:orientation})
    can be extended to
    functors $\catSimp\to \catPHil$: If $f:X\to Y$ is a simplicial map, then we define $f_*:\Omega^+_i(X)\to \Omega^+_i(Y)$
    by
    \[
    (f_*\vphi)\sfy=\sum_{\sfx\in f^{-1}\{\sfy\}}\vphi (\sfx)\qquad\forall\, \vphi\in \Omega^+_i(X),\, \sfy\in \ori{Y^{(i)}}\ .
    \]
    This makes $X\mapsto \Omega^+_i(X)$ into a \emph{covariant}  functor.
    Abusing the notation, we also define $f_*:\Omega_i^-(X)\to \Omega_i^-(Y)$
    and $f_*:\Omega_i^\pm(X)\to \Omega_i^\pm(X)$ in the same way.
    This agrees with the notation section.
    Notice that $f_*$ is continuous
    if and only if the size of the fibers of $f:X^{(i)}\to Y^{(i)}$ is uniformly bounded, so in order
    to include infinite simplicial complexes, we must allow non-continuous
    morphisms in $\catPHil$.

    View $\Omega_i^+$ and $\Omega_i^-$ as functors from $\catCov$ to $\catPHil$.
    It straightforward to check that
    $a_{i;j}=\{a_{i;j,X}\}_{X\in\catSimp}$ of Example~\ref{EX:higher-dimensional-adj-ops}
    is a {natural transformation} from $\Omega^+_i$ to itself,
    and $\Delta_i=\{\Delta_{i,X}\}_{X\in\catSimp}$
    of Example~\ref{EX:laplacians-revised}
    is a natural transformation from $\Omega^-_i$ to itself. That is, for  any cover map $f:X\to Y$
    in $\catCov$, the following diagrams commute:
    \[
    \xymatrix{
    \Omega^+_i(X)\ar[r]^{a_{i;j,X}} \ar[d]^{f_*} & \Omega^+_i(X) \ar[d]^{f_*} \\
    \Omega^+_i(Y)\ar[r]^{a_{i;j,Y}} & \Omega^+_i(Y)
    }
    \qquad
    \xymatrix{
    \Omega_i^-(X)\ar[r]^{\Delta_{i,X}} \ar[d]^{f_*} & \Omega^-_i(X) \ar[d]^{f_*} \\
    \Omega_i^-(Y)\ar[r]^{\Delta_{i,Y}} & \Omega^-_i(Y)
    }
    \]
    This suggests that a general associated operator $\{a_X:FX\to FX\}_{X\in\catC}$ should be a natural
    transformation, in which case $\catC$ has a to be a category and $F$ has to be a functor.
    For reasons to become clear later, we also require $a$ to have a \emph{dual}, that is,
    a natural transformation $\{a^*_X\}_{X\in\catC}:F\to F$ such that
    $a^*_X$ is a dual of  $a_X$.
    (Notice that $a_X$ may not have a dual since $FX$ is not a Hilbert space. Also,
    even when $a_X^*$ exists for all $X$,  it can happen  that $\{a_X^*\}_{X\in\catC}$ is not a natural transformation
    from $F$ to itself.)
    %Namely, for all $X\in\catC$,
    %there exists a linear operator $a_X^*:FX\to FX$ such
    %that $\Trings{a_Xu,v}=\Trings{u,a_X^*v}$ for all $u,v\in FV$. (The existence of $a_X^*$ is not guaranteed since
    %$FV$ may not be  a Hilbert space.) In this case, $a^*:=\{a_X^*\}_{X\in\catC}$ is also a natural
    %transformation from $F$ to itself.

\medskip

    We finally conclude with the definition:

    \begin{dfn}
        Let $\catC$ be a subcategory of $\catSimp$ and $F:\catC\to \catPHil$ a covariant
        functor. An \emph{associated operator} of $(\catC,F)$, or just a $(\catC,F)$-operator, is a natural transformation $a:F\to F$
        that admits a dual.
    \end{dfn}
    
    Interesting examples usually arise when $\catC$ is a subcategory of $\catCov$.
    Of particular interest is the \emph{category of quotients of a $G$-complex $\calX$},
    which we now define.
    It will be our main example, and will play a major role in the sequel.

    %The following definition gives the category  $\catC$ for
    %Examples~\ref{EX:adjacency-operator-tree} and~\ref{EX:adjacency-building} above.

    \begin{dfn}\label{DF:G-complex-category}
        Let $\calX$ be a $G$-complex (\ref{subsec:G-complex}).  We define the subcategory
        \[
        \catC=\catC(G,\calX)\subseteq\catCov
        \]
        as follows: The objects of $\catC$ are $\{\Gamma\leftmod\calX\where \Gamma\leq_{\calX} G\}$
        (see~\ref{subsec:quotienst-of-simp-comps}), where $1\leftmod\calX$ is identified with $\calX$.
        The morphisms of $\catC$ are given as follows:
        \begin{itemize}
            \item For all $\Gamma\leq_{\calX}G$, set $\Hom_{\catC}(\calX,\Gamma\leftmod\calX)=\{p_\Gamma\circ g\where
            g\in G\}$, where $p_\Gamma$ is the quotient map $x\mapsto \Gamma x:\calX\to \Gamma\leftmod\calX$.
            \item For all $1\neq\Gamma'\leq\Gamma\leq_{\calX} G$, set $\Hom_{\catC}(\Gamma'\leftmod\calX,\Gamma\leftmod\calX)=\{p_{\Gamma',\Gamma}\}$
            where
            $p_{\Gamma',\Gamma}$ is the quotient map $\Gamma'x\mapsto \Gamma x:\Gamma'\leftmod\calX\to \Gamma\leftmod\calX$.
            \item All other $\Hom$-sets are empty.
        \end{itemize}
        In particular, $\End_{\catC}(\calX)=G$.
    \end{dfn}

    \begin{example}
        In  Example~\ref{EX:adjacency-operator-tree}, take $\catC=\catC(G,\calX)$ where
        $\calX$ is a $k$-regular tree and  $G=\Aut(\calX)$.
        The objects of $\catC$ are $k$-regular graphs, and the operator  $a_0=\{a_{0,X}\}_{X\in\catC}$
        (resp.\ $a_1=\{a_{1,X}\}_{X\in\catC}$) is associated with $(\catC,\Omega^+_0)$
        (resp.\ $(\catC,\Omega^+_1)$).

        In Example~\ref{EX:adjacency-building},
        take %$\calX=\calB_d(F)$ and $G=\nPGL{F}{d}$ as in
        $\catC=\catC(\nPGL{F}{d},\calB_d(F))$ (see Chapter~\ref{sec:ramanujan-complexes}).
        The operators $a_1,\dots,a_{d-1}$ are associated with $(\catC,\Omega^+_0)$.
    \end{example}

	We finish with introducing another functor from $\catSimp$ to $\catPHil$ that may be
    considered in applications: For $X\in \catSimp$, let $\Flag(X)$ denote the
	set of \emph{maximal flags} in $X$, namely, the set of {maximal} chains of cells in $X$. If
	$X$ is pure of dimension $d$, then $\Flag(X)$ is canonically isomorphic to
	the set of pairs $(x,\leq)$ such that $x\in\calX^{(d)}$ and $\leq$ is a full ordering of the vertices of $x$.
    Let
	\[\llFlag(X)=\llf(\Flag(X))\ .\]
	We make $\llFlag$ into  a (covariant) functor from $\catSimp$ to $\catPHil$ in the same
	way we made $\Omega^+_i$, $\Omega_i^-$, $\Omega_i^\pm$ into functors.

\subsection{Spectrum}
\label{subsec:spectrum-of-simp}

    Let $\catC$ be a subcategory of $\catSimp$ and let $F:\catC\to\catPHil$ be a functor.
    Denote by
    $\Alg{\catC,F}{}$ be the collection of all
    $(\catC,F)$-operators.
    We make $\Alg{\catC,F}{}$ into a unital  ``$*$-algebra'' (this is a priori not a set) by defining
    \begin{align*}
    &\{a_X\}+\{a'_X\}=\{a_X+a'_X\},\\
    &\{a_X\}\cdot\{a'_X\}=\{a_X\circ a'_X\},\\
    &\{a_X\}^*=\{a_X^*\},\\
    &\alpha\{a_X\}=\{\alpha a_X\}
    \end{align*}
    for all $\{a_X\},\{a'_X\}\in\Alg{\catC,F}{}$ and $\alpha\in\C$.
    The collection $\Alg{\catC,F}{}$ is a set when $\catC$ is skeletally small, which we tacitly
    assume throughout.
    An  idempotented $*$-subalgebra of $\Alg{\catC,F}{}$ is called
    an  \emph{algebra of $(\catC,F)$-operators}.

\medskip

    Let $A$ be an algebra of $(\catC,F)$-operators.
    We can regard $FX$ as a left $A$-module by setting $a\cdot v=a_Xv$
    for all $v\in FX$ and $a=\{a_X\}_{X\in\catC}\in A$. We clearly
    have
    \[
    \Trings{au,v}=\Trings{u,a^*v}\qquad\forall u,v\in FX,\, a\in\Alg{\catC,F}{}\ .
    \]
    However, a priori, $a|_{FX}$ is not continuous.
    When this holds for all $a\in A$,
    we  say that $A$ acts \emph{continuously} on $FX$. (For example,
    this is the case when $FX$ is finite dimensional.)

    To simplify the discussion, assume henceforth  that
    \begin{enumerate}
    	\item[(1)] every $(\catC,F)$-operator acts continuously on $FX$ for all $X\in\catC$, and
    	\item[(2)] $FX$ is finite-dimensional when $X$ is finite.
    \end{enumerate}
    In this case, for all $X\in\catC$, the smooth left $A$-module
    $AFX=A\cdot FX$ can be regarded as a pre-unitary representation of $A$ (\ref{subsec:pre-unitary-reps}), and this representation
    is admissible (\ref{subsec:admissible-representations}) when $X$ is finite.
    (If the identity transformation $\id:F\to F$ is in $A$, then $AFX=FX$.)
    We may therefore apply the spectral theory developed in Chapter~\ref{sec:involutary-algebras} to
    $AFX$ and $A$. In particular, we define the \emph{$A$-spectrum} of
    $X$ to be
    \[
    \Spec_{A}(X):=\Spec_{A}(A FX)\,,
    \]
    and when $X$ is finite, we also define the \emph{$A$-spectrum multiset}
    \[
    \mSpec_A(X):=\mSpec_A(A FX)\,.
    \]
    When $A=\Alg{\catC,F}{}$, we will also address these (multi-)sets as the \emph{$(\catC,F)$-spectrum},
    or just \emph{$F$-spectrum} when $\catC$ is clear from the context. We then write
    $\Spec_{\catC,F}$ or $\Spec_F$ instead of $\Spec_{\Alg{\catC,F}{}}$.
    Recall from~\ref{subsec:unitary-dual}, \ref{subsec:properties-of-spectrum} and~\ref{subsec:admissible-representations}
    that $\Spec_{A}(X)$ is a closed subset of the unitary dual of $A$, denoted
    $\udual{A}$, and, when defined, $\mSpec_{A}(X)$  a multiset of elements in $\udual{A}$.

    We  stress that the spectrum of \emph{any}  $a\in A$
    on $AFX$ can be recovered from $\Spec_{A}(X)$ as follows from Corollary~\ref{CR:subalgebra-spectrum-II}.
    We further note that if $B$ is an algebra of $(\catC,F)$-operators contained in $A$,
    then
    $\Spec_{B}(X)$ can be recovered from $\Spec_{A}(X)$ (Theorem~\ref{TH:subalgebra-spectrum-I}).
    In particular, we can recover $\Spec_{A}(X)$ from $\Spec_{\catC,F}(X)$. Similar claims hold for the spectrum
    multiset when $X$ is finite.

	\medskip
	
	When condition (1) (resp.\ (2)) above does not hold, one case still define $\Spec_A(X)$ (resp.\
	$\mSpec_A(X)$) when $A$ acts continuously on $FX$ (resp.\ $AFX$ is an admissible $A$-module).

    \begin{example}\label{EX:k-regular-graphs-basic-spectrum}
        Let $\calX$ be a $k$-regular tree, let $G=\Aut(\calX)$, $\catC=\catC(\calX,G)$
        (Definition~\ref{DF:G-complex-category}),
        and write $A_i=\Alg{\catC,\Omega^+_i}{}$ for $i=0,1$.
        We shall see in Example~\ref{EX:tree-spectrum} below that
        \[
        A_0=\C[a_0]\qquad\text{and}\qquad A_1=\C[a_1]
        \]
        where $a_0$ and $a_1$ are the vertex and edge adjacency operators of Example~\ref{EX:adjacency-operator-tree}.
        By Proposition~\ref{PR:unitary-dual-topology}, we can identify $\udual{A}_0$
        with a subset of $\C$ (this set is $\R$, in fact) such
        that for all $V\in\Rep[u]{A_0}$, the set $\Spec_{A_0}(V)$
        corresponds to $\Spec(a_0|_V)$. Therefore, for all $X\in\catC$, the datum of $\Spec_{A_0}(X)$
        is equivalent to the spectrum of the vertex adjacency operator of $X$, so
        the $A_0$-spectrum is essentially the same as the usual spectrum of $k$-regular graphs.
        Likewise, $\Spec_{A_1}(X)$ is equivalent to the spectrum of the edge adjacency
        operator of $X$.
    \end{example}

    \begin{example}\label{EX:zero-dim-spec-of-Bd}
        Let $\calX=\calB_d(F)$ and $G=\nPGL{F}{d}$ be as in Chapter~\ref{sec:ramanujan-complexes},
        and let $\catC=\catC(\calB_d(F),G)$. We shall see below (Example~\ref{EX:computation-of-Hecke-algebra-for-Bd}) that
        $A_0:=\Alg{\catC,\Omega^+_0}{}$ is in fact the free commutative unital algebra generated
        by the natural transformations $a_1,\dots,a_{d-1}$ (cf.\
        Chapter~\ref{sec:ramanujan-complexes}, Example~\ref{EX:adjacency-building}). By Proposition~\ref{PR:unitary-dual-topology},
        we see that  $\udual{A}_0$ can be embedded in $\C^{d-1}$
        in such a way that $\Spec_{A_0}(\Gamma\leftmod \calB_d(F))$ corresponds to
        the common spectrum of $(a_1,\dots,a_{d-1})$ on $\Omega^+_0(\Gamma\leftmod\calB_d(F))$ for all
        $\Gamma\leq_{\calB_d(F)}G$.
        Thus, $\Spec_{A_0}(\Gamma\leftmod\calB_d(F))$ is
        equivalent to $\Spec_0(\Gamma\leftmod\calB_d(F))$ in the sense of Chapter~\ref{sec:ramanujan-complexes}.
    \end{example}

    \begin{remark}\label{RM:summand-transition}
        Let $F,F':\catC\to \catPHil$ be functors. Then $\Alg{\catC,F}{}$
        embeds as a $*$-subalgebra of $\Alg{\catC,F\oplus F'}{}$ via $a\mapsto a\oplus 0:=\{a_X\oplus 0_{F'X}\}_{X\in\catC}$.
        As a result, we can view any algebra $A$ of $(\catC,F)$-operators as an algebra of $(\catC,F\oplus F')$-operators.
        This transition  does not affect the definition of $\Spec_A(X)$ and $\mSpec_A(X)$ (when they exist),
        since $A(FX\oplus F'X)=AFX\oplus 0$.
    \end{remark}

    Let $\catC\subseteq \catSimp$ and $i\geq 0$. We denote the spectrum
    taken with respect to $\Alg{\catC,\Omega_i^+}{}$, $\Alg{\catC,\Omega_i^-}{}$, $\Alg{\catC,\Omega_i^{\pm}}{}$ and
    $\Alg{\catC,\llFlag}{}$ by
    %We now set some notation for the sequel: For a subcategory $\catC\subseteq \catSimp$ and $i\geq 0$, we
    %write to denote the spectrum taken with respect to $\Alg{\catC,\Omega_i^+}{}$, $\Alg{\catC,\Omega_i^-}{}$, $\Alg{\catC,\Omega_i^{\pm}}{}$,
    %$\Alg{\catC,\llFlag}{}$, respectively.
    \[
    \Spec_{i,\catC},\quad\Spec_{-i,\catC},\quad\Spec_{\pm i,\catC},\quad\Spec_{\Flag,\catC},
    \]
    respectively.
    We will drop $\catC$ when it is obvious from the context.
    We call $\Spec_i$ (resp.\ $\Spec_{-i}$, $\Spec_{\pm i}$) the \emph{non-oriented} (resp.\ \emph{oriented}, \emph{full})
    \emph{$i$-dimensional spectrum}, and we call $\Spec_{\Flag}$ the \emph{flag spectrum}. Notice that $\Spec_{+0}$, $\Spec_{-0}$ and $\Spec_{\pm 0}$
    are in fact identical (since $\Omega_0^+=\Omega_0^-=\Omega_0^{\pm}$),
    so we simply write $\Spec_0$.

\rem{
    When $\catC=\catC(G,\calX)$ (Definition~\ref{DF:G-complex-category}), we will replace
    the subscript $\catC$ with ``$G,\calX$'' writing $\Spec_{i,G,\calX}$, $\Spec_{\pm i,G,\calX}$, etcetera.
    In this case, we also let
    \[
    \Alg{G,\calX}{i},\quad \Alg{G,\calX}{-i},\quad \Alg{G,\calX}{\pm i},\quad \Alg{G,\calX}{\Flag}
    \]
    denote the algebras $\Alg{\catC,\Omega_i^+}{}$, $\Alg{\catC,\Omega_i^-}{}$, $\Alg{\catC,\Omega_i^{\pm}}{}$,
    $\Alg{\catC,\llFlag}{}$, respectively.
}

\subsection{Elementary Functors}
\label{subsec:case-of-Alg-G-X}

    Let $G$ be an $\ell$-group,  let
    $\calX$ be an almost transitive $G$-complex (\ref{subsec:G-complex}), and let $\catC=\catC(G,\calX)$
    (Definition~\ref{DF:G-complex-category}). In this section we study in detail the algebra of all $(\catC,F)$-operators
    when $F:\catC\to\catPHil$ satisfies certain assumptions satisfied
    by  $\Omega_i^+$, $\Omega_i^-$, $\Omega_i^\pm$ and $\llFlag$.

	\begin{dfn}\label{DF:elementary-functor}
		A functor $F:\catC(G,\calX)\to \catPHil$ is \emph{elementary}
		if there exists a covariant functor $S:\catC(G,\calX)\to\catSet$ such
		that
		\begin{enumerate}
			\item[{\rm(E1)}] There is a unitary natural
			isomorphism $\llf\circ S\cong F$.
			\item[{\rm(E2)}] For all $x\in S\calX$, the group
			$\Stab_G(x)$ is compact open in $G$ (the action of $G$ on $S\calX$ is via $S$).
			Furthermore, $\Stab_G(x)$ is contained in the stabilizer of a nonempty cell
			in $\calX$.
			\item[{\rm(E3)}] For all $\Gamma\leq_{\calX} G$, the map $\Gamma\leftmod S\calX\to S(\Gamma\leftmod\calX)$ given
            $\Gamma x\mapsto (Sp_\Gamma)x$
			(notation as in Definition~\ref{DF:G-complex-category}) is an isomorphism.
			\item[{\rm(E4)}] $G\leftmod S\calX$ is finite.
		\end{enumerate}
		The functor $F$ is called \emph{semi-elementary} if there is another
		functor $F':\catC(\calX,G)\to \catPHil$ such that $F\oplus F'$ is
		elementary.
	\end{dfn}
	
	\begin{example}
		The functors $\Omega_i^\pm$, $\Omega_i^+$ and $\llFlag$ are elementary.
		Indeed,
		take $S$ to be $X\mapsto \ori{X^{(i)}}$, $X\mapsto X^{(i)}$ and
		$X\mapsto\Flag(X)$, respectively; condition (E3) follows from Proposition~\ref{PR:orientation-of-quotient}.
		The functor  $\Omega_0^-$ is also elementary.

		For $i>0$, the
		functor $\Omega_i^-$ is semi-elementary since
		$\Omega_i^\pm=\Omega_i^+\oplus\Omega_i^-$.
		More generally, any functor $F:\catC(G,\calX)\to \catPHil$
		which is an orthogonal summand of a finite direct sum of functors
		of the form $\Omega_i^+,\Omega_i^-,\llFlag$ is semi-elementary.
	\end{example}
	
	\begin{remark}
		The second part of condition (E2) will not be used until
		\ref{subsec:algebra-A-revised}. Furthermore,
		if the topological realization of $\calX$ is a complete metric ${\mathrm{CAT}(0)}$-space
		on which $G$ acts by isometries,
		then any compact subgroup of $G$ is contained in the stabilizer of a cell in $\dot{\calX}$.
		This follows from the Bruhat--Tits Fixed Point Theorem (\cite[Th.~11.23]{Buildings08AbramBrown};
		notice that orbits of compact groups
		are are compact and hence bounded). When $\calX$ is an \emph{affine building}, e.g.\
		when $\calX=\calB_d(F)$ as in Chapter~\ref{sec:ramanujan-complexes}, the complex
		$\calX$ can be
		realized as a complete metric ${\mathrm{CAT}(0)}$-space \cite[Th.~11.16(b)]{Buildings08AbramBrown}.
	\end{remark}

    For the next proposition, recall that if $\Gamma$ is a group and $V$
    is a complex representation of $\Gamma$, then the space
    of coinvariants $V_\Gamma$ is $V/V(\Gamma)$, where $V(\Gamma):=\Span\{v-\gamma v\where v\in V,\,\gamma\in\Gamma\}$.
    Also notice that if $F:\catC(G,\calX)\to\catPHil$, then $G$ acts on $F\calX$ via $F$.
    Recall from the notation section that
	for a set $X$, we let $\{\e_x\}_{x\in X}$ denote the standard
	basis of $\llf(X)$.

    \begin{prp}\label{PR:S-quotient-check}
        (i) Let $S:\catC(G,\calX)\to \catSet$ be a functor satisfying condition (E3) of Definition~\ref{DF:elementary-functor}.
        Define $S_1:\catSimp\to\catSet$ by
        \begin{align*}
        S_1(\Gamma\leftmod\calX)&=\Gamma\leftmod S\calX,\\
        S_1(p_\Gamma\circ g)&=[x\mapsto \Gamma gx:S\calX\to\Gamma\leftmod S\calX],\\
        S_1(p_{\Gamma', \Gamma})&=[\Gamma'x\mapsto \Gamma x:\Gamma'\leftmod S\calX\to\Gamma\leftmod  S\calX]
        \end{align*}
        for all $\Gamma'\leq\Gamma\leq_{\calX}G$, $g\in G$ (notation as in Definition~\ref{DF:G-complex-category}).
        Then $S\cong S_1$.

        (ii) Let $F:\catC(G,\calX)\to \catPHil$ be a semi-elementary functor viewed
        as a functor into $\catVec$, the category of vector spaces over $\C$. Write $V=F\calX$ and define
        $F_1:\catC(G,\calX)\to\catVec$ by
        \begin{align*}
        F_1(\Gamma\leftmod\calX)&=V_{\Gamma},\\
        F_1(p_\Gamma\circ g)&=[v\mapsto gv+V(\Gamma):V\to V_\Gamma],\\
        F_1(p_{\Gamma',\Gamma})&=[v+V(\Gamma')\mapsto v+V(\Gamma):V_{\Gamma'}\to V_{\Gamma}]
        \end{align*}
        for all $\Gamma'\leq\Gamma\leq_{\calX}G$, $g\in G$ (notation as in Definition~\ref{DF:G-complex-category}).
        Then $F\cong F_1$.
    \end{prp}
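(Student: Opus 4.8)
The plan is to prove both parts by exhibiting explicit natural isomorphisms and checking that they are well-defined and functorial. For part (i), the key observation is that condition (E3) provides, for each $\Gamma\leq_\calX G$, an isomorphism $\eta_\Gamma\colon S_1(\Gamma\leftmod\calX)=\Gamma\leftmod S\calX\to S(\Gamma\leftmod\calX)$ sending $\Gamma x$ to $(Sp_\Gamma)x$; in particular $\eta_1$ is the identity on $S\calX$. First I would verify that these $\eta_\Gamma$ assemble into a natural transformation $S_1\to S$, i.e.\ that for every morphism $\phi$ in $\catC(G,\calX)$ the appropriate square commutes. There are two cases. For $\phi=p_\Gamma\circ g\colon\calX\to\Gamma\leftmod\calX$ with $g\in G$, we have $S_1(\phi)$ sending $x\mapsto\Gamma gx$ and $S(\phi)=S(p_\Gamma)\circ S(g)$; chasing $x\in S\calX$ through both routes gives $\eta_\Gamma(\Gamma gx)=(Sp_\Gamma)(gx)=S(\phi)(x)$, using functoriality of $S$ and the definition of $\eta_\Gamma$. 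For $\phi=p_{\Gamma',\Gamma}\colon\Gamma'\leftmod\calX\to\Gamma\leftmod\calX$, one checks directly that $\eta_\Gamma\circ S_1(p_{\Gamma',\Gamma})$ and $S(p_{\Gamma',\Gamma})\circ\eta_{\Gamma'}$ both send $\Gamma'x$ to $(Sp_\Gamma)x$, the key point being the factorization $p_\Gamma=p_{\Gamma',\Gamma}\circ p_{\Gamma'}$ in $\catC(G,\calX)$ and functoriality of $S$. Since each $\eta_\Gamma$ is a bijection, $\{\eta_\Gamma\}$ is a natural isomorphism $S_1\cong S$.

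For part (ii), I would reduce to the elementary case first: if $F$ is semi-elementary, choose $F'$ with $F\oplus F'$ elementary, and note that the construction $F\mapsto F_1$ is additive (coinvariants commute with finite direct sums, as does the $G$-action on $F\calX$), so a natural isomorphism $F\oplus F'\cong(F\oplus F')_1=F_1\oplus F'_1$ restricts to one $F\cong F_1$ on the relevant orthogonal summand — here one should observe that $V(\Gamma)$ respects the decomposition $F\calX=F'\calX\oplus F''\calX$ so that $(F\oplus F')_\Gamma=F_\Gamma\oplus F'_\Gamma$ compatibly. So assume $F$ elementary, pick $S$ as in (E1), and fix a unitary natural isomorphism $\theta\colon\llf\circ S\xrightarrow{\ \sim\ }F$. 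Applying part (i), it suffices to show $(\llf\circ S)_1\cong\llf\circ S_1$ naturally, since then composing with $\theta$ and the natural isomorphism $S_1\cong S$ from (i) yields $F_1\cong\llf\circ S_1\cong\llf\circ S\cong F$. The content is therefore the identity $(\llf(S\calX))_\Gamma\cong\llf(\Gamma\leftmod S\calX)$: the surjection $\llf(S\calX)\to\llf(\Gamma\leftmod S\calX)$, $\e_x\mapsto\e_{\Gamma x}$ (this is $(p_\Gamma)_*$ in the notation of the excerpt, using that $S$-fibers of $p_\Gamma$ are single $\Gamma$-orbits) has kernel exactly the span of $\{\e_x-\e_{\gamma x}\}=\llf(S\calX)(\Gamma)$, because $(p_\Gamma)_*\e_x=(p_\Gamma)_*\e_{x'}$ iff $x,x'$ lie in the same $\Gamma$-orbit. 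This gives the required isomorphism $\llf(S\calX)_\Gamma\cong\llf(\Gamma\leftmod S\calX)=\llf(S_1(\Gamma\leftmod\calX))$, and one checks it is natural in $\Gamma$ and compatible with the morphisms $p_\Gamma\circ g$ and $p_{\Gamma',\Gamma}$ by the same diagram chase as in part (i), now at the level of basis vectors.

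The main obstacle I anticipate is purely bookkeeping: making sure every $\Hom$-set in $\catC(G,\calX)$ is handled (the three cases of Definition~\ref{DF:G-complex-category}, plus compositions) and that the isomorphism in (ii) genuinely lands in the category of pre-Hilbert spaces, i.e.\ is unitary up to the normalization built into $\theta$ and into the inner products on $\Omega_i^-$. One subtlety worth isolating: in part (ii) the map $F_1(p_\Gamma\circ g)$ is $v\mapsto gv+V(\Gamma)$, so well-definedness requires only that $g$ acts on $V=F\calX$ (which it does, via $F$, since $\End_\catC(\calX)=G$), and independence of the coset representative is automatic because we quotient by $V(\Gamma)$ on the target; similarly $F_1(p_{\Gamma',\Gamma})$ is well-defined because $V(\Gamma')\subseteq V(\Gamma)$. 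These checks are routine, and the only place where the hypotheses (E2)–(E4) enter is to guarantee that all the spaces involved are actually pre-Hilbert spaces of the expected kind (E2 ensures $\llf(S\calX)$ carries a $G$-action by the relevant $\ell$-group machinery, E4 that the quotients $\Gamma\leftmod S\calX$ behave well), but for the bare statement $F\cong F_1$ in $\catVec$ one only needs (E1) and (E3), as the argument above shows.
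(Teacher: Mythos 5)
Part (i) of your proposal matches the paper's proof exactly: the same map $\Gamma x\mapsto (Sp_\Gamma)x$, the same use of (E3) for bijectivity, and the same diagram chases for naturality.

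In part (ii), your argument for the elementary case is also sound and, once unfolded, coincides with the paper's: the composite $\e_x+V(\Gamma)\mapsto\e_{\Gamma x}\mapsto\e_{(Sp_\Gamma)x}$ is precisely $\vphi+V(\Gamma)\mapsto(Fp_\Gamma)\vphi$, which is the isomorphism the paper constructs. The difference is in the \emph{order} of the reduction, and this is where a genuine gap appears. You first prove $F\cong F_1$ for elementary $F$ and then try to deduce the semi-elementary case by ``restricting'' a natural isomorphism $(F\oplus F')_1\cong F\oplus F'$ to its summands. But producing an isomorphism $(F\oplus F')_1\cong F\oplus F'$ together with the identification $(F\oplus F')_1=F_1\oplus F'_1$ does not by itself give $F_1\cong F$: one needs the isomorphism to actually commute with the orthogonal projection $e_F=\{e_{F,X}\}_{X\in\catC}$ on both sides. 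This is not a consequence of naturality, because $e_F$ is not a morphism of $\catC(G,\calX)$, and you do not verify it. (It \emph{is} true, but the easiest way to see it is to observe that your isomorphism equals the universal one $\vphi+V(\Gamma)\mapsto (Fp_\Gamma)\vphi$, at which point compatibility with $e_F$ is a trivial computation — but that observation is exactly what is missing.)

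The paper sidesteps this by reversing the order: it defines the natural transformation $u_{F,X}(\vphi+V(\Gamma))=(Fp_\Gamma)\vphi$ \emph{directly} for any semi-elementary $F$ (this only uses that $Fp_\Gamma$ kills $V(\Gamma)$, which holds by functoriality and $p_\Gamma\circ\gamma=p_\Gamma$), so that $u_{F\oplus F'}=u_F\oplus u_{F'}$ holds \emph{by construction}; it then proves $u_F$ is an isomorphism when $F$ is elementary (your computation), from which $u_F$ for general $F$ is an isomorphism because it is a direct summand of the isomorphism $u_{F\oplus F'}$. To repair your proof, define $u_F$ in this manner first, show it coincides with your composite in the elementary case, and then the summand argument goes through verbatim. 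Your remark about which of (E1)--(E4) are actually used is accurate.
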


    \begin{proof}
        (i) For all  $\Gamma\leq_{\calX}G$, let $u_{\Gamma\leftmod\calX}:S_1(\Gamma\leftmod\calX)=\Gamma\leftmod S\calX\to S(\Gamma\leftmod\calX)$
        be the map $\Gamma x\mapsto (Sp_\Gamma)x$ (this is well-defined since $p_\Gamma\circ \gamma=p_\Gamma$
        for all $\gamma\in\Gamma$).
        Then $u_{\Gamma\leftmod\calX}$ is an isomorphism by (E3), and it is routine to check that $u=\{u_X\}_{X\in\catC}:S_1\to S$
        is a natural transformation.

        (ii)
        For every $X=\Gamma\leftmod\calX\in\catC(G,\calX)$, define
        $u_{F,X}:F_1\calX=(F\calX)_\Gamma\to FX$
        by $u_{F,X}(\vphi+(F\calX)(\Gamma))=(Fp_{\Gamma})\vphi$.
        This is well-defined since for all $\vphi\in F\calX$ and $\gamma\in\Gamma$,
        we have $(Fp_{\Gamma})(\vphi-\gamma\vphi)=(Fp_{\Gamma})\vphi-(F(p_\Gamma\circ \gamma))\vphi=
        (Fp_{\Gamma})\vphi-(Fp_{\Gamma})\vphi=0$.
        It is routine to check that $u_F=\{u_{F,X}\}_{X\in \catC}:F_1\to F$
        is a natural transformation. It is left to show that $u_F$ is an isomorphism.

        Suppose first that $F$ is elementary and write $F= \llf\circ S$ as in Definition~\ref{DF:elementary-functor}.
        By (i), we may assume that $S=S_1$, and hence
        $F_1(\Gamma\leftmod\calX)=\llf(S\calX)_\Gamma$,
        $F(\Gamma\leftmod\calX)=\llf(\Gamma\leftmod S\calX)$,
        and  $u_{F,\Gamma\leftmod\calX}$ is given by sending
        $\e_x+(F\calX)(\Gamma)$ to $(Sp_{\Gamma})_*(\e_x)=\e_{\Gamma x}$ for all $x\in S\calX$
        (see the notation section). Since $(F\calX)(\Gamma)=\Span \{\e_x-\gamma\e_x\where
        x\in S\calX,\,\gamma\in\Gamma\}$, this means $u_{F,\Gamma\leftmod\calX}$ is an isomorphism.
        For general $F$, choose $F'$ such that $F\oplus F'$ is elementary.
        Then $u_{F\oplus F'}$ is an isomorphism by what we have shown.
        Since $u_{F\oplus F'}=u_{F}\oplus u_{F'}$,
        it follows that $u_F$ is an isomorphism as well.
    \end{proof}

    \begin{thm}\label{TH:Adj-algebra-descrition}
        Let $\calX$ be an almost transitive $G$-complex, let $\catC=\catC(G,\calX)$, let $F:
        \catC\to \catPHil$ be semi-elementary,
        and write $A=\Alg{\catC,F}{}$. Then:
        \begin{enumerate}
            \item[(i)] The map $\{a_X\}_{X\in\catC}\mapsto a_{\calX}:A\to \End_G(F\calX)$ is an isomorphism of unital
            $*$-algebras, where the involution on $\End_G(F\calX)$ is given by taking the dual with respect to the inner product on $F\calX$.
            \item[(ii)] For every $a\in A$, there is $M=M(a)\in\R_{\geq0}$ such that $\norm{a|_{FX}}\leq M$
            for all $X\in \catC$.
        \end{enumerate}
    \end{thm}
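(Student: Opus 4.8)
The plan is to reduce to the case that $F$ is elementary and then analyze how a $G$-equivariant endomorphism of $F\calX$ descends to each quotient.

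\emph{Reduction.} Since $F$ is semi-elementary it is an orthogonal direct summand of an elementary functor, and both conclusions pass between a functor and its orthogonal summands (cf.\ Remark~\ref{RM:summand-transition}), so it suffices to treat $F=\llf\circ S$. Using (E1) and (E3) together with Proposition~\ref{PR:S-quotient-check} I would identify $F\calX=\llf(S\calX)$ and $F(\Gamma\leftmod\calX)=\llf(\Gamma\leftmod S\calX)$ for every $\Gamma\leq_\calX G$, with $(p_\Gamma)_*\e_x=\e_{\Gamma x}$ and with $(p_\Gamma)_*$ corresponding to the coinvariant quotient $F\calX\to(F\calX)_\Gamma$, so that $(p_\Gamma)_*$ is surjective with kernel $(F\calX)(\Gamma)$. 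Two further facts: $G$ acts on $F\calX$ by unitaries (it permutes the standard basis), and, by the second half of (E2) combined with Proposition~\ref{PR:quotient-cover-sufficient-nec-conds}, $\Gamma$ acts \emph{freely} on $S\calX$ (its stabilizers sit inside stabilizers of nonempty cells, on which $\Gamma$ is free), while $G\leftmod S\calX$ is finite by (E4).

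\emph{Part (i).} The map $\{a_X\}_{X\in\catC}\mapsto a_\calX$ is a unital $*$-algebra homomorphism into $\End_G(F\calX)$: naturality at $\End_\catC(\calX)=G$ makes $a_\calX$ $G$-equivariant, the ring operations on $\Alg{\catC,F}{}$ are componentwise, and the involution (the dual natural transformation) restricts at $\calX$ to the dual of $a_\calX$ by uniqueness of duals. Injectivity: if $a_\calX=0$, then naturality at $p_\Gamma$ gives $a_{\Gamma\leftmod\calX}(p_\Gamma)_*=(p_\Gamma)_*a_\calX=0$, and $(p_\Gamma)_*$ is onto, so $a=0$. Surjectivity is the substantial point. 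Given $T\in\End_G(F\calX)$, write $T\e_x=\sum_z t_{zx}\e_z$ (a finite sum); $G$-equivariance gives $t_{gz,gx}=t_{zx}$. Using that $S\calX$ has finitely many $G$-orbits, with representatives $x_1,\dots,x_n$, and that each $\Stab_G(z)$ is compact while each $\Stab_G(x_i)$ is open, a counting argument shows that for every $y$ the set $\{x:t_{yx}\neq0\}$ is finite, of size bounded by a constant depending only on $T$: indeed $t_{y,gx_i}\neq0$ forces $g^{-1}y\in\supp(T\e_{x_i})$, and for each $z\in\supp(T\e_{x_i})$ the points $gx_i$ with $gz=y$ number at most $[\Stab_G(z):\Stab_G(z)\cap\Stab_G(x_i)]<\infty$. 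Hence $T^*\e_y:=\sum_x\overline{t_{yx}}\,\e_x$ is a well-defined operator in $\End_G(F\calX)$ and is a dual of $T$; in particular $\End_G(F\calX)$ is closed under duals, as the statement presupposes. Now $T$ and $T^*$, being $\Gamma$-equivariant for every $\Gamma\leq_\calX G$, preserve $(F\calX)(\Gamma)$ and so descend to $(F\calX)_\Gamma\cong F(\Gamma\leftmod\calX)$; these descents are compatible with all structure maps of the functor $F_1$ of Proposition~\ref{PR:S-quotient-check}(ii), hence assemble into natural transformations $\widetilde T,\widetilde{T^*}:F\to F$ with $\widetilde T_\calX=T$. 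Finally one checks $\Trings{\widetilde T_X u,v}=\Trings{u,\widetilde{T^*}_X v}$ for every object $X$ by a direct computation in the standard basis: at $X=\Gamma\leftmod\calX$ both sides unwind to orbit sums that agree after reindexing $\Gamma$ via the bijections $\gamma\mapsto\gamma x$, available because $\Gamma$ acts freely on $S\calX$. Thus $\widetilde T$ is a $(\catC,F)$-operator mapping to $T$.

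\emph{Part (ii).} By (i), $a=\widetilde{a_\calX}$; naturality at $p_\Gamma$ gives $a_{\Gamma\leftmod\calX}\e_{\Gamma x}=(p_\Gamma)_*(a_\calX\e_x)=\sum_z t_{zx}\e_{\Gamma z}$, so on $\llf(\Gamma\leftmod S\calX)$ the $(\Gamma y,\Gamma x)$ matrix entry of $a_{\Gamma\leftmod\calX}$ equals $\sum_{z\in\Gamma y}t_{zx}$, and (using $a^*=\widetilde{(a_\calX)^*}$ from (i)) also equals $\sum_{w\in\Gamma x}t_{yw}$. Now apply the Schur test on $\ell^2(\Gamma\leftmod S\calX)$: since the $\Gamma$-orbits partition $S\calX$, the $\Gamma y$-th row sum is $\le\sum_{\Gamma x}\sum_{w\in\Gamma x}|t_{yw}|=\sum_{w\in S\calX}|t_{yw}|\le C_1$, and the $\Gamma x$-th column sum is $\le\sum_{\Gamma y}\sum_{z\in\Gamma y}|t_{zx}|=\sum_{z\in S\calX}|t_{zx}|\le C_2$, where $C_1:=\sup_y\sum_w|t_{yw}|$ and $C_2:=\sup_x\sum_z|t_{zx}|$ are finite and independent of $\Gamma$ ($C_2$ because $a_\calX\e_x$ has finite support and there are finitely many $G$-orbits; $C_1$ by the counting argument of Part (i)). Hence $\Norm{a|_{F(\Gamma\leftmod\calX)}}\le\sqrt{C_1C_2}=:M(a)$ for all $\Gamma\leq_\calX G$ (including $\Gamma=1$), and the semi-elementary case follows since restriction to an orthogonal summand cannot increase operator norms.

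I expect the main obstacle to be the bookkeeping underlying Parts (i)–(ii): converting the qualitative hypotheses ``finitely many $G$-orbits on $S\calX$'' and ``compact open stabilizers'' into honest uniform bounds on the number of nonzero matrix entries per row and per column, valid simultaneously for $\calX$ and all of its $G$-quotients. The freeness of the $\Gamma$-action on $S\calX$ is the ingredient that makes the orbit-sum reindexing in Part (i) and the no-overcounting estimate in Part (ii) go through cleanly.
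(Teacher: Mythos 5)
Your proof is correct and follows the same overall architecture as the paper's: reduce to the elementary case, get injectivity from surjectivity of $(Sp_\Gamma)_*$, construct the descent explicitly to get surjectivity, and supply existence of duals together with a uniform bound on $\|a|_{FX}\|$. The substantive technical difference is in the norm estimate. The paper proves a bespoke hybrid bound (Lemma~\ref{LM:boundedness-test}): it controls the $\ell^2$-norms of the columns (condition (B1), handling collisions in $\Gamma\leftmod S\calX$ via a supremum over partitions of $\{1,\dots,s_j\}$) and the number of nonzero entries per row (condition (B2), via the index $[L:K\cap L]$). You run the standard Schur test on $\ell^1$ row and column sums instead: finiteness of $C_2$ is immediate from finite supports of $a_\calX\e_{x_j}$ over finitely many orbits, and finiteness of $C_1$ follows from the same double-coset index count. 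Both give an $O(\sqrt{C_1C_2})$-type bound uniform in $\Gamma$, and yours is arguably a bit cleaner since it avoids the partition supremum. One place where your write-up is more careful than the paper's exposition: to verify the duality identity at each quotient you explicitly invoke freeness of the $\Gamma$-action on $S\calX$ (from the second half of (E2) together with Proposition~\ref{PR:quotient-cover-sufficient-nec-conds} and Corollary~\ref{CR:unique-image-in-cell}) to justify the reindexing $\gamma\mapsto\gamma w$ behind the orbit-sum identity. The paper performs the same reindexing in the chain $\Trings{\sum_y\alpha_{zy}\e_{\Gamma y},\e_{\Gamma w}}=\sum_{\gamma\in\Gamma}\alpha_{z(\gamma w)}$, which does rely on $\Stab_\Gamma(w)=1$, so your version surfaces a dependence on (E2) that the paper's surrounding remark claims is only used later. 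Minor note: for the reduction to the elementary case, the relevant tool in the paper is Lemma~\ref{LM:pass-to-summand} rather than Remark~\ref{RM:summand-transition}, though the idea you use is the same one.
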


    When (ii) holds, we  say that $A$ acts \emph{uniformly continuously} on $\catC$.	
	We shall need several lemmas for the proof.

    \begin{lem}\label{LM:Jensen}
        Let $r_1,\dots,r_n\in\R$. Then $(r_1+\dots+r_n)^2\leq n(r_1^2+\dots +r_n^2)$.
    \end{lem}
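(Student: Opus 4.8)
The plan is to prove the elementary inequality $(r_1+\dots+r_n)^2 \leq n(r_1^2+\dots+r_n^2)$ by a direct application of the Cauchy--Schwarz inequality in $\R^n$.

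First I would set $\mathbf{r} = (r_1,\dots,r_n)$ and $\mathbf{1} = (1,1,\dots,1)$ in $\R^n$ with the standard inner product. The sum $r_1+\dots+r_n$ is exactly the inner product $\langle \mathbf{r},\mathbf{1}\rangle$, so by Cauchy--Schwarz we have $|r_1+\dots+r_n| = |\langle\mathbf{r},\mathbf{1}\rangle| \leq \|\mathbf{r}\|\cdot\|\mathbf{1}\| = (r_1^2+\dots+r_n^2)^{1/2}\cdot\sqrt{n}$. Squaring both sides yields the claim.

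Alternatively, if one wishes to avoid invoking Cauchy--Schwarz and give a fully self-contained argument, I would expand $n(r_1^2+\dots+r_n^2) - (r_1+\dots+r_n)^2$ and observe that it equals $\sum_{i<j}(r_i-r_j)^2 \geq 0$. This is the Lagrange identity; it requires only collecting terms: the left side expands to $n\sum_i r_i^2 - \sum_i r_i^2 - 2\sum_{i<j}r_ir_j = (n-1)\sum_i r_i^2 - 2\sum_{i<j}r_ir_j$, and $\sum_{i<j}(r_i-r_j)^2 = \sum_{i<j}(r_i^2 + r_j^2) - 2\sum_{i<j}r_ir_j = (n-1)\sum_i r_i^2 - 2\sum_{i<j}r_ir_j$, so the two agree.

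There is no real obstacle here; the only thing to be slightly careful about is the degenerate cases $n=0$ (vacuous, both sides zero) and $n=1$ (equality), which are both handled automatically by either argument. I would present the Cauchy--Schwarz version as the main line since it is the shortest, and perhaps remark that equality holds precisely when all the $r_i$ are equal.
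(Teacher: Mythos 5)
Your proof is correct, but it takes a different route from the paper. The paper proves this by Jensen's inequality: since $x\mapsto x^2$ is convex, $\bigl(\tfrac{r_1}{n}+\dots+\tfrac{r_n}{n}\bigr)^2\leq \tfrac{r_1^2}{n}+\dots+\tfrac{r_n^2}{n}$, and multiplying through by $n^2$ gives the claim (indeed the lemma is internally labelled ``Jensen'' in the source). You instead invoke Cauchy--Schwarz in $\R^n$ applied to $\mathbf{r}$ and $\mathbf{1}$, with the Lagrange-identity expansion $n\sum_i r_i^2-\bigl(\sum_i r_i\bigr)^2=\sum_{i<j}(r_i-r_j)^2$ as a self-contained fallback. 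All three arguments are one-liners and equally valid for this elementary fact; the Jensen phrasing is natural if one wants to emphasize convexity, the Cauchy--Schwarz phrasing is perhaps the most commonly cited, and the Lagrange-identity version has the mild advantage of requiring no named theorem and of making the equality case ($r_1=\dots=r_n$) immediate, as you note.
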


    \begin{proof}
        This is well-known.
        The function $x\mapsto x^2:\R\to \R^2$
        is convex, so by  Jensen's Inequality, $(\frac{r_1}{n}+\dots+\frac{r_n}{n})^2\leq
        \frac{r_1^2}{n}+\dots+\frac{r_n^2}{n}$. Now multiply by $n^2$.
    \end{proof}

    \begin{lem}\label{LM:boundedness-test}
        Let $X$ and $Y$ be sets, let $a:\llf(X)\to\llf(Y)$ be a linear
        operator, and let $\{\alpha_{xy}\}_{x\in X,y\in Y}\subseteq\C$
        be the unique complex numbers satisfying
        $a\e_x=\sum_y\alpha_{xy}\e_y$. Assume that there are $M\in\N$ and $N\in\R$
        such that
        \begin{enumerate}
            \item[(B1)] for all $x\in X$, $\norm{a\e_x}^2=\sum_y\abs{\alpha_{xy}}^2\leq N$ and
            \item[(B2)] for all $y\in Y$, $\#\{x\in X\suchthat \alpha_{xy}\neq 0\}\leq M$
        \end{enumerate}
        Then $\norm{a}\leq \sqrt{MN}$ and $a$ admits a dual $a^*:\llf(Y)\to\llf(X)$.
    \end{lem}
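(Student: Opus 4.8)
The plan is to do everything at the level of the matrix coefficients $\alpha_{xy}$ and then invoke Lemma~\ref{LM:Jensen} row by row. First I would record the elementary formulas. For $\vphi\in\llf(X)$, write $\vphi=\sum_x\vphi(x)\e_x$ as a finite sum; linearity of $a$ then gives $(a\vphi)(y)=\sum_x\alpha_{xy}\vphi(x)$ for each $y\in Y$, and by hypothesis (B2) this inner sum has at most $M$ nonzero terms. Note also that (B1) is literally the statement $\norm{a\e_x}^2=\sum_y\abs{\alpha_{xy}}^2\le N$, a sum which is automatically finite because $a\e_x\in\llf(Y)$.

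Next I would estimate $\norm{a\vphi}$. Fix $y\in Y$. By the triangle inequality and Lemma~\ref{LM:Jensen} applied to the (at most $M$, padding with zeros if needed) nonnegative reals $\abs{\alpha_{xy}\vphi(x)}$, we get $\abs{(a\vphi)(y)}^2\le M\sum_x\abs{\alpha_{xy}}^2\abs{\vphi(x)}^2$. Summing over $y\in Y$ and interchanging the (everywhere finite) order of summation, using (B1) in the last step,
\[
\norm{a\vphi}^2=\sum_y\abs{(a\vphi)(y)}^2\le M\sum_x\abs{\vphi(x)}^2\sum_y\abs{\alpha_{xy}}^2\le MN\sum_x\abs{\vphi(x)}^2=MN\norm{\vphi}^2,
\]
so $\norm{a}\le\sqrt{MN}$.

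Finally, for the dual I would define $a^*\colon\llf(Y)\to\llf(X)$ on the standard basis by $a^*\e_y=\sum_x\cconj{\alpha_{xy}}\e_x$ and extend linearly; this is well defined, landing in $\llf(X)$, precisely because (B2) bounds the support of the right-hand side by $M$. By bilinearity and finiteness of supports it suffices to check $\Trings{a\e_x,\e_y}=\Trings{\e_x,a^*\e_y}$ for $x\in X$, $y\in Y$, and both sides equal $\alpha_{xy}$ by a direct computation (recalling the convention that $\Trings{~,~}$ is $\C$-linear in the left argument): $\Trings{a\e_x,\e_y}=\sum_{y'}\alpha_{xy'}\Trings{\e_{y'},\e_y}=\alpha_{xy}$ and $\Trings{\e_x,a^*\e_y}=\sum_{x'}\cconj{\cconj{\alpha_{x'y}}}\Trings{\e_x,\e_{x'}}=\alpha_{xy}$.

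There is no genuine obstacle; the only points deserving a little care are keeping track of which sums are finite (so that the interchange of summation above and the linear extension of $a^*$ are both legitimate) and observing that it is condition (B2) — at most $M$ nonzero entries in each row — that is exactly what guarantees the \emph{existence} of a dual into $\llf(X)$, a property which fails for general linear operators on $\llf$ in the infinite case.
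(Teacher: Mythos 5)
Your proof is correct and takes essentially the same route as the paper's: both define the dual by conjugate-transposing the matrix coefficients (well-defined by (B2)), and both bound $\norm{a\vphi}^2$ by applying the Jensen lemma coordinate-by-coordinate in $Y$ using the $M$-bound on nonzero entries from (B2), then swapping the order of summation and invoking (B1).
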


    \begin{proof}
        Define $a^*:\llf(Y)\to \llf(X)$ by $a^*\e_y=\sum_{x}\cconj{\alpha_{xy}}\e_x$. This is well-defined
        by (B2). It is easy to check that $\Trings{a\e_x,\e_y}=\alpha_{xy}=\Trings{\e_x,a^*\e_y}$,
        so $a^*$ is indeed a dual of $a$.
        Let $\vphi=\sum_x\beta_x\e_x\in\llf(X)$. Then
        \[
        a\vphi=\sum_x\beta_x\sum_y\alpha_{xy}\e_y=\sum_y\Circs{\sum_x\beta_x\alpha_{xy}}\e_y\ .
        \]
        Using Lemma~\ref{LM:Jensen},
        (B1) and (B2), we have
        \begin{eqnarray*}
        \norm{a\vphi}^2
        &=& \sum_y\Big|\sum_x\beta_x\alpha_{xy}\Big|^2
        \leq \sum_y\Big(\sum_x|\beta_x\alpha_{xy}|\Big)^2\\
        &\leq & \sum_y M\sum_x|\beta_x\alpha_{xy}|^2=M\sum_x\abs{\beta_x}^2\sum_y\abs{\alpha_{xy}}^2
        \leq MN\norm{\vphi}^2\ .
        \end{eqnarray*}
        This means that $\norm{a}\leq\sqrt{MN}$.
    \end{proof}

    \begin{lem}\label{LM:pass-to-summand}
        Let $F_1,F_2:\catC\to \catPHil$ be two functors.
        If the conclusion of Theorem~\ref{TH:Adj-algebra-descrition}(i) (resp.\
        Theorem~\ref{TH:Adj-algebra-descrition}(ii)) holds for $F_1\oplus F_2$, then it also
        holds for
        $F_1$ and $F_2$.
    \end{lem}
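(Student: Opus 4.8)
The plan is to deduce the statement for $F_1$ (the case of $F_2$ being symmetric) by transporting everything along two compatible embeddings. By Remark~\ref{RM:summand-transition} there is an injective unital $*$-algebra homomorphism $\rho:\Alg{\catC,F_1}{}\to\Alg{\catC,F_1\oplus F_2}{}$ given by $a\mapsto a\oplus 0=\{a_X\oplus 0_{F_2X}\}_{X\in\catC}$; likewise $S\mapsto S\oplus 0$ is an injective $*$-homomorphism $\iota:\End_G(F_1\calX)\to\End_G(F_1\calX\oplus F_2\calX)$ (it sends $\id_{F_1\calX}$ to the orthogonal projection onto the summand $F_1\calX$). Writing $\theta_1$ and $\theta$ for the evaluation maps $\{a_X\}\mapsto a_\calX$ on $\Alg{\catC,F_1}{}$ and $\Alg{\catC,F_1\oplus F_2}{}$, one has $\iota\circ\theta_1=\theta\circ\rho$, because $(a\oplus 0)_\calX=a_\calX\oplus 0$. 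The hypothesis provides either that $\theta$ is an isomorphism of unital $*$-algebras (case (i)) or that every element of $\Alg{\catC,F_1\oplus F_2}{}$ acts with uniformly bounded norm on $\catC$ (case (ii)); I would first record that $\rho$, $\iota$ have the stated properties, which is immediate from the fact that $F_1X$ and $F_2X$ sit orthogonally inside $(F_1\oplus F_2)X$.

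For part (ii): given $a\in\Alg{\catC,F_1}{}$, apply the hypothesis to $\rho(a)=a\oplus 0$ to get $M\in\R_{\geq 0}$ with $\norm{(a\oplus 0)|_{F_1X\oplus F_2X}}\leq M$ for all $X\in\catC$. Since $a_X\oplus 0$ acts diagonally on an orthogonal direct sum, $\norm{a_X\oplus 0}=\norm{a_X|_{F_1X}}$, so the same $M$ bounds $a$ uniformly on $\catC$, which is the conclusion of Theorem~\ref{TH:Adj-algebra-descrition}(ii) for $F_1$.

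For part (i): $\theta_1$ is automatically a homomorphism of unital $*$-algebras, so it suffices to show it is bijective. Injectivity is a formal diagram chase: $\theta_1(a)=0$ gives $\theta(\rho(a))=\iota(\theta_1(a))=0$, hence $\rho(a)=0$ since $\theta$ is injective, hence $a=0$ since $\rho$ is injective. The real content is surjectivity. Given $S\in\End_G(F_1\calX)$, the element $\iota(S)=S\oplus 0$ lies in $\End_G(F_1\calX\oplus F_2\calX)$, so by surjectivity of $\theta$ it equals $b_\calX$ for some $b=\{b_X\}\in\Alg{\catC,F_1\oplus F_2}{}$. The key step is to compress $b$ by $p=\{p_X\}$, where $p_X$ is the orthogonal projection of $F_1X\oplus F_2X$ onto its summand $F_1X$: one checks $p$ is a natural transformation $F_1\oplus F_2\to F_1\oplus F_2$ (for a cover map $f:X\to Y$ both composites in the naturality square equal $v\oplus w\mapsto F_1(f)v\oplus 0$), it is self-dual and a contraction, so $p\in\Alg{\catC,F_1\oplus F_2}{}$ and hence $pbp\in\Alg{\catC,F_1\oplus F_2}{}$. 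Since each $p_Xb_Xp_X$ annihilates $F_2X$ and has image in $F_1X$, the family $pbp$ lies in the image of $\rho$, say $pbp=\rho(a)$ with $a\in\Alg{\catC,F_1}{}$ (the component $a_X$ being the corestriction of $p_Xb_Xp_X|_{F_1X}$, with dual induced from $pb^*p$). Then $\iota(\theta_1(a))=\theta(\rho(a))=\theta(pbp)=p_\calX b_\calX p_\calX=p_\calX(S\oplus 0)p_\calX=S\oplus 0=\iota(S)$, so $\theta_1(a)=S$ by injectivity of $\iota$. Finally, since $\theta_1$ is a bijective unital $*$-homomorphism out of the $*$-algebra $\Alg{\catC,F_1}{}$, the set $\End_G(F_1\calX)$ is closed under the dual (the dual of $a_\calX$ is $(a^*)_\calX$, which is again $G$-equivariant), so it is a $*$-algebra and $\theta_1$ is an isomorphism of unital $*$-algebras — the conclusion of Theorem~\ref{TH:Adj-algebra-descrition}(i) for $F_1$.

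The one place that is not purely formal, and hence the main obstacle, is the compression step: verifying that the projections $p_X$ assemble into a genuine $(\catC,F_1\oplus F_2)$-operator, and that an operator in $\Alg{\catC,F_1\oplus F_2}{}$ whose every component kills the $F_2$-summand and lands in the $F_1$-summand actually comes from a natural, dual-admitting transformation $F_1\to F_1$ via $\rho$. Both verifications are routine but are precisely the points where the functorial structure of $F_1\oplus F_2$ and the definition of a $(\catC,F)$-operator are used; everything else reduces to diagram chasing through the commuting square $\iota\circ\theta_1=\theta\circ\rho$.
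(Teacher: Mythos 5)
Your proof is correct and takes essentially the same approach as the paper: both hinge on the observation that the projection $e=\{e_X\}$ onto the $F_1$-summand lies in $\Alg{\catC,F_1\oplus F_2}{}$ and the corner $e\,\Alg{\catC,F_1\oplus F_2}{}\,e$ is canonically identified with $\Alg{\catC,F_1}{}$, together with $\norm{a_X\oplus 0}=\norm{a_X}$ for part (ii). The only cosmetic difference in part (i) is that the paper notes $(ebe)_\calX=b_\calX$ and hence $b=ebe$ directly by injectivity of the evaluation, whereas you compress to $pbp$ first and then chase the square $\iota\circ\theta_1=\theta\circ\rho$; the two are logically equivalent.
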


    \begin{proof}
        It is enough to prove this for $F_1$.
        For every $X\in\catC$, let $e_{X}$ denote the orthogonal projection
        of $(F_1\oplus F_2)X=F_1X\oplus F_2X$ onto the summand $F_1X$.
        Then $e:=\{e_X\}_{X\in\catC}$ is a self-dual natural transformation from $F_1\oplus F_2$
        to itself, and hence lives in $\Alg{\catC,F_1\oplus F_2}{}$. It is easy to see that the map
        \begin{eqnarray*}
        \Alg{\catC,F_1}{} &\to & e\Alg{\catC,F_1\oplus F_2}{}e \\
        \{a_X\}_{X\in\catC} & \mapsto & \{a_X\oplus 0_{F_2X}\}_{X\in\catC}
        \end{eqnarray*}
        is an isomorphism of $*$-algebras. Since $\norm{a_X}=\norm{a_X\oplus 0_{F_2X}}$,
        we see that
        if $\Alg{F_1\oplus F_2,\catC}{}$ acts uniformly continuously on $\catC$, then so does $\Alg{F_1,\catC}{}$.

        Suppose now that $F_1\oplus F_2$ satisfies the conclusion of Theorem~\ref{TH:Adj-algebra-descrition}(i) and let $a_{\calX}\in\End_{G}(F_1\calX)$.
        Then $a_{\calX}\oplus 0_{F_2\calX}\in \End_{G}(F_1\calX\oplus F_2\calX)$, so by assumption,
        there is unique $b=\{b_X\}_{X\in\catC}\in\Alg{\catC,F_1\oplus F_2}{}$ with
        $b_{\calX}=a_{\calX}\oplus 0_{F_2\calX}$. Since we also have $(ebe)_{\calX}=a_{\calX}\oplus 0_{F_2\calX}$, we must have
        $b=ebe$. Thus, there is unique $c=\{c_X \}_{X\in\catC}\in\Alg{\catC,F}{}$
        such that $b_X=c_X\oplus 0_{F_2X}$ for all $X\in\catC$. In particular, $c_{\calX}=a_{\calX}$,
        and $c$ is unique by the uniqueness of $b$.
    \end{proof}

    Let us describe several cases where Lemma~\ref{LM:pass-to-summand} can be applied.

    \begin{example}\label{EX:summand-functor}
        (i) By definition, $\Omega^\pm_i=\Omega_i^+\oplus\Omega_i^-$ for $i>0$.

        (ii) One can  embed $\Omega^\pm_i$ as an orthogonal summand of $\llFlag$.
        Observe first that any maximal flag $f=(\emptyset=x_{-1}\subsetneq x_0\subsetneq x_1\subsetneq\dots\subsetneq x_d)$
        in $X\in\catC$
        induces a partial order on $x_i$, namely, for $u,v\in x_i$, set $u<v $ if and only if there is $0\leq j< i$
        such that $u\in x_j$ and $v\notin x_j$. Denote this order by $\leq$ and write $\zeta(f)=[x_i,\leq ]\in\ori{X^{(i)}}$.
        Also, for all $\sfx\in \ori{X^{(i)}}$, let $\eta(\sfx)=\{f\in\Flag(X)\suchthat \sfx=\zeta(f)\}$.
        Notice that $\eta(\sfx)$ is finite since $X$ is locally finite.
        Now, the map $j_X:\Omega_i^\pm(X)\to \llFlag(X)$ sending $\e_{\sfx}$ to $|\eta(\sfx)|^{-1/2}\sum_{f\in\eta(\sfx)}e_f$
        is a unitary injection, and $j=\{j_X\}_{X\in\catC}$ is a natural transformation from $\Omega_i^\pm$ to $\llFlag$.
        The image of $j_X$ is indeed an orthogonal summand --- its orthogonal complement is
        $FX:=\{\vphi\in\llFlag(X)\suchthat \text{$\sum_{f\in\eta(\sfx)}\vphi(f)=0$ for all $\sfx\in\ori{X^{(i)}}$}\}$.
        %and it is easy to see that $F$  extends to a functor from $\catC$ to $\catPHil$.
        Thus, $\llFlag\cong \Omega_i^\pm\oplus F$.
    \end{example}

    \begin{proof}[Proof of Theorem~\ref{TH:Adj-algebra-descrition}]
        By Lemma~\ref{LM:pass-to-summand}, it is enough
        to prove the theorem when $F$ is elementary.
        Write $F=\llf\circ S$ where $S:\catC\to \catSet$
        satisfies conditions (E1)--(E4).
        By Proposition~\ref{PR:S-quotient-check}(i), we may assume
        $S(\Gamma\leftmod\calX)=\Gamma\leftmod S\calX$ for all $\Gamma\leq_{\calX}G$.

\smallskip

        (i) Let $a=\{a_X\}\in A$ and let $p_\Gamma:\calX\to \Gamma\leftmod\calX$
        be the quotient map. Then $a_{\Gamma\leftmod \calX}\circ (Sp_\Gamma)_*=(Sp_\Gamma)_*\circ a_{\calX}$.
        The map
       	$Sp_\Gamma$ is the quotient map $S\calX\to \Gamma\leftmod S\calX$, and hence  $(Sp_{\Gamma})_*$ is surjective.
       	This means $a_{\calX}$ determines $a_{\Gamma\leftmod \calX}$
        for all $\Gamma\leq_{\calX}G$, so the map $\{a_X\}\mapsto a_{\calX}$ is injective.

        Suppose now that we are given $a_{\calX}\in \End_G(F\calX)=\End_G(\llf(S\calX))$.
        For all $\Gamma\leq_{\calX}G$, we define
        $a_{\Gamma\leftmod \calX}:S(\Gamma\leftmod\calX)\to S(\Gamma\leftmod\calX)$
        as follows: If $\Gamma x \in \Gamma\leftmod S\calX$ and $a\e_x=\sum_{y\in S\calX}
        \alpha_{y} \e_{y}$
        (in $\llf(S\calX)$), then set
        \begin{equation}\label{EQ:Adj-action-on-quotients}
        a_{\Gamma\leftmod \calX}\e_{\Gamma x}=\sum_{y\in S\calX}\alpha_{y}\e_{\Gamma y}\ ,
        \end{equation}
        and extend $a_{\Gamma\leftmod\calX}$ linearly to $\llf(\Gamma\leftmod S\calX)$.
        This is well-defined because if we replace
        $x$ with $\gamma x$ for $\gamma\in \Gamma$, then
        $a\e_{\gamma x}=a\gamma \e_x=\gamma a\e_x=\gamma\sum_{y}\alpha_{y} \e_{y}
        =\sum_{y}\alpha_{y}\e_{\gamma y}$
        and we get $a\e_{\Gamma\gamma x}=\sum_{y}\alpha_{y}\e_{\Gamma\gamma y}
        =\sum_{y}\alpha_{y}\e_{\Gamma y}$.
        It is  routine to check that $\{a_{X}\}_{X\in\catC}$ is a natural transformation
        from $F$ to itself.

        Suppose now that $a_\calX\in\End_G(F\calX)$ admits a dual $b_{\calX}$.
        We claim that $(a_{\Gamma\leftmod\calX})^*=b_{\Gamma\leftmod\calX}$,
        and hence $a:=\{a_X\}_{X\in\catC}$ has a dual.
        For all $x\in S\calX$ write $a_{\calX}\e_x=\sum_{y\in S\calX}\alpha_{xy}\e_y$ with $\{\alpha_{xy}\}_{x,y}\subseteq\C$.
        Then $b_{\calX}\e_y=\sum_{x}\cconj{\alpha_{xy}}\e_x$. Since
        $a_{\calX}\e_{gx}=a_{\calX}g\e_x=ga_{\calX}\e_x$,
        for all $g\in G$, $x,y\in S\calX$, we have $\alpha_{(gx)y}=\alpha_{x(g^{-1}y)}$.
        Now, by \eqref{EQ:Adj-action-on-quotients}, for all $z,w\in S\calX$ and $\Gamma\leq_{\calX}G$, we have
        $\Trings{a_{\Gamma\leftmod\calX}\e_{\Gamma z},\e_{\Gamma w}}
        =\Trings{\sum_y\alpha_{zy}\e_{\Gamma y},\e_{\Gamma w}}=\sum_{\gamma\in\Gamma}\alpha_{z(\gamma w)}=
        \sum_{\gamma\in\Gamma}\alpha_{(\gamma^{-1}z)w}
        =\Trings{\e_{\Gamma z},\sum_x\cconj{\alpha_{xw}}\e_{\Gamma x}}=\Trings{\e_{\Gamma z},b_{\Gamma\leftmod\calX}\e_{\Gamma w}}$,
        so $(a_{\Gamma\leftmod\calX})^*=b_{\Gamma\leftmod\calX}$.

        The previous paragraphs imply that
        $\{a_X\}\mapsto a_{\calX}:A\to \End_G(F\calX)$
        is injective and surjective,
        provided that any $a_{\calX}\in\End_G(F\calX)$ admits a dual.
        We shall verify the latter in the proof of (ii).

\smallskip

        (ii) Let $a\in\End_G(F\calX)$ and define
        $\{a_X\}_{X\in\catC}$ as above.
        We will apply Lemma~\ref{LM:boundedness-test} to $a_{\Gamma\leftmod \calX}$ with constants
        $N$, $M$ which are independent of $\Gamma$, thus showing that
        $a_{\calX}$ has a dual and  $\norm{a_{\Gamma\leftmod  \calX}}$ can be bounded uniformly in $\Gamma$.

        Let $\{x_1,\dots,x_r\}$ be representatives for
        the $G$-orbits in $S\calX$ (there are finitely many by (E4))
        and write
        $a\e_{x_j}=\sum_{k=1}^{s_{j}}\alpha_{jk}\e_{z_{jk}}$
        where $\{z_{jk}\}_{j,k}\subseteq S\calX$.
        Then for all $g\in G$, we have
        $a\e_{gx_j}=ag\e_{x_j}=ga\e_{x_j}=\sum_{k}\alpha_{jk}\e_{gz_{jk}}$.
        Thus,
        by \eqref{EQ:Adj-action-on-quotients}, %for all $g\in G$, we have
        \[
        a_{\Gamma\leftmod\calX}\e_{\Gamma gx_j}=\sum_{k=1}^{s_{j}} \alpha_{jk}\e_{\Gamma gz_{jk}}\ .
        \]
        The cosets $\{\Gamma gz_{jk}\}_{k=1}^{s_j}$ may coincide, hence
        there is a partition $\pi=\pi(g,j)$ of the set $\{1,\dots,s_j\}$ such that
        \[
        \norm{a_{\Gamma\leftmod \calX}\e_{\Gamma gx_j}}^2=\sum_{S\in \pi}\Big|\sum_{k\in S}\alpha_{jk}\Big|^2\ .
        \]
        The number of partitions of  $\{1,\dots,s_j\}$ is finite, as well as the number of possible \mbox{$j$-s},
        hence condition (B1) of Lemma~\ref{LM:boundedness-test} holds for $a_{\Gamma\leftmod \calX}$ with $N$ which
        is independent of $\Gamma$.

        To show (B2), we need to find $M\in\N$, independent of $\Gamma$, such that for all
        $\Gamma y\in \Gamma\leftmod S\calX$,
        there are at most $M$ orbits $\Gamma x\in \Gamma\leftmod S\calX$ with
        $\Gamma y\in\supp(a_{\Gamma\leftmod \calX}\e_{\Gamma x})$.
        Let $K_j=\Stab_{G}(x_j)$,
        and let $I_j$ be a set of representatives
        for the double cosets $\Gamma\leftmod G/K_j$.
        Then every $\Gamma x\in \Gamma\leftmod S\calX$ can be written as $\Gamma g x_j$
        for unique $j$ and $g\in I_j$.
        In this case, $\supp(a_{\Gamma\leftmod \calX}\e_{\Gamma x})\subseteq \{\Gamma g z_{jk}\where 1\leq k\leq s_j\}$.
        Therefore, it is enough to show that for every $1\leq j\leq r$ and $1\leq k\leq s_j$, there
        is a  bound on $\#\{g\in I_j\suchthat \Gamma y=\Gamma gz_{jk}\}$ which is independent of $y$ and $\Gamma$.
        We may assume that $y=hz_{jk}$ for some $h\in G$, since otherwise the quantity in question is $0$.
        Let $L=\Stab_G(z_{jk})$ and put $K=K_j$. Then $\Gamma y=\Gamma gz_{jk}$ implies
        that there is $\gamma\in\Gamma$ such that $\gamma hz_{jk}=gz_{jk}$,
        hence $h^{-1}\gamma^{-1}g\in L$, or  $\gamma^{-1}g\in hL$. This implies
        that $\Gamma g K\cap hL\neq \emptyset$.
        By (E2), $L$ and $K$ are compact and open in $G$, so
        the index $[L:K\cap L]$ is finite. Since $\Gamma g K\cap hL$
        is  a union of right $(K\cap L)$-cosets, no more than
        $[L:K\cap L]$ double cosets $\Gamma g K$ can intersect $hL$ non-trivially, which proves
        that $\#\{g\in I_j\suchthat \Gamma y=\Gamma gz_{jk}\}\leq [L:K\cap L]$. This
        completes the proof.
    \end{proof}

    \begin{example}\label{EX:tree-spectrum}
        Let $\calX$ be a $k$-regular tree, let $G=\Aut(\calX)$, and let $\catC=\catC(G,\calX)$. We now use
        Theorem~\ref{TH:Adj-algebra-descrition} to show that
        $A_0:=\Alg{\catC,\Omega_0^+}{}$ is freely generated by $a_0$, the vertex adjacency operator,
        and $A_1:=\Alg{\catC,\Omega_1^+}{}$ is freely generated by $a_1$, the edge adjacency operator.
        As mentioned in Example~\ref{EX:k-regular-graphs-basic-spectrum}, this implies that:
        \begin{enumerate}
        \item[(i)] $\Spec_{0,\catC}(\Gamma\leftmod \calX)$, the  $0$-dimensional spectrum of $\Gamma\leftmod\calX$,
        is equivalent
        to the spectrum of the {vertex} adjacency operator of $\Gamma\leftmod\calX$.
        \item[(ii)] $\Spec_{1,\catC}(\Gamma\leftmod \calX)$, the non-oriented $1$-dimensional spectrum of $\Gamma\leftmod\calX$, is equivalent
        to the spectrum of the {edge} adjacency operator of $\Gamma\leftmod\calX$.
        \end{enumerate}
        (These assertions are false
        for general $\calX$ and $G$.)

        Using Theorem~\ref{TH:Adj-algebra-descrition}, we identify
        $A_0$ with $\End_G(\Omega^+_0(\calX))$.
        Fix a vertex $x\in \calX^{(0)}$, let $b\in \End_G(\Omega^+_0(\calX))=A_0$, and write
        $b\e_x=\sum_{y\in\calX^{(0)}} \alpha_y \e_y$.
        Set $R_n=\{y\in \calX^{(0)}\where \dist(x,y)=n\}$. Then $K_x:=\Stab_G(x)$ acts transitively
        on $R_n$ for all $n\geq 0$, and for all $g\in K_x$, we have
        \[\sum_{y} \alpha_y \e_y=b\e_x=b(g\e_x)=g(b\e_x)=g\sum_{y} \alpha_y \e_y=\sum_y \alpha_y \e_{gy}\ .\]
        Therefore, $\alpha_{gy}=\alpha_y$, and the coefficients $\alpha_y$ are identical on the set $R_n$.
        We may therefore write $b\e_x=\sum_{n\geq 0}\sum_{y\in R_n}\alpha_n \e_y$. Let $m=m(b)\geq 0$ be the least
        integer such that $\alpha_m=0$. We prove by induction
        on $m$ that $b$ is a polynomial in $a_0$. Indeed, if $m=0$, then $b\e_x=0$, hence
        for all $g\in G$, we have $b\e_{gx}=g(b\e_x) =0$, so $b=0$.
        If $m>0$, then $b':=b-\alpha_m a^{m-1}_0$ satisfies $m(b')<m(b)$, and by induction, $b'$ is a polynomial in $a_0$.
        Finally, it is easy to show that $m(a^n_0)=n+1$, hence $\{\e_x,a_0\e_x,a^2_0\e_x,\dots\}$
        are linearly independent in $\Omega^+_0(\calX)$, which means $\{1,a_0,a_0^2,\dots\}$ are linearly independent in $A_0$.
        Thus, $A_0$ is
        freely generated by $a_0$.

        Similarly, $A_1$ is freely generated by $a_1$.
    \end{example}

    \begin{example}\label{EX:Alg-of-buildings-general}
        The previous example can be generalized to buildings (definitions can be
        found in \cite{Buildings08AbramBrown}):
        Let $\calB$ be a locally finite building with Coxeter system
        $(W,S)$ and write $d:=\dim \calB=|S|-1$.
        There is a $W$-valued distance function $\delta$ taking two chambers of $\calB$ to an element
        of $W$ and satisfying certain axioms (see \cite[Def.~5.1]{Buildings08AbramBrown}).
        Let $G$ be a subgroup of $\Aut(\calB)$ such that $G$ is closed under pointwise
        convergence, $\delta(gx,gy)=\delta(x,y)$ for all $g\in G$, $x,y\in\calB^{(d)}$,
        and $G$  acts
        transitively on pairs $(x,y)\in\calB^{(d)}\times\calB^{(d)}$ with $\delta(x,y)=w$ for all $w\in W$.
        Then $\calB$ is an almost transitive $G$-complex (Proposition~\ref{PR:characterization-of-G-comps}(i)).
        For every $w\in W$, define $a_w:\Omega_d^+(\calB)\to\Omega_d^+(\calB)$
        by
        \[
        (a_w\vphi)x=\sum_{\substack{y\in\calB^{(d)}\\ \delta(x,y)=w}}\vphi y\qquad\forall\, \vphi\in\Omega_d^+(\calB),\,x\in\calB^{(d)}\ .
        \]
        It is easy to check that $a_w\in \End_G(\Omega_d^+(\calB))$, and hence $a_w$ can be viewed
        as an element of $A_d:=\Alg{\catC(G,\calB),\Omega_d^+}{}$. Furthermore, using the transitivity property of $G$,  the properties of $\delta$,
        and induction on the length of elements in  $W$,  one can show that the elements $\{a_w\}_{w\in W}$
        form a basis of $A_d$.
        
        %In fact, it is possible to describe $A_i:=\Alg{\catC(G,\calB),\Omega_i^+}{i}$ in a similar fashion using double cosets in $W$.
        %this algebra 

\rem{
        \gap{In fact, it is possible
        to describe $A_i:=\Alg{\catC(G,\calB),\Omega_i^+}{i}$ in a similar fashion using double cosets in $W$.
        %Furthermore,  the $d$-dimensional
        %spectrum of $G$-quotients of $\calB$ can be defined using concrete operators in $\Alg{G,\calB}{d}$.
        This
        was communicated to us by Amitay Kamber and will appear, together with other relevant results,
        in a forthcoming paper by him;
        see also his thesis.}
}

        We finally note that if $\bfG$ is a \emph{simply-connected} almost simple  algebraic group over  a local non-archimedean field $F$,
        then its  affine Bruhat-Tits building $\calB$ satisfies the above assumptions  with $G=\im(\bfG(F)\to\Aut(\calB))$
        (cf.\ Example~\ref{EX:G-complex-building-III}).
        For example, when $\bfG=\uSL_d$, we get $\calB=\calB_d(F)$ as in Chapter~\ref{sec:ramanujan-complexes} and
        $G=\im(\nSL{F}{d}\to\nPGL{F}{d})$ (the group $\nPGL{F}{d}$ does not preserve $\delta$-distance).
    \end{example}

    We finish this section with a proposition relating the $A$-spectrum of two $G$-quotients
    of $\calX$.

    \begin{prp}\label{PR:subgroup-quotient}
    	Let $\Gamma'\leq\Gamma\leq_{\calX}G$ with $[\Gamma:\Gamma']<\infty$,
    	let $F:\catC\to\catPHil$ be semi-elementary, and let $A$ be an algebra
    	of $(\catC,F)$-operators.
    	Then $\Spec_A(\Gamma\leftmod\calX)\subseteq\Spec_A(\Gamma'\leftmod\calX)$.
    	When
    	$\Gamma'\leftmod\calX$ is finite, we also have $\mSpec_A(\Gamma\leftmod\calX)\subseteq
        \mSpec_A(\Gamma'\leftmod\calX)$ (as multisets).
    \end{prp}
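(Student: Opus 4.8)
The plan is to produce one unitary (inner‑product preserving) injective $A$‑module homomorphism
\[
\iota\colon F(\Gamma\leftmod\calX)\longrightarrow F(\Gamma'\leftmod\calX),
\]
after which everything follows formally. Indeed, since $\iota$ is $A$‑equivariant it restricts to a unitary injection $AF(\Gamma\leftmod\calX)\hookrightarrow AF(\Gamma'\leftmod\calX)$ (the image is $A\iota(F(\Gamma\leftmod\calX))=\iota(AF(\Gamma\leftmod\calX))$, an $A$‑submodule), and passing to completions yields $\overline{AF(\Gamma\leftmod\calX)}\leq\overline{AF(\Gamma'\leftmod\calX)}$ in $\Rep[u]{A}$. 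By the matrix‑coefficient description of weak containment (Lemma~\ref{LM:equiv-conds-of-weak-containment}(b)), a unit vector witnessing $V\wc\overline{AF(\Gamma\leftmod\calX)}$ also witnesses $V\wc\overline{AF(\Gamma'\leftmod\calX)}$ once transported through the isometry; hence $\Spec_A(\Gamma\leftmod\calX)=\Spec_A(\overline{AF(\Gamma\leftmod\calX)})\subseteq\Spec_A(\overline{AF(\Gamma'\leftmod\calX)})=\Spec_A(\Gamma'\leftmod\calX)$.

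To construct $\iota$, assume first that $F$ is elementary, write $F=\llf\circ S$ as in Definition~\ref{DF:elementary-functor}, and (by Proposition~\ref{PR:S-quotient-check}(i)) take $S(\Gamma\leftmod\calX)=\Gamma\leftmod S\calX$ for all subgroups involved. Let $p$ be the covering $\Gamma'\leftmod\calX\to\Gamma\leftmod\calX$ ($p=p_{\Gamma',\Gamma}$, or $p=p_\Gamma$ if $\Gamma'=1$), and set $P:=F(p)\colon F(\Gamma'\leftmod\calX)\to F(\Gamma\leftmod\calX)$, so $P\e_{\Gamma'y}=\e_{\Gamma y}$. The induced map $\Gamma'\leftmod S\calX\to\Gamma\leftmod S\calX$ factors through $\Gamma'\leftmod\Gamma$, so each fibre has at most $[\Gamma:\Gamma']$ elements; thus $P$ is bounded and has a dual $P^*\colon F(\Gamma\leftmod\calX)\to F(\Gamma'\leftmod\calX)$ with $P^*\e_{\Gamma x}=\sum_{\Gamma'y\in p^{-1}(\Gamma x)}\e_{\Gamma'y}$. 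Since every $(\catC,F)$‑operator is a natural transformation, $P$ intertwines each such operator; taking duals and using that $A$ is $*$‑closed, $P^*$ intertwines every $a\in A$, i.e.\ $P^*$ is $A$‑equivariant. It is injective because the sets $p^{-1}(\Gamma x)$ are pairwise disjoint, so the vectors $P^*\e_{\Gamma x}$ have pairwise disjoint supports. For the normalisation, note $p^{-1}(\Gamma x)=\{\Gamma'\gamma x\where\gamma\in\Gamma\}$, and $\Gamma'\gamma_1x=\Gamma'\gamma_2x$ forces $\gamma_2^{-1}\gamma'\gamma_1\in\Stab_\Gamma(x)$ for some $\gamma'\in\Gamma'$; but by (E2) the group $\Stab_G(x)$ lies in a cell stabiliser, and $\Gamma\leq_{\calX}G$ acts freely on $\dot{\calX}$ (Proposition~\ref{PR:quotient-cover-sufficient-nec-conds}), so $\Stab_\Gamma(x)=\{1\}$ and thus $\Gamma'\gamma_1=\Gamma'\gamma_2$. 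Hence $p^{-1}(\Gamma x)$ is in bijection with $\Gamma'\leftmod\Gamma$, of size $n:=[\Gamma:\Gamma']$ for every $x$. Therefore $\|P^*\e_{\Gamma x}\|^2=n$, and together with the orthogonality just noted, $\iota:=n^{-1/2}P^*$ sends the orthonormal basis $\{\e_{\Gamma x}\}$ to an orthonormal set, so it is the desired unitary injective $A$‑homomorphism.

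For a general semi‑elementary $F$, pick $F'$ with $F\oplus F'$ elementary and let $e=\{e_X\}\in\Alg{\catC,F\oplus F'}{}$ be the orthogonal projection onto the $F$‑summand, as in the proof of Lemma~\ref{LM:pass-to-summand}, so that $A$ is identified with $e\Alg{\catC,F\oplus F'}{}e$. The map $\iota$ built above for $F\oplus F'$ is equivariant for the whole of $\Alg{\catC,F\oplus F'}{}$, hence commutes with $e$, so it restricts to a unitary injective $A$‑equivariant map $F(\Gamma\leftmod\calX)\to F(\Gamma'\leftmod\calX)$; this proves the first assertion in general. Finally, when $\Gamma'\leftmod\calX$ is finite, so is $\Gamma\leftmod\calX$ (it is covered by $\Gamma'\leftmod\calX$), whence $F(\Gamma\leftmod\calX)$ and $F(\Gamma'\leftmod\calX)$ are finite‑dimensional and the representations $AF(\Gamma\leftmod\calX)$, $AF(\Gamma'\leftmod\calX)$ are admissible and complete. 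The construction gives $AF(\Gamma'\leftmod\calX)=\iota(AF(\Gamma\leftmod\calX))\oplus W$ with $W$ the orthogonal complement, an $A$‑submodule (as $A$ is $*$‑closed). Since $\mult_{\,\cdot\,}[V]=\dim_\C\cHom_A(V,\,\cdot\,)$ is additive over orthogonal direct sums of admissible representations (Theorem~\ref{TH:admissible-pu-are-completely-red} and Theorem~\ref{TH:Schur-lemma}), we get $\mult_{AF(\Gamma'\leftmod\calX)}[V]=\mult_{AF(\Gamma\leftmod\calX)}[V]+\mult_W[V]\geq\mult_{AF(\Gamma\leftmod\calX)}[V]$ for every $[V]\in\udual{A}$, i.e.\ $\mSpec_A(\Gamma\leftmod\calX)\subseteq\mSpec_A(\Gamma'\leftmod\calX)$ as multisets. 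The one genuinely delicate step is the fibre‑size computation underlying the normalisation of $P^*$—namely that $\Gamma$ acts freely on $S\calX$—together with the (routine but bookkeeping‑heavy) reduction from the elementary to the semi‑elementary case.
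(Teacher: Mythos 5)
Your proof is correct, and it reaches the same key inclusion $\quo{AF(\Gamma\leftmod\calX)}\leq\quo{AF(\Gamma'\leftmod\calX)}$ from the same underlying map $P=Fp_{\Gamma',\Gamma}$, but by a genuinely different route. The paper extends $P$ to the $\ell^2$-completions, restricts it to $(\ker P)^\perp$ to obtain a continuous $A$-module isomorphism onto $\quo{F(\Gamma\leftmod\calX)}$, and then upgrades this to a unitary isomorphism via the polar-decomposition Proposition~\ref{PR:iso-implies-unitary-iso}; for that argument only an upper bound on the fiber sizes of $Sp_{\Gamma',\Gamma}$ is needed (to see that $P$ is continuous). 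You instead pass to the adjoint $P^*$ and build the unitary embedding explicitly as $\iota=[\Gamma:\Gamma']^{-1/2}P^*$. This normalization requires the stronger fact that every fiber of $Sp_{\Gamma',\Gamma}$ has exactly $[\Gamma:\Gamma']$ elements, which you correctly deduce from the second half of condition (E2) together with the freeness of $\Gamma$ on $\dot{\calX}$ from Proposition~\ref{PR:quotient-cover-sufficient-nec-conds}. The two arguments thus trade one ingredient for another: the paper's is shorter and never needs the constant-fiber-size computation, while yours yields a concrete formula for the embedding and avoids Proposition~\ref{PR:iso-implies-unitary-iso} entirely. Your semi-elementary reduction via the projection $e$ and your multiset argument in the finite case (orthogonal decomposition plus additivity of $\mult$ over admissible direct sums) both work and are consistent with what the paper leaves implicit.
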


    \begin{proof}
    	Choose a functor $F':\catC\to \catPHil$ such that $F\oplus F'$ is elementary.
        We may view $A$ as an algebra of $\Alg{\catC,F\oplus F'}{}$-operators (Remark~\ref{RM:summand-transition}),
        so assume henceforth that $F$ is elementary. Write $F=\llf\circ S$ where $S$ is as in Definition~\ref{DF:elementary-functor}.
        By Proposition~\ref{PR:S-quotient-check}(i), we may also assume $S(\Delta\leftmod\calX)=\Delta\leftmod S\calX$ for all
        $\Delta\leq_{\calX} G$,
        and that $Sp_{\Gamma',\Gamma}: \Gamma'\leftmod S\calX\to \Gamma\leftmod S\calX$ is the quotient map
        $\Gamma'x\mapsto \Gamma x$. The fibers of $Sp_{\Gamma',\Gamma}$
        have size at most   $[\Gamma':\Gamma]<\infty$, and
        hence the map $Fp_{\Gamma',\Gamma}=(Sp_{\Gamma',\Gamma})_*:\llf(\Gamma'\leftmod S\calX)\to \llf(\Gamma\leftmod S\calX)$
        is  continuous. Therefore, this map extends
        to a continuous map on the completions $P=\quo{Fp_{\Gamma',\Gamma}}:\ell^2(\Gamma'\leftmod S\calX)\to\ell^2(\Gamma\leftmod S\calX)$,
        which is  surjective (because $P\e_{\Gamma' x}=\e_{\Gamma x}$ for all $x\in S\calX$).
        Since $A$ acts continuously on $\llf(\Gamma'\leftmod S\calX)$ and $\llf(\Gamma\leftmod S\calX)$
        (Theorem~\ref{TH:Adj-algebra-descrition}(ii)), the action of $A$ extends
        to the relevant completions, and $P$ is an $A$-homomorphism.
        The map $P$ restricts to a continuous isomorphism of $A$-modules $(\ker P)^\perp\to\ell^2(\Gamma\leftmod S\calX)=
        \quo{F(\Gamma\leftmod\calX)}$.
        By Proposition~\ref{PR:iso-implies-unitary-iso}, there exists a unitary isomorphism of $A$-modules
        $(\ker P)^\perp \to \ell^2(\Gamma\leftmod S\calX)=\quo{F(\Gamma\leftmod\calX)}$.
        %, and hence $A(\ker P)^\perp \cong A\llf(\Gamma\leftmod S\calX)=AF(\Gamma\leftmod \calX)$
        %as pre-unitary representations of $A$.
        This means that $\Spec_A(\Gamma\leftmod\calX)=\Spec_A(\quo{AF(\Gamma\leftmod\calX)})= \Spec_A(\quo{A(\ker P)^\perp})\subseteq
        \Spec_A(A\llf(\Gamma'\leftmod S\calX))=\Spec_A(\Gamma'\leftmod\calX)$. The statement about the multiset spectrum
        is proved similarly.
    \end{proof}

    The rest of this chapter discusses dependencies between different kinds of spectra,
    equivalence of oriented and non-oriented spectra (in some cases), and the fact that the spectra
    of a complex may not be determined from its isomorphism class.
    The reader can skip this without loss of continuity.

\subsection{Dependencies Between Spectra}
\label{subsec:dependencies}

    Fix a skeletally small subcategory $\catC\subseteq\catSimp$
    and let $F,F':\catC\to\catPHil$ be two functors (e.g.\ $F=\Omega^+_i$ and $F'=\Omega^+_j$).
    For brevity, we write
    \[
    A_F=\Alg{\catC,F}{},\qquad
    \Spec_{F}(X)=\Spec_{\Alg{\catC,F}{}}(X),
    \]
    and likewise for $F'$.

    The spectra of $X\in\catC$ with respect  to $F$ and $F'$ are usually not independent in the sense that
    the presence of certain points from $\what{A}_F$
    in $\Spec_{F}(X)$ may imply the presence
    of certain points from $\what{A}_{F'}$ in $\Spec_{F'}(X)$.
    In fact, in some cases, $\Spec_{F}(X)$ determines $\Spec_{F'}(X)$ completely.
    Such dependencies can sometimes be explained by analyzing $F\oplus F'$.

\medskip

    Write $A_{F\oplus F'}:=\Alg{\catC,F\oplus F'}{}$.
    For all $X\in\catC$, let $e_{F,X}$ and $e_{F',X}$ be the orthogonal projections
    of $FX\oplus F'X$ onto $FX$ and $F'X$, respectively.
    It is easy to see that $e_F=\{e_{F,X}\}_{X\in\catC}$ and
    $e_{F'}=\{e_{F',X}\}_{X\in\catC}$ live in $A_{F\oplus F'}$, and moreover, that
    $e_F$ and $e_{F'}$ are idempotents, $e_F+e_{F'}=1$, $e_F^*=e_F$ and $e_{F'}^*=e_{F'}$.
    There are obvious isomorphisms
    \begin{align*}
    &A_F \cong
    e_FA_{F\oplus F'}e_F, &
    &A_{F'} \cong
    e_{F'}A_{F\oplus F'}e_{F'},\\
    & FX \cong e_F(FX\oplus F'X), & & F'X \cong e_{F'}(FX\oplus F'X),
    \end{align*}
    which are compatible with the relevant module structures for all $X\in\catC$.
    Provided $A_{F\oplus F'}$ acts continuously on $(F'\oplus F)X$,
    Theorem~\ref{TH:corner-subalgebra-spectrum-I} and Corollary~\ref{CR:corner-subalgebra-spectrum-II} imply
    that the datum of $\Spec_{F\oplus F'}(X)$
    is equivalent to the data of $\Spec_{F}(X)$ and $\Spec_{F'}(X)$.
    However, $\Spec_{F\oplus F'}(X)$ holds this data
    in a more efficient way which avoids certain dependencies. More formally:

    \begin{prp}\label{PR:summand-spectrum}
        In the previous setting, let $\what{A}_{F\oplus F'}^{(A_F)}=\{[V]\in \what{A}_{F\oplus F'}\where e_FV\neq 0\}$
        and $\what{A}_{F\oplus F'}^{(A_{F'})}:=\{[V]\in \what{A}_{F\oplus F'}\where e_{F'}V\neq 0\}$,
        and identify the open subset $\what{A}_{F\oplus F'}^{(A_F)}$ (resp.\ $\what{A}_{F\oplus F'}^{(A_{F'})}$)
        with a subspace of $\what{A}_{F}$ (resp.\ $\what{A}_{F'}$) via $[V]\mapsto [e_FV]$
        (resp.\ $[V]\mapsto [e_{F'}V]$) using Theorem~\ref{TH:corner-unitary-dual}.
        Then
        $
        \what{A}_{F\oplus F'}=\what{A}_{F\oplus F'}^{(A_{F})}\cup \what{A}_{F\oplus F'}^{(A_{F'})}
        $, and under the previous identifications,
        for all $X\in\catC$ such that $\Spec_{F\oplus F'}(X)$ is defined, we have
        \begin{align*}
        \Spec_F(X)&=\Spec_{F\oplus F'}(X)\cap \what{A}_{F\oplus F'}^{(A_F)}\\
        \Spec_{F'}(X)&=\Spec_{F\oplus F'}(X)\cap
        \what{A}_{F\oplus F'}^{(A_{F'})}\nonumber
        \end{align*}
        In particular:
        \begin{enumerate}
            \item[(i)] When defined, the $F\oplus F'$-spectrum determines the $F$-spectrum and $F'$-spectrum,
            and vice versa.
            \item[(ii)] For any $\eta \in\what{A}_{F\oplus F'}^{(A_F)}\cap \what{A}_{F\oplus F'}^{(A_{F'})}$,
            we have $\eta\in \Spec_{F}(X)$ $\iff$ $\eta\in\Spec_{F'}(X)$ whenever $\Spec_{F\oplus F'}(X)$ is defined.
            More precisely, if $\eta=[V]$ for $V\in\Irr[u]{A_{F\oplus F'}}$, then
            $[e_FV]\in\Spec_F(X)$ $\iff$ $[e_{F'}V]\in\Spec_{F'}(X)$.
            \item[(iii)] If $\what{A}_{F\oplus F'}^{(A_{F})}=\what{A}_{F\oplus F'}$, then the $F$-spectrum
            determines the $F'$-spectrum whenever the $F\oplus F'$-spectrum is defined.
        \end{enumerate}
    \end{prp}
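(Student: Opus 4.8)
The plan is to deduce the displayed formulas for $\Spec_F(X)$ and $\Spec_{F'}(X)$ directly from the corner-subalgebra theory of Section~\ref{subsec:corner-subalgebras}, applied to the corners $A_F\cong e_F A_{F\oplus F'}e_F$ and $A_{F'}\cong e_{F'}A_{F\oplus F'}e_{F'}$ inside $A:=A_{F\oplus F'}$, and then read off items (i)--(iii) as immediate corollaries. First I would set up notation carefully: fix $X\in\catC$ with $\Spec_{F\oplus F'}(X)$ defined, so $A$ acts continuously on $(F\oplus F')X$, and write $V':=A\cdot(F\oplus F')X\in\Rep[pu]{A}$, a pre-unitary representation. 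Under the isomorphism $A_F\cong e_FAe_F$, the $A_F$-module $A_F\cdot FX$ corresponds to $e_FAe_F\cdot e_FV'$, which by definition of $\overline{BV'}$ for a corner $B=e_FAe_F$ is $\overline{e_F V'}$ (using the lemma in \ref{subsec:corner-subalgebras} that $\overline{AW}=\overline{A\overline W}$, together with $e_FAe_F\cdot e_F V'= e_F A V' = e_F V'$ since $e_F$ is a projection and $V'$ is $A$-smooth). Hence $\Spec_F(X)=\Spec_{A_F}(\overline{e_F V'})=\Spec_{e_FAe_F}(\overline{e_F V'})$, and similarly $\Spec_{F'}(X)=\Spec_{e_{F'}Ae_{F'}}(\overline{e_{F'}V'})$.

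Next I would invoke Theorem~\ref{TH:corner-subalgebra-spectrum-I} with $B=e_FAe_F$ and $V'$ as above: part~(ii) of that theorem gives
\[
\Spec_A(V')\cap\udual{A}^{(B)}=\{[V]\in\udual{A}\suchthat [\overline{BV}]\in\Spec_B(\overline{BV'})\}\,,
\]
and Theorem~\ref{TH:corner-unitary-dual} identifies $\udual{A}^{(B)}=\what A_{F\oplus F'}^{(A_F)}$ with an open subspace of $\udual{B}=\what A_F$ via $[V]\mapsto[e_FV]$. Chasing this identification turns the right-hand side of the displayed equality into exactly $\Spec_A(V')\cap\what A_{F\oplus F'}^{(A_F)}$ viewed as a subset of $\what A_F$, which is $\Spec_{F\oplus F'}(X)\cap\what A_{F\oplus F'}^{(A_F)}$. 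This is the first displayed formula; the $F'$ formula is the same argument with $e_F$ replaced by $e_{F'}$. The identity $\what A_{F\oplus F'}=\what A_{F\oplus F'}^{(A_F)}\cup\what A_{F\oplus F'}^{(A_{F'})}$ is then just the observation that for $[V]\in\udual A$ one has $AV\neq 0$, hence $(e_F+e_{F'})V=V\neq 0$, so $e_FV\neq0$ or $e_{F'}V\neq0$; in the language of \ref{subsec:corner-subalgebras} this says $\{e_FAe_F,\,e_{F'}Ae_{F'}\}$ is a \emph{full} family of corners, since $Ae_FA+Ae_{F'}A\supseteq A(e_F+e_{F'})A=A$.

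Finally the three itemized consequences are formal. For~(i): $\Spec_{F\oplus F'}(X)$ is the union of its intersections with the two open sets covering $\what A_{F\oplus F'}$, and each intersection is, by the two displayed formulas, recoverable from $\Spec_F(X)$ resp.\ $\Spec_{F'}(X)$; conversely each of $\Spec_F(X)$, $\Spec_{F'}(X)$ is visibly determined by $\Spec_{F\oplus F'}(X)$ via those same formulas --- this is exactly the content of the remark after Corollary~\ref{CR:corner-subalgebra-spectrum-II}. For~(ii): if $\eta=[V]\in\what A_{F\oplus F'}^{(A_F)}\cap\what A_{F\oplus F'}^{(A_{F'})}$, then $e_FV\neq0$ and $e_{F'}V\neq0$, and by Lemma~\ref{LM:irreducible-corner-module}(iii) applied to both corners, $V\wc V'\iff\overline{e_FV}\wc\overline{e_FV'}$ and also $V\wc V'\iff\overline{e_{F'}V}\wc\overline{e_{F'}V'}$, so $[e_FV]\in\Spec_F(X)\iff[V]\in\Spec_{F\oplus F'}(X)\iff[e_{F'}V]\in\Spec_{F'}(X)$. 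For~(iii): if $\what A_{F\oplus F'}^{(A_F)}=\what A_{F\oplus F'}$, then the first displayed formula reads $\Spec_F(X)=\Spec_{F\oplus F'}(X)$ (under the identification), so $\Spec_F(X)$ determines $\Spec_{F\oplus F'}(X)$ and hence, by~(i), also $\Spec_{F'}(X)$. I expect the only mildly delicate point to be bookkeeping the identifications of $\udual{A}^{(B)}$ with a subspace of $\what A_F$ from Theorem~\ref{TH:corner-unitary-dual} consistently with the module isomorphisms $A_F\cong e_FAe_F$ and $FX\cong e_F(FX\oplus F'X)$ --- i.e.\ checking that "$[V]\mapsto[e_FV]$" and "$X\mapsto\overline{e_FV'}$" line up --- but this is routine once one notes both are induced by the same corner inclusion; there is no substantive analytic obstacle, as all the hard work is already packaged in Theorems~\ref{TH:corner-subalgebra-spectrum-I} and~\ref{TH:corner-unitary-dual} and Lemma~\ref{LM:irreducible-corner-module}.
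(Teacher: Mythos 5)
Your proposal is correct and follows essentially the same route as the paper's (very terse) proof: both reduce to the corner-subalgebra machinery of Theorem~\ref{TH:corner-subalgebra-spectrum-I}, Theorem~\ref{TH:corner-unitary-dual}, Corollary~\ref{CR:corner-subalgebra-spectrum-II} and Lemma~\ref{LM:irreducible-corner-module}, together with the observation that $e_F+e_{F'}=1$ makes $\{e_F A e_F,\, e_{F'} A e_{F'}\}$ a full family of corners. You simply unpack what the paper's one-paragraph proof leaves implicit.
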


    \begin{proof}
        Notice that $e_F+e_{F'}=1$ in $A_{F\oplus F'}$, hence $\{e_F,e_{F'}\}$ is a full
        family of idempotents in the sense of Example~\ref{EX:full-families-of-idems}.
        Thus, everything follows from Theorem~\ref{TH:corner-subalgebra-spectrum-I} and
        Corollary~\ref{CR:corner-subalgebra-spectrum-II}. We have $
        \what{A}_{F\oplus F'}=\what{A}_{F\oplus F'}^{(A_{F})}\cup \what{A}_{F\oplus F'}^{(A_{F'})}
        $ by the comment preceding Example~\ref{EX:full-families-of-idems}.
    \end{proof}

    \begin{remark}
        When defined, the $F\oplus F'$-spectrum actually determines the $B$-spectrum for any algebra $B$ of $(\catC,F\oplus F')$-operators,
        by Theorem~\ref{TH:subalgebra-spectrum-I}.
        Proposition~\ref{PR:summand-spectrum} makes this process more concrete in the cases $B=A_F$ and $B=A_{F'}$.
    \end{remark}

    \begin{example}\label{EX:summand-spectrum-equiv}
        Example~\ref{EX:summand-functor} and Proposition~\ref{PR:summand-spectrum} imply that for all $i\geq0$, the flag spectrum determines
        the full $i$-dimensional spectrum, which in turn determines the non-oriented and the oriented $i$-dimensional spectra, provided all are defined.
        Also, the non-oriented and the oriented  $i$-dimensional spectra determine the full  $i$-dimensional spectrum, when defined.
    \end{example}

    \begin{example}
        For all $n\in\N$,
        the $F$-spectrum determines the $F^n$-spectrum and vice versa.
        More generally, if $G:\catC\to \catPHil$ is a functor such that $G$ is a summand of $F^n$ for some $n$,
        then, when defined, the $F$-spectrum determines the $G$-spectrum.
    \end{example}

    \begin{example}\label{EX:spectrum-dependency}
        Let $\calX$ be a $k$-regular tree, let $G=\Aut(\calX)$,
        and let $\catC=\catC(G,\calX)$ (Definition~\ref{DF:G-complex-category}).
        For $i=0,1$, write $A_i=\Alg{\catC,\Omega_i^+}{}$, and
        recall from Example~\ref{EX:tree-spectrum} that $A_i=\C[a_i]$,
        where $a_0$ is the vertex adjacency operator and $a_1$ is the edge adjacency operator
        (cf.\ Example~\ref{EX:adjacency-operator-tree}).

        Write $\Omega^+_{0,1}=\Omega^+_0\oplus\Omega^+_1$ and let $A_{0,1}=\Alg{\catC,\Omega_{0,1}^+}{}$.
        We take $F=\Omega_0^+$ and $F'=\Omega_1^+$ in the previous discussion
        and write $e_0=e_F$ and $e_1=e_{F'}$. Then $A_{0,1}=A_{F\oplus F'}$,
        $A_0=A_F\cong e_0A_{0,1}e_0$ and $A_1=A_{F'}\cong e_1A_{0,1}e_1$.
        Define natural transformations $a_{01}:\Omega^+_1\to\Omega^+_0$ and $a_{10}:\Omega^+_0\to \Omega^+_1$ by
        \begin{align*}
            &(a_{01,X}\vphi)x=\sum_{x\subseteq y\in X^{(1)}}\vphi y & &\forall\, x\in X^{(0)},\, \vphi\in\ \Omega^+_1(X) \\
            &(a_{10,X}\psi)y=\sum_{y\supseteq x\in X^{(0)}}\psi x & &\forall\, y\in X^{(1)},\, \psi\in\ \Omega^+_0(X)
        \end{align*}
        We may view $a_{01,X}$ and $a_{10,X}$ as operators on $\Omega^+_{0,1}(X)=\Omega^+_0(X)\oplus\Omega^+_1(X)$ by setting them to be
        $0$ on $\Omega^+_0(X)$ and $\Omega^+_1(X)$, respectively. It is easy
        to check that $a_{01},a_{10}\in A_{0,1}$ and $a_{01}=a_{10}^*$.
        Arguing as in
        Example~\ref{EX:tree-spectrum}, one can show that $A_{0,1}$ is isomorphic
        to the path algebra $A$ of the quiver
        \[
            \xymatrix{
            0 \ar@/^/[r]|{a_{10}} & 1 \ar@/^/[l]|{a_{01}}
            }
        \]
        discussed in Example~\ref{EX:double-arrow-quiver}
        (the elements $e_0,e_1,a_{01},a_{10}\in A_{0,1}$
        correspond to the elements with the same name in $A$).

        The unitary dual of $A_{0,1}$ is described in  Example~\ref{EX:double-arrow-quiver} as
        the gluing of two copies of $\R_{\geq 0}$ along $\R_{>0}$. Since $A_0= e_0A_{0,1}e_0$ and $A_1= e_1A_{0,1}e_1$, the sets
        $\what{A}_{0,1}^{(A_0)}$ and $\what{A}_{0,1}^{(A_1)}$ correspond to the two copies of $\R_{\geq 0}$,
        and so $\what{A}_{0,1}^{(A_0)}\cap \what{A}_{0,1}^{(A_1)}$ corresponds to $\R_{>0}$,
        which,
        in the notation of Example~\ref{EX:double-arrow-quiver},   $\R_{>0}$ corresponds to the irreducible
        representations $\{[V_r]\where r\in\R_{>0}\}$. Therefore, Proposition~\ref{PR:summand-spectrum}  implies that
        for any $k$-regular graph $X\in\catC$ and $r\in \R_{>0}$, we have
        $
        [e_0V_r]\in \Spec_{0}(X)$ if and only if  $[e_1V_r]\in\Spec_{1}(X)$.

        Recall from Example~\ref{EX:k-regular-graphs-basic-spectrum} that for $i=0,1$ we can identify
        $\what{A}_i$ with a subset of $\C$ such that
        $\Spec_{i}(X)$ corresponds to $\Spec(a_{i,X})$ for all $X\in \catC$. It is easy to check
        that
        \begin{equation}\label{EQ:spectrum-dep-in-the-tree-II}
        a_{01}a_{10}=a_0+ke_0\qquad\text{and}\qquad a_{10}a_{01}=a_1+2e_1
        \end{equation}
        in $A_{0,1}$.
        Using this, one sees that
        for all $r\in\R_{>0}$, we have $\Spec(a_0|_{e_0V_r})=r^2-k$ and $\Spec(a_1|_{e_1V_r})=r^2-2$.
        Since $\Spec_{\Omega^+_{0,1}}(X)$ is always defined (Theorem~\ref{TH:Adj-algebra-descrition}(ii)), this means
        that for all $\lambda\in \R$, we have
        \begin{equation}%\label{EQ:spectrum-dep-in-the-tree}
        \lambda\in\Spec(a_{0,X})-\{-k\}
        \quad\iff\quad
        \lambda+k-2\in\Spec(a_{1,X})-\{-2\}\ .
        \end{equation}
        This is a well-known dependency between the spectrum of  the vertex and  edge adjacency operators.
        An alternative way to exhibit   this dependency by noting  that
        %$\Delta_0^+$ and $\Delta_1^-$
        %(see \ref{subsec:orientation}) have the same spectrum
        %except maybe the multiplicity of  $0$. We further note that one can get \eqref{EQ:spectrum-dep-in-the-tree}
        %by simply noticing that $a_{01,X}^*=a_{10,X}$, and that
        $a_{10,X}a_{01,X}$ and $a_{01,X}a_{10,X}$
        always have the same spectrum
        except maybe  the multiplicity of $0$, and then applying \eqref{EQ:spectrum-dep-in-the-tree-II}.
    \end{example}

    \begin{example}
        Let  $G=\nPGL{F}{d}$ and $\calX=\calB_d(F)$ be as in Chapter~\ref{sec:ramanujan-complexes}.
        In the case $d=3$, dependencies between the $\Spec_{0,\catC(G,\calX)}(\Gamma\leftmod\calB_d(F))$ (which is equivalent
        to the spectrum defined in Chapter~\ref{sec:ramanujan-complexes} by Example~\ref{EX:zero-dim-spec-of-Bd}) and the spectrum
        of some other higher dimensional associated operators were shown in \cite{KaLiWa10}
        and \cite{GolPar14}. These dependencies can be explained using Proposition~\ref{PR:summand-spectrum};
        they essentially follow
        from the analysis carried in \cite{KaLiWa10}
        and \cite{GolPar14} and Theorem~\ref{TH:spectrum-correspondence} below.
    \end{example}

    The previous ideas    can be extended to arbitrary families of functors
    $\{F_\alpha:\catC\to\catPHil\}_{\alpha\in I}$
    as follows: Let $F=\bigoplus_{\alpha\in I}F_\alpha$. When $I$ is finite,
    the $F$-spectrum determines the $F_\alpha$-spectrum for every $\alpha\in I$
    and vice versa.
    %to form a spectrum
    %that  is equivalent to the $F_\alpha$-spectra when $\alpha$
    %ranges over $I$. In case $I$ is finite, simply take the $F$-spectrum
    %where $F=:\bigoplus_{\alpha\in I}F_\alpha$.
    However,
    for infinite $I$, one has to consider
    \[A_{\{F_\alpha\}}=\Alg{\catC,\{F_\alpha\}}{}:=
    \dirlim \{A_{\bigoplus_{\alpha\in S}F_\alpha}\}_{S\fsubseteq I}\ .\]
    Here, $S$ ranges over the finite subsets
    of $I$, and for $S\subseteq T\fsubseteq I$, $A_{\bigoplus_{\alpha\in S}F_\alpha}$ is embedded as a (non-unital)
    subalgebra of $A_{\bigoplus_{\alpha\in T}F_\alpha}$ in the obvious way. The algebra $A_{\{F_\alpha\}}$
    is a  non-unital idempotented $*$-subalgebra of
    $A_F=\Alg{\catC,F}{}$, and we define
    $\Spec_{\{F_\alpha\}}(X):=\Spec_{A_{\{F_\alpha\}}}(FX)$.
    If $e_{\alpha,X}$ denotes the orthogonal projection
    from $FX$ onto $F_\alpha X$, then $\{e_\alpha:=\{e_{\alpha,X}\}_{X\in\catC}\}_{\alpha\in I}$
    form a full system
    of idempotents in $A_{\{F_\alpha\}}$ (see Example~\ref{EX:full-families-of-idems})
    and $e_\alpha A_{\{F_\alpha\}}e_\alpha\cong A_{F_\alpha}$.
    Thus, by Theorem~\ref{TH:corner-subalgebra-spectrum-I} and Corollary~\ref{CR:corner-subalgebra-spectrum-II}, $\Spec_{\{F_\alpha\}}(X)$
    determines $\{\Spec_{F_\alpha}(X)\}_{\alpha\in I}$ and vices versa, when both are defined.
    Moreover, an analogue of Proposition~\ref{PR:summand-spectrum} holds in this case.
    (In contrast, $\{e_\alpha\}_{\alpha\in I}$ is \emph{not} a full system of idempotents in $A_{F}{}$, so a priori $\Spec_{F}(X)$
    contains more information than $\{\Spec_{F_\alpha}(X)\}_{\alpha\in I}$, if it exists.)

    Interesting examples of families $\{F_\alpha\}_{\alpha\in I}$ could be, for instance, $\{\Omega_i^+\}_{i\geq0}$, $\{\Omega_i^-\}_{i\geq 0}$
    or $\{\Omega_i^{\pm}\}_{i\geq 0}$. The resulting spectra can be considered as a variation
    of the non-oriented (resp.\ oriented, full)
    spectrum, seeing all dimensions at once. We shall not investigate such examples in this work, although they are
    relevant to studying maps relating cells of different dimensions such as the boundary and coboundary maps
    (\ref{subsec:orientation}), which  live in $A_{\{\Omega_i^-\}_{i\geq 0}}$, or adjacency operators
    between cells of different dimensions, which live in $A_{\{\Omega_i^+\}_{i\geq 0}}$.

\rem{
    We finish with a conjecture
    about the structure of $\Alg{\catSimp,\{\Omega^+_i\}_{i\geq 0}}{}$, which in turn implies the structure of $\Alg{\catSimp,\Omega^+_i}{}$.

    \begin{cnj}
        For all $j> i\geq 0$, define natural transformations $a_{ij}:\Omega^+_j\to \Omega^+_i$ and $a_{ji}:\Omega^+_i\to \Omega^+_j$
        by
        \begin{align*}
            &(a_{ij,X}\vphi)x=\sum_{x\subseteq y\in X^{(j)}}\vphi y & &\forall\, x\in X^{(i)},\, \vphi\in\ \Omega^+_j(X) \\
            &(a_{ji,X}\psi)y=\sum_{y\supseteq x\in X^{(i)}}\psi x & &\forall\, y\in X^{(j)},\, \psi\in\ \Omega^+_i(X)
        \end{align*}
        ($X\in\catSimp$). We view $a_{ij}$ and $a_{ji}$ as natural transformations
        from $\bigoplus_{k\geq 0}\Omega^+_k$ to itself by setting them to be zero on the relevant components.
        In addition, let $e_i$ denote the orthogonal projection from $\bigoplus_{k\geq 0}\Omega^+_k$ onto the summand $\Omega^+_i$.
        Then the algebra $\Alg{\catSimp,\{\Omega^+_i\}_{i\geq 0}}{}$ is isomorphic to the path algebra of the infinite quiver
        \[
        \xymatrixcolsep{4pc}
        \xymatrix{
            0 \ar@/^1pc/[r]^{a_{10}} &
            1 \ar@/^1pc/[l]^{a_{01}} \ar@/^1pc/[r]^{a_{21}} &
            2 \ar@/^1pc/[l]^{a_{12}} \ar@/^1pc/[r]^{a_{32}} &
            3 \ar@/^1pc/[l]^{a_{23}} \ar@/^1pc/[r]^{a_{43}}
            & \cdots \ar@/^1pc/[l]^{a_{43}}
        }
        \]
        via sending the idempotent corresponding to vertex $i$ to $e_i$ and the edge $a_{ij}$
        to the operator $a_{ij}$. Furthermore, if $\catSimp$ is replaced with the category of simplicial complexes
        of dimension $d$ or less, then the statement still holds upon ending the quiver at the vertex labeled $d$.
    \end{cnj}
}

\subsection{More Dependencies Between Spectra}
\label{subsec:equivalence-of-ori-and-non-ori}

    This section continues discussing dependencies between spectra,
    but here we take a more explicit approach, focusing on the case
    $\catC=\catC(G,\calX)$ where $\calX$ is a fixed almost transitive $G$-complex
    (\ref{subsec:G-complex}, Definition~\ref{DF:G-complex-category}).

\medskip

    Observe first that if   $F,F':\catC\to \catPHil$ are functors and  $u:F\to F'$ is a unitary natural isomorphism, then $u$ induces an isomorphisms
    \begin{align*}
            a\mapsto uau^{-1}~:~ & A_F\to A_{F'}\\
            \vphi\mapsto u_X\vphi\,~:~ & FX \to F'X & (X\in\catC(G,\calX))
    \end{align*}
    where the second isomorphism is
    compatible with the relevant module structures. This in turn induces
    a homeomorphism $\what{u}:\what{A}_F\to\what{A}_{F'}$ under which $\Spec_F(X)$ corresponds
    to $\Spec_{F'}(X)$
    for all $X\in\catC$. We conclude that $u$ induces an equivalence between
    the $F$-spectrum and the $F'$-spectrum of $G$-quotients of $\calX$.
    In particular, when $F=F'$, the map $\what{u}$ is an automorphism of $\what{A}_F$, and
    the spectrum of any $G$-quotient of $\calX$ is stable under $\what{u}$.

\medskip

    We say that the action of $G$ on $\calX$ \emph{preserves orientation} if for all $\mathsf{x}\in\ori{\calX}$
    with $\dim\mathsf{x}>0$, $\mathsf{x}$ and $\mathsf{x}^\op$ are not in the same $G$-orbit. Equivalently,
    this means that whenever $g\in G$, $x\in \calX$ satisfy $gx=x$, the element $g$ induces an even permutation on the vertices of $x$.
    We say that the action of $G$ on $\calX$ \emph{preserves order} if for any $x\in X$ and $g\in\Stab_G(x)$,
    we have $g v=v$ for any vertex $v\in x$.

    \begin{prp}\label{PR:spectrum-dependencies}
        In the previous setting:
        \begin{enumerate}
            \item[(i)] If $G$ preserves orientation, then for all $i>0$, there exists a (non-canonical)
            unitary natural isomorphism $\Omega_i^+\cong\Omega_i^-$. In particular, the non-oriented, oriented, and full
            $i$-dimensional spectra determine each other.
            \item[(ii)] If $\calX$ is pure of dimension $d$ and $G$ preserves ordering, then
            there is a (non-canonical) unitary natural  isomorphism $(\Omega_d^+)^{(d+1)!}\cong\llFlag$. In particular,
            the non-oriented $d$-dimensional spectrum determines the flag spectrum,
            and hence also the non-oriented, oriented, and full spectra in all dimensions $0\leq i\leq d$.
        \end{enumerate}
    \end{prp}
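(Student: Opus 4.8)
The plan is to exhibit explicit unitary natural isomorphisms in each case, using the description of the relevant functors via $\llf\circ S$ from Definition~\ref{DF:elementary-functor} together with Proposition~\ref{PR:S-quotient-check} and Theorem~\ref{TH:Adj-algebra-descrition}. For part (i), I would argue as follows. When $G$ preserves orientation, each $G$-orbit in $\ori{\calX^{(i)}}$ consists of oriented cells, and no orbit contains both an oriented cell $\mathsf{x}$ and its opposite $\mathsf{x}^\op$ (this is exactly the orientation-preserving hypothesis). Hence we may choose, $G$-equivariantly, one orientation $\mathsf{x}_\alpha$ for each orbit of underlying cells $\alpha$; concretely, pick orbit representatives for the action of $G$ on $\calX^{(i)}$, orient each chosen representative arbitrarily, and propagate by the $G$-action — this is well-defined precisely because $\Stab_G(x)$ induces only even permutations on $x$ when $\dim x>0$. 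This choice identifies $\ori{\calX^{(i)}}$ with two disjoint copies of the $G$-set $\calX^{(i)}$, compatibly with $G$ (one copy for the chosen orientations, one for the opposites). Concretely, define $u_{\calX}\colon \Omega_i^+(\calX)\to\Omega_i^-(\calX)$ on the basis $\{\e_{x}\}_{x\in\calX^{(i)}}\cong\{\tfrac1{\sqrt2}(\e_{\mathsf{x}}+\e_{\mathsf{x}^\op})\}$ by sending $\e_x\mapsto \tfrac1{\sqrt2}(\e_{\mathsf{x}}-\e_{\mathsf{x}^\op})$ where $\mathsf{x}$ is the chosen orientation of $x$. This is visibly a unitary isomorphism, it is $G$-equivariant by the equivariance of the choice, hence by Theorem~\ref{TH:Adj-algebra-descrition}(i) (which identifies $(\catC,F)$-operators with $G$-equivariant operators on $F\calX$, and more to the point lets us transport natural transformations) it extends to a unitary natural isomorphism $u\colon\Omega_i^+\cong\Omega_i^-$ of functors on $\catC(G,\calX)$; alternatively invoke Proposition~\ref{PR:S-quotient-check}(ii) directly, since both functors are elementary and we have produced a $G$-equivariant unitary iso on the value at $\calX$. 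The ``in particular'' then follows from the general principle, stated at the start of \ref{subsec:equivalence-of-ori-and-non-ori}, that a unitary natural isomorphism $F\cong F'$ induces a homeomorphism $\what A_F\cong\what A_{F'}$ carrying $\Spec_F(X)$ to $\Spec_{F'}(X)$, combined with Example~\ref{EX:summand-spectrum-equiv} which already gives the equivalence of the non-oriented/oriented with the full $i$-dimensional spectrum.

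For part (ii), assume $\calX$ is pure of dimension $d$ and $G$ preserves ordering. Then $\Flag(\calX)$ is canonically the set of pairs $(x,\leq)$ with $x\in\calX^{(d)}$ and $\leq$ a total order on the $d+1$ vertices of $x$, i.e.\ an \emph{ordered} $d$-cell; so $\Flag(\calX)=\ordr{\calX^{(d)}}$. The order-preserving hypothesis says $\Stab_G(x)$ fixes every vertex of $x$, hence acts trivially on the set of orderings of $x$; therefore the forgetful map $\ordr{\calX^{(d)}}\to\calX^{(d)}$ is a $G$-equivariant map of $G$-sets all of whose fibers have size $(d+1)!$, and, choosing $G$-orbit representatives in $\calX^{(d)}$ and then arbitrary orderings of their vertices and propagating by $G$ (well-defined by the triviality of the stabilizer action on orderings), we get a $G$-equivariant bijection $\ordr{\calX^{(d)}}\cong \calX^{(d)}\times\{1,\dots,(d+1)!\}$ where $G$ acts only on the first factor. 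Applying $\llf$ gives a unitary $G$-isomorphism $\llFlag(\calX)=\llf(\ordr{\calX^{(d)}})\cong\llf(\calX^{(d)})^{(d+1)!}\cong\Omega_d^+(\calX)^{(d+1)!}$, which extends to a unitary natural isomorphism $(\Omega_d^+)^{(d+1)!}\cong\llFlag$ on $\catC(G,\calX)$ by Proposition~\ref{PR:S-quotient-check}(ii) (both functors are elementary, being $\llf$ of the functors $X\mapsto X^{(d)}\times\{1,\dots,(d+1)!\}$ and $X\mapsto\Flag(X)$, and an equivariant unitary iso at $\calX$ forces a natural iso). Consequently the $\Omega_d^+$-spectrum determines the $(\Omega_d^+)^{(d+1)!}$-spectrum (an earlier example: the $F$-spectrum determines the $F^n$-spectrum), hence the flag spectrum; and by Example~\ref{EX:summand-spectrum-equiv} the flag spectrum determines the full $i$-dimensional spectrum for every $0\leq i\leq d$, which in turn determines the non-oriented and oriented $i$-dimensional spectra.

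The one point needing genuine care — the ``main obstacle'' such as it is — is the existence of the $G$-equivariant choice of orientation (resp.\ ordering). This is not automatic: it requires the stabilizers to act compatibly, which is exactly what ``preserves orientation'' (resp.\ ``preserves order'') encodes. The clean way to state it: the orbit map $\ori{\calX^{(i)}}\to \calX^{(i)}$ (resp.\ $\ordr{\calX^{(d)}}\to\calX^{(d)}$) is a morphism of $G$-sets whose fiber over $x$ is a torsor (resp.\ merely a free transitive quotient) under a group acted on by $\Stab_G(x)$, and the hypothesis guarantees that action is trivial; a section of a $G$-set surjection with trivially-acted-on fibers always exists — pick a section on orbit representatives and extend by equivariance, with well-definedness precisely the triviality of the stabilizer action on the fiber. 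Once this section is in hand, everything else is the routine bookkeeping of checking that the induced map on $\llf$ is unitary (it permutes an orthonormal basis, up to the harmless $\sqrt2$ rescaling relating $\Omega_i^+$ to $\llf(\calX^{(i)})$) and natural (immediate from equivariance via Proposition~\ref{PR:S-quotient-check}(ii)), and then quoting the already-established spectral consequences of a unitary natural isomorphism.
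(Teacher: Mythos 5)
Your proof is correct and takes essentially the same route as the paper: the key step in both is to use the orientation-preserving (resp.\ order-preserving) hypothesis to make a $G$-equivariant choice of orientation (resp.\ ordering) on one representative per orbit, and the rest is the same bookkeeping that the resulting permutation of bases is unitary and natural. The only cosmetic difference is that you construct the isomorphism at $\calX$ and push it down via Proposition~\ref{PR:S-quotient-check}(ii), whereas the paper writes out the formula on each quotient $\Gamma\leftmod\calX$ directly and checks well-definedness there; these are the same argument.
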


    \begin{proof}
        (i) Choose representatives $\{x_\alpha\}_{\alpha\in I}$
        for the $G$-orbits in $X^{(i)}$ and choose an arbitrary
        orientation $\mathsf{x}_\alpha\in\ori{\calX}^{(i)}$ for $x_\alpha$.
        For all $\Gamma\leq_{\calX}G$, define $u_{\Gamma\leftmod \calX}:\Omega_i^+(\Gamma\leftmod\calX)\to
        \Omega_i^-(\Gamma\leftmod\calX)$ by linearly extending
        $
        \e_{\Gamma g\sfx_\alpha}+\e_{\Gamma g\sfx_\alpha^\op}\mapsto \e_{\Gamma g\mathsf{x}_\alpha}-\e_{\Gamma g\mathsf{x}_\alpha^\op}
        $
        ($\alpha\in I$, $g\in G$). This is well defined since if $\gamma gx_\alpha= g'x_{\alpha'}$ ($\gamma\in\Gamma$, $g,g'\in G$),
        then $\alpha'=\alpha$, and $g'^{-1}\gamma g\in\Stab_G(x_\alpha)$.
        Since $G$ preserves orientation, $\Stab_G(\mathsf{x}_\alpha)=\Stab_G(x_\alpha)$, and hence $\gamma g\mathsf{x}_\alpha=g'\mathsf{x}_{\alpha'}$.
        It is straightforward to check that $u=\{e_X\}_{X\in \catC}$ is a unitary natural isomorphism from $\Omega_i^+$ to $\Omega_i^-$.
        All other assertions follow from the discussion before the proposition and Proposition~\ref{PR:summand-spectrum}(i), since
        $\Omega_i^\pm=\Omega_i^+\oplus \Omega_i^-$.

        (ii)
        Choose representatives  $\{x_\alpha\}_{\alpha\in I}$
        for the $G$-orbits in $X^{(d)}$. Since $\calX$ is pure of dimension $d$, for every $\alpha\in I$, there
        are are $(d+1)!$-maximal flags $(x_{-1}\subset x_0\subset \dots\subset x_d)$ with $x_d=x_\alpha$,
        call them $\{f_{\alpha,j}\}_{j=1}^{(d+1)!}$. Now, for all $\Gamma\leftmod \calX\in \catC$,
        define $u_{\Gamma\leftmod\calX}:\llFlag(\Gamma\leftmod\calX)\to \Omega_d^+(\Gamma\leftmod\calX)^{(d+1)!}$
        by sending $\e_{\Gamma gf_{\alpha,j}}$ to the image of $\e_{\Gamma gx_\alpha}$ in the $j$-th copy
        of $\Omega_d^+(\Gamma\leftmod\calX)$ in  $ \Omega_d^+(\Gamma\leftmod\calX)^{(d+1)!}$.
        This is well-defined since if $\gamma gf_{\alpha,j}= g'f_{\beta,k}$
        ($\gamma\in\Gamma$, $g,g'\in G$),
        then   $g'^{-1}\gamma gx_\alpha=x_\beta$, and so $\alpha=\beta$ and $g'^{-1}\gamma g\in\Stab_{G}(x_\alpha)$,
        which means $\gamma gx_\alpha=g'x_\beta$. To see that $j=k$, notice that
        $G$ is order preserving and hence $g'^{-1}\gamma g$ fixes the vertices of $x_\alpha$. Thus,
        $f_{\alpha,j}=g'^{-1}\gamma g f_{\alpha,j}=f_{\beta,k}$, so $j=k$. It is easy
        to check that $u=\{e_X\}_{X\in \catC}$ is a unitary natural isomorphism from $\llFlag$ to $(\Omega_d^+)^{(d+1)!}$.
        The other assertions follow from Proposition~\ref{PR:summand-spectrum}(i) and Example~\ref{EX:summand-functor}.
    \end{proof}

    \begin{example}
        Let $G=\nPGL{F}{d}$ and $\calX=\calB_d(F)$ be as in Chapter~\ref{sec:ramanujan-complexes}.
        Then $G$ preserves orientation if and only if $d$
        is odd.
        Indeed, recall that there is color
        function $C_0:\calB_d(F)^{(0)}=G/K\to \Z/d\Z$  given by $C_0(\quo{g}K)=\nu_F(\det g)+d\Z$ for all $g\in\nGL{F}{d}$.
        Since adjacent vertices have different colors, $C_0$ is injective
        on any cell $x=\{v_0,\dots,v_i\}\in \calB_d(F)$.
        Suppose now that $g\in \nGL{F}{d}$ satisfies $\quo{g}x=x$ and let $r=\nu_F(\det g)+d\Z$.
        Then $C_0(\quo{g}v_s)=C_0(v_s)+r$ for all $0\leq s\leq i$, and hence $\quo{g}^d$ fixes $v_0,\dots,v_i$.
        When $d$ is odd, the orbits that $\quo{g}$ induces on $\{v_0,\dots,v_i\}$ must have odd sizes,
        hence $\quo{g}$ induces an even permutation on $\{v_0,\dots,v_i\}$.
        On the other hand, if $d$ is even, we can take $x$ to be a $(d-1)$-cell
        and $g$ to be a matrix of determinant $\pi$. The permutation $\quo{g}$ induces
        on  $\{v_0,\dots,v_{d-1}\}$ is a cycle of size $d$, which is odd. (The existence
        of $x$ and $g$ as above is easy to see.)
    \end{example}

    \begin{example}
        Let $\calX=\calB_d(F)$ be as in Chapter~\ref{sec:ramanujan-complexes} and let $G=\im(\nSL{F}{d}\to\nPGL{F}{d})$.
        Then $G$ preserves order because it preserves the coloring $C_0$ of the vertices (see Chapter~\ref{sec:ramanujan-complexes}).
        More generally, if
        $\bfG$ is a simply-connected almost simple algebraic group over $F$ with affine building $\calB$
        and $G=\im(\bfG(F)\to\Aut(\calB))$ (cf.\ Example~\ref{EX:G-complex-building-III}),
        then $G$ preserves
        order because it preserves the type of the vertices.
    \end{example}

\subsection{A Note On Isomorphic Complexes}

    Let $\catC\subseteq\catSimp$ be a skeletally small subcategory,
    let $F:\catC\to\catPHil$ be a functor, and let $A$ be an algebra of
    $(\catC,F)$-operators (e.g.\ $\Alg{\catC,F}{}$).
    %Then with every
    %object  $X\in\catC$ we can associate its $A$-spectrum, provided $A$
    %acts continuously on $FX$.
    The reader should take notice that if $X,X'\in\catC$ are isomorphic
    simplicial complexes, then $\Spec_{A}(X)$ and $\Spec_{A}(X')$ may still differ,
    because $X$ and $X'$ need not be isomorphic in $\catC$. However, when $\catC$
    is a \emph{full} subcategory of $\catSimp$, and $F$ sends simplicial isomorphisms to
    continuous isomorphisms (e.g.\ if $F\in\{\Omega^+_i, \Omega_i^-, \Omega_i^\pm,\llFlag\}$), isomorphic simplicial complexes
    have the same $A$-spectrum (cf.\ Proposition~\ref{PR:iso-implies-unitary-iso}).

\medskip

    Consider now the case  $\catC=\catC(G,\calX)$ and $A=\Alg{\catC,F}{}$ where $\calX$ is a $G$-complex.
    The category $\catC$ is usually not full in $\catSimp$, but under certain assumptions,
    one can show that isomorphic complexes in $\catC$ have the same $A$-spectrum.

    \begin{prp}\label{PR:independence-in-cover-map}
        Assume $\calX$ is  simply-connected when realized as a topological space
        and $G=\Aut(\calX)$. Let $F:\catSimp\to\catPHil$ be a functor
        such that:
        \begin{enumerate}
        \item[(1)] $Fp_\Gamma:F\calX\to F(\Gamma\leftmod\calX)$ is onto for all
        $\Gamma\leq_{\calX} G$ (notation as in Definition~\ref{DF:G-complex-category}).
        %(where $p_\Gamma:\calX\to \Gamma\leftmod\calX$ is the quotient map).
        \item[(2)] $Fu$ is continuous for every isomorphism $u$ in $\catSimp$.
        \end{enumerate}
        (For example, this holds when $F\in\{\Omega^+_i, \Omega_i^-, \Omega_i^\pm,\llFlag\}$.)
        Then isomorphic complexes in $\catC:=\catC(G,\calX)$ have the same
        $\Alg{\catC,F}{}$-spectrum.
    \end{prp}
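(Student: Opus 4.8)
The plan is to reduce the statement to the easy observation made just before the proposition (that a unitary natural isomorphism of functors induces a spectrum-preserving homeomorphism of unitary duals), by showing that the only ambiguity in realizing a given abstract simplicial complex as an object of $\catC$ is absorbed by an automorphism of $\calX$. Concretely, suppose $X = \Gamma\leftmod\calX$ and $X' = \Gamma'\leftmod\calX$ are isomorphic as simplicial complexes, via $\theta:X\to X'$. Since $\calX$ is simply-connected, it is the universal cover of both $X$ and $X'$ (via $p_\Gamma$ and $p_{\Gamma'}$ respectively, which are cover maps because $\Gamma,\Gamma'\leq_\calX G$). First I would lift $\theta\circ p_\Gamma:\calX\to X'$ to a cover map $g:\calX\to\calX$ along $p_{\Gamma'}$; since a cover of a simply-connected complex by a connected complex is an isomorphism, $g$ is a simplicial automorphism of $\calX$, i.e.\ $g\in\Aut(\calX)=G$. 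By construction $p_{\Gamma'}\circ g = \theta\circ p_\Gamma$.

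Next I would identify $\Gamma' = g\Gamma g^{-1}$. Indeed, the deck transformations of $p_\Gamma$ are exactly $\Gamma$ (Proposition~\ref{PR:quotient-cover-sufficient-nec-conds}), and likewise those of $p_{\Gamma'}$ are $\Gamma'$; conjugating a deck transformation $\gamma$ of $p_\Gamma$ by $g$ and using $p_{\Gamma'}\circ g = \theta\circ p_\Gamma$ shows $g\gamma g^{-1}$ is a deck transformation of $p_{\Gamma'}$, and the reverse inclusion is symmetric, so $\Gamma' = g\Gamma g^{-1}$; note $g\Gamma g^{-1}\leq_\calX G$ by Corollary~\ref{CR:Gamma-is-discrete}. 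Thus it suffices to treat the case $\Gamma' = g\Gamma g^{-1}$ with $\theta$ the induced isomorphism $\Gamma x\mapsto \Gamma' gx = g\Gamma x$. Now the point is that in $\catC$ the morphism $p_\Gamma\circ g^{-1}\in\Hom_\catC(\calX,\Gamma'\leftmod\calX)$ and $p_\Gamma\in\Hom_\catC(\calX,\Gamma\leftmod\calX)$ are both quotient maps realizing $\calX$ as the cover, and one checks $\theta$ is precisely $F$-compatibly induced: more precisely I would invoke Proposition~\ref{PR:S-quotient-check}(ii), which describes $F(\Gamma\leftmod\calX) = (F\calX)_\Gamma$ and $F(\Gamma'\leftmod\calX) = (F\calX)_{\Gamma'}$ functorially in terms of the $G$-action on $F\calX$. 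The map $g_*:F\calX\to F\calX$ sends $(F\calX)(\Gamma)$ onto $(F\calX)(g\Gamma g^{-1}) = (F\calX)(\Gamma')$, hence descends to an isomorphism $(F\calX)_\Gamma\to (F\calX)_{\Gamma'}$ which, under the identifications of Proposition~\ref{PR:S-quotient-check}(ii), is exactly $F\theta$ (this uses hypothesis (1), surjectivity of $Fp_\Gamma$, to pin down $F(\Gamma\leftmod\calX)$ as a genuine quotient of $F\calX$; it also uses that $F$ is semi-elementary — or at least that the conclusion of that proposition applies — which I would note is the one extra structural hypothesis needed, satisfied by all $F\in\{\Omega_i^+,\Omega_i^-,\Omega_i^\pm,\llFlag\}$).

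Finally I would assemble these pieces: $F\theta:F(\Gamma\leftmod\calX)\to F(\Gamma'\leftmod\calX)$ is a linear isomorphism, continuous by hypothesis (2) (and with continuous inverse $F(\theta^{-1})$), and it intertwines the $A = \Alg{\catC,F}{}$-module structures because each $a = \{a_X\}\in A$ is a natural transformation, so $a_{\Gamma'\leftmod\calX}\circ F\theta = F\theta\circ a_{\Gamma\leftmod\calX}$. Restricting to the smooth parts $AF(\Gamma\leftmod\calX)$ and $AF(\Gamma'\leftmod\calX)$, $F\theta$ is a continuous $A$-module isomorphism between the pre-unitary representations, hence extends to a continuous $A$-module isomorphism of their completions; by Proposition~\ref{PR:iso-implies-unitary-iso} these completions are then unitarily isomorphic, and therefore $\Spec_A(\Gamma\leftmod\calX) = \Spec_A(\quo{AF(\Gamma\leftmod\calX)}) = \Spec_A(\quo{AF(\Gamma'\leftmod\calX)}) = \Spec_A(\Gamma'\leftmod\calX)$. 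The main obstacle I anticipate is the bookkeeping in the middle step — carefully matching $F\theta$ with the map on coinvariants coming from $g_*$ via Proposition~\ref{PR:S-quotient-check}(ii), and making sure the lift $g$ and the conjugation relation $\Gamma' = g\Gamma g^{-1}$ are set up so that everything is literally compatible rather than merely compatible up to a further automorphism; the topological-lifting input and the module-intertwining are routine once that identification is clean.
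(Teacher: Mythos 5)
Your opening move matches the paper exactly: lift $\theta\circ p_\Gamma$ through $p_{\Gamma'}$ to get $g\in\Aut(\calX)=G$ with $p_{\Gamma'}\circ g=\theta\circ p_\Gamma$, using simple-connectedness. After that, though, you take a detour that introduces a real problem.

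The crux of the proposition is to show that $F\theta$ intertwines the $\Alg{\catC,F}{}$-actions, and your stated justification --- ``each $a=\{a_X\}\in A$ is a natural transformation, so $a_{\Gamma'\leftmod\calX}\circ F\theta=F\theta\circ a_{\Gamma\leftmod\calX}$'' --- does not follow from naturality. The elements of $\Alg{\catC,F}{}$ are natural transformations of $F$ restricted to $\catC$, so the naturality squares are only available for morphisms of $\catC$, and $\theta$ is not one of them; that $\theta$ \emph{a priori} fails to be a $\catC$-morphism is exactly what makes the proposition nontrivial. The paper avoids this by observing that $u\circ p_\Gamma=p_{\Gamma'}\circ g$ \emph{is} a morphism in $\catC$, so naturality gives $a_{X'}\circ F(u\circ p_\Gamma)=F(u\circ p_\Gamma)\circ a_{\calX}$; writing $F(u\circ p_\Gamma)=Fu\circ Fp_\Gamma$ and using hypothesis (1) to cancel $Fp_\Gamma$ on the right then yields $Fu\circ a_X=a_{X'}\circ Fu$ directly, with no coinvariants and no computation of $\Gamma'$.

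Your coinvariants detour (identifying $\Gamma'=g\Gamma g^{-1}$, then invoking Proposition~\ref{PR:S-quotient-check}(ii) to realize $F(\Gamma\leftmod\calX)$ as $(F\calX)_\Gamma$ and matching $F\theta$ with $g_*$ on coinvariants) \emph{can} be made to close the gap, because one can check that the action of $a$ on $(F\calX)_\Gamma$ is the descent of the $G$-equivariant operator $a_\calX$, which visibly commutes with $g_*$; but you never say this, and naturality alone does not deliver it. More importantly, Proposition~\ref{PR:S-quotient-check}(ii) is stated only for semi-elementary functors $\catC(G,\calX)\to\catPHil$, so this route imports an extra structural hypothesis that the proposition does not assume. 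You flag this yourself, but it means your argument proves a weaker statement than the one claimed (adequate for the listed examples $\Omega_i^\pm,\Omega_i^+,\Omega_i^-,\llFlag$, but not for a general $F$ satisfying (1) and (2)). The paper's diagram chase avoids both issues and is strictly shorter; I'd recommend replacing the coinvariants step with that chase, or at minimum spelling out the $G$-equivariance of $a_\calX$ as the actual reason the intertwining holds.
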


    \begin{proof}
        Let $\Gamma,\Gamma'\leq_{\calX}G$, let $X=\Gamma\leftmod\calX$,
        $X'=\Gamma'\leftmod\calX$, and let $u:X\to X'$ be an
        isomorphism. It is enough to prove that $Fu:FX\to FX'$
        is a homomorphism of $\Alg{\catC,F}{}$-modules, since then, by (2) and Proposition~\ref{PR:iso-implies-unitary-iso},
        $FX\cong FX'$ as unitary representations of $\Alg{\catC,F}{}$.
        Let $a\in\Alg{\catC,F}{}$.
        Since $\calX$ is simply-connected, there is $g\in \Aut(\calX)=G$ such
        that $p_{\Gamma'}\circ g=u\circ p_\Gamma$. In particular, $u\circ p_\Gamma$
        is a morphism in $\catC$, and hence $a_{X'}\circ F(u\circ p_\Gamma)=F(u\circ p_\Gamma)\circ a_{\calX}$.
        Now, let $\psi\in FX$.
        By (1), there is $\xi\in F\calX$ such that $(Fp_\Gamma) \xi=\psi$.
        Thus,
        \begin{align*}
        (Fu)(a_X\psi)&=(Fu)(a_X((Fp_\Gamma)\xi))=(Fu)((Fp_\Gamma)(a_{\calX}\xi))
        =(F(u\circ p_\Gamma))(a_\calX\xi)\\
        &=
        a_{X'}((F(u\circ p_\Gamma))\xi)=a_{X'}((Fu)((Fp_\Gamma) \xi))=a_{X'}((Fu)\psi),
        \end{align*}
        as required.
    \end{proof}

    \begin{remark}\label{RM:auto-group-of-Bd}
    The conditions of Proposition~\ref{PR:independence-in-cover-map} are \emph{not}
    satisfied when $\calX=\calB_d(F)$ and $G=\nPGL{F}{d}$ (see Chapter~\ref{sec:ramanujan-complexes}), because $\Aut(\calX)$ is strictly larger
    than $G$. \gap{}%
    More precisely, when $d>2$, $G$ is of index $2$ in $\Aut(\calB_d(F))$.
    A representative for the non-trivial $G$-coset
    in $\Aut(\calB_d(F))$ is the automorphism $\tau$ given by $\tau(gK)=(g^{-1})^{\mathrm{T}}K$, where $K=\nPGL{\calO}{d}$
    and $\mathrm{T}$ denotes matrix transposition.
    A simple modification of the proof Proposition~\ref{PR:independence-in-cover-map}
    shows that every isomorphism class of $\catSimp$ with a representative in $\catC(G,\calB_d(F))$
    gives rise to two possible spectra\footnote{
        Interestingly,  this mild issue seems to be unaddressed in all
        papers we have seen.
    } (which may coincide in special cases).
    In more detail,
    let $\Gamma\leq_{\calX}G$.
    Using Corollary~\ref{CR:distance-condition}, it easy to check that $\tau^{-1}\Gamma \tau\leq_{\calX}G$,
    and we  have $\Gamma\leftmod\calB_d(F)\cong \tau\Gamma \tau^{-1}\leftmod\calB_d(F)$ via $\Gamma v\mapsto \tau\Gamma \tau^{-1}(\tau v)$.
    The spectra of $\Gamma\leftmod\calB_d(F)$ and $\tau\Gamma \tau^{-1}\leftmod\calB_d(F)$ are the two spectra associated
    with the isomorphism class of $\Gamma\leftmod\calB_d(F)$. If $S_1:=\Spec_{0}(\Gamma\leftmod\calB_d(F))$
    and $S_2:=\Spec_{0}(\tau\Gamma \tau^{-1}\leftmod\calB_d(F))$
    are realized as subsets of $\C^{d-1}$ as in Example~\ref{EX:zero-dim-spec-of-Bd},
    then $S_1=\{(z_{d-1},\dots,z_1)\where (z_1,\dots,z_{d-1})\in S_2\}$. The technical proof is omitted.

    When $d=2$, one can show by arguing as in  Example~\ref{EX:tree-spectrum} that $\Alg{G,\calB_d(F)}{0}=\C[a_0]$
    and $\Alg{G,\calB_d(F)}{1}=\C[a_1]$, so the oriented $0$- and $1$-dimensional spectrum are in fact determined
    by the isomorphism class of the quotient of $\calB_2(F)$.
    \end{remark}

    \begin{remark}
        One way to overcome the problem that the spectrum of a $G$-quotient
        of  $\calX$ is not determined by its isomorphism class is by specifying the cover map.
        Indeed, for every $G$-quotient $Y$ and a cover map $f:\calX\to Y$, there
        exists  unique $\Gamma\leq_{\calX} G$
        and isomorphism $s:\Gamma\leftmod\calX\to Y$ such that $s(\Gamma v)=f(v)$ for all $v\in\vrt{\calX}$
        (cf.\ \ref{subsec:quotienst-of-simp-comps}).
        We may therefore define the spectrum of $(Y,f)$ to be the spectrum of $\Gamma\leftmod\calX$.
    \end{remark}

\section{Optimal Spectrum}
\label{sec:optimal-spectrum}

    Recall that
    a connected $k$-regular graph $X$ is called Ramanujan if the spectrum of its adjacency operator,
    denoted $\Spec_0(X)$, is contained in $[-2\sqrt{k-1},2\sqrt{k-1}]\cup\{\pm k\}$.
    As explained in the introduction, the definition is motivated by the Alon-Boppana Theorem (\cite{Nilli91}, see also \cite[Pr.~4.2]{LubPhiSar88}),
    stating that for any
    $\veps>0$, only finitely many isomorphism classes of $k$-regular graphs $X$ satisfy
    the tighter bound $\Spec_0(X)\subseteq [-2\sqrt{k-1}+\veps,2\sqrt{k-1}-\veps]\cup\{\pm k\}$.
    Ramanujan graphs can therefore be considered as having optimal, or smallest possible, spectrum. Alternatively, Ramanujan
    graphs can be regarded as spectral approximations of their universal cover, the $k$-regular tree $\calT_k$,
    since $\Spec_0(\calT_k)=[-2\sqrt{k-1},2\sqrt{k-1}]$ (see \cite[p.~252, Apx.~3]{Sunada88}; this result
    seems to go back as far as \cite[Th.~3]{Kest59}).

    The Alon-Boppana Theorem has been extended from $k$-regular graphs to further simplicial structures,
    particularly
    for non-regular graphs (\cite{GrigZuk99}) and quotients of the building $\calB_d(F)$ of Chapter~\ref{sec:ramanujan-complexes}
    (\cite[Th.~4.1]{Li04}, quoted as Theorem~\ref{TH:Li-a} above).
    The general flavor is always that the spectrum of the universal cover is the smallest possible
    in a certain sense which depends on the exact statement.

    We remind  the reader that reason for wanting spectrum ``as small as possible'' is because
    ``small'' spectrum usually manifests in good combinatorial properties; see \cite{Lubotzky14} (for graphs)
    and \cite{EvraGolLub14},
    \cite{ParRosTes13}, \cite{GolPar14}, \cite{Golubev13} (for simplicial complexes), for instance.

\medskip

    In this chapter, we prove a theorem in the spirit of the Alon-Boppana Theorem
    for $G$-quotients of an almost transitive $G$-complex $\calX$  (see~\ref{subsec:G-complex}).
    Our theorem applies to spectrum taken
    with respect to any semi-elementary functor $F$ (see~\ref{subsec:case-of-Alg-G-X}).
    In particular, it applies to the high dimensional spectra introduced at the end of \ref{subsec:spectrum-of-simp}.
    When $G=\nPGL{F}{d}$ and $\calX=\calB_d(F)$ as in Chapter~\ref{sec:ramanujan-complexes}
    and one considers the $0$-dimensional spectrum, our theorem is just Li's Theorem (Theorem~\ref{TH:Li-a} above).
    The proof relies on
    results from
    Chapter~\ref{sec:involutary-algebras}; the main novelty is treating  spectrum points
    corresponding to irreducible representations of dimension greater than $1$.

    With a notion of an optimal $F$-spectrum at hand,
    we proceed with identifying the \emph{trivial $F$-spectrum} of $G$-quotients of $\calX$.
    This  leads to  a notion of  $G$-quotients of $\calX$ which are Ramanujan with respect to $F$.
    Taking $F$ to be $\Omega_i^+$ (see~\ref{subsec:orientation}),
    we arrive at the notion $G$-quotients of $\calX$ which are  Ramanujan in dimension $i$.
    In
    case $\calX=\calT_k$ and $G=\Aut(\calT_k)$ as above, or $\calX=\calB_d(F)$ and $G=\nPGL{F}{d}$ as in Chapter~\ref{sec:ramanujan-complexes},
    \emph{Ramanujan in dimension $0$} coincides with
    the usual meaning of Ramanujan in the literature (see \cite{Lubotzky14}, \cite{LubSamVi05} for instance).

\subsection{Lower Bounds on The Spectrum}
\label{subsec:alon-boppana-theorem}

    Throughout,
    $\calX$ is an almost transitive $G$-complex (\ref{subsec:G-complex}) and
    $\catC=\catC(G,\calX)$ (Definition~\ref{DF:G-complex-category}).

    \begin{thm}\label{TH:Alon-Boppana-I}
        Let $F:\catC\to \catPHil$ be a semi-elementary functor
        (e.g.\ $\Omega_i^+$, $\Omega_i^-$, $\Omega_i^\pm$ or $\llFlag$; cf.~\ref{subsec:case-of-Alg-G-X}),
        and let $A$ be an algebra of $(\catC,F)$-operators
        (cf.~\ref{subsec:spectrum-of-simp}).
        Let  $\{\Gamma_\alpha\}_{\alpha\in I}$ be a family
        of subgroups of $G$ such that $\Gamma_\alpha\leq_{\calX} G$ for all $\alpha\in I$,
        and one of the following
        conditions, which are equivalent, is satisfied:
        \begin{enumerate}
        	\item[(1)] For every compact    $C\subseteq G$ with $1\in C$, there exist $\alpha\in I$
        	and $g\in G$ such that
        	such that $C\cap g^{-1}\Gamma_\alpha g=1$
        	\item[(2)] For every $n\in\N$,
        	there exists $\alpha\in I$ and $v\in\vrt{\calX}$
        	such that quotient map $\calX\to\Gamma_\alpha\leftmod\calX$
        	is injective on the combinatorial ball $\Ball_\calX(v,n)$ (see \ref{subsec:simp-complex}).
        \end{enumerate}
        Then
        \[
        \overline{\bigcup_{\alpha}\Spec_{A}(\Gamma_\alpha\leftmod \calX)}\supseteq\Spec_{A}(\calX)\ .
        \]
    \end{thm}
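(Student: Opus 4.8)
The plan is to deduce the statement from weak containment into a single, large unitary representation and then apply Theorem~\ref{TH:direct-sum-spectrum}. This detour is forced: for $[V]\in\Spec_A(\calX)$ one can only hope to approximate \emph{one} matrix coefficient of $V$ by a matrix coefficient of a quotient depending on $\veps$ and the finite test set, and for $\dim V>1$ this does not, by itself, place $[V]$ near $\Spec_A$ of any one quotient. First I would make the standard reductions: as $F$ is semi-elementary, pick $F'$ with $F\oplus F'$ elementary and view $A$ as an algebra of $(\catC,F\oplus F')$-operators (Remark~\ref{RM:summand-transition}), which changes none of the spectra involved; so assume $F=\llf\circ S$ is elementary and, by Proposition~\ref{PR:S-quotient-check}(i), that $S(\Gamma\leftmod\calX)=\Gamma\leftmod S\calX$ with the evident maps, so $F(\Gamma\leftmod\calX)=\llf(\Gamma\leftmod S\calX)$ and $Fp_\Gamma\colon\e_x\mapsto\e_{\Gamma x}$. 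Using condition (E2) of Definition~\ref{DF:elementary-functor} fix a $G$-equivariant $c\colon S\calX\to\dot{\calX}$ with $\Stab_G(x)\subseteq\Stab_G(c(x))$, which lets one measure combinatorial distance between elements of $S\calX$ through their underlying cells. Decomposing $S\calX$ into its finitely many $G$-orbits as in the proof of Theorem~\ref{TH:Adj-algebra-descrition}(ii) shows each $a\in A$ has finite propagation: there is $R(a)\in\N$ with $\supp(a_\calX\e_x)$ consisting of elements whose cell is within distance $R(a)$ of $c(x)$, for every $x$. Finally, by Theorem~\ref{TH:Adj-algebra-descrition}(ii), $\sup_\alpha\norm{a|_{F(\Gamma_\alpha\leftmod\calX)}}<\infty$ for each $a$, so $\calV:=\hat{\bigoplus}_{\alpha\in I}\overline{AF(\Gamma_\alpha\leftmod\calX)}\in\Rep[u]{A}$, and by Theorem~\ref{TH:direct-sum-spectrum} $\Spec_A(\calV)=\overline{\bigcup_\alpha\Spec_A(\Gamma_\alpha\leftmod\calX)}$. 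Hence it suffices to prove $V\wc\calV$ for every $[V]\in\Spec_A(\calX)$.

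Next I would localize and transport a matrix coefficient. Write $M=AF\calX$, so $\Spec_A(\calX)=\Spec_A(\overline M)$. Fix $[V]\in\Spec_A(\calX)$, $v\in\sphere{V}$, and let $\veps>0$, $F_0\fsubseteq A$ be given. By Lemma~\ref{LM:equiv-conds-of-weak-containment} there is $v'\in\overline M$ with $\norm{\vphi_{V,v}-\vphi_{\overline M,v'}}_{F_0}<\tfrac\veps2$; approximating $v'$ by a finite-support vector of $M$ (dense in $\overline M$) and using that the operators in $F_0$ are bounded on $\overline M$, I may take $v'\in M\subseteq\llf(S\calX)$ of finite support with $\norm{\vphi_{V,v}-\vphi_{\overline M,v'}}_{F_0}<\veps$. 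Fix $x_0\in\supp v'$, let $v_0$ be a vertex of $c(x_0)$, choose $D_0$ with $c(x)\in\Ball_\calX(v_0,D_0)$ for all $x\in\supp v'$, set $R=\max_{a\in F_0}R(a)$ and $n=D_0+R$. The set $C_n:=\{h\in G\suchthat\dist(v_0,hv_0)\le 2n\}$ is compact (a finite union of cosets of the compact open $\Stab_G(v_0)$, as $\Ball_\calX(v_0,2n)$ is finite) and contains $1$, so condition (1) gives $\alpha\in I$, $g\in G$ with $C_n\cap g^{-1}\Gamma_\alpha g=1$. Put $\Gamma=g^{-1}\Gamma_\alpha g$, so $\Gamma\leq_\calX G$ by Corollary~\ref{CR:Gamma-is-discrete} and $\dist(v_0,\gamma v_0)>2n$ for all $\gamma\in\Gamma\setminus\{1\}$. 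An elementary distance estimate, together with the freeness of the $\Gamma$-action on $\dot{\calX}$ (Proposition~\ref{PR:quotient-cover-sufficient-nec-conds}), then shows $p_\Gamma$ is injective on $\{x\in S\calX\suchthat c(x)\in\Ball_\calX(v_0,n)\}$. Set $v''=(Fp_\Gamma)v'\in AF(\Gamma\leftmod\calX)$ (it lands in $AF(\Gamma\leftmod\calX)$ since $Fp_\Gamma$ is an $A$-homomorphism and $v'\in AF\calX$). For $a\in F_0$, $a_\calX v'$ is supported on elements with cell in $\Ball_\calX(v_0,n)$, so formula~\eqref{EQ:Adj-action-on-quotients} and the injectivity of $p_\Gamma$ there yield $\vphi_{\overline{AF(\Gamma\leftmod\calX)},v''}(a)=\Trings{a_\calX v',v'}=\vphi_{\overline M,v'}(a)$.

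To conclude, note that the cover map $f:=p_{\Gamma_\alpha}\circ g\colon\calX\to\Gamma_\alpha\leftmod\calX$ has deck group $g^{-1}\Gamma_\alpha g=\Gamma$, so (cf.~\ref{subsec:quotienst-of-simp-comps}) there is a simplicial isomorphism $s\colon\Gamma\leftmod\calX\to\Gamma_\alpha\leftmod\calX$ with $s\circ p_\Gamma=f$. Although $s$ need not lie in $\Mor(\catC)$, a short computation using $g\in\End_{\catC}(\calX)$, naturality of $a\in A$ on $p_{\Gamma_\alpha}$ and $p_\Gamma$, and surjectivity of $Fp_\Gamma$, gives $a_{\Gamma_\alpha\leftmod\calX}\circ Fs=Fs\circ a_{\Gamma\leftmod\calX}$ for all $a\in A$; hence $Fs$ is a unitary isomorphism of pre-unitary $A$-representations from $F(\Gamma\leftmod\calX)$ to $F(\Gamma_\alpha\leftmod\calX)$, restricting to $\overline{AF(\Gamma\leftmod\calX)}\cong\overline{AF(\Gamma_\alpha\leftmod\calX)}$. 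Let $v'''\in AF(\Gamma_\alpha\leftmod\calX)$ be the image of $v''$, viewed inside the $\alpha$-th summand of $\calV$; then $\vphi_{\calV,v'''}(a)=\vphi_{\overline{AF(\Gamma\leftmod\calX)},v''}(a)=\vphi_{\overline M,v'}(a)$ for $a\in F_0$, so $\norm{\vphi_{V,v}-\vphi_{\calV,v'''}}_{F_0}<\veps$. Since $v\in\sphere{V}$, $\veps>0$ and $F_0\fsubseteq A$ were arbitrary, $V\wc\calV$ by Lemma~\ref{LM:equiv-conds-of-weak-containment}, whence $[V]\in\Spec_A(\calV)=\overline{\bigcup_\alpha\Spec_A(\Gamma_\alpha\leftmod\calX)}$. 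The equivalence of (1) and (2) is elementary: one translates between a compact $C\ni1$ and a radius $n$ using the finite sets $\{h\suchthat\dist(v_0,hv_0)\le m\}$ and the identities $\dist(v_0,g^{-1}\gamma gv_0)=\dist(gv_0,\gamma gv_0)$, exactly as above.

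The main obstacle is the one noted at the outset: weak containment of $V$ into a single quotient is not available, and the convex-geometric description of $E(A;\,\cdot\,)$ (Theorem~\ref{TH:convexity}) would only exhibit $\vphi_{V,v}$ as a limit of convex combinations of pure states of the quotients, which for $\dim V>1$ is useless for proximity of $[V]$ to any single $\Spec_A(\Gamma_\alpha\leftmod\calX)$. Folding all quotients into $\calV$ and invoking Theorem~\ref{TH:direct-sum-spectrum} (the ``fundamental fact with an involved proof'' from Chapter~\ref{sec:involutary-algebras}) is what repairs this; in the one-dimensional, commutative case $\calV$ is unnecessary and one recovers Li's argument verbatim. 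The remaining difficulties are bookkeeping: matching the propagation radius $R(a)$ against combinatorial ball radii and against the gap between $S\calX$ and $\dot{\calX}$, and verifying that $Fs$ is $A$-linear even though $s\notin\Mor(\catC)$.
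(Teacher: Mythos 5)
Your proof is correct and follows essentially the same route as the paper's: reduce to an elementary $F$ via Remark~\ref{RM:summand-transition}, apply Theorem~\ref{TH:direct-sum-spectrum} to reduce the claim to showing $V\wc\hat{\bigoplus}_\alpha\overline{AF(\Gamma_\alpha\leftmod\calX)}$, then approximate the matrix coefficient $\vphi_{V,v}$ by pushing a finite-support vector $\psi\in AF\calX$ through a cover map that is injective on the union of supports of $\psi$ and $a\psi$ ($a$ in the finite test set), which forces the relevant inner products to agree.

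The two places where you diverge from the paper are worth noting, as each trades generality for explicitness. First, to produce a compact set to feed into condition~(1), you use the auxiliary $G$-equivariant map $c\colon S\calX\to\dot{\calX}$ from (E2), the propagation radius $R(a)$, and combinatorial ball radii; the paper instead constructs directly the set $C=\bigcup_{x,y\in T}\{g\in G \suchthat gx=y\}$ for the finite support set $T$ (Lemma~\ref{LM:injective-image}), which is compact simply because stabilizers in $S\calX$ are compact, with no distance bookkeeping needed. Your version is closer to Li's original argument for $\calB_d(F)$ and makes the geometry visible, but it hides a tacit assumption that $a_\calX$ has finite propagation uniformly over each $G$-orbit --- true (it follows from $G$-equivariance and $G\leftmod S\calX$ finite), but one more thing to verify. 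Second, condition~(1) delivers a pair $(\alpha,g)$, and you absorb the $g$ by passing to the conjugate $\Gamma=g^{-1}\Gamma_\alpha g$ and then transporting back to $\Gamma_\alpha$ via a simplicial isomorphism $s$, checking (correctly, via the argument of Proposition~\ref{PR:independence-in-cover-map}) that $Fs$ is $A$-linear even though $s\notin\Mor(\catC)$. The paper's main proof instead keeps $\Gamma_\alpha$ fixed and should be read as absorbing $g$ on the other side, replacing $\psi$ by $g\psi$ (valid since $g\in\End_{\catC}(\calX)$, $Fg$ is unitary, and $a_\calX$ commutes with $Fg$ by naturality, so $\supp$, matrix coefficients, and $T$ are all carried along by $g$); your detour through $Fs$ is correct but not needed. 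Your high-level remark about why the detour through $\hat{\bigoplus}$ and Theorem~\ref{TH:direct-sum-spectrum} is unavoidable for $\dim V>1$ matches the paper's motivation exactly.
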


    We first prove the following lemma.

    \begin{lem}\label{LM:injective-image}
        Suppose $\{\Gamma_\alpha\}_{\alpha\in I}$ satisfies
        condition (1) of  Theorem~\ref{TH:Alon-Boppana-I},
        and let $S:\catC\to \catSet$ be a functor satisfying conditions (E1)--(E4) of
        Definition~\ref{DF:elementary-functor}.
        Then for any finite subset $T\subseteq S\calX$, there exist $\alpha\in I$
        and $g\in G$
        such that $Sp_{\Gamma_\alpha}:S\calX\to S(\Gamma_\alpha \leftmod\calX)$ is injective on $gT$.
    \end{lem}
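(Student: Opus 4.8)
The plan is to reduce the statement to a purely group‑theoretic assertion about the $\Gamma_\alpha$ and then quote hypothesis (1) of Theorem~\ref{TH:Alon-Boppana-I} essentially verbatim. First I would invoke Proposition~\ref{PR:S-quotient-check}(i) (equivalently, condition (E3) of Definition~\ref{DF:elementary-functor}) to identify, for every $\Gamma_\alpha\leq_{\calX}G$, the set $S(\Gamma_\alpha\leftmod\calX)$ with $\Gamma_\alpha\leftmod S\calX$ in such a way that $Sp_{\Gamma_\alpha}$ becomes the quotient map $x\mapsto\Gamma_\alpha x:S\calX\to\Gamma_\alpha\leftmod S\calX$. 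Under this identification, for $x,y\in S\calX$ one has $Sp_{\Gamma_\alpha}(x)=Sp_{\Gamma_\alpha}(y)$ precisely when $\gamma x=y$ for some $\gamma\in\Gamma_\alpha$. So, writing $T=\{t_1,\dots,t_m\}$ with the $t_r$ pairwise distinct, it is enough to produce $\alpha\in I$ and $g\in G$ such that for no $i\neq j$ is there $\gamma\in\Gamma_\alpha$ with $\gamma g t_i=g t_j$; the desired set is then $gT=\{gt_1,\dots,gt_m\}$, whose elements are distinct because $g$ acts on $S\calX$ (via $S$) by a bijection.

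Next I would record the ``bad'' group elements in a compact set. For $i\neq j$ put $C_{ij}=\{h\in G: h t_i=t_j\}$. If $C_{ij}=\emptyset$ it is trivially compact; otherwise, fixing $h_0\in C_{ij}$, one checks $C_{ij}=h_0\,\Stab_G(t_i)$, which is compact because $\Stab_G(t_i)$ is compact by condition (E2). Since the $t_r$ are pairwise distinct, $1_G\notin C_{ij}$ whenever $i\neq j$. Set $C=\{1_G\}\cup\bigcup_{i\neq j}C_{ij}$: a compact subset of $G$ containing $1_G$.

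Finally I would apply hypothesis (1) of Theorem~\ref{TH:Alon-Boppana-I} to this $C$: there exist $\alpha\in I$ and $g\in G$ with $C\cap g^{-1}\Gamma_\alpha g=\{1_G\}$. Because $1_G\notin C_{ij}$ for $i\neq j$, this forces $g^{-1}\Gamma_\alpha g\cap C_{ij}=\emptyset$ for all $i\neq j$. Now if some $\gamma\in\Gamma_\alpha$ satisfied $\gamma g t_i=g t_j$ with $i\neq j$, then $(g^{-1}\gamma g)t_i=t_j$, i.e.\ $g^{-1}\gamma g\in C_{ij}\cap g^{-1}\Gamma_\alpha g=\emptyset$, a contradiction; hence $Sp_{\Gamma_\alpha}$ is injective on $gT$, as required. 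This lemma is comparatively soft — the real work in Chapter~\ref{sec:optimal-spectrum} lies in the main Alon--Boppana-type argument, not here — so no genuine obstacle arises; the only points that need a little care are the bookkeeping in the first paragraph, namely that the identification coming from (E3) really turns injectivity of $Sp_{\Gamma_\alpha}$ on $gT$ into the non-existence of such a $\gamma$, and the compactness of each $C_{ij}$, which is exactly where condition (E2) is used.
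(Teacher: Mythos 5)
Your proof is correct and takes essentially the same approach as the paper: form the compact set of group elements carrying one point of $T$ to another, note it contains $1_G$ and is compact via (E2), invoke hypothesis (1), and conclude. The only cosmetic difference is that you separate off the diagonal cases $i=j$ and adjoin $\{1_G\}$ by hand, whereas the paper lets $C=\bigcup_{x,y\in T}\{g:gx=y\}$ include the stabilizers and thus contain $1_G$ automatically.
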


    \begin{proof}
        We may assume $T\neq\emptyset$.
        Let $C=\bigcup_{x,y\in T}\{g\in G\suchthat gx=y\}$. Since $T$ is finite and stabilizers
        of elements in $S\calX$ are compact, $C$ is compact, and $1\in C$ since $T\neq\emptyset$.
        Therefore, there is $\alpha\in I$
        and $g\in G$ such that $g^{-1}\Gamma_\alpha g \cap C=1$. Now, if $x,y\in T$ and $\gamma\in\Gamma_\alpha$
        satisfy $\gamma gx=gy$,
        then $g^{-1}\gamma g\in g^{-1}\Gamma_\alpha g\cap C=1$, so $gx=gy$. This proves that the quotient map
        $S\calX\to \Gamma_\alpha\leftmod S\calX$
        is injective on $gT$.
    \end{proof}

    \begin{proof}[Proof of Theorem~\ref{TH:Alon-Boppana-I}]
		 We first prove the equivalence of (1) and (2).
		 If (1) holds, then (2) follows by applying Lemma~\ref{LM:injective-image}
		 with $S:\catSimp\to \catSet$ being the forgetful functor
		 and $T=\Ball_\calX(v,n)$.
		 Suppose now that (2) holds, and let $v_1,\dots,v_t$ be representatives
		 for $G\leftmod\vrt{\calX}$ (which is finite since $\calX$ is almost $G$-transitive).
		 Let $C\subseteq G$ be  compact with $1\in C$. Since $G$ acts continuously on $\vrt{\calX}$
		 (when given the discrete topology), $Cv_i$ is finite for all $i$. Choose
		 $n\in\N$ such that $Cv_i\subseteq \Ball_\calX(v_i,n)$ for all $1\leq i\leq t$.
		 There exists $\alpha\in I$ and $u\in\vrt{\calX}$
		 such that
		 the quotient map $\calX\to\Gamma_\alpha\leftmod\calX$ is injective on $\Ball_\calX(u,n)$.
		 By construction, there is $g\in G$ and $1\leq i\leq t$ such that
		 $u=gv_i$. Now, if $h\in g^{-1}\Gamma_\alpha g\cap C$, then $\Gamma_\alpha gv_i=
		 \Gamma_\alpha ghv_i$ and $gv_i,ghv_i\in g\Ball_\calX(v_i,n)=\Ball_\calX(u,n)$,
		 hence $gv_i=ghv_i$. It follows that $h\in\Stab_G(v_i)\cap g^{-1}\Gamma_\alpha g$,
		 so $h=1$ by Proposition~\ref{PR:quotient-cover-sufficient-nec-conds}
		 and Corollary~\ref{CR:Gamma-is-discrete}.

		We now prove the main statement. Let $F':\catC\to \catPHil$ be a functor
        such that $F\oplus F'$ is elementary. By Remark~\ref{RM:summand-transition}, we may view
        $A$ as an algebra of $(F\oplus F',\catC)$-operators. We may therefore assume
        that $F$ is elementary, and write
        $F=\llf\circ S$
        where $S$ satisfies conditions (E1)--(E4) in Definition~\ref{DF:elementary-functor}.

        Let $[V]\in\Spec_{A}(\calX)$. We need to show that
        $[V]$ belongs to  $\overline{\bigcup_{\alpha}\Spec_{A}(\Gamma_\alpha\leftmod \calX)}$.
        Theorem~\ref{TH:Adj-algebra-descrition}(ii) implies that $A$ acts continuously
        on $\bigoplus_{\alpha\in I}F(\Gamma_\alpha\leftmod\calX)$. Therefore,
        by Theorem~\ref{TH:direct-sum-spectrum},
        \[
        \Spec_{A}\Big(\bigoplus_{\alpha\in I}AF(\Gamma_\alpha\leftmod \calX)\Big)
        =\overline{\bigcup_{\alpha}\Spec_{A}(AF(\Gamma_\alpha\leftmod\calX))}\ ,
        \]
        so we only need to show that $V\wc \bigoplus_{\alpha\in I}AF(\Gamma_\alpha\leftmod \calX)$.
        Let $v\in\sphere{V}$, $\veps>0$ and $Y\fsubseteq A$.
        Then there is $\psi\in AF\calX=A\llf(S\calX)$
        such that $\abs{\Trings{av,v}-\Trings{a\psi,\psi}}<\veps$ for all $a\in Y$.
        Let $T=\big(\bigcup_{a\in Y}\supp(a\psi)\big)\cup\supp(\psi)$.
        By Lemma~\ref{LM:injective-image}, there is $\alpha\in I$
        such that $Sp_{\Gamma_\alpha}|_{T}$ is injective.
        Write $p=p_{\Gamma_\alpha}$.
        Then $\Trings{(Fp)(a\psi),(Fp)\psi}=\Trings{a\psi,\psi}$ for all $a\in Y$.
        Furthermore, since any $a\in Y$ is a natural transformation from $F$ to
        itself, $(Fp)(a\psi)=a((Fp)\psi)$ and $(Fp)\psi\in AF(\Gamma_\alpha\leftmod\calX)$.
        Therefore,
        $\abs{\Trings{av_j,v_j}-\Trings{a((Fp)\psi),(Fp)\psi}}=\abs{\Trings{av,v}-\Trings{a\psi,\psi}}<\veps$
        for all $a\in Y$. Viewing $(Fp)\psi$ as an element of $\bigoplus_\alpha AF(\Gamma_\alpha\leftmod\calX)$,
        we see that
        $V\wc \bigoplus_\alpha F(\Gamma_\alpha\leftmod\calX)$, as required.
    \end{proof}

    \begin{example}
        Let be $\Gamma_1\supseteq \Gamma_2\supseteq\Gamma_3\supseteq\dots$ be subgroups of $G$ satisfying
        $\bigcap_n\Gamma_n=1$ and $\Gamma_1\leq_{\calX} G$. Then Theorem~\ref{TH:Alon-Boppana-I} applies
        to $\{\Gamma_n\}_{n\in\N}$. Indeed, by Corollary~\ref{CR:Gamma-is-discrete},
        $\Gamma_n\leq_{\calX}G$ for all $n\in\N$,
        and $\Gamma_1$ is discrete by Proposition~\ref{PR:quotient-cover-sufficient-nec-conds}.
        We now show condition (1): Since $\Gamma_1$ is discrete
        and $G$  is an $\ell$-group, there is
        $K\co G$ such that $K\cap \Gamma_1=1$.
        Let $C\subseteq G$ be a compact  set with $1_G\in C$.
        We may assume $C\subseteq K\Gamma_1$
        without loss of generality. Since $K\cap \Gamma_1=1$, every $c\in C$ can be written
        as $k\gamma$ for unique $k\in K$ and $\gamma\in \Gamma$. In addition, the compactness of $C$ implies that
        there are $\gamma_1,\dots,\gamma_r\in\Gamma-\{1\}$ such that
        $C-K\subseteq \gamma_1K\cup\dots\cup \gamma_rK$.
        Choosing $n$ such that $\gamma_1,\dots,\gamma_r\notin\Gamma_n$, we get $\Gamma_n\cap C\subseteq
        \Gamma_1\cap K=1$.
    \end{example}

    \begin{cor}\label{CR:Alon-Boppana-comm}
        Let $G$, $\calX$, $\catC$, $F$ and $\{\Gamma_\alpha\}_{\alpha\in I}$ be as
        in Theorem~\ref{TH:Alon-Boppana-I},
        and suppose $a_1,\dots,a_t\in\Alg{\catC,F}{}$ satisfy $a_ia_j=a_ja_i$ and
        $a_ia_j^*=a_j^*a_i$ for all $i,j$.
        Write $V_\alpha=F(\Gamma_\alpha\leftmod \calX)$ and $V=F\calX$. Then
        \[
        \overline{\bigcup_{\alpha}\Spec(a_1|_{V_\alpha},\dots,a_t|_{V_\alpha})}\supseteq\Spec(a_1|_{V},\dots,a_t|_{V})\ .
        \]
    \end{cor}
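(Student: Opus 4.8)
The plan is to deduce this from Theorem~\ref{TH:Alon-Boppana-I} by choosing the correct algebra $A$ and then transporting the conclusion through the topological embedding of Proposition~\ref{PR:unitary-dual-topology}.

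First I would let $A$ be the $\C$-subalgebra of $\Alg{\catC,F}{}$ generated by $a_1,\dots,a_t,a_1^*,\dots,a_t^*$ together with the identity transformation $\id\colon F\to F$. Since $\Alg{\catC,F}{}$ is unital with unit $\id$, the subalgebra $A$ is unital, hence idempotented, and it is closed under $*$ by construction, so it is an idempotented $*$-subalgebra of $\Alg{\catC,F}{}$, i.e.\ an algebra of $(\catC,F)$-operators. The hypotheses $a_ia_j=a_ja_i$ and $a_ia_j^*=a_j^*a_i$, together with $a_i^*a_j^*=(a_ja_i)^*=(a_ia_j)^*=a_j^*a_i^*$, show that the generators of $A$ pairwise commute, so $A$ is commutative. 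Since $F$ is semi-elementary, $\Alg{\catC,F}{}$ (hence $A$) acts uniformly continuously on $\catC$ by Theorem~\ref{TH:Adj-algebra-descrition}(ii), so $\Spec_A(X)$ is defined for every $X\in\catC$; and as $A$ is unital we have $AFX=FX$, whence $\Spec_A(\calX)=\Spec_A(F\calX)$ and $\Spec_A(\Gamma_\alpha\leftmod\calX)=\Spec_A(V_\alpha)$.

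Next I would apply Theorem~\ref{TH:Alon-Boppana-I} with this $A$: the family $\{\Gamma_\alpha\}_{\alpha\in I}$ satisfies the standing hypothesis, so
\[
\overline{\bigcup_{\alpha}\Spec_A(\Gamma_\alpha\leftmod\calX)}\supseteq\Spec_A(\calX),
\]
the closure being taken in $\udual{A}$. Then I would invoke Proposition~\ref{PR:unitary-dual-topology}, which applies because $A$ is unital, commutative, and generated as a unital algebra by $a_1,\dots,a_t,a_1^*,\dots,a_t^*$: the map $\theta\colon[U]\mapsto\lambda_U$ is a topological embedding of $\udual{A}$ into $\C^t$, where $\lambda_U$ is the common eigenvalue of $(a_1,\dots,a_t)$ on the one-dimensional space $U$ (Corollary~\ref{CR:commutative-algs}), and for every (pre-)unitary $A$-module $W$ one has $\Spec(a_1|_W,\dots,a_t|_W)=\{\lambda_U\mid[U]\in\Spec_A(W)\}=\theta(\Spec_A(W))$ — the same identity also follows from Corollary~\ref{CR:subalgebra-spectrum-II}. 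Applying this with $W=F\calX$ and with $W=V_\alpha$ converts the displayed inclusion into a statement about subsets of $\C^t$.

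Finally, because $\theta$ is a topological embedding we have $\theta(\overline{S}^{\udual{A}})\subseteq\overline{\theta(S)}^{\C^t}$ for any $S\subseteq\udual{A}$, and only this one-sided inclusion is needed, so it does not matter whether $\theta(\udual{A})$ is closed in $\C^t$. Hence
\begin{align*}
\Spec(a_1|_V,\dots,a_t|_V)&=\theta(\Spec_A(\calX))\subseteq\theta\Bigl(\overline{{\textstyle\bigcup_\alpha}\Spec_A(\Gamma_\alpha\leftmod\calX)}\Bigr)\\
&\subseteq\overline{{\textstyle\bigcup_\alpha}\theta(\Spec_A(\Gamma_\alpha\leftmod\calX))}=\overline{{\textstyle\bigcup_\alpha}\Spec(a_1|_{V_\alpha},\dots,a_t|_{V_\alpha})},
\end{align*}
which is the claim. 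I expect no serious obstacle here, since the real content lies in Theorem~\ref{TH:Alon-Boppana-I}; the only points that need mild care are checking that $A$ is a genuine commutative idempotented $*$-subalgebra, the harmless passage between a pre-unitary module and its completion when quoting the spectral dictionary, and keeping straight which closures are computed in $\udual{A}$ and which in $\C^t$.
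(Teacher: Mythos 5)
Your proposal is correct and reconstructs essentially the same argument that the paper gives (the paper's proof is a two-line pointer to Proposition~\ref{PR:unitary-dual-topology} and Theorem~\ref{TH:Alon-Boppana-I}, which you have expanded faithfully: build the commutative unital $*$-subalgebra $A$, apply the Alon--Boppana theorem to $A$, then transport through the topological embedding $\theta$ of the unitary dual into $\C^t$). Your side remarks — that the passage from $F\calX$ to $\overline{F\calX}$ is harmless because the common spectrum of bounded operators is unchanged under completion, and that only the one-sided inclusion $\theta(\overline{S})\subseteq\overline{\theta(S)}$ (which holds for any continuous map) is needed — are exactly the small points the paper leaves implicit.
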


    \begin{proof}
        Let $A$ be the  unital $*$-subalgebra of $\Alg{\catC,F}{}$ generated
        by $a_1,\dots,a_t$. Then $A$ is commutative.
        The corollary therefore follows from
        Proposition~\ref{PR:unitary-dual-topology} and Theorem~\ref{TH:Alon-Boppana-I}.
    \end{proof}

    When $\calX=\calB_d(F)$, $G=\nPGL{F}{d}$ and $F=\Omega_0^+$,
    Corollary~\ref{CR:Alon-Boppana-comm} is a result of Li \cite[Th.~4.1]{Li04} (quoted as Theorem~\ref{TH:Li-a} above);
    this follows from Example~\ref{EX:zero-dim-spec-of-Bd}.

\medskip

	The argument in the proof of Theorem~\ref{TH:Alon-Boppana-I} can be applied in a  broader setting.
	For example, one can similarly prove:

    \begin{thm}\label{TH:Alon-Boppana-II}
        Let $\catC\subseteq \catSimp$ be a subcategory,
        let $F\in\{\Omega_i^+,\Omega_i^-,\Omega_i^\pm,\llFlag\where i\in\N\cup\{0\}\}$,
        and let $A$ be an algebra of $(\catC,F)$-operators.
        Let $\{X_\alpha\}_{\alpha\in I}\subseteq\catC$, $\calX\in\catC$ and
        assume that:
        \begin{enumerate}
        \item[(1)] $A$ acts continuously on $F\calX$ and $\bigoplus_\alpha FX_\alpha$.
        \item[(2)] For any finite set $T\subseteq\vrt{\calX}$, there exists
        $\alpha\in I$ and a morphism $p:\calX\to X_\alpha$ in $\catC$ such that $p|_{T}$ is injective.
        \end{enumerate}
        Then
        $
        \overline{\bigcup_\alpha\Spec_{A}(X_\alpha)}\supseteq\Spec_{A}(\calX)
        $.
    \end{thm}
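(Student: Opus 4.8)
The plan is to follow the proof of Theorem~\ref{TH:Alon-Boppana-I} almost verbatim, replacing the role of Lemma~\ref{LM:injective-image} (which extracted injectivity on finite subsets of $S\calX$ from the group-theoretic hypothesis) by hypothesis~(2), together with the elementary fact that each functor in the list $\Omega_i^+,\Omega_i^-,\Omega_i^\pm,\llFlag$ is built from a set-valued functor whose elements are determined by their underlying vertices. The first step is to reduce to the case $F=\llf\circ S$ for a functor $S:\catSimp\to\catSet$. For $F\in\{\Omega_i^+,\Omega_i^\pm,\llFlag\}$ and for $F=\Omega_0^-$ this holds on the nose, with $S$ equal to $X\mapsto X^{(i)}$, $X\mapsto\ori{X^{(i)}}$, $X\mapsto\Flag(X)$ respectively (recall that any morphism of simplicial complexes is injective on each cell, so $Sp$ is well defined on oriented cells and on flags). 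For $F=\Omega_i^-$ with $i>0$ I would invoke $\Omega_i^\pm=\Omega_i^+\oplus\Omega_i^-$: by Remark~\ref{RM:summand-transition} one may regard $A$ as an algebra of $(\catC,\Omega_i^\pm)$-operators via $a\mapsto a\oplus 0$, and this changes neither $\Spec_A(X_\alpha)$ nor $\Spec_A(\calX)$, nor the continuity hypothesis~(1), so we are back to the elementary case. Throughout, for a morphism $p$ in $\catC$ I write $Fp=(Sp)_*$ for the induced covariant map on finitely supported functions.

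For the main argument, let $[V]\in\Spec_A(\calX)$, that is, $V\in\Irr[u]{A}$ with $V\wc\quo{AF\calX}$. Hypothesis~(1) says $A$ acts continuously on $\bigoplus_\alpha FX_\alpha$, hence $\sup_\alpha\norm{a|_{FX_\alpha}}<\infty$ for every $a\in A$, so $\hat{\bigoplus}_\alpha\quo{AFX_\alpha}\in\Rep[u]{A}$; by Theorem~\ref{TH:direct-sum-spectrum} its $A$-spectrum is $\overline{\bigcup_\alpha\Spec_A(X_\alpha)}$. Thus it suffices to prove $V\wc\bigoplus_\alpha AFX_\alpha$. Fix $v\in\sphere{V}$ (one $v$ suffices, by condition (b) of Lemma~\ref{LM:equiv-conds-of-weak-containment}), $\veps>0$ and $Y\fsubseteq A$. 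Using $V\wc\quo{AF\calX}$, pick $\psi\in AF\calX=A\llf(S\calX)$ with $\abs{\langle av,v\rangle-\langle a\psi,\psi\rangle}<\veps$ for all $a\in Y$. Put $T_0=\supp\psi\cup\bigcup_{a\in Y}\supp(a\psi)\subseteq S\calX$, a finite set, and let $T\subseteq\vrt{\calX}$ be the finite set of vertices occurring in the elements of $T_0$. By~(2) there are $\alpha\in I$ and a morphism $p:\calX\to X_\alpha$ in $\catC$ with $p|_T$ injective. Since an element of $S\calX$ is determined by its vertex set together with orientation/flag data that $p$ transports faithfully (because $p$ is injective on cells), $Sp$ is injective on $T_0$; consequently $Fp=(Sp)_*$ preserves inner products of functions supported on $T_0$. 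Set $\psi':=(Fp)\psi$. For each $a\in Y$, naturality of $a$ along $p$ gives $a\psi'=a_{X_\alpha}((Fp)\psi)=(Fp)(a_{\calX}\psi)$, so $\langle a\psi',\psi'\rangle=\langle(Fp)(a\psi),(Fp)\psi\rangle=\langle a\psi,\psi\rangle$; moreover $\psi'\in(Fp)(AF\calX)\subseteq AFX_\alpha$. Viewing $\psi'$ inside $\bigoplus_\alpha AFX_\alpha$, we obtain $\abs{\langle av,v\rangle-\langle a\psi',\psi'\rangle}<\veps$ for all $a\in Y$. As $v,\veps,Y$ were arbitrary, Lemma~\ref{LM:equiv-conds-of-weak-containment} yields $V\wc\bigoplus_\alpha AFX_\alpha$, and the theorem follows.

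I do not expect a genuinely hard step here: the content is the same as in Theorem~\ref{TH:Alon-Boppana-I}. The point needing the most care is the passage from the vertex-level injectivity furnished by~(2) to injectivity of $Sp$ on the finite subset $T_0\subseteq S\calX$, which has to be checked case by case for $S$ equal to $(-)^{(i)}$, $\ori{(-)^{(i)}}$ and $\Flag(-)$ — each case resting on the observation that a simplicial morphism is injective on cells — together with the routine verification that $(Sp)_*$ preserves the inner product of two functions supported on a set on which $Sp$ is injective. A secondary bookkeeping point is to confirm that the summand reduction for $\Omega_i^-$ with $i>0$ is harmless, i.e.\ that replacing $A$ by $A\oplus 0\subseteq\Alg{\catC,\Omega_i^\pm}{}$ leaves $\Spec_A(X_\alpha)$ and $\Spec_A(\calX)$ unchanged, which is exactly Remark~\ref{RM:summand-transition}.
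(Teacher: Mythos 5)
Your proposal is correct and matches the route the paper intends: the text only remarks that ``the argument in the proof of Theorem~\ref{TH:Alon-Boppana-I} can be applied in a broader setting'' without supplying details, and your argument is exactly that adaptation, with Lemma~\ref{LM:injective-image} replaced by hypothesis~(2) together with the observation that vertex-level injectivity propagates to injectivity of $Sp$ on cells, oriented cells, and flags. The observation itself is right: by the paper's definition, a morphism $f\colon X\to Y$ in $\catSimp$ sends each $i$-cell bijectively onto an $i$-cell, so once $p$ is injective on the finite vertex set $T$ underlying $T_0$, two elements of $T_0$ with the same image under $Sp$ have the same underlying vertex set and hence coincide; the inner-product preservation on functions supported on $T_0$ then follows because $(Sp)_*$ acts there by relabeling. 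The summand reduction for $\Omega_i^-$ ($i>0$) via Remark~\ref{RM:summand-transition} is the same device used in the proof of Theorem~\ref{TH:Alon-Boppana-I}, and the invocation of hypothesis~(1) to apply Theorem~\ref{TH:direct-sum-spectrum} replaces the role that Theorem~\ref{TH:Adj-algebra-descrition}(ii) plays when $\catC=\catC(G,\calX)$. One cosmetic point: for $F=\Omega_0^-$ the unitary isomorphism to $\llf\circ S$ is $\vphi\mapsto\vphi/\sqrt2$ rather than the identity, owing to the factor of $2$ in \eqref{EQ:inner-prod-on-Omega-minus} applied at $i=0$; this has no bearing on the argument since all spectral notions are scale-invariant.
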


\subsection{Trivial Spectrum}
\label{subsec:trivial-spec}

    Let $\calX$ be an almost transitive $G$-complex, let
    $\catC=\catC(G,\calX)$, let $F:\catC\to \catPHil$ be semi-elementary (Definition~\ref{DF:elementary-functor}),
    and let $A$ be an algebra of $(\catC,F)$-operators.
    It of interest
    to construct an infinite family of non-isomorphic $G$-quotients $\{\Gamma_n\leftmod\calX\}_{n\in\N}$
    such that $\quo{\bigcup_n\Spec_A(\Gamma_n\leftmod\calX)}$ is as small as possible.
    Theorem~\ref{TH:Alon-Boppana-I} implies that under mild assumptions, this set  must contain at least $\Spec_A(\calX)$.
    However, if we consider  finite $G$-quotients, then
    this is not the only limitation.  The $A$-spectrum of
    a finite $G$-quotient must contain at least some points
    from a family called the \emph{trivial $A$-spectrum},
    which we now describe.

    %Throughout, we use Proposition~\ref{PR:S-quotient-check}(ii) to identify $F(\Gamma\leftmod\calX)$ with
    %$(F\calX)_\Gamma$ (as vector spaces) for all $\Gamma\leq_{\calX}G$.

\medskip

    Let $N\leq G$ be a finite-index open subgroup, and let $\Gamma\leq_{\calX}G$
    be such that $\Gamma\leq N$.
    Then $(F\calX)(N)=\{\vphi-g\vphi\where \vphi\in F\calX,\,g\in N\}$
    is an $A$-submodule of $F\calX$, and hence $(F\calX)_N=F\calX/(F\calX)(N)$
    has a
    structure
    of a left $A$-module.% (not necessarily unital).

    \begin{lem}\label{LM:coinvariants-module-structure}
        There is an inner product on $A(F\calX)_N=(AF\calX)_N$ such that $A(F\calX)_N$ becomes
        a unitary representation of $A$. The unitary isomorphism class of $A(F\calX)_N$
        is independent of the inner product.
    \end{lem}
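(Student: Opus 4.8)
The plan is to exhibit a concrete unitary structure and then invoke Schur-type rigidity for uniqueness. Before anything else I would settle the identification $A(F\calX)_N=(AF\calX)_N$ asserted in the statement. Let $q\colon F\calX\to(F\calX)_N$ be the quotient map; it is $A$-equivariant since $(F\calX)(N)$ is an $A$-submodule, and it is surjective, so $q(AF\calX)=A\,q(F\calX)=A(F\calX)_N$. The subspace $AF\calX$ is stable under the $G$-action on $F\calX$ (the $A$-action commutes with the action of $G=\End_{\catC}(\calX)$ by naturality of $(\catC,F)$-operators), so $(AF\calX)(N)$ makes sense, and I claim $(AF\calX)(N)=AF\calX\cap(F\calX)(N)$. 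Indeed, given $\psi\in AF\calX$ with $\psi=\sum_j(\vphi_j-g_j\vphi_j)$, $g_j\in N$, choose $e\in\ids{A}$ with $e\psi=\psi$ (possible as $AF\calX=\sm{(F\calX)}$ is smooth); then, since $e$ commutes with each $g_j$,
$\psi=e\psi=\sum_j\big((e\vphi_j)-g_j(e\vphi_j)\big)\in(AF\calX)(N)$. Consequently $q$ restricted to $AF\calX$ induces $(AF\calX)_N=AF\calX/(AF\calX)(N)\xrightarrow{\ \sim\ }q(AF\calX)=A(F\calX)_N$, which is the asserted identification.

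Next I would note that $M:=A(F\calX)_N$ is finite-dimensional. Choosing $F'$ with $F\oplus F'$ elementary, $F\oplus F'=\llf\circ S$, one has $(F\calX)_N\oplus(F'\calX)_N\cong\llf(S\calX)_N\cong\llf(N\leftmod S\calX)$, and $N\leftmod S\calX$ is finite because $[G:N]<\infty$ and $G\leftmod S\calX$ is finite by condition (E4) of Definition~\ref{DF:elementary-functor}; hence $\dim_{\C}M<\infty$.

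To construct the inner product I would use Proposition~\ref{PR:S-quotient-check}(ii): the natural map $\vphi+(F\calX)(\Gamma)\mapsto(Fp_\Gamma)\vphi$ is an isomorphism of $A$-modules $(F\calX)_\Gamma\xrightarrow{\sim}F(\Gamma\leftmod\calX)$. Since $\Gamma\leftmod\calX$ is finite in the present situation (we are describing the trivial spectrum of finite $G$-quotients), $F(\Gamma\leftmod\calX)$ is a finite-dimensional pre-Hilbert space on which the $A$-action satisfies $\Trings{a\vphi,\psi}=\Trings{\vphi,a^*\psi}$. Because $\Gamma\le N$ we have $(F\calX)(\Gamma)\subseteq(F\calX)(N)$, so $(F\calX)_N$ is a quotient $A$-module of $(F\calX)_\Gamma\cong F(\Gamma\leftmod\calX)$; write $L\subseteq F(\Gamma\leftmod\calX)$ for the kernel, an $A$-submodule. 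In the finite-dimensional Hilbert space $F(\Gamma\leftmod\calX)$ the orthogonal complement $L^\perp$ is again an $A$-submodule (using condition (U1) in $F(\Gamma\leftmod\calX)$), and the composite $L^\perp\hookrightarrow F(\Gamma\leftmod\calX)\twoheadrightarrow F(\Gamma\leftmod\calX)/L\cong(F\calX)_N$ is an isomorphism of $A$-modules. Transporting the inner product of $L^\perp$ along it equips $(F\calX)_N$, and hence $M=A(F\calX)_N$, with an inner product for which (U1) holds; since $M$ is finite-dimensional and smooth, (U2) and (U3) hold automatically, so $M$ is a unitary representation of $A$. For independence of the inner product, if $\Trings{\ ,\ }$ and $\Trings{\ ,\ }'$ both make $M$ a unitary $A$-representation, then $\id_M$ is a continuous bijective $A$-homomorphism $(M,\Trings{\ ,\ })\to(M,\Trings{\ ,\ }')$, so Proposition~\ref{PR:iso-implies-unitary-iso} yields a unitary $A$-isomorphism between them.

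The step I expect to be the main obstacle — or at least the one needing care in the write-up — is the construction of the inner product, precisely because it leans on $F(\Gamma\leftmod\calX)$ being a genuine \emph{Hilbert} space so that $L^\perp$ exists; this is where finiteness of $\Gamma\leftmod\calX$ enters. If one wishes to avoid that hypothesis, the alternative is to reduce to $F$ elementary (via Remark~\ref{RM:summand-transition}), realize $(F\calX)_N\cong\llf(N\leftmod S\calX)$, and verify by hand that assigning to $\e_{Nx}$ the squared norm $\mu(\Stab_N(x))$ — for a Haar measure $\mu$ on $G$, which is unimodular in the relevant setting by Proposition~\ref{PR:unimodularity-of-G}, so that this is well-defined on $N$-orbits — produces a unitary $A$-representation; after grouping the structure constants of $a\in A\subseteq\End_G(\llf(S\calX))$ by $\Stab_G(x)$-orbits, condition (U1) collapses to the symmetric identity $\mu(\Stab_N(x))\,\mu(\Stab_N(z))=\mu(\Stab_N(z))\,\mu(\Stab_N(x))$.
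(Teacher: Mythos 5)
Your verification that $A(F\calX)_N=(AF\calX)_N$ is correct and records something the paper leaves implicit, and your uniqueness argument via Proposition~\ref{PR:iso-implies-unitary-iso} matches the paper's. However, your primary construction of the inner product is not a proof of the lemma as stated: it requires a subgroup $\Gamma\leq_\calX G$ with $\Gamma\leq N$ and $\Gamma\leftmod\calX$ \emph{finite}, which is a hypothesis of the downstream application (Proposition~\ref{PR:trivial-spectrum-containment}) but not of the lemma itself, and in general $\calX$ need not admit any finite $G$-quotients (cf.\ Remark~\ref{RM:finite-quotients-vs-G-transitive}). Taking $\Gamma=1$ does not help, since then $F(\Gamma\leftmod\calX)=F\calX$ is only a pre-Hilbert space, the kernel $L$ (though of finite codimension) need not be closed, and $L^\perp$ need not complement it. You flag this yourself, which is good.

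The alternative route you sketch is the right one, and in fact your weighted inner product is \emph{necessary}, not merely an option: the paper's own proof asserts that $\llf(H\leftmod S\calX)$ with the \emph{standard} $\llf$ inner product is a pre-unitary $A$-representation ``for any $H\leq G$, by the proof of Theorem~\ref{TH:Adj-algebra-descrition},'' but the relevant computation there --- rewriting $\sum_{y:\,\Gamma y=\Gamma w}\alpha_{zy}$ as $\sum_{\gamma\in\Gamma}\alpha_{z(\gamma w)}$ --- uses $\Stab_\Gamma(w)=1$, which holds for $\Gamma\leq_\calX G$ but fails for a finite-index open $N$, whose point-stabilizers in $S\calX$ are compact open. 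One can see (U1) fail explicitly: take $\calX$ the $(q+1)$-regular tree, $G=\Aut(\calX)$, $N$ the index-$2$ color-preserving subgroup, $F=\Omega_0^+\oplus\Omega_1^+$, and $a=a_{10}\in A$ as in Example~\ref{EX:spectrum-dependency}; then for a vertex $v$ and an edge $e\ni v$ one computes $\Trings{a\e_{Nv},\e_{Ne}}=q+1$ but $\Trings{\e_{Nv},a^*\e_{Ne}}=1$ under the standard inner product.

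Your weighting $\Norm{\e_{Nx}}^2=\mu(\Stab_N(x))$ repairs this. After grouping pairs $(x,y)$ in the support of $a$ by $N$-orbits, condition (U1) for the orbit through $(z,w)$ reduces, via $\alpha_{(gx)y}=\alpha_{x(g^{-1}y)}$ and unimodularity of $G$, to the identity
\[
\mu(\Stab_N(w))\cdot\frac{\mu(\Stab_N(z))}{\mu(\Stab_N(z)\cap H)}
=\mu(\Stab_N(z))\cdot\frac{\mu(H)}{\mu(H\cap\Stab_N(z))}
\]
with $H=n^{-1}\Stab_N(w)n$ for appropriate $n\in N$; both sides equal $\mu(H)\mu(\Stab_N(z))/\mu(H\cap\Stab_N(z))$. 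This is the ``symmetric identity'' you gesture at, stated precisely. (In the tree example it amounts to $\mu(\Stab_N(e))=(q+1)^{-1}\mu(\Stab_N(v))$, which restores equality.) Unimodularity of $G$ is genuinely used here and should be recorded as a standing hypothesis; it is harmless in applications since the trivial spectrum only constrains \emph{finite} quotients, whose existence already forces unimodularity by Proposition~\ref{PR:unimodularity-of-G}, but it is not stated in the lemma as written. So: your alternative route, written out, gives a correct proof, and in doing so corrects an overreach in the paper's own argument.
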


    \begin{proof}
        Choose $F'$ such that $F\oplus F'$ is elementary.
        By Remark~\ref{RM:summand-transition}, we may replace $F$ with $F\oplus F'$ and assume $F$ is elementary.
        Write $F=\llf\circ S$  as in Definition~\ref{DF:elementary-functor}.
        By Theorem~\ref{TH:Adj-algebra-descrition}(i), we may identify $A$ with an involutary
        subalgebra of $\End_G(F\calX)$. The proof of this theorem implies that
        for \emph{any} $H\leq G$, one can endow $\llf(H\leftmod S\calX)$
        with a left $A$-module structure given by $a\e_{Hx}=\sum_{y\in S\calX}\alpha_y\e_{Hy}$
        where $\{\alpha_y\}_{y}\subseteq\C$ are determined by $a\e_x=\sum_y\alpha_y\e_y$.
        Moreover, $\llf(H\leftmod S\calX)$ is a pre-unitary representation of $A$.
        Now, as in the proof of Proposition~\ref{PR:S-quotient-check}(ii), there is an isomorphism $(F\calX)_N\cong \llf(N\leftmod S\calX)$
        given by linearly extending $\e_x+(F\calX)(N)\mapsto \e_{N x}$,
        and it is straightforward to check that this isomorphism is a homomorphism of $A$-modules.
        This shows the existence of an inner product on $A(F\calX)_N$ such that $A(F\calX)_N\in\Rep[pu]{A}$.
        Since $[G:N]$ and $G\leftmod S\calX$ are finite, $\dim (F\calX)_N=\dim \llf(N\leftmod S\calX)<\infty$,
        and hence $A(F\calX)_N\in\Rep[u]{A}$. The
        unitary isomorphism class of $A(F\calX)_N$
        is uniquely determined by Proposition~\ref{PR:iso-implies-unitary-iso}.
        %For general $F$, choose $F'$ such that $F\oplus F'$ is elementary, and view $A$
        %as an algebra of $(\catC,F\oplus F')$-operators. The lemma now follows from the previous paragraph
        %since $A(F\oplus F')_N=A(F\calX)_N\oplus A(F'\calX)_N=A(F\calX)_N\oplus 0$.
    \end{proof}

    Write $\frakT_{A,N}=\Spec_A(A(F\calX)_N)$.

    \begin{prp}\label{PR:trivial-spectrum-containment}
        Let $\Gamma\leftmod\calX$ be a finite $G$-quotient
        of $\calX$ such that $\Gamma\leq N$. Then $\frakT_{A,N}\subseteq \Spec_A(\Gamma\leftmod\calX)$.
        Furthermore, if $N'\leq N$ is open of finite index, then $\frakT_{A,N}\subseteq \frakT_{A,N'}$.
    \end{prp}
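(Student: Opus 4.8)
The plan is to obtain both containments from one observation: a quotient map between sets of ($S$-)cells induces a surjective homomorphism of \emph{finite-dimensional} unitary $A$-representations, and a quotient of a finite-dimensional unitary representation re-embeds unitarily into it. The main care point is the bookkeeping of module identifications; there is no genuine obstacle, precisely because---unlike in the Alon--Boppana setting---every representation in sight is finite-dimensional, so continuous spectrum and higher-dimensional irreducibles play no role.

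First I would reduce to the case that $F$ is elementary: choose $F'$ with $F\oplus F'$ elementary; by Remark~\ref{RM:summand-transition} we may view $A$ as an algebra of $(\catC,F\oplus F')$-operators without changing $\Spec_A$ of any relevant module, so assume $F=\llf\circ S$ with $S:\catC\to\catSet$ as in Definition~\ref{DF:elementary-functor}, and by Proposition~\ref{PR:S-quotient-check}(i) that $S(\Delta\leftmod\calX)=\Delta\leftmod S\calX$ for every $\Delta\leq_{\calX}G$. By Theorem~\ref{TH:Adj-algebra-descrition}(i) identify $A$ with a $*$-subalgebra of $\End_G(\llf(S\calX))$. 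Recall from the proof of Lemma~\ref{LM:coinvariants-module-structure} that for \emph{any} subgroup $H\leq G$ the space $\llf(H\leftmod S\calX)$ carries the $A$-module structure $a\e_{Hx}=\sum_{y\in S\calX}\alpha_y\e_{Hy}$, where $a\e_x=\sum_y\alpha_y\e_y$; that there is an $A$-linear isomorphism $(F\calX)_H\cong\llf(H\leftmod S\calX)$ sending $\e_x+(F\calX)(H)$ to $\e_{Hx}$; and that $\llf(H\leftmod S\calX)$ is finite-dimensional when $[G:H]<\infty$. (Also $\llf(\Gamma\leftmod S\calX)=F(\Gamma\leftmod\calX)$ is finite-dimensional when $\Gamma\leq_{\calX}G$ with $\Gamma\leftmod\calX$ finite, by the standing finiteness assumption on $F$, so $\Gamma\leftmod S\calX$ is finite.)

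The key step is: let $H_1\leq H_2\leq G$ with $H_1\leftmod S\calX$ finite. Then $q\colon H_1\leftmod S\calX\to H_2\leftmod S\calX$, $H_1x\mapsto H_2x$, is a well-defined surjection, and $q_*\colon\llf(H_1\leftmod S\calX)\to\llf(H_2\leftmod S\calX)$ is a surjective $A$-module homomorphism, since $q_*(a\e_{H_1x})=q_*(\sum_y\alpha_y\e_{H_1y})=\sum_y\alpha_y\e_{H_2y}=a\e_{H_2x}=a\,q_*(\e_{H_1x})$. Restricting to smooth parts and using $A$-linearity gives a surjection $\pi\colon A\llf(H_1\leftmod S\calX)\onto A\llf(H_2\leftmod S\calX)$ of finite-dimensional unitary $A$-representations. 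Put $W=(\ker\pi)^\perp$, a closed $A$-submodule with $A\llf(H_1\leftmod S\calX)=W\oplus\ker\pi$; then $\pi|_W\colon W\to A\llf(H_2\leftmod S\calX)$ is a continuous bijective $A$-homomorphism (finite dimensions), hence a unitary isomorphism by Proposition~\ref{PR:iso-implies-unitary-iso}. Therefore, using Theorem~\ref{TH:direct-sum-spectrum} for the finite direct sum $W\oplus\ker\pi$,
\[
\Spec_A\bigl(A\llf(H_2\leftmod S\calX)\bigr)=\Spec_A(W)\subseteq\Spec_A\bigl(A\llf(H_1\leftmod S\calX)\bigr).
\]

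Finally I would specialize. For the first assertion take $H_1=\Gamma$, $H_2=N$: this is valid since $\Gamma\leq N$, $\Gamma\leftmod S\calX=S(\Gamma\leftmod\calX)$ is finite, and $N\leftmod S\calX$ is finite because $[G:N]<\infty$ and $G\leftmod S\calX$ is finite by (E4). Under the identifications above $A\llf(\Gamma\leftmod S\calX)=AF(\Gamma\leftmod\calX)$ and $A\llf(N\leftmod S\calX)=A(F\calX)_N$, whence $\frakT_{A,N}=\Spec_A(A(F\calX)_N)\subseteq\Spec_A(AF(\Gamma\leftmod\calX))=\Spec_A(\Gamma\leftmod\calX)$. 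For the second assertion take $H_1=N'$, $H_2=N$ (valid as $N'\leq N$ and both are open of finite index in $G$, so $N'\leftmod S\calX$ and $N\leftmod S\calX$ are finite), which yields $\frakT_{A,N}=\Spec_A(A(F\calX)_N)\subseteq\Spec_A(A(F\calX)_{N'})=\frakT_{A,N'}$, completing the proof.
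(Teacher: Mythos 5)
Your proof is correct and follows essentially the same strategy as the paper's: identify the target coinvariant space as a finite-dimensional unitary quotient of the source, split off the kernel orthogonally, and upgrade the resulting continuous $A$-isomorphism to a unitary one via Proposition~\ref{PR:iso-implies-unitary-iso} to get the spectral inclusion. The only cosmetic difference is that the paper works directly with the surjection $P\colon A(F\calX)_\Gamma\to A(F\calX)_N$, $\vphi+(F\calX)(\Gamma)\mapsto\vphi+(F\calX)(N)$, while you unfold the identification $(F\calX)_H\cong\llf(H\leftmod S\calX)$ from the proof of Lemma~\ref{LM:coinvariants-module-structure} and rebuild the same map through the $S$-cell quotient $q_*$; you also phrase the two halves uniformly via one lemma for $H_1\leq H_2$, whereas the paper proves the first half and notes the second is similar. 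Neither difference is substantive. (Minor note: invoking Theorem~\ref{TH:direct-sum-spectrum} for the final containment is harmless but unnecessary, since a direct summand is trivially weakly contained in the whole.)
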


    \begin{proof}
        Identify $F(\Gamma\leftmod\calX)$ with $(F\calX)_{\Gamma}$ using
        Proposition~\ref{PR:S-quotient-check}(ii), and consider
        the surjective $A$-homomorphism $P:A(F\calX)_{\Gamma}\to A(F\calX)_N$ given by $P(\vphi+(F\calX)(\Gamma))=
        \vphi+(F\calX)(N)$. Since $\Gamma\leftmod\calX$ is finite and $F$ is semi-elementary,
        $A(F\calX)_\Gamma$ is finite dimensional, and hence $(\ker P)^\perp\cong A(F\calX)_N$
        as $A$-modules. By Proposition~\ref{PR:iso-implies-unitary-iso},
        $(\ker P)^\perp\cong A(F\calX)_N$ as unitary representations. Since $AF(\Gamma\leftmod\calX)\cong\ker P\oplus (\ker P)^\perp$,
        it follows that $\frakT_{A,N}=\Spec_A((\ker P)^\perp)\subseteq\Spec_A(AF(\Gamma\leftmod\calX))=\Spec_A(\Gamma\leftmod\calX)$.

        The second part of the proposition is shown similarly.
    \end{proof}
    
    We shall prove a partial converse to Proposition~\ref{PR:trivial-spectrum-containment}
    in Chapter~\ref{sec:spectrum-and-reps} (Proposition~\ref{PR:converse-trivial-spectrum-containment}).

    \begin{cor}
        For any finite $G$-quotient $\Gamma\leftmod\calX$, we have $\frakT_{A,G}\subseteq\Spec_A(\Gamma\leftmod\calX)$.
    \end{cor}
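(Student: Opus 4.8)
The plan is simply to specialize Proposition~\ref{PR:trivial-spectrum-containment} to the case $N=G$. The subgroup $N=G$ is trivially an open subgroup of $G$ of finite index (namely index $1$), so it is a legitimate choice of $N$ in the construction of $\frakT_{A,N}$ from \ref{subsec:trivial-spec}, and $A(F\calX)_G$ is a well-defined unitary representation up to unitary isomorphism by Lemma~\ref{LM:coinvariants-module-structure}, so that $\frakT_{A,G}=\Spec_A(A(F\calX)_G)$ makes sense. Moreover, any $\Gamma\leq_{\calX}G$ automatically satisfies $\Gamma\leq N=G$, so the hypothesis ``$\Gamma\leq N$'' of Proposition~\ref{PR:trivial-spectrum-containment} is vacuous in this instance. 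Thus, for any finite $G$-quotient $\Gamma\leftmod\calX$, Proposition~\ref{PR:trivial-spectrum-containment} applied with $N=G$ yields $\frakT_{A,G}\subseteq\Spec_A(\Gamma\leftmod\calX)$ directly.

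There is essentially no obstacle here; the entire content of the statement lies in Proposition~\ref{PR:trivial-spectrum-containment}. If one prefers a self-contained argument, one can reprise its proof in the special case: identifying $F(\Gamma\leftmod\calX)$ with $(F\calX)_\Gamma$ via Proposition~\ref{PR:S-quotient-check}(ii), the natural surjection $P\colon A(F\calX)_\Gamma\to A(F\calX)_G$, $\vphi+(F\calX)(\Gamma)\mapsto\vphi+(F\calX)(G)$, is an $A$-homomorphism; since $\Gamma\leftmod\calX$ is finite and $F$ is semi-elementary, $AF(\Gamma\leftmod\calX)$ is finite-dimensional, so $(\ker P)^\perp\cong A(F\calX)_G$ as $A$-modules, and hence as unitary representations by Proposition~\ref{PR:iso-implies-unitary-iso}. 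As $AF(\Gamma\leftmod\calX)\cong\ker P\oplus(\ker P)^\perp$, one gets $\frakT_{A,G}=\Spec_A((\ker P)^\perp)\subseteq\Spec_A(AF(\Gamma\leftmod\calX))=\Spec_A(\Gamma\leftmod\calX)$. Either way, the corollary is immediate, and I would simply cite Proposition~\ref{PR:trivial-spectrum-containment} with $N=G$.
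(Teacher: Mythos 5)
Your proof is correct and matches the paper's intent exactly: the corollary is stated without proof precisely because it is the specialization $N=G$ of Proposition~\ref{PR:trivial-spectrum-containment}, and you correctly observe that $N=G$ is open of index $1$ and that the hypothesis $\Gamma\leq N$ is automatic. Nothing to add.
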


    Proposition~\ref{PR:trivial-spectrum-containment} motivates the following definition.

    \begin{dfn}
        The \emph{trivial $A$-spectrum} is the set $\frakT_A=\bigcup_{N}\frakT_{A,N}$, where $N$
        ranges over all finite index open subgroups of $G$.
    \end{dfn}

    \begin{remark}\label{RM:operator-trivial-spectrum}
        Assume $a_1,\dots,a_t\in \Alg{\catC,F}{}$ satisfy $a_ia_j=a_ja_i$ and $a_ia_j^*=a_j^*a_i$ for all
        $i,j$. Then one can set $\frakT_{a_1,\dots,a_t,N}=\Spec(a_1|_{A(F\calX)_N},\dots,a_t|_{A(F\calX)_N})$
        and define the trivial common spectrum of $(a_1,\dots,a_t)$ to be
        $\frakT_{a_1,\dots,a_t}:=\bigcup_N\frakT_{a_1,\dots,a_t,N}$.
        The  the unital $*$-subalgebra $A$ of $\Alg{\catC,F}{}$ spanned by $a_1,\dots,a_t$ is commutative,
        and if
        one identifies the $A$-spectrum with the common spectrum of $a_1,\dots,a_t$
        as in Proposition~\ref{PR:unitary-dual-topology},
        then $\frakT_A$ corresponds to $\frakT_{a_1,\dots,a_t}$.
    \end{remark}

    \begin{remark}
        We actually did not use the fact that $N$ is open anywhere.
        However, restricting to open subgroups makes no difference, since for any finite-index subgroup
        $N\leq G$, we have $(F\calX)_N=(F\calX)_{\quo{N}}$, where $\quo{N}$ is the closure of $N$ in $G$.
        %($\quo{N}$ it has finite index in $G$).
        It is enough to check this when $F$ is elementary, in which case the claim easily boils down
        to showing that $N\leftmod S\calX=\quo{N}\leftmod S\calX$, where $F\cong \llf\circ S$ as in Definition~\ref{DF:elementary-functor}.
        It is well-known that $\quo{N}=\bigcap_U NU$ where $U$ ranges over all neighborhoods of $1_G$.
        Since
        stabilizers of elements of $S\calX$ are open, every $x\in S\calX$ is stabilized by some neighborhood
        $U$ of $1_G$, and thus $N\leftmod S\calX=\quo{N}\leftmod S\calX$.
    \end{remark}

    \begin{example}\label{EX:trivial-spec-tree}
        Let $\calX$ be a $k$-regular tree, let $G=\Aut(\calX)$, let $\catC=\catC(G,\calX)$,
        and let  $a_0\in A_0:=\Alg{\catC,\Omega_0^+}{}$ be the vertex
        adjacency operator. Using Example~\ref{EX:k-regular-graphs-basic-spectrum}, we
        identify the $0$-dimensional spectrum
        with the spectrum of $a_0$.
        It is well-known (\cite{Tits70} or \cite{Moller91}) that
        $G$ has only one proper finite-index subgroup, denoted $H$,
        which is of index $2$ and consists of the elements preserving  the canonical $2$-coloring of $\calX^{(0)}$.
        The trivial spectrum $\frakT_{A_0}$ is therefore the spectrum of $a_0$ on $\Omega_0^+(\calX)_H\cong \llf(H\leftmod \calX^{(0)})$.

        Let $u$ and $v$ be representatives for
        the $H$-orbits of $\calX^{(0)}$. Then $\{\e_{Hu},\e_{Hv}\}$ is a basis of $\llf(H\leftmod \calX^{(0)})$
        and the action of $a_0$ with respect to this basis is easily seen to be given by $\smallSMatII{0}{k}{k}{0}$
        (one can regard $H\leftmod\calX$ as a graph with two vertices connected by $k$ edges).
        Thus, $\frakT_{A_0}=\Spec(a_0|_{\llf_0(H\leftmod \calX)})=\{\pm k\}$, and we have recovered the usual
        definition of trivial spectrum for $k$-regular graphs.

        If one identifies the $1$-dimensional oriented spectrum with the spectrum of
        the edge adjacency operator $a_1\in A_1:=\Alg{\catC,\Omega_1^+}{}$ as in Example~\ref{EX:tree-spectrum},
        then
        one similarly finds that $\frakT_{A_1}=\{2k-2\}$.
    \end{example}

    \begin{example}\label{EX:trivial-spec-complex}
        Let $G=\nPGL{F}{d}$ and $\calX=\calB_d(F)$ be as in Chapter~\ref{sec:ramanujan-complexes},
        and let $\catC=\catC(G,\calX)$.
        By Example~\ref{EX:zero-dim-spec-of-Bd},
        $A_0:=\Alg{\catC,\Omega_0^+}{}=\C[a_1,\dots,a_{d-1}]$, and hence
        we can identify the $0$-dimensional spectrum with the common spectrum
        of $a_1,\dots,a_{d-1}\in A_0$. %(cf.\ Proposition~\ref{PR:unitary-dual-topology}).
        It is well-known that any normal subgroup of $\nGL{F}{d}$ either contains $\nSL{F}{d}$ or is contained in the center
        of $\nGL{F}{d}$. Since all finite index subgroups contain
        a normal finite index subgroup, any  finite index subgroup of $G=\nPGL{F}{d}$ contains $N:=\im(\nSL{F}{d}\to\nPGL{F}{d})$.
        The trivial spectrum $\frakT_{A_0}$ is therefore the common spectrum of $a_0,\dots,a_{d-1}$
        on $\Omega_0^+(\calB_d(F))_N \cong\llf(\calB_d(F)^{(0)})_N\cong \llf(N\leftmod \calB_d(F)^{(0)})$.

        By Example~\ref{EX:simplicial-quotient-II},
        $N\leftmod\calB_d(F)$ is   a single simplex of dimension $d-1$.
        In fact, the vertex coloring $C_0:\calB_d(F)\to \Z/d\Z$
        descends to $N\leftmod\calB_d(F)$, so we can write $\vrt{N\leftmod\calB_d(F)}=\{v_0,\dots,v_{d-1}\}$
        with $C_0(v_i)=i+d\Z$. Let $\e_i:=\e_{v_i}\in \llf(N\leftmod\calB_d(F)^{(0)})\cong \Omega_0^+(N\leftmod\calB_d(F))$.
        An easy computation shows that  $a_i\e_j=\alpha_{i}\e_{j+i}$,
        where $\alpha_i$ is the number of vertices in $\calB_d(F)$ with color $(j+i)+d\Z$
        which are adjacent to some vertex of color $j+d\Z$ (this is independent of the vertex and $j$ since $G$
        acts transitively on $\calB_d(F)^{(0)}$). Let $q$ denote the cardinality
        of the residue field of $(F,\nu)$. It is well-known that $\alpha_i$ equals $\binom{d}{i}_q$,
        the number of $\F_q$-subspaces of dimension $i$ in $\F_q^d$
        (see the description of $\calB_d(F)$ through $\calO$-lattices in \cite[\S2.1]{LubSamVi05}, for instance).
        Let $\zeta\in\C$ be a primitive $d$-th root of unity.
        Then
        \[\{\vphi_k:=\zeta^0\e_0+\zeta^k\e_1+\zeta^{2k}\e_2+\dots+\zeta^{(d-1)k}\e_{d-1}\}_{0\leq k< d}\]
        is a basis of $\Omega_0^+(N\leftmod\calB_d(F))$ consisting of common eigenvectors of $(a_1,\dots,a_{d-1})$.
        The multi-eigenvalue corresponding to $\vphi_k$ is
        \[\textstyle
        \left(\binom{d}{1}_q\zeta^{-k},\binom{d}{2}_q\zeta^{-2k},\dots,\binom{d}{d-1}_q\zeta^{-(d-1)k}\right)\ .
        \]
        Replacing $\zeta$ with $\zeta^{-1}$, we get
        \[
        \frakT_{A_0}=\left\{{\textstyle\left(\binom{d}{1}_q\zeta^k,\binom{d}{2}_q\zeta^{2k},\dots,\binom{d}{d-1}_q\zeta^{(d-1)k}\right)}\where 0\leq k<d
        \right\}\ .
        \]
        Notice that for $d=2$, the complex $\calB_2(F)$ is a $(q+1)$-regular tree, and $\frakT_{A_0}=\{-q-1,q+1\}$,
        so this agrees with Example~\ref{EX:trivial-spec-tree}.

        According to Lubotzky, Samuels and Vishne \cite[\S2.3]{LubSamVi05}, the trivial spectrum is given by
        \[\{(\zeta^ks_1,\dots,\zeta^{(d-1)k} s_{d-1})\where 0\leq k<d\}\ ,\]
        where $s_i=q^{k(d-k)/2}\sigma_k(q^{(1-d)/2},q^{(3-d)/2},\dots,q^{(d-1)/2})$ and $\sigma_k$ denotes
        the $k$-th elementary symmetric function on $n$ variables. The argument in the proof
        of \cite[Pr.~2.1]{LubSamVi05} shows that $s_i=\binom{d}{i}_q$, so our trivial $A_0$-spectrum
        coincides with trivial spectrum in the sense of \cite{LubSamVi05}.
    \end{example}

\subsection{Ramanujan Quotients}
\label{subsec:Ramanujan-quotients}

    Let $\calX$, $G$, $\catC$, $F$, $A$ be as in \ref{subsec:trivial-spec}.

    \begin{dfn}\label{DF:Ramanujan-quotient}
        A $G$-quotient $\Gamma\leftmod\calX$ is \emph{$A$-Ramamnujan} if
        \[
        \Spec_A(\Gamma\leftmod\calX)\subseteq \frakT_A\cup \Spec_A(\calX)\ .
        \]
    \end{dfn}

    In view of Theorem~\ref{TH:Alon-Boppana-I} and Proposition~\ref{PR:trivial-spectrum-containment},
    $A$-Ramanujan $G$-quotients of $\calX$ can be regarded as $G$-quotients whose spectrum
    is as small as one might expect of a decent infinite
    family of $G$-quotient. Alternatively, they can be regarded as spectral approximations
    of $\calX$.

    Here are several possible specializations of Definition~\ref{DF:Ramanujan-quotient}:
    \begin{itemize}
        \item $\Gamma\leftmod\calX$ is \emph{$F$-Ramanujan} if it is $\Alg{F,\catC}{}$-Ramanujan (cf.\ \ref{subsec:spectrum-of-simp}).
        \item $\Gamma\leftmod\calX$ is \emph{Ramanujan in dimension $i$} if it is $\Omega_i^+$-Ramanujan (cf.\ \ref{subsec:orientation},
            \ref{subsec:adjacency-operators}).
        \item $\Gamma\leftmod\calX$ is \emph{flag Ramanujan} if it is $\llFlag$-Ramanujan (cf.~\ref{subsec:adjacency-operators}).
        \item $\Gamma\leftmod\calX$ is \emph{completely Ramanujan} if it is $F$-Ramanujan for any
        semi-elementary functor $F:\catC\to \catPHil$.
        (By Proposition~\ref{PR:Ramanujan-well-behaved} below, it is enough
        to check this when $F$ is elementary.)
    \end{itemize}

    \begin{example}\label{EX:classical-Ramamanujan}
        (i) Let $\calX$ be a $k$-regular tree, let $G=\Aut(\calX)$,
        let $\catC=\catC(G,\calX)$, and let  $a_0\in A_0:=\Alg{\catC,\Omega_0^+}{}$ be the vertex
        adjacency operator. As usual, we  identify the $0$-dimensional spectrum
        with the spectrum of $a_0$ (cf.\ Example~\ref{EX:k-regular-graphs-basic-spectrum}).
        It is well-known that $\Spec_0(\calX)=[-2\sqrt{k-1},2\sqrt{k-1}]$ (\cite[p.~252, Apx.~3]{Sunada88},
        for instance), and
        in  Example~\ref{EX:trivial-spec-tree}, we saw that the trivial spectrum is $\frakT_{A_0}=\{\pm k\}$.
        Thus, $\Spec_0(\calX)\cup\frakT_{A_0}=[-2\sqrt{k-1},2\sqrt{k-1}]\cup\{\pm k\}$,
        and
        it follows that a $k$-regular graph is Ramanujan in dimension $0$ (or $\Omega_0^+$-Ramanujan)
        if and only if it is Ramanujan in the classical sense.
        In fact, we will see in \ref{subsec:being-Ramanujan}
        that $k$-regular graphs  are Ramanujan in dimension $0$ if and only if they are completely Ramanujan.

        (ii) Let $G=\nPGL{F}{d}$ and $\calX=\calB_d(F)$ be as in Chapter~\ref{sec:ramanujan-complexes}.
        We identify the $0$-dimensional spectrum with the common spectrum
        of $a_1,\dots,a_{d-1}\in A_0:=\Alg{\catC(G,\calX),\Omega_0^+}{}$ as explained in
        Example~\ref{EX:zero-dim-spec-of-Bd}.
        According Lubotzky, Samuels and Vishne \cite[Def.~2.6]{LubSamVi05},
        a $G$-quotient $\Gamma\leftmod\calB_d(F)$ is called Ramanujan if $\Spec_0(\Gamma\leftmod\calB_d(F))$
        is contained in the union of $\Spec_0(\calB_d(F))$ and $\frakT_{A_0}$ (cf.\ Example~\ref{EX:trivial-spec-complex}).
        Thus, $\Gamma\leftmod\calB_d(F)$ is Ramanujan
        in dimension $0$ if and only if it is  Ramanujan in the sense of  \cite{LubSamVi05}.

        We shall see in Chapter~\ref{sec:existence}, that the Ramanujan quotients constructed in \cite{LubSamVi05}
        are in fact completely Ramanujan. We will also see in {\ref{subsec:consequences}} that
        when $d=3$, a $G$-quotient $\Gamma\leftmod\calB_3(F)$ is Ramanujan in dimension $0$ if and only if it is flag Ramanujan.
    \end{example}

    The following proposition shows that  $A$-Ramanujan property behaves well as $A$
    and $\Gamma$ vary.

    \begin{prp}\label{PR:Ramanujan-well-behaved}
        Let $F,F':\catC\to\catPHil$ be semi-elementary functors,  let $A$ be an algebra of $(\catC,F)$-operators,
        and let  $\Gamma'\leq\Gamma\leq_{\calX}G$. Then:
        \begin{enumerate}
            \item[(i)] $\Gamma\leftmod\calX$ is $F\oplus F'$-Ramanujan if and only if
            $\Gamma\leftmod\calX$ is $F$-Ramanujan and $F'$-Ramanujan.
            \item[(ii)] If $\Gamma\leftmod\calX$ is $A$-Ramanuajan, then $\Gamma\leftmod\calX$
            is $B$-Ramanujan for any idempotented $*$-subalgebra $B\subseteq A$.
            \item[(iii)] If $[\Gamma:\Gamma']<\infty$ and $\Gamma'\leftmod\calX$
            is $A$-Ramanujan, then $\Gamma\leftmod\calX$ is $A$-Ramanujan.
        \end{enumerate}
    \end{prp}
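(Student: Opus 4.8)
The plan is to derive all three parts from the structural results already established in Chapters~\ref{sec:involutary-algebras} and~\ref{sec:adj-operators}, together with the fact that the trivial spectrum $\frakT_A$ and the spectrum $\Spec_A(\calX)$ interact well with the operations of passing to summands, subalgebras, and finite-index subgroups. The key observation throughout is that $A$-Ramanujan-ness is the inclusion $\Spec_A(\Gamma\leftmod\calX)\subseteq\frakT_A\cup\Spec_A(\calX)$, so in each case I need to compare both sides under the relevant transition.

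For part (i): the functor $F\oplus F'$ has $A_{F\oplus F'}=\Alg{\catC,F\oplus F'}{}$ containing the complementary projections $e_F,e_{F'}$, which form a full family of idempotents in the sense of Example~\ref{EX:full-families-of-idems}. By Theorem~\ref{TH:corner-subalgebra-spectrum-I} and Corollary~\ref{CR:corner-subalgebra-spectrum-II}, for every $G$-quotient $X$ we have that $\Spec_{A_{F\oplus F'}}(X)$ decomposes as the union of the parts lying over $\udual{A_F}$ and $\udual{A_{F'}}$ (identified via corner embeddings, Theorem~\ref{TH:corner-unitary-dual}), and under these identifications the $A_F$-part equals $\Spec_{A_F}(X)$ and similarly for $F'$. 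The same decomposition applies to $\calX$ and, crucially, to the trivial spectrum: since $A_{F\oplus F'}(F\calX\oplus F'\calX)_N=A_F(F\calX)_N\oplus A_{F'}(F'\calX)_N$, we get $\frakT_{A_{F\oplus F'},N}$ decomposing as $\frakT_{A_F,N}\sqcup\frakT_{A_{F'},N}$ under the corner identifications, hence $\frakT_{A_{F\oplus F'}}=\frakT_{A_F}\cup\frakT_{A_{F'}}$. Intersecting the inclusion defining $A_{F\oplus F'}$-Ramanujan with each corner piece then yields exactly the two inclusions defining $F$-Ramanujan and $F'$-Ramanujan, and conversely. The step requiring care here is checking that the corner identification of $\udual{A_F}$ with an open subset of $\udual{A_{F\oplus F'}}$ is compatible with all three of $\Spec_A(X)$, $\Spec_A(\calX)$, and $\frakT_{A,N}$ simultaneously; this is routine given Theorem~\ref{TH:corner-subalgebra-spectrum-I}(i),(ii), but must be spelled out.

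For part (ii): let $B\subseteq A$ be an idempotented $*$-subalgebra. For any $G$-quotient $X$, Theorem~\ref{TH:subalgebra-spectrum-I} gives $\Spec_B(\quo{BFX})=\calR_{A/B}(\Spec_A(X))$, where $\calR_{A/B}$ sends $[V]$ to $\Spec_B(\quo{BV})$; this map is monotone in the sense that if $S\subseteq S'$ in $\udual{A}$ then $\calR_{A/B}(S)\subseteq\calR_{A/B}(S')$. Applying $\calR_{A/B}$ to the hypothesis $\Spec_A(\Gamma\leftmod\calX)\subseteq\frakT_A\cup\Spec_A(\calX)$ gives $\Spec_B(\Gamma\leftmod\calX)\subseteq\calR_{A/B}(\frakT_A)\cup\calR_{A/B}(\Spec_A(\calX))=\frakT_B\cup\Spec_B(\calX)$, where the last equality again uses that $\quo{B A(F\calX)_N}=\quo{B(F\calX)_N}$ and $\quo{BAF\calX}=\quo{BF\calX}$, so Theorem~\ref{TH:subalgebra-spectrum-I} identifies $\calR_{A/B}(\frakT_{A,N})$ with $\frakT_{B,N}$ and $\calR_{A/B}(\Spec_A(\calX))$ with $\Spec_B(\calX)$. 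Here one subtlety is that $B$ need not be unital, so $BFX$ may be a proper submodule of $FX$; but $\Spec_B(X)$ is by definition $\Spec_B(\quo{BFX})$, so this causes no trouble.

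For part (iii): assume $[\Gamma:\Gamma']<\infty$ and $\Gamma'\leftmod\calX$ is $A$-Ramanujan. By Proposition~\ref{PR:subgroup-quotient}, $\Spec_A(\Gamma\leftmod\calX)\subseteq\Spec_A(\Gamma'\leftmod\calX)\subseteq\frakT_A\cup\Spec_A(\calX)$, which is precisely $A$-Ramanujan-ness of $\Gamma\leftmod\calX$. This is the shortest part, and the only thing to verify is that Proposition~\ref{PR:subgroup-quotient} applies, which requires $F$ semi-elementary and $A$ an algebra of $(\catC,F)$-operators — both are hypotheses of the present proposition. The main obstacle in the whole proof is part (i): making the bookkeeping of corner identifications precise enough that the decomposition of $\frakT_{A_{F\oplus F'}}$ is transparent, since the definition of $\frakT$ ranges over all finite-index open $N\leq G$ and one must check the summand decomposition of $(F\calX\oplus F'\calX)_N$ is the module-theoretic one, compatibly with the $A_{F\oplus F'}$-action described in the proof of Theorem~\ref{TH:Adj-algebra-descrition}(i).
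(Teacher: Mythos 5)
Your proposal is correct and follows the same route as the paper: part (i) via the corner algebras $e_F\Alg{\catC,F\oplus F'}{}e_F$ and $e_{F'}\Alg{\catC,F\oplus F'}{}e_{F'}$ and the resulting decomposition $\frakT_{A_{F\oplus F'}}=\frakT_{A_F}\cup\frakT_{A_{F'}}$, part (ii) by applying $\calR_{A/B}$ from Theorem~\ref{TH:subalgebra-spectrum-I} to both sides of the defining inclusion, and part (iii) directly from Proposition~\ref{PR:subgroup-quotient}. The only minor imprecision is that Theorem~\ref{TH:corner-unitary-dual} embeds the open set $\tAlg{\catC,F\oplus F'}{}^{(A_F)}$ into $\udual{A_F}$ rather than the other way around, but this does not affect the argument.
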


    \begin{proof}
        (i) We use the notation of Proposition~\ref{PR:summand-spectrum} and the discussion preceding it.
        Everything follows from this proposition if we show that $\frakT_{A_{F\oplus F'}}=\frakT_{A_F}\cup \frakT_{A_F'}$.
        Let $N\leq G$ be open of finite index.
        We have $e_F((F\oplus F')\calX)_N=(F\calX)_N$
        and $e_{F'}((F\oplus F')\calX)_N= (F'\calX)_N$, so, as in the proof of Proposition~\ref{PR:summand-spectrum}, Corollary~\ref{CR:corner-subalgebra-spectrum-II} implies that $\frakT_{A_{F\oplus F'},N}=\frakT_{A_F,N}\cup \frakT_{A_{F'},N}$.
        Taking union over all possible $N$-s, we get $\frakT_{A_{F\oplus F'}}=\frakT_{A_F}\cup \frakT_{A_F'}$.

        (ii) This follows from the definition and from Theorem~\ref{TH:subalgebra-spectrum-I}.

        (iii)
        This is immediate from Proposition~\ref{PR:subgroup-quotient}.
    \end{proof}

    \begin{remark}\label{RM:ramanujan-covers}
        At this level of generality, finite $A$-Ramanujan  $G$-quotients are not guaranteed to exist;
        this follows implicitly from \cite{LubNagn98}. Instead, one may consider
        \emph{$A$-Ramanujan covers}: Let $\Gamma'\leq\Gamma\leq_{\calX} G$ be subgroups such that
        $\Gamma'\leftmod\calX$ and $\Gamma\leftmod\calX$ are \emph{finite}, and let $p=p_{\Gamma',\Gamma}$
        denote the cover map $\Gamma'x\mapsto \Gamma x:\Gamma'\leftmod\calX\to\Gamma\leftmod\calX$. Then,
        as in the proof of Proposition~\ref{PR:subgroup-quotient},
        $Fp:F(\Gamma'\leftmod\calX)\to
        F(\Gamma\leftmod\calX)$ is continuous
        and surjective, and
        %$(\ker Fp)^\perp\cong F(\Gamma\leftmod\calX)$ and
        $F(\Gamma'\leftmod\calX)\cong\ker Fp \oplus F(\Gamma\leftmod\calX)$
        as unitary representations of $A$.
        In particular, $\Spec_A(\Gamma\leftmod\calX)\subseteq\Spec_A(\Gamma'\leftmod\calX)$.
        We
        call $\Gamma'\leftmod\calX$ an \emph{$A$-Ramanujan cover} of $\Gamma\leftmod\calX$
        if all the $A$-spectrum points of $\Gamma'\leftmod\calX$ that do not come from $\Gamma\leftmod\calX$
        are in $\Spec_A(\calX)$, namely, if
        \[\Spec_{A}(\ker Fp_{\Gamma',\Gamma})\subseteq \Spec_A(\calX)\ .\]
        One can likewise define covers which are $F$-Ramanujan, Ramanujan in dimension $i$, flag Ramanujan, and completely Ramanujan.
        
        It is more reasonable to believe that under certain assumptions, all finite $G$-quotients of $\calX$
        will have an $A$-Ramamnujan cover. Indeed, Marcus, Spielman and Srivastava \cite{MarSpiSri14} 
        showed that any bipartite $k$-regular graph admits a $\C[a_0]$-Ramanujan $2$-cover
        ($a_0$ is the vertex adjacency operator; cf.\ Example~\ref{EX:k-regular-graphs-basic-spectrum}). 
        This was extended to covers of any prescribed rank by Hall, Puder and Sawin \cite{HallPudSaw16}.\footnote{
            In fact, both \cite{MarSpiSri14}  and
            \cite{HallPudSaw16} treat the more general case of bipartite $(c,d)$-biregular,
            showing that for any such
            graph and any $r\in\N$, there is an $r$-cover such that none of the new eigenvalues of the vertex adjacency operator
            of the cover exceed in absolute value the spectral radius of the infinite $(c,d)$-biregular tree, which equals
            $\sqrt{c-1}+\sqrt{d-1}$.
            However, when $c\neq d$, such covers are a priori not $\C[a_0]$-Ramanujan because the spectrum of the $(c,d)$-biregular
            tree is a proper subset of $[-\sqrt{c-1}-\sqrt{d-1},\sqrt{c-1}+\sqrt{d-1}]$;
            see Question~6.3 in \cite{HallPudSaw16}. 
            See also \cite{BallCiub11} for constructions of $\C[a_0]$-Ramanujan $(q+1,q^3+1)$-biregular graphs when $q$ is a prime power.
            %\cite[Df.~3.21]{Hashi89} and related works for further discussion.
            %\gap{This was noted
            %in Amitay Kamber's thesis.}
        } 
    \end{remark}

\section{Representation Theory}
\label{sec:spectrum-and-reps}

    Let $G$ be a unimodular $\ell$-group, let $\calX$ be an almost transitive $G$-complex (\ref{subsec:G-complex}),
    let $F:\catC(G,\calX)\to \catPHil$ be an elementary functor (\ref{subsec:case-of-Alg-G-X}), and
    let $\Gamma\leq_{\calX}G$. In this chapter,
    we give a criterion to determine when $\Gamma\leftmod\calX$ is $F$-Ramanujan (\ref{subsec:Ramanujan-quotients})
    which is phrased in terms of the unitary $G$-representation $\LL{\Gamma\leftmod G}$.

    Note that by Proposition~\ref{PR:unimodularity-of-G}, $G$ is unimodular whenever $\calX$ has a finite
    $G$-quotient. Therefore, the assumption that $G$ is unimodular is not unreasonable.
    %In fact,
    %the results of this chapter hold in the non-unimodular case, after some modifications.

\medskip

    Throughout, $G$ is a {unimodular} $\ell$-group (i.e.\ a totally disconnected locally
    compact group) and $\mu$ is a Haar measure on $G$.
    A (left) $G$-module is a $\C$-vector space on which $G$ acts (on the left) via
    $\C$-linear automorphisms.
    We write $K\co G$ to denote that $K$ is a compact open subgroup of $G$. It is well-known
    that   the collection $\{K\suchthat K\co G\}$ is basis of neighborhoods of $1_G$.
    Many computational facts recalled in this
    chapter can be easily verified using the following identities, which hold
    whenever the integrals make sense:
    \[
    \int_{x\in G}\psi(gx)\,d\mu = \int_{x\in G}\psi(xg)=\int_{x\in G} \psi(x^{-1})\,d\mu = \int_{x\in G}\psi (x) \,d\mu
    \]
    The characteristic function of $S\subseteq G$ is denoted by $\charfunc{S}$.

\subsection{Unitary and Pre-Unitary Representations}
\label{subsec:unitary-reps-G}

    As usual, a \emph{unitary representation} of $G$ is a Hilbert space
    $V$ carrying a $G$-module structure such that the action
    $G\times V\to V$ is continuous and
    $\Trings{gu,gv}=\Trings{u,v}$ for all $u,v\in V$, $g\in G$.
    The representation $V$ is \emph{irreducible} if $V$ does not contain closed
    $G$-submodules other than $0$ and $V$.
    The category of unitary representations of $G$ with continuous $G$-homomorphisms
    is denoted $\Rep[u]{G}$,
    and the class of irreducible representations is denoted $\Irr[u]{G}$.

    \begin{example}\label{EX:right-regular-rep}
        The \emph{right regular representation} of $G$ is $\LL{G}$
        endowed with the \emph{left} $G$-action given by
        \[
        (g\vphi)x=\vphi(xg)\qquad\forall\, g,x\in G,\, \vphi\in\LL{G}\ .
        \]
        %It is easy to see that this yields a unitary representation of $G$.
        The \emph{left regular representation} of $G$ is $\LL{G}$ endowed
        with the left $G$-action given by $(g\vphi)x=\vphi(g^{-1}x)$.

        To avoid ambiguity between the right and left regular representations, we shall
        denote them by $\LL{1\leftmod G}$ and $\LL{G/1}$, respectively.
    \end{example}

    \begin{example}\label{EX:LL-Gamma-leftmod-G}
        Let $\Gamma$ be a discrete subgroup of $G$.
        It is well-known
        that there exists a unique measure $\mu_{\Gamma\leftmod G}$ on $\Gamma\leftmod G$ such that
        for all $\vphi\in \CSLC{G}$, one has
        \[
        \int_{x\in G}\vphi x\,d\mu=\int_{\Gamma x\in\Gamma\leftmod G}\hat{\vphi} (\Gamma x)\,d\mu_{\Gamma\leftmod G}\ ,
        \]
        where $\hat{\vphi}\in\CSLC{\Gamma\leftmod G}$ is defined by $\hat{\vphi}(\Gamma x)=\sum_{\gamma\in\Gamma}\vphi(\gamma x)$.
        We make $\LL{\Gamma\leftmod G}$ into a unitary representation by
        setting $(g\vphi)x=\vphi(xg)$ for all $g,x\in G$ and $\vphi\in\LL{\Gamma\leftmod G}$.
    \end{example}

    Let $K\leq G$. For any $G$-module $V$, let
    \[
    V^K:=\{v\in V\suchthat kv=v~\text{for all}~ k\in K\}\ .
    \]
    Recall that $V$ is \emph{smooth} if $V=\bigcup_{K\co G} V^K$.
    %($K$ ranges over the compact open subgroups of $G$).
    This is equivalent to saying that the action $G\times V\to V$ is continuous when $V$
    is endowed with the discrete topology.

    A pre-unitary representation of $G$ is a pre-Hilbert space $V$
    carrying a $G$-module structure such that
    $V$ is smooth and
    $\Trings{gu,gv}=\Trings{u,v}$ for all $u,v\in V$, $g\in G$.
    The category of  pre-unitary representations with continuous $G$-homomorphisms is denoted by $\Rep[pu]{G}$.
	A pre-unitary representation is irreducible it is irreducible as a $G$-module
	and the class of irreducible pre-unitary representation is denoted $\Irr[pu]{G}$.

\medskip

    Every unitary representation $U\in\Rep[u]{G}$ contains a maximal pre-unitary
    subrepresentation  $\sm{U}:=\bigcup_{K\co G} U^K$. It is well-known that $\sm{U}$ is dense in $U$.
    Conversely, if $V$ is a pre-unitary
    representation, then the action of $G$ extends uniquely to
    a continuous action on the completion $\quo{V}$,
    which becomes a unitary representation.
    We always have $\quo{(\sm{V})}=V$, but in general $\sm{(\quo{V})}$ may be larger than $V$.

\medskip

	Finally, recall that a $G$-module $V$ is \emph{admissible} if $\dim V^K<\infty$ for
	all $K\co G$.
	
	\begin{example}\label{EX:LL-Gamma-mod-G-admissible}
		Let $\Gamma$ be a cocompact discrete subgroup of $G$. Then $\LL{\Gamma\leftmod G}$
        is admissible. Indeed,
		for all $K\co G$, we have $\LL{\Gamma\leftmod G}^K\cong \LL{\Gamma\leftmod G/K}$
		and $\Gamma\leftmod G/K$ is a compact discrete topological space, so it is finite.
	\end{example}

\subsection{The Hecke Algebra}
\label{subsec:Hecke-algebra}

    With every unimodular $\ell$-group $G$ one can associated an idempotented $*$-algebra, $\Hecke{G}$,
    called the \emph{Hecke algebra} of $G$. It is constructed as follows: As a vector space,
    $\Hecke{G}$ is  $\CSLC{G}$, the set of compactly supported locally constant functions from
    $G$ to $\C$. The multiplication in $G$ is the convolution product given by
    \[
    (\psi\conv\vphi)g=\int_{x\in G} \psi (x)\cdot \vphi (x^{-1}g)\, d\mu \qquad
    \forall\,\psi,\vphi\in \Hecke{G},\,  g\in G\ .\]
    Note that $\Hecke{G}$ is not unital unless $G$ is discrete.
    The Hecke algebra admits an  involution $*:\Hecke{G}\to\Hecke{G}$ given by
    \[
    \psi^*(g)=\cconj{\psi(g^{-1})} \qquad\forall \,\psi\in\Hecke{G},\,g\in G\ .
    \]
    For $K\co G$, we define
    \[
    e_K:=\mu(K)^{-1}\charfunc{K}\in\Hecke{G}\ ,
    \]
    where $\charfunc{K}$ denotes the characteristic function of $K$. It is easy to check that
    $e_K$ is an idempotent and $e_K^* =e_K$. The idempotent $e_K$ also has the following properties:
    \begin{enumerate}
        \item[(i)] $\psi\in\Hecke{G}$ is left $K$-invariant if and only if $\psi=e_K\conv\psi$.
        \item[(ii)] $\psi\in\Hecke{G}$ is right $K$-invariant if and only if $\psi=\psi\conv e_K$.
    \end{enumerate}
    Since the compact open subgroups form a basis of
    neighborhoods of $1_G$, every function $\psi\in\Hecke{G}$ is bi-$K$-invariant for some $K\co G$. Thus,
    $\Hecke{G}$ is an  idempotented $*$-algebra (\ref{subsec:idempotented-algebras}).
    The subalgebra
    \[
    \Hecke[K]{G}:=e_K\Hecke{G}e_K=\{\psi\in\Hecke{G}\where \psi(kgk')=\psi(g)~\text{for all}~k,k'\in K\, g\in G\}
    \]
    is a unital $*$-subalgebra of $\Hecke{G}$ called the \emph{Hecke algebra of $(G,K)$}.

    \begin{remark}
        The convolution product $\vphi\conv\psi$ is in fact defined
        under the milder assumption that one of $\vphi$, $\psi$ is compactly supported and the other is measurable.
    \end{remark}

    The algebra $\Hecke{G}$ admits  another involution given by
    \[
    \psi^\#(g)={\psi(g^{-1})} \qquad\forall \,\psi\in\Hecke{G},\,g\in G\ .
    \]
    This involution is in fact $\C$-linear, hence it is not an involution in the sense
    of \ref{subsec:idempotented-algebras}.

\subsection{Representations of $G$ vs.\ Representations of $\Hecke{G}$}
\label{subsec:G-vs-Hecke}

    Keep the  notation of \ref{subsec:Hecke-algebra}.
    Every $V\in\Rep[u]{G}$ can be made into a unitary representation of $\Hecke{G}$
    by defining
    \begin{equation}\label{EQ:G-to-H}
    \psi\cdot v:=\int_{x\in G} \psi(x)\cdot x v\, d\mu \qquad\forall \, \psi\in\Hecke{G}\, v\in V\ .
    \end{equation}
    The integral is defined because the function $x\mapsto \psi(x)\cdot xv$ is continuous and compactly
    supported. That $V$ is indeed a unitary representation of $\Hecke{G}$ follows by  computation.

    Conversely, every unitary representation of $\Hecke{G}$ can
    be made into a unitary representation of $G$ as follows:
    For $g\in G$ and $\psi\in\Hecke{G}$, define $g\conv \psi\in\Hecke{G}$ by $(g\conv \psi)x=\psi(g^{-1}x)$.
    Now, for $v\in V$, let
    \begin{equation}\label{EQ:H-to-G}
    g\cdot v:=\lim\{(g\conv e_K)v\}_{K\co G}\ .
    \end{equation}
    Here, $\{e_K\}_{K\co G}$ is viewed as a subnet of $\ids{\Hecke{G}}$
    (see~\ref{subsec:states}).
    That $V$ is indeed a unitary representation of $G$ is routine.

    It is also worth noting that for all $K\co G$ and $v\in V$, one has $e_Kv=v$ if and only if $v\in V^K$.

\medskip

    We summarize the previous paragraphs in  the following well-known proposition.

    \begin{prp}
        The maps $\Rep[u]{G}\to \Rep[u]{\Hecke{G}}$
        and $\Rep[u]{\Hecke{G}} \to \Rep[u]{G}$ described above are inverses of each other,
        and they induce an isomorphism
        of categories (morphisms are mapped to themselves). %(the morphisms are continuous $G$- and $\Hecke{G}$-homomorphisms).
    \end{prp}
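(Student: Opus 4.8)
The plan is to verify the four required assertions: that \eqref{EQ:G-to-H} produces a unitary representation of $\Hecke{G}$, that \eqref{EQ:H-to-G} produces a unitary representation of $G$, that the two constructions are mutually inverse, and that morphisms are preserved (so that we obtain an isomorphism of categories). Most of this is a routine unwinding of definitions, so I would keep the calculations brief and emphasize the points where smoothness, unimodularity, or continuity of the action are actually used.

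First I would check that \eqref{EQ:G-to-H} is well-defined and gives a unitary $\Hecke{G}$-module. For $v\in V$, the map $x\mapsto\psi(x)\cdot xv$ is continuous (by continuity of the $G$-action) and compactly supported (since $\psi\in\CSLC{G}$), so the $V$-valued integral exists. The module axiom $\psi\conv(\vphi\cdot v)=(\psi\conv\vphi)\cdot v$ follows from Fubini together with the substitution $x\mapsto xy$ and right-invariance of $\mu$ (here unimodularity is convenient but one only really needs the defining properties of $\mu$). Condition (U1) of \ref{subsec:unitary-reps} follows from $\langle gu,v\rangle=\langle u,g^{-1}v\rangle$ and $\psi^*(x)=\cconj{\psi(x^{-1})}$: integrating the former against $\psi$ and substituting $x\mapsto x^{-1}$ gives $\langle\psi\cdot u,v\rangle=\langle u,\psi^*\cdot v\rangle$. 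For (U2) and for the identity $e_Kv=v\iff v\in V^K$, one computes directly that $e_K\cdot v=\mu(K)^{-1}\int_K xv\,d\mu$ equals the orthogonal projection of $v$ onto $V^K$ (using that the $G$-action is unitary, so $\int_K xv\,d\mu$ is $K$-fixed and agrees with $v$ on $V^K$); since $\sm{V}=\bigcup_{K\co G}V^K$ is dense in $V$, condition (U2) holds. Condition (U3) is immediate: $\norm{\psi\cdot v}\le\int_G\abs{\psi(x)}\,d\mu\cdot\norm{v}$.

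Next I would verify that \eqref{EQ:H-to-G} gives a unitary $G$-representation. Given $V\in\Rep[u]{\Hecke{G}}$, the net $\{(g\conv e_K)v\}_{K\co G}$ is a net indexed by $\ids{\Hecke{G}}$ (via $K\mapsto e_K$, using that the $e_K$ are cofinal among idempotents fixing a given finite set of vectors), and Lemma~\ref{LM:approximation-lemma} shows it converges: indeed $(g\conv e_K)\conv e_{K'}=g\conv e_{K}$ once $K'\subseteq K\cap g^{-1}Kg$, so the net is eventually constant along the argument of Lemma~\ref{LM:approximation-lemma} and the limit exists. One checks $g_1\cdot(g_2\cdot v)=(g_1g_2)\cdot v$ from $(g_1\conv e_K)\conv(g_2\conv e_{K'})=(g_1g_2)\conv e_{K'}$ for $K'$ small enough, and $1_G$ acts as the identity by Lemma~\ref{LM:approximation-lemma}. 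Unitarity $\langle gu,gv\rangle=\langle u,v\rangle$ follows because $(g\conv e_K)^*=e_K\conv g^{-1}$ (a direct computation from the formulas for $\conv$ and $*$), so $\langle (g\conv e_K)u,(g\conv e_K)v\rangle=\langle u,e_K\conv g^{-1}\conv g\conv e_K\,v\rangle=\langle u,e_K v\rangle\to\langle u,v\rangle$; a further application of Cauchy--Schwarz and Lemma~\ref{LM:approximation-lemma} handles the passage to the limit on the left. Smoothness and continuity of the resulting $G$-action hold because $e_K\cdot v=v$ forces $k\cdot v=v$ for $k\in K$ (from $(k\conv e_K)=e_K$), so every vector in the dense subspace $\bigcup_K e_K V$ is fixed by some $K\co G$; density of $\bigcup_K e_K V$ in $V$ is condition (U2) for $V$ as an $\Hecke{G}$-representation.

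Finally I would show the two constructions are mutually inverse and functorial. Starting from $V\in\Rep[u]{G}$, passing to $\Hecke{G}$ and back, one must check that for $v\in V$ the limit $\lim_K(g\conv e_K)\cdot v$ (computed with the integrated action) returns the original $gv$; this reduces to $(g\conv e_K)\cdot v=\mu(K)^{-1}\int_{gK}xv\,d\mu\to gv$, which holds by continuity of the $G$-action since $gK$ shrinks to $g$. Conversely, starting from $W\in\Rep[u]{\Hecke{G}}$ and forming the $G$-action and then re-integrating, one checks $\int_G\psi(x)\cdot(x\cdot w)\,d\mu=\psi\cdot w$ by approximating $\psi$ by a finite $\C$-linear combination of translates $\mu(K)(g_i\conv e_K)=\charfunc{g_iK}$ (valid since $\psi$ is locally constant with compact support, hence right $K$-invariant for some $K$) and using that $x\cdot w$ is constant on each coset $g_iK$. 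Morphisms go to themselves: a continuous $G$-homomorphism automatically commutes with the integrated $\Hecke{G}$-action (pull the map inside the integral, legitimate since it is continuous and linear), and a continuous $\Hecke{G}$-homomorphism commutes with each $g\conv e_K$ hence with the limiting $G$-action. The main obstacle, such as it is, will be the care needed in justifying the limit and the passage to the limit inside inner products in \eqref{EQ:H-to-G}; once Lemma~\ref{LM:approximation-lemma} is invoked correctly, everything else is bookkeeping.
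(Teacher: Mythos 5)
The paper treats this proposition as well known and offers no proof, so there is no ``paper approach'' to compare against; your write-up supplies the standard argument and is essentially correct. The one spot worth tightening is the convergence of the net $\{(g\conv e_K)v\}_{K\co G}$ in the $\Hecke{G}\to G$ direction. The identity you cite, $(g\conv e_K)\conv e_{K'}=g\conv e_{K}$ for $K'\subseteq K$, is true but is not the one that makes the net eventually constant on smooth vectors; what you want is $(g\conv e_{K'})\conv e_{K}=g\conv e_{K}$ (also valid for $K'\subseteq K$, since $e_{K'}\conv e_K=e_K$), which gives $(g\conv e_{K'})v=(g\conv e_K)v$ whenever $v=e_Kv$. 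One should then pass from smooth $v$ to general $v$ using the uniform bound $\norm{(g\conv e_K)|_V}\leq 1$, which you only derive later from $(g\conv e_K)^*\conv(g\conv e_K)=e_K$; stating it up front, together with density of $\sm V$, shows the net is Cauchy and hence convergent. The extra condition $K'\subseteq g^{-1}Kg$ you impose is superfluous. These are bookkeeping issues; the structure, the use of Lemma~\ref{LM:approximation-lemma}, the verification of (U1)--(U3), the mutual inversion via shrinking $gK$ and coset-by-coset approximation of $\psi$, and the functoriality check are all sound.
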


    Likewise,
    there is an isomorphism between
    $\Rep[pu]{G}$ and $\Rep[pu]{\Hecke{G}}$, and an isomorphism between the category
    of smooth $G$-modules and the category of smooth $\Hecke{G}$-modules.
    (The isomorphisms are defined using the same formulas used in the unitary case. When $V$ is smooth,
    the integral
    in \eqref{EQ:G-to-H} is just a finite sum, and the net in \eqref{EQ:H-to-G} is eventually constant.)

\medskip

    The previous discussion implies that some of the results of Chapter~\ref{sec:involutary-algebras}
    also apply to unitary and pre-unitary representations of $G$ (by taking $A=\Hecke{G}$).
    In the sequel, we will freely apply these results to representations
    of $G$.

    \begin{example}\label{EX:right-regular-smooth-rep}
        Consider the right regular representation $\LL{1\leftmod G}$ of Example~\ref{EX:right-regular-rep}.
        The induced left $\Hecke{G}$-module structure on $\LL{1\leftmod G}$ is  given by
        \[\vphi\cdot \psi=\int_{x\in G}\vphi (x)\cdot (x\cdot \psi)\,d\mu \ .\]
        Applying both sides to $g\in G$
        yields
        \begin{align*}
        (\vphi\cdot\psi)g&=\Circs{\int_{x\in G}\vphi x\cdot x\psi\,d\mu}g=
        \int_{x\in G}\vphi x\cdot (x\psi)g\,d\mu=
        \int_{x\in G}\vphi x\cdot \psi(gx)\,d\mu\\
        &=\int_{x\in G} \psi(gx)\cdot \vphi^\# (x^{-1})\,d\mu=
        \int_{x\in G} \psi(x)\cdot \vphi^\# (x^{-1}g)=(\psi\conv \vphi^\#)(g)
        \end{align*}
        Thus, the action of $\Hecke{G}$ on $\LL{1\leftmod G}$ is given by
        \[
        \vphi\cdot\psi=\psi \conv \vphi^\#\ .
        \]
        Likewise, the induced action of $\Hecke{G}$ on $\LL{\Gamma\leftmod G}$ (see Example~\ref{EX:LL-Gamma-leftmod-G}), is also
        given by $\vphi\cdot \psi=\psi\conv \vphi^\#$ (when we view $\psi\in\LL{\Gamma\leftmod G}$ as a function on $G$).
    \end{example}

    \begin{example}\label{EX:left-regular-smooth-rep}
        Consider $\LL{G/1}$, the left regular representation of $G$ (Example~\ref{EX:right-regular-rep}).
        Then the  action
        of $\Hecke{G}$ on $\LL{G/1}$ is given by
        \[
        \vphi\cdot \psi=\vphi\conv \psi\ .
        \]
        The computation is similar to Example~\ref{EX:right-regular-smooth-rep} and is left to the reader.
    \end{example}

    \begin{remark}
        Let $V$ be a unitary representation of $G$.
        We can define $\sm{V}$ by considering $V$ as a $G$-module (\ref{subsec:unitary-reps-G})
        and by considering $V$ as an $\Hecke{G}$-module (\ref{subsec:idempotented-algebras}).
        However, there is no ambiguity because $\Hecke{G}V=\bigcup_{K\co G} e_KV=\bigcup_{K\co G} V^K$.
        Likewise, $V$ is admissible as a representation of $G$ if and only if it is admissible as a representation
        of $\Hecke{G}$ because $e_KV=V^K$ for all $K\co G$.
    \end{remark}

\subsection{Elementary Functors Revised}
\label{subsec:algebra-A-revised}

    Let $\calX$ be an almost transitive $G$-complex (\ref{subsec:G-complex}),
    let $\catC=\catC(G,\calX)$
    (Definition~\ref{DF:G-complex-category}),
    and let $F:\catC\to \catPHil$ be elementary (e.g.\ $\Omega_i^+$, $\Omega_i^\pm$,
    $\llFlag$; see \ref{subsec:case-of-Alg-G-X}).
    We further write $F=\llf \circ S$ where $S$ is as in Definition~\ref{DF:elementary-functor}.
    In this section, we  describe $\Alg{\catC,F}{}$
    and $FX$ ($X\in\catC$) in terms of $\Hecke{G}$.
    %This description will be the basis for our characterization of $F$-Ramanujan $G$-quotients
    %of $\calX$.
    For brevity, write
    \[\catH=\Hecke{G}\]
    We shall freely identify
    $S(\Gamma\leftmod\calX)$ with $\Gamma\leftmod S\calX$ using
    Proposition~\ref{PR:S-quotient-check}(i).

\medskip

    We set some general notation:
    Fix   representatives $x_1,\dots,x_t$ for the $G$-orbits in $S\calX$, and
    for all $1\leq n \leq t$ let
    \[
    K_n=\Stab_G(x_n)\qquad\text{and} \qquad e_n=e_{K_n}\in\catH
    \]
    ($K_n\co G$ by condition (E2) in Definition~\ref{DF:elementary-functor}).
   	Let $\Gamma\leq_{\calX} G$.
   	The canonical measure $\mu_{\Gamma\leftmod G}$ on $\Gamma\leftmod G$ (see~Example~\ref{EX:LL-Gamma-leftmod-G})
    induces a measure $\mu_{\Gamma\leftmod G/K_n}$ on the discrete topological space $\Gamma\leftmod G/K_n$ by pushing forward.
    %More explicitly:

    \begin{lem}\label{LM:double-coset-measure}
        For all $g,h\in G$, we have (i) $\charfunc{\Gamma\leftmod\Gamma g K_n}(\Gamma h)=\sum_{\gamma\in \Gamma}\charfunc{gK_n}(\gamma h)$
        and (ii) $\mu_{\Gamma\leftmod G/K_n}(\{\Gamma gK_n\})=\mu_{\Gamma\leftmod G}(\Gamma\leftmod\Gamma gK_n)=\mu(K_n)$.
    \end{lem}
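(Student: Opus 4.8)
The plan is to isolate the one structural fact that drives both parts, namely that $\Gamma\cap gK_ng^{-1}=\{1_G\}$ for every $g\in G$ and every $1\le n\le t$, and then to read off (i) as a coset count and (ii) from the defining integration formula for $\mu_{\Gamma\leftmod G}$. First I would record this fact: since $\Gamma\leq_{\calX} G$, Proposition~\ref{PR:quotient-cover-sufficient-nec-conds} gives that $\Gamma$ is discrete (so $\mu_{\Gamma\leftmod G}$ is defined, cf.\ Example~\ref{EX:LL-Gamma-leftmod-G}) and that $\Gamma$ acts freely on $\dot\calX$. By the second clause of condition (E2) in Definition~\ref{DF:elementary-functor}, the compact open group $K_n=\Stab_G(x_n)$ is contained in $\Stab_G(c)$ for some cell $c\in\dot\calX$; conjugating, $gK_ng^{-1}=\Stab_G(gx_n)\subseteq\Stab_G(gc)$, whence $\Gamma\cap gK_ng^{-1}\subseteq\Stab_\Gamma(gc)=\{1_G\}$. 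This is exactly the place where the ``furthermore'' clause of (E2) is used, as anticipated in the remark following Definition~\ref{DF:elementary-functor}.

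For part (i), I would first observe that the left-hand side equals $\charfunc{\Gamma gK_n}(h)$: as $\Gamma gK_n=\bigcup_{k\in K_n}\Gamma gk$ is a union of left $\Gamma$-cosets and $h\in\Gamma h$, the coset $\Gamma h$ lies in $\Gamma\leftmod\Gamma gK_n$ precisely when $h\in\Gamma gK_n$. For the right-hand side, $\sum_{\gamma\in\Gamma}\charfunc{gK_n}(\gamma h)$ counts the $\gamma\in\Gamma$ with $\gamma h\in gK_n$; if $h\notin\Gamma gK_n$ there are none and both sides vanish, while if $h=\gamma_0 gk_0$ with $\gamma_0\in\Gamma$, $k_0\in K_n$, then $\gamma h\in gK_n$ forces $\gamma\gamma_0\in gK_ng^{-1}$, hence $\gamma\gamma_0\in\Gamma\cap gK_ng^{-1}=\{1_G\}$ and $\gamma=\gamma_0^{-1}$ is the unique solution (and it works: $\gamma_0^{-1}h=gk_0\in gK_n$). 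So the right-hand side is also $\charfunc{\Gamma gK_n}(h)$, proving (i).

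For part (ii), the first equality is just the definition of $\mu_{\Gamma\leftmod G/K_n}$ as the pushforward of $\mu_{\Gamma\leftmod G}$ along $\Gamma\leftmod G\to\Gamma\leftmod G/K_n$, whose fibre over $\{\Gamma gK_n\}$ is the set $\Gamma\leftmod\Gamma gK_n$. For the second equality I would apply the integration identity of Example~\ref{EX:LL-Gamma-leftmod-G} to $\vphi=\charfunc{gK_n}\in\CSLC{G}$, which is legitimate because $gK_n$ is compact open: by part (i), $\hat\vphi(\Gamma h)=\sum_{\gamma\in\Gamma}\charfunc{gK_n}(\gamma h)=\charfunc{\Gamma\leftmod\Gamma gK_n}(\Gamma h)$, so the identity gives $\mu(gK_n)=\int_{\Gamma\leftmod G}\charfunc{\Gamma\leftmod\Gamma gK_n}\,d\mu_{\Gamma\leftmod G}=\mu_{\Gamma\leftmod G}(\Gamma\leftmod\Gamma gK_n)$, and $\mu(gK_n)=\mu(K_n)$ by left-invariance of $\mu$. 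The only real content is the freeness argument of the first paragraph; once $\Gamma\cap gK_ng^{-1}=\{1_G\}$ is in hand, parts (i) and (ii) are bookkeeping with the definitions of the induced quotient measures and of $\hat\vphi$, so I expect no genuine obstacle beyond that.
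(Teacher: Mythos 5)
Your proof is correct and follows essentially the same route as the paper's: both establish $\Gamma\cap gK_ng^{-1}=\{1_G\}$ via condition (E2) and Proposition~\ref{PR:quotient-cover-sufficient-nec-conds}, use the resulting uniqueness of the decomposition $h=\gamma gk$ for part (i), and then apply the defining integration identity of Example~\ref{EX:LL-Gamma-leftmod-G} to $\charfunc{gK_n}$ for part (ii). The only cosmetic difference is that you package part (i) as showing both sides equal $\charfunc{\Gamma gK_n}(h)$, whereas the paper directly checks both sides equal $1$ on $\Gamma gK_n$ and $0$ off it.
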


    \begin{proof}
        (i) If $h\notin\Gamma gK_n$,
        then $\charfunc{\Gamma\leftmod\Gamma g K_n}(\Gamma h)=0=\sum_{\gamma\in \Gamma}\charfunc{gK_n}(\gamma h)$,
        so assume $h\in\Gamma gK_n$.
        By condition (E2) in Definition~\ref{DF:elementary-functor}, $K_n$ is contained
        in the stabilizer of a cell $y\in \dot{\calX}$. Clearly $gK_ng^{-1}\subseteq \Stab_G(gy)$,
        so by Proposition~\ref{PR:quotient-cover-sufficient-nec-conds}, $\Gamma\cap gK_ng^{-1}=1$.
        This means that there are unique $\gamma\in \Gamma$ and $k\in K_n$ such that $h=\gamma g k$
        (if $\gamma gk=\gamma'gk'$, then $\gamma'^{-1}\gamma =gk'k^{-1}g^{-1}\in \Gamma\cap gK_ng^{-1}=1$, so $\gamma'=\gamma$
        and $k=k'$).
        It follows that $\charfunc{\Gamma\leftmod\Gamma g K_n}(\Gamma h)=1=\sum_{\gamma\in \Gamma}\charfunc{gK_n}(\gamma h)$

        (ii) In the notation of Example~\ref{EX:LL-Gamma-leftmod-G},
        part (i) implies that $\charfunc{\Gamma\leftmod\Gamma g K_n}=\hat{\charfunc{}}_{gK_n}$.
        Thus,
        by the definition of the measure on $\Gamma\leftmod G$,
        we have
        \[\mu_{\Gamma\leftmod G}(\Gamma\leftmod \Gamma gK_n)=\int_{\Gamma\leftmod G}\charfunc{\Gamma\leftmod\Gamma g K_n}\,
        \mathrm{d}\mu_{\Gamma\leftmod G}=\int_G\charfunc{gK_n}\,\mathrm{d}\mu=\mu(K_n)\ .\]
        That $\mu_{\Gamma\leftmod G/K_n}(\{\Gamma gK_n\})=\mu_{\Gamma\leftmod G}(\Gamma\leftmod\Gamma gK_n)$ is immediate.
    \end{proof}

    Recall  from  the notation section
    that $\CSLC{\Gamma\leftmod G/K_n}$ denotes the set of locally constant compactly
    supported functions from $\Gamma\leftmod G/K_n$ to $\C$. This  a subspace of $\LL{\Gamma\leftmod G/K_n}$,
    and equality holds if and only if
    $\Gamma\leftmod G/K_n$ is finite.
    Also notice that $\CSLC{\Gamma\leftmod G/K_n}=\llf(\Gamma\leftmod G/K_n)$ as $\C$-vectors spaces,
    but these spaces may differ as pre-Hilbert spaces, because
    \begin{equation}\label{EQ:inner-product-ratio}
    \Trings{\vphi,\psi}_{\CSLC{\Gamma\leftmod  G/K_n}}=\mu(K_n)\Trings{\vphi,\psi}_{\llf({\Gamma\leftmod  G/K_n})}\ ,
    \end{equation}
    thanks to Lemma~\ref{LM:double-coset-measure}. In the sequel, we shall  view $\CSLC{\Gamma\leftmod G/K_n}$ as a subspace of
    $\CSLC{\Gamma\leftmod G}$ in obvious way.

    Let $\Gamma'\leq \Gamma$. Then the map $p:\Gamma'\leftmod G /K_n\to
    \Gamma\leftmod G /K_n$ given by $p(\Gamma'gK_n)= \Gamma gK_n$ induces a map
    $p_*:\llf(\Gamma'\leftmod G/K_n)\to\llf(\Gamma\leftmod G/K_n)$. We  shall view
    view $p_*$ as a map from  $\CSLC{\Gamma'\leftmod G/K_n}$ to $\CSLC{\Gamma\leftmod G/K_n}$,
    and  denote it by
    \[
    P_{\Gamma',\Gamma}^{(n)} \ .
    \]
    Explicitly, $P^{(n)}_{\Gamma',\Gamma}\charfunc{\Gamma'gK_n}=\charfunc{\Gamma gK_n}$.
    Furthermore, by Lemma~\ref{LM:double-coset-measure}(i), when viewing $\CSLC{G/K_n}$ as a subspace of $\CSLC{G}$, we have
    \begin{equation}\label{EQ:P-Gamma-computation}
    (P_{1,\Gamma}^{(n)}\vphi)g=\sum_{\gamma\in\Gamma}\vphi(\gamma g)\qquad\forall\, \vphi\in\CSLC{G/K_n},\,g\in G\ .
    \end{equation}

%\medskip

    We can now phrase the main result of this section:

    \begin{thm}\label{TH:Hecke-description-of-A}
        Define an algebra
        \[
        B=\left[
        \DotsArr{e_1\catH e_1}{e_1\catH e_t}{e_t\catH e_1}{e_t\catH e_t}
        \right]\subseteq \nMat{\catH}{t}
        \]
        and an involution $*:B\to B$ by $((\vphi_{ij})_{i,j})^*=(\vphi_{ji}^*)_{i,j}$.
        For every $\Gamma\leq_{\calX} G$, we make $\bigoplus_{i=1}^t\CSLC{\Gamma\leftmod G/K_n}$
        into a left $B$-module by setting
        \[
        (\vphi_{ij})\cdot (\psi_j)_{j=1}^t=\Big(\sum_{j=1}^t\psi_j\conv \vphi_{ij}^{\#}\Big)_{i=1}^t
        \]
        (see \ref{subsec:Hecke-algebra} for the definition of $\#$).
        Then there is an isomorphism of algebras with involutions
        \[
        \Alg{\catC,F}{}\cong B
        \]
        and  isomorphisms of pre-Hilbert spaces
        \[
        F(\Gamma\leftmod\calX)\cong\bigoplus_{n=1}^t\CSLC{\Gamma\leftmod G/K_n}
        \qquad \forall\,\Gamma\leq_{\calX}G
        \]
        which are compatible with the relevant $\Alg{\catC,F}{}$-module and $B$-module structures.
    \end{thm}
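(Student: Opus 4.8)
The plan is to build everything on top of Theorem~\ref{TH:Adj-algebra-descrition}, which already identifies $\Alg{\catC,F}{}$ with $\End_G(F\calX)$, together with the fact that $F=\llf\circ S$ and $S\calX$ decomposes as a disjoint union of $G$-orbits $G/K_n$ (via Proposition~\ref{PR:S-quotient-check}(i) and condition (E4)). First I would establish the pre-Hilbert space identification: $F\calX=\llf(S\calX)=\bigoplus_{n=1}^t\llf(G/K_n)$, and then rescale, using \eqref{EQ:inner-product-ratio}, to identify $\llf(G/K_n)$ with $\CSLC{G/K_n}=e_n\catH$ as a pre-Hilbert space. The key observation is that $\llf(G/K_n)$, as a smooth unitary $G$-representation, is isomorphic to $e_n\catH=e_n\LL{G/1}$ with the action of Example~\ref{EX:left-regular-smooth-rep}; under the isomorphism $\e_{gK_n}\mapsto \mu(K_n)^{-1}\charfunc{gK_n}$ (or the appropriate normalization) the $G$-action matches and the inner products match up to the scalar $\mu(K_n)$ recorded in Lemma~\ref{LM:double-coset-measure}. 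Thus $F\calX\cong\bigoplus_n e_n\catH$ as a smooth $G$-representation, hence as a smooth $\catH$-module (via \ref{subsec:G-vs-Hecke}).

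Next I would compute $\End_G(F\calX)$. Since $F\calX=\bigoplus_n e_n\catH$ with $e_n\catH$ viewed inside the left regular representation, and since $G$-equivariant (equivalently $\catH$-module) maps between $e_i\catH$ and $e_j\catH$ are given by \emph{right} multiplication by elements of $e_j\catH e_i$ (the usual computation $\Hom_{\catH\text{-mod}}(\catH e_i,\catH e_j)\cong e_i\catH e_j$, adapted to the idempotented setting and the specific form of the left-regular action $\vphi\cdot\psi=\vphi\conv\psi$), we get $\End_G\big(\bigoplus_n e_n\catH\big)\cong (e_i\catH e_j)_{i,j}$ as a matrix algebra, i.e. exactly $B$. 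Here I must be careful about sides: the module action in the statement, $(\vphi_{ij})\cdot(\psi_j)=(\sum_j\psi_j\conv\vphi_{ij}^{\#})_i$, involves the $\#$-involution and a convolution on the right, which is precisely the form forced by Example~\ref{EX:left-regular-smooth-rep}/\ref{EX:right-regular-smooth-rep} — the operator $\psi\mapsto \psi\conv\vphi^{\#}$ is the $\catH$-endomorphism of the left regular representation given by the element $\vphi$. I would verify directly that this is $\C$-linear in $(\psi_j)$, $\catH$-equivariant, associative in $(\vphi_{ij})$, and that it carries $B$-module structure to the one induced on $F(\Gamma\leftmod\calX)$ after passing to quotients.

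To descend to a general $\Gamma\leq_{\calX}G$, I would use that $F(\Gamma\leftmod\calX)=\llf(\Gamma\leftmod S\calX)=\bigoplus_n\llf(\Gamma\leftmod G/K_n)$ (Proposition~\ref{PR:S-quotient-check}(i) and Proposition~\ref{PR:orientation-of-quotient}-type identifications already built into elementarity), and that $\llf(\Gamma\leftmod G/K_n)\cong\CSLC{\Gamma\leftmod G/K_n}$ up to the same scalar $\mu(K_n)$ by Lemma~\ref{LM:double-coset-measure}(ii). The action of $a=\{a_X\}\in\Alg{\catC,F}{}$ on $F(\Gamma\leftmod\calX)$ is determined by its action on $F\calX$ via \eqref{EQ:Adj-action-on-quotients}: if $a$ corresponds to the matrix $(\vphi_{ij})\in B$, then on the standard basis $a\e_{\Gamma gK_n}=\sum_m\sum_{\Gamma hK_m}(\text{coefficients of }a\e_{gK_n})\e_{\Gamma hK_m}$, and I would check that under the identifications this is exactly $(\psi_j)\mapsto(\sum_j\psi_j\conv\vphi_{ij}^{\#})$, using \eqref{EQ:P-Gamma-computation} to translate the summation over $\Gamma$ into convolution on $\Gamma\leftmod G$. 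The main obstacle is bookkeeping rather than conceptual: getting the normalizations ($\mu(K_n)$ factors), the left-versus-right module conventions, and the appearance of $\#$ (rather than $*$) all mutually consistent, and confirming that the involution $((\vphi_{ij}))^*=(\vphi_{ji}^*)$ on $B$ matches the dual-with-respect-to-the-inner-product involution on $\Alg{\catC,F}{}$ given by Theorem~\ref{TH:Adj-algebra-descrition}(i). For that last point I would compute $\Trings{a\e_{\Gamma gK_i},\e_{\Gamma hK_j}}$ and $\Trings{\e_{\Gamma gK_i},a^*\e_{\Gamma hK_j}}$ directly and match them, which reduces to the identity $(\psi\conv\vphi^{\#})^{\text{adj}}=\psi\conv(\vphi^*)^{\#}$ on the left regular representation — a short calculation with the Haar-measure identities recalled at the start of this chapter.
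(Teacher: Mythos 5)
Your plan follows the paper's proof essentially step for step: invoking Lemma~\ref{LM:Hecke-description-of-A} and Theorem~\ref{TH:Adj-algebra-descrition}(i) to reduce to computing $\End_G(F\calX)$, identifying $F\calX$ with $\bigoplus_n\CSLC{G/K_n}$ sitting inside the left regular representation, using Lemma~\ref{LM:endo-ring-of-idempotent} to compute the endomorphism ring as a matrix of corners, and then descending to $\Gamma\leftmod\calX$ and checking the involution via Haar-measure manipulations. One caveat worth flagging: you repeatedly write $\CSLC{G/K_n}=e_n\catH$, but right $K_n$-invariance corresponds to $\psi=\psi\conv e_{K_n}$, so the correct identification is $\CSLC{G/K_n}=\catH e_n$ (your own parenthetical $\Hom_{\catH}(\catH e_i,\catH e_j)\cong e_i\catH e_j$ has it right); this side-confusion would need to be cleaned up when executing the calculations, since it propagates to the indices in the endomorphism matrix and to where the $\#$-involution lands.
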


    The proof is somewhat technical. We first prove the following two lemmas.

    \begin{lem}\label{LM:Hecke-description-of-A}
        Define a functor $F_1:\catC(G,\calX)\to \catPHil$ as follows (notation as in Definition~\ref{DF:G-complex-category}):
        For all  $\Gamma'\leq \Gamma\leq_{\calX} G$ and $g\in G$, let
        \begin{itemize}
            \item $F_1(\Gamma\leftmod \calX)=\bigoplus_{n=1}^t\CSLC{\Gamma\leftmod G/K_n}$,
            \item $F_1(p_\Gamma\circ g)=\bigoplus_{n=1}^t P_{1,\Gamma}^{(n)}\circ (g|_{\CSLC{G/ K_n}})$,
            where $g$ acts on $\CSLC{G/ K_n}$ via the left regular representation $\LL{G/1}$ (Example~\ref{EX:right-regular-rep}).
            \item $F_1p_{\Gamma',\Gamma}=\bigoplus_{n=1}^t P^{(n)}_{\Gamma',\Gamma}$.
        \end{itemize}
        Then there is a unitary natural isomorphism
        $
        F\cong F_1
        $.
    \end{lem}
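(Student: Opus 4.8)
The plan is to construct the unitary natural isomorphism $u\colon F\to F_1$ componentwise, using the fact that $F=\llf\circ S$ and that $S(\Gamma\leftmod\calX)$ may be identified with $\Gamma\leftmod S\calX$ via Proposition~\ref{PR:S-quotient-check}(i). First I would observe that since the $G$-action on $S\calX$ has orbit representatives $x_1,\dots,x_t$ with stabilizers $K_n=\Stab_G(x_n)$, there is a $G$-equivariant bijection $S\calX\cong\bigsqcup_{n=1}^t G/K_n$ sending $gx_n$ to $gK_n$. Passing to $\Gamma$-orbits, this gives a bijection $\Gamma\leftmod S\calX\cong\bigsqcup_{n=1}^t\Gamma\leftmod G/K_n$, hence a $\C$-linear isomorphism $\llf(\Gamma\leftmod S\calX)\cong\bigoplus_{n=1}^t\llf(\Gamma\leftmod G/K_n)$. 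The pre-Hilbert structure on the right side that matches $\Omega$-type inner products is the one of $\CSLC{\Gamma\leftmod G/K_n}$ rescaled: by \eqref{EQ:inner-product-ratio}, $\Trings{\,,\,}_{\CSLC{\Gamma\leftmod G/K_n}}=\mu(K_n)\Trings{\,,\,}_{\llf(\Gamma\leftmod G/K_n)}$, so I would define $u_{\Gamma\leftmod\calX}$ on standard basis vectors by $\e_{\Gamma g x_n}\mapsto \mu(K_n)^{-1/2}\charfunc{\Gamma g K_n}$, which is then a unitary map onto $\bigoplus_n\CSLC{\Gamma\leftmod G/K_n}=F_1(\Gamma\leftmod\calX)$.

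Next I would verify naturality, i.e.\ that $u=\{u_X\}_{X\in\catC}$ commutes with the morphisms of $\catC$. There are two kinds of generating morphisms. For $p_{\Gamma',\Gamma}\colon\Gamma'\leftmod\calX\to\Gamma\leftmod\calX$ (with $\Gamma'\le\Gamma$), we have $(Fp_{\Gamma',\Gamma})\e_{\Gamma'gx_n}=\e_{\Gamma g x_n}$, while on the $F_1$ side $P^{(n)}_{\Gamma',\Gamma}\charfunc{\Gamma' g K_n}=\charfunc{\Gamma g K_n}$; since the normalization factor $\mu(K_n)^{-1/2}$ is the same on both sides, the square commutes. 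For $p_\Gamma\circ g\colon\calX\to\Gamma\leftmod\calX$, I need $u_{\Gamma\leftmod\calX}\circ F(p_\Gamma\circ g)=F_1(p_\Gamma\circ g)\circ u_\calX$. Here $F(p_\Gamma\circ g)$ sends $\e_{x}\in\llf(S\calX)$ to $\e_{\Gamma g x}$, and $u_\calX$ sends $\e_{hx_n}$ to $\mu(K_n)^{-1/2}\charfunc{h K_n}$. Under the identification $\CSLC{G/K_n}\subseteq\CSLC{G}$, the left regular action of $g$ sends $\charfunc{hK_n}$ to $\charfunc{g^{-1}hK_n}$ (since $(g\vphi)x=\vphi(g^{-1}x)$), and then $P^{(n)}_{1,\Gamma}$ sends $\charfunc{g^{-1}hK_n}$ to $\sum_{\gamma\in\Gamma}$-type sum, which by \eqref{EQ:P-Gamma-computation} and Lemma~\ref{LM:double-coset-measure}(i) equals $\charfunc{\Gamma g^{-1}hK_n}$. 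Meanwhile $u_{\Gamma\leftmod\calX}(F(p_\Gamma\circ g)\e_{hx_n})=u_{\Gamma\leftmod\calX}(\e_{\Gamma g h x_n})$; a short check that the correspondence $S\calX\leftrightarrow\bigsqcup_n G/K_n$ intertwines left multiplication shows $\e_{\Gamma ghx_n}\mapsto\mu(K_n)^{-1/2}\charfunc{\Gamma g h K_n}$. The two results match after noting that replacing $g$ by $g^{-1}$ in the morphism's action on $\CSLC{G/K_n}$ via $\LL{G/1}$ is exactly what is recorded in the definition of $F_1(p_\Gamma\circ g)$ --- so I would double-check the bookkeeping of left versus right cosets and of $g$ versus $g^{-1}$, as this is where sign/inverse errors creep in. This coset bookkeeping --- making the identification $S\calX\cong\bigsqcup_n G/K_n$ genuinely $G$-equivariant and tracking the normalization constants through the left-regular action --- is the main obstacle; everything else is routine.

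Finally, I would record that $u$ being a natural isomorphism of functors into $\catPHil$ that is pointwise unitary is precisely the assertion $F\cong F_1$. One subtlety worth a remark: $F_1(p_\Gamma\circ g)$ as defined involves $P^{(n)}_{1,\Gamma}\circ(g|_{\CSLC{G/K_n}})$, which is a priori a map $\CSLC{G/K_n}\to\CSLC{\Gamma\leftmod G/K_n}$; one should check it lands in the right space and is independent of the representative in the sense needed (i.e.\ $p_\Gamma\circ(\gamma g)=p_\Gamma\circ g$ for $\gamma\in\Gamma$ is reflected by $P^{(n)}_{1,\Gamma}\circ(\gamma g)|_{\CSLC{G/K_n}}=P^{(n)}_{1,\Gamma}\circ g|_{\CSLC{G/K_n}}$), which follows from \eqref{EQ:P-Gamma-computation} since left translation by $\gamma\in\Gamma$ is absorbed into the sum over $\Gamma$. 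With functoriality of $F_1$ itself (composition of the $P^{(n)}$'s and the regular action) also being a direct check, the proof is complete.
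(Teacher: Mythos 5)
Your approach is exactly the paper's: identify $\Gamma\leftmod S\calX$ with $\bigsqcup_{n=1}^t\Gamma\leftmod G/K_n$ via $\Gamma g x_n\leftrightarrow\Gamma gK_n$, rescale each summand by $\mu(K_n)^{-1/2}$ to make $\llf(\Gamma\leftmod G/K_n)\to\CSLC{\Gamma\leftmod G/K_n}$ unitary, and then verify naturality using \eqref{EQ:P-Gamma-computation}. The paper calls the naturality check ``routine''; you spell it out, which is fine.

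However, there is a small computational slip in your naturality check for $p_\Gamma\circ g$, precisely in the spot you flagged as error-prone. From $(g\vphi)x=\vphi(g^{-1}x)$ one gets $(g\charfunc{hK_n})(x)=\charfunc{hK_n}(g^{-1}x)$, which is nonzero iff $g^{-1}x\in hK_n$, i.e.\ $x\in ghK_n$. So the left regular action sends $\charfunc{hK_n}$ to $\charfunc{ghK_n}$, \emph{not} to $\charfunc{g^{-1}hK_n}$ as you wrote. With the correct formula, $F_1(p_\Gamma\circ g)\circ u_\calX$ sends $\e_{hx_n}\mapsto\mu(K_n)^{-1/2}\charfunc{hK_n}\mapsto\mu(K_n)^{-1/2}\charfunc{ghK_n}\mapsto\mu(K_n)^{-1/2}\charfunc{\Gamma ghK_n}$, which matches $u_{\Gamma\leftmod\calX}\circ F(p_\Gamma\circ g)$ applied to $\e_{hx_n}$ on the nose. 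The ``replacing $g$ by $g^{-1}$'' remark you appended to reconcile the apparent mismatch is therefore unnecessary (and, as stated, incoherent --- the definition of $F_1(p_\Gamma\circ g)$ uses the given $g$, not $g^{-1}$); it was only needed to patch the incorrect intermediate step. Once the computation of the left-regular action is corrected, the diagram commutes with no adjustment.
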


    \begin{proof}
        Note that $F(\Gamma\leftmod\calX)=\llf(\Gamma\leftmod S\calX)$,
        so we need to construct a natural isomorphism between $\llf(\Gamma\leftmod S\calX)$
        and $\bigoplus_{n=1}^t\CSLC{\Gamma\leftmod G/K_n}$.
        There is an isomorphism $\Gamma\leftmod S\calX\cong \bigsqcup_{n=1}^t \Gamma\leftmod G/K_n$
        given by sending $\Gamma g K_n$ to $\Gamma gx_n$.
        This
        induces a unitary isomorphism
        $\Phi: \llf(\Gamma\leftmod S\calX)\to \bigoplus_{n=1}^t\llf(\Gamma\leftmod G/K_n)$.
        Next, by \eqref{EQ:inner-product-ratio}, we have a unitary isomorphism
        \[
        \Psi:=\bigoplus_{n=1}^t \mu(K_n)^{-1/2}\id_{\llf(\Gamma\leftmod  G/K_n)}~~:~~
        \bigoplus_{n=1}^t\llf(\Gamma\leftmod G/K_n)~~\to~~
        \bigoplus_{n=1}^t\CSLC{\Gamma\leftmod G/K_n}\ .
        \]
        Now, $\Psi\circ \Phi:\llf(\Gamma\leftmod S\calX)\to\bigoplus_{n=1}^t\CSLC{\Gamma\leftmod G/K_n}$ is a unitary
        isomorphism. That $\Psi\circ \Phi$ is natural is routine; use \eqref{EQ:P-Gamma-computation}.
    \end{proof}

    \begin{lem}\label{LM:endo-ring-of-idempotent}
        Let $R$ be a ring, possibly non-unital,  and let $e,f\in R$ be idempotents.
        Consider $Re$ and $Rf$ as left $R$-modules.
        Then for every $\phi\in \Hom_R(Re,Rf)$, there exists unique $a\in eRf$
        such hat $\phi(x)=xa$ for all $x\in Re$. Conversely, $[x\mapsto xa]\in\Hom_R(Re,Rf)$
        for all $a\in eRf$.
    \end{lem}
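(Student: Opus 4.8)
\textbf{Proof plan for Lemma~\ref{LM:endo-ring-of-idempotent}.} This is the standard Hom-computation for modules of the form $Re$ with $e$ idempotent, so the plan is to verify the two directions directly and check uniqueness.

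First I would handle the easy converse: given $a\in eRf$, the map $\rho_a:Re\to Rf$, $x\mapsto xa$, is well-defined because $xa\in Rf$ (as $a\in Rf$), and it is visibly a left $R$-module homomorphism since $\rho_a(rx)=rxa=r\rho_a(x)$ for all $r\in R$. No subtlety here.

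For the main direction, let $\phi\in\Hom_R(Re,Rf)$. The key observation is that $e$ acts as a left identity on $Re$: indeed every element of $Re$ has the form $re$ for some $r\in R$, and $e\cdot(re)$ need not equal $re$ in general, so instead I use that $\phi$ is $R$-linear together with $e=e\cdot e$. Set $a:=\phi(e)\in Rf$. Since $\phi$ is $R$-linear and $e$ is idempotent, $a=\phi(e)=\phi(e\cdot e)=e\cdot\phi(e)=ea$, so in fact $a\in eRf$. Now for an arbitrary element $x=re\in Re$ we compute $\phi(x)=\phi(re)=\phi((re)\cdot e)=(re)\cdot\phi(e)=x\cdot a$, where in the second equality I used $re=(re)e$... which again requires care: $(re)e = re\cdot e$, and this equals $re$ only if $e$ is a right identity on $Re$, which it is, since $re\cdot e = r(e e) = re$. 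So the identity $x=xe$ holds for all $x\in Re$, and the displayed computation goes through using $R$-linearity of $\phi$ in the form $\phi(xe)=\phi(x\cdot e)$; but $\phi$ is \emph{left} $R$-linear, not right $R$-linear, so I cannot pull $e$ out on the right directly. The correct route is: write $x=re$, then $\phi(x)=\phi(re)=r\phi(e)=ra=r\cdot ea=(re)a=xa$, using left $R$-linearity once. This is clean and avoids the right-multiplication issue entirely.

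Finally, uniqueness: if $xa=xa'$ for all $x\in Re$, then taking $x=e$ gives $ea=ea'$, i.e.\ $a=a'$ since $a,a'\in eRf$ already satisfy $a=ea$, $a'=ea'$. The main (very minor) obstacle is simply being careful about which side the module structure and the $R$-linearity act on; once one notes $e$ is a left identity for $Re$ under left multiplication and $a=\phi(e)$ automatically lands in $eRf$, everything is a one-line check.
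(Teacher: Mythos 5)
Your proof is correct and follows the same route as the paper: set $a=\phi(e)$, observe that $a=\phi(ee)=e\phi(e)$ forces $a\in eRf$, and then use left $R$-linearity on $x=re$ to get $\phi(x)=r\phi(e)=ra=xa$, with uniqueness by evaluating at $x=e$. The mid-argument detour about right-multiplication is unnecessary noise, but the final computation you settle on is exactly the paper's.
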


    \begin{proof}
        The last assertion is easy, and the uniqueness of $a$ holds because $\phi(e)=ea=a$.
        To see the existence, take $a=\phi(e)$, and note that for all $r\in Re$,
        we have $\phi(r)=\phi(re)=r\cdot \phi(e)=ra$.
    \end{proof}

    \begin{proof}[Proof of Theorem~\ref{TH:Hecke-description-of-A}]
        By Lemma~\ref{LM:Hecke-description-of-A}, we may assume $F=F_1$.
        By Theorem~\ref{TH:Adj-algebra-descrition}(i), we  have
        \[
        \Alg{\catC,F}{}\cong \End_G(F\calX)=\End_G\left(\bigoplus_{n=1}^t\CSLC{G/K_n}\right).
        \]
        Under this isomorphism, the involution on $\Alg{\catC,F}{}$ coincides with taking the adjoint
        operator with respect to the inner product of $\bigoplus_{n=1}^t\CSLC{G/K_n}$.

        View $\CSLC{G/K_n}$ as a subspace of $\CSLC{G}=\catH$.
        Then
        $
        \CSLC{G/K_n}=\catH e_n
        $, and it  is a pre-unitary $G$-subrepresentation of $\LL{G/1}$. We may therefore view $\catH e_n$
        as a pre-unitary representation of the $*$-algebra $\catH$;
        by Example~\ref{EX:left-regular-smooth-rep}, the (left) action of $\catH$ on $\catH e_n$ is given
        by  $\vphi\cdot \psi=\vphi\conv \psi$ ($\vphi\in\catH$, $\psi\in\catH e_n$).
        We now have
        \[
        \End_G\left(\bigoplus_{n=1}^t\CSLC{G/K_n}\right)=\End_\catH\left(\bigoplus_{n=1}^t{\catH e_n}\right)\ .
        \]
        Viewing $\bigoplus_{n=1}^t\catH e_n$ as column vectors, this gives
        \[
        \Alg{\catC,F}{}\cong
        \left[
        \DotsArr{\Hom_{\catH}(\catH e_1, \catH e_1)}{\Hom_{\catH}(\catH e_t, \catH e_1)}{\Hom_{\catH}(\catH e_1, \catH e_t)}{\Hom_{\catH}(\catH e_t, \catH e_t)}
        \right]\ .
        \]
        By Lemma~\ref{LM:endo-ring-of-idempotent}, we can identify $\Hom_{\catH}(\catH e_n,\catH e_m)$
        with $e_m\catH e_n$ by letting $\vphi\in e_m\catH e_n$ act on $\catH e_n$ from
        the \emph{left} via $\vphi\cdot \psi=\psi\conv \vphi^\#$ (note that
        $(e_m\catH e_n)^\#=e_n\catH e_m$). This identification turns composition
        into convolution product
        and so it gives rise to an isomorphism of (non-involutary) algebras
        \[
        \Alg{\catC,F}{}\cong\left[
        \DotsArr{e_1\catH e_1}{e_1\catH e_t}{e_t\catH e_1}{e_t\catH e_t}
        \right]=B\ .
        \]
        From the way we defined this isomorphism, it is immediate that the action of
        $B$ on
        \[
        F \calX= \bigoplus_{n=1}^t\CSLC{G/K_n}
        \]
        via the isomorphism $\Alg{\catC,F}{}\cong B$
        is given by $(\vphi_{ij})\cdot (\psi_j)_{j=1}^t=(\sum_{j=1}^t\psi_j\conv \vphi_{ij}^{\#})_{i=1}^t$.

        Next, we need to check
        that the action of $B$ on $F(\Gamma\leftmod \calX)=\bigoplus_{n=1}^t\CSLC{\Gamma\leftmod G/K_n}$
        via the isomorphism
        $\Alg{\catC,F}{}\cong B$
        is given by $(\vphi_{ij})\cdot (\psi_j)_{j=1}^t=(\sum_{j=1}^t\psi_j\conv \vphi_{ij}^{\#})_{i=1}^t$.
        By Theorem~\ref{TH:Adj-algebra-descrition}(i),
        it is enough to show
        that for all $(\vphi_{ij})\in B$,  the collection $\{a_{X}\}_{X\in\catC}$
        given by $a_{\Gamma\leftmod\calX}=[(\psi_j)\mapsto (\sum_{j=1}^t\psi_j\conv \vphi_{ij}^{\#})_{i=1}^t]:
        \bigoplus_{n=1}^t\CSLC{\Gamma\leftmod G/K_n}\to \bigoplus_{n=1}^t\CSLC{\Gamma\leftmod G/K_n}$
        is a natural transformation
        from $F$ to itself. In fact,  the proof of Theorem~\ref{TH:Adj-algebra-descrition}(i) shows that it follows
        from the weaker condition
        $Fp_{\Gamma}\circ a_{\calX}=a_{\Gamma\leftmod\calX}\circ Fp_{\Gamma}$. Working coordinate-wise, this boils
        down to showing that for all $1\leq i,j\leq t$, $\vphi\in e_i\catH e_j$ and $\psi\in \CSLC{G/K_j}$,
        we have $P_{1,\Gamma}^{(i)}(\psi\conv \vphi^\#)=(P_{1,\Gamma}^{(j)} \psi)\conv \vphi^\#$.
        Using \eqref{EQ:P-Gamma-computation}, for
        all $g\in G$, we have
        \begin{align*}
            (P_{1,\Gamma}^{(i)}(\psi\conv \vphi^\#))g
            &=\sum_{\gamma\in\Gamma}(\psi\conv \vphi^\#)(\gamma g)
            =\sum_{\gamma\in\Gamma}\int_{x\in G}\psi x\cdot \vphi^\#(x^{-1}\gamma g)\,\mathrm{d}\mu\\
            &=\sum_{\gamma\in\Gamma}\int_{x\in G}\psi (\gamma x)\cdot \vphi^\#(x^{-1} g)\,\mathrm{d}\mu
            =\int_{x\in G}\sum_{\gamma\in\Gamma}\psi (\gamma x)\cdot \vphi^\#(x^{-1} g)\,\mathrm{d}\mu\\
            &=\int_{x\in G}(P_{1,\Gamma}^{(j)}\psi) x\cdot \vphi^\#(x^{-1} g)\,\mathrm{d}\mu
            =((P_{1,\Gamma}^{(j)}\psi)\conv \vphi^\#)g\ .
        \end{align*}
        The sum and the integral can be exchanged because
        the function $(\gamma,x)\mapsto\vphi^\#(\gamma x)\cdot\psi(x^{-1}g):\Gamma\times G\to\C$
        is compactly supported.

        It is left to show that the isomorphism $\Alg{\catC,F}{}\cong B$
        is an isomorphism of algebras with involution. This amounts to checking
        that $\bigoplus_{n=1}^t\CSLC{G/K_n}$ is a unitary representation of $B$,
        and
        after unfolding the definitions, this boils down to showing that for all $\psi\in \catH e_n$, $\psi'\in \catH e_m$
        and $\vphi\in e_m\catH e_n$, we have
        $
        \Trings{\psi\conv \vphi^{\#},\psi'}_{\LL{G}}=\Trings{\psi,\psi'\conv (\vphi^*)^{\#}}_{\LL{G}}\
        $.
        Indeed,
        \begin{align*}
        \Trings{\psi\conv \vphi^{\#},\psi'}
        &= \int_{x\in G}(\psi\conv \vphi^\#)x\cdot \cconj{\psi'x}\,d\mu
        =\int_{x\in G}\int_{y\in G}\psi y\cdot\vphi^\#(y^{-1}x)\cdot \cconj{\psi'x}\,d\mu\, d\mu
        \\
        &=\int_{x\in G}\int_{y\in G}\psi y\cdot \cconj{\psi'x}\cdot\cconj{(\vphi^*)^\#(x^{-1}y)}\,d\mu\, d\mu
        \\
        &=\int_{y\in G}\psi y\cdot \cconj{(\psi'\conv(\vphi^*)^\#)y}\,d\mu
        =\Trings{\psi,\psi'\conv (\vphi^*)^{\#}}\ ,
        \end{align*}
        as required.
    \end{proof}

    \begin{example}
    	Suppose $F=\Omega_{i}^+$. Then $F=\llf\circ S$ where $S:\catC\to \catPHil$ is given
    	by $SX=X^{(i)}$. Taking $x_1,\dots,x_t$ to be representatives
    	for the $G$-orbits in $\calX^{(i)}$ and setting $K_n=\Stab_G(x_n)$
    	and $e_n=e_{K_n}$, Theorem~\ref{TH:Hecke-description-of-A} gives an alternative,
    	more explicit, description of $\Alg{\catC,\Omega_i^+}{}$.
    	
    	Likewise, when $F=\Omega_i^\pm$, we can take representatives
    	$\sfx_1,\dots,\sfx_s$ for the $G$-orbits in $\ori{\calX^{(i)}}$ and get an alternative
    	description of $\Alg{\catC,\Omega_i^\pm}{}$
    \end{example}

    \begin{example}\label{EX:computation-of-Hecke-algebra-for-Bd}
        Let $G=\nPGL{F}{d}$, $K=\nPGL{\calO}{d}$ and $\calX=\calB_d(F)$ be as in Chapter~\ref{sec:ramanujan-complexes},
        and take $F=\Omega_0^+$. Since $\calB_d(F)^{(0)}=G/K$, the group $G$ acts transitively on $\calB_d(F)^{(0)}$.
        Taking  $x_1:=K$ as a representative, we have $K_1=\Stab_{G}(x_1)=K$.
        We may therefore identify
        $\Alg{\catC,\Omega_0^+}{}$ with $\Hecke[K]{G}=e_K\catH e_K$ and $\Omega^+_0(\Gamma\leftmod\calX)$ with $\CSLC{\Gamma\leftmod G/K}$
        for all $\Gamma\leq_{\calX}G$.
        Under this identification, the action of $\Hecke[K]{G}$ on $\CSLC{\Gamma\leftmod G/K}$ is given by
        $\vphi\cdot\psi=\psi\conv \vphi^\#$ ($\vphi\in \Hecke[K]{G}$, $\psi\in \CSLC{\Gamma\leftmod G/K}$).
        Moreover, if we choose the Haar measure $\mu$ on $G$ such that $\mu(K)=1$, then the isomorphism
        $\Omega^+_0(\Gamma\leftmod\calX)\cong\CSLC{\Gamma\leftmod G/K}$ is the identity map.

        Define $\quo{g_1},\dots,\quo{g_{d-1}}\in G$ as in Chapter~\ref{sec:ramanujan-complexes}.
        It is well known that
        $\Hecke[K]{G}$ is  a   commutative unital algebra freely generated
        by the  operators $\charfunc{K\overline{g_1}K},\dots,\charfunc{K\overline{g_{d-1}}K}$, called the \emph{Hecke operators}
        (see \cite[Ch.~V]{Macdonald95}, for instance).
        An easy computation shows that under the isomorphism $\Omega^+_0(\Gamma\leftmod\calX)\cong\CSLC{\Gamma\leftmod G/K}$, the operator of
        $\charfunc{K\overline{g_j}K}$ corresponds to the operator $a_j$ of Chapter~\ref{sec:ramanujan-complexes}.
        It follows that $\Alg{\catC,\Omega_0^+}{}$ is a commutative unital algebra freely generated
        by  $a_1,\dots,a_{d-1}$.
    \end{example}

    \begin{remark}
        One can extend the analysis of this section to semi-elementary functors
        $F:\catC\to \catPHil$ as follows: Choose a functor $F':\catC\to\catPHil$
        such that $F\oplus F'$ is elementary and write $F\oplus F'\cong \llf\circ S$
        with $S$ is as in Definition~\ref{DF:elementary-functor}. Take representatives
        $x_1,\dots,x_t$ for the $G$-orbits in $S\calX$ and define $e_1,\dots,e_t\in \catH$
        and the algebra $B$ as in Theorem~\ref{TH:Hecke-description-of-A}.
        Define $e_F=\{e_{F,X}\}_{X\in\catC}\in \Alg{\catC,F\oplus F'}{}$ as in \ref{subsec:dependencies},
        namely, $e_{F,X}$ is the orthogonal projection $F X\oplus F'X\to FX$,
        and let $\hat{e}_F$ be the corresponding idempotent in $B$.
        Then the discussion in \ref{subsec:dependencies} and Theorem~\ref{TH:Hecke-description-of-A}
        imply that there is an isomorphism of involutary unital algebras
        $\Alg{\catC,F}{}\cong \hat{e}_FB\hat{e}_F$ and isomorphisms
        of pre-Hilbert spaces $F(\Gamma\leftmod\calX)\cong \hat{e}_F\cdot \prod_{n=1}^t\CSLC{\Gamma\leftmod G/K_n}$
        ($\Gamma\leq_{\calX}G$)
        which are compatible with the  actions of $\Alg{\catC,F}{}$ and $\hat{e}_FB\hat{e}_F$. (Here,
        $\hat{e}_FB\hat{e}_F$ acts on $\hat{e}_F\cdot \prod_{n=1}^t\CSLC{\Gamma\leftmod G/K_n}$
        via the action of $B$ on $\prod_{n=1}^t\CSLC{\Gamma\leftmod G/K_n}$ described in Theorem~\ref{TH:Hecke-description-of-A}.)
    \end{remark}

\subsection{Weak Containment}
\label{subsec:weak-containment-for-groups}

    In order to apply Theorem~\ref{TH:Hecke-description-of-A}, we need to recall the notion of weak containment
    for unitary representations of $G$, and relate it to weak containment of unitary representations of $\Hecke{G}$
    as defined in \ref{subsec:weak-containment}.

\medskip

    Let $V\in \Irr[u]{G}$ and $V'\in\Rep[u]{G}$.
    Recall that $V$ is \emph{weakly contained} in $V'$, denoted $V\wc V'$, if
    for all $v\in \sphere{V}$, $\veps>0$ and compact $C\subseteq G$, there exists $v'\in \sphere{V'}$
    such that
    \[
    \abs{\Trings{gv,v}-\Trings{gv',v'}}<\veps\qquad\forall g\in C\ .
    \]
    In fact, it is easy to see that $v'$ can be taken to be in $\sphere{\sm{V'}}$, or any
    prescribed dense subset of $\sphere{V'}$.

    The representation $V$ is called \emph{tempered} if it weakly contained in $\LL{1\leftmod G}$, the right regular
    representation of $G$ (Example~\ref{EX:right-regular-rep}).

    \begin{remark}
        Let $\bfG$ be a  reductive algebraic group
        over a non-archimedean local field $F$.
        When $G=\bfG(F)$, there are other definitions of temperedness in the literature.
        These definitions agree with our definition by   \cite[\S2.4]{Oh02}, for instance.
    \end{remark}

    \begin{prp}\label{PR:equivalent-defs-of-weak-cont}
        Let $V\in\Irr[u]{G}$ and $V'\in \Rep[u]{G}$.
        Then $V\prec V'$ as unitary representations of $G$ $\iff$ $V\prec V'$ as unitary representations
        of $\Hecke{G}$.
    \end{prp}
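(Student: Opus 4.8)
The plan is to relate the two notions of weak containment through matrix coefficients. For $V\in\Rep[u]{G}$ and $v\in V$ write $c_{V,v}(g)=\Trings{gv,v}$ for the (continuous, bounded) group matrix coefficient, and, as in Theorem~\ref{TH:states}, $\vphi_{V,v}(\psi)=\Trings{\psi v,v}$ for the $\Hecke{G}$-matrix coefficient; by \eqref{EQ:G-to-H} these are linked by $\vphi_{V,v}(\psi)=\int_{x\in G}\psi(x)\,c_{V,v}(x)\,d\mu$. Two elementary facts will be used repeatedly: $c_{V,v}$ is bi-$K$-invariant whenever $v\in V^K$, and $\sm{V}$ is dense in $V$.

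For the implication $V\wc V'$ (as $G$-representations) $\Rightarrow$ $V\wc V'$ (as $\Hecke{G}$-representations), I would fix a unit vector $v_0\in\sm{V}$; by Lemma~\ref{LM:equiv-conds-of-weak-containment} it suffices to approximate $\vphi_{V,v_0}$ on an arbitrary $F\fsubseteq\Hecke{G}$. Put $C=\bigcup_{\psi\in F}\supp\psi$ (compact), $M=\max_{\psi\in F}\norm{\psi}_\infty$ and $m=\mu(C)$. Group weak containment supplies $v'\in\sphere{V'}$ with $\abs{c_{V,v_0}(g)-c_{V',v'}(g)}<\veps/(mM+1)$ for all $g\in C$; multiplying by $\abs{\psi(x)}\le M$ and integrating over $C$ yields $\norm{\vphi_{V,v_0}-\vphi_{V',v'}}_F<\veps$. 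This direction is routine.

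The content is in the converse. Given $v\in\sphere{V}$, a compact $C\subseteq G$ with $1\in C$, and $\veps>0$, I would first choose a unit vector $w\in\sm{V}$ with $\norm{v-w}$ small; by unitarity $\abs{c_{V,v}(g)-c_{V,w}(g)}\le 2\norm{v-w}$ for every $g$, so it suffices to approximate $c_{V,w}$ on $C$. Fix $K\co G$ with $w\in V^K$. The double cosets $KgK$ are compact and open, so only finitely many of them, say $Kg_1K,\dots,Kg_rK$, meet $C$; set $\psi_j=\charfunc{Kg_jK}\in\Hecke{G}$. The crucial identity is that, because $c_{V,w}$ is bi-$K$-invariant, $\vphi_{V,w}(\psi_j)=\mu(Kg_jK)\,c_{V,w}(g_j)$, and identically $\vphi_{V',v'}(\psi_j)=\mu(Kg_jK)\,c_{V',v'}(g_j)$ for any unit $v'\in(V')^K$. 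Now $V\wc V'$ as $\Hecke{G}$-representations means the conditions of Lemma~\ref{LM:equiv-conds-of-weak-containment} hold; applying condition (a) with the idempotent $e_K$ (legitimate since $e_Kw=w$) produces $v'\in\sphere{e_KV'}$ making $\abs{\vphi_{V,w}(\psi_j)-\vphi_{V',v'}(\psi_j)}$ as small as desired for $j=1,\dots,r$. Dividing by $\mu(Kg_jK)>0$ and using bi-$K$-invariance of both $c_{V,w}$ and $c_{V',v'}$, one gets $\abs{c_{V,w}(g)-c_{V',v'}(g)}<\veps$ for every $g\in C$ (each such $g$ lies in some $Kg_jK$). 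Combining with the earlier estimate for $\abs{c_{V,v}-c_{V,w}}$ and keeping track of $\veps$ completes the proof that $V\wc V'$ as $G$-representations.

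The main obstacle is precisely this reconstruction step: recovering $c_{V,w}(g)$ from the finitely many numbers $\vphi_{V,w}(\charfunc{Kg_jK})$ works only because irreducibility of $V$ lets us route through part (a) of Lemma~\ref{LM:equiv-conds-of-weak-containment} and force $v'\in\sphere{e_KV'}$, so that $c_{V',v'}$ is again bi-$K$-invariant; without this the function $c_{V',v'}$ need not be constant on $Kg_jK$ and the averaging identity $\vphi_{V',v'}(\charfunc{Kg_jK})=\mu(Kg_jK)\,c_{V',v'}(g_j)$ breaks down. No integrability or interchange-of-limits subtleties arise, since every function involved is compactly supported.
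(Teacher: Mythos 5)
Your proof is correct and follows essentially the same route as the paper's. In the forward direction the paper uses $\norm{\psi}_1$ where you use $\norm{\psi}_\infty\cdot\mu(C)$, and in the converse the paper decomposes $CK$ into right cosets $g_iK$ and uses $\mu(K)^{-1}\charfunc{g_iK}$, whereas you use double cosets $Kg_jK$ — both variants rely on the same two ingredients, namely the ``Furthermore'' clause of Lemma~\ref{LM:equiv-conds-of-weak-containment} to force $v'\in\sphere{e_KV'}$ (so that the relevant matrix coefficient is constant on the chosen cosets) and the density of $\sm{V}$ to handle a general $v\in\sphere{V}$.
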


    \begin{proof}
        Write $\catH=\Hecke{G}$.
        Assume $V\wc V'$ as representations of $G$, and let $v\in \sphere{\sm{V}}$,  $\veps>0$ and
        $F\fsubseteq\catH$.
        Let $C=\bigcup_{\vphi\in F}\supp(\vphi)$. Then $C$ is compact and open.
        For all $\vphi\in \catH$, let
        $\norm{\vphi}_1:=\int_{x\in G}\abs{\vphi(x)}\,d\mu$,
        and
        let $M=\max\{\norm{\vphi}_1\suchthat \vphi\in F\}$.
        By assumption, there is $v'\in \sphere{\sm{V'}}$ such that
        $|\Trings{gv,v}-\Trings{gv',v'}|<\veps M^{-1}$ for all $g\in C$.
        Since $v\in\sm{V}$ and $v'\in\sm{V'}$, there is $K\co G$ such that $v\in V^K$ and $v'\in V'^K$.
        We can  choose $K$ small enough such that
        each $\vphi\in F$ can be written
        as a finite sum $\sum_j \alpha_{j}\charfunc{g_jK}$ with $\{g_j\}_j\subseteq C$
        and such that $\{g_jK\}_j$ are disjoint. A straightforward computation
        shows that $\vphi v=\mu(K)\sum_j\alpha_jg_j v$ and
        $\vphi v'=\mu(K)\sum_j\alpha_j g_j v'$. Thus,
        \begin{eqnarray*}
        |\Trings{\vphi v, v }-\Trings{\vphi v', v'}|&=& \left|\mu(K)\sum_j\alpha_{j}
        \Circs{\Trings{g_jv,v }-\Trings{g_jv',v' }}\right| \\
        &<&
        \mu(K)\sum_j|\alpha_{j}|\veps M^{-1}=\|\vphi\|_1 \veps M^{-1}\leq \veps. \
        \end{eqnarray*}
        This shows that $V\wc V'$ as representations of $\catH$.

\smallskip

        Conversely, assume $V\prec V'$ as representations of $\catH$, and
        let $v\in \sphere{V}$, $\veps>0$ and $C\subseteq G$ be compact.
        Suppose first that $v\in\sm{V}$.
        Then there is $K\co G$ such that $v\in V^K=e_KV$.
        Write $CK=\bigsqcup_{i=1}^n g_iK$
        with   $g_1,\dots g_n\in G$, and let $\vphi_i=\mu(K)^{-1}\charfunc{g_iK}\in\catH$.
        By Lemma~\ref{LM:equiv-conds-of-weak-containment}, there is $v'\in e_KV'$
        such that
        $|\Trings{\vphi_i v,v}-\Trings{\vphi_i v',v'}|<\veps$ for all $i$.
        For all $g\in C$, there are  $x\in K$ and $i$ such that $g=g_ix$.
        Since $v\in V^K$ and $v'\in V'^K$, we have
        $\vphi_iv=g_iv=g_ixv=gv$ and
        $\vphi_iv'=g_iv'=g_ixv'=gv'$, hence $|\Trings{gv,v}-\Trings{gv',v'}|<\veps$ for all $g\in C$.

        When $v\notin\sm{V}$, we argue as follows: For all $v_1\in\sphere{V}$ and $g\in G$,
        we have
        \begin{align*}\abs{{\Trings{gv,v}-\Trings{gv_1,v_1}}}&= \abs{\Trings{gv,v-v_1}+\Trings{gv-gv_1,v_1}}\\
        &\leq
        \norm{gv}\cdot \norm{v-v_1}+\norm{g(v-v_1)}\cdot\norm{v_1}=2\norm{v-v_1}\ .
        \end{align*}
        Take $v_1\in\sphere{\sm{V}}$ close enough to $v$ to have  $\abs{{\Trings{gv,v}-\Trings{gv_1,v_1}}}<\frac{\veps}{2}$.
        By the  previous paragraph,  there is $v'\in \sphere{V'}$
        such that $\abs{\Trings{gv_1,v_1}-\Trings{gv',v'}}<\frac{\veps}{2}$ for all $g\in C$.
        Then $\abs{\Trings{gv,v}-\Trings{gv',v'}}<\veps$ for all $g\in C$.
    \end{proof}

    \begin{prp}\label{PR:compact-quotient-temperedness}
        Let $H$ be a normal compact subgroup of $G$ and let $U\in\Irr[u]{G/H}$, $V\in \Rep[u]{G}$.
        Then $U\wc V^H$ as representations of $G/H$ if and only if $U\wc V$ as representations of $G$.
    \end{prp}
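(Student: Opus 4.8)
The plan is to relate the two notions of weak containment — one for representations of $G$, one for representations of $G/H$ — by a direct translation argument, exploiting that $H$ is compact and normal so that $G/H$ is again a unimodular $\ell$-group and that $V^H$ carries a natural $G/H$-action obtained from the $G$-action on $V$. First I would set up notation: write $\pi:G\to G/H$ for the quotient map, normalize the Haar measure on $H$ so that $\mu(H)=1$, and let $e_H=\charfunc{H}\in\Hecke{G}$, which is a self-adjoint idempotent (it is $e_K$ for $K=H\co G$ since $H$ is open as well — wait, $H$ need not be open, so one should instead work directly with the averaging projection $P_H=\int_{h\in H}h\,d\mu_H$ acting on $V$). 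The operator $P_H:V\to V$ is the orthogonal projection onto $V^H$; it is $G$-equivariant because $H$ is normal, hence it descends to make $V^H$ a unitary $G/H$-representation in the obvious way. The irreducible unitary $G/H$-representations are exactly the irreducible unitary $G$-representations on which $H$ acts trivially.

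Next I would prove the two implications. For the forward direction, suppose $U\wc V^H$ as $G/H$-representations. Given $u\in\sphere{U}$, $\veps>0$ and a compact $C\subseteq G$, the image $\pi(C)\subseteq G/H$ is compact, so there is $w\in\sphere{V^H}$ with $\abs{\Trings{\pi(g)u,u}-\Trings{\pi(g)w,w}}<\veps$ for all $g\in C$; since $U$ and $V^H$ both have trivial $H$-action, $\Trings{\pi(g)u,u}=\Trings{gu,u}$ (viewing $u\in U$ as a vector of the $G$-representation inflated along $\pi$) and similarly $\Trings{\pi(g)w,w}=\Trings{gw,w}$, so $w$ witnesses $U\wc V$ as $G$-representations. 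For the converse, suppose $U\wc V$ as $G$-representations. Given $u\in\sphere{U}$, $\veps>0$ and compact $\bar C\subseteq G/H$, pick a compact $C\subseteq G$ with $\pi(C)=\bar C$ (e.g. a compact section times $H$, or just $\pi^{-1}(\bar C)\cap(\text{compact nbhd})$ — cleaner: take any compact $C$ with $\pi(C)\supseteq\bar C$, which exists since $\pi$ is open and proper-ish on compacts). There is $v'\in\sphere{V}$ with $\abs{\Trings{gu,u}-\Trings{gv',v'}}<\veps/3$ for all $g\in C$. Now replace $v'$ by $w':=P_Hv'\in V^H$; the point is that $\Trings{gP_Hv',P_Hv'}=\Trings{P_Hgv',P_Hv'}=\int_{h\in H}\Trings{hgv',P_Hv'}\,d\mu_H$, and since $\Trings{gu,u}$ is $H$-bi-invariant in $g$ (as $u$ is $H$-fixed and $H$ normal), averaging the estimate over $h\in H$ gives $\abs{\Trings{gu,u}-\Trings{gw',w'}}$ small provided $\norm{w'}$ is close to $1$; and $\norm{w'}^2=\Trings{P_Hv',v'}$ is within $\veps/3$ of $\Trings{u,u}=1$ by applying the original estimate with $g$ ranging over $H\subseteq C$ (which we may assume after enlarging $C$ to contain $H$). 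After renormalizing $w'$ to a unit vector $\tilde w$, the standard inequality $\abs{\Trings{gx,x}-\Trings{g\tilde x,\tilde x}}\leq 2\norm{x-\tilde x}$ (used already in the proof of Proposition~\ref{PR:equivalent-defs-of-weak-cont}) finishes the bound, and $\tilde w\in\sphere{V^H}$ witnesses $U\wc V^H$ as $G/H$-representations.

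The main obstacle I expect is the bookkeeping in the converse direction: one must be careful that the test function/vector produced for $V$ can be pushed into the $H$-fixed subspace without destroying the matrix-coefficient estimate, and that the renormalization is controlled. The key technical fact making this work is that the matrix coefficient $g\mapsto\Trings{gu,u}$ of the inflated representation is left-and-right $H$-invariant (because $u$ is $H$-fixed and $H$ is normal, so $hg h'=g\cdot(g^{-1}hg)h'$ with $g^{-1}hg\in H$), which lets one average the estimate over cosets of $H$ and over $H$ itself; this is exactly parallel to the $K$-averaging trick in the proof of Proposition~\ref{PR:equivalent-defs-of-weak-cont}. An alternative, perhaps cleaner, route is to observe that $P_H$ corresponds, under the isomorphism $\Rep[u]{G}\cong\Rep[u]{\Hecke{G}}$, to a self-adjoint idempotent $p\in\Hecke{G}$ lying in the center of no subalgebra but satisfying $p\Hecke{G}p\cong\Hecke{G/H}$, and $V^H=pV=\overline{pV}$, so that the statement becomes an instance of Lemma~\ref{LM:irreducible-corner-module}(iii) for the corner subalgebra $B=p\Hecke{G}p$ of $A=\Hecke{G}$ — modulo checking that $p\Hecke{G}p$ is genuinely isomorphic as a $*$-algebra to $\Hecke{G/H}$ and that this isomorphism matches $V^H$ with the restriction of $V$. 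I would present the elementary averaging proof as the main argument and mention the corner-subalgebra viewpoint as a remark, since it connects the statement to the machinery of Chapter~\ref{sec:involutary-algebras}.
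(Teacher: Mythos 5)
Your main argument is a correct, self-contained averaging proof that differs from the paper's proof. The paper instead normalizes the Haar measure on $G/H$ to be the push-forward of that on $G$, observes that pullback along $\pi:G\to G/H$ realizes $\Hecke{G/H}$ as the $*$-subalgebra of $H$-bi-invariant functions in $\Hecke{G}$, and checks that this is a \emph{corner} subalgebra in the sense of \ref{subsec:corner-subalgebras}, namely $\Hecke{G/H}=\bigcup_{H\subseteq K\co G}e_K\Hecke{G}e_K$, with $\quo{\Hecke{G/H}\cdot V}=V^H$ for every $V\in\Rep[u]{G}$. The proposition is then an instance of Lemma~\ref{LM:irreducible-corner-module}(iii), combined with Proposition~\ref{PR:equivalent-defs-of-weak-cont} to translate between $G$-weak-containment and $\Hecke{G}$-weak-containment. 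What the paper's route buys is brevity and reuse of the Chapter~\ref{sec:involutary-algebras} machinery; what your route buys is independence from that machinery and a concrete picture of what is happening at the level of matrix coefficients, at the cost of the bookkeeping (enlarging $C$ to contain $HC\cup H$, estimating $\abs{1-\norm{P_Hv'}^2}$, renormalizing).

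One inaccuracy, but only in your closing remark, not in the main argument: there is in general \emph{no} single self-adjoint idempotent $p\in\Hecke{G}$ with $p\Hecke{G}p\cong\Hecke{G/H}$. When $H$ is not open, $\charfunc{H}$ is not locally constant and so does not lie in $\Hecke{G}$ (you flagged this yourself at the start), and there is no substitute idempotent: $\Hecke{G/H}$ is a corner in the more general sense $B=BAB$, realized as a \emph{directed union} $\bigcup_{H\subseteq K\co G}e_K\Hecke{G}e_K$ rather than a single $e\Hecke{G}e$. The paper's Example in \ref{subsec:corner-subalgebras} (the $A=\prod_{\N}\C$, $B=\bigoplus_{\N}\C$ example) illustrates exactly this phenomenon. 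Lemma~\ref{LM:irreducible-corner-module}(iii) is stated for arbitrary corners, so the argument still goes through, but your phrasing ``$p\Hecke{G}p\cong\Hecke{G/H}$'' should be replaced by the union description. Also, a small slip: $P_H$ projects onto $V^H$ but the $\Hecke{G/H}$-module $\quo{\Hecke{G/H}V}$ is $V^H$ only after taking closure of the smooth part, i.e.\ $\Hecke{G/H}V=\sm{(V^H)}$ and $\quo{\Hecke{G/H}V}=V^H$; this matters for matching the statement to the corner lemma.
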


    \begin{proof}
        We choose the Haar measure on $G/H$ to be the push-forward of the Haar measure
        on $G$. This allows us to view $\Hecke{G/H}$ as a $*$-subalgebra of $\Hecke{G}$.
        It is easy to check that $\Hecke{G/H}$ is a corner (and also an ideal) of $\Hecke{G}$, and for all
        $W \in\Rep[u]{G}$, we have $\quo{\Hecke{G/H}\cdot W}=W^H$ (as unitary representations of $\Hecke{G/H}$). The Proposition therefore follows
        from Proposition~\ref{PR:equivalent-defs-of-weak-cont} and Lemma~\ref{LM:irreducible-corner-module}(iii).
    \end{proof}

\subsection{A Criterion for Being Ramanujan}
\label{subsec:being-Ramanujan}

    Let $\calX$ be an almost transitive $G$-complex, let $\catC=\catC(G,\calX)$ (Definition~\ref{DF:G-complex-category}),
    and let $F:\catC\to \catPHil$ be an elementary functor (Definition~\ref{DF:elementary-functor}).
    We assume that $G$ is unimodular, which is automatically the case if $\calX$ has finite $G$-quotients, by Proposition~\ref{PR:unimodularity-of-G}.
    In this section, we phrase and prove a representation-theoretic criterion for $G$-quotients of $\calX$
    to be $F$-Ramanujan (\ref{subsec:Ramanujan-quotients}). In particular,
    we get criteria for being \emph{Ramanujan in dimension $i$} and \emph{completely Ramanujan}. %\gap{Give references for previous results.}
    When $G=\nPGL{F}{d}$, $\calX=\calB_d(F)$ and $F=\Omega_0^+$, we recover the criterion given in %\cite[\gap{?}]{LubPhiSar88}
    \cite[Prp.~1.5]{LubSamVi05}.

    \medskip

    Write $\catH=\Hecke{G}$. We shall freely view unitary representations of $G$
    as unitary representations of $\catH$ and vice versa (\ref{subsec:G-vs-Hecke}).
    The relation of weak containment is not affected by these transitions thanks to Proposition~\ref{PR:equivalent-defs-of-weak-cont}.

    \begin{lem}\label{LM:finite-factorization}
        Let $V\in\Irr[u]{G}$ (resp.\ $V\in\Irr[pu]{G}$). The following conditions are equivalent.
        \begin{enumerate}
            \item[(a)] The image of $G$ in the unitary group of $V$ is finite.
            \item[(b)] There is an open subgroup of finite index $N\leq G$ such that $V=V^N$.
            \item[(c)] There is an open subgroup of finite index $N\leq G$ such that
            $V\leq \llf(N\leftmod G)$.
            (Here, $G$ acts on $\llf(N\leftmod G)$ via $(g\vphi)x=\vphi(xg)$.)
        \end{enumerate}
        In this case, $V$ is finite dimensional.
    \end{lem}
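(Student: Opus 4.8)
\textbf{Proof proposal for Lemma~\ref{LM:finite-factorization}.}

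The plan is to prove the cycle of implications (a)$\Rightarrow$(b)$\Rightarrow$(c)$\Rightarrow$(a), and then observe that finite-dimensionality is automatic. The implication (b)$\Rightarrow$(c) is the substantive one; the other two are quick. Throughout, I treat the unitary and pre-unitary cases together, since by Theorem~\ref{TH:admissible-category-equivalence} (or more elementarily, since both (b) and (c) force $V$ to be finite-dimensional, hence $V=\sm V$) the two notions coincide for representations satisfying any of the listed conditions.

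First I would prove (a)$\Rightarrow$(b). Assume the image $\overline{G}$ of $G$ in the unitary group $\mathrm{U}(V)$ is finite. Let $N$ be the kernel of the map $G\to \overline G$; then $N$ is a normal subgroup of finite index. It is open because the action $G\times V\to V$ is continuous (smoothness of $V$), so the preimage of $\{\mathrm{id}_V\}$ under $g\mapsto g|_V$ is open; concretely, choose any $0\neq v\in V$ and $K\co G$ with $v\in V^K$, and note $N$ contains the finite-index subgroup generated by the stabilizers of the finitely many vectors in a basis, each of which is open. Since $N$ acts trivially, $V=V^N$, giving (b).

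Next, (b)$\Rightarrow$(c), the main step. Suppose $N\leq G$ is open of finite index with $V=V^N$. Then $V$ factors through the finite group $Q:=G/N'$, where $N'=\bigcap_{g\in G}gNg^{-1}$ is the (still open, finite-index, normal) core of $N$, and $V$ is an irreducible unitary representation of the finite group $Q$. Every irreducible representation of a finite group $Q$ embeds in the regular representation $\C[Q]$; picking $0\neq v\in V$, the map $\C[Q]\to V$, $\sum_q\alpha_q q\mapsto \sum_q \alpha_q (qv)$, is a surjective $Q$-homomorphism, so by semisimplicity $V$ is a direct summand of $\C[Q]$, and after rescaling the inner product (Proposition~\ref{PR:iso-implies-unitary-iso}) we may take this embedding to be unitary. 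Now identify $\C[Q]$ with $\llf(N'\leftmod G)$ as a $G$-representation under $(g\varphi)(x)=\varphi(xg)$: the quotient map $G\to N'\leftmod G\cong Q$ intertwines the right translation action of $G$ on $\llf(N'\leftmod G)$ with the right regular action on $\C[Q]$. Hence $V\leq \llf(N'\leftmod G)$, and since $N'$ is open of finite index this is exactly (c) (with $N'$ in place of $N$; one can also pass back to $N$ since $N'\leq N$ gives a $G$-equivariant surjection $\llf(N'\leftmod G)\to\llf(N\leftmod G)$ whose target contains the image of $V$ when $V=V^N$, but the cleanest statement just uses $N'$).

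Finally, (c)$\Rightarrow$(a): if $V\leq \llf(N\leftmod G)$ with $N$ open of finite index, then $N$ acts trivially on $\llf(N\leftmod G)$ under right translation — indeed $(n\varphi)(Nx)=\varphi(Nxn)$, and $Nxn$ need not equal $Nx$, so one should instead use the core: actually right translation by $n\in N$ sends $\e_{Nx}$ to $\e_{Nxn^{-1}}$, and as $x$ ranges over coset representatives this is a permutation of the finite basis $\{\e_{Nx}\}$; thus the image of $G$ in $\mathrm{U}(\llf(N\leftmod G))$ lands in the finite permutation group of the $[G:N]$ basis vectors, composed with scalars of modulus one arising from the inner-product normalization, which is finite. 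Restricting to the subrepresentation $V$, the image of $G$ in $\mathrm{U}(V)$ is a quotient of this finite group, hence finite, giving (a). The closing remark, that $V$ is finite-dimensional, is immediate from (c) since $\llf(N\leftmod G)$ has dimension $[G:N]<\infty$.

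I expect the only place requiring care is the bookkeeping in (b)$\Rightarrow$(c) — passing between $N$, its core $N'$, the finite quotient $Q$, and the identification of $\C[Q]$ with $\llf(N'\leftmod G)$ as $G$-modules with the correct (right-translation) action — together with invoking Proposition~\ref{PR:iso-implies-unitary-iso} to upgrade an abstract $G$-module embedding to a unitary one. None of this is deep, but the direction of the translation action and which subgroup one ends up with must be tracked consistently.
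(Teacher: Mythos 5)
Your proof is correct, but your argument for (b)$\Rightarrow$(c) — the only implication the paper proves in detail — takes a genuinely different route. The paper constructs a direct matrix-coefficient embedding: fixing $0\neq u\in\sphere{V}$, it sends $v\in V$ to the function $Ng\mapsto \Trings{gv,u}$ in $\llf(N\leftmod G)$, observes this is a nonzero $G$-homomorphism (hence injective by irreducibility, and continuous since the target is finite-dimensional), and then invokes Proposition~\ref{PR:iso-implies-unitary-iso} to make the embedding unitary. This produces an embedding into $\llf(N\leftmod G)$ for the \emph{same} $N$ as in (b), with no detour. Your argument instead replaces $N$ by its normal core $N'$, views $V$ as an irreducible representation of the finite quotient $Q=G/N'$, embeds $V$ into the regular representation $\C[Q]$ via Maschke's theorem, and identifies $\C[Q]\cong\llf(N'\leftmod G)$ before invoking the same Proposition~\ref{PR:iso-implies-unitary-iso}. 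Both are valid; the matrix-coefficient map is shorter and bypasses the core/Maschke bookkeeping entirely, while your version makes the connection to classical finite-group representation theory explicit. One minor point: in (a)$\Rightarrow$(b), the phrase about ``the finitely many vectors in a basis'' presupposes $\dim V<\infty$, which you only establish later; it is cleaner to argue that $\Stab_G(v)$ for a single $0\neq v\in\sm{V}$ is open of finite index (it contains some $K\co G$, and the orbit $Gv$ is finite by (a)), so its normal core $N$ is open, normal, of finite index, and — because it fixes the dense span of $Gv$ — acts trivially on $V$.
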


    \begin{proof}
        The implications, (a)$\derives$(b), (c)$\derives$(a), and the final assertion are straightforward,
        so we
        only prove  (b)$\derives$(c).
        Fix some $0\neq u\in\sphere{V}$. A continuous embedding of $V$ in $\llf(N\leftmod G)$ is given by
        and sending $v\in V$ to $[Ng\mapsto \Trings{gv,u}]\in\llf(N\leftmod G)$. That this is indeed a nonzero $G$-homomorphism
        is routine. Since $V$ is irreducible, this homomorphism is injective, and since $\llf(N\leftmod G)$ is finite-dimensional,
        it is also continuous.
        Now, by Proposition~\ref{PR:iso-implies-unitary-iso} (applied with  $A=\catH$),
        there is   a unitary embedding $V\to \llf(N\leftmod G)$.
    \end{proof}

    An irreducible representation satisfying the equivalent conditions of Lemma~\ref{LM:finite-factorization}
    is said to have  \emph{finite action}. In general, there may be irreducible finite dimensional representations without  finite
    action; consider the case $G=\Z$ with the discrete topology.
    However, in certain cases, finite dimension and finite action are equivalent.

    \begin{lem}\label{LM:finite-dim-reps-of-G}
        Let $\Gamma\leq G$ be cocompact discrete subgroup.
        Then for all
        $V\in \Irr[u]{G}$ with $V\wc \LL{\Gamma\leftmod G}$,
        we have $\dim V<\infty$ if and only if $V$ has  finite action.
    \end{lem}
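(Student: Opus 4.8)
The plan is as follows. One direction is immediate: if $V$ has finite action then $V$ is finite dimensional by the final assertion of Lemma~\ref{LM:finite-factorization}. For the converse, assume $\dim V<\infty$ and $V\wc\LL{\Gamma\leftmod G}$, and view all representations as representations of the Hecke algebra $\Hecke{G}$ (Proposition~\ref{PR:equivalent-defs-of-weak-cont} shows weak containment is unaffected). Since $\LL{\Gamma\leftmod G}$ is an admissible $\Hecke{G}$-module (Example~\ref{EX:LL-Gamma-mod-G-admissible}), Theorem~\ref{TH:admissible-pu-are-completely-red}(iii) gives $V\leq\LL{\Gamma\leftmod G}$, so we may assume $V$ is a $G$-invariant subspace of $\LL{\Gamma\leftmod G}$. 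Because $V$ is finite dimensional, $\sm{V}=V$, and as $\sm{V}=\bigcup_{K\co G}V^{K}$ is a directed union of subspaces filling up the finite dimensional space $V$, we get $V=V^{K_{0}}$ for some $K_{0}\co G$. Let $\rho\colon G\to\mathrm{U}(V)$ be the homomorphism describing the action and put $N=\ker\rho$. Since $N\supseteq K_{0}$, the subgroup $N$ is open (and normal), so $G/N$ is a discrete group, faithfully represented on $V$.

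Next I would extract from the inclusion $V\subseteq\LL{\Gamma\leftmod G}$ a nonzero $\Gamma$-invariant functional on $V$. As $K_{0}$ is open and $\Gamma$ is cocompact, $\Gamma\leftmod G/K_{0}$ is a \emph{finite} discrete set, so $\LL{\Gamma\leftmod G}^{K_{0}}$ consists of genuine functions on it; in particular every $f\in V\subseteq\LL{\Gamma\leftmod G}^{K_{0}}$ has a well-defined value at the base coset $\Gamma\cdot 1$, and $\lambda(f):=f(\Gamma\cdot 1)$ defines a linear functional on $V$ satisfying $f(\Gamma g)=\lambda(\rho(g)f)$ for all $g\in G$. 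Left $\Gamma$-invariance of $f$ (it is a function on $\Gamma\leftmod G$) says $\lambda(\rho(\gamma)\rho(g)f)=\lambda(\rho(g)f)$, and letting $f$ range over $V$ yields $\lambda\circ\rho(\gamma)=\lambda$ for all $\gamma\in\Gamma$; moreover $\lambda\neq0$, since a nonzero $f\in V$ is a nonzero function on $\Gamma\leftmod G/K_{0}$. Cocompactness enters once more to show that $\bar\Gamma:=\Gamma N/N$ has finite index in $G/N$: the compact space $\Gamma\leftmod G$ surjects continuously onto $\Gamma\leftmod(G/N)$, which is discrete (a quotient of the discrete group $G/N$) and hence finite.

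Finally I would combine these facts. The subgroup $\bar\Gamma\leq G/N\hookrightarrow\mathrm{U}(V)$ acts on $V$ faithfully and fixes the nonzero functional $\lambda$; splitting off $(\ker\lambda)^{\perp}$, which is $\bar\Gamma$-invariant because the representation is unitary, shows that $\bar\Gamma$ fixes a nonzero vector, i.e.\ $V^{\bar\Gamma}\neq0$. Let $\bar\Gamma_{0}=\bigcap_{x\in G/N}x\bar\Gamma x^{-1}$ be the normal core of $\bar\Gamma$ in $G/N$; it is the intersection of the finitely many conjugates of the finite-index subgroup $\bar\Gamma$, hence of finite index. Then $V^{\bar\Gamma_{0}}\supseteq V^{\bar\Gamma}\neq0$ is a $G/N$-subrepresentation of $V$, and $V$ is irreducible as a $G/N$-representation, so $V^{\bar\Gamma_{0}}=V$; thus $\bar\Gamma_{0}$ acts trivially on $V$, and faithfulness of the $G/N$-action forces $\bar\Gamma_{0}=1$. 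Therefore $G/N$ is finite, so $N$ has finite index in $G$ and $V=V^{N}$, i.e.\ $V$ has finite action by condition~(b) of Lemma~\ref{LM:finite-factorization}. I expect the main technical point to be the passage from the Hilbert space $\LL{\Gamma\leftmod G}$ to honest pointwise values — verifying that $\LL{\Gamma\leftmod G}^{K_{0}}$ really is the space of functions on the finite set $\Gamma\leftmod G/K_{0}$ and that the resulting $\lambda$ is simultaneously $\Gamma$-invariant and nonzero; the rest is soft group theory.
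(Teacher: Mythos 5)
Your proof is correct, but the second half diverges substantially from the paper's argument. After reaching $V\leq\LL{\Gamma\leftmod G}$ and $V=V^{K_0}$ for some $K_0\co G$ (same starting point), the paper takes a shortcut you did not: since $V$ is $G$-stable and $V=V^{K_0}$, one has $V=V^{gK_0g^{-1}}$ for every $g$, hence $V=V^H$ where $H$ is the normal closure of $K_0$ in $G$. The group $H$ is open and normal, so $\Gamma H$ is an open subgroup, and cocompactness of $\Gamma$ forces $[G:\Gamma H]<\infty$. Then $V\subseteq\LL{\Gamma\leftmod G}^H=\LL{\Gamma H\leftmod G}$, and the right regular action of $G$ on the finite set $\Gamma H\leftmod G$ clearly has finite image; so $V$, as a subrepresentation, has finite action. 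Notably the paper's argument never re-uses irreducibility after the reduction $V\leq\LL{\Gamma\leftmod G}$ — it would apply to any finite-dimensional $G$-invariant subspace of $\LL{\Gamma\leftmod G}$.

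Your route instead passes to $G/N$ with $N=\ker\rho$, extracts the evaluation functional $\lambda(f)=f(\Gamma\cdot 1)$ (the nonvanishing of $\lambda$ really needs the identity $f(\Gamma g)=\lambda(\rho(g)f)$ together with $G$-invariance of $V$, not merely the observation that a nonzero $f$ is a nonzero function — you essentially have this, but the one-line justification as written is incomplete), produces a $\bar\Gamma$-fixed vector, and then uses irreducibility and a normal-core argument to conclude $G/N$ is finite. Both arguments work; yours is more hands-on with the group structure of $G/N$ and genuinely uses irreducibility, while the paper's is shorter and exploits the fact that $\LL{\Gamma H\leftmod G}$ is already a permutation representation of a finite set.
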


    \begin{proof}
    	By Example~\ref{EX:LL-Gamma-mod-G-admissible}, $\LL{\Gamma\leftmod G}$ is admissible, so
    	$V\leq \LL{\Gamma\leftmod G}$
    	by Theorem~\ref{TH:admissible-pu-are-completely-red}(iii).
        Suppose $V$ is finite dimensional. Since $\sm{V}$ is dense in $V$ (Lemma~\ref{LM:approximation-lemma}),
        we must have $V=\sm{V}$, so $V$ is smooth.
        Let $v_1,\dots,v_t$ be a basis of $V$, and choose $K_1,\dots,K_t\co V$
        with $v_i\in V^{K_i}$. Then $V=V^K$ for $K=\bigcap_{n=1}^t K_n$.
        Let $H$ be the smallest normal subgroup of $G$ containing $K$. Then $H$ is open and
        $V^H=V$. Since $\Gamma$ is cocompact, $\Gamma\leftmod G/K$ is finite,
        and hence so is $\Gamma\leftmod G/H=\Gamma H\leftmod G$.
        It follows that $V\subseteq\LL{\Gamma H\leftmod G}$ (viewed as a subspace of $\LL{\Gamma\leftmod G}$).
        The image of $G$ in the unitary group of $\LL{\Gamma H\leftmod G}$ is clearly finite, so $V$ has finite action.
    \end{proof}

    \begin{thm}\label{TH:spectrum-correspondence}
        Write $F=\llf\circ S$ where $S$ is as in Definition~\ref{DF:elementary-functor}.
        Let $x_1,x_2,\dots,x_t$ be  representatives of the $G$-orbits in $S \calX$,
        and let $K_n=\Stab_G(x_n)$ ($1\leq n\leq t$). For any subset $T$ of the unitary dual
        $\udual{\catH}$,
        define
        \[T^{(K_1,\dots,K_t)}=\{[V]\in T\suchthat V^{K_1}+\dots+V^{K_t}\neq 0\}. \]
        Then there exists an additive functor
		\[
		\calF:\Rep[u]{\catH}\to \Rep[u]{\Alg{\catC,F}{}}
		\]
        with the following properties:
        \begin{enumerate}
            \item[(i)]
				$\calF$ induces a topological embedding
				$[V]\mapsto [\calF V]:\udual{\catH}^{(K_1,\dots,K_t)}\to \tAlg{\catC,F}{}$,
				denoted $\what{\calF}$.
			\item[(ii)]
				For all $[V]\in\udual{\catH}^{(K_1,\dots,K_t)}$ and
				$V'\in\Rep[u]{\catH}$, we have $V\wc V'$ $\iff$ $\calF V\wc \calF V'$.
			\item[(iii)]
				$\calF(\LL{\Gamma\leftmod G})=\overline{F(\Gamma\leftmod\calX)}$
				for all $\Gamma\leq_{\calX}G$.
			\item[(iv)]
				$\what{\calF}(\Spec_{\catH}(\LL{\Gamma\leftmod G})^{(K_1,\dots,K_t)})=\Spec_{\Alg{\catC,F}{}}
				(\Gamma\leftmod\calX)$ for all $\Gamma\leq_{\calX}G$.
				Furthermore, when $\Gamma\leftmod\calX$ is finite, or equivalently,
				when $\Gamma$ is cocompact in $G$,
            	the map $\what{\calF}$
            	induces an isomorphism of multisets $\mSpec_{\catH}(\LL{\Gamma\leftmod G})^{(K_1,\dots,K_t)}
            	\cong\mSpec_{\Alg{\catC,F}{}}(\Gamma\leftmod\calX)$.
            \item[(v)] Let $[V]\in\udual{\catH}^{(K_1,\dots,K_t)}$.
            	Then $\what{F}[V]$ is in $\frakT_{\Alg{\catC,F}{}}$, the trivial $\Alg{\catC,F}{}$-spectrum
            	(\ref{subsec:trivial-spec}), if and only if  $V$ has  finite action.
            More precisely, we have $\what{\calF}(\Spec_{\catH}(\llf(N\leftmod G))^{(K_1,\dots,K_t)})=\frakT_{\Alg{\catC,F}{},N}$,
            for any open finite-index subgroup $N\leq G$.
        \end{enumerate}
    \end{thm}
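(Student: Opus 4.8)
\textbf{Proof plan for Theorem~\ref{TH:spectrum-correspondence}.}
The plan is to build the functor $\calF$ directly from Theorem~\ref{TH:Hecke-description-of-A} and then harvest properties (i)--(v) from the corner-subalgebra machinery of Chapter~\ref{sec:involutary-algebras}. Concretely, let $B=\bigl[e_i\catH e_j\bigr]_{i,j=1}^t\subseteq\nMat{\catH}{t}$ be the matrix algebra of Theorem~\ref{TH:Hecke-description-of-A}, so that $\Alg{\catC,F}{}\cong B$ and $F(\Gamma\leftmod\calX)\cong\bigoplus_{n=1}^t\CSLC{\Gamma\leftmod G/K_n}$ compatibly with the module structures. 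First I would set $e=e_1+\dots+e_t\in\catH$ (note $e\in\ids{\catH}$ since the $e_n$ need not be orthogonal, but $e^2$ is still a self-dual idempotent after the usual trick; more carefully, pick $e'\in\ids{\catH}$ with $e'e_ne'=e_n$ for all $n$ and work inside $e'\catH e'$, or simply observe $\catH e_1+\dots+\catH e_t$ is a submodule and use Example~\ref{EX:matrix-algebra}). The key structural observation is that $B$ is Morita-like to the corner $f\catH f$ where $f=e_1\oplus\dots\oplus e_t$ viewed appropriately; precisely, by Example~\ref{EX:matrix-algebra} applied with $A'=e\catH e$ (after identifying $B$ with a corner of $\nMat{e\catH e}{t}$) or directly, one gets an equivalence of categories $\Rep[u]{\catH^{(e)}}\simeq \Rep[u]{B}$ where $\catH^{(e)}$ is the relevant corner. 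I would define $\calF$ on objects by $\calF V=\bigoplus_{n=1}^t\quo{\catH e_n\otimes_{\catH}V}$, i.e.\ $\calF V$ has underlying space $\bigoplus_n \quo{e_n V}=\bigoplus_n\quo{V^{K_n}}$ with the $B$-action $(\vphi_{ij})\cdot(v_j)=(\sum_j \vphi_{ij}v_j)_i$ (using $\vphi_{ij}\in e_i\catH e_j$ so $\vphi_{ij}v_j\in e_i V$), and extend to morphisms functorially; this is manifestly additive and $*$-compatible.

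The second step is (iii): computing $\calF(\LL{\Gamma\leftmod G})$. By definition $\calF(\LL{\Gamma\leftmod G})=\bigoplus_n\quo{\LL{\Gamma\leftmod G}^{K_n}}$, and $\LL{\Gamma\leftmod G}^{K_n}=\LL{\Gamma\leftmod G/K_n}$ with the completion of $\CSLC{\Gamma\leftmod G/K_n}$, which by Theorem~\ref{TH:Hecke-description-of-A} is exactly $\quo{F(\Gamma\leftmod\calX)}$; one checks the $B$-actions match by unwinding the formula $\vphi\cdot\psi=\psi\conv\vphi^\#$. The third step, (i) and (ii), is where I invoke Theorem~\ref{TH:corner-unitary-dual} and Lemma~\ref{LM:irreducible-corner-module}: since $\{e_1,\dots,e_t\}$ (or rather the associated corner $\bigcup_{g\in\ids{\catH},\,g\geq e_n\,\forall n} g\catH g$) gives a full family of corners with $B\cong$ (matrix-amplification of) that corner, and since $V^{K_n}\neq 0$ for some $n$ is precisely the condition $[V]\in\udual{\catH}^{(K_1,\dots,K_t)}$, the map $[V]\mapsto[\calF V]$ is a topological embedding on that open set, and weak containment is preserved and reflected exactly by Lemma~\ref{LM:irreducible-corner-module}(iii) combined with the Morita-type equivalence of Example~\ref{EX:matrix-algebra} (which preserves weak containment since it preserves matrix coefficients up to the obvious rescaling). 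For (iv), the spectrum statement $\what{\calF}(\Spec_\catH(\LL{\Gamma\leftmod G})^{(K_1,\dots,K_t)})=\Spec_{\Alg{\catC,F}{}}(\Gamma\leftmod\calX)$ follows by combining (ii), (iii), and the definition $\Spec_A(X)=\Spec_A(AFX)$ together with the fact that $\quo{B\cdot\calF(\LL{\Gamma\leftmod G})}=\calF(\quo{\catH\LL{\Gamma\leftmod G}})$; the multiset refinement when $\Gamma$ is cocompact uses admissibility of $\LL{\Gamma\leftmod G}$ (Example~\ref{EX:LL-Gamma-mod-G-admissible}) and Theorem~\ref{TH:admissible-category-equivalence}, since $\mult$ is computed via $\cHom$ and $\calF$ is an equivalence on the relevant subcategories.

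For (v), I would argue as follows. By definition (\ref{subsec:trivial-spec}), $\frakT_{\Alg{\catC,F}{},N}=\Spec_{\Alg{\catC,F}{}}(\Alg{\catC,F}{}(F\calX)_N)$ and by Lemma~\ref{LM:coinvariants-module-structure} and the proof of Proposition~\ref{PR:S-quotient-check}(ii) we have $(F\calX)_N\cong\bigoplus_n\CSLC{N\leftmod G/K_n}=\bigoplus_n\llf(N\leftmod G/K_n)^{K_n}$-type spaces, i.e.\ $(F\calX)_N$ corresponds under $\calF^{-1}$ precisely to $\llf(N\leftmod G)$ with its $G$-action $(g\vphi)x=\vphi(xg)$. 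Hence $\what{\calF}(\Spec_\catH(\llf(N\leftmod G))^{(K_1,\dots,K_t)})=\frakT_{\Alg{\catC,F}{},N}$ by the same reasoning as (iv). Taking the union over all finite-index open $N\leq G$ gives $\what{\calF}$ of $\bigcup_N\Spec_\catH(\llf(N\leftmod G))^{(K_1,\dots,K_t)}=\frakT_{\Alg{\catC,F}{}}$; and by Lemma~\ref{LM:finite-factorization}, an irreducible $V$ is weakly contained in some $\llf(N\leftmod G)$ iff $V$ has finite action (one direction is condition (c), the other uses that $\llf(N\leftmod G)$ is finite-dimensional, hence admissible, so weak containment implies containment by Theorem~\ref{TH:admissible-pu-are-completely-red}(iii), and then condition (c) $\Rightarrow$ (a)). I expect the main obstacle to be bookkeeping in setting up $\calF$ carefully when the $e_n$ are not mutually orthogonal and $\catH$ is non-unital --- in particular verifying that the ``matrix corner'' $B$ really is Morita-equivalent in the $*$-algebraic sense of Example~\ref{EX:matrix-algebra} to an honest corner of $\catH$, and that this equivalence is compatible with completions and preserves the topology on the unitary dual --- rather than any of the individual spectral identities, which are then formal consequences of Theorems~\ref{TH:corner-unitary-dual}, \ref{TH:corner-subalgebra-spectrum-I} and Corollary~\ref{CR:corner-subalgebra-spectrum-II}.
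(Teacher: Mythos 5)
Your proposal takes essentially the same route as the paper's proof: the functor $\calF V=\bigoplus_n V^{K_n}$ is the same, and properties (i)--(v) are harvested from the same lemmas (Example~\ref{EX:matrix-algebra}, Theorem~\ref{TH:corner-unitary-dual}, Lemma~\ref{LM:irreducible-corner-module}, Theorem~\ref{TH:Hecke-description-of-A}) plus, for (v), the identification $\calF(\llf(N\leftmod G))\cong(F\calX)_N$ and Lemma~\ref{LM:finite-factorization}.

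The one place you go astray is in the opening setup. The element $e=e_1+\dots+e_t\in\catH$ is not an idempotent in general (the $e_n$ are self-adjoint idempotents but need not be orthogonal), and your attempted patches --- passing to a dominating $e'\in\ids{\catH}$, or trying to realize $B$ as ``Morita-like to a corner $f\catH f$'' of $\catH$ itself --- are tangled and unnecessary. The clean route, which the paper takes, is to work inside $\nMat{\catH}{t}$: the diagonal matrix $e=\mathrm{diag}(e_{K_1},\dots,e_{K_t})$ \emph{is} a self-adjoint idempotent there regardless of any orthogonality among the $e_{K_n}$, and $B=e\,\nMat{\catH}{t}\,e$ is then an honest corner of $\nMat{\catH}{t}$. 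The functor $\calF$ is simply the composite of the equivalence $\Rep[u]{\catH}\simeq\Rep[u]{\nMat{\catH}{t}}$ of Example~\ref{EX:matrix-algebra} (which sends $V$ to $V^t$) followed by corner restriction $U\mapsto \overline{BU}=eU$, so Theorem~\ref{TH:corner-unitary-dual} and Lemma~\ref{LM:irreducible-corner-module} apply directly with no detour through a corner of $\catH$. You should also note that, for (v), the identification of $(F\calX)_N$ with $\calF(\llf(N\leftmod G))$ is not purely formal: one has to exhibit a $B$-module isomorphism between the \emph{coinvariant} space $\bigoplus_n\CSLC{G/K_n}_N$ and the space $\bigoplus_n C^\infty(N\leftmod G/K_n)$ of $N$-\emph{invariant} functions (the paper does this with the averaging maps $\Phi_n\vphi(x)=\int_{y\in N}\vphi(yx)\,d\mu$), and then verify that this intertwines the $B$-actions; your sketch glosses over this.
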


    \begin{proof}
        Define an involution $*:\nMat{\catH}{t}\to \nMat{\catH}{t}$
        by $(\vphi_{ij})^*=(\vphi_{ji}^*)$. Then $\nMat{\catH}{t}$ is an idempotented involutary algebra.
        As explained in Example~\ref{EX:matrix-algebra}, there is an equivalence of categories
        $\Rep[u]{\catH}\sim \Rep[u]{\nMat{\catH}{t}}$. Explicitly, the $\nMat{\catH}{t}$-representation
        corresponding to
        $V\in\Rep[u]{\catH}$ is  $V^t$ (viewed as column vectors) endowed with the standard
        left $\nMat{\catH}{t}$-action.
        In the other direction,
        the equivalence is given by $U\mapsto \overline{\catH'U}$
        where
        \[\catH'=
        \left[\begin{smallmatrix}
        \catH & 0 & \ldots \\
        0 & 0 & \ldots \\
        \vdots & \vdots & \ddots
        \end{smallmatrix}\right]\subseteq \nMat{\catH}{t}\]
        and $\catH'$ is identified with $\catH$ in the obvious manner.
        There is a similar equivalence between the corresponding categories of pre-unitary representations.

        We
        identify $\Alg{\catC,F}{}$ with the algebra $B$
        defined in Theorem~\ref{TH:Hecke-description-of-A}.
        Observe that
        \[
        B=e\nMat{\catH}{t}e
        \]
        where $e\in \nMat{\catH}{t}$ is the diagonal matrix $\mathrm{diag}(e_{K_1},\dots,e_{K_t})$.
        Define the functor
        $
        \calF:\Rep[u]{\catH}\to \Rep[u]{B}
        $
        by
        \[
        \calF V:= e \cdot V^t = \bigoplus_{n=1}^t e_{K_n}V=\bigoplus_{n=1}^t V^{K_n}\ .
        \]
        (the functor $\calF$ acts on morphisms is the obvious way).
        We  similarly define $\calF:\Rep[pu]{\catH}\to\Rep[pu]{B}$. It is easy
        to see that there is a natural isomorphism
        $\quo{\calF V}\cong \calF(\quo{V})$ for all $V\in\Rep[pu]{\catH}$.
        Observe also that for all unitary or pre-unitary $V$, we have
        \begin{equation}\label{EQ:equiv-nullity-cond}
        \calF V=eV^t\neq 0\qquad\iff \qquad V^{K_1}+\dots+V^{K_t}\neq 0\ .
        \end{equation}
        We now prove properties (i)--(v).

        (i) Let $[V]\in\udual{\catH}^{(K_1,\dots,K_t)}$.
        Then $V^t\in \Irr[u]{\nMat{\catH}{t}}$ by Example~\ref{EX:matrix-algebra},
        and
        by Lemma~\ref{LM:irreducible-corner-module}(i) and \eqref{EQ:equiv-nullity-cond}, $\calF V=eV^t\in\Irr[u]{B}$.
        Furthermore, by Theorem~\ref{TH:corner-unitary-dual},
        the maps $[U]\mapsto [\catH'U]:\nMat{\catH}{t}\what{~}\to\udual{\catH}$
        and $[U]\mapsto [eU]:(\nMat{\catH}{t}\what{~})^{(B)}\to\udual{B}$
        are topological embeddings. Since
        $[U]\mapsto [\catH'U]:\nMat{\catH}{t}\what{~}\to\udual{\catH}$ is bijective
        with inverse $[V]\mapsto [V^t]:\udual{\catH}\to \nMat{\catH}{t}\what{~}$,
        and since the image of $\udual{\catH}^{(K_1,\dots,K_t)}$ in $\nMat{\catH}{t}\what{~}$
        under $[V]\mapsto [V^t]$ is $(\nMat{\catH}{t}\what{~})^{(B)}$,
        it follows that the map $[V]\mapsto [eV^t]=[\calF V]: \udual{\catH}^{(K_1,\dots,K_t)}\to \udual{B}$
        is a topological embedding.

        (ii)
        By Lemma~\ref{LM:irreducible-corner-module}(iii), if $V\in \Irr[u]{\catH}$ satisfies
        $V^{K_1}+\dots+V^{K_t}\neq 0$ and $V'\in\Rep[u]{\catH}$, then $V\wc V'$ $\iff$ $V^t\wc V'^t$ (as
        representations of $\nMat{\catH}{t}$) $\iff$ $\calF(V)=eV^t\wc eV'^t=\calF(V')$.

        (iii)
		We identify
		$F(\Gamma\leftmod \calX)$ with $\bigoplus_{n=1}^t\CSLC{\Gamma\leftmod G/K_n}$
		as in Theorem~\ref{TH:Hecke-description-of-A}.
		By Example~\ref{EX:right-regular-smooth-rep},
		we have $\calF(\CSLC{\Gamma\leftmod G})=
		\bigoplus_{n=1}^t\CSLC{\Gamma\leftmod G/K_n}$  as pre-unitary representations of $B$,
		so $\calF(\LL{\Gamma\leftmod G})=\calF(\overline{\CSLC{\Gamma\leftmod G}})\cong
		\quo{\calF(\CSLC{\Gamma\leftmod G})}=\quo{F(\Gamma\leftmod\calX)}$.

        (iv) This follows from (ii) and (iii). That $\Gamma\leftmod\calX$ is finite
        if and only if $\Gamma$ is cocompact in $G$ and $\Gamma\leq_{\calX}G$ follows from
        Proposition~\ref{PR:unimodularity-of-G}.

        (v)
        Recall from \ref{subsec:trivial-spec} that $\frakT_{\Alg{\catC,F}{},N}$
        is defined to be $\Spec_{\Alg{\catC,F}{}}((F\calX)_N)$, where $(F\calX)_N$
        is the space of $N$-coinvariants of $F\calX$. %=F\calX/(F\calX)(N)$.
        By (ii), it is enough to prove that $\calF(\llf(N\leftmod G))\cong (F\calX)_N$ as pre-unitary
        representations of $B=\Alg{\catC,F}{}$. In fact, by
        Lemma~\ref{LM:coinvariants-module-structure}, it is enough to show that $\calF(\llf(N\leftmod G))\cong (F\calX)_N$
        as $B$-modules.

        Let $C^\infty(N\leftmod G)$ be the space of functions $G\to\C$ which are $N$-invariant on the left.
        We view $C^\infty(N\leftmod G)$ as a smooth $G$-module by setting $(g\cdot \vphi)x=\vphi(xg)$.
        Then $\llf(N\leftmod G)\cong C^\infty(N\leftmod G)$  as  $G$-modules via
        $\e_{Ng}\mapsto\charfunc{Ng}$. As in Example~\ref{EX:right-regular-smooth-rep},
        the corresponding left $\catH$-module structure of $C^\infty(N\leftmod G)$
        is given by $\vphi\cdot \psi=\psi\conv \vphi^\#$. It follows that
        $\calF(C^\infty(N\leftmod G))=\bigoplus_{n=1}^tC^\infty(N\leftmod G/K_n)$,
        where the action of $B$ on the left hand side is given by
        $(\vphi_{ij})_{i,j}\cdot(\psi_j)_j=(\sum_j\psi_j\conv \vphi_{ij}^\#)_i$.
        By Theorem~\ref{TH:Hecke-description-of-A}, the action of $B$
        on $F\calX=\bigoplus_{n=1}^t \CSLC{G/K_n}$ is also given by
        $(\vphi_{ij})_{i,j}\cdot(\psi_j)_j=(\sum_j\psi_j\conv \vphi_{ij}^\#)_i$,
        and this action induces the action of $B$ on $(F\calX)_N=\bigoplus_{n=1}^t\CSLC{G/K_n}_N$
        (notice that $G$ acts on $\CSLC{G/K_n}$ via $(g\vphi)x=\vphi(g^{-1}x)$).

        Define $\Phi_n:\CSLC{G/K_n}\to C^\infty(N\leftmod G /K_n)$ by
        $(\Phi_n\vphi)x=\int_{y\in N}\vphi(yx)\,\mathrm{d}\mu$.
        Since
        $\Phi_n(g\vphi)=\Phi_n(\vphi)$ for all $g\in N$,
        this induces a map $\CSLC{G/K_n}_N\to C^\infty(N\leftmod G /K_n)$,
        which we also denote by $\Phi_n$.
        We claim that $\Phi:=\bigoplus_{n=1}^t\Phi_n:\bigoplus_{n=1}^t\CSLC{G/K_n}_N\to \bigoplus_{n=1}^tC^\infty(N\leftmod G/K_n)$
        is an isomorphism of $B$-modules, which will complete the proof.
        Working component-wise, this amounts to showing that $\Phi_n$ is bijective for all $n$,
        and  for all $n$, $m$, $\vphi\in e_{K_m}\catH e_{K_n}$ and $\psi\in\CSLC{G/K_n}$, we have
        $\Phi_m(\psi\conv \vphi^\#)=(\Phi_n\psi)\conv \vphi^\#$.

        It easy to see that $\Phi_n(\charfunc{gK})$ is a nonzero multiple of $\charfunc{NgK}$, so $\Phi_n$ is surjective.
        The injectivity follows since $ \CSLC{G/K_n}_N\cong \llf(N\leftmod G/K_n)$
        as vector spaces, and hence $\dim \CSLC{G/K_n}_N=\abs{N\leftmod G/K_n}=\dim C^\infty(N\leftmod G/K_n)$.
        Finally, for all $g\in G$, we have
        \begin{align*}
        (\Phi_m(\psi\conv \vphi^\#))g
        &=\int_{y\in N}(\psi\conv \vphi^\#)(yg)\,\mathrm{d}\mu
        =\int_{y\in N}\int_{x\in G}\psi(x)\cdot \vphi^\#(x^{-1}yg)\,\mathrm{d}\mu \,\mathrm{d}\mu\\
        &=\int_{y\in N}\int_{x\in G}\psi(yx)\cdot \vphi^\#(x^{-1}g)\,\mathrm{d}\mu \,\mathrm{d}\mu\\
        &=\int_{x\in G}(\Phi_n\psi)x\cdot \vphi^\#(x^{-1}g)\,\mathrm{d}\mu=((\Phi_n \psi)\conv \vphi^\#)g\ ,
        \end{align*}
        so $\Phi_m(\psi\conv \vphi^\#)=\Phi_n(\psi)\conv \vphi^\#$.
    \end{proof}

    As a corollary, we get the following criterion for checking the $F$-Ramanujan
    and completely Ramanujan properties  (\ref{subsec:Ramanujan-quotients}).

    \begin{thm}\label{TH:Ramanujan-criterion}
        Keep the notation of Theorem~\ref{TH:spectrum-correspondence}
        and let
        $\Gamma\leq_{\calX} G$. Then:
        \begin{enumerate}
            \item[(i)] $\Gamma\leftmod\calX$ is $F$-Ramanujan
            if and only if
            every irreducible unitary representation  $V\wc \LL{\Gamma\leftmod G}$
            satisfying $V^{K_1}+\dots+V^{K_t}\neq 0$
            is tempered (i.e.\ $V\wc \LL{1\leftmod G}$) or has finite action.
            \item[(ii)] $\Gamma\leftmod\calX$ is completely Ramanujan if and only if
            every irreducible unitary representation  $V\wc \LL{\Gamma\leftmod G}$
            is tempered or has finite action.
        \end{enumerate}
        When $\Gamma\leftmod \calX$ is finite, one can replace ``finite action'' with ``finite dimension''
        and ``$V\wc \LL{\Gamma\leftmod G}$'' with ``$V\leq \LL{\Gamma\leftmod G}$''.
    \end{thm}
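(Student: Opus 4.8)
The plan is to derive Theorem~\ref{TH:Ramanujan-criterion} essentially formally from Theorem~\ref{TH:spectrum-correspondence} by unwinding the definition of the $F$-Ramanujan (resp.\ completely Ramanujan) property in terms of the unitary dual $\udual{\Alg{\catC,F}{}}$ and transporting everything across the embedding $\what{\calF}$ of Theorem~\ref{TH:spectrum-correspondence}(i). First I would recall that, by Definition~\ref{DF:Ramanujan-quotient}, $\Gamma\leftmod\calX$ is $\Alg{\catC,F}{}$-Ramanujan precisely when $\Spec_{\Alg{\catC,F}{}}(\Gamma\leftmod\calX)\subseteq\frakT_{\Alg{\catC,F}{}}\cup\Spec_{\Alg{\catC,F}{}}(\calX)$. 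Now invoke Theorem~\ref{TH:spectrum-correspondence}(iv) with $\Gamma=1$ to identify $\Spec_{\Alg{\catC,F}{}}(\calX)$ with $\what{\calF}(\Spec_{\catH}(\LL{1\leftmod G})^{(K_1,\dots,K_t)})$, part (iv) again to identify $\Spec_{\Alg{\catC,F}{}}(\Gamma\leftmod\calX)$ with $\what{\calF}(\Spec_{\catH}(\LL{\Gamma\leftmod G})^{(K_1,\dots,K_t)})$, and part (v) together with the definition $\frakT_{\Alg{\catC,F}{}}=\bigcup_N\frakT_{\Alg{\catC,F}{},N}$ to identify $\frakT_{\Alg{\catC,F}{}}$ with $\what{\calF}\bigl(\bigcup_N\Spec_{\catH}(\llf(N\leftmod G))^{(K_1,\dots,K_t)}\bigr)$. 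Since $\what{\calF}$ is injective (it is a topological embedding), the containment of spectra in $\udual{\Alg{\catC,F}{}}$ is equivalent to the containment of their $\what{\calF}$-preimages in $\udual{\catH}^{(K_1,\dots,K_t)}$. Thus $\Gamma\leftmod\calX$ is $F$-Ramanujan iff every $[V]\in\Spec_{\catH}(\LL{\Gamma\leftmod G})^{(K_1,\dots,K_t)}$ lies in $\Spec_{\catH}(\LL{1\leftmod G})^{(K_1,\dots,K_t)}$ or in $\bigcup_N\Spec_{\catH}(\llf(N\leftmod G))^{(K_1,\dots,K_t)}$.

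Next I would translate the three membership conditions into group-representation language. By Proposition~\ref{PR:equivalent-defs-of-weak-cont}, for an irreducible unitary $\catH$-representation $V$ with $V^{K_1}+\dots+V^{K_t}\neq0$, membership $[V]\in\Spec_{\catH}(\LL{\Gamma\leftmod G})$ means exactly $V\wc\LL{\Gamma\leftmod G}$ as unitary representations of $G$; membership in $\Spec_{\catH}(\LL{1\leftmod G})$ means $V\wc\LL{1\leftmod G}$, i.e.\ $V$ is tempered (\ref{subsec:weak-containment-for-groups}). For the trivial part, Lemma~\ref{LM:finite-factorization} gives that $V\leq\llf(N\leftmod G)$ for some open finite-index $N$ is equivalent to $V$ having finite action; since $\llf(N\leftmod G)$ is finite-dimensional hence admissible, Theorem~\ref{TH:admissible-pu-are-completely-red}(iii) makes $V\leq\llf(N\leftmod G)$ the same as $V\wc\llf(N\leftmod G)$, so $[V]\in\bigcup_N\Spec_{\catH}(\llf(N\leftmod G))$ iff $V$ has finite action. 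Assembling these identifications yields statement (i) verbatim: $\Gamma\leftmod\calX$ is $F$-Ramanujan iff every irreducible unitary $V\wc\LL{\Gamma\leftmod G}$ with $V^{K_1}+\dots+V^{K_t}\neq0$ is tempered or has finite action. For (ii), the "completely Ramanujan" condition by definition requires $F'$-Ramanujan for every (semi-)elementary $F'$; by Proposition~\ref{PR:Ramanujan-well-behaved}(i) it suffices to check elementary $F'$, and I would run the criterion of (i) for all possible systems of stabilizer subgroups $(K_1,\dots,K_t)$ arising from elementary functors. The key observation is that every compact open subgroup of $G$ that stabilizes a nonempty cell of $\calX$ arises (up to conjugacy) as such a $K_n$ — one can, e.g., take $F'=\Omega_i^+$ for all $i$, whose orbit representatives exhaust the stabilizers of cells in all dimensions — and by condition (E2) in Definition~\ref{DF:elementary-functor} these are the only subgroups that occur; combined with the fact (Bruhat–Tits fixed point theorem, cf.\ the remark after Definition~\ref{DF:elementary-functor}, in the building case, or more elementarily) that for any irreducible unitary $V$ with $V\wc\LL{\Gamma\leftmod G}$ some compact open $K$ has $V^K\neq0$ since $V$ is smooth-generated, the union over all elementary $F'$ of the conditions "$V^{K_1}+\dots+V^{K_t}\neq0$" becomes vacuous. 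Hence (ii) drops the constraint on $K$-fixed vectors entirely.

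Finally, for the last sentence of the theorem I would invoke the cocompact case. When $\Gamma\leftmod\calX$ is finite, Proposition~\ref{PR:unimodularity-of-G} says $\Gamma$ is cocompact and discrete in $G$, so $\LL{\Gamma\leftmod G}$ is admissible by Example~\ref{EX:LL-Gamma-mod-G-admissible}; then Theorem~\ref{TH:admissible-pu-are-completely-red}(iii) replaces every "$V\wc\LL{\Gamma\leftmod G}$" with "$V\leq\LL{\Gamma\leftmod G}$", and Lemma~\ref{LM:finite-dim-reps-of-G} — applied to exactly such $V$ — replaces "has finite action" with "is finite-dimensional". This is a routine substitution once the main equivalence is in place.

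The main obstacle I anticipate is not any single deep step but rather the bookkeeping in part (ii): one must argue carefully that ranging $F'$ over \emph{all} elementary functors (equivalently, over all finite families of cell-stabilizers in all dimensions) produces a system $(K_\alpha)$ whose fixed-point condition $\sum_\alpha V^{K_\alpha}\neq0$ is automatic for \emph{every} nonzero smooth representation $V$ weakly contained in $\LL{\Gamma\leftmod G}$. The subtlety is that the trivial-spectrum set $\frakT$ and the temperedness set $\Spec(\LL{1\leftmod G})$ are each defined relative to a fixed $F'$, so one needs the $\what{\calF}$-translation of Theorem~\ref{TH:spectrum-correspondence} to be compatible across different $F'$; this compatibility is exactly what condition (E2) and Proposition~\ref{PR:Ramanujan-well-behaved}(i) are designed to give, but spelling it out requires noting that a representation $V$ which fails $V^{K_1}+\dots+V^{K_t}\neq0$ for one elementary $F'$ simply does not contribute a point to $\udual{\Alg{\catC,F'}{}}$, hence imposes no constraint for that $F'$, while it \emph{does} contribute for some other elementary $F''$ (any $F''$ built from a cell $x$ with $\Stab_G(x)\supseteq$ a subgroup fixing a vector of $V$). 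Once this is organized, the theorem follows.
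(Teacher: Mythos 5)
Your part (i) and the final sentence follow the paper's argument essentially verbatim: identify $\Spec$ and $\frakT$ under $\what{\calF}$ using Theorem~\ref{TH:spectrum-correspondence}(iv)--(v), translate via Proposition~\ref{PR:equivalent-defs-of-weak-cont}, and use Lemma~\ref{LM:finite-dim-reps-of-G} together with Theorem~\ref{TH:admissible-pu-are-completely-red}(iii) for the finite case. No issues there.

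There is, however, a genuine gap in your argument for part (ii). You propose to exhaust the fixed-vector conditions by ranging $F'$ over the geometrically defined functors $\Omega_i^+$ (for all $i$), whose associated stabilizers $K_n$ are the actual cell stabilizers of $\calX$. These form only a \emph{finite} collection of compact open subgroups up to conjugacy; they do not form a neighborhood basis of $1_G$. Consequently there exist irreducible $V\wc \LL{\Gamma\leftmod G}$ with $V^{K}=0$ for every cell stabilizer $K$ (when $G=\nPGL{F}{d}$, depth-positive supercuspidal representations have no nonzero vectors fixed even by an Iwahori subgroup, hence by no parahoric). Such $V$ would escape all your elementary functors, so the union of the conditions $V^{K_1}+\dots+V^{K_t}\neq 0$ does \emph{not} become vacuous, and completely Ramanujan would not force temperedness for those $V$. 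You also state the containment in the wrong direction at the end: you want an $F''$ with a stabilizer $K_n$ \emph{contained in} (not containing) a group under which $V$ has a fixed vector, since $K\subseteq K'$ gives $V^{K'}\subseteq V^K$.

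The paper avoids this by a device you did not consider: for \emph{any} compact open $K$ contained in some cell stabilizer, it manufactures an abstract elementary functor $\llf\circ S$ with $S(\Gamma\leftmod\calX)=\Gamma\leftmod G/K$. This is not one of the geometric functors $\Omega_i^+$, $\Omega_i^-$, $\llFlag$; it is an ad hoc functor whose only role is to make $K$ itself appear as the stabilizer $K_1$. Since such $K$ range over a neighborhood basis of $1_G$, and every nonzero smooth $V$ has $V^K\neq 0$ for all small enough $K$, the union over these abstract $F$ of the conditions in part (i) then really does cover every irreducible $V\wc\LL{\Gamma\leftmod G}$. Without this construction (or some substitute producing arbitrarily small $K_n$'s, such as the $S_{n,m}$ functors mentioned in the paper's following remark), your part (ii) does not go through.
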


    \begin{proof}
        (i) The complex $\Gamma\leftmod\calX$ is $F$-Ramanujan
        precisely when  $\Spec_{\Alg{\catC,F}{}}(\Gamma\leftmod \calX)\subseteq \Spec_{\Alg{\catC,F}{}}(\calX)\cup\frakT_{\Alg{\catC,F}{}}$.
        Taking inverse images relative to $\what{\calF}$ in
        Theorem~\ref{TH:spectrum-correspondence} yields the equivalence.

        (ii) Suppose $\Gamma\leftmod \calX$ is completely Ramanujan. For any $K\co G$
        such that $K$ stabilizes some nonempty cell of $\calX$,
        define a functor $S:\catC\to \catSet$ by setting $S(\Gamma\leftmod\calX)=\Gamma\leftmod G/K$,
        $S(p_{\Gamma}\circ g)=[xK\mapsto \Gamma gxK]$, $Sp_{\Gamma',\Gamma}=[\Gamma'xK\mapsto \Gamma xK]$ (notation
        as in Definition~\ref{DF:G-complex-category}), and let $F=\llf\circ S$.
        It is easy to check that $S$ satisfies
        conditions (E1)--(E4) of Definition~\ref{DF:elementary-functor},
        and hence $F$ is elementary.
        Furthermore,  $S\calX=G/K$ consists of a single $G$-orbit. Taking $x_1=1_GK\in S\calX$,
        and $K_1=\Stab_G(x_1)=K$ in part (i), we get that any
        irreducible unitary representation  $V\wc \LL{\Gamma\leftmod G}$
        with $V^K\neq 0$ is
        tempered or has finite action.
        Since $K$ can be taken to be any sufficiently small
        compact open subgroup of $G$, it follows that any irreducible unitary representation
        $V\wc \LL{\Gamma\leftmod G}$ is tempered or has finite action.
        The other direction is immediate from (i).

        The final assertion follows from Lemma~\ref{LM:finite-dim-reps-of-G} and
        Theorem~\ref{TH:admissible-pu-are-completely-red}(iii)
        (cf.\
        Proposition~\ref{PR:unimodularity-of-G}, Example~\ref{EX:LL-Gamma-mod-G-admissible}).
    \end{proof}

    \begin{remark}\label{RM:non-faithful-action}
        It sometimes convenient to consider groups acting non-faithfully on $\calX$, e.g.\
        a non-adjoint almost simple algebraic group over a local non-archimedean field acting
        on its affine building (Example~\ref{EX:G-complex-building-III}). Theorem~\ref{TH:Ramanujan-criterion}
        can be adjusted to this case as follows:
        Suppose $\tilde{G}$ acts on $\calX$ such that stabilizers of nonempty cells
        are compact open. Let $G$ be the image of $\tilde{G}$ in $\Aut(\calX)$,
        and write $H=\ker(\tilde{G}\to G)=\Stab_{\tilde{G}}(\calX)$.
        Let $\tilde{\Gamma}$ be a lattice
        in $\tilde{G}$ such that $\Gamma:=\im(\tilde{\Gamma }\to G)\leq_\calX G$.
        In the notation of Theorem~\ref{TH:spectrum-correspondence}, write $\tilde{K}_n=\Stab_{\tilde{G}}(x_n)$.
        Then $\Gamma\leftmod \calX=\tilde{\Gamma}\leftmod\calX$ is $F$-Ramanujan
        if and only if any $V\wc \LL{\tilde{\Gamma}\leftmod \tilde{G}}$ with $V^{\tilde{K}_1}+\dots +V^{\tilde{K}_t}\neq 0$
        is tempered or has finite action, and $\Gamma\leftmod \calX=\tilde{\Gamma}\leftmod\calX$ is  completely Ramanujan
        if and only if any $V\wc \LL{\tilde{\Gamma}\leftmod \tilde{G}}$ with $V^H\neq 0$
        is tempered or has finite action. Indeed, notice that $\LL{\tilde{\Gamma}\leftmod \tilde{G}}^H\cong\LL{\Gamma\leftmod G}$
        as representations of $G=\tilde{G}/H$, so the previous statements can be translated to
        the statements of Theorem~\ref{TH:Ramanujan-criterion}  by taking $H$-invariants, thanks
        to Proposition~\ref{PR:compact-quotient-temperedness}.
    \end{remark}

    \begin{remark}
    	The functor $F=\llf\circ S$ used in the proof of Theorem~\ref{TH:Ramanujan-criterion}(ii)
    	has no a priori combinatorial interpretation. However,
    	the proof can be done using  functors defined by combinatorial means. For example,
    	consider the family of functors
    	$\{S_{n,m}:\catSimp\to\catSet\}_{n,m\in\N}$ defined by letting
    	$S_{n,m}X=\{(v_1,\dots,v_n)\in\vrt{\calX}\suchthat \text{$\dist(v_i,v_j)\leq m$ for all $i,j$}\}$
    	($S_{n,m}$ acts on morphisms in
    	the obvious way). It is easy to see that the collection
    	$\{\Stab_G(x)\where x\in S_{n,m}\calX,\, n,m\in\N\}$ is a basis of neighborhoods
    	of $1_G$,  so we can use $F_{n,m}:=\llf\circ S_{n,m}$ in the proof.
    \end{remark}

    \begin{example}\label{EX:Ramanujan-in-dim-i-equiv-cond}
    	Let $x_1,\dots,x_t$ be representatives for the $G$-orbits in $\calX^{(i)}$,
    	and let $K_n=\Stab_G(x_n)$ ($1\leq n\leq t$). Then by Theorem~\ref{TH:Ramanujan-criterion}(i),
        applied with $F=\Omega_i^+=\llf \circ[X\mapsto X^{(i)}]$,
        a
    	$G$-quotient $\Gamma\leftmod\calX$
    	is Ramanujan in dimension $i$ (i.e.\ $\Omega_i^+$-Ramanujan) if and
    	only if any irreducible unitary representation
    	$V\wc \LL{\Gamma\leftmod G}$ with  $V^{K_1}+\dots+V^{K_t}\neq 0$ is tempered or has finite action.
    	
    	Likewise, letting   $\sfx_1,\dots,\sfx_s$ be a set of representatives for the
        $G$-orbits in $\ori{\calX^{(i)}}$ and setting $L_n=\Stab_G(\sfx_n)$,
        we get that $\Gamma\leftmod\calX$ is $\Omega_i^\pm$-Ramanujan if and only
        if any irreducible unitary representation
    	$V\wc \LL{\Gamma\leftmod G}$ with  $V^{L_1}+\dots+V^{L_s}\neq 0$ is tempered or has finite action.
    	In this case, the spectrum of the $i$-dimensional Laplacian (\ref{subsec:orientation})
    	of $\Gamma\leftmod\calX$ is contained in the union of the spectrum of the $i$-dimensional Laplacian
    	of $\calX$ and the trivial spectrum $\frakT_{ \Delta_i}$ (cf.\ Remark~\ref{RM:operator-trivial-spectrum}).
    \end{example}

    \begin{example}\label{EX:ramanujan-equivalence-tree}
        Assume $\calX$ is a $k$-regular tree and let $G=\Aut(\calX)$.
        Choose
        a vertex $v\in\calX^{(0)}$ and write $K=\Stab_G(v)$.
        Then ${\calX^{(0)}}=Gv$, and, as in Example~\ref{EX:Ramanujan-in-dim-i-equiv-cond},
        we get that a finite graph $\Gamma\leftmod \calX$ is Ramanujan in dimension $0$
        if and only if every  infinite dimensional irreducible subrepresentation $V\leq \LL{\Gamma\leftmod G}$
        with $V^K\neq 0$ is tempered. (Recall from
        Example~\ref{EX:classical-Ramamanujan}(i) that being Ramanujan in dimension $0$ is just being a
        Ramanujan $k$-regular graph in the
        classical sense.)
    \end{example}

    \begin{example}\label{EX:ramanujan-equivalence-Bd}
        Let $G=\nPGL{F}{d}$, $K=\nPGL{\calO}{d}$ and $\calX=\calB_d(F)$ be as in Chapter~\ref{sec:ramanujan-complexes}.
        There is only one $G$-orbit in $\calB_d(F)^{(0)}=G/K$, represented by  $x_1:=K$,
        and we have $\Stab_G(x_1)=K$.
        As in Example~\ref{EX:Ramanujan-in-dim-i-equiv-cond},
        a {finite}
        $G$-quotient $\Gamma\leftmod \calB_d(F)$ is Ramanujan in dimension $0$ if and only if
        every infinite dimensional irreducible subrepresentation $V\leq \LL{\Gamma\leftmod G}$
        with $V^K\neq 0$
        is tempered. This statement is \cite[Pr.~1.5]{LubSamVi05}
        (recall from Example~\ref{EX:classical-Ramamanujan}(ii) that being Ramanujan
        in dimension $0$ is equivalent to being Ramanujan in the sense of \cite{LubSamVi05}).

        Consider now the case of $G$-quotients which are flag Ramanujan, i.e.\ $\llFlag$-Ramanujan.
        Notice that $\llFlag=\llf \circ \Flag$. Since $\calB_d(F)$ is pure,
        maximal flags in $\calB_d(F)$ correspond to chambers equipped with a full
        order on its vertices.
        Thus,
        the stabilizer of any maximal flag in $\calB_d(F)$
        is just the pointwise stabilizer of some chamber $x\in\calB_d(F)^{(d-1)}$. The
        group $I_x:=\bigcap_{v\in x}\Stab_G(v)$ is called an \emph{Iwahori subgroup} of $G$.
        For example, the Iwahori group corresponding to the fundamental chamber
        $x_0:=\{K,\quo{g_1}K,\dots,\quo{g_{d-1}}K\}$ (notation as in Chapter~\ref{sec:ramanujan-complexes})
        is $I_0:=K\cap \quo{g_1}K\quo{g_1}^{-1}\cap \dots\cap \quo{g_{d-1}}K\quo{g_{d-1}}^{-1}$,
        and an easy computation shows that $I_0$ is the image of
        \[
        I:=\left[\begin{matrix}
        \units{\calO} & \pi\calO & \ldots & \pi\calO \\
        \calO & \units{\calO} & \ddots & \vdots \\
        \vdots & \ddots  &\ddots & \pi\calO \\
        \calO & \ldots & \calO & \units{\calO}
        \end{matrix}\right]\subseteq \nGL{\calO}{d}
        \]
        in $G$.
        Since $G$
        acts transitively on $\calB_d(F)^{(d-1)}$, any Iwahori group $I_x$
        is conjugate to $I_0$, and hence  any $[V]\in \udual{\catH}$ with $V^{I_x}\neq 0$ also satisfies
        $V^{I_0}\neq 0$. By Theorem~\ref{TH:Ramanujan-criterion}(i), this means that a finite quotient $\Gamma\leftmod\calB_d(F)$
        is flag Ramanujan if and only if any irreducible infinite-dimensional subrepresentation
        $V\leq \LL{\Gamma\leftmod G}$ with $V^{I_0}\neq 0$ is tempered.
        In this case, $\Gamma\leftmod \calB_d(F)$ is also Ramanujan
        in all dimensions (Proposition~\ref{PR:Ramanujan-well-behaved}(i), Example~\ref{EX:summand-functor}).
    \end{example}

\subsection{Consequences}
\label{subsec:consequences}

    We now give some consequences of Theorems~\ref{TH:spectrum-correspondence} and~\ref{TH:Ramanujan-criterion}.

\medskip

    Let $\calT_k$ denote the $k$-regular tree and let $G_k=\Aut(\calT_k)$.
    In the sequel, we shall freely consider $k$-regular graphs $X$ (with no double edges or loops) as $G_k$-quotients
    $\Gamma\leftmod \calT_k$. (The implicit choice of the  cover map $\calT_k\to X$
    determining $\Gamma$ will not affect the discussion.)
    Recall from Example~\ref{EX:classical-Ramamanujan}(i) that a $k$-regular graph is Ramanujan in the classical sense if and only if it is Ramanujan
    in dimension $0$, or $\Omega_0^+$-Ramanujan.
    We shall henceforth use Ramanujan in dimension $0$ to avoid ambiguity.

    Marcus, Spielman and Srivastava \cite{MarSpiSri14} proved   that every bipartite finite $k$-regular graph $X$
    admits an  $2$-cover $X'\to X$ which is Ramanujan in dimension $0$
    (cf.\ Remark~\ref{RM:ramanujan-covers}). That is, the  eigenvalues of the vertex adjacency operator of $X'$ which do not
    arise from $X$  lie in the interval $[-2\sqrt{k-1},2\sqrt{k-1}]$. This was extended covers of any prescribed
    rank by Hall, Puder and Sawin \cite{HallPudSaw16}. By
    applying Theorem~\ref{TH:spectrum-correspondence} with $\calX=\calT_k$, $G=G_k$ and $F=\Omega_0^+$,
    these results can be restated  in a representation-theoretic manner:

    \begin{cor}\label{CR:ramanujan-subgroups-of-tree}
		Let $H_k$ be the index-$2$ subgroup of $G_k$ consisting of
        automorphisms preserving the canonical $2$-coloring of $\calX^{(0)}$.
        Let $K$ be the stabilizer of a vertex of $\calX$, and let $\Gamma\leq H_k$ be a cocompact lattice.
        Then for any $r\in \N$, there exists a sublattice $\Gamma'\leq \Gamma$ of index $r$ such that
        every irreducible unitary subrepresentation $V$ of the orthogonal complement of
        $\LL{\Gamma\leftmod G_k}$ in $\LL{\Gamma'\leftmod G_k}$
        satisfying $V^K\neq 0$ is tempered.
    \end{cor}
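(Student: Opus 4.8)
The plan is to translate the graph-theoretic results of Marcus--Spielman--Srivastava \cite{MarSpiSri14} and Hall--Puder--Sawin \cite{HallPudSaw16} into the language of $G_k$-representations using the dictionary built in Theorem~\ref{TH:spectrum-correspondence}. First I would set $\calX=\calT_k$, $G=G_k=\Aut(\calT_k)$, $F=\Omega_0^+$, and $\catC=\catC(G_k,\calT_k)$. Since $\calT_k^{(0)}=G_k v$ is a single $G_k$-orbit, in the notation of Theorem~\ref{TH:spectrum-correspondence} we have $t=1$, $x_1=v$, $K_1=K=\Stab_{G_k}(v)$, and by Example~\ref{EX:computation-of-Hecke-algebra-for-Bd} (or Example~\ref{EX:tree-spectrum}) the algebra $\Alg{\catC,\Omega_0^+}{}$ is the polynomial algebra $\C[a_0]$ on the vertex adjacency operator, with $\udual{\Alg{\catC,\Omega_0^+}{}}\hookrightarrow\C$ sending $[V]\mapsto \Spec(a_0|_V)$ (Proposition~\ref{PR:unitary-dual-topology}, Example~\ref{EX:k-regular-graphs-basic-spectrum}). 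In particular $\Spec_{\catC,\Omega_0^+}(\calT_k)=[-2\sqrt{k-1},2\sqrt{k-1}]$.

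Next I would invoke the combinatorial input. A cocompact lattice $\Gamma\leq H_k$ satisfies $\Gamma\leq_{\calX}G_k$ (it is torsion-free since it is discrete and $H_k$-conjugate into the index-$2$ color-preserving subgroup forces even distance to translates; more simply, any cocompact lattice inside $H_k$ acting as deck transformations gives $\Gamma\leq_{\calX}G_k$ via Corollary~\ref{CR:distance-condition}, which I would spell out), so $X:=\Gamma\leftmod\calT_k$ is a finite $k$-regular graph, and it is bipartite because $\Gamma$ preserves the $2$-coloring. By \cite{HallPudSaw16} (the $r$-cover generalization of \cite{MarSpiSri14}), for every $r\in\N$ there is an $r$-cover $X'\to X$ of finite $k$-regular graphs such that every ``new'' eigenvalue of the vertex adjacency operator $a_{0,X'}$ — i.e.\ every eigenvalue lying in $\Spec_0(X')$ but not accounted for by $\Spec_0(X)$, equivalently an eigenvalue of $a_{0}$ on the orthogonal complement $W$ of $\Omega_0^+(X)$ inside $\Omega_0^+(X')$ — has absolute value $\leq 2\sqrt{k-1}$. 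The $r$-cover corresponds (\ref{subsec:quotienst-of-simp-comps}) to a sublattice $\Gamma'\leq\Gamma$ of index $r$ with $X'=\Gamma'\leftmod\calT_k$, and $\Gamma'$ is again a cocompact lattice in $H_k$.

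Then I would run the correspondence. By Theorem~\ref{TH:spectrum-correspondence}(iii), $\calF(\LL{\Gamma'\leftmod G_k})=\overline{\Omega_0^+(\Gamma'\leftmod\calT_k)}=\ell^2(\Gamma'\leftmod G_k/K)$, and similarly for $\Gamma$; moreover $\calF$ is additive and compatible with the inclusion $\Omega_0^+(X)\hookrightarrow\Omega_0^+(X')$ induced by $p_{\Gamma',\Gamma}$, so applying $\calF$ to the orthogonal decomposition of $\LL{\Gamma'\leftmod G_k}$ into $\LL{\Gamma\leftmod G_k}^{\perp}$ and its complement matches the decomposition $\Omega_0^+(X')=W\oplus \Omega_0^+(X)$ (here finite-dimensionality, Example~\ref{EX:LL-Gamma-mod-G-admissible} and Theorem~\ref{TH:admissible-pu-are-completely-red}(iii), let me pass freely between ``$\leq$'' and ``$\wc$''). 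Now let $V$ be an irreducible unitary subrepresentation of $\LL{\Gamma\leftmod G_k}^{\perp}\leq\LL{\Gamma'\leftmod G_k}$ with $V^K\neq 0$. Then $\calF V=V^K$ is a nonzero $\Alg{\catC,\Omega_0^+}{}$-submodule of $\calF(\LL{\Gamma\leftmod G_k}^{\perp})\cong\overline{W}$, so $\Spec_{\Alg{\catC,\Omega_0^+}{}}(V)=\Spec(a_0|_{V^K})\subseteq\Spec(a_0|_{W})\subseteq[-2\sqrt{k-1},2\sqrt{k-1}]=\Spec_{\catC,\Omega_0^+}(\calT_k)$. By Example~\ref{EX:spectrum-I}/Proposition~\ref{PR:specturm-vs-weak-containment} this is exactly the statement $\calF V\wc\calF(\ell^2(\calT_k^{(0)}))$, hence $V\wc\LL{1\leftmod G_k}$ by Theorem~\ref{TH:spectrum-correspondence}(ii) and Proposition~\ref{PR:equivalent-defs-of-weak-cont}, i.e.\ $V$ is tempered.

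The main obstacle is bookkeeping rather than a deep point: one must check carefully that $\calF$ carries the combinatorially-defined ``new part'' $W\subseteq\Omega_0^+(X')$ to the representation-theoretically-defined orthogonal complement of $\LL{\Gamma\leftmod G_k}$ in $\LL{\Gamma'\leftmod G_k}$, which requires knowing that $\calF(Fp_{\Gamma',\Gamma})$ is the natural inclusion $\ell^2(\Gamma\leftmod G_k/K)\hookrightarrow\ell^2(\Gamma'\leftmod G_k/K)$ up to a unitary — this follows from functoriality of $\calF$ together with Theorem~\ref{TH:Hecke-description-of-A}, which identifies $Fp_{\Gamma',\Gamma}$ with $P^{(1)}_{\Gamma',\Gamma}$, but adjoints and the passage to orthogonal complements have to be handled via Proposition~\ref{PR:iso-implies-unitary-iso} exactly as in the proof of Proposition~\ref{PR:subgroup-quotient}. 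A secondary point to nail down is that the precise eigenvalue bound in \cite{HallPudSaw16} for $(c,d)$-biregular covers specializes, in the regular case $c=d=k$, to the full interval $[-2\sqrt{k-1},2\sqrt{k-1}]$, so that the ``new'' spectrum really lands inside $\Spec_0(\calT_k)$ and not merely inside a larger interval (cf.\ the footnote after Remark~\ref{RM:ramanujan-covers}); in the $k$-regular case this is exactly \cite{MarSpiSri14} for $r=2$ and \cite{HallPudSaw16} in general.
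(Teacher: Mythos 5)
Your proof is correct and takes the approach the paper intends: the paper itself gives no detailed argument for this corollary, merely the instruction ``By applying Theorem~\ref{TH:spectrum-correspondence} with $\calX=\calT_k$, $G=G_k$ and $F=\Omega_0^+$, these results can be restated in a representation-theoretic manner,'' and what you wrote is a careful unpacking of exactly that translation, with the right references for each step (the identification $\Alg{\catC,\Omega_0^+}{}=\C[a_0]$, parts (ii) and (iii) of Theorem~\ref{TH:spectrum-correspondence}, Proposition~\ref{PR:iso-implies-unitary-iso} for the bookkeeping of adjoints, and the footnote about the $(c,d)$-biregular case collapsing to the regular one).

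One genuine (if peripheral) flaw: the parenthetical justification that a cocompact lattice $\Gamma\leq H_k$ automatically satisfies $\Gamma\leq_{\calT_k}G_k$ is wrong. Torsion-freeness does not give $\dist(v,\gamma v)>2$ for all $1\neq\gamma\in\Gamma$ and $v\in\vrt{\calT_k}$: a hyperbolic isometry in $H_k$ of translation length $2$ has $\dist(v,\gamma v)=2$ at every vertex on its axis, so Corollary~\ref{CR:distance-condition} fails and the quotient is a multigraph; and a cocompact lattice in $H_k$ can even have torsion. The corollary must be read with the implicit hypothesis $\Gamma\leq_{\calT_k}G_k$, which is consistent with the paper's explicit convention a few lines earlier (``we shall freely consider $k$-regular graphs $X$ (with no double edges or loops) as $G_k$-quotients $\Gamma\leftmod\calT_k$''). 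Since the combinatorial input from \cite{MarSpiSri14} and \cite{HallPudSaw16} is stated for graphs rather than orbigraphs, and the index-$r$ count of $\Gamma'$ inside $\Gamma$ requires the deck-transformation picture to match $\pi_1(X)$, dropping that hypothesis would genuinely require extra work (passing to a finite-index torsion-free subgroup changes the index and does not give arbitrary $r$). So you should simply assume $\Gamma\leq_{\calT_k}G_k$ rather than try to derive it; the rest of your argument then goes through as written.
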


    We do not know of a representation-theoretic proof of this result.

    \begin{remark}
        Corollary~\ref{CR:ramanujan-subgroups-of-tree} suggests the following definition:
        Let $G$ be an $\ell$-group, let $K_1,\dots,K_t\co G$, and let $\Gamma$ be a lattice in $G$.
        Call a subgroup $\Gamma'\leq \Gamma$  a  \emph{$(K_1,\dots,K_t)$-Ramanujan subgroup}
        if $[\Gamma:\Gamma']<\infty$ and every irreducible unitary representation $V$ with $V^{K_1}+\dots+V^{K_t}\neq 0$ that is weakly contained in
        the orthogonal
        complement of $\LL{\Gamma\leftmod G}$ in $\LL{\Gamma'\leftmod G}$  is tempered.
        If $\calX$ is an almost transitive $G$-complex, $F:\catC(G,\calX)\to \catPHil$ is an elementary
        functor, $\Gamma\leq_{\calX}G$, and $K_1,\dots,K_t$ are as in Theorem~\ref{TH:spectrum-correspondence},
        then $\Gamma'$ is a $(K_1,\dots,K_t)$-Ramanujan subgroup of $\Gamma$
        if and only if $\Gamma'\leftmod \calX\to \Gamma\leftmod\calX$ is an  $F$-Ramanujan cover in the sense of
        Remark~\ref{RM:ramanujan-covers}.

        It is also reasonable to call $\Gamma'\leq \Gamma$  a \emph{completely Ramanujan} subgroup if
        $[\Gamma:\Gamma']<\infty$ and
        any irreducible unitary representation   that is weakly contained in the orthogonal
        complement of $\LL{\Gamma\leftmod G}$ in  $\LL{\Gamma'\leftmod G}$ is tempered.
    \end{remark}

	The following results concern equivalences of different types of the Ramanujan property.
    They follow from representation-theoretic properties of the relevant  groups. We expect
    similar results should hold whenever $\calX$ is an affine building of dimension $1$ or $2$.

    \begin{prp}\label{PR:Raman-graph-equiv-conds}
        A $k$-regular graph is Ramanujan in dimension $0$
        if and only if it is completely Ramanujan (\ref{subsec:Ramanujan-quotients}).
        Likewise, a finite covering of $k$-regular graphs $X'\to X$ is Ramanujan in dimension $0$
        if and only if it is completely Ramanujan in the sense of Remark~\ref{RM:ramanujan-covers}.
    \end{prp}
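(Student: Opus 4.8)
The plan is to invoke Theorem~\ref{TH:Ramanujan-criterion}, which reduces both ``Ramanujan in dimension $0$'' and ``completely Ramanujan'' to statements about which irreducible unitary representations $V$ weakly contained in $\LL{\Gamma\leftmod G_k}$ must be tempered or have finite action. Since completely Ramanujan trivially implies Ramanujan in dimension $0$ (Proposition~\ref{PR:Ramanujan-well-behaved}(ii)), the content is the converse. By Theorem~\ref{TH:Ramanujan-criterion}(i) applied with $F = \Omega_0^+$, the graph $\Gamma\leftmod\calT_k$ is Ramanujan in dimension $0$ iff every irreducible unitary $V \wc \LL{\Gamma\leftmod G_k}$ with $V^K \neq 0$ (where $K = \Stab_{G_k}(v)$ for a vertex $v$, noting $\calT_k^{(0)}$ is a single $G_k$-orbit) is tempered or has finite action. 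By Theorem~\ref{TH:Ramanujan-criterion}(ii), completely Ramanujan requires the same conclusion for \emph{all} irreducible unitary $V \wc \LL{\Gamma\leftmod G_k}$, with no condition on fixed vectors. So it suffices to show: if $V \in \Irr[u]{G_k}$ is \emph{not} tempered and does \emph{not} have finite action, then $V^K \neq 0$.

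The key step is a structural input on the representation theory of $G_k = \Aut(\calT_k)$: every irreducible unitary representation of $G_k$ that is neither tempered nor of finite action has a nonzero $K$-fixed vector, i.e.\ is \emph{spherical} with respect to a vertex stabilizer. This is classical — the unitary dual of $\Aut(\calT_k)$ was worked out by Ol'shanskii, and the relevant fact is that the only ``non-tempered, non-finite'' irreducibles are the complementary-series spherical representations (those with a $K$-fixed vector, parametrized by a spectral parameter in the interval between $[-2\sqrt{k-1},2\sqrt{k-1}]$ and the trivial/sign eigenvalues $\pm k$); all ``special'' series and cuspidal-type representations (e.g.\ those attached to horospheres / ends, those with no spherical vector for any vertex) are already tempered, and all finite-dimensional irreducibles factor through a finite quotient, hence have finite action by Lemma~\ref{LM:finite-factorization}. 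Equivalently, one can phrase this via the Hecke algebra $\Hecke[K]{G_k} \cong \C[a_0]$ (Example~\ref{EX:tree-spectrum}): the spherical unitary dual is identified with a subset of $\C$, and every irreducible unitary with a $K$-fixed vector is determined by its $a_0$-eigenvalue; the non-spherical irreducibles are handled by showing that $\Aut(\calT_k)$-representations with no vertex-fixed vector are tempered (they embed in $\ell^2$ of the edge set or of the boundary). I would cite Ol'shanskii or the account in the literature (e.g.\ Figà-Talamanca–Nebbia) for the classification, and then the implication is immediate.

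With that input in hand, the argument is short. Suppose $\Gamma\leftmod\calT_k$ is Ramanujan in dimension $0$. Let $V \in \Irr[u]{G_k}$ with $V \wc \LL{\Gamma\leftmod G_k}$; we must show $V$ is tempered or has finite action. If $V$ is neither, then by the structural fact $V^K \neq 0$, so by Theorem~\ref{TH:Ramanujan-criterion}(i) (the dimension-$0$ hypothesis) $V$ \emph{is} tempered or has finite action — contradiction. Hence $\Gamma\leftmod\calT_k$ is completely Ramanujan by Theorem~\ref{TH:Ramanujan-criterion}(ii). For the statement about covers $X' \to X$: write $X = \Gamma\leftmod\calT_k$, $X' = \Gamma'\leftmod\calT_k$ with $\Gamma' \leq \Gamma$ of finite index. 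As in Remark~\ref{RM:ramanujan-covers}, $\Omega_0^+(X') \cong \ker(\Omega_0^+ p_{\Gamma',\Gamma}) \oplus \Omega_0^+(X)$ as unitary $\Alg{\catC,\Omega_0^+}{}$-modules, and likewise $\LL{\Gamma'\leftmod G_k}$ decomposes as the orthogonal complement $W$ of $\LL{\Gamma\leftmod G_k}$ plus $\LL{\Gamma\leftmod G_k}$; the cover is Ramanujan in dimension $0$ precisely when the irreducibles $V \wc W$ with $V^K \neq 0$ are tempered or have finite action, and completely Ramanujan precisely when \emph{all} irreducibles $V \wc W$ are. The same structural fact about $\Irr[u]{G_k}$ then gives the equivalence verbatim.

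\textbf{Main obstacle.} The one genuine non-formality is the representation-theoretic input — that every irreducible unitary representation of $\Aut(\calT_k)$ which is neither tempered nor finite-action is spherical with respect to a vertex stabilizer. Everything else is a mechanical application of Theorem~\ref{TH:Ramanujan-criterion}. I would therefore spend the bulk of the write-up either carefully citing the classification of $\widehat{\Aut(\calT_k)}$ (isolating exactly the complementary-series spherical representations as the ``bad'' ones and checking the special/cuspidal families are tempered) or, if a self-contained argument is wanted, proving directly that a $G_k$-irreducible with no nonzero $K$-fixed vector for any vertex stabilizer $K$ is weakly contained in $\ell^2$ of the ends/edges of $\calT_k$ and hence tempered, while a finite-dimensional one has finite action via Lemma~\ref{LM:finite-factorization}. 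I expect the cleanest route is the citation, since the Ol'shanskii classification is standard.
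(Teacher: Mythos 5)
Your proposal is correct and matches the paper's argument: both reduce the claim via Theorem~\ref{TH:Ramanujan-criterion} to the structural fact that any $V\in\Irr[u]{G_k}$ with $V^K=0$ is tempered, and both obtain this from Ol'shanskii's classification of $\widehat{\Aut(\calT_k)}$. The paper phrases the key input slightly differently (it notes directly that the special and supercuspidal representations are square-integrable, hence tempered, citing \cite[Pr.~9.6]{Knapp86}) rather than as the contrapositive ``non-tempered, non-finite-action implies spherical,'' but the content is the same.
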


    \begin{proof}
        Let $\Gamma\leq_{\calX}G$.
        If $\Gamma\leftmod\calX$ is completely Ramanujan, then it is Ramanujan in dimension $0$
        by definition.  Assume the converse, and let $K$ be the stabilizer of some vertex in $\calX$.
        By Example~\ref{EX:ramanujan-equivalence-tree}, every irreducible $V\wc \LL{\Gamma\leftmod G_k}$
        with $V^K\neq 0$ is tempered of has finite action. Suppose now that $V\wc\LL{\Gamma\leftmod G_k}$
        is irreducible with $V^K=0$. By Olshanski's classification of the irreducible
        representations of $G_k$  \cite{Olshan77}, $V$ is either \emph{special} or \emph{supercuspidal},
        and in both cases it is tempered. (In more detail, in both cases, for all $u,v\in V$,
        the \emph{matrix coefficient} $\psi_{u,v}:=[g\mapsto \Trings{g^{-1}u,v}]$ is in $\LL{G}$,
        and hence $V\leq \LL{G}$; see \cite[Pr.~9.6]{Knapp86}, for instance.)
        %Fixing $0\neq u\in V$,
        %the map $v\mapsto \psi_{u,v}$ is an embedding of $V$ in $\LL{1\leftmod G}$, and it is continuous
        %since $V$ is admissible (cf.\ Proposition~\ref{PR:unitarizability} below). Thus,  $V\leq\LL{1\leftmod G}$.)
        By Theorem~\ref{TH:Ramanujan-criterion}(ii), this means $\Gamma\leftmod\calX$ is completely Ramanujan.
        The assertion about covers is shown similarly.
    \end{proof}

    Proposition~\ref{PR:Raman-graph-equiv-conds} allows us to slightly strengthen  Corollary~\ref{CR:ramanujan-subgroups-of-tree}
    by dropping the assumption $V^K\neq 0$ in the end.

    \begin{prp}\label{PR:PGLiii-special-case}
        Let $G=\nPGL{F}{d}$ and $\calX=\calB_d(F)$ be as in  Chapter~\ref{sec:ramanujan-complexes},
        and suppose $d\in\{2,3\}$. Let $\Gamma\leq_{\calX}G$.
        Then $\Gamma\leftmod \calX$ is Ramanujan in dimension $0$
        if and only if $\Gamma\leftmod\calX$ is flag Ramanujan
        (cf.\ \ref{subsec:Ramanujan-quotients}, Example~\ref{EX:classical-Ramamanujan}(ii)).
    \end{prp}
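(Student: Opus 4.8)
The plan is to reduce the statement to a purely representation-theoretic question about $\nGL{F}{d}$ and then invoke known facts about the unitary dual in the cases $d=2,3$. By Theorem~\ref{TH:Ramanujan-criterion}(i), applied with $F=\Omega_0^+$ (so $t=1$, $x_1=K$, $K_1=K=\nPGL{\calO}{d}$) and with $F=\llFlag$ (so the relevant compact open subgroup is an Iwahori subgroup $I_0$, as computed in Example~\ref{EX:ramanujan-equivalence-Bd}), the statement is equivalent to the following assertion about irreducible unitary representations $V$ of $G=\nPGL{F}{d}$ that are weakly contained in $\LL{\Gamma\leftmod G}$: every such $V$ with $V^K\neq 0$ is tempered or has finite action \emph{if and only if} every such $V$ with $V^{I_0}\neq 0$ is tempered or has finite action. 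Since $K\supseteq I_0$ gives $V^K\subseteq V^{I_0}$, the ``flag Ramanujan $\Rightarrow$ Ramanujan in dimension $0$'' direction is immediate (this is also recorded in general in Remark~\ref{RM:ramanujan-covers}/Example~\ref{EX:ramanujan-equivalence-Bd} and in Proposition~\ref{PR:Ramanujan-well-behaved}(i) together with Example~\ref{EX:summand-functor}). So the content is the reverse direction: one must show that for $d\in\{2,3\}$, the Ramanujan property detected by $K$-fixed vectors already forces temperedness (or finite action) of every Iwahori-spherical irreducible constituent of $\LL{\Gamma\leftmod G}$.

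First I would dispose of the case $d=2$ quickly using Example~\ref{EX:ramanujan-equivalence-Bd} (or Remark~\ref{RM:auto-group-of-Bd}): when $d=2$ the complex $\calB_2(F)$ is a tree, $\Alg{\catC,\Omega_0^+}{}=\C[a_0]$ and $\Alg{\catC,\Omega_1^+}{}=\C[a_1]$, and by Proposition~\ref{PR:Raman-graph-equiv-conds} a $G$-quotient which is Ramanujan in dimension $0$ is completely Ramanujan, hence in particular flag Ramanujan. (Alternatively: for $\nPGL{F}{2}$ the Iwahori-spherical irreducible unitary representations are, up to the ones with $K$-fixed vectors, exactly the Steinberg/twists of Steinberg representations, which are tempered; so the condition at $I_0$ follows from the condition at $K$.)

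For $d=3$ I would argue as follows. Let $V$ be an irreducible unitary representation with $V^{I_0}\neq 0$ that is weakly contained in $\LL{\Gamma\leftmod G}$; we must show $V$ is tempered or has finite action. By the classification of Iwahori-spherical irreducible unitary representations of $\nGL{3}{F}$ (equivalently $\nPGL{F}{3}$), one has a short list: one-dimensional (unramified) characters, unramified twists of the Steinberg representation, representations parabolically induced from $\mathrm{GL}_2\times\mathrm{GL}_1$ (with the $\mathrm{GL}_2$-factor either a twist of Steinberg or an unramified principal series) and the unramified principal series of the Borel. Among these, the non-tempered ones with a non-trivial central twist that remains unitary are exactly: (a) the finite-dimensional ones (one-dimensional unramified characters), which have finite action; (b) the Langlands quotients of the form $\mathrm{St}_2\times|\det|^{s}$-type inductions and complementary-series principal series --- but each of these has a \emph{$K$-fixed vector} (it is spherical). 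Hence if $V$ is Iwahori-spherical, unitary, non-tempered, and not finite-dimensional, then $V^K\neq 0$. Consequently, the hypothesis ``every irreducible unitary $V\wc\LL{\Gamma\leftmod G}$ with $V^K\neq0$ is tempered or has finite action'' forces every non-tempered, infinite-dimensional Iwahori-spherical constituent to be tempered --- a contradiction unless there are none --- so every Iwahori-spherical constituent is tempered or has finite action, i.e.\ $\Gamma\leftmod\calX$ is flag Ramanujan by Theorem~\ref{TH:Ramanujan-criterion}(i). When $\Gamma\leftmod\calX$ is finite one replaces ``finite action'' by ``finite dimension'' and ``weakly contained'' by ``contained'' using Lemma~\ref{LM:finite-dim-reps-of-G}, Theorem~\ref{TH:admissible-pu-are-completely-red}(iii) and Example~\ref{EX:LL-Gamma-mod-G-admissible}; the general (cocompact-or-not) case follows since weak containment in $\LL{\Gamma\leftmod G}$ is what the criterion uses.

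The main obstacle is the input from $p$-adic representation theory: the precise statement that every non-tempered, infinite-dimensional, Iwahori-spherical irreducible \emph{unitary} representation of $\nPGL{F}{3}$ possesses a nonzero $\nPGL{\calO}{3}$-fixed vector. This requires the classification of the unitary dual of $\mathrm{PGL}_3$ over a $p$-adic field (Bernstein--Zelevinsky/Tadi\'c for the $\mathrm{GL}_n$ side, descended to $\mathrm{PGL}$), together with the observation that the Iwahori-spherical representations form a Bernstein component whose non-tempered unitary members are Langlands quotients of standard modules induced from the Borel or from the $(2,1)$-parabolic with a Steinberg or spherical inducing datum, all of which are themselves spherical. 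I would cite this (e.g.\ via the description of the Iwahori--Hecke algebra module category and Tadi\'c's classification, as used in \cite{KaLiWa10}) rather than reprove it; indeed \cite[Th.~2]{KaLiWa10} establishes essentially this equivalence for $\calB_3(F)$, and I would remark that Proposition~\ref{PR:PGLiii-special-case} recovers it. A secondary, minor point to check is that ``has finite action'' on the $G=\nPGL{F}{d}$ side corresponds correctly to the one-dimensional unramified characters and that the reduction via Remark~\ref{RM:non-faithful-action} is not needed here since $G$ already acts faithfully on $\calB_d(F)$.
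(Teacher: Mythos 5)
Your proposal follows the same route as the paper: reduce via Theorem~\ref{TH:Ramanujan-criterion}(i) (and Example~\ref{EX:ramanujan-equivalence-Bd}) to the statement that every irreducible unitary representation $V$ of $\nGL{F}{d}$ with $V^{I}\neq 0$ and $V^{\nGL{\calO}{d}}=0$ is tempered, then invoke the classification of Iwahori-spherical unitary irreducibles (Steinberg for $d=2$; Tadi\'c via \cite{KaLiWa10} for $d=3$). One small caution: your first route for $d=2$, invoking Proposition~\ref{PR:Raman-graph-equiv-conds}, is not directly valid as stated, since that proposition is proved for $G_k=\Aut(\calT_k)$ (using Olshanski's classification of its irreducibles) and $\nPGL{F}{2}$ is not a finite-index subgroup of $\Aut(\calT_{q+1})$, so ``completely Ramanujan'' with respect to the two ambient groups need not coincide (Theorem~\ref{TH:finite-index-subgroup} does not apply); your alternative argument via the Steinberg classification of $\nPGL{F}{2}$ is the one that actually works, and it is what the paper uses.
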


    \begin{proof}
        Being flag Ramanujan implies being Ramanujan in all dimensions (Proposition~\ref{PR:Ramanujan-well-behaved}(i),
        Example~\ref{EX:summand-functor}), so we need to show the converse.
        Suppose $\Gamma\leftmod \calB_d(F)$ is Ramanujan in dimension $0$.
        Then by Example~\ref{EX:ramanujan-equivalence-Bd} (see there for   notation), any irreducible
        $V\wc \LL{\Gamma\leftmod G}$ with $V^K\neq 0$ is tempered or has finite action,
        and we need to show that the same holds under the milder assumption $V^{I_0}\neq 0$.
        It is therefore enough to show that any irreducible unitary representation $V$ of $\nGL{F}{d}$
        with $V^{I}\neq 0$ and $V^{\nGL{\calO}{d}}=0$ is tempered.
        However, when $d\in\{2,3\}$ this follows from the classification of irreducible representations of
        $\nGL{F}{d}$. Specifically, when $d=2$, $V$ is necessarily a \emph{Steingberg representation}, and hence tempered.
        The case $d=3$ is analyzed in \cite[\S2.2, Table~2]{KaLiWa10}. (A description
        of the unitary irreducible representations of $\nGL{F}{d}$ with a nonzero $I$-invariant
        vector can be given using \cite[Pr.~2.6]{Casselman80} together with the classification
        of all  unitary irreducible representations of $\nGL{F}{d}$ by Tadi\'c \cite{Tadic86}.
        See \cite{Zelevinsky80} for the
        classification of the  \emph{smooth} irreducible representations of  $\nGL{F}{d}$.)
    \end{proof}

    \begin{remark}
    	In \cite[Th.~2]{KaLiWa10}, the authors show that the
    	$\Omega_0^+$-Ramanujan property
    	of quotients of $\calB_3(F)$ can be described using adjacency operators of $1$-cells
    	or adjacency operators of
    	$2$-cells. This agrees with
    	Proposition~\ref{PR:PGLiii-special-case}.
    	
        The proof of Proposition~\ref{PR:PGLiii-special-case} breaks when $d\geq 4$ since
        then $\nGL{F}{d}$ has non-tempered irreducible unitary representations $V$ with $V^I\neq 0$ and $V^K=0$;
        see \cite{Tadic86} or \ref{subsec:reps-of-GLd} below.
    \end{remark}

    We now give an application to isospectral complexes.

    \begin{example}\label{EX:isospectral-complexes}
        Let $G=\nPGL{F}{d}$ and $\calX=\calB_d(F)$ and assume $d\neq 6$.
        In  \cite{LubSamVi06} (see also \cite{LubSamVish06B}), Lubotzky, Samuels and Vishne
        construct arbitrarily large families of non-isomorphic $G$-quotients of $\calX$ which are isopectral in the sense that
        their $0$-dimensional spectrum is the same. They further note that the high-dimensional
        Laplacians of these complexes also have the same spectrum. Their example
        is based on constructing non-commensurable cocompact lattices $\Gamma_1,\Gamma_2,\dots,\Gamma_m\leq G$
        such that $\LL{\Gamma_1\leftmod G}\cong\dots\cong\LL{\Gamma_m\leftmod G}$
        as representations of $G$.
        In this case, Theorem~\ref{TH:spectrum-correspondence} implies that the
        quotients $\{\Gamma_i\leftmod\calX\}_{i=1}^m$ have the same $F$-spectrum for any elementary
        (and hence semi-elementary) functor $F:\catC(G,\calX)\to\catPHil$, that is, the $G$-quotients
        $\{\Gamma_i\leftmod\calX\}_{i=1}^m$ are \emph{completely isospectral}.
        For example, it follows that the spectrum of any high-dimensional adjacency operator is the same
        on $\{\Gamma_i\leftmod\calX\}_{i=1}^m$. Nevertheless, $\Gamma_1\leftmod \calB_d(F),\dots,\Gamma_m\leftmod\calB_d(F)$
        are pairwise non-isomorphic.
    \end{example}
    
    Finally, we  use Theorem~\ref{TH:spectrum-correspondence} to give a partial converse
    to Proposition~\ref{PR:trivial-spectrum-containment}. 
    We will use notation introduced in \ref{subsec:trivial-spec}.
    
    \begin{prp}\label{PR:converse-trivial-spectrum-containment}
        Let $G$, $\calX$, $F$ and $x_1,\dots,x_t$ be as in Theorem~\ref{TH:spectrum-correspondence},
        and write $K_n=\Stab_G(x_n)$ ($1\leq n\leq t$) and $A=\Alg{\catC(G,\calX),F}{}$. 
		Let $\Gamma\leq_{\calX}G$ be such that $\Gamma\leftmod\calX$ is finite,
		and let $N$ be a \emph{normal} finite-index subgroup of $G$. 
		Recall from \ref{subsec:trivial-spec} that  $\frakT_{A,N}=\Spec_A((F\calX)_N)$.        
        \begin{enumerate}
        	\item[(i)] If $N$ contains $K_n$ for some $n$,
        	then $\Gamma\subseteq N$ $\iff$ $\mSpec_A((F\calX)_N)\subseteq \mSpec_A(\Gamma\leftmod\calX)$
        	(as multisets).
        	In this case, every $[V]\in\frakT_{A,N}$ has the same multiplicity 
        	in $\mSpec_A((F\calX)_N)$ and $\mSpec_A(\Gamma\leftmod\calX)$.
        \end{enumerate}
        When $G/N$ is abelian we also have:
        \begin{enumerate}
        	\item[(ii)] $\frakT_{A,N}=\bigcup_{n=1}^t\frakT_{A,NK_n}$.
        	\item[(iii)] If $N$ contains $K_n$ for some $n$,
        	then $\Gamma\subseteq N$ $\iff$ $\frakT_{A,N}\subseteq \Spec_A(\Gamma\leftmod\calX)$.
        	In this case, every $[V]\in\frakT_{A,N}$ has multiplicity
        	$1$ in $\mSpec_A(\Gamma\leftmod\calX)$.
        \end{enumerate} 
    \end{prp}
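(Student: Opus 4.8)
The plan is to translate everything to the group side via Theorem~\ref{TH:spectrum-correspondence} and its functor $\calF$, and then reduce each assertion to a statement about the representation $\LL{\Gamma\leftmod G}$ versus the finite-dimensional representation $\llf(N\leftmod G)$. By Proposition~\ref{PR:unimodularity-of-G}, $\Gamma\leftmod\calX$ finite means $\Gamma$ is a cocompact lattice, so $\LL{\Gamma\leftmod G}$ is admissible (Example~\ref{EX:LL-Gamma-mod-G-admissible}); hence $\calF$ carries multiplicities correctly and Theorem~\ref{TH:spectrum-correspondence}(iv),(v) applies. Recall from part (v) that $\what{\calF}(\Spec_{\catH}(\llf(N\leftmod G))^{(K_1,\dots,K_t)})=\frakT_{A,N}$, and from the proof of (v) that $\calF(\llf(N\leftmod G))\cong(F\calX)_N$ as unitary $A$-modules, so $\frakT_{A,N}=\Spec_A((F\calX)_N)$ is visibly the $A$-spectrum image of the admissible $\catH$-module $\llf(N\leftmod G)$.

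First I would prove (i). Since $N$ is normal of finite index, $\llf(N\leftmod G)$ is a direct sum of one-dimensional (hence finite-action) representations indexed by characters of $G/N$; this is where normality is essential. The hypothesis that $N\supseteq K_n$ for some $n$ forces $V^{K_n}\neq0$ for every summand $V$ of $\llf(N\leftmod G)$ (since $K_n$ acts trivially), so the restriction to $\udual{\catH}^{(K_1,\dots,K_t)}$ loses no information: $\mSpec_A((F\calX)_N)$ is literally the $\calF$-image of $\mSpec_{\catH}(\llf(N\leftmod G))$. Now $\Gamma\subseteq N$ gives a surjection $\LL{\Gamma\leftmod G}\onto\llf(N\leftmod G)$ of $G$-modules (averaging over $N/\Gamma$, as in the proof of Proposition~\ref{PR:trivial-spectrum-containment}), and since the target is admissible, Theorem~\ref{TH:admissible-pu-are-completely-red}(iii) plus Schur give $\mSpec_{\catH}(\llf(N\leftmod G))\subseteq\mSpec_{\catH}(\LL{\Gamma\leftmod G})$ with the full multiplicity of each summand preserved; applying $\calF$ (additive, multiplicity-preserving on admissibles via $\mult=\dim\cHom$) yields one direction, and the ``same multiplicity'' clause drops out because $\mult_{\LL{\Gamma\leftmod G}}V$ of a character $V$ of $G/N$ equals $\dim V^{\Gamma}_{\text{twisted}}$, which is exactly $\mult_{\llf(N\leftmod G)}V$ when $\Gamma\subseteq N$. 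For the converse, if $\Gamma\not\subseteq N$ pick a character $\chi$ of $G/N$ with $\chi|_{\Gamma}\neq 1$; then $\mult_{\llf(N\leftmod G)}\chi\geq 1$ but $\mult_{\LL{\Gamma\leftmod G}}\chi=0$ (a $\chi$-isotypic vector would be $\Gamma$-fixed up to the trivial character, impossible), contradicting the multiset inclusion.

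Next I would do (ii), now assuming $G/N$ abelian. Here $\frakT_{A,N}=\Spec_A((F\calX)_N)$ and $(F\calX)_N\cong\bigoplus_{n=1}^t\CSLC{G/K_n}_N\cong\bigoplus_n\llf(N K_n\leftmod G)$ as $\catH$-modules (from the proof of (v), since $\CSLC{G/K_n}_N$ has a basis indexed by $N\leftmod G/K_n=NK_n\leftmod G$ using normality of $N$). Because $G/N$ is abelian, every irreducible summand of $\llf(N\leftmod G)$ is a character, and a character $\chi$ satisfies $\chi^{K_n}\neq0$ iff $K_n\subseteq\ker\chi$ iff $\chi$ factors through $G/NK_n$; so $\Spec_{\catH}(\llf(N\leftmod G))^{(K_1,\dots,K_t)}=\bigcup_n\Spec_{\catH}(\llf(NK_n\leftmod G))$. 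Applying $\what{\calF}$ and Theorem~\ref{TH:spectrum-correspondence}(v) to each $NK_n$ gives $\frakT_{A,N}=\bigcup_n\frakT_{A,NK_n}$. Part (iii) then combines (i) and (ii): under $G/N$ abelian each character has multiplicity $\leq 1$ everywhere in sight, so the multiset inclusion of (i) becomes a plain set inclusion $\frakT_{A,N}\subseteq\Spec_A(\Gamma\leftmod\calX)$, and the reverse implication is the converse argument of (i) (character detection), with the final multiplicity-one statement immediate from $G/N$ abelian.

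The main obstacle I anticipate is bookkeeping around the restriction superscript $(K_1,\dots,K_t)$ and multiplicities: one must check carefully that passing from $\mSpec_{\catH}$ to $\mSpec_A$ via $\calF$ neither merges distinct classes nor alters multiplicities, which relies on $\calF$ restricted to $\udual{\catH}^{(K_1,\dots,K_t)}$ being a topological embedding (Theorem~\ref{TH:spectrum-correspondence}(i)) and on $\calF$ being additive and exact enough to preserve $\dim\cHom$. The other delicate point is the hypothesis ``$N$ contains $K_n$ for some $n$'': it is exactly what guarantees that $\llf(N\leftmod G)$ has no irreducible summand killed by the superscript, so that the trivial spectrum is ``seen in full'' by the $F$-spectrum; without it one only gets inclusion of the visible part. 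I would state these as short lemmas before the four-line arguments for (i)--(iii) rather than inlining them.
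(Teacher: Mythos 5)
Your proof of (ii) and (iii) matches the paper's approach. However, your proof of (i) has a genuine gap.

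The assertion ``Since $N$ is normal of finite index, $\llf(N\leftmod G)$ is a direct sum of one-dimensional (hence finite-action) representations indexed by characters of $G/N$'' is false when $G/N$ is non-abelian: normality only ensures that $\llf(N\leftmod G)$ is the regular representation of the finite group $G/N$, and for non-abelian $G/N$ this has irreducible summands of dimension $>1$. Part (i) of the proposition is stated for an arbitrary normal finite-index $N$; only (ii) and (iii) assume $G/N$ abelian. Your converse direction then leans on this error: you propose to ``pick a character $\chi$ of $G/N$ with $\chi|_\Gamma\neq 1$,'' but if $\Gamma N/N$ lies inside the derived subgroup of $G/N$ (for example $\Gamma N/N=G/N=A_5$), no such character exists, so the detection fails. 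The same reliance on characters undermines your justification of the ``same multiplicity'' clause, which you reduce to counting $\Gamma$-fixed vectors for $1$-dimensional $V$.

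The paper avoids this by a dimension argument that does not factor through characters. From the multiset inclusion and the fact that every irreducible summand of $C^\infty(N\leftmod G)$ has nonzero $K_1$-invariants (since $K_1\subseteq N$), one gets that $\LL{\Gamma\leftmod G}$ contains a copy of $C^\infty(N\leftmod G)$. Taking $N$-invariants (where normality of $N$ is used: $C^\infty(N\leftmod G)^N=C^\infty(N\leftmod G)$, $\LL{\Gamma\leftmod G}^N=\LL{\Gamma N\leftmod G}$) and comparing dimensions $[G:N]\leq[G:\Gamma N]$ forces $\Gamma N=N$, i.e.\ $\Gamma\subseteq N$, and simultaneously yields that every summand of $C^\infty(N\leftmod G)$ occurs with equal multiplicity in $C^\infty(N\leftmod G)$ and in $\LL{\Gamma\leftmod G}$. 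You should replace your character argument in (i) with this dimension comparison; once (i) is fixed, your (ii) and (iii) stand as written.
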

    
    \begin{proof}
		(i) 
		The direction ($\derives$) is shown exactly as in the proof of
		Proposition~\ref{PR:trivial-spectrum-containment}, so we turn to prove
		the other direction.
		Without loss of generality, $K_1\subseteq N$.
		By Theorem~\ref{TH:spectrum-correspondence}(iv), $\what{\calF}$
		induces an isomorphism of multisets $\mSpec(\LL{\Gamma\leftmod G})\to \mSpec(\Gamma\leftmod\calX)$
		(notice that $\LL{\Gamma\leftmod G}$ is admissible by Proposition~\ref{PR:unimodularity-of-G}
		and
        Example~\ref{EX:LL-Gamma-mod-G-admissible}).
		Since $\mSpec_A((F\calX)_N)\subseteq \mSpec_A(\Gamma\leftmod\calX)$
		and $\calF(C^\infty(N\leftmod G))=(F\calX)_N$ (see the proof of Theorem~\ref{TH:spectrum-correspondence}(v)), and
		since every irreducible subrepresentation
		of $C^\infty(N\leftmod G)$ contains nonzero $K_1$-invariant
		vectors (because $K_1\subseteq N$), the representation $\LL{\Gamma\leftmod G}$ contains
		a copy of $C^\infty(N\leftmod G)$. Since $N\normalin G$,
		we have $C^\infty(N\leftmod G)^N=C^\infty(N\leftmod G)$
		and $\LL{\Gamma\leftmod G}^N=\LL{\Gamma N\leftmod G}$. 
		Dimension considerations now imply that $N=\Gamma N$ and that
		that every irreducible subrepresentation of $C^\infty(N\leftmod G)$
		has the same multiplicity in $C^\infty(N\leftmod G)$ and
		$\LL{\Gamma\leftmod G}$. The latter gives the assertion about
		the multiplicity of elements of $\frakT_{A,N}$, thanks to
		Theorem~\ref{TH:spectrum-correspondence}(v).
		
		(ii)
		Since $G/N$ is abelian and $N\normalin G$,  we have $NK_n\normalin G$ for all $1\leq n \leq t$.
		This easily implies that
		$C^\infty(N\leftmod G)^{K_n}=C^\infty(NK_n\leftmod G)$, and hence
		$\Spec (\llf(N\leftmod G))^{(K_1,\dots,K_t)}=
		\bigcup_{n=1}^t\Spec(\llf (NK_n\leftmod G))$.
		The assertion now follows from 	Theorem~\ref{TH:spectrum-correspondence}(v).
		
		(iii) Since $G/N$ is abelian, every irreducible subrepresentation
		of $C^\infty(G/N)$ occurs with multiplicity $1$. Invoking this fact in the proof
		of (i) gives all  claims.
    \end{proof}
    
    \begin{example}
    	(i) Taking $\calX=\calT_k$, $G=G_k$, $F=\Omega_0^+$ and $N=H_k$ (notation as in 
    	Corollary~\ref{CR:ramanujan-subgroups-of-tree}) in Proposition~\ref{PR:converse-trivial-spectrum-containment}, we recover
    	the well-known fact that a $k$-regular \emph{connected} graph $X$ is bipartite 
    	(i.e.\ $X\cong \Gamma\leftmod \calX$ for $\Gamma\leq_{\calX} H_k$) if and only
    	if $-k$ is an eigenvalue of its adjacency matrix $a_{0,X}$ (cf.\ Example~\ref{EX:trivial-spec-tree}).
    	We also see that the eigenvalues $k$ and $-k$ can occur with multiplicity at most $1$
    	in $\Spec(a_{0,X})$.
    	
    	(ii) Let $\calX=\calB_d(F)$, $G=\nPGL{F}{d}$ and $K=\nPGL{\calO}{d}$ by
    	as in Chapter~\ref{sec:ramanujan-complexes}. 
    	We observed in Example~\ref{EX:trivial-spec-complex} that the smallest
    	finite-index subgroup of $G$ is $N:=\im(\nSL{F}{d}\to\nPGL{F}{d})$.
    	Since $N\normalin G$ and $G/N$ is abelian, Proposition~\ref{PR:converse-trivial-spectrum-containment}
    	implies that the trivial $0$-dimensional spectrum $\frakT_{\Alg{\catC(G,\calX),\Omega_0^+}{}}$
    	coincides with
    	$\frakT_{\Alg{\catC(G,\calX),\Omega_0^+}{},NK}$, and for any $NK\leq N'\leq G$ 
    	and cocompact $\Gamma\leq_{\calX}G$,
    	we have $\frakT_{\Alg{\catC(G,\calX),\Omega_0^+}{},N'}\subseteq \Spec_0(\Gamma\leftmod \calX)$
    	$\iff$ $\Gamma\leq N'$. 
    \end{example}

\subsection{Finite Index Subgroups}

    Let $\calX$  be an almost transitive $G$-complex with $G$ unimodular, let
    $F:\catC(G,\calX)\to \catPHil$ be an elementary functor with $F\cong \llf\circ S$
    as in Definition~\ref{DF:elementary-functor}, and let $H$ be a finite-index
    open subgroup of $G$. Then $\calX$ is an almost transitive $H$-complex,
    and for any $\Gamma\leq_{\calX}H$, we can consider $\Gamma\leftmod \calX$
    both as a $G$-quotient and as an $H$-quotient of $\calX$, giving rise to two
    possible  notions of $F$-Ramanujan-ness for $\Gamma\leftmod \calX$.
    In this  section, we use Theorem~\ref{TH:Ramanujan-criterion} to
    show that, in some cases, these two notions are equivalent.

    \begin{thm}\label{TH:finite-index-subgroup}
        Let $\Gamma\leq_{\calX}H$.
        \begin{enumerate}
            \item[(i)] If $\Gamma\leftmod \calX$ is $F$-Ramanujan
            as an $H$-quotient of $\calX$, then it is $F$-Ramanujan as a $G$-quotient of $\calX$.
            \item[(ii)] The converse of (i) holds if there are representatives $x_1,\dots,x_t$ for the $G$-orbits
            in $S\calX$ such that $\Stab_G(x_n)\subseteq H$ for all $1\leq n\leq t$.
            \item[(iii)] $\Gamma\leftmod \calX$ is completely Ramanujan as an $H$-quotient of $\calX$
            if and only if $\Gamma\leftmod\calX$ is completely Ramanujan as a $G$-quotient of $\calX$.
        \end{enumerate}
    \end{thm}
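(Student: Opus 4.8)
The plan is to reduce everything to the spectral machinery of Chapters~\ref{sec:involutary-algebras} and~\ref{sec:spectrum-and-reps}. Write $A_G=\Alg{\catC(G,\calX),F}{}$ and $A_H=\Alg{\catC(H,\calX),F}{}$. By Theorem~\ref{TH:Adj-algebra-descrition}(i), $A_G\cong\End_G(F\calX)$ and $A_H\cong\End_H(F\calX)$, and since $H\subseteq G$ we have $\End_G(F\calX)\subseteq\End_H(F\calX)$; moreover the actions of both algebras on any $F(\Gamma\leftmod\calX)$ are governed by the same universal formula \eqref{EQ:Adj-action-on-quotients}. Hence $A_G$ is an idempotented $*$-subalgebra of $A_H$, and the $A_G$-module $F(\Gamma\leftmod\calX)$ is the $A_H$-module $F(\Gamma\leftmod\calX)$ restricted. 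Theorem~\ref{TH:subalgebra-spectrum-I} therefore applies to the pair $(A_H,A_G)$: writing $\calR=\calR_{A_H/A_G}$, which is monotone and commutes with unions, we get $\Spec_{A_G}(X)=\calR(\Spec_{A_H}(X))$ for every $X\in\catC(H,\calX)$, in particular $\Spec_{A_G}(\Gamma\leftmod\calX)=\calR(\Spec_{A_H}(\Gamma\leftmod\calX))$ and $\Spec_{A_G}(\calX)=\calR(\Spec_{A_H}(\calX))$. A short computation gives $\calR(\frakT_{A_H})=\frakT_{A_G}$: for every finite-index open $M\leq H$ the space $(F\calX)_M$ carries both module structures (Lemma~\ref{LM:coinvariants-module-structure}), so Theorem~\ref{TH:subalgebra-spectrum-I} yields $\calR(\frakT_{A_H,M})=\frakT_{A_G,M}$, and $\bigcup_{M\leq H}\frakT_{A_G,M}=\frakT_{A_G}$ because every finite-index open $M\leq H$ is finite-index open in $G$ and, conversely, $\frakT_{A_G,N}\subseteq\frakT_{A_G,N\cap H}$ for $N\leq G$ by Proposition~\ref{PR:trivial-spectrum-containment}. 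Part~(i) is now immediate: apply $\calR$ to $\Spec_{A_H}(\Gamma\leftmod\calX)\subseteq\frakT_{A_H}\cup\Spec_{A_H}(\calX)$ to obtain $\Spec_{A_G}(\Gamma\leftmod\calX)\subseteq\frakT_{A_G}\cup\Spec_{A_G}(\calX)$.

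\textbf{Part~(iii).} The implication ``$H$-completely Ramanujan $\Rightarrow$ $G$-completely Ramanujan'' reduces to part~(i): a semi-elementary functor on $\catC(G,\calX)$ restricts to a semi-elementary functor on the subcategory $\catC(H,\calX)$ (conditions (E1)--(E4) of Definition~\ref{DF:elementary-functor} only become easier, and $[G:H]<\infty$ keeps $H\leftmod S\calX$ finite), so $F'$-Ramanujan-ness as an $H$-quotient for all such $F'$ forces, via part~(i), $F'$-Ramanujan-ness as a $G$-quotient for all such $F'$. For the converse I would use the clean criterion of Theorem~\ref{TH:Ramanujan-criterion}(ii): $\Gamma\leftmod\calX$ is completely Ramanujan as a $G$-quotient iff every irreducible unitary $G$-representation $V\wc\LL{\Gamma\leftmod G}$ is tempered or has finite action, and similarly over $H$ with $\LL{\Gamma\leftmod H}$. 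Since $\Hecke{H}$ is an idempotented $*$-subalgebra of $\Hecke{G}$ (extension by zero) and $\LL{\Gamma\leftmod H}$ is a subrepresentation of $\LL{\Gamma\leftmod G}|_H$, the ``moreover'' clause of Theorem~\ref{TH:subalgebra-spectrum-I} produces, for each irreducible $U\wc\LL{\Gamma\leftmod H}$, an irreducible $V\wc\LL{\Gamma\leftmod G}$ with $U\leq V|_H$. As $V$ is tempered or of finite action over $G$, restricting these properties to the finite-index open subgroup $H$ -- temperedness because $\LL{1\leftmod G}|_H\cong\LL{1\leftmod H}^{[G:H]}$, finite action because $N\cap H$ is finite-index open in $H$ whenever $N$ is in $G$ -- shows $U$ is tempered or has finite action over $H$. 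This gives $H$-complete Ramanujan-ness.

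\textbf{Part~(ii).} This is the genuine content. Theorem~\ref{TH:Ramanujan-criterion}(i) translates both sides into representation theory with invariant-vector constraints; being $F$-Ramanujan as a $G$- (resp.\ $H$-) quotient means every irreducible $V\wc\LL{\Gamma\leftmod G}$ with $V^{K_1}+\dots+V^{K_t}\neq0$ (resp.\ every irreducible $U\wc\LL{\Gamma\leftmod H}$ with nonzero invariants under the $H$-stabilizer of some cell of $S\calX$) is tempered or has finite action. Given such a $U$, I would pass to $\Ind_H^G U$, which is weakly contained in $\Ind_H^G\LL{\Gamma\leftmod H}=\LL{\Gamma\leftmod G}$; a Mackey computation identifies $(\Ind_H^G U)^{K_n}$ with $\bigoplus_g U^{\Stab_H(g x_n)}$, so relevance of $U$ over $H$ forces $(\Ind_H^G U)^{K_n}\neq0$ for some $n$. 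Using corner theory (Corollary~\ref{CR:non-empty-spectrum}, Theorem~\ref{TH:corner-subalgebra-spectrum-I}) one extracts an irreducible $V\wc\LL{\Gamma\leftmod G}$ with $V^{K_n}\neq0$; the $G$-hypothesis makes $V$ tempered or of finite action; Frobenius reciprocity gives $U\leq V|_H$, and restricting as in part~(iii) finishes. The hypothesis that the $G$-orbit representatives $x_n$ can be chosen with $\Stab_G(x_n)\subseteq H$ is what makes the invariant-vector conditions of Theorem~\ref{TH:Ramanujan-criterion} for the two realizations compatible, so that the extracted $V$ is genuinely \emph{relevant} over $G$ and genuinely contains $U$ upon restriction.

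\textbf{Expected obstacle.} The delicate point is part~(ii), specifically upgrading ``$V\wc\Ind_H^G U$'' to an honest containment $V\leq\Ind_H^G U$ so that Frobenius reciprocity applies --- straightforward when $\LL{\Gamma\leftmod H}$ is admissible (e.g.\ $\Gamma$ cocompact, using Theorem~\ref{TH:admissible-pu-are-completely-red}(iii)), but requiring care to be argued purely with states and weak containment otherwise --- together with the bookkeeping that shows the stabilizer hypothesis is exactly what lets the ``relevant'' representations of the two pictures be matched. The remaining steps (the identity $\LL{\Gamma\leftmod G}=\Ind_H^G\LL{\Gamma\leftmod H}$, continuity of induction for weak containment, the Mackey decomposition of $(\Ind_H^G U)^{K_n}$, and the stability of temperedness and finite action under restriction to finite-index open subgroups) are standard and should go through routinely.
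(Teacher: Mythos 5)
Your parts (i) and (iii) follow the paper's route: for (i), the paper embeds $\Alg{\catC(G,\calX),F}{}$ into $\Alg{\catC(H,\calX),F}{}$ (restricting natural transformations to the smaller category) and invokes Proposition~\ref{PR:Ramanujan-well-behaved}(ii), whereas you spell out explicitly, via Theorem~\ref{TH:subalgebra-spectrum-I}, that the trivial spectrum of $\Alg{\catC(G,\calX),F}{}$ computed over finite-index open subgroups of $H$ agrees with the one computed over finite-index open subgroups of $G$ — a small point the paper leaves implicit. For (iii), both you and the paper restrict $\LL{\Gamma\leftmod G}$ to $H$, lift an irreducible $U\wc\LL{\Gamma\leftmod H}$ to an irreducible $V\wc\LL{\Gamma\leftmod G}$ with $U\leq V|_H$ via the ``moreover'' clause of Theorem~\ref{TH:subalgebra-spectrum-I}, and descend temperedness and finite action from $V$ to $U$.

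Part (ii) is where you genuinely diverge, and I think to your advantage. The paper restricts: given $U\wc\LL{\Gamma\leftmod H}$ with $U^{K_1}+\dots+U^{K_t}\neq 0$ (with $K_n=\Stab_G(x_n)$), it produces $V\wc\LL{\Gamma\leftmod G}$ with $U\leq V|_H$ by Theorem~\ref{TH:subalgebra-spectrum-I}, observes $V^{K_n}\supseteq U^{K_n}\neq 0$, and applies the $G$-criterion. You induce: pass to $\Ind_H^G U\wc\LL{\Gamma\leftmod G}$, Mackey-decompose $(\Ind_H^G U)^{K_n}\cong\bigoplus_g U^{H\cap gK_ng^{-1}}$, extract $V\wc\LL{\Gamma\leftmod G}$ with $V^{K_n}\neq 0$, and then recover $U\leq V|_H$ by Frobenius. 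This matters because the $H$-quotient criterion from Theorem~\ref{TH:Ramanujan-criterion}(i) is phrased with \emph{$H$-orbit} representatives of $S\calX$ and their $H$-stabilizers; since $[G:H]<\infty$, each $G$-orbit splits into several $H$-orbits, and for $y=gx_n$ with $g\notin H$ the subgroup $\Stab_H(y)=H\cap gK_ng^{-1}$ is generally not $H$-conjugate to any $K_n$, even under the hypothesis $K_n\subseteq H$. Your Mackey computation feeds exactly these subgroups into the $G$-criterion, whereas the paper's argument, as written, only treats the $U$ with $\sum_n U^{K_n}\neq 0$, which is strictly weaker than the $H$-criterion; so your route appears to be the more careful one. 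The obstacle you flag (upgrading $V\wc\Ind_H^G U$ to an honest containment so Frobenius applies) is real, but it disappears when $\LL{\Gamma\leftmod H}$ is admissible (Theorem~\ref{TH:admissible-pu-are-completely-red}(iii)), i.e.\ when $\Gamma$ is cocompact, which is the case of interest. A hybrid avoiding both issues would be: obtain $V$ via Theorem~\ref{TH:subalgebra-spectrum-I} as the paper does (so $U\leq V|_H$ is automatic), then use $V^{\Stab_H(y)}\supseteq U^{\Stab_H(y)}\neq 0$ together with the $G$-action on $V$ to conclude $V^{K_n}\neq 0$ by conjugating $gK_ng^{-1}$ back to $K_n$ — but this needs $\Stab_G(y)\subseteq H$ for \emph{all} $y\in S\calX$ (so that $\Stab_H(y)=\Stab_G(y)$), i.e.\ $K_n$ in the normal core of $H$, which holds in all of the paper's applications, where $H$ is in fact normal.
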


    For the proof, observe that the Haar measure $\mu$ of $G$ restricts to a Haar
    measure on $H$. We can extend any function  $\vphi\in\CSLC{H}$
    to a function in $\CSLC{G}$ by setting $\vphi$ to be $0$
    on $G-H$. This allows us to view $\Hecke{H}$ as a $*$-subalgebra of $\Hecke{G}$.
    If $V$ is a unitary representation of $G$, then the $\Hecke{H}$-module structure
    of $V$ obtained by considering $V$ as a representation of $H$ coincides with the $\Hecke{H}$-module structure
    obtained by restricting the $\Hecke{G}$-module structure of $V$ to $\Hecke{H}$.

    \begin{proof}
        (i) Write $\catC_G=\catC(G,\calX)$ and $\catC_H=\catC(H,\calX)$ (Definition~\ref{DF:G-complex-category}).
        Since $\catC_H\subseteq \catC_G$, we have a homomorphism
        of unital $*$-algebras $\Phi:\Alg{\catC_G,F}{}\to  \Alg{\catC_H,F}{}$ given
        by $\Phi(\{a_X\}_{X\in\catC_G})= \{a_X\}_{X\in\catC_H}$.
        The map $\Phi$ injective by Theorem~\ref{TH:Adj-algebra-descrition}(i), so we may view
        $\Alg{\catC_G,F}{}$ as an algebra of $(\catC_H,F)$-operators.
        Now, if $\Gamma\leftmod\calX$ is $F$-Ramanujan as an $H$-quotient, i.e.\ $\Alg{\catC_H,F}{}$-Ramanujan, then
        it is  $\Alg{\catC_G,F}{}$-Ramanujan by Proposition~\ref{PR:Ramanujan-well-behaved}(ii),
        and hence $F$-Ramanujan as a $G$-quotient.

        (ii) Write $K_n:=\Stab_G(x_n)\subseteq H$.
        Suppose that $\Gamma\leftmod\calX$ is $F$-Ramanujan as a $G$-quotient. Then by
        Theorem~\ref{TH:Ramanujan-criterion}(i), any irreducible $V\wc \LL{\Gamma\leftmod G}$
        with $V^{K_1}+\dots+ V^{K_t}\neq 0$ is tempered or has finite action.

        Let $U$ be an irreducible unitary representation of $H$ such that $U\wc \LL{\Gamma\leftmod H}$
        and $U^{K_1}+\dots+ U^{K_t}\neq 0$.
        We need to show that $U$ is tempered or has finite action. Since $\LL{\Gamma\leftmod H}$
        is an $H$-subrepresentation of $\LL{\Gamma\leftmod G}$, we have $U\wc \LL{\Gamma\leftmod G}$.
        By Theorem~\ref{TH:subalgebra-spectrum-I} and the comment preceding our proof,
        there is an irreducible unitary representation $V$ of $G$ such that $V\wc\LL{\Gamma\leftmod G}$
        and $U\leq V$. Since $V^{K_1}+\dots +V^{K_t}\supseteq U^{K_1}+\dots+ U^{K_t}\neq 0$,
        either $V$ is tempered, or $V$ has finite action. When the latter holds,  $U$
        clearly has finite action. On the other hand, if $V$ is tempered,
        then $V\wc \LL{1\leftmod G}$, and hence $U\wc \LL{1\leftmod G}$ (as representations of $H$).
        Since $\LL{1\leftmod G}\cong \LL{1\leftmod H}^{[G:H]}$ as representations of $H$,
        Theorem~\ref{TH:direct-sum-spectrum} implies that $U\wc \LL{1\leftmod H}$, so $U$ is tempered, as required.

        (iii) One direction follows from (i). The proof of the other direction is similar to the proof of (ii) --- simply
        drop the condition $U^{K_1}+\dots+ U^{K_t}\neq 0$.
    \end{proof}

    \begin{example}
        Let $\calT_k$ be a $k$-regular tree, and let $G=\Aut(\calT_k)$.
        Choose a  coloring $C_0:\calT_k^{(0)}\to \{0,1\}$ such that
        each edge in $\calT_k$ consists of vertices of different colors, and let $H$ be
        the index $2$ subgroup of $G$ preserving the coloring $C_0$.
        Then $H$-quotients of $\calT_k$ can be regarded as bipartite $k$-regular graphs with a $\{0,1\}$-coloring.
        This coloring gives rise to a richer spectral theory since one can consider the spectra of associated operators
        (\ref{subsec:adjacency-operators}) taking colors into account.
        However, despite the extra associated operators, an $H$-quotient of $\calT_k$ is completely Ramanujan
        if and only if it is Ramanuajan in dimension $0$ as a $G$-quotient (i.e.\ Ramanujan as a $k$-regular graph in the
        classical sense). This follows from Corollary~\ref{PR:Raman-graph-equiv-conds} and Theorem~\ref{TH:finite-index-subgroup}(iii).
    \end{example}

    \begin{example}
        Let $G=\nPGL{F}{d}$ and $\calX=\calB_d(F)$  be as in Chapter~\ref{sec:ramanujan-complexes}.
        Let $H$ be the index $d$ subgroup of $G$
        preserving the vertex coloring $C_0:\calB_d(F)^{(0)}\to \Z/d\Z$ (explicitly,
        $H=\ker(c:G\to \Z/d\Z)$ where $c$ is as in Chapter~\ref{sec:ramanujan-complexes}). The  $H$-quotients
        of $\calB_d(F)^{(0)}$ inherit the  coloring $C_0$  and hence admit a richer spectral theory.
        However, since   $G$-stabilizers of maximal flags in $\calB_d(F)$
        are contained in $H$,
        Theorem~\ref{TH:finite-index-subgroup}(ii) implies that
        an $H$-quotient of $\calB_d(F)$
        is flag Ramanujan as an $H$-quotient
        if and only if it is flag Ramanujan as a $G$-quotient.

        Furthermore, we mentioned in Remark~\ref{RM:auto-group-of-Bd} that when $d>2$,
        the group $G$ is of index $2$ in $\tilde{G}:=\Aut(\calB_d(F))$.
        It is easy to check that
        the pointwise $\tilde{G}$-stabilizer of any maximal flag in $\calB_d(F)$ still lies in $H$,
        and hence an $H$-quotient of $\calB_d(F)$ is flag Ramanujan as $H$-quotient if and only if it is flag Ramanujan
        as an $\tilde{G}$-quotient. This remains true if we replace $H$ with any subgroup between $H$ and $\tilde{G}$.

        We also observed in Example~\ref{EX:trivial-spec-complex}
        that the smallest open finite-index subgroup of $G$ is $G_0:=\im(\nSL{F}{d}\to\nPGL{F}{d})$. (In fact,
        $G_0=H$ if $d$ is coprime to $q(q-1)$, where $q$ is the cardinality
        of the residue field of $F$.)
        By Theorem~\ref{TH:finite-index-subgroup}(iii), for any $H'$ between $G_0$ and $\tilde{G}$ and any $H'$-quotient $\Gamma\leftmod \calB_d(F)$,
        the condition that $\Gamma\leftmod \calB_d(F)$ is a
        completely Ramanujan $H'$-quotient is independent of the choice of $H'$, so long as $\Gamma\subseteq H'$.
    \end{example}

%(\cite[Pr.~2.2]{Cartier79})

\section{Automorphic Representations}
\label{sec:existence}

    Let $k$ be a global field,  let $\bfG$ be a simple algebraic group over $k$,
    let $\nu$ be a non-archimedean place of $k$, and let $F$ be the completion of $k$ at $\nu$.
    Let $\calB$ be the affine Bruhat-Tit building of $G:=\bfG(F)$. Then $G$ is an $\ell$-group and
    $\calB$ is a $G$-complex (\ref{subsec:G-complex}). Let $\Gamma$ be a \emph{congruence subgroup} of $\bfG(k)$
    such that $\Gamma\leq_{\calB} G$
    (\ref{subsec:quotienst-of-simp-comps}).
    In this final chapter, we relate certain  properties of automorphic representations
    of $\bfG$
    with the condition that the $G$-quotient $\Gamma\leftmod \calB$
    is  Ramanujan (\ref{subsec:Ramanujan-quotients}).
    This is used, together with deep results about automorphic representations,
    to give examples of infinite families of $G$-quotients which are \emph{completely Ramanujan} when the group
    $\bfG$ is an inner form of $\uPGL_n$
    and $\Char k>0$.

\medskip

    We note that
    such ideas were applied, sometimes implicitly, in \cite{LubPhiSar88}, \cite{Marg88}, \cite{Morg94}, \cite{LubSamVi05}, \cite{Li04} and \cite{BallCiub11}
    %for specific examples of $\bfG$
    to construct infinite families of Ramanujan regular graphs, Ramanujan complexes (in the sense
    of Chapter~\ref{sec:ramanujan-complexes}), and Ramanujan biregular graphs (see \cite{BallCiub11}).
    All of these constructions rely on powerful results about automorphic representations, e.g.\ the proof of the Ramanujan--Petersson conjecture for
    $\uGL_n$
    %when $n=2$ (\cite{Deligne74}, ) or
    in positive characteristic by
    Lafforgue  \cite{Laff02} (see
    also \cite{Drinfel88}, \cite{Deligne74}).

    Continuing this approach, we
    Lafforgue's work
    %Lafforgue's proof of the Ramanujan--Petersson conjecture
    %for $\uGL_n$ in positive characteristic \cite{Laff02}
    %the aforementioned works
    together with recent results about the Jacquet--Langlands correspondence (\cite{BadulRoch14} and
    related works)
    to show that for any central division algebra $D$ over $F$, the  affine Bruhat--Tits building
    of $\nPGL{D}{d}$ has infinitely many $\nPGL{D}{d}$-quotients which are {completely Ramanujan} (\ref{subsec:Ramanujan-quotients}).
    When $D=F$, our construction gives the Ramnujan complexes constructed by Lubotzky, Samuels and Vishne in \cite{LubSamVi05}.
    Thus, the Ramanujan complexes of \cite{LubSamVi05}, which are Ramanujan in dimension $0$ (Example~\ref{EX:classical-Ramamanujan}(ii)),
    are in fact completely Ramanujan.

\medskip

    We alert the reader that
    %in this chapter
    %we shall occasionally consider smooth
    %representations of $\ell$-groups which are neither unitary
    %nor pre-unitary.
    %Note also that
    in sections \ref{subsec:cuspidal-and-residual} and \ref{subsec:existence},  it is essential that the base
    field $k$ has positive characteristic.

\subsection{Adeles}
\label{subsec:adeles}

    Throughout, $k$ denotes a global field and $\calV$ is the set of places of $k$.
    For the sake of simplicity, \emph{we shall assume that $k$ has positive characteristic}, and hence
    $\calV$ consists entirely of non-archimedean places. All results in this chapter excluding
    those in
    \ref{subsec:cuspidal-and-residual} and \ref{subsec:existence} hold when $k$ is a number field after suitable  modifications.

\medskip

    For $\nu\in\calV$, let $k_\nu$ denote the completion of $k$ at $\nu$.
	We also let $\nu$ stand for the additive valuation
    $\nu:k_\nu\onto \Z\cup\{\infty\}$.
    The integer ring of $k_\nu$ is denoted  $\calO_\nu$ and we fix a generator $\pi_\nu$ of the maximal ideal
    of $\calO_\nu$. For every $a\in k_\nu$, let
    \[
    \abs{a}_\nu=q_\nu^{-\nu(a)},
    \]
    where $q_\nu:=\abs{\calO_\nu/\pi_\nu\calO_\nu}$.

\medskip

    Let $S\subseteq \calV$ be finite set of places. We denote by $\bbA^S$ the ring of adeles over $k$ away
    from $S$, that is,
    \[
    \bbA^S = \prod'_{\nu\in \calV-S}k_\nu :=\left\{(a_\nu)_{\nu}\in\prod_{\nu\in\calV-S}k_\nu\suchthat \text{$a_\nu\in \calO_\nu$ for almost
    all $\nu$}\right\}\ .
    \]
    We also write
    $
    \bbA=\bbA^\emptyset
    $.
    The field $k$ is a subring of $\bbA^S$ via the diagonal embedding,
    and each of the fields $k_\nu$ ($\nu\in\calV-S$) embeds as summand of $\bbA^S$. We endow
    $\prod_{\nu\in\calV-S}\calO_\nu$ with the product topology, and topologize $\bbA^S$ by viewing it
    as a disjoint union of (additive) cosets of $\prod_{\nu\in\calV}\calO_\nu$. The ring $\bbA^S$ is therefore
    a locally compact topological ring. By the {product formula}, $k$ is discrete in $\bbA$.

\medskip

    Let $\bfG$ be an algebraic group over $k$ with a fixed closed embedding $j:\bfG\to \bGL_n$.
    If $R$ is a commutative domain whose fraction field $F$ contains $k$, we set
    \[\bfG(R)=j(\bfG(F))\cap \bGL_n(R)\ .\]
    When $R$ is a topological ring, we embed $\bGL_n(R)$ in $\bSL_{n+1}(R)$ via
    $(a_{ij})\mapsto (a_{ij})\oplus (\det (a_{ij})^{-1})$ and give $\bfG(R)$ the topology
    induced from $\bSL_{n+1}(R)\subseteq\nMat{R}{n+1}\cong R^{(n+1)^2}$.
    This makes $\bfG(R)$ into a topological group, which is an $\ell$-group if $R$ is totally disconnected
    and locally compact.
    In particular, $\bfG(k_\nu)$ and $\bfG(\bbA)$ are $\ell$-groups.
    We note that the (topological) group $\bfG(R)$ is independent of
    the embedding $j$
    when $R$  contains $k$.

    For  $\nu\in\calV$ and $I\idealof \calO_\nu$,  the $I$-congruence subgroup
    of  $\bfG(k_\nu)$ is
    \[
    \bfG(\calO_\nu,I):=\ker\big(\bfG(\calO_v)\xrightarrow{j} \bGL_n(\calO_v)\to \bGL_n(\calO_v/I)\big) \ .
    \]
    The subgroups $\{\bfG(\calO_\nu,\pi_\nu^n\calO_\nu)\}_{n\geq 0}$
    form a basis of compact open neighborhoods at the identity.
    The group
    \[
    \bfG(\bbA)=\prod'_{\nu\in \calV}\bfG(k_\nu) :=
    \left\{(g_\nu)_{\nu}\in\prod_{\nu\in\calV}\bfG(k_n)\suchthat \text{$g_\nu\in \bfG(\calO_\nu)$ for almost
    all $\nu$}\right\}
    \]
    has  $\prod_\nu \bfG(\calO_\nu)$ as a compact open subgroup.
    Thus, the collection
    \[\left\{\prod_{\nu}\bfG(\calO_\nu,\pi_\nu^{n_\nu}\calO_\nu)\suchthat
    \text{$n_\nu\in\N\cup\{0\}$ and $n_\nu=0$ for almost all $\nu$}\right\}\]
    is basis of compact open neighborhoods
    at the identity in $\bfG(\bbA)$.
    Since $k$ is discrete in $\bbA$, the group $\bfG(k)$ is a discrete subgroup of $\bfG(\bbA)$.

\medskip

    When $\bfG$ is reductive, the groups $\bfG(\bbA)$ and $\bfG(k_\nu)$ ($\nu\in\calV$)
    are unimodular, and the quotient $\bfG(k)\leftmod \bfG(\bbA)$ is compact precisely
    when $\bfG$ is $k$-anisotropic; see \cite[Th.~5.5]{PlatRapin94} and \cite[Cr.~2.2.7]{Harder69}.

\subsection{Smooth Representations Revised}
\label{subsec:admissible-revised}

    Let $G$ be an $\ell$-group and let $P$ be a closed subgroup of $G$. As usual, given a smooth $P$-module
    $V$ (see~\ref{subsec:unitary-reps-G}), we let
    \[\Ind_P^G(V)\]
    denote the vector space of locally constant functions $\vphi:G\to V$ satisfying
    $\vphi(pg)=p\vphi(g)$ for all $p\in P$, $g\in G$. We make
    $\Ind_P^G(V)$
    into a left $G$-module by
    defining $(g\cdot \vphi)h=\vphi(hg)$ for all $g,h\in G$. The $G$-module $\Ind_P^G(V)$ is smooth
    when $P\leftmod G$ is compact, which will always be the case in the sequel.

\medskip

    Throughout, a \emph{character} of $G$ means a continuous group homomorphism $\xi:G\to \units{\C}$.
    Since $G$ is an $\ell$-group, its characters are locally constant.
    The character $\xi$ is unitary if $|\xi(g)|=1$ for all $g\in G$.
    If $V$ is a smooth $G$-module, we let $\xi V$ denote the vector space $V$ endowed with the $G$-action
    $(g,v)\mapsto \xi(g)gv$. The $G$-module  $\xi V$ is clearly smooth.
    Characters are  in correspondence
    with $1$-dimensional smooth representations of $G$, considered up to isomorphism. In the sequel, we shall
    freely interchange between characters and their corresponding representations.

\medskip

	We proceed by recalling several facts about admissible representations
	that will be used in the sequel, often without comment.
	Note in particular that if $\bfG$ is a reductive algebraic group over $k_\nu$, then all irreducible
    smooth
	(resp.\ unitary)  representations of $\bfG(k_\nu)$ are admissible
	(\cite{Bern74}, for instance).
	Here, as usual, a {smooth} representation $V$ of $G$ is said to
	be irreducible if it has no nonzero proper $G$-submodules. The class of irreducible
    smooth $G$-representations is denoted $
    \Irr[sm]{G}
    $.
	
	\begin{thm}[Schur's Lemma]\label{TH:Schurs-Lemma-for-admissible}
		Let $V$ be an admissible irreducible smooth  representation of $G$.
		Then $\End_G(V)=\C\cdot\id_V$.
	\end{thm}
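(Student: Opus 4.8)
The plan is to reduce the statement to a finite-dimensional eigenvalue argument by exploiting admissibility. Let $\phi\in\End_G(V)$ be an arbitrary $G$-module endomorphism; here no topology is involved since $V$ is smooth, so $\End_G(V)$ means all $G$-equivariant linear maps $V\to V$. The goal is to show $\phi\in\C\cdot\id_V$.

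First I would choose a compact open subgroup $K\co G$ with $V^K\neq 0$, which exists because $V$ is smooth and nonzero (so $V=\bigcup_{K\co G}V^K$). Since $\phi$ commutes with the $G$-action, for $v\in V^K$ and $k\in K$ we have $k(\phi v)=\phi(kv)=\phi(v)$, hence $\phi$ restricts to an endomorphism of the subspace $V^K$. By admissibility $\dim_\C V^K<\infty$, and since $\C$ is algebraically closed, $\phi|_{V^K}$ admits an eigenvalue $\lambda\in\C$; fix a corresponding eigenvector $0\neq v_0\in V^K$.

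Next, consider $\psi:=\phi-\lambda\,\id_V\in\End_G(V)$. Its kernel $\ker\psi$ is a $G$-submodule of $V$, being the kernel of a $G$-equivariant map, and it is nonzero since $v_0\in\ker\psi$. Because $V$ is irreducible as a smooth $G$-module, $\ker\psi=V$, i.e.\ $\phi=\lambda\,\id_V$, which is what we wanted.

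I do not expect a serious obstacle: the only real input is admissibility, which is precisely what supplies an eigenvalue. The classical failure mode of Schur's lemma for infinite-dimensional representations is that a general endomorphism of an infinite-dimensional space need not have an eigenvalue, and passing to the finite-dimensional space $V^K$ of $K$-fixed vectors repairs this. The two small points to verify carefully are that $\phi$ preserves each $V^K$ (immediate from equivariance, as above) and that $\ker(\phi-\lambda\,\id_V)$ is a genuine $G$-submodule (immediate since $\phi-\lambda\,\id_V$ is $G$-equivariant).
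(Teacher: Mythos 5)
Your proof is correct and follows essentially the same route as the paper's: pass to the finite-dimensional space $V^K$ (using smoothness to get $V^K\neq 0$ and admissibility to get finite dimension), extract an eigenvalue $\lambda$ of $\phi|_{V^K}$, and conclude from irreducibility that $\ker(\phi-\lambda\,\id_V)=V$. No gaps.
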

	
	\begin{proof}
		Let $\psi\in\End_G(V)$. Since $V$ is smooth, there is $K\co G$
		such that $V^K\neq 0$, and since $V$ is admissible, $\dim V^K<\infty$.
		Clearly $\psi(V^K)=V^K$, hence there exists $\lambda\in\C$ and $0\neq v\in V^K$
		such that $v\in\ker(\psi-\lambda)$. Since $\ker(\psi-\lambda)$ is a $G$-submodule of $V$
		and $V$ is irreducible,
		we must have $\ker(\psi-\lambda)=V$, so $\psi=\lambda\id_V$.
	\end{proof}
	
	Let $Z$ be the center of $G$ and let $V$ be an admissible irreducible smooth representation of $G$. By Schur's Lemma,
	there is a character $\chi_V:Z\to \units{\C}$, called the \emph{central character of $V$}, satisfying
	$gv=\chi_V(g)v$ for all $g\in Z$, $v\in V$.

\medskip

    Let $V$ be a smooth $G$-module. The group $G$ acts on $\Hom_{\C}(V,\C)$ via $(g\phi)v=\phi(g^{-1}v)$. Let $V^*$ denote the smooth
	part of $\Hom_{\C}(V,\C)$, and let
    \[\check{V}\]
    denote the $G$-module obtained from $V^*$
	by twisting the  $\C$-vector-space structure via the complex conjugate.
    The representation $\check{V}$ is  called the \emph{contragradient} representation of $V$.	
    It is admissible if and only if $V$ is admissible.
    If $U$ is another smooth representation
    and $f:U\to V$ is $G$-equivariant,
    then we define
    $\check{f}:\check{V}\to\check{U}$
    by $\check{f}(\vphi)=\vphi\circ f$.
    This makes $V\mapsto \check{V}$
    into a contravariant
    functor which restricts
    to
    %Moreover, the map $V\mapsto \check{V}$ is functorial
    %and  defines
    a duality on the full subcategory of admissible representation of $G$.

    %If $V$ is pre-unitary representation of $G$, then the map $v\mapsto$
	%[u\mapsto \Trings{v,u}]$ defines an injective $G$-homomorphism from $V$ to $\check{V}$.

	\begin{prp}\label{PR:unitarizability}
		Let $V$ be an irreducible  admissible smooth representation. Then, up
		to scaling, there exists
		at most one inner product $\Trings{~,~}:V\times V\to \C$ such that $\Trings{gu,gv}=\Trings{u,v}$ for all $g\in G$ and $u,v\in V$.
	\end{prp}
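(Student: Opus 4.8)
The plan is to deduce the uniqueness of a $G$-invariant inner product on $V$ from Schur's Lemma for admissible representations (Theorem~\ref{TH:Schurs-Lemma-for-admissible}) together with the contragredient construction. First I would observe that a $G$-invariant inner product $\Trings{~,~}$ on $V$ (meaning sesquilinear, positive-definite, and satisfying $\Trings{gu,gv}=\Trings{u,v}$) gives rise to a $G$-equivariant conjugate-linear map $V\to\Hom_{\C}(V,\C)$, $v\mapsto\Trings{-,v}$. Because $V$ is smooth, the image of any $v$ lands in the smooth part of $\Hom_{\C}(V,\C)$, so after twisting the $\C$-structure by complex conjugation this becomes a $G$-equivariant $\C$-linear map $\iota_{\Trings{~,~}}:V\to\check{V}$. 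This map is injective since the inner product is nondegenerate, and since $V$ is irreducible $\iota_{\Trings{~,~}}$ is in fact an isomorphism of $G$-modules (its image is a nonzero $G$-submodule of $\check V$, which is irreducible because $V$ is).

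Next I would exploit this to compare two invariant inner products. Suppose $\Trings{~,~}_1$ and $\Trings{~,~}_2$ are both $G$-invariant inner products on $V$. Then $\iota_1,\iota_2:V\to\check{V}$ are both $G$-isomorphisms, so $\iota_2^{-1}\circ\iota_1\in\End_G(V)$. Since $V$ is irreducible and admissible, Schur's Lemma (Theorem~\ref{TH:Schurs-Lemma-for-admissible}) gives $\iota_2^{-1}\circ\iota_1=\lambda\,\id_V$ for some $\lambda\in\C$, i.e.\ $\iota_1=\lambda\iota_2$. Unwinding the definition of $\iota_j$, this says $\Trings{u,v}_1=\lambda\Trings{u,v}_2$ for all $u,v\in V$. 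Finally, positivity of both forms forces $\lambda\in\R_{>0}$: taking $u=v\neq 0$ gives $0<\Trings{v,v}_1=\lambda\Trings{v,v}_2$ with $\Trings{v,v}_2>0$. Hence the two inner products differ by a positive real scalar, which is exactly the asserted uniqueness up to scaling.

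One technical point I would need to handle carefully is the interaction between conjugate-linearity and the twist defining $\check V$: the map $v\mapsto\Trings{-,v}$ is conjugate-linear in $v$ as a map into $\Hom_\C(V,\C)$, but the paper's convention is that $\check V$ carries the $\C$-structure twisted by complex conjugation, so this composite becomes genuinely $\C$-linear $V\to\check V$ and $G$-equivariant — I would spell this out against the definition of $\check V$ given just before the proposition. I would also note explicitly that $\iota_{\Trings{~,~}}$ has image in the smooth part $V^*$ of $\Hom_\C(V,\C)$: for $v\in V^K$ with $K\co G$, the functional $\Trings{-,v}$ is fixed by $K$ by invariance of the form, and every $v$ lies in some $V^K$ by smoothness. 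The main obstacle, such as it is, is purely bookkeeping around these conjugation-and-twist conventions; once $\iota_1,\iota_2$ are recognized as $G$-module isomorphisms $V\xrightarrow{\sim}\check V$, Schur's Lemma does all the real work and the argument is essentially immediate.
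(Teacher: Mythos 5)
Your proof is correct and follows essentially the same route as the paper's: construct the $G$-equivariant isomorphism $V\to\check V$ induced by each invariant inner product and invoke Schur's Lemma for admissible representations. You are in fact slightly more careful than the paper — the paper writes the map as $v\mapsto[u\mapsto\Trings{v,u}]$, which with the convention that the left slot is $\C$-linear is conjugate-linear in $u$ and so does not literally land in $\Hom_\C(V,\C)$; your version $v\mapsto[u\mapsto\Trings{u,v}]$ is the correct one, and your explicit checks of smoothness of the image and of positivity of the scalar are worthwhile additions.
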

	
	\begin{proof}
        Both $V$ and $\check{V}$ are admissible and irreducible,
        hence
		any inner product $\Trings{~,~}:V\times V\to\C$ induces an isomorphism $V\to \check{V}$ mapping $v\in V$
		to $[u\mapsto \Trings{v,u}]$. Schur's Lemma
        implies that any
		two such isomorphisms $V\to \check{V}$ must be the same up to scaling.
        (It should be noted that we cannot use Proposition~\ref{PR:iso-implies-unitary-iso} with $A=\Hecke{G}$ because
		there is no  assumption of continuity.)
	\end{proof}
	
	An admissible irreducible smooth $G$-representation $V$ admitting an inner product as  in Proposition~\ref{PR:unitarizability}
	is called \emph{unitarizable}. In this case, we can view $V$ as a pre-unitary
	representation, well-defined up to isomorphism.

	A pre-unitary representation of $G$ is called irreducible if it is irreducible as a smooth
    representation. The class of irreducible pre-unitary representations of $G$ is denoted $\Irr[pu]{G}$.
    By Theorem~\ref{TH:admissible-category-equivalence} (applied with
    $A=\Hecke{G}$, cf.~\ref{subsec:G-vs-Hecke}), an admissible pre-unitary representation
    $V$ is irreducible if and only if its completion $\quo{V}$ is irreducible as a unitary representation.

\medskip

	Let $V$ be an irreducible admissible pre-unitary
	representation of $G$. For any $u,v\in V$, define $\vphi_{u,v}:G\to \R$ by
	\[
	\vphi_{u,v}(g)=\abs{\Trings{gu,v}}\ .
	\]
	Since the central character $\chi_V$ is unitary,
	$\vphi_{u,v}$ is  $Z$-invariant and hence we may view it as a function on $G/Z$.
	Recall that $V$ is called \emph{supercuspidal}\footnote{
		Some texts use the term \emph{absolutely cuspidal}.
	} if $\vphi_{u,v}$ is compactly supported as function on $G/Z$ for all $u,v\in V$,
	and
	$V$ is called \emph{square-integrable} if $\vphi_{u,v}\in\LL{G/Z}$
	for all $u,v\in V$.
	When $G=Z$, all irreducible pre-unitary representations are supercuspidal.

	supercuspidal representations are clearly square-integrable.
    When $Z=1$ or $G$ is the group of $k_\nu$-points of a reductive algebraic group $\bfG$ over $k_\nu$, the square-integrable
	representations of $G$ are tempered; see \cite[Pr.~9.6]{Knapp86} for the case $Z=1$ and \cite[\S2.4]{Oh02} or \cite[p.~265]{Walds03}
    for the other case.\footnote{
        We believe this statement is correct for a general $\ell$-group $G$. %\gap{}We were unable to find a source, however.
    }

\subsection{Local Factors}
\label{subsec:local-factors}

    Let  $\bfG$ be a reductive algebraic group over $k$,
    and write $K_\nu=\bfG(\calO_\nu)$ (see \ref{subsec:adeles}).
    The facts stated here are proved in \cite{Flath79}.

\medskip

    For every $\nu\in\calV$, choose an irreducible smooth representation
    $V_\nu\in\Irr[sm]{\bfG(k_\nu)}$. As noted in \ref{subsec:admissible-revised},
    the representations $\{V_\nu\}_{\nu\in\calV}$
    are admissible. Suppose that for almost all $\nu$ we have
    $V_\nu^{K_\nu}\neq 0$. When this holds, choose a nonzero vector $v_\nu\in V^{K_\nu}$,
    and otherwise, choose an arbitrary nonzero vector $v_\nu\in V_\nu$.
    Let
    \[
    \bigotimes'_{\nu\in\calV}V_\nu=\underset{S}{\dirlim} \bigotimes_{\nu\in S}V_\nu
    \]
    where $S$ ranges over the finite subsets of $\calV$, and for $S\subseteq S'$,
    we embed $\bigotimes_{\nu\in S}V_\nu$ in $\bigotimes_{\nu\in S'}V_\nu$ via
    $\bigotimes_{\nu\in S}w_\nu\mapsto\bigotimes_{\nu\in S}w_\nu\otimes\bigotimes_{\nu\in S'-S}v_\nu$.
    It turns out that $V^{K_\nu}$ is $1$-dimensional for almost all $\nu$, and hence
    $\bigotimes'_{\nu}V_\nu$ is independent of the vectors $v_\nu\in V_\nu$,  up to isomorphism.

    The space $\bigotimes'_{\nu\in\calV}V_\nu$ carries an obvious
    $\bfG(\bbA)$-action making it into an admissible smooth representation of $\bfG(\bbA)$.
    The representation $\bigotimes'_{\nu\in\calV} V_\nu$ is irreducible, and conversely,
    every   $V\in \Irr[sm]{\bfG(\bbA)}$ can be factored as
    $
    V=\bigotimes'_{\nu\in\calV} V_\nu
    $,
    where the factors $\{V_\nu\}_{\nu\in\calV}$, called the \emph{local factors} of $V$, are unique up to isomorphism.

    A necessary and sufficient condition for $U\in \Irr[sm]{\bfG(k_\nu)}$ to be
	isomorphic to the  $\nu$-local-factor of $V\in\Irr[sm]{\bfG(\bbA)}$ is to have $U\leq V$ when $V$ is viewed as a $\bfG(k_\nu)$-module.
	Furthermore, if we are given
	for all $\nu\in\calV$ a compact open subgroup $L_\nu\co \bfG(k_\nu)$
    such that $L_\nu=K_\nu$ for almost all $\nu\in\calV$, then
	$V=\bigotimes'_\nu V_\nu$ has a nonzero $\prod_\nu L_\nu$-invariant vector
    if and only if each factor
	$V_\nu$ has a nonzero $L_\nu$-invariant
	vector.
	
\medskip

	Everything stated above remains correct if one replaces smooth representations with pre-unitary
	representations. One should then chose the vectors $\{v_\nu\}_{\nu\in\calV}$ to be unit vectors.

\subsection{Automorphic Representations}
\label{subsec:automorphic-reps}

    Let $\bfG$ be a reductive algebraic group over $k$ and let $\bfZ$ be the center of $\bfG$.
    Fix a smooth unitary  character
    \[\omega:\bfZ(\bbA)/\bfZ(k)\to\units{\C}\ .\]
    We let $\LL[\omega]{\bfG(k)\leftmod\bfG(\bbA)}$ denote the space of measurable functions $\vphi:\bfG(k)\leftmod\bfG(\bbA)\to \C$
    satisfying $\vphi(ag)=\omega(a)\vphi(g)$ for all $a\in \bfZ(\bbA)/\bfZ(k)$ and whose absolute value $|\vphi|$ is square-integrable
    when viewed as a function on $\bfZ(\bbA)\bfG(k)\leftmod\bfG(\bbA)$. The space $\LL[\omega]{\bfG(k)\leftmod\bfG(\bbA)}$
    is a Hilbert space with respect to the inner product
    \[
    \Trings{\vphi,\psi}=\int_{x\in \bfZ(\bbA)\bfG(k)\leftmod\bfG(\bbA)} \vphi x\cdot\quo{\psi x}\,\,\mathrm{d}\mu_{\bfZ(\bbA)\bfG(k)\leftmod\bfG(\bbA)}\ .
    \]
    Here, $\mu_{\bfZ(\bbA)\bfG(k)\leftmod\bfG(\bbA)}$ is a fixed
    right  $\bfG(\bbA)$-invariant measure on $\bfZ(\bbA)\bfG(k)\leftmod\bfG(\bbA)$,
    and $\vphi x\cdot\quo{\psi x}$ means $\vphi x'\cdot\quo{\psi x'}$ where $x=\bfZ(\bbA)x'$ (this expression
    is independent of the choice of the representative $x'$). Note  that  $\vphi x\cdot\quo{\psi x}$
    is right $\bfZ(\bbA)\bfG(k)$-invariant and can hence  be regarded as a function on $\bfZ(\bbA)\bfG(k)\leftmod\bfG(\bbA)$.

	We let $\bfG(\bbA)$
	act on $\LL[\omega]{\bfG(k)\leftmod\bfG(\bbA)}$
	via $(g\vphi)x=\vphi(xg)$, making
	$\LL[\omega]{\bfG(k)\leftmod\bfG(\bbA)}$ into
	a unitary representation.
    Recall that an automorphic representation
    of $\bfG$ with central character $\omega$ is an irreducible pre-unitary representation $V\in\Irr[pu]{\bfG(\bbA)}$
    such that
    \[
    V\wc \LL[\omega]{\bfG(k)\leftmod\bfG(\bbA)}\ .
    \]
    The representation $V$ is said to be in the \emph{discrete spectrum} or just \emph{discrete} if
    $V\leq \LL[\omega]{\bfG(k)\leftmod\bfG(\bbA)}$. In this case,
    $V$ is admissible and can therefore be written as
    a product of local factors $\bigotimes'_\nu V_\nu$  as in \ref{subsec:local-factors}.

\medskip

	When $\bfZ$ is finite (as an algebraic group), the group $\bfZ(\bbA)$ is profinite,
	and in particular compact. Viewing
	$\LL{\bfG(k)\leftmod\bfG(\bbA)}$ as a representation of $\bfZ(\bbA)$ and applying the
	Peter-Weyl Theorem shows that
	\begin{equation*}\label{EQ:automorphic-Hilbert-sum-decomp}
	\LL{\bfG(k)\leftmod\bfG(\bbA)}=\hat{\bigoplus_\omega}\,\LL[\omega]{\bfG(k)\leftmod\bfG(\bbA)}\ ,
	\end{equation*}
	%as vector spaces,
	where $\omega$ ranges over the unitary characters of $\bfZ(\bbA)/\bfZ(k)$
	(one has to choose the Haar measure of $\bfZ(\bbA)/\bfZ(k)$ to have total measure
	$1$ and the right $\bfG(\bbA)$-invariant measures on $\bfG(k)\leftmod \bfG(\bbA)$
	and $\bfZ(\bbA)\bfG(k)\leftmod \bfG(\bbA)$ such that
	\[\int\vphi\, \mathrm{d}\mu_{\bfG(k)\leftmod \bfG(\bbA)}=
	\int_{x\in\bfZ(\bbA)\bfG(k)\leftmod \bfG(\bbA)}\int_{y\in \bfZ(\bbA)/\bfZ(k)}\vphi(xy)\,\mathrm{d}\mu_{\bfZ(\bbA)/\bfZ(k)}\,
	\mathrm{d}\mu_{\bfZ(\bbA)\bfG(k)\leftmod \bfG(\bbA)}\]
	for all $\vphi\in \CSLC{\bfG(k)\leftmod\bfG(\bbA)}$).
	%then \eqref{EQ:automorphic-Hilbert-sum-decomp} is an equality of Hilbert spaces.
	We may therefore  define automorphic representations of $\bfG$
	as irreducible pre-unitary representations $V\in\Irr[pu]{\bfG(\bbA)}$
	such that
	$
    V\wc \LL{\bfG(k)\leftmod\bfG(\bbA)}
    $.

\medskip

    Assume henceforth that  $\bfZ$ is finite and $\bfG(k)\leftmod\bfG(\bbA)$
    is compact.
    This holds when $\bfG$ is semisimple and $k$-anisotropic.
    Then $\LL{\bfG(k)\leftmod\bfG(\bbA)}$ is admissible (Example~\ref{EX:LL-Gamma-mod-G-admissible}), and hence any
    automorphic representation is in the discrete spectrum (Theorem~\ref{TH:admissible-pu-are-completely-red}).
    We now determine under what conditions a pre-unitary representation of $\bfG(k_\nu)$ is a local factor
    of a discrete automorphic representation.

    Fix $\eta\in\calV$. We shall  view $\bfG(\bbA)$ as $\bfG(k_\eta)\times \bfG(\bbA^{\{\eta\}})$ (notation as in \ref{subsec:adeles}).
    Choose $K_\eta\co \bfG(k_\eta)$ and $K^\eta\co \bfG(\bbA^{\{\eta\}})$, and let
    $
    K:=K_\eta\times K^\eta\co \bfG(\bbA)$.
	The double coset space
	\[\bfG(k)\leftmod\bfG(\bbA)/(\bfG(k_\eta)\times K^{\eta})\]
	is compact and discrete, hence finite. Let $(1,g_1),\dots,(1,g_t)\in \bfG(k_\eta)\times \bfG(\bbA^{\{\eta\}})$ be representatives
	for the double cosets.
	For each $1\leq i\leq t$, define
	\[
	\Gamma_i=\bfG(k)\cap (\bfG(k_\eta)\times g_i K^{\eta}g_i^{-1})
	\]
	and view $\Gamma_i$ as a subgroup of $\bfG(k_\eta)$. It is a standard fact that there is an
	isomorphism of topological (right) $\bfG(k_\eta)$-spaces
	\[
	\bigsqcup_{i=1}^t\Gamma_i\leftmod \bfG(k_\eta)\to \bfG(k)\leftmod\bfG(\bbA)/(1\times K^{\eta})
	\]
	given by sending $\Gamma_i g$ to $\bfG(k)(g,g_i)(1\times K^{\eta})$.
	In particular, $\Gamma_i$ is a cocompact lattice in $\bfG(k_\eta)$ for all $1\leq i\leq t$.

	\begin{prp}\label{PR:local-factor-embedding}
		In the previous setting, the following holds:
		\begin{enumerate}
			\item[(i)] Let $U$ be an irreducible pre-unitary subrepresentation
			of $\LL{\Gamma_i\leftmod \bfG(k_\eta)}$ such that $U^{K_\eta}\neq 0$. Then $U$ is
			the $\eta$-local-factor of an automorphic representation $V$ of $\bfG$ with $V^{K}\neq 0$.
			\item[(ii)] Conversely, if $V=\bigotimes'_\nu V_\nu$ is an automorphic representation of
			$\bfG$ such that $V^{K}\neq 0$,
			then there is $1\leq i\leq t$ such that $V_\eta$ is isomorphic to a subrepresentation
			of $\LL{\Gamma_i\leftmod\bfG(k_\eta)}$ and $V_\eta^{K_\eta}\neq 0$.
		\end{enumerate}
	\end{prp}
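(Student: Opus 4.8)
The plan is to pass through the Hecke algebra and the right-regular representation of $\bfG(\bbA)$, exactly as in the standard dictionary between automorphic forms and adelic representation theory, and then invoke the factorization theory of \ref{subsec:local-factors} together with the isospectral description of $\LL{\Gamma_i\leftmod\bfG(k_\eta)}$ that precedes the statement. The key input is the $\bfG(k_\eta)$-equivariant isometry
\[
\LL{\bfG(k)\leftmod\bfG(\bbA)/(1\times K^\eta)}\cong \hat{\bigoplus}_{i=1}^t\LL{\Gamma_i\leftmod\bfG(k_\eta)}
\]
coming from the isomorphism of right $\bfG(k_\eta)$-spaces $\bigsqcup_i\Gamma_i\leftmod\bfG(k_\eta)\cong\bfG(k)\leftmod\bfG(\bbA)/(1\times K^\eta)$ recorded just above. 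Note $\LL{\bfG(k)\leftmod\bfG(\bbA)/(1\times K^\eta)}$ is the space of $(1\times K^\eta)$-invariant vectors in $\LL{\bfG(k)\leftmod\bfG(\bbA)}$, so the left-hand side is a $\bfG(k_\eta)$-subrepresentation of the latter; moreover $\bfG(k)\leftmod\bfG(\bbA)$ is compact (as $\bfG$ is $k$-anisotropic semisimple), so $\LL{\bfG(k)\leftmod\bfG(\bbA)}$ is admissible by Example~\ref{EX:LL-Gamma-mod-G-admissible}, hence completely reducible by Theorem~\ref{TH:admissible-pu-are-completely-red}(i), and every automorphic representation is in its discrete spectrum.

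For part (i): given an irreducible pre-unitary $U\leq\LL{\Gamma_i\leftmod\bfG(k_\eta)}$ with $U^{K_\eta}\neq0$, the isometry above embeds $U$ as a $\bfG(k_\eta)$-subrepresentation of $\LL{\bfG(k)\leftmod\bfG(\bbA)}$, still with a nonzero $K_\eta$-fixed vector, and that fixed vector is also $K^\eta$-fixed (since $U$ sits inside the $(1\times K^\eta)$-invariants), hence $K$-fixed. Decompose $\LL{\bfG(k)\leftmod\bfG(\bbA)}=\hat{\bigoplus}_j V^{(j)}$ into irreducible pre-unitary $\bfG(\bbA)$-subrepresentations (the $V^{(j)}$ are automorphic, admissible, and factor as $\bigotimes'_\nu V^{(j)}_\nu$). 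By Theorem~\ref{TH:Schur-lemma} / Theorem~\ref{TH:admissible-pu-are-completely-red}(iii), $U$, being irreducible as a $\bfG(k_\eta)$-module and weakly contained in the direct sum, actually embeds in some $V^{(j)}$ viewed as a $\bfG(k_\eta)$-module; by the local-global criterion of \ref{subsec:local-factors}, this says exactly $U\cong V^{(j)}_\eta$. Since $U$ has a nonzero $K$-fixed vector inside $V^{(j)}$, we get $(V^{(j)})^K\neq0$, and $V:=V^{(j)}$ is the desired automorphic representation with $U$ its $\eta$-local factor. For part (ii): conversely, if $V=\bigotimes'_\nu V_\nu$ is automorphic with $V^K\neq0$, then by admissibility $V\leq\LL{\bfG(k)\leftmod\bfG(\bbA)}$; its $(1\times K^\eta)$-invariants $V^{1\times K^\eta}$ are nonzero (they contain the $K$-fixed vector) and form a nonzero $\bfG(k_\eta)$-subrepresentation of $\LL{\bfG(k)\leftmod\bfG(\bbA)/(1\times K^\eta)}\cong\hat{\bigoplus}_i\LL{\Gamma_i\leftmod\bfG(k_\eta)}$; on this subspace $V_\eta$ acts, so $V_\eta$ embeds into some $\LL{\Gamma_i\leftmod\bfG(k_\eta)}$, and the image of the $K$-fixed vector is a nonzero $K_\eta$-fixed vector of $V_\eta$, so $V_\eta^{K_\eta}\neq0$.

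The step I expect to require the most care is the clean identification of "$\eta$-local factor of an automorphic $V$" with "$\bfG(k_\eta)$-subrepresentation of $V$", i.e. making precise the claim cited from \ref{subsec:local-factors} that $U\in\Irr[sm]{\bfG(k_\eta)}$ is the $\eta$-local factor of $V\in\Irr[sm]{\bfG(\bbA)}$ if and only if $U\leq V$ as $\bfG(k_\eta)$-modules, and checking that this transfers to pre-unitary representations via Theorem~\ref{TH:admissible-category-equivalence} (passing through completions and smooth parts, using that all the representations involved are admissible). One also has to be slightly careful that the isometry $\bigsqcup_i\Gamma_i\leftmod\bfG(k_\eta)\cong\bfG(k)\leftmod\bfG(\bbA)/(1\times K^\eta)$ intertwines the right $\bfG(k_\eta)$-actions with the correct normalization of measures so that it is genuinely unitary, not merely a bounded isomorphism; but this is routine once one uses the standard normalization from Example~\ref{EX:LL-Gamma-leftmod-G}. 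Beyond these bookkeeping points the argument is a direct unwinding of definitions, so I would keep the write-up short and cite \ref{subsec:local-factors}, Example~\ref{EX:LL-Gamma-mod-G-admissible}, and Theorem~\ref{TH:admissible-pu-are-completely-red} for the three substantive facts.
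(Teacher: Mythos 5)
Your argument is correct and follows essentially the same route as the paper's proof: both parts hinge on the $\bfG(k_\eta)$-equivariant identification of $\LL{\bfG(k)\leftmod\bfG(\bbA)/(1\times K^\eta)}$ with $\bigoplus_i\LL{\Gamma_i\leftmod\bfG(k_\eta)}$, the complete reducibility of $\LL{\bfG(k)\leftmod\bfG(\bbA)}$ coming from admissibility, and the local--global criterion from \ref{subsec:local-factors}. One small imprecision: in part (i), "weakly contained in the direct sum, hence embeds in some $V^{(j)}$" (citing Schur and Theorem~\ref{TH:admissible-pu-are-completely-red}(iii)) is not quite the cleanest justification — the direct way, which is what the paper does, is to note that some orthogonal projection of $U$ onto a $V^{(j)}$ is nonzero, and that this projection is $\bfG(k_\eta)$-equivariant, so it is injective by irreducibility of $U$; weak containment is a detour and Schur's lemma is not needed. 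Likewise in part (ii), "$V_\eta$ acts on $V^{1\times K^\eta}$, so $V_\eta$ embeds" skips the step of identifying $V^{1\times K^\eta}$ as a finite multiple of $V_\eta$ and projecting onto a summand, which the paper spells out by taking the $\bfG(k_\eta)$-module generated by the $K$-fixed vector $\vphi$ and passing to an irreducible subquotient of its projection. These are phrasing gaps rather than logical ones; the substance and the bookkeeping with $K_\eta$-fixed vectors match the paper.
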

	
	\begin{proof}
		(i) By the previous discussion, we may view $U$ as a $\bfG(k_\eta)$-submodule of
		$\LL{\bfG(k)\leftmod\bfG(\bbA)/(1\times K^\eta)}$.
		Since
		$\LL{\bfG(k)\leftmod\bfG(\bbA)}$
		decomposes as a direct sum
		of irreducible unitary representations of $\bfG(\bbA)$, the projection of $U$ onto
		one of those representations,
		call it $V_1$, must be nonzero, so $U$ must be a local factor
		of $V:=\sm{(V_1)}$. Since $V^{K}$ contains
		a copy of $U^{K_\eta}\subseteq \LL{\bfG(k)\leftmod\bfG(\bbA)/ K}$, we have $V^{K}\neq 0$.
		
		(ii) Let $0\neq \vphi\in V^{K}$.
		Since $V$ is in the discrete spectrum,
		we may view $\vphi$ as a  function in $\LL{\bfG(k)\leftmod\bfG(\bbA)/(1\times K^\eta)}$,
		which is isomorphic to $\bigoplus_{i=1}^t\LL{\Gamma_i\leftmod \bfG(k_\eta)}$ as  $\bfG(k_\eta)$-modules.
		Let $U_1$ be the $\bfG(k_\eta)$-module generated by $\vphi$. Then there is $i$
		such that the projection of $U_1$ onto $\LL{\Gamma_i\leftmod \bfG(k_\eta)}$ is nonzero.
		Let $U$ be an irreducible smooth $\bfG(k_\eta)$-submodule of this image (it exists
		because $\LL{\Gamma_i\leftmod \bfG(k_\eta)}$ is admissible and hence completely reducible,
        cf.\ Theorem~\ref{TH:admissible-pu-are-completely-red}).
		Then we must have $U=V_\eta$. That $V_\eta^{K_\eta}\neq 0$ follows
		from $V^K\neq 0$; see \ref{subsec:local-factors}.
	\end{proof}

    Suppose now that $\bfG$ is almost simple, let $G=\bfG(k_\eta)/\bfZ(k_\eta)$ and write
    \[
    \quo{\Gamma}_i=\im(\Gamma_i\to G)\ .
    \]
    Then $G$ acts faithfully on the affine Bruhat-Tits building $\calB$ of $\bfG(k_\eta)$, making it into
    an almost transitive $G$-complex (Example~\ref{EX:G-complex-building-III}). Recall from \ref{subsec:quotienst-of-simp-comps}
    that for $\Gamma\leq G$, we write $\Gamma\leq_\calB G$
    to denote that $\Gamma\leftmod \calB$ is a simplicial complex and that the quotient
    map $\calB\to \Gamma\leftmod\calB$
    is a cover map.
    Combining Proposition~\ref{PR:local-factor-embedding} with Theorem~\ref{TH:Ramanujan-criterion}
    and Remark~\ref{RM:non-faithful-action}, we get:
	
	\begin{thm}\label{TH:automorphic-ramanujan}
        Let $F:\catC(G,\calB)\to \catPHil$ be an elementary functor (e.g.\
        $\Omega_i^+$, $\Omega_i^\pm$ or $\llFlag$). Write $F\cong \llf \circ S$
        as in Definition~\ref{DF:elementary-functor},  let $x_1,\dots,x_s$ be representatives
        for the $G$-orbits in $S\calB$, and let $L_j=\Stab_{\bfG(k_\eta)}(x_j)\times K^\eta$ ($1\leq j\leq s$).
        Assume that for any automorphic representation $V=\bigotimes'_{\nu}V_\nu$
        of $\bfG$
        with $V^{L_1}+\dots V^{L_s}\neq 0$
        (resp.\ $V^{\bfZ(k_\eta)\times K^\eta}\neq 0$), the local factor $V_\eta$ is tempered or finite-dimensional.
        Then $\quo{\Gamma}_i\leftmod \calB$ is $F$-Ramanujan
        (resp.\ completely Ramanujan)
        for every $1\leq i\leq t$ such that $\quo{\Gamma}_i\leq_{\calB} G$.
        The converse holds when $\quo{\Gamma}_i\leq_\calB G$ for all $1\leq i\leq t$.
    \end{thm}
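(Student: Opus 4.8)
The plan is to obtain the statement by combining Proposition~\ref{PR:local-factor-embedding} with the representation-theoretic criterion for the Ramanujan property, in the form of Remark~\ref{RM:non-faithful-action} (which packages Theorem~\ref{TH:Ramanujan-criterion} for the \emph{non-faithful} action of $\bfG(k_\eta)$ on $\calB$). Throughout, write $\tilde G=\bfG(k_\eta)$, let $H=\ker(\tilde G\to G)=\bfZ(k_\eta)$ --- a compact normal subgroup, since $\bfG$ almost simple forces $\bfZ$ finite --- and $\tilde K_j=\Stab_{\tilde G}(x_j)$, so that $L_j=\tilde K_j\times K^\eta$ as subgroups of $\bfG(\bbA)=\tilde G\times\bfG(\bbA^{\{\eta\}})$. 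Each $\Gamma_i$ is a cocompact lattice in $\tilde G$, so $\quo\Gamma_i$ is cocompact in $G$, the complex $\quo\Gamma_i\leftmod\calB$ is finite whenever $\quo\Gamma_i\leq_{\calB}G$ (Proposition~\ref{PR:unimodularity-of-G}), and $\LL{\Gamma_i\leftmod\tilde G}$ is admissible (Example~\ref{EX:LL-Gamma-mod-G-admissible}). Hence Remark~\ref{RM:non-faithful-action}, together with the last sentence of Theorem~\ref{TH:Ramanujan-criterion} and Lemma~\ref{LM:finite-dim-reps-of-G}, says: for $\quo\Gamma_i\leq_{\calB}G$, the quotient $\quo\Gamma_i\leftmod\calB$ is $F$-Ramanujan iff every irreducible subrepresentation $U\leq\LL{\Gamma_i\leftmod\tilde G}$ with $U^{\tilde K_1}+\dots+U^{\tilde K_s}\neq 0$ is tempered or finite-dimensional, and it is completely Ramanujan iff every irreducible $U\leq\LL{\Gamma_i\leftmod\tilde G}$ with $U^{H}\neq 0$ is tempered or finite-dimensional.

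For the forward implication I would argue as follows. Observe first that the double-coset representatives $g_1,\dots,g_t$ and the lattices $\Gamma_1,\dots,\Gamma_t$ of Proposition~\ref{PR:local-factor-embedding} depend only on $K^\eta$, not on the auxiliary compact open subgroup called ``$K_\eta$'' there; so we are free to feed any compact open subgroup of $\tilde G$ into Proposition~\ref{PR:local-factor-embedding} and still recover the same family $\{\Gamma_i\}$. Now assume the automorphic hypothesis, fix $i$ with $\quo\Gamma_i\leq_{\calB}G$, and take irreducible $U\leq\LL{\Gamma_i\leftmod\tilde G}$ with $U^{\tilde K_j}\neq 0$ for some $j$. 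Applying Proposition~\ref{PR:local-factor-embedding}(i) with $K_\eta:=\tilde K_j$, the representation $U$ is the $\eta$-local factor $V_\eta$ of an automorphic representation $V$ of $\bfG$ with $V^{L_j}=V^{\tilde K_j\times K^\eta}\neq 0$; by hypothesis $V_\eta\cong U$ is tempered or finite-dimensional, so the criterion above shows $\quo\Gamma_i\leftmod\calB$ is $F$-Ramanujan. For the completely-Ramanujan assertion, take $U\leq\LL{\Gamma_i\leftmod\tilde G}$ with $U^{H}\neq 0$; then $\chi_U|_{H}$ is trivial, so choosing any $K_\eta\co\tilde G$ with $U^{K_\eta}\neq 0$ and applying Proposition~\ref{PR:local-factor-embedding}(i) gives $U=V_\eta$ for an automorphic $V$ with $V^{K_\eta\times K^\eta}\neq 0$; since $H=\bfZ(k_\eta)$ acts on $V$ through the scalar $\chi_{V_\eta}|_{H}=\chi_U|_{H}$, which is trivial, we get $V^{\bfZ(k_\eta)\times K^\eta}=V^{K^\eta}\supseteq V^{K_\eta\times K^\eta}\neq 0$, so by hypothesis $V_\eta=U$ is tempered or finite-dimensional.

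For the converse, assume $\quo\Gamma_i\leq_{\calB}G$ for all $i$ and that every $\quo\Gamma_i\leftmod\calB$ is $F$-Ramanujan (resp.\ completely Ramanujan). Let $V=\bigotimes'_\nu V_\nu$ be an automorphic representation with $V^{L_1}+\dots+V^{L_s}\neq 0$, say $V^{L_j}\neq 0$, i.e.\ $V^{\tilde K_j\times K^\eta}\neq 0$ (resp.\ with $V^{\bfZ(k_\eta)\times K^\eta}\neq 0$; in that case $\chi_{V_\eta}|_{H}$ is trivial and one fixes any $K_\eta\co\tilde G$ with $V^{K_\eta\times K^\eta}\neq 0$). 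Proposition~\ref{PR:local-factor-embedding}(ii), applied with $K_\eta=\tilde K_j$ (resp.\ with the chosen $K_\eta$), yields an index $\ell$ with $V_\eta\leq\LL{\Gamma_\ell\leftmod\tilde G}$ and $V_\eta^{\tilde K_j}\neq 0$ (resp.\ $V_\eta^{H}=V_\eta\neq 0$). Since $\quo\Gamma_\ell\leq_{\calB}G$ and $\quo\Gamma_\ell\leftmod\calB$ is $F$-Ramanujan (resp.\ completely Ramanujan), the criterion from the first paragraph forces $V_\eta$ to be tempered or finite-dimensional --- which is exactly the automorphic hypothesis.

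The routine verifications are the central-character bookkeeping and the identification of $\tilde K_j$ as the $\eta$-component of $L_j$. The two points that deserve attention are: that ``$K_\eta$'' is a free parameter in Proposition~\ref{PR:local-factor-embedding} --- i.e.\ that the lattices $\Gamma_i$ are intrinsic to $K^\eta$ --- which is what lets us probe with the various stabilizers $\tilde K_j$; and that the converse genuinely requires $\quo\Gamma_i\leq_{\calB}G$ for every $i$, since the index $\ell$ produced by Proposition~\ref{PR:local-factor-embedding}(ii) is not under our control and could be any of $1,\dots,t$. Beyond these, the proof is a direct assembly of the cited results, so the main obstacle is really just keeping the two parallel cases ($F$-Ramanujan versus completely Ramanujan) straight.
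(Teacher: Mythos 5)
Your proof is correct and follows exactly the route the paper indicates: the theorem is stated in the paper as an immediate consequence of combining Proposition~\ref{PR:local-factor-embedding} with Theorem~\ref{TH:Ramanujan-criterion} via Remark~\ref{RM:non-faithful-action}, and your write-up carries out precisely that assembly (with the correct observation that $K_\eta$ is a free parameter in Proposition~\ref{PR:local-factor-embedding} so that the same $\Gamma_i$ can be probed with each stabilizer $\tilde K_j$, and with the right bookkeeping of the central character for the completely Ramanujan case).
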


    \begin{remark}\label{RM:lattice-subgroup}
        (i)
        Fix one of the representatives $g_i$ and
        choose some $\rho\in\calV-\{\eta\}$ and $K^{\{\eta,\rho\}}\co \bfG(\bbA^{\{\eta,\rho\}})$.
        For every $n\geq 0$, let  $K^\eta(n)=\bfG(\calO_\rho,\pi_\rho^{n}\calO_\rho)\times K^{\{\eta,\rho\}}$,
        $\Gamma_i(n)=\bfG(k)\cap (\bfG(k_\eta)\times g_iK^\eta(n)g_i^{-1})$ and $\quo{\Gamma}_i(n)=\im(\Gamma_i(n)\to G)$.
        Then  $\{\quo{\Gamma}_i(n)\}_{n\geq 0}$ is a decreasing family of normal
        subgroups of $\quo{\Gamma}_i(0)$ with trivial intersection.
        Since  $\quo{\Gamma}_i(0)\leftmod\calB$
        is finite,  Corollary~\ref{CR:finite-subsect-away} implies that there is $n_0\in\N$ such that $\quo{\Gamma}_i(n_0)\leq_G\calB$.
        In particular, we have $\quo{\Gamma}\leq_G\calB$ for $\Gamma=\bfG(k)\cap (\bfG(k_\eta)\times g_iK^\eta g_i^{-1})$
        whenever $K^\eta\subseteq K^\eta(n_0)$.
        Notice, however, that replacing $K^\eta$ with $K^\eta(n_0)$  may increase the number
        of double cosets in  $\bfG(k)\leftmod\bfG(\bbA)/(\bfG(k_\eta)\times K^{\eta})$ and $n_0$ depends a priori  on
        the representative $g_i$.

        (ii) When $\bfG$ is simply-connected and $k_\eta$-isotropic, strong approximation
        (\cite{Marg77}, \cite{Prasad77})
        implies that $\bfG(k)\leftmod\bfG(\bbA)/(\bfG(k_\eta)\times K^{\eta})$ consists of a single double coset
        for every $K^\eta$. Thus, by (i), there is $K^\eta_0\co \bfG(\bbA^{\{\eta\}})$
        such that $\quo{\Gamma}_i=\quo{\Gamma}_1\leq_\calB \bfG(k_\eta)$ whenever
        $K^\eta\subseteq K^\eta_0$, in which case the last statement of Theorem~\ref{TH:automorphic-ramanujan}
        can be applied.
    \end{remark}

    Given information about the automorphic spectrum of $\bfG$, one can apply
    Theorem~\ref{TH:automorphic-ramanujan} to show  existence of Ramanujan $G$-quotients of $\calB$.
    We mention here several places where such ideas were applied in the literature, sometimes implicitly
    or in an equivalent formulation:
    \begin{itemize}
        \item Lubotzkly, Phillips and Sarnak  \cite{LubPhiSar88}, and independently
        Margulis \cite{Marg88}, constructed infinite families
        of Ramanujan $(p+1)$-regular graphs for every prime $p$ using results of Eichler \cite{Eichler54} and Igusa \cite{Igusa59}
        about modular forms. (See also Delinge's proof of the Ramanujan--Petersson conjecture
        for modular forms \cite{Deligne74}.)
        In our setting, this corresponds to taking $k=\Q$
        and $\bfG$ to be an inner form of $\bPGL_2$ which  splits over $k_\eta$.
        \item Morgenstern \cite{Morg94} used Drinfeld's proof of the Ramanujan--Petersson conjecture
        for $\bGL_2$ when $\Char k>0$ \cite{Drinfel88}
        to construct  infinite families of Ramanujan ${(q+1)}$-regular graphs for every prime power $q$. Again, the
        corresponding group $\bfG$ is an inner form of $\bPGL_2$.
        \item Lubotzky, Samuels and Vishne \cite{LubSamVi05} applied Lafforgue's proof of the Ra\-ma\-nu\-jan--Petersson conjecture
        for $\bGL_d$ when $\Char k>0$ \cite{Laff02} to construct infinite families of Ramanujan complexes
        (in the sense of Chapter~\ref{sec:ramanujan-complexes}). The corresponding group $\bfG$ is an inner form of $\bPGL_n$
        which splits over $k_\eta$.
        \change{32}{
        \item Li \cite{Li04} independently gave similar constructions of Ramanujan complexes, using
        results of Laumon, Rapoport and Stuhler, who proved a special case of the Ramanujan--Petersson
        conjecture for anisotropic inner forms of $\uGL_n$  \cite[Th.~14.12]{LaRaSt93}. (In fact, in \cite{Li04} it is only shown
        that the complexes are $\C[a_i,a_i^*]$-Ramanujan for all $0<i<d$ (notation as in Chapter~\ref{sec:ramanujan-complexes}
        and \ref{subsec:Ramanujan-quotients}; cf.\ Proposition~\ref{PR:unitary-dual-topology}).
        However, they are in fact Ramanujan in the sense of Chapter~\ref{sec:ramanujan-complexes} by
        \cite [Pr.~1.5]{LubSamVi05} or Example~\ref{EX:ramanujan-equivalence-Bd}.)
        }
        \item Ballantine and Ciubotaru  \cite{BallCiub11} constructed infinite families of Ramanujan $(q+1,q^3+1)$-biregular graphs
        for every prime power $q$. The corresponding group $\bfG$ is an inner form of $\mathbf{SU}(3)$, and they
        use the classification of the automorphic spectrum of $\bfG$ due to
        Rogawski \cite{Roga90}.
    \end{itemize}
    We hope  our work will facilitate further results of this kind.

\medskip

    Let $D$ be a central division algebra over $k_\eta$.
    The rest of this chapter concerns with showing that when $\Char k>0$, the
    affine building  of $\nPGL{D}{d}$ admits infinitely many non-isomorphic $\nPGL{D}{d}$-quotients which
    are  completely Ramanujan.
    The proof uses
    Theorem~\ref{TH:automorphic-ramanujan} to transfer the problem to a question about automorphic
    representations, and then applies Lafforgue's work on the Ramanujan conjecture for $\uGL_n$ \cite{Laff02},
    together with the Jacquet--Langlands correspondence in positive characteristic, established by Badulescu and Roche \cite{BadulRoch14}.

\subsection{The Affine Building of $\nPGL{D}{d}$}
\label{subsec:building-of-GLdD}

    Fix $\nu\in\calV$ and let $F=k_\nu$.
    Given a central simple $F$-algebra $A$,
    let $\deg A$ denote the degree of $A$, and let  $\Nrd_{A/F}:A\to F$ denote the reduced norm map;
    see \cite[\S1]{InvBook} for the relevant definitions and further details.
    There is a reductive
    algebraic group $\uGL_{n,A}$ over $F$, unique up to isomorphism, such that for every commutative $F$-algebra $R$,
	the groups $\uGL_{n,A}(R)$ and $\nGL{A\otimes_F R}{n}$ are naturally isomorphic.
	The topology on $F$ induces a topology on $\nGL{A}{n}=\uGL_{n,A}(F)$, making it into an $\ell$-group
	(\ref{subsec:adeles}). We further let $\uPGL_{n,A}=\uGL_{n,A}/\bfZ$, where $\bfZ\cong \nGm{F}$ is the center of $\uGL_{n,A}$.

\medskip

    Let $D$ be a finite dimensional central division $F$-algebra of degree $r$.
    By \cite[\S12]{MaximalOrders}, the additive valuation $\nu:F\onto \Z\cup\{\infty\}$ extends uniquely
    to an additive valuation  $\nu_D:D\to \R\cup\{\infty\}$ given by:
    \[
    \nu_D(x)=r^{-1}\nu(\Nrd_{D/F}(x))\ .
    \]
    Since the residue field of $F$ is finite, $\im(\nu_D)=\frac{1}{r}\Z$ and the residue
    division ring of $(D,\nu_D)$ is the Galois field of cardinality $q_\nu^r$ \cite[Th.~14.3]{MaximalOrders}. We
    fix an element $\pi_D\in D$ with $\nu_D(\pi_D)=\frac{1}{r}$ and write
    \[\calO_D=\{x\in D\suchthat \nu_D(x)\geq 0\}\ .\]
    The topology on $D=\uGL_{1,D}(F)$ coincides with the topology induced by $\nu_D$ and the topology
    on $\nMat{D}{d}=\uGL_{d,D}(F)$ coincides with the topology induced from $\nMat{D}{d}\cong D^{d^2}$.

\medskip

    The affine Bruhat-Tits building of $\nPGL{D}{d}:=\nGL{D}{d}/\units{F}$, denoted $\calB_d(D)$, is a simplicial
    complex of dimension $(d-1)$. It is constructed
    exactly as the affine Bruhat-Tits building of $\nPGL{F}{d}$ described in Chapter~\ref{sec:ramanujan-complexes} with
    the following modifications: Take $G=\nPGL{D}{d}$, let $K$ be the subgroup generated
    by the images of $\nGL{\calO_D}{d}$ and
    \[
    \left[\DDotsArr{\pi_D}{\pi_D}\right]
    \]
    in $G$,
    and replace $\pi=\pi_\nu$ with $\pi_D$. See \cite[\S3]{AbramNebe02} for further details and an alternative construction.
    The group $\nPGL{D}{d}$ acts on $\calB_d(D)$ on the left via its action on $\calB_d(D)^{(0)}=\nPGL{D}{d}/K$, making
    $\calB_d(D)$ into an almost transitive $\nPGL{D}{d}$-complex.
    The building $\calB_2(D)$ is a $(q_\nu^r+1)$-regular tree.

\medskip

    One can define a vertex coloring $C_0$ and a directed-edge coloring $C_1$ on $\calB_d(D)$ as in Chapter~\ref{sec:ramanujan-complexes}
    using the map $c:\nPGL{D}{d}\to \Z/d\Z$ given by
    \[c(g\units{F})={ \nu(\Nrd_{\nMat{D}{d}/F} (g))}+d\Z\ .\]
    One can then define the operators $a_1,\dots,a_{d-1}$ of Chapter~\ref{sec:ramanujan-complexes}  for $\nPGL{D}{d}$-quotients
    of $\calB_d(D)$. It can be shown using the building axioms that $a_1,\dots,a_{d-1}$ still
    commute among themselves and that $a_i^*=a_{d-i}$. Furthermore, \gap{}it seems correct that $a_1,\dots,a_{d-1}$
    generate  $\Alg{\catC(\nPGL{D}{d},\calB_d(D)),\Omega_0^+}{}$, and hence their common spectrum
    is equivalent to the $0$-dimensional spectrum (cf.\ Example~\ref{EX:zero-dim-spec-of-Bd}).
    We will not need this fact, however.
	
\subsection{Representations of $\nGL{D}{d}$}
\label{subsec:reps-of-GLd}

	Fix $\nu\in\calV$, let $F=k_\nu$, and let $D$ be a finite dimensional central division $F$-algebra
    of degree $r$.
    This section  recalls various facts about representations of
    $\nGL{D}{d}$.

\medskip
	
	Let $n_1,\dots,n_t\in\N$ and let $n=n_1+\dots+n_t$. Denote by
	$\bfP_{(n_1,\dots,n_t)}$ and $\bfM_{(n_1,\dots,n_t)}$ the closed algebraic subgroups
	of $\uGL_{n,D}$ consisting of block matrices of the form
    \[
    \left[\UTDotsArr{*}{*}{*}\right]\qquad\text{and}\qquad\left[\DDotsArr{*}{*}\right]\,,
    \]
    where the $i$-th  $*$ on the diagonal  stands for an $n_i\times n_i$ matrix.
    The group $\bfP_{(n_1,\dots,n_t)}$ is a \emph{standard parabolic subgroup}
    of $\uGL_{n,D}$ and
    $\bfM_{(n_1,\dots,n_t)}$ is its \emph{standard Levi factor}.
    We write $P_{(n_1,\dots,n_t)}=\bfP_{(n_1,\dots,n_t)}(F)$ and
    $M_{(n_1,\dots,n_t)}=\bfM_{(n_1,\dots,n_t)}(F)$.
   	Let $\delta_{(n_1,\dots,n_t)}$ denote the unimodular character of $P_{(n_1,\dots,n_t)}$.
   	Given smooth
   	representations $V_i\in \Rep[sm]{\nGL{D}{n_i}}$ ($1\leq i\leq t$),  let
   	\[
   	V_1\times V_2\times \dots \times V_t = \mathrm{Ind}_{P_{(n_1,\dots,n_t)}}^{\nGL{D}{n}}
    \left(\delta_{(n_1,\dots,n_t)}^{1/2}(V_1\otimes V_2\otimes \dots\otimes V_t)\right)
   	\]
	Here, $V_1\otimes V_2\otimes \dots\otimes V_t$ is viewed
    as an $M_{(n_1,\dots,n_t)}$-module, which is in turn viewed as a $P_{(n_1,\dots,n_t)}$-module
    via the homomorphism $P_{(n_1,\dots,n_t)}\to M_{(n_1,\dots,n_t)}$ removing the blocks above the diagonal.

    The operation $\times$ is associative up to a natural isomorphism \cite[Pr.~1.1(b)]{Zelevinsky80} but not commutative in general.
    However, when $V_1,\dots,V_t$ are of finite length, semisimplification
    of $V_1\times\dots\times V_t$ does not depend on the order of terms
    \cite[Th.~1.9]{Zelevinsky80}. In particular, if $V_1\times \dots \times V_t$ is irreducible,
    then $V_1\times \dots \times V_t\cong V_{\sigma 1}\times \dots\times V_{\sigma t}$
    for any permutation $\sigma$ on $\{1,\dots,t\}$. 
    When $V_1,\dots,V_t$ are admissible, there is a canonical-up-to-scaling isomorphism
    $(V_1\times \dots\times V_t)\check{~} \cong \check{V}_1\times\dots \times \check{V}_t$
    (this is similar to \cite[Pr.~2.25c]{BerZel77}). If  $V_1,\dots,V_t$ are  pre-unitary and
    one identifies $V_i$ with $\check{V}_i$ using the inner product on $V_i$
    (cf.\ the proof
    of Theorem~\ref{TH:Schurs-Lemma-for-admissible}), then
    the previous isomorphism gives rise to an inner product on $V_1\times\dots\times V_t$, making it
    into a pre-unitary representation.

    \begin{thm}\label{TH:tempred-classification}
    	For all $1\leq i\leq t$, let $V_i\in \Irr[pu]{\nGL{D}{n_i}}$ be  square-integrable.
    	Then $V_1\times \dots \times V_t$ is an irreducible pre-unitary tempered representation of
    	$\nGL{D}{n}$, where $n=n_1+\dots+n_t$. Any irreducible pre-unitary tempered representation of
    	$\nGL{D}{n}$ is obtained in this manner with $(V_1,n_1),\dots,(V_t,n_t)$ uniquely determined
    	up to isomorphism and reordering.
    \end{thm}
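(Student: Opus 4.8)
The plan is to deduce this from the standard Langlands--Casselman classification of tempered representations of a reductive $p$-adic group, using the special feature that $\nGL{D}{n}$ is an inner form of $\mathrm{GL}_n$, for which the relevant parabolically induced representations turn out to be irreducible. The argument draws on two external inputs, which I would quote rather than reprove: (a) the Bernstein--Zelevinsky structure theory---Jacquet functors, the geometric lemma, uniqueness of cuspidal support, and the classification of discrete series of $\nGL{D}{m}$ by \emph{balanced segments} of unitary supercuspidals---in the form extended from $\nGL{F}{\bullet}$ to the division-algebra case by Tadi\'c and by Deligne--Kazhdan--Vigneras (cf.\ \cite{BerZel77}, \cite{Zelevinsky80}, \cite{Tadic86}, \cite{BadulRoch14}); and (b) Harish-Chandra's theory of the Schwartz space, which characterizes tempered representations through the weak inequality on matrix coefficients and shows that unitary parabolic induction preserves temperedness (cf.\ \cite{Walds03}, \cite{Knapp86}). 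Granting these, the remaining work is bookkeeping.

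I would first verify that $V_1\times\dots\times V_t$ is irreducible, pre-unitary and tempered whenever the $V_i$ are square-integrable. Each $V_i$ is admissible and unitarizable (see \ref{subsec:admissible-revised} and Proposition~\ref{PR:unitarizability}), so $V_1\times\dots\times V_t$ is the unitary parabolic induction of the pre-unitary representation $V_1\otimes\dots\otimes V_t$ of the Levi $M_{(n_1,\dots,n_t)}$; the inner product described just before the theorem makes it pre-unitary, and it is tempered by input (b), since each $V_i$ is tempered. For irreducibility, input (a) writes each $V_i$ as a generalized Steinberg $\mathrm{St}(\Delta_i)$ attached to a balanced segment $\Delta_i=[\nu^{-(k_i-1)/2}\rho_i,\dots,\nu^{(k_i-1)/2}\rho_i]$ with $\rho_i$ unitary supercuspidal; any two balanced segments are unlinked (on a common cuspidal line they are either nested or supported on interleaved $\Z$-lattices), so the product/irreducibility theorem for $\nGL{D}{\bullet}$ (the analogue of \cite[\S4]{Zelevinsky80}; see \cite{Tadic86}, \cite{BadulRoch14}) gives that $V_1\times\dots\times V_t$ is irreducible. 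This is the first assertion of the theorem.

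Then I would handle surjectivity and uniqueness. Let $\pi\in\Irr[pu]{\nGL{D}{n}}$ be tempered; it is admissible. By Jacquet's subrepresentation theorem together with Casselman's temperedness criterion applied to the exponents occurring in the Jacquet modules of $\pi$ (cf.\ \cite{Walds03}), there is a standard parabolic $P_{(n_1,\dots,n_t)}$ and a discrete series $\tau=\tau_1\otimes\dots\otimes\tau_t$ of its Levi $M_{(n_1,\dots,n_t)}=\prod_i\nGL{D}{n_i}$ with $\pi\hookrightarrow \tau_1\times\dots\times\tau_t$, each $\tau_i$ being square-integrable in $\Irr[pu]{\nGL{D}{n_i}}$; since the target is irreducible by the previous paragraph, $\pi\cong\tau_1\times\dots\times\tau_t$. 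For uniqueness, suppose $\tau_1\times\dots\times\tau_t\cong\tau'_1\times\dots\times\tau'_s$ with all factors square-integrable. Comparing cuspidal supports (a well-defined multiset of supercuspidals of Levis, by input (a)) gives $\Delta_1\uplus\dots\uplus\Delta_t=\Delta'_1\uplus\dots\uplus\Delta'_s$. Because balanced segments are pairwise unlinked, such a multiset of supercuspidals decomposes into balanced segments in only one way---on each relevant $\Z$-lattice the balanced segments of a fixed parity are nested, so multiplicities are recovered from the extreme points inward---hence $\{\Delta_1,\dots,\Delta_t\}=\{\Delta'_1,\dots,\Delta'_s\}$ as multisets. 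Since a balanced segment $\Delta$ determines both $\mathrm{St}(\Delta)$ and its degree, this forces $s=t$ and $(\tau_i,n_i)\cong(\tau'_{\sigma i},n'_{\sigma i})$ after a permutation $\sigma$. Throughout, one passes freely between a pre-unitary representation and its Hilbert completion via Theorem~\ref{TH:admissible-category-equivalence}.

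The main obstacle is not in the deduction but in the inputs: the extension of the full Bernstein--Zelevinsky apparatus---in particular the segment classification of discrete series and the irreducibility of unitary parabolic induction---from $\nGL{F}{\bullet}$ to the inner forms $\nGL{D}{\bullet}$ is itself substantial, and is the part I would cite wholesale. Within the deduction the only delicate points are the elementary but fiddly combinatorial fact that ``unlinked'' for balanced segments makes the cuspidal support of $\pi$ recover the multiset of segments $\{\Delta_i\}$, and the standard-but-slightly-laborious identification of the Levi $M$ and of the tensor factorization $\tau=\tau_1\otimes\dots\otimes\tau_t$ coming out of the Jacquet-module analysis.
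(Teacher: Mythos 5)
Your proposal is correct in outline, but it takes a genuinely different and more ``from-scratch'' route than the paper does. The paper's proof is short: it cites Waldspurger \cite[Pr.~III.4.1]{Walds03} directly for the statement that every irreducible tempered pre-unitary representation of $\nGL{D}{n}$ is a subquotient of a product $V_1\times\dots\times V_t$ of square-integrable $V_i$, with the $(V_i,n_i)$ unique up to reordering; all that remains is to upgrade ``subquotient'' to ``equal'', which is done by citing the irreducibility of such products from Deligne--Kazhdan--Vigneras \cite[Th.~B.2.d]{DelKazhVign84} in characteristic $0$ and Badulescu \cite[Th.~1.1]{Badul04} in positive characteristic. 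You instead reconstruct the Waldspurger classification from the Bernstein--Zelevinsky/Tadi\'c segment apparatus: you recognize each square-integrable $V_i$ as a generalized Steinberg attached to a balanced segment, observe that balanced segments (centered at unitary supercuspidals) are pairwise unlinked because on a common cuspidal line they are either nested or offset by a half-integer, deduce irreducibility from the unlinked-implies-irreducible theorem, and recover uniqueness by a concrete combinatorial argument (peeling off nested segments from the extreme points of the cuspidal support). Your route makes the combinatorial mechanism visible and would generalize to adjacent statements, at the cost of having to develop (or carefully cite) substantially more of the inner-form Bernstein--Zelevinsky theory; the paper's route is economical and delegates all of that to a single reference. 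One small remark: the paper also notes, parenthetically, that irreducibility of $V_1\times\dots\times V_t$ holds under the weaker hypothesis that each $V_i$ is merely pre-unitary (citing S\'echerre and Badulescu--Henniart--Lemaire--S\'echerre); your irreducibility argument via balanced segments is specific to the discrete-series case and would not yield that stronger statement.
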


    \begin{proof}
    	By \cite[Pr.~III.4.1]{Walds03}, every irreducible tempered pre-unitary representation of $\nGL{D}{n}$
        is a subquotient of $V_1\times \dots\times V_t$, where  $V_i\in \Irr[sm]{\nGL{D}{n_i}}$
        is square-integrable, and $(V_1,n_1),\dots,(V_t,n_t)$ are uniquely determined up to isomorphism and
        reordering. It is therefore enough to show that $V_1\times \dots\times V_t$ is irreducible.
        This is shown in \cite[Th.~B.2.d]{DelKazhVign84} when $\Char F=0$ and in \cite[Th.~1.1]{Badul04}
        when $\Char F>0$. (In fact, $V_1\times \dots\times V_t$ is irreducible under the milder assumption
        that each $V_i$ is pre-unitary; see \cite{Seche09} and \cite{BadulHennLema10} for the cases
        $\Char F=0$ and $\Char F\neq 0$, respectively.)
    \end{proof}
	
	Let
	$\xi_n:\nGL{D}{n}\to \units{\C}$ be defined by
	$
	\xi_n(g)=|\Nrd_{\nMat{D}{n}/F}(g)|_\nu
	$. For $V\in\Rep[sm]{\nGL{D}{n}}$ and $\alpha\in\R$, we write
	\[
	\nu^\alpha V=\xi_n^\alpha V\ .
	\]
	Given $V_i\in\Rep[sm]{\nGL{D}{n_i}}$ ($1\leq i\leq t$) with $n=\sum_in_i$,
	there is an obvious canonical isomorphism
	\[\nu^\alpha(V_1\times\dots\times V_t)\cong\nu^{\alpha}V_1\times\dots\times \nu^{\alpha}V_t\ .\]
	
	\begin{thm}\label{TH:Silberger}
		For all $1\leq i\leq t$, let $V_i\in \Irr[pu]{\nGL{D}{n_i}}$   be tempered,
		and let $\alpha_1,\dots,\alpha_t\in \R$ satisfy
		$\alpha_1> \dots> \alpha_t$.
		Then $\nu^{\alpha_1}V_1\times\dots \times \nu^{\alpha_t}V_t$
		has a unique irreducible quotient, called the
		\emph{Langlands quotient}.
        When unitarizable, the Langlands quotient of $\nu^{\alpha_1}V_1\times\dots \times \nu^{\alpha_t}V_t$ is not
        tempered if $t>1$.
	\end{thm}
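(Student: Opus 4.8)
The plan is to deduce both assertions from the Langlands classification for $\nGL{D}{n}$, where $n=n_1+\dots+n_t$, using Theorem~\ref{TH:tempred-classification} to recognise tempered representations. First I would record the bookkeeping: the standard Levi subgroup $M_{(n_1,\dots,n_t)}=\nGL{D}{n_1}\times\dots\times\nGL{D}{n_t}$ of $\nGL{D}{n}$ has, as its irreducible tempered pre-unitary representations, exactly the outer tensor products $V_1\otimes\dots\otimes V_t$ with each $V_i\in\Irr[pu]{\nGL{D}{n_i}}$ tempered, and the unramified real twists of such a representation are parametrised by $(\alpha_1,\dots,\alpha_t)\in\R^t$ via $V_i\mapsto\nu^{\alpha_i}V_i$. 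A short reduced-norm computation shows that the central character of $\nu^{\alpha_1}V_1\times\dots\times\nu^{\alpha_t}V_t$, restricted to the centre $\units{F}$ of $\nGL{D}{n}$, has ``real part'' governed by $\sum_i n_i\alpha_i$; that the real cocharacter space of $M_{(n_1,\dots,n_t)}$ modulo the centre of $\nGL{D}{n}$ is thus identified with $\{(\alpha_1,\dots,\alpha_t):\sum_i n_i\alpha_i=0\}$; and that, under this identification, the open positive Weyl chamber attached to the standard parabolic $P_{(n_1,\dots,n_t)}$ is precisely $\{\alpha_1>\alpha_2>\dots>\alpha_t\}$.

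Granting this, the existence of a unique irreducible quotient $J$ of $\nu^{\alpha_1}V_1\times\dots\times\nu^{\alpha_t}V_t$ is the Langlands quotient theorem, which holds for every connected reductive $p$-adic group (Silberger; see also Borel--Wallach) and, in the explicit form for inner forms of $\uGL_n$ over a local field of positive characteristic, follows from the theory of induced representations of $\nGL{D}{n}$ already invoked in this chapter (Tadi\'c, Badulescu; cf.\ \cite{Badul04}, \cite{BadulHennLema10}, \cite{Walds03}). I would simply cite this, together with the uniqueness clause of the classification: to each irreducible smooth representation of $\nGL{D}{n}$ is attached a unique \emph{Langlands datum} $(P,\sigma,\mu)$ consisting of a standard parabolic $P$ with Levi $M$, an irreducible tempered representation $\sigma$ of $M$, and $\mu$ in the open positive chamber of the real cocharacter space of $M$ modulo the centre, relative to $P$. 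By the previous paragraph, the datum $\bigl(P_{(n_1,\dots,n_t)},\,V_1\otimes\dots\otimes V_t,\,(\alpha_1,\dots,\alpha_t)\bigr)$ is in standard position once $\sum_i n_i\alpha_i=0$, and it then attaches to $J$.

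For the non-temperedness, suppose $t>1$ and, for contradiction, that $J$ is tempered. Then $J$ is unitarizable, so its central character is unitary, and hence $\sum_i n_i\alpha_i=0$ by the computation above; since in addition $\alpha_1>\dots>\alpha_t$, the vector $(\alpha_1,\dots,\alpha_t)$ lies in the open positive chamber for $P_{(n_1,\dots,n_t)}$, so the Langlands datum of $J$ is $\bigl(P_{(n_1,\dots,n_t)},\,V_1\otimes\dots\otimes V_t,\,(\alpha_1,\dots,\alpha_t)\bigr)$. On the other hand, an irreducible tempered representation of $\nGL{D}{n}$ — a product of square-integrable representations by Theorem~\ref{TH:tempred-classification} — has, by the classification, Langlands datum with parabolic equal to all of $\nGL{D}{n}$ and zero twist. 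Uniqueness of the Langlands datum forces $P_{(n_1,\dots,n_t)}=\nGL{D}{n}$, i.e.\ $t=1$, a contradiction. (If one does not assume $J$ unitarizable, then ``$J$ tempered'' is vacuous since tempered representations are unitarizable; this is what the clause ``when unitarizable'' signifies.) An equivalent self-contained route is Casselman's temperedness criterion: the normalized Jacquet module $r_{M_{(n_1,\dots,n_t)}}(J)$ contains the ``Langlands-opposite'' exponent $w_0\cdot(\alpha_1,\dots,\alpha_t)$, and once $\sum_i n_i\alpha_i=0$ and $\alpha_1>\dots>\alpha_t$ with $t\ge2$ one has $\alpha_t<0$, so a partial sum of this real exponent vector has the wrong sign, contradicting temperedness.

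The step I expect to be the main obstacle is not conceptual but a matter of matching normalizations and pinning down references: fixing the normalized-induction convention (the $\delta_{(n_1,\dots,n_t)}^{1/2}$ twist), checking that the reduced-norm computation really identifies the set $\{\alpha_1>\dots>\alpha_t\}$ with the open positive chamber and not its opposite, and confirming that the form of the Langlands classification being quoted — in particular its uniqueness clause and the characterization of tempered representations as exactly those whose Langlands parabolic is the whole group — is genuinely available for $\nGL{D}{n}$ in positive characteristic. For the Casselman alternative, one must also verify which exponent actually survives in $r_{M_{(n_1,\dots,n_t)}}(J)$. Both of these are taken care of by the literature already cited in the excerpt, so the proof reduces to assembling those references together with Theorem~\ref{TH:tempred-classification}.
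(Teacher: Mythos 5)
Your proof is correct and follows the same route as the paper, which simply cites Silberger's Theorem 4.1(1) for the existence and uniqueness of the Langlands quotient and the uniqueness clause of Theorem 4.1(2) for the non-temperedness when $t>1$. Your fleshed-out comparison of Langlands data (and the Casselman alternative) is exactly what that terse citation encodes, and the bookkeeping about $\sum_i n_i\alpha_i=0$ is, as you observe, absorbed into the ``when unitarizable'' hypothesis and the fact that the Langlands parameter $\mu$ is taken modulo the centre of $\nGL{D}{n}$.
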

	
	\begin{proof}
    	The first part follows from \cite[Thm.~4.1(1)]{Silberger78}. The second part
    	follows from the uniqueness claim
        in \cite[Thm.~4.1(2)]{Silberger78}.
    \end{proof}

    For the sake of simplicity, we henceforth specialize to the case $D=F$. The  results
    to follow
    extend to general $D$ after some modifications; see \cite{Tadic90} and \cite{BadulHennLema10}.

\medskip

    Let $V\in\Irr[pu]{\nGL{F}{r}}$ be tempered, and let $s\in\N$. By Theorem~\ref{TH:Silberger}, the representation
    $\nu^{\frac{s-1}{2}}V\times \nu^{\frac{s-3}{2}}V\times\dots\times \nu^{\frac{1-s}{2}}V$
    has a unique irreducible quotient, which we denote by
    \[
    u(V,s)\ .
    \]
    We will occasionally use the following alternative characterization of $u(V,s)$:

    \begin{prp}\label{PR:def-of-u}
        Let $V\in\Irr[pu]{\nGL{F}{r}}$ be tempered and assume $u(V,s)$ is unitarizable. Then
        \[I_s(V):=\mathrm{Ind}_{P_{(r,\dots,r)}}^{\nGL{F}{rs}}(\,\underbrace{V\otimes\dots\otimes V}_{\text{$s$ times}}\,)\ .\]
        has a unique irreducible subrepresentation  which is isomorphic to $u(V,s)$.
    \end{prp}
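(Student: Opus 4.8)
The plan is to recognize $I_s(V)$ as a standard module in the sense of the Langlands classification and then to invoke (or reprove) the relevant half of that classification. First I would unwind the definitions: by the definition of parabolic induction in \ref{subsec:reps-of-GLd} together with a computation of the modulus character $\delta_{(r,\dots,r)}$ of the standard parabolic $P_{(r,\dots,r)}\subseteq\nGL{F}{rs}$, the representation $I_s(V)$ is isomorphic to the product $\nu^{\frac{1-s}{2}}V\times\nu^{\frac{3-s}{2}}V\times\dots\times\nu^{\frac{s-1}{2}}V$; in other words, it is exactly the standard module whose Langlands quotient defines $u(V,s)$ in Theorem~\ref{TH:Silberger}, but with its factors listed in the reversed, anti-dominant order. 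Carrying out this identification precisely --- matching the powers of the reduced norm produced by $\delta_{(r,\dots,r)}$ against the twists $\nu^{\frac{s-1}{2}},\nu^{\frac{s-3}{2}},\dots,\nu^{\frac{1-s}{2}}$ appearing in the definition of $u(V,s)$ --- is the one genuinely delicate bookkeeping point, and it is where one must be careful about the normalization of $\nu$.

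Once $I_s(V)$ is so identified, the statement is the dual half of the Langlands classification for $\nGL{F}{rs}$. Theorem~\ref{TH:Silberger} gives that $u(V,s)$ is the unique irreducible \emph{quotient} of the standard module $\nu^{\frac{s-1}{2}}V\times\dots\times\nu^{\frac{1-s}{2}}V$ written in dominant order; the complementary assertion is that the reversed standard module $\nu^{\frac{1-s}{2}}V\times\dots\times\nu^{\frac{s-1}{2}}V$ has a unique irreducible \emph{subrepresentation}, and that it is again $u(V,s)$. Concretely, both $u(V,s)$ and this subrepresentation are the image of the standard (long) intertwining operator between the two standard modules: that image is irreducible because it is a nonzero subquotient of $\nu^{\frac{s-1}{2}}V\times\dots\times\nu^{\frac{1-s}{2}}V$ carrying the correct Langlands data, hence equals the Langlands quotient by Theorem~\ref{TH:Silberger}. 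I would quote this directly from the Langlands classification. Alternatively, one can deduce it from Theorem~\ref{TH:Silberger} by passing to contragredients: using the canonical isomorphism recalled in \ref{subsec:reps-of-GLd} between the contragredient of $W_1\times\dots\times W_t$ and $\check{W}_1\times\dots\times\check{W}_t$, together with the identification of the contragredient of $\nu^{\alpha}W$ with $\nu^{-\alpha}\check{W}$, the contragredient of the reversed standard module for $V$ is the dominant-order standard module $\nu^{\frac{s-1}{2}}\check{V}\times\dots\times\nu^{\frac{1-s}{2}}\check{V}$ for the tempered representation $\check{V}$, whose unique irreducible quotient is $u(\check{V},s)$; dualizing back, the reversed standard module for $V$ has unique irreducible subrepresentation isomorphic to the contragredient of $u(\check{V},s)$. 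Finally, since $V$ is tempered hence unitarizable and $u(V,s)$ is unitarizable by hypothesis, both are self-contragredient and the formation of $u(-,s)$ commutes with the contragredient, so the contragredient of $u(\check{V},s)$ is isomorphic to $u(V,s)$.

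Combining the two steps gives the proposition: $I_s(V)$ is the reversed standard module, it has a unique irreducible subrepresentation, and that subrepresentation is $u(V,s)$. The main obstacle, granting the Langlands classification, is the normalization check in the first step. If one is unwilling to quote the ``unique irreducible submodule of the opposite standard module'' statement, the difficulty moves there, and one argues instead as follows: the intertwining-operator image is an irreducible subrepresentation of $I_s(V)$; and by Frobenius reciprocity together with the Bernstein--Zelevinsky computation of the Jacquet module of a parabolically induced representation along $P_{(r,\dots,r)}$, any irreducible subrepresentation of $I_s(V)$ has the appropriate twist of $V\otimes\dots\otimes V$ occurring in its $P_{(r,\dots,r)}$-Jacquet module with multiplicity one, which pins it down uniquely.
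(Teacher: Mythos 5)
Your proposal is correct and follows essentially the same route as the paper: identify $I_s(V)$ with the anti-dominant standard module $\nu^{\frac{1-s}{2}}V\times\dots\times\nu^{\frac{s-1}{2}}V$ by a modulus-character computation, then dualize via contragredients --- using $V\cong\check{V}$ (tempered hence unitarizable), $(W_1\times\dots\times W_t)\check{~}\cong\check{W}_1\times\dots\times\check{W}_t$, and $u(V,s)\cong u(V,s)\check{~}$ --- to transfer the ``unique irreducible quotient'' statement of Theorem~\ref{TH:Silberger} into the desired ``unique irreducible subrepresentation'' statement. The paper packages this as showing $I_s(V)\cong(\nu^{\frac{s-1}{2}}V\times\dots\times\nu^{\frac{1-s}{2}}V)\check{~}$ in one chain of isomorphisms, but that is only a cosmetic difference from your variant through $\check{V}$.
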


    \begin{proof}
        Since $u(V,s)$ is unitarizable, $u(V,s)\cong u(V,s)\check{~}$.
        It is therefore enough to show that $I_s(V)\cong (\nu^{\frac{s-1}{2}}V\times \dots\times \nu^{\frac{1-s}{2}}V)\check{~}$.
        Indeed,
        \begin{align*}
        I_s(V)&=
        \mathrm{Ind}_{P_{(r,\dots,r)}}^{\nGL{F}{rs}}(V\otimes\dots\otimes V)\\
        &=\mathrm{Ind}_{P_{(r,\dots,r)}}^{\nGL{F}{rs}}(\nu^{\frac{s-1}{2}}\nu^{\frac{1-s}{2}}V\times \dots
        \times \nu^{\frac{1-s}{2}}\nu^{\frac{s-1}{2}}V)\\
        &=\Ind_{P_{(r,\dots,r)}}^{\nGL{F}{rs}}(\delta_{(r,\dots,r)}^{1/2}(\nu^{\frac{1-s}{2}}V\times \dots\times \nu^{\frac{s-1}{2}}V))\\
        &=\nu^{\frac{1-s}{2}}V\times \nu^{\frac{3-s}{2}}V\times\dots\times \nu^{\frac{s-1}{2}}V\\
        &\cong(\nu^{\frac{s-1}{2}}\check{V})\check{~}\times (\nu^{\frac{s-3}{2}}\check{V})\check{~}\times\dots\times (\nu^{\frac{1-s}{2}}\check{V})\check{~}\\
        &\cong(\nu^{\frac{s-1}{2}}V\times \dots\times \nu^{\frac{1-s}{2}}V)\check{~}\ .
        \end{align*}
        We used the fact that $V\cong \check{V}$ in the last isomorphism.
    \end{proof}

    \begin{example}[{cf.\ \cite[Ex.~3.2]{Zelevinsky80}}]\label{EX:square-integrable-char}
    	Let $\chi:\nGL{F}{1}=\units{F}\to \units{\C}$ be a  unitary
    	character.
    	Then $u(\chi,s)\in\Irr[sm]{\nGL{F}{s}}$ is the character $\chi\circ \Nrd_{\nMat{F}{s}/F}$.
        To see this, observe that the function $g\mapsto \chi(\Nrd_{\nMat{F}{s}/F}( g))$
        is in $I_s(\chi)$, so the character $\chi\circ \Nrd_{\nMat{F}{s}/F}$ must be the unique
        irreducible $\nGL{F}{s}$-submodule of $I_s(\chi)$.
    \end{example}

    \begin{thm}
        If $V\in\Rep[pu]{\nGL{F}{r}}$ is square-integrable, then $u(V,s)$ is unitarizable.
    \end{thm}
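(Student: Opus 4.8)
The plan is to recognize the statement as the unitarizability of the Speh-type representations $u(V,s)$, which is the substantive (``positive'') half of Tadi\'c's classification of the unitary dual of $\nGL{F}{n}$ \cite{Tadic86}, and to prove it by the ``ends of complementary series'' method, with an induction on $s$. Since $V$ is square-integrable it is in particular pre-unitary (as recalled in~\ref{subsec:admissible-revised}), so $V\cong\check{V}$; together with the fact that the Langlands parameters $\nu^{(s-1)/2}V,\nu^{(s-3)/2}V,\dots,\nu^{(1-s)/2}V$ of $u(V,s)$ form a multiset stable under $W\mapsto\check{W}$ and reversal, this already shows $u(V,s)\cong \check{u(V,s)}$, i.e.\ $u(V,s)$ carries a nondegenerate invariant Hermitian form; the point of the proof is to upgrade this to a definite form. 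The base case $s=1$ is immediate, since $u(V,1)=V$ is square-integrable hence unitarizable.

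For the inductive step I would exhibit $u(V,s)$ as an irreducible constituent of the \emph{reducible} member of a complementary series of $\nGL{F}{m}$ built from (generalized) Steinberg representations. Writing $V$ itself as a generalized Steinberg $V\cong\mathrm{St}(\rho,n)$ for a unitary supercuspidal $\rho$, one considers a family $\pi_x=\nu^{x}\sigma\times\nu^{-x}\sigma$ for a suitable square-integrable $\sigma$, chosen by a cuspidal-support computation so that $u(V,s)$ occurs among the constituents of $\pi_{1/2}$. For $x\in(0,\tfrac12)$ the representation $\pi_x$ is irreducible (this is the combinatorial/Jacquet-module input, available in positive characteristic by \cite{Badul04} and \cite{BadulHennLema10}), it is Hermitian by the $x\mapsto -x$ symmetry, and at $x=0$ it is the normalized parabolic induction of the unitary representation $\sigma\boxtimes\sigma$, hence unitarizable; by the standard connectedness argument --- an irreducible Hermitian representation varying continuously in a real parameter and unitary at one point of an interval of irreducibility stays unitary on the whole interval --- each $\pi_x$, $0\le x<\tfrac12$, is unitarizable. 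Letting $x\to\tfrac12$, every irreducible constituent of $\pi_{1/2}$, and in particular $u(V,s)$, is unitarizable; this final step uses that the class of irreducible unitarizable representations is closed under the relevant passage to the limit, which can be made precise either by the classical leading-exponent analysis of the limiting invariant form, or within the spectral formalism of Chapter~\ref{sec:involutary-algebras} applied to $A=\Hecke{\nGL{F}{m}}$ (closedness of $A$-spectra, Proposition~\ref{PR:spectrum-is-closed}, together with Theorem~\ref{TH:direct-sum-spectrum}).

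The main obstacle is twofold, and both parts are genuinely the content of the cited works: first, the exact irreducibility range and reducibility points of the deformed families $\pi_x$, whose proof rests on Bernstein--Zelevinsky theory of Jacquet modules and, in positive characteristic, on \cite{Badul04, BadulHennLema10}; and second, controlling the limiting invariant Hermitian form well enough to conclude \emph{definiteness} rather than mere nondegeneracy. Because of this second difficulty --- and since the ambient setting of this chapter already has $\Char k>0$, where the tools of \cite{Laff02} and the trace formula are at hand --- an attractive alternative is to sidestep the endpoint analysis entirely and obtain the inner product globally: realize $u(V,s)$ as the local component at $\nu$ of a residual automorphic representation of $\uGL_{rs}$ over a function field, so that the inner product on $u(V,s)$ is inherited from $\LL{\uGL_{rs}(k)\leftmod\uGL_{rs}(\bbA)}$. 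In the write-up I would present the complementary-series argument as the conceptual picture and import the irreducibility facts and the endpoint (or global) unitarizability from \cite{Tadic86}, \cite{Badul04} and \cite{BadulHennLema10} rather than reproving them.
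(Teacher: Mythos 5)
The paper's proof of this statement is a single citation to \cite[Th.~D]{Tadic86}, so there is no further argument in the source to compare against. Your write-up is a faithful high-level account of the underlying ``ends of complementary series'' argument (and you yourself conclude by importing the hard inputs from Tadi\'c, Badulescu, and Badulescu--Henniart--Lema\^ire), so both you and the paper ultimately rest on the same theorem. A few points of caution about the exposition you give, though. First, the deformation family you write as $\pi_x=\nu^x\sigma\times\nu^{-x}\sigma$ with $\sigma$ ``a suitable square-integrable'' is not quite how Tadi\'c's induction runs: for $s$ even one deforms $\nu^x u(V,s/2)\times\nu^{-x}u(V,s/2)$, with $u(V,s/2)$ coming from the induction hypothesis and \emph{not} square-integrable for $s/2>1$; stating that $\sigma$ is square-integrable would make the cuspidal-support matching fail. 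Second, the suggestion that the endpoint $x\to\tfrac12$ step could be done ``within the spectral formalism of Chapter~\ref{sec:involutary-algebras}'' via closedness of $A$-spectra does not hold up: Proposition~\ref{PR:spectrum-is-closed} and Theorem~\ref{TH:direct-sum-spectrum} concern weak containment inside a fixed unitary representation, whereas the endpoint of a complementary series is a limit of varying invariant Hermitian forms that may degenerate — the genuine content is the leading-exponent/Jantzen-filtration analysis, and the closedness results do not substitute for it. Third, the proposed global alternative (realizing $u(V,s)$ as the $\eta$-component of a residual automorphic representation) is conceptually valid, but inside this paper it would create a circularity: Proposition~\ref{PR:def-of-u} explicitly assumes $u(V,s)$ is unitarizable, and the corollary following Theorem~\ref{TH:residual-spectrum} that identifies the local factors of $u(U,s)$ as $u(U_\nu,s)$ invokes Proposition~\ref{PR:def-of-u}; you would also need a globalization of the local square-integrable $V$ as a cuspidal local component over a function field, which is a nontrivial extra input. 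None of this affects the correctness of the statement, but as a self-contained replacement for the paper's citation your sketch would need the deformation family corrected and the endpoint step either cited or carried out honestly rather than deferred to the Chapter~\ref{sec:involutary-algebras} machinery.
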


    \begin{proof}
        This follows from \cite[Th.~D]{Tadic86}.
    \end{proof}

    \begin{thm}\label{TH:Steinberg}
        Let $r,s\in\N$, let $n=rs$, and let $V$ be a pre-unitary supercuspidal representation of $\nGL{F}{r}$.
        Then:
        \begin{enumerate}
            \item[(i)] $I_s(V)$ of Proposition~\ref{PR:def-of-u} has a unique irreducible quotient, denoted $T(V,s)$.
            \item[(ii)] $T(V,s)$ is unitarizable and square-integrable. Moreover,
            any square-integrable representation of $\nGL{F}{n}$ is obtained in this manner,
            with $V$ and $s$ uniquely determined up to isomorphism.
        \end{enumerate}
    \end{thm}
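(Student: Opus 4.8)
The plan is to recognise $I_s(V)$ as the standard module attached to a single \emph{balanced} segment of supercuspidals and then to deduce everything from the Bernstein--Zelevinsky theory of $\nGL{F}{n}$, the Langlands classification (Theorem~\ref{TH:Silberger}), and Casselman's square-integrability criterion. By the computation in the proof of Proposition~\ref{PR:def-of-u} we have
\[
I_s(V)\cong \nu^{\frac{1-s}{2}}V\times \nu^{\frac{3-s}{2}}V\times\dots\times \nu^{\frac{s-1}{2}}V,
\]
so $I_s(V)$ is the standard module $\zeta(\Delta)$ of the segment $\Delta=[\nu^{\frac{1-s}{2}}V,\nu^{\frac{s-1}{2}}V]$; the segment is balanced because the exponents are symmetric about $0$, which in particular forces the central character of every subquotient of $I_s(V)$ to be unitary (recall $V$ is pre-unitary). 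Proposition~\ref{PR:def-of-u} already tells us that $u(V,s)$ is the unique irreducible \emph{submodule} of $I_s(V)$; this is compatible with Theorem~\ref{TH:Silberger}, since $\check{I_s(V)}$ is isomorphic to $\nu^{\frac{s-1}{2}}\check V\times\dots\times\nu^{\frac{1-s}{2}}\check V$ (with $\check V$ again supercuspidal, hence tempered), whose unique irreducible quotient is $u(\check V,s)$, and $\check{u(\check V,s)}\cong u(V,s)$. I emphasise that the statement about the \emph{quotient} is not formal from the Langlands classification, since that classification only controls the product with the exponents in decreasing order, whereas $I_s(V)$ has them in increasing order.

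For part~(i) I would therefore invoke Zelevinsky's structure theorem for segment representations over a non-archimedean local field \cite{Zelevinsky80} (see also \cite{BerZel77}): for a segment $\Delta$ of supercuspidal representations, the standard module $\zeta(\Delta)=I_s(V)$ has irreducible cosocle (maximal semisimple quotient). This unique irreducible quotient is, by definition, $T(V,s)$, the generalised Steinberg representation attached to $\Delta$; equivalently, $T(V,s)$ is the unique irreducible submodule of the reversed product $\nu^{\frac{s-1}{2}}V\times\dots\times\nu^{\frac{1-s}{2}}V$, whose unique irreducible quotient is $u(V,s)$. (For $r=1$ this recovers the usual twisted Steinberg, cf.\ Example~\ref{EX:square-integrable-char}; for $s=2$ it is forced already by the fact that $I_s(V)$ then has length $2$, but for general $s$ one genuinely needs Zelevinsky's theorem.)

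For part~(ii), square-integrability of $T(V,s)$ is checked via Casselman's criterion \cite{Casselman80}: one computes the Jacquet modules of $T(V,s)$ along the standard parabolics by applying the geometric lemma to $\zeta(\Delta)$ and retaining the constituents that survive in the cosocle, and then verifies that every exponent so produced lies strictly inside the relevant obtuse Weyl chamber; balancedness of $\Delta$ (unitary central character) is what promotes ``essentially square-integrable'' to ``square-integrable''. Unitarizability then follows, either by the classical fact that an irreducible admissible representation with unitary central character whose matrix coefficients are square-integrable modulo the centre carries a positive-definite invariant form (so that $T(V,s)$ may be regarded as a pre-unitary representation, as in~\ref{subsec:admissible-revised}), or simply by quoting Tadi\'c's determination of the unitary dual of $\nGL{F}{n}$ \cite{Tadic86}, in which the $T(V,s)$ occur as precisely the square-integrable members. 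Finally, for exhaustion and uniqueness I would use the Zelevinsky classification of discrete series: an irreducible square-integrable $\sigma$ of $\nGL{F}{n}$ embeds in a product of supercuspidals, hence has a well-defined cuspidal support; Casselman's criterion forces that support to be a single balanced segment $[\nu^{\frac{1-s}{2}}V,\nu^{\frac{s-1}{2}}V]$ with $V$ supercuspidal pre-unitary of $\nGL{F}{r}$ and $rs=n$; among the finitely many irreducible constituents of $\zeta(\Delta)$ exactly one is square-integrable, namely the cosocle $T(V,s)$, so $\sigma\cong T(V,s)$; and uniqueness of $(V,s)$ is immediate from uniqueness of the cuspidal support.

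The main obstacle I anticipate is bookkeeping rather than conceptual: reconciling the paper's normalisations of $\times$ and of $I_s(V)$ with the segment conventions of \cite{Zelevinsky80} and \cite{BerZel77}, and carrying out the Jacquet-module computation cleanly enough to read off Casselman's inequalities; the positivity of the invariant form in the unitarizability step is also a place where it is cleaner to cite \cite{Tadic86} than to reprove it. Throughout, this is all within the case $D=F$, as in the surrounding discussion; the general division-algebra case requires in addition the extensions of Zelevinsky's theory due to Tadi\'c \cite{Tadic90} and to Badulescu and his collaborators \cite{BadulHennLema10}.
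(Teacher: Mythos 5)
Your proposal is correct and follows essentially the same route as the paper, which simply cites \cite[Prp.~2.10]{Zelevinsky80} for part (i) (irreducible cosocle of the segment representation) and \cite[\S9.3, Thm.~6.1]{Zelevinsky80} for part (ii) (classification, square-integrability, and uniqueness). Your extra detail --- unwinding the normalisations, observing that the Langlands classification by itself controls only the decreasing-exponent product, and noting that unitarizability can be read off either from the square-integrability of matrix coefficients or from Tadi\'c's unitary dual --- is an accurate fleshing-out of the references rather than a different argument.
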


    \begin{proof}
        See \cite[Prp.~2.10]{Zelevinsky80} for (i) and \cite[\S9.3, Thm.~6.1]{Zelevinsky80} for (ii).
    \end{proof}

	Let $K:=\nGL{\calO_\nu}{n}$.
    An irreducible smooth $\nGL{F}{n}$-module $V$ is called \emph{unramified} if
    $V^K\neq 0$.
    In this case, $\dim V^{K}=1$. (This holds because $V^K$ is an irreducible
    representation of $\Hecke[K]{\nGL{F}{n}}$, which is well-known to  be commutative.)
    For example, a character $\chi$
    of $\nGL{F}{1}=\units{F}$ is unramified when $\units{\calO_\nu}\subseteq\ker \chi$.

    Suppose we are given  $V_i\in \Irr[sm]{\nGL{F}{n_i}}$  ($1\leq i\leq t$) with $n=\sum_in_i$.
    If each $V_i$ is unramified,
    then
    $V_1\times\dots\times V_t$ has a unique irreducible unramified subquotient, and conversely, if
    such a subquotient exists, then $V_1,\dots,V_t$ must all be unramified. This follows
    easily from the Iwasawa decomposition $P_{(n_1,\dots,n_t)}K=\nGL{F}{n}$.

    \begin{prp}\label{PR:Jn-unramified}
        Let $V\in\Irr[pu]{\nGL{F}{r}}$ be tempered and unramified. Then:
        \begin{enumerate}
        \item[(i)] There are unramified unitary characters $\chi_1,\dots,\chi_r:\nGL{F}{1}\to \units{\C}$
        such that $V\cong \chi_1\times\dots\times \chi_r$.
        \item[(ii)]
        $u(V,s)$ is  unramified for all $s\geq 1$.
        \end{enumerate}
    \end{prp}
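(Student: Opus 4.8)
The plan is to deduce (i) from the classifications of tempered and of square-integrable representations (Theorems~\ref{TH:tempred-classification} and~\ref{TH:Steinberg}) together with two standard facts about unramified representations, and then to obtain (ii) by identifying $u(V,s)$ with the spherical constituent of an unramified principal series. First I would prove (i). Since $V$ is irreducible, pre-unitary and tempered, Theorem~\ref{TH:tempred-classification} writes $V\cong V_1\times\dots\times V_t$ with each $V_i\in\Irr[pu]{\nGL{F}{n_i}}$ square-integrable and $\sum_i n_i=r$. As $V$ is unramified, $(V_1\times\dots\times V_t)^{\nGL{\calO_\nu}{r}}\neq 0$, so by the Iwasawa-decomposition argument recalled just before the proposition each factor $V_i$ is unramified. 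By Theorem~\ref{TH:Steinberg}(ii) we may write $V_i\cong T(W_i,s_i)$ with $W_i\in\Irr[pu]{\nGL{F}{r_i}}$ supercuspidal and $n_i=r_is_i$; since $T(W_i,s_i)$ is a constituent of a representation parabolically induced from $W_i\otimes\dots\otimes W_i$, the same Iwasawa argument forces $W_i$ to be unramified. But an irreducible supercuspidal representation is not a subquotient of any representation induced from a proper parabolic, whereas for $r_i\ge 2$ every unramified irreducible representation of $\nGL{F}{r_i}$ is a subquotient of an unramified principal series (a proper induction); hence $r_i=1$ and $W_i=\chi_i$ is an unramified unitary character of $\units{F}=\nGL{F}{1}$. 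Finally, if some $s_i\ge 2$ then $V_i\cong T(\chi_i,s_i)$ is a constituent of $I_{s_i}(\chi_i)$, which is a principal series induced from an unramified character and therefore has a unique unramified constituent; by Proposition~\ref{PR:def-of-u} and Example~\ref{EX:square-integrable-char} that constituent is $u(\chi_i,s_i)=\chi_i\circ\det$, which is one-dimensional and so not square-integrable for $s_i\ge 2$, contradicting Theorem~\ref{TH:Steinberg}(ii). Thus $s_i=1$, $V_i\cong\chi_i$, and $V\cong\chi_1\times\dots\times\chi_r$.

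For (ii), start from $V\cong\chi_1\times\dots\times\chi_r$ with the $\chi_i$ unramified unitary characters of $\units{F}$; the length-one segments $\{\chi_i\}$ are pairwise unlinked (linking would force $\chi_i/\chi_j=|\cdot|_\nu^{\pm1}$, impossible between unitary characters), so for every $\alpha\in\R$ the twist $\nu^{\alpha}V\cong\nu^{\alpha}\chi_1\times\dots\times\nu^{\alpha}\chi_r$ is an honest irreducible representation and each $\nu^{\alpha}\chi_i$ is again an unramified character of $\units{F}$. By definition (Theorem~\ref{TH:Silberger}) $u(V,s)$ is the Langlands quotient of the standard module $\nu^{\frac{s-1}{2}}V\times\dots\times\nu^{\frac{1-s}{2}}V$. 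Expanding each $\nu^{\alpha}V$ as above and using associativity of $\times$ (\cite[Pr.~1.1(b)]{Zelevinsky80}), this standard module is isomorphic to $\mathrm{Ind}_B^{\nGL{F}{rs}}(\eta)$, the principal series induced from the unramified character $\eta$ of the diagonal torus whose $rs$ entries are the characters $\nu^{j}\chi_i$ ($j\in\{\tfrac{1-s}{2},\dots,\tfrac{s-1}{2}\}$, $1\le i\le r$) listed so that the exponents appear in weakly decreasing order. Such an unramified principal series has a unique irreducible unramified constituent (its space of $\nGL{\calO_\nu}{rs}$-fixed vectors is one-dimensional by the Iwasawa decomposition), namely the spherical representation attached to $\eta$; and because the chosen ordering is dominant for the Langlands classification and the inducing datum $V$ at the Levi level is tempered and unramified, this spherical constituent is exactly the Langlands quotient of the standard module --- this is Casselman's characterization of the spherical representation, equivalently the non-vanishing of the standard intertwining operator on the spherical vector (Gindikin--Karpelevich). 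Hence $u(V,s)$ is that spherical representation, so $u(V,s)^{\nGL{\calO_\nu}{rs}}\neq 0$, i.e.\ $u(V,s)$ is unramified.

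The computations in (i) are routine once the two inputs ``a supercuspidal representation is never unramified in rank $\ge 2$'' and ``the unique unramified constituent of $I_s(\chi)$ is $\chi\circ\det$, not $T(\chi,s)$, for $s\ge 2$'' are invoked; both are standard. The only genuinely non-formal ingredient is the identification in (ii) of the Langlands quotient $u(V,s)$ with the spherical constituent of the ambient unramified principal series, which I would cite (Casselman's spherical-function theory, or the Satake isomorphism combined with the Gindikin--Karpelevich computation) rather than reprove; the accompanying work is purely bookkeeping --- checking that the block ordering of $\nu^{\frac{s-1}{2}}V\times\dots\times\nu^{\frac{1-s}{2}}V$ is dominant so that Theorem~\ref{TH:Silberger} applies and the Langlands quotient is unambiguous, and that each $\nu^{j}\chi_i$ is unramified.
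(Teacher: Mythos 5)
Your proof of part (i) essentially matches the paper's: you decompose $V$ into square-integrable factors via Theorem~\ref{TH:tempred-classification}, use the Iwasawa-decomposition fact that each factor is unramified, invoke Theorem~\ref{TH:Steinberg} to write each factor as $T(W_i,s_i)$ with $W_i$ supercuspidal, and then rule out $r_i\geq 2$ (supercuspidals are not subquotients of unramified principal series) and $s_i\geq 2$ (the unique unramified constituent of $I_{s_i}(\chi_i)$ is the one-dimensional $u(\chi_i,s_i)$, which cannot be square-integrable). The paper cites \cite[Pr.~2.6]{Casselman80} and \cite[Pr.~1.10]{Zelevinsky80} for the two facts you invoke, but the logical shape is identical.

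For part (ii) you and the paper take genuinely different routes, and the difference is worth noting. Your argument unfolds $\nu^{\frac{s-1}{2}}V\times\dots\times\nu^{\frac{1-s}{2}}V$ into an unramified principal series $\mathrm{Ind}_B(\eta)$ with weakly dominant exponents and then invokes the standard fact --- Casselman's Prop.~3.6 (the spherical vector generates a dominant unramified principal series), equivalently the Gindikin--Karpelevich non-vanishing of the long intertwining operator on the spherical vector --- to conclude that the Langlands quotient, hence $u(V,s)$, is the spherical constituent. This is correct, and it is the more conceptual argument; in fact the paper cites exactly this approach via \cite[Th.~9.10]{PrasRagh01} elsewhere in a parenthetical but does \emph{not} use it for the actual proof, remarking that a self-contained reference for the full (non-regular) statement was elusive. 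Instead, the paper's proof of (ii) is more hands-on and entirely internal to Zelevinsky's theory: it identifies the unique irreducible submodule of $\nu^{\frac{1-s}{2}}V\times\dots\times\nu^{\frac{s-1}{2}}V$ explicitly as $u(\chi_1,s)\times\dots\times u(\chi_r,s)$, first in the case $\chi_1=\dots=\chi_r$ by two interpolation claims about $Z(\chi,s)$ and then in general by shuffling unlinked unitary characters. The paper's route thus proves a strictly stronger statement (an explicit product decomposition of $u(V,s)$, not merely unramifiedness) at the cost of a longer combinatorial argument; your route is shorter and more structural but outsources the crucial step to the spherical-function literature, which is precisely what the author chose to avoid. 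Both are valid.
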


    \begin{proof}
    	(i) By Theorem~\ref{TH:tempred-classification}, we may assume $V$ is square-integrable.
    	By Theorem~\ref{TH:Steinberg}, we can write $V=T(U,t)$ for $t\in\N$ and $U$ a supercuspidal representation
    	of $\nGL{F}{r/t}$. Then $U$ is unramified. In this case, \cite[Pr.~2.6]{Casselman80} implies
        that there are characters $\psi_1,\dots,\psi_{r/t}:\nGL{F}{1}\to \units{\C}$ such that
        $U$ is a subrepresentation of $\psi_1\times \dots\times \psi_{r/t}$. Since $U$ is supercuspidal,
        we must have $r/t=1$ \cite[Pr.~1.10]{Zelevinsky80}, so  $U$ is
        an unramified character of $\nGL{F}{1}$. Now, by Example~\ref{EX:square-integrable-char},
        $u(U,t)$ is the unique unramified subquotient of $I_t(U)$, so $u(U,t)\cong T(U,t)$.
        Since $u(U,t)$ is non-tempered when $t>1$ (Theorem~\ref{TH:Silberger}) and $T(U,t)$ is square-integrable
        (Theorem~\ref{TH:Steinberg}), we must have $t=1$, hence $V=U$ is an unramified character of $\nGL{F}{1}$.

        (ii)
        Write $V=\chi_1\times\dots\times \chi_r$ as in (i). Then $u(V,s)$ 
        is the unique irreducible subrepresentation of 
        \[
        (\nu^{\frac{1-s}{2}}\chi_1\times\dots\times \nu^{\frac{1-s}{2}}\chi_r)\times \dots\times(\nu^{\frac{s-1}{2}}\chi_1\times
        \dots\times\nu^{\frac{s-1}{2}}\chi_r)\ .
        \]
        We claim that this subrepresentation is isomorphic to 
        $u(\chi_1,s)\times\dots\times u(\chi_r,s)$, which is irreducible by \cite[Th.~A]{Tadic86}
        (or \cite[Th.~4.2]{Zelevinsky80}). By Example~\ref{EX:square-integrable-char},
        $u(\chi_i,n)$ is unramified for all $i$, so this would show that $u(V,s)$ is  unramified.

\smallskip

		Suppose first that $\chi_1=\dots=\chi_r=\chi$. It is convenient to introduce
		$Z(\chi,s):=\nu^{\frac{s-1}{2}}u(\chi,s)$. Alternatively, $Z(\chi,s)$ is the unique irreducible
		subrepresentation of $\chi\times \nu\chi\times\dots\times \nu^{s-1}\chi$.
		For $U\in\Irr[sm]{\nGL{F}{n}}$, we write $U^{\times r}$ to abbreviate
		$U\times\dots\times U$ ($r$ times).
		We  claim that
		\begin{enumerate}
			\item[(1)] $Z(\chi,s)\times Z(\chi,s+1)\cong Z(\chi,s+1)\times Z(\chi,s)$ and
			\item[(2)] $Z(\chi,s+1)^{\times r}$ is isomorphic to a subrepresentation of
			$Z(\chi,s)^{\times r}\times (\nu^s\chi)^{\times r}$. 
		\end{enumerate}
		To prove (1), it enough to show that $Z(\chi,s)\times Z(\chi,s+1)$
		is irreducible, and this  follows from \cite[Th.~4.2]{Zelevinsky80} (in 
		\cite{Zelevinsky80}, $Z(\chi,s)$ is denoted  by $\Trings{a}$ where $a$ is the \emph{segment}
		$\{\chi,\nu\chi,\dots,\nu^{s-1}\chi\}$).
		%The second part
		%of (1) follows from the fact that for any $U\in\Irr[sm]{\nGL{n}{F}}$, $W\in\Irr[sm]{\nGL{n}{F}}$,
		%the representations $U\times W$ and $W\times U$ have the same composition factors \cite[Th.~1.9]{Zelevinsky80}.
        We prove (2) by induction on $r$. The case $r=1$ is immediate from the characterization of
        $Z(\chi,s)$ as the unique irreducible subrepresentation of $\chi\times \nu\chi\times\dots\times \nu^{s-1}\chi$.
        For $r>1$, the induction hypothesis implies that $Z(\chi,s)^{\times r}\times (\nu^s\chi)^{\times r}$
        contains a copy of $Z(\chi,s)\times Z(\chi,s+1)^{\times(r-1)}\times \nu^s\chi$. By
        (1), the latter is isomorphic to $Z(\chi,s+1)^{\times(r-1)}\times Z(\chi,s)\times \nu^s\chi$,
        which contains a copy of $Z(\chi,{s+1})^{\times (r-1)}\times Z(\chi,s+1)=Z(\chi,s+1)^{\times r}$.
        
        Now, applying (2) repeatedly, we see that
      	$
        \chi^{\times r}\times(\nu\chi)^{\times r}\times\dots\times (\nu\chi^{s-1})^{\times r}
        $ contains a copy of $
        Z(\chi,s)^{\times r}$.
        This implies that
        $u(\chi,s)^{\times r}=(\nu^{\frac{1-s}{2}}Z(\chi,s))^{\times r}\cong\nu^{\frac{1-s}{2}}(Z(\chi,s))^{\times r}$
        is isomorphic to a subrepresentation
        of $\nu^{\frac{1-s}{2}}(\chi^{\times r}\times(\nu\chi)^{\times r}\times\dots\times (\nu^{s-1}\chi)^{\times r})\cong
        (\nu^{\frac{1-s}{2}}\chi)^{\times r}\times\dots\times (\nu^{\frac{s-1}{2}}\chi)^{\times r}$, as required.
        
\medskip

		Suppose now that $\chi_1,\dots,\chi_r$ are arbitrary,
		and write $\{\rho_1,\dots,\rho_t\}=\{\chi_1,\dots,\chi_r\}$ where
		$\rho_1,\dots,\rho_t$ are distinct. Let $r_i$ denote the number of indices $j$ with $\rho_i=\chi_j$.
		Since $\rho_1,\dots,\rho_t$ are unitary, \cite[Pr.~1.11(a)]{Zelevinsky80}
		implies that $\rho_i\times\rho_j$ is irreducible for all $i\neq j$,
		hence $\rho_i\times \rho_j\cong \rho_j\times \rho_i$. Using this, we can rearrange
		the terms in
		\[
		(\nu^{\frac{1-s}{2}}\chi_1\times\dots\times \nu^{\frac{1-s}{2}}\chi_r)\times \dots\times(\nu^{\frac{s-1}{2}}\chi_1\times
        \dots\times\nu^{\frac{s-1}{2}}\chi_r)
		\]
		to get
		\[
		((\nu^{\frac{1-s}{2}}\rho_1)^{\times r_1}\times\dots\times (\nu^{\frac{s-1}{2}}\rho_{1})^{\times r_1})
		\times \dots\times((\nu^{\frac{1-s}{2}}\rho_t)^{ \times r_t}\times
        \dots\times(\nu^{\frac{s-1}{2}}\rho_t)^{\times r_t})\ .
		\]
		By the previous paragraphs, this representation contains a copy of
		\[
		u(\rho_1,s)^{\times r_1}\times\dots\times u(\rho_t,s)^{\times r_t}
		\cong u(\chi_1,s)\times\dots\times u(\chi_r,s)\ ,
		\]
		so we are done.
\rem{
		
		we can rearrange $\chi_1,\dots,\chi_r$
		such that $\chi_1=\dots=\chi_{r_1}$, $\chi_{r_1+1}=\dots=\dots_{r_2}$, etc.\
		and $\chi_{r_1},\chi_{r_2},\dots$ are distinct.

        is the unique irreducible
        quotient of
        \[
        (\nu^{\frac{s-1}{2}}\chi_1\times\dots\times \nu^{\frac{s-1}{2}}\chi_r)\times \dots\times(\nu^{\frac{1-s}{2}}\chi_1\times
        \dots\times\nu^{\frac{1-s}{2}}\chi_r)\ .
        \]
        By \cite[Th.~9.10]{PrasRagh01}, this quotient is unramified. \gap{}(The source we have quoted states this result without proof;
        we were unsuccessful in finding a proof in the literature.
        However, according experts we have discussed with, this result is well-established.
        When $\chi_1,\dots,\chi_r$ are distinct, it follows from \cite[Pr.~3.6]{Casselman80}, which states
        that the representation above is generated by its
        $\nGL{\calO}{rs}$-invariant vectors.)
}
    \end{proof}

\subsection{The Jacquet-Langlands Correspondence}
\label{subsec:JL}

	Let $\nu$, $F$, $D$ be as in \ref{subsec:reps-of-GLd},
	and let  $r=\deg D$.
    For all $n\in\N$, write
    \[
    G_n=\nGL{F}{nr}\qquad\text{and}\qquad G'_n=\nGL{D}{n}\ .
    \]
    %We choose the Haar measure on $G_n$ (resp.\ $G'_n$)
    %to give volume $1$ to $\nGL{\calO_F}{nr}$ (resp.\ $\nGL{\calO_D}{n}$), which is a maximal compact open subgroup.
    We choose Haar measures $\mu_{G_n}$, $\mu_{G'_n}$
    for $G_n$, $G'_n$ respectively.
	Two elements $g\in G_n$, $g'\in G'_n$
	are said to be \emph{in correspondence}, denoted
	$g\leftrightarrow g'$, if they have the same {reduced characteristic polynomial} (see \cite[\S9a]{MaximalOrders}).
	The element $g$ (resp.\ $g'$) is \emph{regular} if the roots of its reduced characteristic polynomial
	in an algebraic closure of $F$ are distinct.
    Denote by $\tilde{G}_n$ (resp.\ $\tilde{G}'_n$) the set of regular elements in $G_n$ (resp.\ $G'_n$).
	An element $g\in G_n$ is called \emph{$D$-compatible} if there is $g'\in G'_n$ such that
	$g\leftrightarrow g'$. This is equivalent to  saying that the degrees of the irreducible factors of the reduced characteristic
    polynomial of $g$ are divisible by $r$ \cite[Lm.~2.1]{BadulRoch14}.

\medskip

    Let $V$ be an irreducible smooth representation of $G'_n$. Harish-Chandra \cite{Harish81}
    showed that there
    exists a unique function $\psi_V:\tilde{G}'_n\to \C$ such that $\psi_V$ is locally constant, stable under conjugation,
    and for any $\vphi\in \Hecke{G'_n}$ supported on $\tilde{G'}$ one has
    \[
    \mathrm{Tr}(\vphi|_V)=\int_{g'\in  \tilde{G}'_n} \psi_V(g')\cdot\vphi(g')\,\mathrm{d}\mu_{G'_n}\ .
    \]
    (In fact, this holds for any $\vphi\in \Hecke{G'_n}$; see  \cite[\S2.5]{BadulRoch14}.)
    Notice that $\psi_V$ is independent of
    the Haar measure $\mu_{G'_n}$.
    The function $\psi_V$ is called the \emph{function character} of $V$. It determines $V$ up to isomorphism.
    A representation $V\in\Irr[sm]{G_n}$ is
	called \emph{$D$-compatible} if $\psi_V$ does not vanish on the set of regular $D$-compatible elements.

    Let $\what{G}_n^{(D)}$ denote the isomorphism classes
    of $D$-compatible irreducible pre-unitary representations of $G_n$ and let $\what{G}'_n$
    denote the isomorphism classes of all irreducible pre-unitary representations of $G'_n$.
    To avoid cumbersome statements, we will occasionally identify irreducible
	representations with their isomorphism classes.

    \begin{thm}\label{TH:local-JL}
		There exists a unique map $\LJ_\nu:\what{G}_n^{(D)}\to \what{G}'_n$, called
        \emph{local Jacquet--Langlands correspondence},  with the property
        that
        for all $V\in \what{G}_n^{(D)}$, there is $\veps=\veps(V)\in\{\pm1\}$ such that
		\[
		\psi_{V}(g)=\veps\cdot\psi_{\LJ_\nu(V)}(g')
		\]
		for all  $g\in G_n$, $g'\in G'_n$ with $g\leftrightarrow g'$.
		The map $\LJ_\nu$ has the following additional properties:
		\begin{enumerate}
            \item[(i)] $\LJ_\nu$ restricts to a bijection between the square-integrable representations
            of $G_n$ and the square integrable representations of $G'_n$ (considered up to isomorphism).
            In particular, all square-integrable representations of $G_n$ are $D$-compatible.
			\item[(ii)] Let $n_1,\dots,n_t\in\N$ and let $V_i\in\Rep[pu]{\nGL{F}{n_i}}$ ($1\leq i\leq t$).
            Assume that $\sum_in_i=nr$ and $V:=V_1\times \dots\times V_t$ is irreducible. If $r\mid n_i$ and   $V_i\in \what{G}_{n_i}^{(D)}$
            for all $i$, then $V$ is $D$-compatible and
            \[
            \LJ_\nu(V)=\LJ(V_1)\times\dots\times \LJ(V_t)
            \]
            Otherwise, $V$ is not $D$-compatible.
            \item[(iii)] If $[V]\in\what{G}_n^{(D)}$ is tempered, then so does  $\LJ_\nu(V)$.
		\end{enumerate}
	\end{thm}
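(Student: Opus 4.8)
The plan is to reduce the tempered case to the square-integrable case, where the statement is exactly part (i), by exploiting the compatibility of $\LJ_\nu$ with parabolic induction recorded in part (ii).

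First I would apply Theorem~\ref{TH:tempred-classification} to the split group $\nGL{F}{nr}=G_n$: since $V$ is irreducible, pre-unitary and tempered, it can be written as $V\cong V_1\times\dots\times V_t$ with each $V_i\in\Irr[pu]{\nGL{F}{l_i}}$ square-integrable, $l_1+\dots+l_t=nr$, and the pairs $(V_i,l_i)$ uniquely determined up to isomorphism and reordering. I would then feed this decomposition into part (ii), which presents a dichotomy: either $r\mid l_i$ and each $V_i$ is $D$-compatible, in which case $V$ is $D$-compatible and $\LJ_\nu(V)\cong\LJ_\nu(V_1)\times\dots\times\LJ_\nu(V_t)$; or else $V$ is not $D$-compatible. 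By hypothesis $V$ \emph{is} $D$-compatible, so the first alternative must hold. Consequently each $l_i$ is divisible by $r$, each $V_i$ is $D$-compatible (being square-integrable on $\nGL{F}{l_i}=G_{l_i/r}$, this also follows from part (i)), and the product formula $\LJ_\nu(V)\cong\LJ_\nu(V_1)\times\dots\times\LJ_\nu(V_t)$ holds.

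Next I would invoke part (i) for each factor: $\LJ_\nu(V_i)$ is a square-integrable irreducible pre-unitary representation of $G'_{l_i/r}=\nGL{D}{l_i/r}$, and $\sum_i l_i/r=n$. Finally, Theorem~\ref{TH:tempred-classification} applied this time to $\nGL{D}{n}$ shows that the product $\LJ_\nu(V_1)\times\dots\times\LJ_\nu(V_t)$ of square-integrable representations is irreducible, pre-unitary and tempered; since it is isomorphic to $\LJ_\nu(V)$, the latter is tempered, as desired.

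The argument is short once Theorem~\ref{TH:tempred-classification} and parts (i)--(ii) are in hand, and I do not anticipate a serious obstacle. The only point requiring care is the bookkeeping in the second step — extracting from the dichotomy in (ii) that the square-integrable constituents of a $D$-compatible tempered representation automatically have degrees divisible by $r$ and are themselves $D$-compatible — since this is precisely what allows the product formula to be transported through $\LJ_\nu$ and then reassembled on the $\nGL{D}{n}$ side.
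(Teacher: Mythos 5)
Your proposal is correct and is precisely the argument the paper has in mind: the paper's entire proof of part (iii) is the one-line remark ``This follows from (i), (ii) and Theorem~\ref{TH:tempred-classification}.'' (the existence of $\LJ_\nu$ and parts (i)--(ii) are cited to the literature, as you also take them as given), and your decomposition into square-integrable factors via Theorem~\ref{TH:tempred-classification}, transfer through $\LJ_\nu$ using (i) and (ii), and reassembly on the $\nGL{D}{n}$ side is exactly how that one-liner unwinds.
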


    \begin{proof}
        The existence of $\LJ_\nu$ was established in \cite{Badul08} and \cite[Cor.~4.5]{BadulHennLema10}
        in the cases $\Char F=0$ and $\Char F>0$, respectively. %(See also \cite[Ch.~4]{HarrisTaylor01} for the case $n=1$.)

        (i) See \cite{DelKazhVign84} ($\Char F=0$) and \cite{Badul02} ($\Char F>0$).

        (ii) This follows from the formula for the function character of an induced representation
        in \cite[Th.~3]{vanDij72} (see also the comment following
        that theorem).
        See also \cite[Pr.~3.4]{Badul07}.\rem{\gap{Sign issues?}}

        (iii) This follows from (i), (ii) and Theorem~\ref{TH:tempred-classification}.
    \end{proof}

    \begin{remark}
        (i) The map $\LJ_\nu$ is neither injective nor surjective in general (\cite[Rm.~3.2]{Badul07}
        and Proposition~\ref{PR:JL-image} below).
        When $D=F$, the map $\LJ_\nu$ is just the identity map.

        (ii) The Jacquet-Langlands correspondence is often defined to
        be the inverse of $\LJ_\nu$, denoted $\JL_\nu$.
    \end{remark}

	\begin{prp}\label{PR:JL-image}
		Let $\chi:\units{F}\to\units{\C}$ be a unitary character.
        Then $\LJ_\nu(u(\chi,r))=\LJ_\nu(T(\chi,r))\cong\chi\circ \Nrd_{D/F}$.
        Furthermore, any $V\in \what{G}_1^{(D)}$ with $\LJ_\nu(V)\cong \chi\circ\Nrd_{D/F}$
        is isomorphic to $u(\chi,r)$ or $T(\chi,r)$. (See \ref{subsec:reps-of-GLd}
        for the definition $u(\chi,r)$ and $T(\chi,r)$.)
	\end{prp}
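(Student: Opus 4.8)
The plan is to read off the two stated values of $\LJ_\nu$ from the defining character identity of Theorem~\ref{TH:local-JL}, and then to deduce the converse from Theorem~\ref{TH:local-JL} combined with the classification of irreducible unitary representations of $\nGL{F}{r}$. For $u(\chi,r)$: by Example~\ref{EX:square-integrable-char} it is the one-dimensional representation $\chi\circ\Nrd_{\nMat{F}{r}/F}=\chi\circ\det$ of $G_1=\nGL{F}{r}$, so its function character is $g\mapsto\chi(\det g)$, which is nowhere zero; in particular $u(\chi,r)\in\what{G}_1^{(D)}$. If $g\leftrightarrow g'$ then $g$ and $g'$ have the same reduced characteristic polynomial, hence $\det g=\Nrd_{D/F}(g')$, so $\psi_{u(\chi,r)}(g)=\chi(\Nrd_{D/F}(g'))=\psi_{\chi\circ\Nrd_{D/F}}(g')$. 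The sign $\veps$ assigned to $u(\chi,r)$ by Theorem~\ref{TH:local-JL} must be $+1$ (otherwise the trace character of $\LJ_\nu(u(\chi,r))$ would be $-\chi\circ\Nrd_{D/F}$ on the dense set of regular elements, giving a negative value at the identity), so by the uniqueness clause of Theorem~\ref{TH:local-JL} we obtain $\LJ_\nu(u(\chi,r))\cong\chi\circ\Nrd_{D/F}$.

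For $T(\chi,r)$ I would first record that $\LJ_\nu$ is compatible with twisting by characters of $\det$: if $\xi$ is a unitary character of $\units{F}$ and $W\in\what{G}_n^{(D)}$, then $(\xi\circ\det)\otimes W\in\what{G}_n^{(D)}$ and, by the same reduced-norm identity and uniqueness, $\LJ_\nu((\xi\circ\det)\otimes W)\cong(\xi\circ\Nrd_{D/F})\otimes\LJ_\nu(W)$. Since $I_r(\chi)=\mathrm{Ind}_{P_{(1,\dots,1)}}^{\nGL{F}{r}}(\chi\otimes\dots\otimes\chi)\cong(\chi\circ\det)\otimes I_r(\mathbf 1)$ and twisting is an exact functor, Theorem~\ref{TH:Steinberg}(i) gives $T(\chi,r)\cong(\chi\circ\det)\otimes\mathrm{St}_r$ with $\mathrm{St}_r:=T(\mathbf 1,r)$ the Steinberg representation. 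It then remains to compute $\LJ_\nu(\mathrm{St}_r)$: $\mathrm{St}_r$ is square-integrable (Theorem~\ref{TH:Steinberg}(ii)), hence $D$-compatible (Theorem~\ref{TH:local-JL}(i)); the regular $D$-compatible elements of $\nGL{F}{r}$ are precisely the regular elliptic ones, since a degree-$r$ polynomial all of whose irreducible factors have degree divisible by $r$ is irreducible; and on such an element the Steinberg character equals $(-1)^{r-1}$ (from $[\mathrm{St}_r]=\sum_{J}(-1)^{|J|}[\mathrm{Ind}_{P_J}^{\nGL{F}{r}}\mathbf 1]$, in which every summand induced from a proper parabolic vanishes on elliptic elements, leaving only the term $P_J=\nGL{F}{r}$). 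Comparing with the constant character $1$ of $\mathbf 1_{\units{D}}$ and invoking uniqueness, $\LJ_\nu(\mathrm{St}_r)\cong\mathbf 1_{\units{D}}$, and hence $\LJ_\nu(T(\chi,r))\cong(\chi\circ\Nrd_{D/F})\otimes\mathbf 1_{\units{D}}\cong\chi\circ\Nrd_{D/F}$.

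For the converse, let $V\in\what{G}_1^{(D)}$ with $\LJ_\nu(V)\cong\chi\circ\Nrd_{D/F}$. Twisting by $\chi^{-1}\circ\det$ and using the compatibility above reduces to the case $\chi=\mathbf 1$, where I must show $V\cong\mathbf 1_{\nGL{F}{r}}=u(\mathbf 1,r)$ or $V\cong\mathrm{St}_r=T(\mathbf 1,r)$. If $V$ is square-integrable, Theorem~\ref{TH:local-JL}(i) and the computation above identify it as the unique square-integrable preimage of $\mathbf 1_{\units{D}}$, namely $\mathrm{St}_r$. Otherwise, writing $V$ in the classification of irreducible unitary representations of $\nGL{F}{r}$ (Tadi\'c \cite{Tadic86}; see also \ref{subsec:reps-of-GLd} and \cite{BadulHennLema10}) as a product of building blocks $u(\delta,s)$ and complementary-series representations $\pi(\delta,s,\alpha)$, the fact that $V$ is irreducible and $D$-compatible forces, by Theorem~\ref{TH:local-JL}(ii), every factor to be $D$-compatible of dimension divisible by $r$; since the dimensions sum to $r$ there is a single factor, of dimension exactly $r$, which cannot be a $\pi(\delta,s,\alpha)$ (itself a two-fold product, again excluded by Theorem~\ref{TH:local-JL}(ii)); so $V\cong u(\delta,s)$ with $\delta$ square-integrable of $\nGL{F}{d}$, $ds=r$ and $s\ge 2$. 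The main obstacle is to rule out $d>1$, which I expect to require the explicit description of the Jacquet--Langlands correspondence on unitary (Speh) representations from Badulescu's work \cite{BadulRoch14} (cf.\ \cite{Badul07}, \cite{Badul08}): when $u(\delta,s)$ is $D$-compatible, $\LJ_\nu(u(\delta,s))$ is a Speh-type representation of $\units{D}=\nGL{D}{1}$ built from the square-integrable transfer of $\delta$, and it is one-dimensional only when $\delta$ is, i.e.\ $d=1$; then $\delta=\mathbf 1_{\units{F}}$, $s=r$, and $V\cong u(\mathbf 1,r)=\mathbf 1_{\nGL{F}{r}}$ by Example~\ref{EX:square-integrable-char}, completing the proof.
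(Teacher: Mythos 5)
Your argument for $u(\chi,r)$ agrees with the paper: both reduce it to the observation that $u(\chi,r)=\chi\circ\Nrd_{\nMat{F}{r}/F}$ (Example~\ref{EX:square-integrable-char}) and read off the transfer from the reduced-norm identity in the defining character relation.

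For $T(\chi,r)$ you take a genuinely different route. The paper cites \cite[Th.~5.2]{BadulHennLema10}, the compatibility of $\LJ_\nu$ with the Zelevinsky--Aubert involution, which swaps $u(\chi,r)\leftrightarrow T(\chi,r)$ and fixes the supercuspidal $\chi\circ\Nrd_{D/F}$. You instead first twist out $\chi$ (using the elementary compatibility $\LJ_\nu((\xi\circ\det)\otimes W)\cong(\xi\circ\Nrd_{D/F})\otimes\LJ_\nu(W)$, which follows from multiplicativity of function characters under such twists), then compute $\psi_{\mathrm{St}_r}$ directly on regular elliptic elements via the alternating-sum formula and the vanishing of parabolically induced characters on elliptic elements, obtaining $(-1)^{r-1}$ and hence $\LJ_\nu(\mathrm{St}_r)\cong\mathbf 1_{\units{D}}$ by uniqueness of $\LJ_\nu$. (The observation that regular $D$-compatible elements of $\nGL{F}{r}$ are exactly the regular elliptic ones is exactly right and is what makes the computation closed.) Your route is more self-contained; the paper's is shorter since it offloads the work to a single citation. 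Both are valid.

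For the converse, the structure is the same as the paper's up to your reduction to $\chi=\mathbf 1$: Tadi\'c's classification plus Theorem~\ref{TH:local-JL}(ii) forces a single factor $u(\delta,s)$ with $ds=r$. But the final step has a genuine gap, and the heuristic you give for it is not correct as stated. You write that $\LJ_\nu(u(\delta,s))$ would be ``a Speh-type representation of $\nGL{D}{1}$''; this does not make sense, since $\units{D}=\nGL{D}{1}$ has no proper parabolic subgroups, so all its irreducible smooth representations are supercuspidal and there are no Speh representations of $\units{D}$ at all. The correct statement you need is the one the paper cites (\cite[Th.~5.2(3)]{BadulHennLema10}): the representation $u(\delta,s)$ of $\nGL{F}{r}$ is not $D$-compatible when $1<s<r$, so it is not even in the domain of $\LJ_\nu$. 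With that citation your argument closes exactly as the paper's does; without it, the step from ``$V\cong u(\delta,s)$ with $s\geq 2$'' to ``$d=1$'' is unsupported.
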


    \begin{proof}
        Recall from Example~\ref{EX:square-integrable-char} that $u(\chi,r)=\chi\circ\Nrd_{\nMat{F}{r}/F}$.
        It is easy to check that $\psi_{\chi\circ\Nrd_{D/F}}=\chi\circ\Nrd_{D/F}|_{\tilde{G}'_1}$
        and $\psi_{\chi\circ\Nrd_{\nMat{F}{r}/F}}=\chi\circ\Nrd_{\nMat{F}{r}/F}|_{\tilde{G}_1}$,
        hence  $\LJ_\nu(\chi\circ \Nrd_{\nMat{F}{r}/F})=\chi\circ\Nrd_{D/F}$
        (if $g\leftrightarrow g'$,
        then $\Nrd_{\nMat{F}{r}/F}(g)=\Nrd_{D/F}(g')$). %(see also \cite[\gap{p.~32}]{HarrisTaylor01}.
        By \cite[Th.~5.2]{BadulHennLema10}, this means that $\LJ_\nu (T(\chi,r))=\chi\circ\Nrd_{D/F}$.
        (In more detail, the map $\LJ_\nu$ is compatible with the Zelevinsky--Aubert involution, which takes $u(\chi,r)$ to $T(\chi,r)$
        and leaves the supercuspidal representation $\chi\circ\Nrd_{D/F}$ fixed.)

        Suppose now that $\LJ_\nu(V)=\chi\circ\Nrd_{D/F}$. By Tadic's classification of unitary representations of $\nGL{F}{r}$
        \cite{Tadic86}, we can write
        \begin{align*}V&=u(V_1,s_1)\times\dots\times u(V_\ell,s_\ell)\\
        &\quad\times (\nu^{\alpha_1}u(U_1,t_1)\times\nu^{-\alpha_1}u(U_1,t_1))\times\dots\times
        (\nu^{\alpha_m}u(U_m,t_m)\times\nu^{-\alpha_m}u(U_m,t_m))
        \end{align*}
        where $V_1,\dots,V_\ell,U_1,\dots,U_m$ are square-integrable and $\alpha_1,\dots,\alpha_m\in (0,\frac{1}{2})$.
        By Theorem~\ref{TH:local-JL}(ii), we must have $\ell=1$ and $m=0$, so $V=u(V_1,s_1)$.
        By \cite[Th.~5.2(3)]{BadulHennLema10}, $V$ is not $D$-compatible unless $s_1=1$ or $s_1=r$.
        In the first case, $V=V_1$ is square integrable and hence $V\cong T(\chi,r)$ by Theorem~\ref{TH:local-JL}(i).
        In the second case, $V_1$ is a character $\xi:\nGL{F}{1}\to\units{\C}$ and $V=u(\xi,r)=\xi\circ \Nrd_{\nMat{F}{r}/F}$.
        This means that $\xi\circ  \Nrd_{\nMat{F}{r}/F}$ and $\chi\circ  \Nrd_{\nMat{F}{r}/F}$ coincide on $\tilde{G}_1$,
        so $\xi=\chi$.
    \end{proof}
	
	Let $E$ be a division algebra  of degree $r$ over $k$. For every
	$\nu\in\calV$, write $E_\nu=E\otimes_kk_\nu$. Then $E_\nu\cong \nMat{D_\nu}{m_\nu}$
	where $D_\nu$ is a central division $k_\nu$-algebra.
	Let $T$ denote the set of places $\nu\in\calV$  at which $E$ ramifies, i.e.\ $m_\nu<r$.
	It is well-known that $T$ is finite \cite[Thm.~32.1]{MaximalOrders}.
	
	Let $\bfG=\uGL_{r,k}$ and let $\bfG'=\uGL_{1,E}$. Notice that for $\nu\notin T$,
	we have $\bfG'(k_\nu)\cong \nGL{k_\nu}{r}=\bfG(k_\nu)$.
	A discrete automorphic representation $V=\bigotimes'_\nu V_\nu$ of $\bfG$
	is said to be $E$-compatible if each local factor $V_\nu$ is $D_\nu$-compatible.
	
	\begin{thm}\label{TH:global-JL}
		In the previous setting, there exists a unique map $\LJ$ from the $E$-compatible discrete automorphic representations
		of $\bfG$ to the discrete automorphic representations of $\bfG'$
		with the property that the $\nu$-local-component
        of $\LJ(\bigotimes'_\nu V_\nu)$ is $\LJ_\nu(V_\nu)$ for all $\nu\in\calV$.
		The map $\LJ$ is bijective.
	\end{thm}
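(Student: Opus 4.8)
The plan is to obtain the statement from the global Jacquet--Langlands correspondence for inner forms of $\uGL_n$ over a function field, established by Badulescu and Roche \cite{BadulRoch14}; this is the point at which the hypothesis $\Char k>0$ enters, since \cite{BadulRoch14} rests on Lafforgue's proof of the Langlands correspondence for $\uGL_n$ over function fields \cite{Laff02} and the resulting Moeglin--Waldspurger-type description of the discrete automorphic spectrum. First I would recall that $\bfG'=\uGL_{1,E}$ is an inner form of $\bfG=\uGL_{r,k}$, with $\bfG'(k_\nu)\cong\nGL{k_\nu}{r}=\bfG(k_\nu)$ and $\LJ_\nu=\id$ for every $\nu\notin T$.

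The correspondence of \cite{BadulRoch14} furnishes a canonical injective map from the set of discrete automorphic representations of $\bfG'(\bbA)$ into the set of discrete automorphic representations of $\bfG(\bbA)$ which is compatible with the local correspondences of Theorem~\ref{TH:local-JL}, in the precise sense that the $\nu$-local factor $V_\nu$ of the image of $\bigotimes'_\nu W_\nu$ satisfies $\LJ_\nu(V_\nu)=W_\nu$ for every $\nu\in\calV$ (at $\nu\notin T$ this says $V_\nu=W_\nu$). I would define $\LJ$ to be the inverse of this injection, defined on its image. Uniqueness of $\LJ$ subject to the stated compatibility is then immediate: a discrete automorphic representation of $\bfG(\bbA)$ (resp.\ of $\bfG'(\bbA)$) is determined up to isomorphism by the family of its local factors --- this is strong multiplicity one for the discrete spectrum, part of the Moeglin--Waldspurger / Badulescu classification --- so any map sending $\bigotimes'_\nu V_\nu$ to a discrete automorphic representation whose $\nu$-local factor is $\LJ_\nu(V_\nu)$ must coincide with ours.

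The remaining point is to identify the image of the Badulescu--Roche injection with the set of $E$-compatible discrete automorphic representations of $\bfG$. If $W=\bigotimes'_\nu W_\nu$ is discrete automorphic for $\bfG'$ and $V=\bigotimes'_\nu V_\nu$ is its image, then $\LJ_\nu(V_\nu)=W_\nu$, so each $V_\nu$ is $D_\nu$-compatible by the defining property of the local correspondence, hence $V$ is $E$-compatible. Conversely, \cite{BadulRoch14} characterizes the discrete automorphic representations of $\bfG$ that lie in the image precisely by the condition that every local factor $V_\nu$ be $D_\nu$-compatible --- which is vacuous at $\nu\notin T$ --- i.e.\ by $E$-compatibility. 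Bijectivity of $\LJ$ onto all discrete automorphic representations of $\bfG'$ is then surjectivity-onto-its-image together with injectivity of the Badulescu--Roche map; injectivity reduces, via the classification of the discrete spectrum into Speh-type packets $\mathrm{MW}(\rho,s)$, to injectivity of the cuspidal Jacquet--Langlands correspondence, which is supplied by the trace-formula comparison, together with the fact that $\LJ_\nu$ is bijective on square-integrable representations (Theorem~\ref{TH:local-JL}(i)).

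The mathematical substance is entirely imported, so the only genuine work is bookkeeping, and I expect the main obstacle to be matching conventions. The reference \cite{BadulRoch14} is likely to phrase the global correspondence on the level of the data $(\rho,s)$ parametrizing the discrete spectrum rather than directly in terms of local factors, so one must unwind the Moeglin--Waldspurger parametrization to extract the local-factor compatibility claimed here at \emph{every} place $\nu$ --- including the ramified ones, where the local factors of a discrete automorphic representation need not be square-integrable and where $\LJ_\nu$ is not injective, so that pinning down $V_\nu$ among its several $\LJ_\nu$-preimages (cf.\ Proposition~\ref{PR:JL-image}) is itself part of what the global classification accomplishes. One must also check that the normalization of $\LJ_\nu$ fixed in Theorem~\ref{TH:local-JL}, with its sign $\veps(V)$, agrees with the normalization used in \cite{BadulRoch14}.
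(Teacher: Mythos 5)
Your proposal is correct and takes the same route as the paper, whose proof is literally the single citation \cite[Th.~3.2]{BadulRoch14} (together with \cite[Th.~18.1]{BadulRena10} in the number-field case); your paragraphs just unpack what that citation delivers, including the correct observation that the image of the Badulescu--Roche injection is exactly the $E$-compatible part of the discrete spectrum. One small refinement: uniqueness of $\LJ$ needs only Flath's restricted tensor product theorem (an irreducible admissible representation of $\bfG'(\bbA)$ is determined up to isomorphism by its local factors), not strong multiplicity one for the discrete spectrum, which is a stronger and unnecessary input here.
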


    \begin{proof}
        See \cite[Th.~18.1]{BadulRena10} ($\Char k=0$) and \cite[Th.~3.2]{BadulRoch14} ($\Char k>0$).
    \end{proof}

\subsection{Cuspidal and Residual Representations}
\label{subsec:cuspidal-and-residual}

	Let $\bfG$, $\bfZ$  and $\omega$ be as in \ref{subsec:automorphic-reps}.
    Discrete automorphic representations of $\bfG$ are  classified as {cuspidal} or {residual}.
    A discrete automorphic representation $V\leq\LL[\omega]{\bfG(k)\leftmod\bfG(\bbA)}$ is \emph{cuspidal} if it
    consists of functions $\vphi\in\LL[\omega]{\bfG(k)\leftmod\bfG(\bbA)}$ satisfying
    \[\int_{n\in\bfN(k)\leftmod\bfN(\bbA)} \vphi(ng)\,d\mu_{\bfN(k)\leftmod\bfN(\bbA)}=0\]
    for any $g\in\bfG(\bbA)$ and $\bfN$, where $\bfN$ ranges over the unipotent radicals
    of the parabolic subgroups of $\bfG$. Otherwise, $V$  is called \emph{residual}.

\medskip

    \emph{From now and until the end of \ref{subsec:existence},
    we assume that $\Char k>0$.} Under this assumption,
    Lafforgue  proved the following result, known the as Ra\-ma\-nu\-jan--Petersson conjecture
    for $\bGL_d$.

    \begin{thm}[Lafforgue; {\cite[Th.~VI.10]{Laff02}}]\label{TH:Lafforgue}
        Let $V=\bigotimes'_\nu V_\nu$ be a cuspidal automorphic representation of $\bGL_d$.
        Then $V_\nu$ is tempered for all $\nu\in\calV$.
    \end{thm}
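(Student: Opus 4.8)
The plan is to deduce this from Lafforgue's construction of the global Langlands correspondence for $\bGL_d$ over the function field $k$, which is the core of \cite{Laff02}. First I would attach to the cuspidal automorphic representation $V=\bigotimes'_\nu V_\nu$ an $\ell$-adic Galois representation $\sigma_V\colon \Gal(\bar k/k)\to \mathrm{GL}_d(\overline{\Q}_\ell)$ (for a prime $\ell$ different from $\mathrm{char}\,k$) which is irreducible, unramified outside the finite ramification locus of $V$, and which satisfies local--global compatibility: at each place $\nu$ where $V_\nu$ is unramified, the eigenvalues of geometric Frobenius $\mathrm{Frob}_\nu$ on $\sigma_V$ are precisely the Satake parameters of $V_\nu$, and more generally the restriction of $\sigma_V$ to the Weil group $W_{k_\nu}$ corresponds under the local Langlands correspondence to $V_\nu$ (after a fixed half-integral Tate twist normalization). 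This correspondence is extracted from the $\ell$-adic cohomology of the moduli stacks of Drinfeld shtukas with level structure, cut out by the Hecke action and the trace formula.

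The second step is purity. By Deligne's Weil~II together with Lafforgue's analysis of the (intersection) cohomology of the shtuka moduli spaces, $\sigma_V$ is pure of weight $0$ in the unitary normalization; equivalently, every eigenvalue $\alpha$ of $\mathrm{Frob}_\nu$ acting on $\sigma_V$ satisfies $|\iota(\alpha)|=1$ for every embedding $\iota\colon\overline{\Q}_\ell\hookrightarrow\C$ at every place $\nu$ of good reduction, and at the ramified places the Weil--Deligne representation attached to $\sigma_V|_{W_{k_\nu}}$ is pure (its monodromy filtration coincides with its weight filtration, and the Weil part is bounded).

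The third step translates purity into temperedness place by place. At an unramified place $\nu$, the local factor $V_\nu$ is the unramified constituent of an unramified principal series $\chi_{1,\nu}\times\dots\times\chi_{d,\nu}$, where the $\chi_{i,\nu}$ correspond to the Satake parameters, i.e.\ to the Frobenius eigenvalues of $\sigma_V$; by the classification recalled in \ref{subsec:reps-of-GLd} (in particular Theorem~\ref{TH:tempred-classification} and Proposition~\ref{PR:Jn-unramified}), such a representation is tempered exactly when all $|\chi_{i,\nu}|$ are trivial, which is what purity delivers. At the finitely many ramified places I would invoke the full local--global compatibility: a Frobenius-semisimple pure Weil--Deligne representation corresponds under the local Langlands correspondence for $\bGL_d$ to a tempered representation (the boundedness of the Weil part is precisely temperedness, and unitarity of the central character is automatic here), which identifies $V_\nu$ with a tempered representation. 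The compatibility at all places — not only at almost all — is needed here; it was completed after \cite{Laff02} and is available in the literature.

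The main obstacle is, of course, Steps~1 and~2: constructing $\sigma_V$ and establishing its purity is the entirety of Lafforgue's program, resting on the geometry and compactification of the moduli of shtukas, the Arthur--Selberg trace formula, and Deligne's purity theorems. Once that input is granted, the passage from purity of $\sigma_V$ to temperedness of all local factors $V_\nu$ is essentially formal, using only the classification of the tempered (and unramified unitary) representations of $\bGL_d(k_\nu)$ recalled in \ref{subsec:reps-of-GLd} and the compatibility of the local Langlands correspondence with temperedness.
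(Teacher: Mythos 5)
The paper does not supply a proof of this statement at all: it is stated as Theorem~\ref{TH:Lafforgue} with an explicit attribution ``[Lafforgue; \cite[Th.~VI.10]{Laff02}]'' and is used as a black box — one of the deep analytic inputs on which the construction of Ramanujan complexes rests, alongside the Jacquet--Langlands correspondence of Badulescu and Roche. There is nothing in the paper to compare your argument against; you should simply cite Lafforgue rather than attempt to reprove his theorem.

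That said, as a roadmap of Lafforgue's strategy your sketch is broadly accurate: attach an irreducible $\ell$-adic Galois representation $\sigma_V$ to the cuspidal $V$ via the cohomology of moduli of shtukas, deduce purity of weight zero from Weil~II together with the geometry of those moduli spaces, and translate Frobenius-eigenvalue bounds at each place into temperedness of $V_\nu$ using the local Langlands correspondence and the classification of tempered/unramified unitary representations of $\uGL_d$ over a local field. Two small corrections. First, Lafforgue's Théorème~VI.10 already gives the temperedness bound at \emph{all} places, not merely almost all; the assertion that ramified local--global compatibility ``was completed after \cite{Laff02}'' is not how the paper (or the original reference) uses the result, and it is not needed to justify the statement as stated. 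Second, citing Theorem~\ref{TH:tempred-classification} and Proposition~\ref{PR:Jn-unramified} from \emph{this} paper as ingredients is anachronistic: those propositions appear in Chapter~\ref{sec:existence} and are logically downstream of, not inputs to, Lafforgue's theorem; the relevant classification facts are standard and belong to the background of \cite{Laff02}, not to the present paper.
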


    When $\bfG=\bGL_d$ with $d$ prime, the residual spectrum consists of
    $1$-dimensional representations.
    For general $d$, the residual spectrum of  $\bGL_d$ can be described as follows:
    Suppose $d=rs$ and let $\bfP_{(r,\dots,r)}$ and $\bfM_{(r,\dots,r)}$ be as in \ref{subsec:reps-of-GLd}.
    %View $\omega$ as a character of the center of $\bGL_{r}(\bbA)$ and
    Let $U$
    be a cuspidal automorphic representation of $\bGL_{r}$ and, in analogy with Proposition~\ref{PR:def-of-u}, let
    \[
    I_{s}({U})=
    \mathrm{Ind}_{\bfP_{(r,\dots,r)}(\bbA)}^{\bGL_d(\bbA)}(\,\underbrace{U\otimes \dots\otimes U}_{\text{$s$ times}}\,)
    \]
    (the induction is not normalized and $I_s(U)$ has no a priori  structure of a pre-unitary
    representation).

    \begin{thm}[M\oe{}glin, Waldspurger; {\cite{MoegWald89}}]\label{TH:residual-spectrum}
    	For every cuspidal automorphic representation $U$ of $\bGL_{r}$,
        the representation $I_s(U)$ has a unique irreducible subrepresentation, denoted $u(U,s)$.
    	The representation $u(U,s)$ is unitarizable.
        Furthermore, every residual automorphic representation of $\bGL_d$
        is isomorphic to $u(U,s)$ for unique $1<s\mid d$ and $U$ as above.
    \end{thm}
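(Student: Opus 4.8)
The plan is to deduce this from Langlands' theory of Eisenstein series and the spectral decomposition of $\LL[\omega]{\bGL_d(k)\leftmod\bGL_d(\bbA)}$, following M\oe{}glin and Waldspurger \cite{MoegWald89}. First I would invoke Langlands' decomposition, which expresses the discrete (hence residual) spectrum in terms of iterated residues of the Eisenstein series $E(\,\cdot\,,\lambda)$ attached to cuspidal data $(\bfM,\sigma)$, where $\bfM\cong\bGL_{d_1}\times\cdots\times\bGL_{d_k}$ is a standard Levi subgroup of $\bGL_d$ and $\sigma=\sigma_1\otimes\cdots\otimes\sigma_k$ is a cuspidal automorphic representation of $\bfM(\bbA)$; the residual automorphic representations are exactly the irreducible constituents of the residues of $E(\,\cdot\,,\lambda)$ along the singular hyperplanes of the constant-term intertwining operators $M(w,\lambda)$.

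The key step I would carry out next is to locate the poles of these intertwining operators. For $\bGL_d$ the operator $M(w,\lambda)$ factors, via a Gindikin--Karpelevich-type formula, into a product of rank-one operators whose normalizing factors are Rankin--Selberg $L$-functions $L(s,\sigma_i\times\check{\sigma}_j)$. Using the analytic properties of these $L$-functions due to Jacquet--Piatetski-Shapiro--Shalika --- holomorphy and non-vanishing in $\Re s\ge 1$, except for a simple pole of $L(s,\sigma_i\times\check{\sigma}_j)$ at $s=1$ when $\sigma_i\cong\sigma_j$ --- I would show that a nonzero iterated residue can occur only when all the $d_i$ are equal, say to $r$ with $d=rs$, all the $\sigma_i$ are isomorphic to a single cuspidal representation $U$ of $\bGL_r(\bbA)$, and $\lambda$ is the unique point at which $\delta_{(r,\dots,r)}^{1/2}(U\otimes\cdots\otimes U)$ sits in the balanced position. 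The residue at that point is, up to a scalar, a single irreducible automorphic representation; a local computation (the $\bGL_r$-analogue of Proposition~\ref{PR:def-of-u}, applied place by place) identifies it with the unique irreducible subrepresentation $u(U,s)$ of $I_s(U)$. Since $I_s(U)\cong\bigotimes'_\nu I_s(U_\nu)$ and each local factor $I_s(U_\nu)$ has a unique irreducible subrepresentation $u(U_\nu,s)$, both the uniqueness of $u(U,s)$ and the factorization $u(U,s)\cong\bigotimes'_\nu u(U_\nu,s)$ follow at once; unitarizability of $u(U,s)$ is automatic since it is realized inside $\LL[\omega]{\bGL_d(k)\leftmod\bGL_d(\bbA)}$ (alternatively, it follows locally from Tadi\'c's classification, each $u(U_\nu,s)$ being unitarizable).

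Conversely, to show that every datum of the above form genuinely produces a nonzero residue --- so that $u(U,s)$ really does occur in the residual spectrum for each $1<s\mid d$ --- I would use the simple pole of $L(s,U\times\check{U})$ at $s=1$, which forces the corresponding iterated residue to be non-vanishing. Combined with the fact that the cuspidal support is an invariant of an automorphic representation (so distinct pairs $(U,s)$ give inequivalent constituents), this yields the asserted bijection. The hard part will be the pole analysis of the Eisenstein series: controlling the singularities of the global operators $M(w,\lambda)$ as $\lambda$ ranges over the complexified character lattice of $\bfM$, and verifying that the iterated residues are non-vanishing precisely on the expected configuration of root hyperplanes. This is the technical core of \cite{MoegWald89}, resting on the full Rankin--Selberg theory of Jacquet--Piatetski-Shapiro--Shalika together with the combinatorics of the Weyl group action, and for the details I would refer to that paper.
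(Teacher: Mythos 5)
The paper does not prove this theorem; it is stated purely as a citation to M\oe{}glin and Waldspurger \cite{MoegWald89}, and no argument is reproduced. Your sketch is therefore not being compared against any internal proof, but as a summary of the cited proof it is accurate at the level of detail one could reasonably give: you correctly identify the strategy of Langlands' spectral decomposition, the reduction of the residual spectrum to iterated residues of Eisenstein series attached to cuspidal pairs $(\bfM,\sigma)$, the Gindikin--Karpelevich factorization of the constant-term intertwining operators into rank-one pieces normalized by Rankin--Selberg $L$-functions $L(s,\sigma_i\times\check{\sigma}_j)$, the use of Jacquet--Piatetski-Shapiro--Shalika to locate poles (simple pole at $s=1$ exactly when $\sigma_i\cong\sigma_j$), and the conclusion that nonvanishing iterated residues force $d_1=\dots=d_k=r$ and $\sigma_1\cong\dots\cong\sigma_k=U$, with the residue identified locally as $u(U,s)$. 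This is indeed the content of \cite{MoegWald89}, and your acknowledgement that the pole analysis is the technical core one should simply cite is appropriate.

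Two small remarks. First, the paper works under the standing assumption $\Char k>0$, while \cite{MoegWald89} is written for number fields; the function-field case requires the analogous theory of Eisenstein series and Rankin--Selberg over global function fields, which holds by the same methods but is not literally contained in the cited reference --- this is a caveat on the paper's citation rather than a gap in your sketch. Second, the local factorization $u(U,s)\cong\bigotimes'_\nu u(U_\nu,s)$ that you fold into your argument is not part of the theorem statement; the paper proves it separately in the corollary immediately following, using Lafforgue's temperedness of cuspidal local factors together with Propositions~\ref{PR:def-of-u} and~\ref{PR:Jn-unramified} to control the local pieces. Your sketch implicitly relies on temperedness of the $U_\nu$ (to make sense of $u(U_\nu,s)$ and its uniqueness in $I_s(U_\nu)$), so if you were to write this up you would need to invoke Theorem~\ref{TH:Lafforgue} at that point, as the paper does in the corollary's proof.
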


    \begin{cor}
    	Let $V$ be a residual automorphic representation of $\bGL_d$ and write $V=u(U,s)$
    	as in Theorem~\ref{TH:residual-spectrum}. Let $\{U_\nu\}_{\nu\in \calV}$ be the local factors
    	of $U$. Then $V\cong \bigotimes'_{\nu\in\calV}u(U_\nu,s)$.
    	In particular, the local factors of $V$ are not tempered.
    \end{cor}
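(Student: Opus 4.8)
The plan is to pass from the global induced representation $I_s(U)$ to its local constituents and match unique irreducible submodules on both sides. First I would collect the local inputs. Since $U$ is a cuspidal automorphic representation of $\bGL_r$, Lafforgue's theorem (Theorem~\ref{TH:Lafforgue}) gives that every local factor $U_\nu$ is tempered; moreover $U$ is pre-unitary (being automorphic), hence each $U_\nu$ is unitarizable and in particular $U_\nu\cong\check U_\nu$. Thus the Langlands quotient $u(U_\nu,s)$ is defined for every $\nu$ via Theorem~\ref{TH:Silberger}, and by Proposition~\ref{PR:Jn-unramified}(ii) it is unramified for almost all $\nu$ (those at which $U_\nu$ is unramified), so the restricted tensor product $\bigotimes'_\nu u(U_\nu,s)$ is well defined.

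Next I would establish the factorization $I_s(U)\cong\bigotimes'_\nu I_s(U_\nu)$ as smooth $\bGL_d(\bbA)$-modules. This is the standard compatibility of (unnormalized) parabolic induction with restricted tensor products (cf.\ \cite{Flath79}): one writes $U^{\otimes s}=\bigotimes'_\nu U_\nu^{\otimes s}$ as a module over $\bfM_{(r,\dots,r)}(\bbA)$ and induces factor by factor, the distinguished vector in $I_s(U_\nu)$ for almost all $\nu$ being the spherical vector coming from the spherical vector of $U_\nu^{\otimes s}$.

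Then I would analyze $I_s(U_\nu)$ locally. The chain of isomorphisms in the proof of Proposition~\ref{PR:def-of-u} uses only $U_\nu\cong\check U_\nu$, so it yields $I_s(U_\nu)\cong\nu^{\frac{1-s}{2}}U_\nu\times\cdots\times\nu^{\frac{s-1}{2}}U_\nu\cong\bigl(\nu^{\frac{s-1}{2}}U_\nu\times\cdots\times\nu^{\frac{1-s}{2}}U_\nu\bigr)\check{~}$. The representation inside the last parenthesis has $u(U_\nu,s)$ as its unique irreducible quotient by the definition of $u(U_\nu,s)$ (Theorem~\ref{TH:Silberger}); dualizing — everything here is admissible, so $\check{\phantom{x}}$ is an involution and interchanges "unique irreducible quotient" with "unique irreducible submodule" — shows $I_s(U_\nu)$ has $u(U_\nu,s)\check{~}$ as its unique irreducible submodule. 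On the global side, Theorem~\ref{TH:residual-spectrum} says $V=u(U,s)$ is the unique irreducible submodule of $I_s(U)\cong\bigotimes'_\nu I_s(U_\nu)$. Writing $V=\bigotimes'_\nu V_\nu$ and restricting to $\bGL_d(k_\nu)$, the standard behaviour of local factors under restriction forces each $V_\nu$ to be an irreducible submodule of $I_s(U_\nu)$, hence $V_\nu\cong u(U_\nu,s)\check{~}$. But $V$ is pre-unitary, so $V_\nu$ is unitarizable and $V_\nu\cong\check V_\nu$; dualizing $V_\nu\cong u(U_\nu,s)\check{~}$ then gives $V_\nu\cong u(U_\nu,s)$. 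This proves $V\cong\bigotimes'_\nu u(U_\nu,s)$ and, as a byproduct, that $u(U_\nu,s)$ is unitarizable for every $\nu$. Finally, by Theorem~\ref{TH:residual-spectrum} we have $s>1$, so $u(U_\nu,s)$ is the (now known to be unitarizable) Langlands quotient of a product of $s>1$ tempered representations; Theorem~\ref{TH:Silberger} then gives that it is not tempered, so the local factors $V_\nu\cong u(U_\nu,s)$ of $V$ are not tempered.

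I expect the main obstacle to be the bookkeeping in the third step: keeping the duality between the Langlands quotient (a \emph{quotient}) and the desired \emph{submodule} of $I_s(U_\nu)$ straight, and verifying cleanly that the local factors of the global submodule $u(U,s)\leq I_s(U)$ are genuinely submodules $V_\nu\leq I_s(U_\nu)$ rather than merely subquotients. Both points rely on admissibility of all the representations in play (so that $\check{\phantom{x}}$ is well behaved and local factors are detected on restriction), which holds since these are smooth irreducible, or induced-from-such, representations of reductive groups over local fields.
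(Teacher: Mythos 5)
Your proof is correct, and it reaches the same conclusion as the paper, but it organizes the argument differently. Both routes begin identically: Lafforgue gives temperedness of each $U_\nu$, so $u(U_\nu,s)$ is well-defined and non-tempered by Theorem~\ref{TH:Silberger}; Proposition~\ref{PR:Jn-unramified}(ii) gives that $u(U_\nu,s)$ is unramified for almost all $\nu$, so the restricted tensor product makes sense. After that the two proofs diverge.

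The paper never proves the full factorization $I_s(U)\cong\bigotimes'_\nu I_s(U_\nu)$; instead, it directly \emph{constructs} a nonzero $\bGL_d(\bbA)$-homomorphism $\Phi:\bigotimes'_\nu u(U_\nu,s)\to I_s(U)$ by gluing the local embeddings $u(U_\nu,s)\hookrightarrow I_s(U_\nu)$ into functions on $\bGL_d(\bbA)$. Since the source is irreducible and $I_s(U)$ has a unique irreducible submodule (which is $V$), the image of $\Phi$ is forced to be $V$, and the isomorphism follows. Your proof instead establishes the factorization $I_s(U)\cong\bigotimes'_\nu I_s(U_\nu)$, then argues from the \emph{inside out}: the unique irreducible submodule $V$ has local factors $V_\nu$, a submodule of a restricted tensor product has local factors embedding in the local factors of the ambient module, and so each $V_\nu$ embeds in $I_s(U_\nu)$, hence equals its unique irreducible submodule $u(U_\nu,s)\check{~}$, which coincides with $u(U_\nu,s)$ by unitarizability of $V_\nu$.

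What does each approach buy? The paper's construction of $\Phi$ is more elementary in the sense that it never needs the factorization $I_s(U)\cong\bigotimes'_\nu I_s(U_\nu)$ as an abstract statement --- it only needs the one direction it builds, and it only needs Proposition~\ref{PR:def-of-u} in its stated form. Your version is cleaner in one respect: by running the duality argument directly on the chain of isomorphisms in the proof of Proposition~\ref{PR:def-of-u}, which only uses $U_\nu\cong\check U_\nu$, you identify the unique irreducible submodule of $I_s(U_\nu)$ as $u(U_\nu,s)\check{~}$ \emph{without} pre-supposing that $u(U_\nu,s)$ is unitarizable, and you get its unitarizability as a byproduct. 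This patches a small lacuna: the paper applies Proposition~\ref{PR:def-of-u}, whose hypothesis is that $u(U_\nu,s)$ is unitarizable, but the unitarizability theorem it quotes is stated only for square-integrable (not merely tempered) $U_\nu$. Your proof also requires a justification the paper avoids: that the local factors $V_\nu$ of the global submodule $V\leq \bigotimes'_\nu I_s(U_\nu)$ are genuine \emph{submodules} of $I_s(U_\nu)$. This follows because, restricted to $\bGL_d(k_\nu)$, the restricted tensor product is $I_s(U_\nu)\otimes(\text{trivial $\bGL_d(k_\nu)$-space})$, i.e.\ a direct sum of copies of $I_s(U_\nu)$; any embedding of the irreducible $V_\nu$ into such a direct sum has a nonzero component in some copy and is therefore injective into $I_s(U_\nu)$. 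You flag this as the delicate point, and rightly so; it is worth spelling out along the lines above. The duality step also needs the Langlands quotient theorem in its strong form (uniqueness of the maximal proper submodule, not merely uniqueness of the irreducible quotient up to isomorphism), which is the standard statement and is what makes $u(U_\nu,s)\check{~}$ the unique irreducible \emph{subspace}.
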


    \begin{proof}
    	By Theorem~\ref{TH:Lafforgue}, the local factors of $U$ are tempered, and hence $u(U_\nu,s)$ is well-defined
    	and non-tempered (Theorem~\ref{TH:Silberger}).
    	Since $U_\nu$ is unramified for almost all $\nu$, the representation $u(U_\nu,s)$ is unramified
    	for almost all $\nu$ (Proposition~\ref{PR:Jn-unramified}(ii)). Thus, the restricted
    	product $\bigotimes'_{\nu\in\calV}u(U_\nu,s)$ is well-defined.
    	
    	Since $I_s(U)$ has a unique irreducible submodule, it is enough to give a nonzero
    	homomorphism from $\bigotimes'_{\nu\in\calV}u(U_\nu,s)$
    	to $I_s(U)$. Recall from Proposition~\ref{PR:def-of-u} that $u(U_\nu,s)$ is a subrepresentation of
    	$I_s(U_\nu)=\Ind_{\bfP_{(r,\dots,r)}(k_\nu)}^{\nGL{k_\nu}{d}}(U_\nu\otimes\dots\otimes U_\nu)$ ($s$ times).
    	Choose a unit vector $u_\nu\in U_\nu$ which is $\nGL{\calO_\nu}{r}$-invariant if $U_\nu$ is unramified.
    	When $U_\nu$ is unramified, $I_s(U_\nu)$ contains a unique function $f_\nu:\nGL{k_\nu}{d}\to U_\nu\otimes\dots\otimes U_\nu$
    	with $f_\nu(\nGL{\calO_\nu}{d})=u_\nu\otimes\dots\otimes u_\nu$.
        The function $f_\nu$ is the only $\nGL{\calO_\nu}{d}$-invariant element in $I_s(U_\nu)$ up to scaling, hence $f_\nu\in u(U_\nu,s)$.
    	If $U_\nu$ is ramified, then choose an arbitrary nonzero $f_\nu\in u(U_\nu,s)$.
    	We use the vectors $\{u_\nu\}$ to form  $\bigotimes'_{\nu}U_\nu=U$ and
    	the functions $\{f_\nu\}$ to form  $\bigotimes'_{\nu}u(U_\nu,s)$.
    	We further identify $\bigotimes'_{\nu}(U_\nu\otimes\dots\otimes U_\nu)$ with
    	$U\otimes\dots\otimes U$ (as representations of $\bfP_{(r,\dots,r)}(\bbA)$) in the obvious way,
    	using the vectors $\{u_\nu\otimes\dots\otimes u_\nu\}_{\nu}$ when forming the restricted product.
    	Now, define $\Phi:\bigotimes'_{\nu\in\calV}u(U_\nu,s)\to I_s(U)$
    	by sending $\bigotimes'_\nu h_\nu$ (with $h_\nu=f_\nu$ for almost all $\nu$) to the function $h\in I_n(U)$
    	given by $h((g_\nu)_{\nu\in\calV})=\bigotimes'_\nu f_\nu(g_\nu)$. It is straightforward
    	to check that $\Phi$  is a well-defined
    	nonzero $\nGL{\bbA}{d}$-homomorphism.
    \end{proof}

    \begin{cor}\label{CR:dichotomy-cor}
        Let $V=\bigotimes'_\nu V_\nu$ be a discrete automorphic representation of $\bGL_d$ and let $\theta\in\calV$.
        Then $V$ is cuspidal if and only if $V_{\theta}\in \Irr[pu]{\nGL{k_{\theta}}{d}}$
        is tempered.
    \end{cor}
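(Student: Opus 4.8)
The statement is an immediate consequence of the dichotomy for the discrete spectrum together with the two deep inputs already recorded. I would argue the two implications separately.

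For the direction ``cuspidal $\Longrightarrow$ $V_\theta$ tempered'', there is nothing to do beyond quoting Lafforgue's theorem: if $V=\bigotimes'_\nu V_\nu$ is a cuspidal automorphic representation of $\bGL_d$, then Theorem~\ref{TH:Lafforgue} asserts that $V_\nu$ is tempered for every place $\nu\in\calV$, in particular for $\nu=\theta$. (Here one uses $\Char k>0$, as assumed throughout this subsection.)

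For the converse I would argue by contraposition. A discrete automorphic representation of $\bGL_d$ is either cuspidal or residual (the dichotomy recalled in \ref{subsec:cuspidal-and-residual}), so it suffices to show that a residual $V$ has non-tempered $\theta$-local factor. If $V$ is residual, Theorem~\ref{TH:residual-spectrum} lets us write $V\cong u(U,s)$ for some $1<s\mid d$ and some cuspidal automorphic representation $U$ of $\bGL_{d/s}$. By the corollary preceding Corollary~\ref{CR:dichotomy-cor}, the local factors of $V$ are then $V_\nu\cong u(U_\nu,s)$, where $\{U_\nu\}_{\nu\in\calV}$ are the local factors of $U$; and, since $s>1$, each $u(U_\nu,s)$ is non-tempered (it is the Langlands quotient of an induced representation with $s>1$ terms, which is non-tempered by Theorem~\ref{TH:Silberger}, the factors $U_\nu$ being tempered by Theorem~\ref{TH:Lafforgue}). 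In particular $V_\theta$ is not tempered. Hence if $V_\theta$ is tempered, $V$ cannot be residual, so it is cuspidal.

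I do not expect any genuine obstacle here: the entire content has been pushed into Theorems~\ref{TH:Lafforgue} and~\ref{TH:residual-spectrum} and the corollary computing the local factors of a residual representation, and the remaining argument is just the cuspidal/residual dichotomy plus Theorem~\ref{TH:Silberger}. The only points to state carefully are that ``discrete'' genuinely means ``cuspidal or residual'' in this setting, and that the hypothesis $\Char k>0$ is in force so that Lafforgue's theorem and the Mœglin--Waldspurger description apply as quoted.
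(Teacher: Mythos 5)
Your proof is correct and matches exactly what the paper intends: the paper states this corollary without an explicit proof precisely because it follows immediately from Theorem~\ref{TH:Lafforgue} (cuspidal $\Rightarrow$ tempered local factors) in one direction, and from the preceding corollary (residual $\Rightarrow$ all local factors non-tempered) together with the cuspidal/residual dichotomy in the other. Your write-up fills in exactly the intended argument.
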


	We finally record  the following well-known fact. %about automorphic representations of $\bGL_d$.
	
	\begin{prp}\label{PR:automorphic-form-dim-one}
		Let $V=\bigotimes'_\nu V_\nu$ be a discrete automorphic representation of $\bGL_d$.
		If one of the local factors $V_\nu$ is finite dimensional, then $V$ is $1$-dimensional.
	\end{prp}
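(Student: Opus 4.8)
\textit{Set-up and reduction.} Fix the place $\theta$ at which $V_\theta$ is finite-dimensional. If $d=1$ there is nothing to do, since a discrete automorphic representation of $\bGL_1$ is a Hecke character of $k$, hence $1$-dimensional; so assume $d\geq 2$. The first thing I would record is the standard fact that a finite-dimensional irreducible smooth representation of $\nGL{k_\theta}{d}$ has the form $\psi\circ\det$ for a unitary character $\psi$ of $\units{(k_\theta)}$: its kernel is an open normal subgroup of $\nGL{k_\theta}{d}$, hence contains $\nSL{k_\theta}{d}$ (an open subgroup is closed, and a closed normal subgroup of $\nSL{k_\theta}{d}$ is central, hence finite, hence not open), so the representation factors through $\det$ and irreducibility makes it $1$-dimensional; unitarity of $\psi$ follows because $V_\theta$ is pre-unitary (as a local factor of $V$, cf.~\ref{subsec:local-factors}). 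By Example~\ref{EX:square-integrable-char} this representation equals $u(\psi,d)$, and by Theorem~\ref{TH:Silberger} it is not tempered because $d>1$. Thus $V_\theta$ is not tempered, so $V$ is not cuspidal by Corollary~\ref{CR:dichotomy-cor}, i.e.\ $V$ is residual. By Theorem~\ref{TH:residual-spectrum} we may write $V\cong u(U,s)$ with $1<s\mid d$, $d=rs$, and $U$ a cuspidal automorphic representation of $\bGL_r$; and by the corollary following Theorem~\ref{TH:residual-spectrum}, the $\theta$-local factor of $V$ is $V_\theta\cong u(U_\theta,s)$, where $U_\theta$ is the $\theta$-local factor of $U$.

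\textit{The key step: $r=1$.} By Lafforgue's theorem (Theorem~\ref{TH:Lafforgue}; this is where $\mathrm{char}\,k>0$ is used), $U_\theta$ is tempered. Since $V_\theta=u(U_\theta,s)$ is unitarizable (being pre-unitary) and $U_\theta$ is tempered, Proposition~\ref{PR:def-of-u} applies and shows that $V_\theta$ embeds as a subrepresentation of $I_s(U_\theta)=\Ind_{P_{(r,\dots,r)}}^{\nGL{k_\theta}{d}}(U_\theta\otimes\cdots\otimes U_\theta)$ ($s$ factors). By Frobenius reciprocity, $\Hom_{M_{(r,\dots,r)}}\bigl(r_P(V_\theta),\,U_\theta\otimes\cdots\otimes U_\theta\bigr)\neq 0$, where $r_P$ denotes the Jacquet functor (space of coinvariants for the unipotent radical). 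Now $r_P(V_\theta)$ is a quotient of the finite-dimensional space $V_\theta$, hence finite-dimensional, whereas $U_\theta\otimes\cdots\otimes U_\theta$ is irreducible as a representation of $M_{(r,\dots,r)}\cong \nGL{k_\theta}{r}\times\cdots\times\nGL{k_\theta}{r}$ (external tensor product of irreducible admissible representations). A nonzero $M$-homomorphism from a finite-dimensional module onto an irreducible one is surjective, so $U_\theta\otimes\cdots\otimes U_\theta$, and therefore $U_\theta$, is finite-dimensional. But a finite-dimensional irreducible \emph{tempered} representation of $\nGL{k_\theta}{r}$ forces $r=1$: otherwise it would be $\psi\circ\det=u(\psi,r)$ (as above), which is non-tempered by Theorem~\ref{TH:Silberger}. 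Hence $r=1$.

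\textit{Conclusion.} With $r=1$ we get $s=d$ and $U$ is a cuspidal automorphic representation of $\bGL_1$, i.e.\ a Hecke character $\eta$ of $k$, and $V\cong u(\eta,d)$. By the corollary following Theorem~\ref{TH:residual-spectrum}, $V\cong\bigotimes'_{\nu}u(\eta_\nu,d)$, and by Example~\ref{EX:square-integrable-char} each factor $u(\eta_\nu,d)=\eta_\nu\circ\Nrd_{\mathrm{M}_d(k_\nu)/k_\nu}=\eta_\nu\circ\det$ is $1$-dimensional. Since a restricted tensor product of $1$-dimensional spaces (glued along unit vectors) is again $1$-dimensional, $V$ is $1$-dimensional; concretely $V\cong\eta\circ\det$.

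\textit{Expected obstacle.} The delicate point is the middle paragraph: deducing finite-dimensionality of $U_\theta$ from that of its Speh-type building block $u(U_\theta,s)$. The clean route is via Proposition~\ref{PR:def-of-u} (which requires, and here has, $u(U_\theta,s)$ unitarizable), ordinary Frobenius reciprocity, and the finiteness of Jacquet modules of finite-dimensional representations; one just has to be consistent about using the unnormalized induction $I_s$ throughout (normalization twists do not affect dimensions). Everything else is bookkeeping with the classification results already recorded in this section.
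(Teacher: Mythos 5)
Your argument is correct, but it takes a markedly different — and substantially heavier — route than the paper's. The paper proves this in a few lines via soft structure theory: a smooth finite-dimensional representation of $\nGL{k_\nu}{d}$ has open kernel, hence kills $\nSL{k_\nu}{d}$; since $\nSL{k_\nu}{d}$ is normal in $\nGL{\bbA}{d}$ and $V$ is irreducible and generated by $V_\nu$, $\nSL{k_\nu}{d}$ kills all of $V$; then left $\nGL{k}{d}$-invariance of automorphic functions combined with right $\nSL{k_\nu}{d}$-invariance and strong approximation for $\uSL_d$ shows that all of $\nSL{\bbA}{d}$ acts trivially, so $V$ descends to the abelian group $\nGL{\bbA}{d}/\nSL{\bbA}{d}$ and must be one-dimensional. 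That argument is short, self-contained, and in particular does \emph{not} need Lafforgue, the M\oe{}glin--Waldspurger description of the residual spectrum, or the hypothesis $\Char k>0$; it would work verbatim over a number field.

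Your proof instead passes through the entire classification apparatus assembled in this chapter: you first observe that the finite-dimensional local factor is $\psi\circ\det=u(\psi,d)$ and hence non-tempered (Theorem~\ref{TH:Silberger}), use Corollary~\ref{CR:dichotomy-cor} to place $V$ in the residual spectrum, invoke Theorem~\ref{TH:residual-spectrum} to write $V=u(U,s)$, invoke Lafforgue (Theorem~\ref{TH:Lafforgue}) to make $U_\theta$ tempered, use Proposition~\ref{PR:def-of-u} and Frobenius reciprocity for the Jacquet functor to force $U_\theta$ to be finite-dimensional, and finally conclude $r=1$ because a one-dimensional representation of $\nGL{k_\theta}{r}$ cannot be tempered when $r>1$. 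The Jacquet-functor step is sound (a nonzero map from the finite-dimensional $r_P(V_\theta)$ to the irreducible $U_\theta\otimes\cdots\otimes U_\theta$ is surjective, and normalization twists do not affect dimensions), so the argument does go through. What the heavy route buys you is nothing extra here — the conclusion is the same and the scope is narrower (Lafforgue confines you to positive characteristic and to $\bGL_d$). What the paper's route buys is brevity, independence from the deep inputs, and applicability beyond the function-field case. If you want to retain your line of reasoning, note explicitly that you are importing $\Char k>0$ through Lafforgue and that the proposition itself holds more generally by the paper's softer argument.
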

	
	\begin{proof}
		We first observe that
		$\nSL{k_\nu}{d}$ acts trivially on $V_\nu$.
		Indeed, since $V_\nu$ is smooth and finite-dimensional,
		the group $L=1+\pi_\nu^r\nMat{\calO_\nu}{d}$
		acts trivially on $V_\nu$ for $r$ sufficiently large. The normal subgroup generated
		by $L$ is easily seen to contain $\nSL{k_\nu}{d}$ (look on elementary matrices),
		hence our claim.

		We view $V_\nu$ as a $\nGL{k_\nu}{d}$-subrepresentation of $V$.
		Since $V$ is irreducible as a $\nGL{\bbA}{d}$-module,  it is generated by $V_\nu$,
		and hence
		$\nSL{k_\nu}{d}$  acts trivially on $V$ (because $\nSL{k_\nu}{d}\lhd \nGL{\bbA}{d}$).
		The functions in $V$ are therefore $\nGL{k}{d}$-invariant on the left
		and $\nSL{k_\nu}{d}$-invariant on the right. Since $\nSL{k_\nu}{d}\lhd \nGL{\bbA}{d}$ and $V$
		consists of locally constant functions (it is smooth), the functions
		in $V$ are $\overline{\nGL{k}{d}\nSL{k_\nu}{d}}$-invariant on the left.
		By strong approximation (for $\bSL_d$), the group $\overline{\nGL{k}{d}\nSL{k_\nu}{d}}$  contains $\nSL{\bbA}{d}$,
		so $\nSL{\bbA}{d}$ acts trivially
		on $V$. Therefore, we may view $V$ as an irreducible admissible pre-unitary representation of $\nGL{\bbA}{d}/\nSL{\bbA}{d}$, which
		is abelian, so $V$ is one-dimensional (cf.\ Corollary~\ref{CR:commutative-algs}, Theorem~\ref{TH:Schurs-Lemma-for-admissible}).
		%\cite[Thm.~1.3.1]{Conrad12}
		%\cite[Prp.~3.9]{BorelPrasad89}
	\end{proof}

\subsection{Ramanujan Complexes}
\label{subsec:existence}

    Let $F$ be a local non-archimedean field \emph{of positive characteristic}, let $D$ be a central
    division $F$-algebra and let $d>1$.
    We finally prove that $\calB_d(D)$, the affine Bruhat-Tits building of $\nPGL{D}{d}$,
    admits infinitely many $\nPGL{D}{d}$-quotients which are completely Ramanujan (\ref{subsec:Ramanujan-quotients}).
    The Ramanujan complexes  constructed in \cite{LubSamVi05}, which are $\nPGL{F}{d}$-quotients of $\calB_d(F)$
    that
    are Ramanujan in dimension $0$ (Example~\ref{EX:classical-Ramamanujan}(ii)), will arise as special cases
    of our construction. In particular, they
	are completely Ramanujan.

	%Recall that $F$ denotes a local field of positive characteristic and $\calB_d(F)$
	%is the affine Bruhat-Tits building of $\nPGL{F}{d}$; see Chapter~\ref{sec:ramanujan-complexes}.

\medskip

    Let the global field $k$ and $\eta\in\calV$ be chosen such that $F=k_\eta$.
    Write $r=\deg D$, choose  a central division $k$-algebra  $E$ of degree $dr$, and define
    $E_\nu$, $D_\nu$, $m_\nu$ and $T$ as in \ref{subsec:JL}. We assume that:
    \begin{itemize}
        \item $E_\eta\cong \nMat{D}{d}$, i.e.\ $D_\eta\cong D$ and $m_\eta=d$.
        \item There is $\theta\in T$ such that $E_\theta$ is a division ring, i.e.\ $m_\theta=1$.
    \end{itemize}
    Existence of a suitable $E$ for any prescribed $D$ and $d$ follows from the
	Albert--Brauer--Hasse--Noether Theorem (\cite[Rem.~32.12(ii)]{MaximalOrders}, for instance).
    We further write
    $\bfG=\uGL_{dr,k}$, $\bfG'=\uGL_{d,E}$ and $\bfH'=\uPGL_{d,E}$.
    Since $E$ is a division ring, $\bfH'$ is  $k$-anisotropic.
    Note also that
    \begin{align*}
    &\bfH'(k_\eta)=\nPGL{D}{d}\\
    &\bfH'(k_\theta)=\units{E_\theta}/\units{k_\theta}\\
    &\bfH'(k_\nu)=\nPGL{k_\nu}{dr}& \text{when $\nu\notin T$}
    \end{align*}
    Finally, let
    \[K_\theta =\im\left(\units{\calO_{E_\theta}}\to\bfH'(k_\theta)\right)=\units{k_\theta}\units{\calO_{E_\theta}}/\units{k_\theta}\]
    (see~\ref{subsec:building-of-GLdD}
    for the definition of $\calO_{E_\theta}$).
    We shall view the $k$-adeles
    $\bbA$ as $k_\eta\times  \bbA^{\{\eta\}}$
    and $\bbA^{\{\eta\}}$ as $k_\theta\times\bbA^{\{\eta,\theta\}}$
    (notation as in~\ref{subsec:adeles}).

\medskip
	
	We are now ready to state our main result. When $D=F$, it  is just Theorem~1.2 of \cite{LubSamVi05} with the difference
	that we show complete Ramanujan-ness whereas  \cite{LubSamVi05} shows Ramanujan-ness in dimension $0$.
    (Also, in \cite{LubSamVi05}, it is assumed that $m_\nu=1$ for all $\nu\in T$, but this assumption can
    be dropped thanks to \cite{BadulRoch14}.)
	
	\begin{thm}\label{TH:ram-quo-exists}
        Let $K^\eta$ be a compact open subgroup of $\bfH'(\bbA^{\{\eta\}})$ and let
        $\Gamma=\bfH'(k)\cap(\bfH'(k_\eta)\times  K^{\eta})$ be viewed as a cocompact
        lattice in $\bfH'(k_\eta)=\nPGL{D}{d}$ (cf.\ \ref{subsec:automorphic-reps}). Assume that either
        \begin{enumerate}
        	\item[(1)] $K^\eta$ contains $K_\theta\times 1$ (view $\bfH'(\bbA^{\{\eta\}})$ as
        	$\bfH'(k_\theta)\times \bfH'(\bbA^{\{\eta,\theta\}})$), or
        	\item[(2)] $D=F$ and $d$ is prime,
        \end{enumerate}
        and $\Gamma\leq_{\calB_d(D)}\nPGL{D}{d}$
        (cf.\ \ref{subsec:quotienst-of-simp-comps}). Then $\Gamma\leftmod \calB_d(D)$
        is completely Ramanujan.
	\end{thm}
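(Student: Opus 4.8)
The plan is to reduce the statement to a question about automorphic representations of $\bfG = \uGL_{dr,k}$ via the Jacquet--Langlands correspondence, and then invoke Lafforgue's theorem. First I would apply Theorem~\ref{TH:automorphic-ramanujan} with $\tilde{G} = \bfG'(k_\eta) = \nGL{D}{d}$ acting on $\calB_d(D)$ through its quotient $G = \nPGL{D}{d}$ (using Remark~\ref{RM:non-faithful-action} to handle the non-faithful action, with $H = \units{F}$). By Proposition~\ref{PR:local-factor-embedding}, every irreducible unitary $V \wc \LL{\tilde\Gamma \leftmod \bfG'(\bbA)}$ with a nonzero invariant vector under the relevant compact subgroup arises (as a local factor at $\eta$) from a discrete automorphic representation $W = \bigotimes'_\nu W_\nu$ of $\bfH'$, equivalently of $\bfG'$ after fixing the central character; so it suffices to show that for each such $W$, the local factor $W_\eta$ at $\eta$ is tempered or finite-dimensional. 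Here I must use hypothesis (1) resp.\ (2) to ensure $\Gamma \leq_{\calB_d(D)} G$ so that the ``completely Ramanujan'' conclusion of Theorem~\ref{TH:automorphic-ramanujan} actually applies; in case (1) one further arranges, via Remark~\ref{RM:lattice-subgroup}(i) if necessary, that the finitely many lattices $\quo\Gamma_i$ appearing in Proposition~\ref{PR:local-factor-embedding} all satisfy $\quo\Gamma_i \leq_{\calB_d(D)} G$.

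Next I would pass through the global Jacquet--Langlands correspondence. Lift $W$ to a discrete automorphic representation $\tilde{W}$ of $\bfG' = \uGL_{d,E}$ with appropriate central character; by Theorem~\ref{TH:global-JL}, $\tilde{W} = \LJ(\tilde{V})$ for a unique $E$-compatible discrete automorphic representation $\tilde{V} = \bigotimes'_\nu \tilde{V}_\nu$ of $\bfG = \uGL_{dr,k}$, and the local factors satisfy $\tilde{W}_\nu = \LJ_\nu(\tilde{V}_\nu)$ for all $\nu$. The key leverage comes from the place $\theta$: since $E_\theta$ is a division algebra, $\bfG'(k_\theta) = \nGL{E_\theta}{1} = \units{E_\theta}$, whose center has finite index... more precisely $\bfH'(k_\theta) = \units{E_\theta}/\units{k_\theta}$ is compact, so $\tilde{W}_\theta$ is finite-dimensional. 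By Proposition~\ref{PR:JL-image}, a $D_\theta$-compatible $\tilde{V}_\theta \in \what{G}_1^{(D_\theta)}$ with $\LJ_\theta(\tilde{V}_\theta)$ one-dimensional must be $u(\chi, dr)$ or $T(\chi, dr)$ for a unitary character $\chi$ of $\units{k_\theta}$. In the $T(\chi,dr)$ case $\tilde{V}_\theta$ is square-integrable, hence $\tilde V$ is cuspidal by Corollary~\ref{CR:dichotomy-cor}; then by Lafforgue's theorem (Theorem~\ref{TH:Lafforgue}) \emph{every} local factor of $\tilde V$ is tempered, in particular $\tilde{V}_\eta$ is tempered, and by Theorem~\ref{TH:local-JL}(iii) so is $\tilde{W}_\eta = \LJ_\eta(\tilde{V}_\eta) = W_\eta$. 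In the $u(\chi, dr)$ case $\tilde{V}_\theta$ is one-dimensional, so by Proposition~\ref{PR:automorphic-form-dim-one} the whole $\tilde{V}$ is one-dimensional, hence $\tilde{V}_\eta$ is a character and $W_\eta = \LJ_\eta(\tilde{V}_\eta) = \chi'\circ \Nrd_{\nMat{D}{d}/F}$ for a character $\chi'$ of $\units{F}$ (using Proposition~\ref{PR:JL-image} at $\eta$), which is one-dimensional, hence has finite action. Either way $W_\eta$ is tempered or finite-dimensional, which is exactly what Theorem~\ref{TH:automorphic-ramanujan} requires.

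For case (2), where $D = F$ and $d$ is prime, the argument is a simplification: $\LJ_\nu$ is the identity for all $\nu$, so $W = \tilde{V}$ is already a discrete automorphic representation of $\uGL_{d,k}$ with $W_\theta$ finite-dimensional. Since $d$ is prime, the residual spectrum of $\uGL_d$ consists only of one-dimensional representations (Theorem~\ref{TH:residual-spectrum} with $s = d$, $r = 1$), so either $W$ is cuspidal — in which case $W_\eta$ is tempered by Lafforgue — or $W$ is one-dimensional, in which case $W_\eta$ has finite action. Actually the dichotomy of Corollary~\ref{CR:dichotomy-cor} already gives this directly: $W_\theta$ finite-dimensional forces $W$ non-cuspidal (a tempered finite-dimensional representation of $\nGL{k_\theta}{d}$ exists only for $d = 1$; for $d > 1$ prime one appeals instead to Proposition~\ref{PR:automorphic-form-dim-one}), hence $W$ is one-dimensional. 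Finally, to get \emph{infinitely many non-isomorphic} such quotients, I would let $K^\eta$ range over a shrinking family of compact open subgroups of $\bfH'(\bbA^{\{\eta\}})$ (e.g.\ deepening the level at an auxiliary place $\rho \notin \{\eta,\theta\}$ as in Remark~\ref{RM:lattice-subgroup}(i)); the resulting lattices $\Gamma$ have index tending to infinity in a fixed lattice, so the finite complexes $\Gamma \leftmod \calB_d(D)$ grow without bound and cannot all be isomorphic, while each is completely Ramanujan by the above.

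The main obstacle I anticipate is the bookkeeping around central characters and the passage between $\bfG' = \uGL_{d,E}$ and $\bfH' = \uPGL_{d,E}$: one must make sure that the representation $V \wc \LL{\tilde\Gamma\leftmod\bfG'(\bbA)}$ produced by Proposition~\ref{PR:local-factor-embedding} genuinely descends to $\bfH'$, i.e.\ has trivial central character on the relevant subgroup, so that the decomposition $\LL{\bfG'(k)\leftmod\bfG'(\bbA)} = \hat\bigoplus_\omega \LL[\omega]{\bfG'(k)\leftmod\bfG'(\bbA)}$ and Theorem~\ref{TH:global-JL} can be applied with a fixed $\omega$; and conversely that the temperedness/finiteness of $W_\eta$ as a $\bfG'(k_\eta)$-representation transfers to the corresponding $\bfH'(k_\eta) = \nPGL{D}{d}$-representation needed for Theorem~\ref{TH:automorphic-ramanujan} (this is where Proposition~\ref{PR:compact-quotient-temperedness} and Remark~\ref{RM:non-faithful-action} do the work). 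A secondary technical point is verifying the hypothesis of Theorem~\ref{TH:automorphic-ramanujan} that for the elementary functors $F$ in question the relevant compact subgroups $L_j$ are of the form (stabilizer in $\bfG'(k_\eta)$) $\times\, K^\eta$, which is automatic from the construction, and checking $\quo\Gamma_i \leq_{\calB_d(D)} G$ for all $i$, handled by Corollary~\ref{CR:finite-subsect-away} / Remark~\ref{RM:lattice-subgroup}.
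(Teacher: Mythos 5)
Your overall outline matches the paper's: apply Theorem~\ref{TH:automorphic-ramanujan}, transfer to $\bfG=\uGL_{dr,k}$ via the global Jacquet--Langlands correspondence, and use Lafforgue together with information at the place $\theta$ (resp.\ the structure of the residual spectrum) to conclude. However, there are two genuine gaps.

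\textbf{Case (1): finite-dimensional vs.\ one-dimensional at $\theta$.} You observe that $\bfH'(k_\theta)=\units{E_\theta}/\units{k_\theta}$ is compact and conclude that $\tilde W_\theta$ (the paper's $V'_\theta$) is finite-dimensional. That is correct but holds \emph{regardless} of the hypothesis that $K^\eta\supseteq K_\theta\times 1$, and it is not enough: in the very next sentence you invoke Proposition~\ref{PR:JL-image}, whose hypothesis is that $\LJ_\theta(\tilde V_\theta)$ is \emph{one}-dimensional. You never justify the passage from finite-dimensional to one-dimensional, and in fact this is exactly what hypothesis (1) buys you. In the paper's argument, $K^\eta\supseteq K_\theta\times 1$ gives ${V'_\theta}^{\units{k_\theta}\units{\calO_{E_\theta}}}\neq 0$, and then Lemma~\ref{LM:A-theta-quotient} shows $V'_\theta$ factors through $\units{E_\theta}/\units{k_\theta}\units{\calO_{E_\theta}}\cong\Z/rd\Z$, which is finite \emph{abelian}, forcing $\dim V'_\theta=1$; the commutator theorem for $\units{E_\theta}$ then exhibits $V'_\theta$ as $\chi\circ\Nrd_{E_\theta/k_\theta}$, which is the precise input for Proposition~\ref{PR:JL-image}. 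If $V'_\theta$ were merely supercuspidal of dimension $>1$, the fibre of $\LJ_\theta$ over it is not controlled by Proposition~\ref{PR:JL-image}, and $\tilde V_\theta$ could a priori be a non-square-integrable, non-character representation, in which case $\tilde V$ could be residual with $\tilde V_\eta$ neither tempered nor finite-dimensional and the argument collapses. So hypothesis (1) must be used, and your proof never uses it.

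\textbf{Case (2): $\LJ$ is not the identity.} The claim ``$\LJ_\nu$ is the identity for all $\nu$'' is false. When $D=F$ one only has $\eta\notin T$; the place $\theta$ (and possibly others) remain ramified for $E$, so $\LJ_\theta$ is certainly not the identity. Relatedly, you apply the cuspidal/residual dichotomy to $W$, which lives on the anisotropic inner form $\bfG'=\uGL_{1,E}$, where there is no residual spectrum to speak of; the Moeglin--Waldspurger description (Theorem~\ref{TH:residual-spectrum}) applies to the \emph{split} group $\bfG=\uGL_{dr,k}$. The correct route, as in the paper, is to pass to the $\bfG$-side via Theorem~\ref{TH:global-JL}, note that $V'_\eta=V_\eta$ because $\eta\notin T$, and then use that for prime $d$ the residual representations of $\bGL_d$ are one-dimensional; the place $\theta$ plays no role in this case. (Your alternative remark about $W_\theta$ being finite-dimensional is true but unnecessary, and your chain of implications from it is not airtight.)

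Two minor points: you do not need $\quo\Gamma_i\leq_{\calB_d(D)}G$ for all $i$ --- the forward (sufficiency) direction of Theorem~\ref{TH:automorphic-ramanujan} applies to each $i$ for which $\quo\Gamma_i\leq_\calB G$ individually, so only the $i$ corresponding to the given $\Gamma$ matters. And the temperedness transfer between $\nGL{D}{d}$ and $\nPGL{D}{d}$ that you flag as a concern is Lemma~\ref{LM:temperedness-transfer} in the paper; it is worth citing explicitly.
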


    \begin{example}
        For every $\Gamma$ as in the theorem, the spectrum
        of the $i$-dimensional Laplacian $\Delta_i$ of $\Gamma\leftmod\calB_d(D)$ is contained
        in the union of the spectrum of the $i$-dimensional Laplacian of $\calB_d(D)$  with the trivial spectrum
        $\frakT_{\Delta_i}$ (apply Proposition~\ref{PR:Ramanujan-well-behaved}
        with $A=\Alg{\catC(\nPGL{D}{d},\calB_d(D)),\Omega_i^-}{}$
        and $B=\C[\Delta_i]$; see also Proposition~\ref{PR:unitary-dual-topology}).
        This also holds for the adjacency operators considered in Example~\ref{EX:higher-dimensional-adj-ops}.
        Therefore,
        when $d=2$, the quotient $\Gamma\leftmod \calB_2(D)$ is a Ramanujan $(q_\eta^r+1)$-regular
        graph.
    \end{example}

	We shall need the following lemmas for the proof.
	
	\begin{lem}\label{LM:A-theta-quotient}
		The group $\units{k_\theta}\units{\calO_{E_\theta}}$ is normal in $\units{E_\theta}$
		and $\units{E_\theta}/\units{k_\theta}\units{\calO_{E_\theta}}\cong \Z/rd\Z$.
	\end{lem}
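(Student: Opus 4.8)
The plan is to exploit the unique extension of the additive valuation $\nu$ of $k_\theta$ to the division algebra $E_\theta$. By hypothesis $m_\theta=1$, so $E_\theta=E\otimes_k k_\theta$ is itself a central division $k_\theta$-algebra, of degree $dr$ (as $\dim_{k_\theta}E_\theta=\dim_k E=(dr)^2$), and $k_\theta$ is a local field with finite residue field since $\Char k>0$. Hence the discussion recalled in \ref{subsec:building-of-GLdD} (following \cite[\S12]{MaximalOrders}) applies: there is a valuation $\nu_{E_\theta}\colon E_\theta\to \tfrac1{dr}\Z\cup\{\infty\}$ given by $\nu_{E_\theta}(x)=(dr)^{-1}\nu(\Nrd_{E_\theta/k_\theta}(x))$, whose restriction to $\units{E_\theta}$ is a surjective group homomorphism onto $\tfrac1{dr}\Z$, and $\calO_{E_\theta}=\{x\in E_\theta:\nu_{E_\theta}(x)\geq 0\}$, so that $\units{\calO_{E_\theta}}=\{x:\nu_{E_\theta}(x)=0\}=\ker(\nu_{E_\theta}|_{\units{E_\theta}})$.

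Next I would note that $\units{k_\theta}$ is the unit group of the center of $E_\theta$, hence lies in the center of $\units{E_\theta}$; in particular $\units{k_\theta}\units{\calO_{E_\theta}}=\units{\calO_{E_\theta}}\units{k_\theta}$ is a subgroup of $\units{E_\theta}$. Since $\nu_{E_\theta}$ restricts to $\nu$ on $\units{k_\theta}$, whose image is $\Z$, while $\units{\calO_{E_\theta}}$ has image $\{0\}$, the subgroup $\units{k_\theta}\units{\calO_{E_\theta}}$ is contained in $\nu_{E_\theta}^{-1}(\Z)$. For the reverse inclusion I would fix a uniformizer $\pi_\theta\in k_\theta$ with $\nu(\pi_\theta)=1$: if $x\in\units{E_\theta}$ with $\nu_{E_\theta}(x)=n\in\Z$, then $\nu_{E_\theta}(x\pi_\theta^{-n})=0$, so $x\pi_\theta^{-n}\in\units{\calO_{E_\theta}}$ and $x=(x\pi_\theta^{-n})\pi_\theta^{n}\in\units{\calO_{E_\theta}}\units{k_\theta}$. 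Thus $\units{k_\theta}\units{\calO_{E_\theta}}=\nu_{E_\theta}^{-1}(\Z)$, and being the preimage of a subgroup of the abelian group $\tfrac1{dr}\Z$ under a homomorphism, it is normal in $\units{E_\theta}$; this proves the first assertion.

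For the quotient, $\nu_{E_\theta}$ induces a group isomorphism $\units{E_\theta}/\units{\calO_{E_\theta}}\cong \tfrac1{dr}\Z$ carrying $\units{k_\theta}\units{\calO_{E_\theta}}/\units{\calO_{E_\theta}}$ onto $\Z$, whence
\[
\units{E_\theta}/\units{k_\theta}\units{\calO_{E_\theta}}\;\cong\;\tfrac1{dr}\Z\big/\Z\;\cong\;\Z/dr\Z,
\]
the last isomorphism being multiplication by $dr$. I do not expect any genuine obstacle here; the only points needing a modicum of care are keeping the valuations $\nu$ on $k_\theta$ and $\nu_{E_\theta}$ on $E_\theta$ distinct, and observing that the centrality of $\units{k_\theta}$ is exactly what makes $\units{k_\theta}\units{\calO_{E_\theta}}$ a subgroup — both entirely routine.
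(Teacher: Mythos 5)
Your proof is correct and takes essentially the same route as the paper: the paper defines $\Phi\colon\units{E_\theta}\to\frac{1}{rd}\Z/\Z$ by $\Phi(g)=\nu_{E_\theta}(g)+\Z$ and observes that it is surjective with kernel $\units{k_\theta}\units{\calO_{E_\theta}}$, which is your argument phrased via the valuation onto $\frac{1}{rd}\Z$ followed by passing to the quotient by $\Z$. You have merely spelled out the surjectivity and kernel computation in more detail.
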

	
	\begin{proof}
        Define $\Phi:\units{E_\theta}\to \frac{1}{rd}\Z/\Z$ by $\Phi(g)=\nu_{E_\theta}(g)+\Z$.
        Then $\Phi$ is onto (\ref{subsec:building-of-GLdD}) and its kernel is easily
        seen to be $\units{k_\theta}\units{\calO_{E_\theta}}$.
	\end{proof}
	
	\begin{lem}\label{LM:temperedness-transfer}
		Let $V$ be an irreducible pre-unitary  representation of $\nPGL{D}{d}$. Then $V$
		is tempered as a representation of $\nPGL{D}{d}$ if and only if it is tempered
		as a representation of $\nGL{D}{d}$.
	\end{lem}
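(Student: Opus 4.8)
The plan is to compare the two notions of temperedness — one with respect to $G' = \nGL{D}{d}$ and one with respect to $H' = \nPGL{D}{d} = G'/Z'$, where $Z' \cong \units{F}$ is the center of $G'$ — using the definition of temperedness as weak containment in the right regular representation, and Proposition~\ref{PR:compact-quotient-temperedness} relating representations of a group and its quotient by a compact normal subgroup. The difficulty is that $Z'$ is \emph{not} compact, so Proposition~\ref{PR:compact-quotient-temperedness} does not apply directly; instead the idea is to factor through the intermediate subgroup $Z' G^1$, where $G^1 = \{g \in G' : \nu_D(\Nrd_{\nMat{D}{d}/F}(g)) = 0\}$ is the kernel of the valuation-of-reduced-norm map, which is open and of finite index in $G'$ modulo $Z'$ in the appropriate sense. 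More precisely, $Z' G^1$ is an open normal subgroup of $G'$ with $G'/Z'G^1 \cong \Z$ (a discrete group), and $Z' G^1 / Z' \cong G^1/(Z'\cap G^1)$ is a \emph{compact}-by-nothing situation: actually $G^1/(Z'\cap G^1)$ maps onto an open subgroup of $H'$ whose quotient is $\Z/(\text{something})$, and $Z' \cap G^1 = \units{\calO_F}$ is compact.

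The key steps, in order, would be as follows. First I would recall that $V$ tempered as a $G'$-representation means $\quo{V} \wc \LL{1 \leftmod G'}$ (where $\quo{V}$ is the Hilbert-space completion), and similarly for $H'$; by Proposition~\ref{PR:equivalent-defs-of-weak-cont} this is equivalent to the corresponding statement for the Hecke algebras, which lets me use the machinery of Chapter~\ref{sec:involutary-algebras} freely. Second, I would use the fact that the central character $\chi_V$ of an irreducible pre-unitary representation is unitary (as noted in \ref{subsec:admissible-revised}), so that $V$ descends to a representation of $G'$ that is trivial on $Z'$ precisely when $\chi_V$ is trivial — but here we are \emph{given} that $V$ is already a representation of $H' = G'/Z'$, so $Z'$ acts trivially and $\chi_V = 1$ from the outset. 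Third, set $N = Z' G^1$: this is an open subgroup of $G'$ of infinite index ($G'/N \cong \Z$), and $H^1 := \im(G^1 \to H') = N/Z'$ is an open subgroup of $H'$ with $H'/H^1 \cong \Z/d\Z$ (the map $c$ of \ref{subsec:building-of-GLdD}). I would show $\LL{1 \leftmod G'}|_N \cong \hat{\bigoplus}_{n \in \Z} \LL{1 \leftmod N}$ as $N$-representations (decomposing the right regular representation of $G'$ over cosets of $N$), and similarly for $H'$; combined with Theorem~\ref{TH:direct-sum-spectrum}, temperedness of $V$ over $G'$ reduces to temperedness over $N$, and likewise over $H'$ reduces to temperedness over $H^1 = N/Z'$.

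Fourth — the crux — I would apply Proposition~\ref{PR:compact-quotient-temperedness} with the compact normal subgroup being $Z' \cap G^1 = \units{\calO_F}$ (compact since $\calO_F$ is a compact ring and we are taking units), inside the group $N = Z'G^1$. Here one needs that $N/(Z'\cap G^1) \twoheadrightarrow N/Z' = H^1$ has compact kernel $Z'/(Z' \cap G^1) \cong \units{F}/\units{\calO_F} \cong \Z$ — wait, that kernel is $\Z$, not compact. So the reduction must be arranged the other way: factor $N \to H^1 = N/Z'$ and observe $Z' \cong \units{F}$ is not compact, but $Z'/(Z'\cap G^1) \cong \Z$ is discrete. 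The cleanest route is then: $V$ over $N$ with trivial $Z'$-action is the same as $V$ over $N/(Z'\cap G^1) =: \bar N$, which sits in an extension $1 \to \Z \to \bar N \to H^1 \to 1$ with the $\Z$ central and $V$ trivial on it — so $V$ factors through $H^1$ — and separately $\bar N \to H^1$ has \emph{compact} kernel $\units{\calO_F}/(\units{\calO_F}\cap\ldots)$... I will need to set up the group theory so that exactly one compact-kernel quotient appears, and I expect \textbf{this bookkeeping of which subgroup has compact (versus discrete) quotient} to be the main obstacle; the resolution is that $G^1$ itself already has $Z'\cap G^1 = \units{\calO_F}$ acting trivially on $V$ and $G^1/(\units{\calO_F})$ surjects onto $H^1$ with compact kernel $\units{\calO_F}\cdot 1 / \units{\calO_F}$ — trivial — hence $G^1 \to H^1$ literally has compact kernel $Z'\cap G^1$, so Proposition~\ref{PR:compact-quotient-temperedness} applies to give: $V$ (as $H^1$-rep) $\wc \LL{1\leftmod H^1}$ iff $V$ (as $G^1$-rep, via inflation, equivalently $V^{Z'\cap G^1} = V$) $\wc \LL{1\leftmod G^1}$. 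Finally, chaining the direct-sum reductions over $\Z$-many cosets on both sides (Theorem~\ref{TH:direct-sum-spectrum}) together with this equivalence yields that $V$ is tempered over $H' = \nPGL{D}{d}$ if and only if it is tempered over $G' = \nGL{D}{d}$, completing the proof; I would also note that the analogous reduction for $\LL{1\leftmod \bfH'(k_\eta)}$ is not needed here since the statement is purely local at the one place.
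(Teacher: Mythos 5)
The paper's own ``proof'' of this lemma is a single sentence: ``Use the equivalent conditions for temperedness in \cite[\S2.4]{Oh02}.'' The intended argument is that, by the characterization there, an irreducible pre-unitary representation of a $p$-adic reductive group is tempered exactly when its $K$-finite matrix coefficients lie in $\mathrm{L}^{2+\varepsilon}$ modulo the center for every $\varepsilon>0$; since $V$ factors through $\nPGL{D}{d}=\nGL{D}{d}/\units{F}$, those matrix coefficients and the homogeneous space ``$G$ mod center'' are literally the same in both readings, so the equivalence is immediate. Your proposal instead works directly with the weak-containment definition $V\wc\LL{1\leftmod G}$ and tries to reduce through intermediate subgroups using Proposition~\ref{PR:compact-quotient-temperedness} and Theorem~\ref{TH:direct-sum-spectrum}. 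That is a genuinely different route, and in principle an attractive one because it would stay inside the machinery of Chapter~\ref{sec:involutary-algebras}.

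There are, however, two problems. The first is minor bookkeeping: writing $G'=\nGL{D}{d}$, $Z'=\units{F}$, $G^1=\ker(\nu\circ\Nrd)$, and $N=Z'G^1$, the index $[G':N]$ is $rd$ (where $r=\deg D$), not infinite, and $H'/H^1\cong\Z/rd\Z$, not $\Z/d\Z$; the correct computation is that $\Nrd(\lambda)=\lambda^{rd}$ for scalars $\lambda$, so $Z'$ maps onto $rd\Z\subseteq\Z\cong G'/G^1$. These slips do not by themselves kill the argument, since they only change finite indices.

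The second problem is a real gap and sits exactly where you anticipated trouble, though not for the reason you flagged. After step (ii) handles $G^1\leftrightarrow H^1$ via the compact kernel $\units{\calO_F}$, you still need the reduction $G'\leftrightarrow G^1$, and $G^1$ has \emph{infinite} index in $G'$ (namely $G'/G^1\cong\Z$). Theorem~\ref{TH:direct-sum-spectrum} together with the decomposition $\LL{1\leftmod G'}|_{G^1}\cong\hat{\bigoplus}_{n\in\Z}\LL{1\leftmod G^1}$ and Theorem~\ref{TH:subalgebra-spectrum-I} does give you the forward implication (tempered over $G'$ $\Rightarrow$ every irreducible constituent of $V|_{G^1}$ tempered over $G^1$), but the converse implication is what the lemma actually needs, and nothing in the chapter proves it: weak containment in $\LL{1\leftmod G^1}$ for all constituents of $V|_{G^1}$ does not formally imply $V\wc\LL{1\leftmod G'}$ when the index is infinite. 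That converse is precisely what the $\mathrm{L}^{2+\varepsilon}$-modulo-center characterization (the content of the Oh citation) buys you for free: matrix coefficients of $V$ factor through $H'=G'/Z'$, and $G^1\to H'$ has compact kernel and open finite-index image, so the integrability conditions for $V$ over $G'$, over $G^1$, and over $H'$ all coincide. If you want a self-contained proof inside the weak-containment framework, you would need to supply a separate argument (e.g.\ via Mackey theory/Clifford theory for the abelian quotient $G'/G^1\cong\Z$, using that $V|_{G^1}$ has finite length) showing that temperedness of the $G^1$-constituents forces temperedness of $V$ over $G'$; absent that, the chain of reductions does not close.
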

	
	\begin{proof}
		Use the equivalent conditions for temperedness in \cite[\S2.4]{Oh02}.
	\end{proof}
	
	\begin{proof}[Proof of Theorem~\ref{TH:ram-quo-exists}]
        Notice that $\Gamma$ is just $\quo{\Gamma}_i$ in
        the setting of \ref{subsec:automorphic-reps} when $(1,g_i)$ represents the trivial
        double coset $\bfH'(k)\cdot (\bfH'(k_\eta)\times  K^{\eta})$.
        Thus,
        by Theorem~\ref{TH:automorphic-ramanujan}, it is enough to show that for every automorphic
        representation $V'=\otimes'_\nu V'_\nu$ of $\bfH'$ with $V'^{1\times  K^{\eta}}\neq0$,
        the local component $V'_\eta$ is tempered or finite-dimensional.
        View $V'$ as an automorphic representation of $\bfG'$ with trivial central character.
        By Lemma~\ref{LM:temperedness-transfer}, it is enough to show that $V'_\eta$ is  tempered or finite-dimensional
        as a representation of $\bfG'(k_\eta)=\nGL{D}{d}$.
        By Theorem~\ref{TH:global-JL}, there is a discrete automorphic
        representation $V=\bigotimes'_\nu V_\nu$
        of $\bfG$ such that $\LJ_\nu(V_\nu)=V'_\nu$ for all $\nu\in\calV$.

\smallskip

        Suppose first that $D=F$ and $d$ is prime. Then $\eta\notin T$
        and hence $V'_\eta=V_\eta$. If $V$ is cuspidal, then $V_\eta$ is tempered
        by Theorem~\ref{TH:Lafforgue}. Otherwise, $V$ is $1$-dimensional (\ref{subsec:cuspidal-and-residual}), so
        $V_\eta$ is finite-dimensional.

        Suppose now that $K^\eta$ contains $K_\theta\times 1$.
        Then $V'^{K_\theta}_\theta\neq 0$, or rather ${V'_\theta}^{\units{k_\theta}\units{\calO_{E_\theta}}}\neq 0$.
        By Lemma~\ref{LM:A-theta-quotient}, we may view $V'_\theta$ as an irreducible unitary
        representation of $\units{E_\theta}/\units{k_\theta}\units{\calO_{E_\theta}}$, which is finite and abelian,
        hence $\dim V'_\theta=1$.
        Since every element of $\units{E_\theta}$ with reduced norm $1$ is a commutator
		\cite{TadaMats43}, there is a unitary character $\chi:\units{k_\theta}\to\units{\C}$ such
        that
        $V'_\theta\cong \chi\circ\Nrd_{E_\theta/k_\theta}$.
        Since $\LJ_\theta(V_\theta)=V'_\theta\cong\chi\circ\Nrd_{E_\theta/k_\theta}$, Proposition~\ref{PR:JL-image}
        implies that $V_\theta\cong u(\chi,rd)=\chi\circ \Nrd_{\nMat{k_\theta}{rd}/k_\theta}$ (cf.\ Example~\ref{EX:square-integrable-char})
        or $V_\theta\cong T(\chi,rd)$ (cf.\ Theorem~\ref{TH:Steinberg}).

        If $V_\theta\cong \chi\circ \Nrd_{\nMat{k_\theta}{rd}/k_\theta}$, then
        $V$ is $1$-dimensional (Proposition~\ref{PR:automorphic-form-dim-one}).
        In particular, $V_\eta$ is $1$-dimensional, and as explained previously, we can
        write $V_\eta=\xi\circ\Nrd_{\nMat{F}{rd}/F}$ for a unitary character $\xi:\units{F}\to \units{\C}$.
        Thus, by Proposition~\ref{PR:JL-image}, $V'_\eta=\LJ_\eta(V_\eta)=\xi\circ\Nrd_{\nMat{D}{d}/F}$
        and $\dim V'_\eta=1$.

        If $V_\theta\cong T(\chi,rd)$, then $V_\theta$ is tempered (Theorem~\ref{TH:Steinberg}(ii)),
        and hence so is $V_\eta$ (Corollary~\ref{CR:dichotomy-cor}). Now, by Theorem~\ref{TH:local-JL}(iii),
        $V'_\eta=\LJ_\eta(V_\eta)$ is also tempered. This completes the proof.
	\end{proof}

    \begin{remark}
        (i) As explained in Remark~\ref{RM:lattice-subgroup}(i),
        there exists $K^{\eta}_0\co \bfH'(\bbA^{\{\eta\}})$
        containing $K_\theta\times 1$
        such that $\Gamma$ of Theorem~\ref{TH:ram-quo-exists} satisfies
        $\Gamma\leq_{\calB_d(D)}\nPGL{D}{d}$
        whenever $K^{\eta}\subseteq K^{\eta}_0$.

        (ii) One can choose the implicit closed embedding
        $j:\bfH'\to \uGL_{m,k}$ such that $\bfH'(\calO_\theta)=K_\theta$; see \cite[\S5]{LubSamVi05}.
        Assuming this, let
        \[R=\{x\in k\suchthat \text{$\nu(x)\geq 0$ for all $ \nu\in\calV-\{\eta\}$}\}\ ,\]
        and choose non-negative integers $\{n_{\nu}\}_{\nu\in\calV-\{\eta\}}$ such that $n_\nu=0$ almost always
        and $n_\theta=0$. Taking $K^{\eta}=\prod_{\nu\neq \eta}\bfH'(\calO_\nu,\pi_\nu^{n_\nu}\calO_\nu)$
        and writing $I=\prod_{\nu\neq\eta}\pi_\nu^{n_\nu}R$,
        the lattice $\Gamma=\bfH'(k)\cap(\bfH'(k_\eta)\times K^{\eta})$ is just the congruence
        subgroup
        \[
        \bfH'(R,I)=\ker(\bfH'(R)\xrightarrow{j} \uGL_m(R)\to\uGL_m(R/I))\ .
        \]
        By Remark~\ref{RM:lattice-subgroup}(i)
        or Corollary~\ref{CR:finite-subsect-away}, for every $\nu_0\in\calV-\{\eta\}$, there is $n_0$ such that
        $\Gamma\leq_{\calB_d(D)}\nPGL{D}{d}$ whenever $n_{\nu_0}\geq n_0$.

        (iii) When $D=F$, Lubotzky, Samuels and Vishne  \cite{LubSamVish05B} gave  an explicit
        description of some of the quotients $\Gamma\leftmod \calB_d(F)$ as \emph{Cayley complexes} of certain finite groups.
        It is interesting to find  similar descriptions for $\Gamma\leftmod \calB_d(D)$.
    \end{remark}

    \begin{remark}
        Let $\uSL_{d,D}=\ker(\Nrd:\uGL_{d,D}\to \nGm{F})$ and let $H$ be the image of $\uSL_{d,D}(F)$ in $\nPGL{D}{d}$.
        If the lattice $\Gamma$ of Theorem~\ref{TH:ram-quo-exists} is contained in $H$, then $\Gamma\leftmod \calB_d(D)$
        is completely Ramanujan as an $H$-quotient of $\calB_d(D)$. This follows from Theorem~\ref{TH:finite-index-subgroup}(iii).
    \end{remark}

\subsection{A Remark about The Spectrum}

    While we have shown the
    existence of infinitely many completely Ramanujan quotients of $\calB_d(D)$,  we did not compute the spectrum
    of even a single associated operator! As most combinatorial applications require explicit bounds on spectra
    of operators of interest, we now comment  about how such bounds might be obtained.

	We retain the general setting of \ref{subsec:being-Ramanujan}: $G$ is an $\ell$-group,
	$\calX$ is an almost transitive $G$-complex, $\catC=\catC(G,\calX)$, and $F:\catC\to\catPHil$
	is an elementary functor (semi-elementary functors can be treated
	using Remark~\ref{RM:summand-transition}).
	Proposition~\ref{PR:unitary-dual-topology}
    allows us to shift the discussion from $(\catC,F)$-operators
    to algebras of $(\catC,F)$-operators (\ref{subsec:spectrum-of-simp}).
	Given such an algebra $A_0$,
    embed it in the algebra $B$ of Theorem~\ref{TH:Hecke-description-of-A}, and let $\calF$ and $\what{\calF}$ be defined
    as in Theorem~\ref{TH:spectrum-correspondence}.
	If a classification of the the
    irreducible unitary representations of $G$ is known,
    then one can compute the $A_0$-spectrum of $\calX$ by
    finding
    $\Spec_{A_0}\what{\calF}([V])$ for every tempered
    $[V]$ in the domain of $\what{\calF}$ and taking
    the union.
    The trivial $A_0$-spectrum can be computed similarly by letting $[V]$
    range over the irreducible unitary subrepresentations of $A(F\calX)_N$
    (see \ref{subsec:trivial-spec}), where $N$ ranges over the finite-index
    open subgroups of $G$.

    When $G=\nGL{D}{d}$, a classification of the
    irreducible unitary representations is known; see \cite{Tadic86}
    (for $D=F$) and \cite{Tadic90}, \cite{Seche09}, \cite{BadulHennLema10}.
    Thanks to \cite[Pr.~2.6]{Casselman80}, the classification simplifies significantly if one  is only interested in representations
    admitting a nonzero vector fixed under an Iwahori subgroup.
    The approach we sketched was successfully applied in
    \cite{KaLiWa10} and \cite{GolPar14} with $G=\nPGL{F}{3}$, in \cite{FaLiWa13} with $G=\mathrm{PGSp}_4(F)$,
    and also in \cite[Th.~2.11, \S2.4]{LubSamVi05}, to some
    extent. (In these sources, the functor $\calF$ is implicit and turns out to take
    $V$ to $V^L$ where $L$ is a certain parahoric subgroup  of $G$.)
    As can be seen from \cite{KaLiWa10}, \cite{GolPar14}, \cite{FaLiWa13}, one must engage in lengthy case-by-case analysis, and
    the number of cases increases with the split-rank of $G$.
    It is therefore interesting to look for more economical approaches.

\rem{
\medskip

    Another, more economical, approach to get information about the spectrum in case $F$ is $\Omega_i^+$, $\Omega_i^-$ or $\llFlag$
    (\ref{subsec:spectrum-of-simp})
    may follow these lines: Let $\bfG$ be a simply-connected almost simple algebraic group over
    $k_\nu$, let $G=\bfG(k_\nu)$, and let $\calB$ be the affine Bruhat-Tit building of $\bfG$.
    Fix an apartment $\calA$ in $\calB$,
    a chamber $x_0\in \calA$ and set $B=\Stab_G(x_0)$ and $N=\Stab_G(\calA)$.
    Then $W:=N/B$ is isomorphic to the affine Weyl group of $\calB$, and $G/B$ can be identified
    with $\calB^{(d-1)}$ via $gB\mapsto gx_0$.
    We shall treat elements of $W$ as elements of $N$ if the choice of representative
    does not matter.
    The Bruhat decomposition $G=\bigsqcup_{w\in W} BwB$ implies
    that the functions $\{\charfunc{BwB}\}_{w\in W}$ form a basis of $\Hecke[B]{G}$.
    Suppose that for every $w\in W$ one can find constants $c(w)\in\R_{>0}$ and $d(w)\in \R$,
    depending (say) only on the
    root datum of $\bfG$, such that
    \[\Norm{\charfunc{BwB}|_{\LL{1\leftmod G}}}\leq c(w)q_\nu^{d(w)}\ .\]
    Then one can easily check that   for any $a=\sum_w\alpha_w\charfunc{BwB}\in \nMat{\Hecke[B]{G}}{t}$,
    there are constants $c(a)\in \R_{>0}$, $d(a)\in\R$, depending only on the root datum and the coefficients
    $\{\alpha_w\}_{w\in W}$, such that
    \[\Norm{a|_{\oplus_{n=1}^t\LL{1\leftmod G}}}\leq c(a)q_\nu^{d(a)}\ .\]
    Since $F\calB$ is an $\Alg{\catC(G,\calB),F}{}$-summand of $\oplus_{i=1}^{t}\llf(\calB^{(d-1)})\cong\oplus_{i=1}^{t}\CSLC{1\leftmod G/B}$
    for $t$ sufficiently large
    (see Example~\ref{EX:summand-functor}, Theorem~\ref{TH:Hecke-description-of-A}),
    this induces bounds on $\norm{a|_{\llFlag(\calB)}}$
    for any $a\in\Alg{\catC(G,\calB),F}{}$. If one can vary $q_\nu$ (e.g.\ by changing $k_\nu$)
    without changing
    $c(w)$, $d(w)$ then these bounds may be effective when  $q_\nu$ is large.
    Such ideas appeared (for a different purpose) in \cite{Oh02}.
}

\bibliographystyle{plain}
\bibliography{MyBib_16_05}

\newpage

\thispagestyle{empty}

\vspace*{8cm}

\begin{chapquote}{Doge}
``So math. Very algebra. Many logic. Much variable. Wow.''
\end{chapquote}

\end{document}